\documentclass[a4paper,10pt]{article}
\usepackage{amssymb,amsmath,amsthm}
\usepackage{graphicx}

\usepackage{amsfonts}
\usepackage{latexsym}
\usepackage[english]{babel}
\usepackage{hyperref}
\usepackage{verbatim} 
\usepackage{bbm} 
\usepackage{color}
\numberwithin{equation}{section}
\usepackage{enumitem}
\usepackage{relsize}
\usepackage{xcolor}
\usepackage{todonotes}

\textwidth 19cm
\textheight 22.5cm
\evensidemargin 0cm
\topmargin 0.05cm
\headheight 0.38cm
\headsep 0.85cm
\footskip 2.35cm
%Valori standard:         art10          bk12       rep12

\numberwithin{equation}{section}

\oddsidemargin  0cm %1cm  %        2.21cm |       0.63 cm |  0.74 cm
\evensidemargin 0cm %1cm  %        2.21   |       3.17    |  2.07
\topmargin  0.05cm      %        0.95   |       1.85    |  0.95
\headheight 0.37cm      %        0.42   |       0.69    |  0.42
\headsep    0.38cm      %        0.88   |       0.69    |  0.88
\textwidth  16.5cm      %       12.12   |       12.7    | 13.7
\textheight 22.5cm     %       18.55   |       18.85   | 18.85
\footskip   1.5cm      %        1.05   |       1.05    |  1.05

%       deve essere  oddside + evenside + textwidth = 16.5cm

%\include{macro}

\newtheorem{theorem}{Theorem}[section]
\newtheorem{proposition}{Proposition}[section]
\newtheorem{lemma}{Lemma}[section]
\newtheorem{corollary}{Corollary}[section]

\newtheorem{remark}{Remark}[section]
\newtheorem{remarks}{Remark}[section]
\newtheorem{definition}{Definition}[section]
\newcommand{\be}{\begin{equation}}
\newcommand{\ee}{\end{equation}}

\newcommand{\e}{\varepsilon}

\newcommand{\R}{\mathbb R}
\newcommand{\C}{\mathbb C}

\newcommand{\Z}{\mathbb Z}

\newcommand{\N}{\mathbb N}
\newcommand{\T}{\mathbb T}

\newcommand{\s }{\sigma }
\newcommand{\ii }{{\rm i} }

\newcommand{\vphi}{\varphi }

\newcommand{\pa}{\partial}

%\newcommand{\Lipg}{{\rm{Lip}(\g)}}

 % coefficienti costanti nel linearized coniugato

% \usepackage{showkeys}

\begin{document}

\title{{\bf On the stability of periodic multi-solitons of the KdV equation
}}

%\date{}

\author{
Thomas Kappeler\footnote{Supported in part by the Swiss National Science Foundation.}  ,
Riccardo Montalto\footnote{Supported in part by the Swiss National Science Foundation and INDAM-GNFM.} 
}

\date{March 28, 2021}

\maketitle

\noindent
{\bf Abstract.}
In this paper we obtain the following stability result for periodic multi-solitons of the KdV equation: We prove that under any given semilinear Hamiltonian perturbation 
of small size $\varepsilon > 0$, a large class of periodic multi-solitons of the KdV equation, including ones of large amplitude, 
are orbitally stable for a time interval of length at least $O(\varepsilon^{-2})$. To the best of our knowledge, this is the first stability result of such type 
for periodic multi-solitons of large size of an integrable PDE.

\medskip

\noindent
{\em Keywords:} KdV equation,  periodic multi-solitons,  orbital stability, normal form coordinates, Hamiltonian perturbations, para-differential calculus

\medskip

\noindent
{\em MSC 2020:} 37K10, 35Q53, 37K45

\medskip

%\noindent
%{\color{red} REVISED VERSION}

%%%%%%%%%%%%%%%%%%%%%%%%%%%%%%%%%%%%%%%%%%%%%%%%%%%%%%%%%%%%%%%%%%%%%%%%%%

\tableofcontents

\section{Introduction}\label{introduzione paper}
\label{1. Introduction}

The Korteweg-de Vries (KdV) equation
 \begin{equation} \label{KdV} 
   \partial_t u = -  \partial_x^3 u + 6 u \partial_x u  
   \end{equation}
   is one of the most important model equations for describing dispersive phenomena.
   It is named after the two Dutch mathematician Korteweg and de Vries \cite{KdV} 
   (cf. also Boussinesq \cite{Bouss}, Raleigh \cite{Ra})
   and originally was proposed as a model equation in one space dimension for long surface waves of water in a narrow and shallow channel.
   Today it is used in many
   branches of physics as well as in the engineering sciences. The seminal discovery in the late sixties
   that \eqref{KdV} 
   admits infinitely many conservation laws (\cite{Lax}, \cite{MGK}), and the development 
   of the inverse scattering transform method (\cite{GGKM}) led to the modern theory of 
   integrable systems of finite and infinite dimension (see e.g. \cite{DKN}, \cite{FT}, and references therein).
   More recently, as one of the most prominent examples among dispersive equations, \eqref{KdV} played a major role 
   in the development of the theory of dispersive PDEs to which many of the leading analysts of our times contributed.
   In particular, the (globally in time) well-posedness theory of \eqref{KdV} has been established in various setups in great detail -- see \cite{Wiki}.
   
A distinguished feature of equation \eqref{KdV} is the existence of sharply localized traveling wave solutions
of arbitrarily large amplitude and particle like properties. Kruskal and Zabusky, who studied them in numerical experiments 
in the early sixties
%both on the real line and in the periodic setup 
(cf. \cite{KZ}), coined the name 'soliton' for them. More generally, they found solutions, which are localized 
near finitely many points in space, referred to as multi-solitons.
In the periodic setup, these solutions often are referred to as {\em periodic multi-solitons} or {\em finite gap solutions}.
Due to their importance in applications, various stability aspects have been considered such as 
the long time asymptotics of solutions with initial data near (periodic) multi-solitons
(orbital stability, soliton resolution conjecture). 
%In the periodic setup, it has been shown that periodic multi-solitons coincide with the solutions of \eqref{KdV} 
%which are quasi-periodic in time and that solutions for arbitrary initial data are almost periodic in time.
Two major questions arise in connection with the {\em structural stability} of \eqref{KdV}. 
One of them concerns the persistence of the (periodic) multi-solitons under perturbations of \eqref{KdV},
and the other one concerns the long time asymptotics of solutions of perturbations of \eqref{KdV}
with initial data close to a (periodic) multi-soliton. 
In the periodic setup, the first question has been studied quite extensively by developing KAM methods, pioneered 
by Kolmogorov, Arnold, and Moser to treat perturbations of finite dimensional integrable system, for PDEs 
(cf. \cite{K87}, \cite{K2-KdV}, \cite{K}, \cite{Wayne}, \cite{B-KAM}, \cite{KP}, \cite{CY}, \cite{LY}, \cite{PP}, \cite{BBM3}, \cite{BKM}, and references therein), 
whereas the second one turned out to be quite challenging and little is known so far. Our goal is to
address this longstanding open problem.

\medskip

The aim of this paper is to study in the periodic setup the long time asymptotics of the solutions of Hamiltonian perturbations of \eqref{KdV}
with initial data close to a periodic multi-soliton of arbitrary large amplitude.
To describe the class of perturbations considered, recall that \eqref{KdV} with the space periodic
variable $x \in \T_1:= \R / \Z$ can be written in Hamiltonian form
\begin{equation}\label{Ham-KdV}
\partial_t u = \partial_x \nabla H^{kdv}(u)\ , \qquad H^{kdv}(u) := \int_0^1 \big( \frac{1}{2} (\partial_x u)^2 + u^3  \big) d x \ ,
\end{equation}
where $\nabla H^{kdv}(u)$ denotes the $L^2-$gradient of $H^{kdv}$ and where $\partial_x$ is the Poisson structure,
corresponding to the Poisson bracket defined for functionals $F$, $G$ by
\begin{equation}\label{Poisson}
\{ F, G \} (u) = \int_0^1 \nabla F \partial_x \nabla G d x.
\end{equation}
We consider semilinear Hamiltonian perturbations of \eqref{KdV} of the form
 \begin{equation} \label{1.1} 
   \partial_t u = -  \partial_x^3 u + 6 u \partial_x u + \e F(u)  
   \end{equation}
     where $0 < \e <1$ is a small parameter and $F$ is a semilinear Hamiltonian vector field
     \begin{equation}\label{cal N def intro}
     F(u) =  \partial_x \nabla {P_f}(u).
     \end{equation}
     Here $P_f$ is a Hamiltonian of the form
     \begin{equation}\label{cal K def intro}
     { P_f}(u) := \int_0^1 f(x, u(x))\, d x 
     \end{equation}
     and  $f$ a $C^{\infty}-$smooth density
     \begin{equation}\label{f def intro}
     f : \T_1 \times \R \to \R, \quad (x, \zeta) \mapsto f(x, \zeta)  ,
     \end{equation}
     so that with $f'(x, \zeta) := \partial_\zeta f(x, \zeta)$ and $f''(x, \zeta) := \partial^2_\zeta f(x, \zeta)$,
     $$
     F(u)(x) = \partial_x \nabla P_f(u)(x) = \partial_x f'(x, u(x)) + f''(x, u(x))\partial_x u(x).
     $$
     
%     We consider the map
%$$
%S : L^2_0(\T) \to L^2_0(\T), \quad u(x) \mapsto u(- x)\,.
%$$
%We assume that the Hamiltonian density $f(x, \zeta_0, \zeta_1)$ satisfies 
%\begin{equation}\label{reversibility condition f}
%f(x, \zeta_0, \zeta_1) = f(- x, \zeta_0, - \zeta_1), \quad \forall (x, \zeta_0, \zeta_1) \in \T \times \R \times \R\,,
%\end{equation}
%so that the Hamiltonian $P$ is reversible, meaning that $P \circ S = P$. Under this assumption on the Hamiltonian density $f$, the PDE \eqref{1.1} is reversible and we can look at reversible solutions, namely solutions satisfying 
%\begin{equation}\label{reversible solutions}
%u (t) = S u(- t), \quad \text{i.e.} \quad u(t, x) = u(- t, - x)\,. 
%\end{equation}
%
 To state our main results, we first need to introduce some more notations. Since $u \mapsto \langle u \rangle_x := \int_0^1 u \, d x$ is 
 a Casimir for the Poisson bracket \eqref{Poisson} and hence a prime integral of \eqref{1.1}, 
 we restrict our attention to spaces of functions with zero mean (cf. \cite{KP}, Section 13) and choose as phase spaces of \eqref{1.1}
 the scale of Sobolev spaces $H^s_0(\T_1)$, $s \in \Z_{\ge 0}$,
 $$
 H^s_0(\T_1) := \{ q \in H^s(\T_1) \ : \ \int_0^1 q(x) d x = 0  \} , 
 \qquad L^2_0 (\T_1) \equiv H^0_0(\T_1) , 
 $$
 where
     \begin{equation}\label{def Hs intro}
   H^s(\T_1) \equiv H^s(\T_1, \R) : = \big\{ q = \sum_{n \in \Z} q_n e^{2 \pi \ii n x} \ :  \,
  q_n \in \C, \ q_{-n} = \overline q_n \ \forall n \in \Z, \   \| q \|_s < \infty \big\} \,,
  \end{equation}
  and 
  $$
   \| q \|_s = \big( \sum_{n \in \Z} \langle n \rangle^{2 s} |q_n|^2 \big)^{\frac12} \ , \qquad 
  \langle n \rangle := {\rm max}\{1, |n| \} \ , \quad \ \forall \ n \in \Z \ . 
  $$
  On $L^2_0(\mathbb T_1)$, the Poisson structure $\partial_x$ is nondegenerate and the corresponding symplectic form is given by 
\be\label{KdV symplectic}
{\mathcal W}_{L^2_0} (u, v) := 
\int_0^1 (\partial_x^{- 1} u ) v\, d x \, , \qquad 
\partial_x^{- 1} u = 
\sum_{n \ne 0} \frac{1}{\ii n} u_n e^{\ii 2 \pi n x}\, ,
\qquad
\forall u, v \in L^2_0(\T_1)\, .
\ee
Note that the Hamiltonian vector field $ X_H (u) = \pa_x \nabla H (u) $, associated with the Hamiltonian $ H $, satisfies
$ d H (u)[ \cdot ] = {\mathcal W}_{L^2_0} ( X_H , \cdot ) $. 

%{\color{orange} TO BE REMOVED:
%
%\noindent
%  Equation \eqref{1.1} is locally in time well-posed in $ H^s_0(\T_1)$ for $s $ sufficiently large
%  More precisely, for any  $q_0 \in H^s_0(\T_1)$ there exist a neighborhood 
%  $U$ of $q_0$ in $H^s_0(\T_1)$ and $T > 0$ so that the following holds: (i) For any initial data $u_0 \in U,$ there exists a unique
%  curve 
%  $$
%  \mathcal S(\cdot; u_0) \in C^0((-T, T), H^s_0(\T_1)) \cap C^1((-T, T), H^{s- 3}_0(\T_1))
%  $$ 
%  solving \eqref{1.1} and satisfying $\mathcal S(0; u_0) = u_0$; (ii) the solution map 
%  $$
%  \mathcal S: U \to C^0((-T, T), H^s_0(\T_1)), u_0 \mapsto \mathcal S(\cdot; u_0)
%  $$ 
%  is continuous.
%  }

\smallskip

Our results can informally be stated as follows:  for any $f \in C^{\infty}(\T_1 \times \R)$, 
$s$ sufficiently large, $ \varepsilon > 0$ sufficiently small, and for most of the finite gap solutions $q: t \mapsto q(t, \cdot)$ of \eqref{KdV}, the following holds:
for any initial data  $u_0 \in H^s_0(\T_1)$, which is $\varepsilon$-close in $H^s_0(\T_1)$ to the orbit $\mathcal O_q := \{ q(t, \cdot) : \, t \in \R \}$ of $q$,
the perturbed equation \eqref{1.1} admits a unique solution $t \mapsto u(t, \cdot)$ in $H^s_0(\T_1)$ with initial data $u(0, \cdot) = u_0$ and life span at least
$[- T, T]$, $T = O(\varepsilon^{-2})$. The solution $u(t, \cdot)$ stays $\varepsilon$-close in $H^s_0(\T_1)$
to the orbit $\mathcal O_q $.

\smallskip
%
%Roughly speaking, Theorem \ref{stability theorem} says that if the initial data $u_0$ is smooth enough 
%and close to the $S-$gap solution $t \mapsto q(t, \cdot; \omega)$ where $\omega  \in \Pi_\gamma$ with $0 <  \gamma < 1$ (cf. \eqref{initial value}), 
%then the solution of \eqref{1.1} with $u_0$ as initial data stays $\e$-close to the orbit $\{ q(t, \cdot; \omega) \, : \, t \in \R \}$  
%of $q(\cdot, \cdot; \omega)$ on a time interval of length at least $O(\e^{- 2})$. 
%
%\medskip

To state our results in precise terms, we need to define the notion of finite gap solution and the invariant tori, on which they evolve, 
and explain for which of these solutions the above
stability results hold.  Since these finite gap solutions are not small, we need to introduce coordinates to describe them.
Most conveniently, this can be done in terms of a Euclidean version of
action angle coordinates, referred to as Birkhoff coordinates. Let us now explain this in detail.

  According to \cite{KP}, the KdV equation \eqref{Ham-KdV} on $\T_1$ is an integrable PDE in the strongest possible sense,
  meaning that it admits globally defined canonical coordinates on $L^2_0(\T_1)$ so that 
  when expressed in these coordinates, \eqref{Ham-KdV} can be solved by quadrature. 
  %These coordinates are referred to as Birkhoff coordinates and the corresponding transformation as Birkhoff map.
  To describe these coordinates in more detail, we introduce for any $s \in \Z_{\ge 0}$ the weighted $\ell^2-$sequence spaces
   $$ 
   h_0^s := \big\{ (w_n)_{n \neq 0} \in h^s_{0, c} \ : \ w_{- n} = \overline{w}_n \,\, \forall n \geq 1 \big\} , \qquad  \ell^2_0 \equiv h^0_0,
   $$ 
   where $  h^s_{0, c} \equiv h^s(\Z \setminus \{ 0 \}, \C) $ is given by
   $$
   h^s_{0,c} : = \big\{ w = (w_n)_{n \neq 0} : w_n \in \C \,\,  \forall n \ne 0, \ \| w \|_s < \infty \big\}  , 
   \qquad 
     \| w \|_s := \big( \sum_{n \neq 0} |n|^{2 s} |w_n|^2 \big)^{\frac12} .
   $$
   By \cite{KP} there exists a real analytic diffeomorphism, referred to as (complex) Birkhoff map,
   $$
   \Phi^{kdv} : L^2_0(\T_1)  \to \ell^2_0, \quad q \mapsto w(q) := (w_n(q))_{n \neq 0}\,,
   $$
  which is canonical in the sense that 
   \begin{equation}\label{w_n canonical}
   \{ w_n, w_{- n} \} = \int_0^1 \nabla w_n \partial_x \nabla w_{- n}\, d x = 2 \pi \ii n, \quad \forall n \neq 0\,,
   \end{equation}
   whereas the brackets between all other coordinate functions vanish, and which has the property that for any $s \in \N$, 
   the restriction of $\Phi^{kdv}$ to $H^s_0(\T_1)$ is a real analytic diffeomorphism with range $h^s_0$, $\Phi^{kdv} : H^s_0(\T_1)  \to h^s_0$,
   so that the KdV Hamiltonian, when expressed in the coordinates $w_n,$ $n \ne 0,$ is in normal form. More precisely, 
   $$
    {H}^{kdv} \circ \Psi^{kdv} : h^1_0 \to \R\,, \qquad \Psi^{kdv} := (\Phi^{kdv})^{- 1}\,,
   $$
   is a real analytic function $ {\cal H}^{kdv }$ of the actions $I(w)= (I_n(w))_{n \ge1}$ alone,
   $$
   {\cal H}^{kdv } :\ell^{1, 3}_+ \to \R, \ I \mapsto  {\cal H}^{kdv }(I), \qquad
   I_n(w) := 2 \pi n w_n w_{- n}, \   \forall n \ge 1,
   $$
   where $\ell^{1,3}_+$ denotes the positive quadrant of the weighted $\ell^1-$sequence space,
   $$
   \ell^{1,3} \equiv \ell^{1,3}(\N, \R) := \{ I=(I_n)_{n \geq 1} \subset \R \,  : \, \sum_{n =1}^\infty n^3 |I_n| < \infty \} \, , \qquad  \N:= \Z_{\ge1}\,  .
   $$
   Equation \eqref{Ham-KdV}, when expressed in the coordinates $w_n$, $n \ne 0$, then takes the form
   \begin{equation}\label{KdV in Birkhoff}
   \dot w_n = \ii \omega_n^{kdv}(I)  w_n\,, \qquad \forall n \ne 0,
   \end{equation}
   where $\omega_n^{kdv}(I)$, $n \ne 0$, denote the KdV frequencies 
   \begin{equation}\label{kdv frequencies}
   \omega_n^{kdv}(I) := \partial_{I_n} {\cal H}^{kdv}(I) \ , \quad  \omega_{-n}^{kdv}(I)  := - \omega_n^{kdv}(I) , \qquad \forall n \ge 1.
   \end{equation}
   Since by \eqref{w_n canonical}  the action variables Poisson commute, $\{ I_n, I_m \}$, $\forall n, m \ge 1$, it follows
   that they are prime integrals of \eqref{Ham-KdV} and so are the frequencies $  \omega_n^{kdv}(I)$, $n \ne 0$.
   As a consequence, \eqref{KdV in Birkhoff} can be solved by quadrature.
   Finally,  the differential 
   $d_0 \Phi^{kdv} : L^2_0(\T_1)  \to \ell^2_0$ of $\Phi^{kdv}$ at $q = 0$  is the Fourier transform (cf. \cite{KP}, Theorem 9.8)
   $$
   {\cal F} : L^2_0(\T_1)  \to \ell^2_0,  \quad q \mapsto (q_n)_{n \neq 0}, \quad q_n := \int_0^1 q(x) e^{- 2 \pi \ii n x}\, d x , 
   $$
   and hence $d_0 \Psi^{kdv}$ is given by the inverse Fourier transform ${\cal F}^{- 1}$. 
   We remark that the coordinates $w_{\pm n} \equiv w_{\pm n}(q)$, referred to as (complex) Birkhoff coordinates,
   are related to the (real) Birkhoff coordinates  $x_n,$ $y_n$, $n \ge 1$, introduced in \cite{KP}, by
   $$
   x_n = \frac{w_n + w_{-n}}{2 \sqrt{n \pi}} ,   \quad   y_n = \ii \frac{w_n - w_{-n}}{2 \sqrt{n \pi}} , \qquad \forall \ n \geq 1\,, 
   $$
   where $\sqrt{\cdot}$ denotes the principal branch of the square root, $\sqrt{\cdot} \equiv \sqrt[+]{\cdot}\,$.
   
   The Birkhoff coordinates are well suited to describe the finite gap solutions of \eqref{Ham-KdV}.
   For any {\em finite} subset $S_+ \subseteq \N $, let
  
   $$
    S := S_+ \cup (- S_+)\, ,  \qquad S^\bot := \Z \setminus (S \cup  \{ 0 \})\,.
   $$
   We denote by $M_S$ the submanifold of $L^2_0(\T_1) $, given by
   $$
   M_S := \big\{ q = \Psi^{kdv}(w) \  : \ w_n(q) = 0 \  \ \forall \, n \in S^\bot  \big\}, \quad
   $$
   whose elements are referred to as $S$-gap potentials, 
   and by $M_S^o$ the open subset of $M_S$, consisting of the so called proper $S$-gap potentials,
   $$
   M_S^o := \{ q \in M_S \  :  \, w_n(q)  \ne 0 \  \ \forall \, n \in S \}\,.
   $$
   Note that $M_S$ is contained in $\cap_{s \geq 0} H^s_0(\T_1)$ and hence consists of $C^\infty$-smooth potentials  
   and that $M_S^o$ can be parametrized by the action-angle coordinates 
   $ \theta = (\theta_k)_{k \in S_+} \in  \T^{S_+},$ and $I = (I_k)_{k \in S_+} \in \R^{S_+}_{> 0}$,
   $$
  \Psi_{S_+}  : {\cal M}_S^o := \T^{S_+} \times \R^{S_+}_{> 0} \to M_S^o, \,\, (\theta, I) \mapsto  \Psi_{S_+}(\theta, I) := \Psi^{kdv}(w(\theta, I))
   $$
   where $\T := \R / 2 \pi \Z$ and $w(\theta, I) = (w_n(\theta, I))_{n \ne 0}$ is defined by 
   \begin{equation}\label{coo for S_+}
    w_{\pm n} := \sqrt{ I_n / (2 \pi n)} e^{\mp \ii \theta_n}, \quad \forall n \in S_+ ,  \qquad  \quad
   w_n : = 0, \quad \forall n \in S^\bot \,.
   \end{equation}
   %For any given $q \in M_S$, we refer to $z_n (q)$, $n \in S$, as the tangential coordinates of $q$ and to 
  % $z_n(q)$, $n \in S^\bot$, as the ones in the normal directions and 
 Introduce 
   $$
   h^s_\bot := \big\{ w \in h^s_{\bot c} : w_{- n} = \overline{w}_n \,\, \forall n \in S^\bot \big\}, \qquad h^s_{\bot c} := h^s(S^\bot, \C)\,.
   $$
   %In the sequel we will only consider the subset of $h^s_0$ consisting of elements $(w_n)_{n \neq 0} \in h^s_0$ with $w_n \neq 0$ for any $n \in S$. 
   For notational convenience, we view ${\cal M}_S^o \times h^s_\bot$ as a subset of $h^s_0$.
   Its elements are denoted by
   $$
\quad \theta = (\theta_n)_{n \in S_+}, \,\,\, I = (I_n)_{n \in S_+}, \,\,\, w = (w_n)_{n \in S^\bot}
   $$
  and it is endowed with the canonical Poisson bracket, given by 
   $$
   \{ I_n, \theta_n \} = 1, \quad \forall n \in S_+, \qquad \{ w_n, w_{- n} \} =  \ii 2 \pi n, \quad \forall n \in S^\bot_+ := S^\bot \cap \N \, , 
   $$
   whereas the brackets between all other coordinate functions vanish.
   %In order to state precisely this result we introduce some more notations. 
 It is convenient to introduce the frequency vector $\omega(I)$ (cf. \eqref{kdv frequencies}),
 %$\mu := (I_n)_{n \in S_+} \in \R^{S_+}_{> 0}$ and
\begin{equation}\label{normal frequencies}
\omega(I):= (\omega^{kdv}_n(I , 0))_{n \in S_+}\,.  
%\qquad  \omega_{- n}^{kdv}(I, 0)  : = - \omega_n^{kdv}(I, 0), \quad \forall n \in S^\bot_+ .
\end{equation} 
By \cite{BK}, the action to frequency map $\omega: \R^{S_+}_{> 0} \to \R^{S_+},$ $I \mapsto \omega(I)$, is a local diffeomorphism.
Throughout the paper, we denote by $\Xi \subset \R^{S_+}_{> 0}$ the closure of a bounded, open, nonempty set so that the restriction
of $\omega$ to $\Xi$ is a diffeomorphism onto its image $\Pi := \omega(\Xi)$ and so that for some $\delta > 0,$
$$
\Xi + B_{S_+}(\delta) \subset \R^{S_+}_{> 0} ,
$$ 
where $B_{S_+}(\delta)$ is the ball in $\R^{S_+}$ of radius $\delta > 0$, centered at the origin. 
We remark that for any $I \in \Xi + B_{S_+}(\delta)$,  the nth action $I_n = I_n(w)$, $n \in S_+$, is of the form 
$ I_n(w) =  I_n^{(0)} +y$ where $I_n^{(0)}:= 2 \pi n w^{(0)}_n w^{(0)}_{- n} \in \Xi$ and
\begin{equation}\label{formula y}
y_n = (w_n - w_n^{(0)})w_{-n}^{(0)} + w_n^{(0)} (w_{-n} - w_{-n}^{(0)}) + (w_n - w_n^{(0)})(w_{-n} - w_{-n}^{(0)}) \, .
\end{equation}
The inverse of $\omega : \Xi \to \Pi$ is denoted by $\mu$,
$$
\mu: \Pi \to \Xi, \quad \omega \mapsto \mu(\omega)\,.
$$
 In what follows, we will consider the frequency vector  $\omega$ as a parameter.
For any $\omega \in \Pi,$ a $S-$gap solution of \eqref{Ham-KdV} is defined as a solution of the form
\begin{equation}\label{finite gap in coordinate originarie}
q(t, x; \omega ) = \Psi_{S_+}(\theta^{(0)} + \omega  t, \mu(\omega))(x) \ , \qquad  \theta^{(0)} \in \T^{S_+} , 
\end{equation}
whereas a finite gap solution of \eqref{Ham-KdV} is a solution of the form \eqref{finite gap in coordinate originarie}
for some  $S = S_+ \cup (- S_+)$ with $S_+ \subset \N$ finite.
The $S-$gap solution $t \mapsto q(t, x; \omega )$ is a curve on the $|S_+|-$dimensional torus 
$$
{\frak T}_{\mu(\omega)} := \Psi_{S_+}\big(\T^{S_+} \times \{\mu(\omega)\} \big).
$$
We note that ${\frak T}_{\mu(\omega)}$ is invariant under \eqref{Ham-KdV} and Lyapunov stable 
in $H^s_0(\T_1)$ for any $ s \ge 0$. More precisely, 
for any $\e > 0$ there exists $\delta > 0$, depending on $s$, so that for any initial data
$u_0 \in H^s_0(\T_1)$ with 
\begin{equation}\label{distance}
{\rm dist}_{H^s} \big(u_0, {\frak T}_{\mu(\omega)} \big)  \leq \delta \ ,
\qquad {\rm dist}_{H^s} \big(u_0, {\frak T}_{\mu(\omega)} \big) := \inf_{q \in {\frak T}_{\mu(\omega)} } \| u_0 - q\|_s \ , 
\end{equation}
the solution $u(t, \cdot)$ of \eqref{Ham-KdV} with  $u(0, \cdot) = u_0$ satisfies
$$
 {\rm dist}_{H^s} \big(u(t, \cdot), {\frak T}_{\mu(\omega)} \big)   \leq \e \ , \qquad \forall \ t \in \R .
$$
%{ ALTERNATIVE: instead of ${\frak T}_{\mu(\omega)}$ we also could consider the orbit $\mathcal O_q$ of the solution $t \mapsto q(t, x; \omega )$,
%$$\mathcal O_q := \{ q(t, x; \omega ) \, : \, t \in \R \} \ . $$ }
Finally, we introduce the so called normal frequencies, 
\begin{equation}\label{def notation normal frequencies}
\Omega_j(\omega) := \omega^{kdv}_j(\mu(\omega), 0), \quad j \in S^\bot , \  \omega \in \Pi \, ,
\end{equation}
and for any given  $\tau > |S_+| $, the subsets $\Pi_\gamma$ of $\Pi$, 
\begin{equation}\label{non resonant set tot}
\Pi_\gamma := \cap_{i = 0}^3 \Pi_\gamma^{(i)} , \qquad  0 < \gamma <1 \ ,
\end{equation}
where $\Pi_\gamma^{(i)}$, $0 \le i \le 3$,  are given by
%\begin{equation}\label{condizioni forma normale}
%\begin{aligned}
%\Pi_\gamma^{(0)} & := \Big\{ \mu \in \Pi : |\omega(\mu ) \cdot \ell| \geq \frac{\gamma}{\langle \ell \rangle^\tau}, \quad \forall \ell \in \Z^{S_+} \setminus \{ 0 \}\Big\}\,, \\
%\Pi^{(1)}_\gamma & := \Big\{ \mu \in \Pi : |\omega (\mu) \cdot \ell + \omega_j^{kdv}(\mu)| \geq \frac{\gamma}{\langle \ell \rangle^\tau}, \quad \forall \ell \in \Z^{S_+}, \quad j \in S^\bot \Big\}\,, \\
%\Pi_\gamma^{(2)} & := \Big\{ \mu \in \Pi : |\omega (\mu) \cdot \ell + \omega_j^{kdv}(\mu) + \omega_{j'}^{kdv}(\mu)| \geq \frac{\gamma}{\langle \ell \rangle^\tau}, \quad \forall (\ell, j, j') \in \Z^{S_+} \times S^\bot \times S^\bot, \quad (\ell, j, j') \neq (0, j, - j) \Big\}\,, \\
%\Pi^{(3)}_\gamma & := \Big\{ \mu \in \Pi : |\omega (\mu) \cdot \ell + \omega_{j_1}^{\mu}(\mu) + \omega_{j_2}^{kdv}(\mu) + \omega_{j_3}^{kdv}(\mu)| \geq \frac{\gamma}{\langle \ell \rangle^\tau \langle j_1 \rangle^2 \langle j_2 \rangle^2 \langle j_3 \rangle^2},  \\
%& \qquad \forall (\ell, j_1, j_2, j_3) \in \Z^{S_+} \times S^\bot \times S^\bot \times S^\bot  \Big\}\,. 
%\end{aligned}
%\end{equation}
\begin{equation}\label{condizioni forma normale}
\begin{aligned}
\Pi_\gamma^{(0)} & := \big\{ \omega \in \Pi \ : \ |\omega \cdot \ell| \geq \frac{\gamma}{\langle \ell \rangle^\tau} \ \  \forall \ell \in \Z^{S_+} \setminus \{0\}\big\}\,, \\
\Pi^{(1)}_\gamma & := \big\{ \omega \in \Pi \ : \ |\omega  \cdot \ell + \Omega_j(\omega)| \geq \frac{\gamma}{\langle \ell \rangle^\tau} \ \  \forall (\ell, j)  \in \Z^{S_+} \times S^\bot \big\}\,, \\
\Pi_\gamma^{(2)} & := \big\{ \omega \in \Pi \ : \ |\omega \cdot \ell + \Omega_{j_1}(\omega) + \Omega_{j_2}(\omega)| 
\geq \frac{\gamma}{\langle \ell \rangle^\tau}  \\
& \ \  \forall (\ell, j_1, j_2) \in \Z^{S_+} \times S^\bot \times S^\bot \ \text{with} \ (\ell, j_1, j_2) \neq (0, j_1, - j_1) \big\}\,, \\
\Pi^{(3)}_\gamma & := \big\{ \omega \in \Pi  \ : \ |\omega  \cdot \ell + \Omega_{j_1}(\omega) + \Omega_{j_2}(\omega) + \Omega_{j_3}(\omega)| \geq \frac{\gamma}{\langle \ell \rangle^\tau \langle j_1 \rangle^2 \langle j_2 \rangle^2 \langle j_3 \rangle^2}  \\
& \ \  \forall (\ell, j_1, j_2, j_3) \in \Z^{S_+} \times S^\bot \times S^\bot \times S^\bot \ \text{with} \  j_k + j_m \neq 0 \ \  \forall k, m \in \{1,2,3\}  \big\}\,. 
\end{aligned}
\end{equation}
Here we used the standard notation for vectors $y$ in $\R^n$,
\begin{equation}\label{def langle y rangle}
\langle  y \rangle := \max\{1, | y |  \} , \quad | y | := (\sum_{j=1}^n |y_j|^2)^{1/2} ,  \qquad  \forall \, y \in \R^n \, .
\end{equation}
We refer to $\Pi^{(j)}_\gamma$, $0 \le j \le 3$, as the {\em jth Melnikov conditions} and note that the third Melnikov conditions allow for 'a loss of derivatives in space'
-- see item $(ii)$ in {\em Comments on Theorem \ref{stability theorem}} below.
%This loss is used in an essential way to prove in Section \ref{measure estimates} the measure estimates, stated in  \eqref{main measure estimate} below. 

The goal of this paper is to prove a long time stability result of finite gap solutions \eqref{finite gap in coordinate originarie}
of the Korteweg-de Vries equation on $\T_1$. To state it, we denote
for any Banach space $X$ with norm $\| \cdot \|_X$, integer $m \ge 0$, and interval $J \subset \R$, by $C^m(J, X)$ the Banach space of
functions $f: J \to X$, which are $m$ times continuously differentiable, endowed with the supremum norm, 
$\|f\|_{C^m_t} := \max_{0 \le j \le m} \sup\{ \| \partial_t^j f(t) \|_X \, : \, t \in J; 0 \le j \le m\}$. 

%We then consider a $\mu$-dependent family of finite gap solutions 
%\begin{equation}\label{finite gap in coordinate originarie}
%q(t, x; \omega ) = \Psi^{kdv}(\theta^{(0)} + \omega  t, \mu(\omega), 0)\,.
%\end{equation}
\begin{theorem}\label{stability theorem}
Let  $f$ be a function in $ C^{\infty}(\T_1 \times \R)$ (cf. \eqref{cal K def intro}), $S_+$ be a finite subset of $\N$, 
and $\tau$ be a number with $\tau > |S_+| $ (cf. \eqref{condizioni forma normale}).
Then for any integer $s$ sufficiently large and any $\omega \in \Pi_\gamma$, $0 < \gamma < 1$, there exists  
$0 < \varepsilon_0 \equiv \varepsilon_0(s, \gamma) < 1$
 with the following properties:
for any  $0 < \varepsilon \le \varepsilon_0$
and  any initial data  $u_0 \in H^s_0(\T_1)$, satisfying 
\begin{equation}\label{initial value}
{\rm dist}_{H^s} \big(u_0, {\frak T}_{\mu(\omega)} \big) \leq \e \ ,
\end{equation}
equation \eqref{1.1} admits a unique solution  $t \mapsto u(t, \cdot)$ in 
$C^0([-T, T], H^s_0(\T_1)) \cap C^1([-T, T], H^{s-3}_0(\T_1))$ with initial data $u(0, x) = u_0(x)$
and $T \equiv T_{\e, s, \gamma} = O(\e^{- 2})$. 
Moreover, $u$ satisfies the estimate 
$$
 {\rm dist}_{H^s} \big(u(t, \cdot), {\frak T}_{\mu(\omega)} \big) \lesssim_{s, \gamma} 
\  \e\, , \qquad \forall \ - T \le  t \le T \ ,
$$
where the distance function ${\rm dist}_{H^s}$ is defined in \eqref{distance}.
Furthermore, there exists  $0 < \mathtt a < 1$ so that for any $0 < \gamma < 1,$
 the Lebesgue measure  $|\Pi \setminus \Pi_\gamma |$  
of $ \Pi \setminus \Pi_\gamma$ satisfies 
\begin{equation}\label{main measure estimate}
|\Pi \setminus \Pi_\gamma| \lesssim \gamma^{\mathtt a} \ , \quad {\rm implying \  that } \quad
\lim_{\gamma \to 0}|\Pi_\gamma| = |\Pi| \ . 
\end{equation}
Here and in the sequel, the notation $ h \lesssim_{\alpha, \ldots} g$ means that the real valued function $h$, depending on various variables,
satisfies an estimate of the form $h \le C g$ where $g$ is also a real valued function, typically small, and the constant $C>0$ only depends on the parameters $\alpha, \ldots$.
For notational convenience, the dependence of the constant $C$ on $f$, $S_+$, and $\tau$ is not indicated.
\end{theorem}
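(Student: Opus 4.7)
The plan is to conjugate the perturbed Hamiltonian in \eqref{1.1} to a normal form in a tubular neighborhood of the torus $\mathfrak{T}_{\mu(\omega)}$, kill the non-resonant part of the $O(\varepsilon)$ perturbation using the Melnikov conditions that define $\Pi_\gamma$, and then close a Gronwall-type estimate on the transverse displacement to obtain stability over a time interval of length $O(\varepsilon^{-2})$.

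First, I would introduce coordinates $(\theta,y,z) \in \mathbb{T}^{S_+} \times \mathbb{R}^{S_+} \times h^s_\bot$ adapted to $\mathfrak{T}_{\mu(\omega)}$ by combining the action--angle map \eqref{coo for S_+} on the tangential modes (with action written as $\mu(\omega)+y$) with the Birkhoff coordinates $z = (w_j)_{j \in S^\bot}$ on the normal modes. The torus becomes $\{y=0,\ z=0\}$, and the hypothesis \eqref{initial value} gives $|y(0)| + \|z(0)\|_s \lesssim_s \varepsilon$. In these variables the Hamiltonian of \eqref{1.1} reads
\[
H_\varepsilon \,=\, \omega\cdot y + \tfrac12\,\mathbb{A}(\omega)\,y\cdot y + \sum_{j\in S^\bot_+} \frac{\Omega_j(\omega)}{2\pi j}\, z_j z_{-j} + R_{\ge 3}(y,z) + \varepsilon\, P(\theta,y,z),
\]
with $\mathbb{A}=d\omega/dI$, $R_{\ge 3}$ at least cubic in $(y,z)$ (coming from the KdV Hamiltonian on $\Psi^{kdv}$), and $P=P_f\circ \Psi^{kdv}$ pulled back to the new variables.

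The core step is to build a symplectic change of variables $\Phi_\varepsilon = \mathrm{id}+O(\varepsilon)$, close to the identity on a ball of radius $O(\varepsilon)$, such that $H_\varepsilon\circ \Phi_\varepsilon = N + Z + \mathcal{R}$, where $N$ is the quadratic normal form above, $Z$ depends only on $y$ and the normal actions $z_j z_{-j}$, and the Hamiltonian vector field of the remainder $\mathcal{R}$ is of size $\varepsilon^3$ on such a ball. Expanding $R_{\ge 3}$ and $P$ into monomials $e^{\mathrm{i}\ell\cdot\theta} y^\alpha\prod_k z_{j_k}$ of total $z$--degree at most three and solving the homological equation amounts to dividing each non--resonant coefficient by the small divisor $\omega\cdot\ell + \sum_k \Omega_{j_k}(\omega)$; these are controlled precisely by the Melnikov conditions $\Pi_\gamma^{(0)},\dots,\Pi_\gamma^{(3)}$ in \eqref{condizioni forma normale}. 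Since $y$ and the quantities $z_j z_{-j}$ Poisson--commute with both $N$ and $Z$, one obtains
\[
\tfrac{d}{dt}\bigl(|y|^2 + \|z\|_s^2\bigr) = \{|y|^2+\|z\|_s^2,\mathcal{R}\} \lesssim_s \varepsilon^4,
\]
so long as $|y|+\|z\|_s\lesssim \varepsilon$, and a standard bootstrap then gives $|y(t)|+\|z(t)\|_s \lesssim_s \varepsilon$ on $|t|\le c\varepsilon^{-2}$; pulling back via $\Phi_\varepsilon$ and the Birkhoff map yields the claimed bound on ${\rm dist}_{H^s}(u(t),\mathfrak{T}_{\mu(\omega)})$, and local well--posedness in $H^s_0$ for semilinear perturbations of KdV combined with the a priori bound produces the solution on $[-T,T]$.

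The main obstacle lies in making the normal form step rigorous on $H^s_0$ for the cubic--in--$z$ resonances: the third Melnikov condition in \eqref{condizioni forma normale} allows a loss of two space derivatives per frequency index, so the generating Hamiltonian $\chi$ solving the corresponding homological equation has an unbounded vector field, and its time--one flow does not define a bounded symplectomorphism on $H^s_0$ by a naive Neumann--series argument. The remedy, in the spirit of the recent approach developed in \cite{BBM3, BKM}, is para--differential: the cubic--in--$z$ part of the perturbation is rewritten as a para--differential operator whose top--order symbol is compatible with the skew--adjoint operator $\partial_x$, and is then conjugated by the flow of a carefully chosen para--differential symbol so as to symmetrize the resulting operator and produce a remainder of controlled order on $H^s_0$, enabling the energy estimate above with no net derivative loss. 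Finally, \eqref{main measure estimate} is obtained by the classical resonant--strip analysis applied separately to each $\Pi\setminus\Pi_\gamma^{(i)}$: using that $\omega\mapsto\mu(\omega)$ is a diffeomorphism and that $\Omega_j(\omega)=-(2\pi j)^3 + O(j^{-1})$ (cf.\ \cite{KP}), each forbidden strip has measure at most $\gamma\langle\ell\rangle^{-\tau}$ (respectively weighted by $\prod_k\langle j_k\rangle^{-2}$ at the third level), and the sum over $\ell,(j_k)$ converges since $\tau>|S_+|$, producing $|\Pi\setminus\Pi_\gamma|\lesssim\gamma^{\mathtt{a}}$ for some $\mathtt{a}>0$ depending only on $|S_+|$ and $\tau$.
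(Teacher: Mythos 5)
Your outline matches the paper's general architecture (normal form coordinates near the torus, a normal form step killing the $O(\varepsilon)$ and quadratic resonances, energy/bootstrap estimates, and a measure estimate for the Melnikov sets), but there are two concrete issues.

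\textbf{The measure estimate argument has a genuine gap for $\ell=0$.} You assert that ``each forbidden strip has measure at most $\gamma\langle\ell\rangle^{-\tau}$ (\dots weighted by $\prod_k\langle j_k\rangle^{-2}$)'' and that summability over $\ell,(j_k)$ finishes the job. For $\Pi_\gamma^{(3)}$ with $\ell\ne 0$ and $|\ell|$ large this works because $|\partial_s\,\varphi(s)|\gtrsim |\ell|$ along any line in $\Pi$. But for $\ell=0$, the function $\omega\mapsto\Omega_{j_1}(\omega)+\Omega_{j_2}(\omega)+\Omega_{j_3}(\omega)$ has the form $(2\pi)^3(j_1^3+j_2^3+j_3^3)+O(1/\min_k|j_k|)$, whose leading part is a constant independent of $\omega$: there is \emph{no} transversality in $\omega$ to exploit, and the standard resonant-strip estimate simply does not apply. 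If it were possible to have $j_1^3+j_2^3+j_3^3=0$ for $j_k\in S^\bot$ with $j_k+j_m\ne 0$ for all $k,m$, the resonance would persist for all $\omega$ and the measure estimate would \emph{fail}. The paper's proof of Lemma~\ref{Lemma three wave res} invokes Euler's solution of the case $n=3$ of Fermat's Last Theorem to conclude $|j_1^3+j_2^3+j_3^3|\ge 1$ under the constraint $j_k+j_m\ne 0$, which gives the deterministic lower bound $R_{0 j_1 j_2 j_3}(\gamma)=\emptyset$ for $\min_k|j_k|$ large; the finitely many remaining $(\ell,j_1,j_2,j_3)$ are then handled by a non-degeneracy/Weierstrass-preparation argument. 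Your sketch, as written, skips this number-theoretic ingredient, and without it the third measure estimate does not follow (cf.\ Remark~\ref{remark on Pi_gamma^4} in the paper: the analogous diophantine equation at fourth order \emph{does} have nontrivial solutions, which is precisely why the paper stops at time scale $O(\varepsilon^{-2})$). Also a minor point: $\Omega_j(\omega)=(2\pi j)^3+O(j^{-1})$, not $-(2\pi j)^3$.

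\textbf{The normal form you claim is stronger than what is actually constructed, and stronger than what is needed.} You assert a symplectic $\Phi_\varepsilon$ with $H_\varepsilon\circ\Phi_\varepsilon=N+Z+\mathcal R$ where $Z$ depends only on $y$ and the normal actions $z_jz_{-j}$, so that $y$ and the actions are exact prime integrals of $N+Z$. The paper does not achieve (and does not need) this. After the smoothing and para-differential steps (Sections~\ref{forma normale smoothing standard}--\ref{normalization II}) the normal component of the transformed vector field is $\ii\Omega_\bot w + \mathtt D^\bot(\frak x)[w]+\Pi_\bot T_{a(\frak x)}\partial_x w+\mathcal R^\bot(\frak x)$ (Theorem~\ref{teorema totale forma normale}), where $\mathtt D^\bot(\frak x)$ is a skew-adjoint Fourier multiplier depending nonlinearly on $\frak x$ (not only on $y$ and actions), $a$ is small of order two but nonzero, and $\mathcal R^\bot$ is small of order three. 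The energy estimate then proceeds not by Poisson-commuting $\|z\|_s^2$ with $Z$, but by a direct $\tfrac{d}{dt}\|\partial_x^s w\|^2$ computation exploiting skew-adjointness of $\ii\Omega_\bot+\mathtt D^\bot$ and commutator estimates for $T_a\partial_x$ (Lemma~\ref{lemma eps T soluzione}). Moreover, some of the normalizing transformations in Section~\ref{normalization II} are deliberately \emph{not} symplectic; what they preserve is the weaker structural property \eqref{prop parte diagonale intro} that the diagonal Fourier matrix elements of the linearization at $w=0$ are purely imaginary, which is exactly what the energy estimate consumes. Your sketch's claim that one can reach a genuine Birkhoff normal form in $y$ and $z_jz_{-j}$ by ``para-differential symmetrization'' is plausible-sounding but not substantiated; the derivative loss in the third Melnikov conditions is precisely why the paper settles for the weaker skew-adjoint-plus-smoothing structure.
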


\smallskip

\noindent
{\bf Comments on Theorem \ref{stability theorem}}\label{comments}

\smallskip

\noindent
$(i)$ {\em Initial data.} Note that the size of the distance of the initial value $u_0$
to  the considered $S-$gap solution of the KdV equation (cf. \eqref{initial value})  
is assumed to be of the same order of magnitude as the size of the perturbation $\e F(u)$ in \eqref{1.1}.

\smallskip

\noindent
$(ii)$ {\em Measure estimate \eqref{main measure estimate}.}
The proof of the measure estimates \eqref{main measure estimate} requires that the third Melnikov conditions $\Pi^{(3)}_\gamma $ in \eqref{condizioni forma normale}
allow for a loss of derivatives in space. Furthermore, a key ingredient into the proof of \eqref{main measure estimate} is the case $n=3$ 
of Fermat's Last Theorem, proved by Euler \cite{Euler} (cf. Lemma \ref{Lemma three wave res}). 

\smallskip

\noindent
$(iii)$ {\em Assumptions in Theorem  \ref{stability theorem}.} The results of Theorem \ref{stability theorem} hold for any density $f(x, \zeta)$ of class $\mathcal C^{\sigma}$ with $\sigma$ sufficiently large. 
Furthermore, corresponding results hold for (invariant tori of) finite gap solutions of the KdV equation in the affine spaces $c + H^s_0(\mathbb T_1)$, $c \in \R$. 
We assume in this paper that $f$ is $\mathcal C^\infty-$smooth and that $c=0$ merely to simplify the exposition. 

In order to limit the size of the paper, we assume the perturbation $\e F(u)$ to be semilinear (cf. \eqref{cal N def intro}),  leaving the case of a quasilinear one for future work. 
Most likely, the elaborate method designed in \cite{Feola-Iandoli} will allow to transform quasilinear perturbations into normal form while preserving the Hamiltonian structure
of the equation.

\smallskip

\noindent
$(iv)$ {\em Time of stability.}  It seems unlikely that the stability results of Theorem \ref{stability theorem} in the generality stated are valid for time intervals of size larger than $O(\e^{-2})$
since the conditions, required to hold for the frequencies $\Omega_j$, $j \in S^\bot$, so that the normal form procedure could be implemented, are too strong.
See Remark \ref{remark on Pi_gamma^4} at the end of Section \ref{measure estimates}.
Actually, it might be possible that the (almost) resonances of the KdV frequencies of degree four can be used to prove instability results for solutions of the perturbed equation \eqref{1.1} 
-- see \cite{CKSTT}, \cite{GHHMP} and references therein for related results for Schr\"odinger equations in two space dimension.

\smallskip

\noindent
$(v)$ {\em Conservation of momentum.} If the density $f$ of the perturbation ${P_f}(u) = \int_0^1 f(x, u(x))\, d x$
does not explicitely depend on $x$, then the momentum $M(u) := \frac12 \int_{\T_1} u^2\, d x$ is a prime integral of equation \eqref{1.1}. 
We plan to prove in future work that the stability time can be improved in such a case. 

\smallskip

\noindent 
$(v)$ {\em Integrable PDEs.} The method of proof of Theorem \ref{stability theorem} is quite general. We expect that for any integrable PDE,
admitting coordinates of the type constructed in \cite{Kap-Mon-2}, a corresponding version of Theorem \ref{stability theorem} holds,
up to the measure estimates related to the nonresonance conditions for the frequencies of the integrable PDE considered.
These estimates might require specific arithmetic properties of the frequencies -- see item $(ii)$ above.

%Note that the perturbed equation \eqref{1.1} is locally well posed in Sobolev spaces $H^s$ with $s $ large enough. 
%Our task is to prove that for initial data close to a finite gap potential $q(0, \cdot; \omega)$, $\omega \in \Pi_\gamma$, 
%the length of the time interval of existence can be shown to be of order $O(\e^{- 2})$ and 
%the solution is $\e$-close to the finite gap potential $q(t, x; \mu)$, $\mu \in \Pi_\gamma$. 

\medskip

To explain the main ideas of the proof, we first need to introduce some terminology and additional notations.
They will be used throughout the paper.

\smallskip

\noindent
{\em Notations and terminology.} 
 For any finite subset $S_+  \subset \N$, $L^2_\bot(\T_1)$ is the subspace, given by
\begin{equation}\label{Def:L2bot}
L^2_\bot(\T_1) := \big\{ w = \sum_{n \in S^\bot }  w_n e^{\ii 2\pi n x} \in L^2_0 (\T_1) \big\}\, , \qquad 
S^\bot  =  \Z \setminus \big( S_+ \cup (- S_+) \cup \{ 0 \} \big) \, ,
\end{equation}
and $\Pi_\bot$ denotes the $L^2-$orthogonal projector onto the subspace $ L^2_\bot (\T_1) $. For any $s > 0$, we set
\be\label{Hsbot}
H^s_\bot(\T_1) := H^s(\T_1) \cap L^2_\bot(\T_1), \qquad  H_\bot^0(\T_1) := L^2_\bot( \T_1)\, .
\ee
By $ {\mathcal E}_s$ we denote the phase space and by $E_s$ the corresponding tangent space, given by
\be\label{EsEs}
 {\mathcal E}_s := \T^{S_+} \times \R^{S_+} \times H^s_\bot(\T_1)\,, \quad 
{\mathcal E} \equiv {\mathcal E}_0 \, , \qquad \quad
E_s := \R^{S_+} \times \R^{S_+} \times H^s_\bot(\T_1)\,, \quad E \equiv E_0\, ,
\ee  
where $\T_1 = \R / \Z$ and $\T = \R / 2\pi \Z$.
Elements of ${\mathcal E}$ are denoted by $\frak x = (\theta, y , w)$
and the ones of its tangent space $E$ by 
$ \widehat{ \frak x} = (\widehat \theta, \widehat y,\widehat w)$.
For $s > 0$, $ H^{s}_\bot (\T_1)^*$ denotes the dual space of $ H^{s}_\bot (\T_1)$,
which is canonically identified with the Sobolev space $ H^{-s}_\bot (\T_1)$  of distributions.
The spaces
$ {\mathcal E}_{-s} $ and $ E_{-s}$  are then defined as in \eqref{EsEs}. 
On $E$, we denote by 
$ \langle \cdot, \cdot \rangle_E$ the inner  product defined  
by  
\be\label{bi-form}
\big\langle (\widehat \theta_1, \widehat y_1, \widehat w_1), (\widehat \theta_2, \widehat y_2, \widehat w_2)  \big\rangle_E := \widehat \theta_1 \cdot \widehat \theta_2 + \widehat y_1 \cdot \widehat y_2  + \big\langle \widehat w_1, \widehat w_2 \big\rangle  \, 
\ee
where $ \langle \cdot , \cdot \rangle $ is the standard real scalar product on $L^2_\bot$. For notational convenience, $\Pi_\bot$ 
also denotes the projector of $E_s$  onto its third component,
$$
\Pi_\bot : E_s  \to H^s_\bot(\T_1) \, , \, \quad 
(\widehat \theta, \widehat y, \widehat w) \mapsto \widehat w\, .
$$
For any $0 < \delta < 1$, we denote by
$B_{S_+}(\delta)$ the open ball in $\R^{S_+}$ of radius $\delta$  centered at $0$ and by $B_\bot^s(\delta)$, $s \ge 0$, the corresponding one in $H^s_\bot(\T_1)$.
For $s=0$, we also write  $B_\bot(\delta)$ instead of $B^0_\bot(\delta)$. These balls are used
to define the following open neighborhoods in $\mathcal E_s$, 
$s \ge 0$,
\be\label{Vns}
{\cal V}^s(\delta) := \T^{S_+}_1 \times  B_{S_+}(\delta) \times 
B_\bot^s(\delta)  \,, \qquad  {\cal V}(\delta) \equiv {\cal V}^0(\delta) \, , \qquad 0 < \delta < 1\,  .
\ee
For notational convenience, often without stating it explicitly, $\delta > 0$ will take on different values
in the course of our arguments. In particular, $\delta > 0$ typically will depend on $s$. 
(Note that by \eqref{formula y}, the coordinates $y= (y_n)_{n \in S_+}$ are of the same order as the coordinates $w= (w_n)_{n \in S^\bot}$.) 

For any $k \ge 1, $ $\partial_x^{-k} : L^2(\T_1) \to L^2_0(\T_1)$ is  the linear operator, defined by
$$
\partial_x^{-k}[e^{2\pi \ii nx}] = \frac{1}{(2\pi \ii n)^k} e^{2\pi \ii nx}\, , \quad \forall n \ne 0\,,  
\qquad \mbox{and} \qquad   \partial_x^{-k}[1] = 0\,.
$$   
The space $ {\cal V}^s(\delta)$ is endowed with the symplectic form 
 \begin{equation}\label{2form}
{\cal W} := \big( {\mathop \sum}_{j \in S_+} d y_j \wedge d \theta_j \big)  \oplus 
{\cal W}_\bot  
\ee
where $ {\cal W}_\bot $ is the restriction to  $ L^2_\bot(\T_1) $ of the symplectic form 
$ {\cal W}_{L^2_0} $ defined  in \eqref{KdV symplectic}. Throughout the paper, the Hamiltonians considered depend 
on the small parameter $\e \in [0, \e_0]$, $0 < \varepsilon_0 < 1$, and are $C^\infty$-smooth maps,
${\cal V}^s(\delta) \times [0, \e_0] \to \R$. Given such a Hamiltonian $H$, we often do not indicate the dependence of $H$ on the parameter $\e$.
The Hamiltonian vector field of $H$ is denoted by $X_H$. It is given by
\begin{equation}\label{campo hamiltoniano notazioni}
X_H (\frak x) = {\cal J} \nabla H (\frak x) = 
\big( - \nabla_y H(\frak x), \,  \nabla_\theta H(\frak x), \, \partial_x \nabla_\bot H(\frak x) \big)
\end{equation}
where $\mathcal J$ is the Poisson structure, associated to the symplectic form $\cal W$,
\begin{equation}\label{Poisson struture}
\mathcal J : E_{s } \to E_{s - 1} \, , \quad 
(\widehat \theta,  \widehat y,  \widehat w) \mapsto 
(- \widehat y,   \widehat \theta,  \partial_x \widehat w) \, 
\end{equation}
and where $\nabla_\bot H(\frak x) \equiv \nabla_w H(\frak x)$ denotes the $L^2-$gradient of $H$ with respect to the variable $w$.
For notational convenience, we denote by $ \{ F, G \}$ the Poisson bracket corresponding to $\mathcal J,$
\begin{equation}\label{def poisson action-angle}
 \{ F, G \} = {\cal W}(X_F, X_G) =  \big\langle \nabla F \,,\, {\cal J} \nabla G \big\rangle_E 
 = - \nabla_\theta F \cdot \nabla_y G + \nabla_y F \cdot \nabla_\theta G + \big\langle \nabla_\bot F\,,\, \partial_x \nabla_\bot G \big\rangle\,. 
 \end{equation}
 Given a Hamiltonian vector field $X_F :  {\cal V}^s(\delta) \times [0, \e_0]  \to E_s$ with Hamiltonian $F$, 
 we denote by $\Phi_F(\tau, \cdot)$  or $\Phi_{X_F}(\tau, \cdot) $
 the flow generated by $X_F$. For the vector fields $X_F$ considered in this paper, there exists $0 < \delta' < \delta $ so that 
 for any $\tau \in [- 1, 1]$,  the flow map $ \mathcal V^s(\delta') \to  \mathcal V^s(\delta)$, $\frak x \mapsto  \Phi_F(\tau, \frak x)$ is well defined.
The Taylor expansion of $\tau \mapsto H \circ \Phi_F(\tau, \frak x)$ at $\tau = 0$ can be computed as
\begin{equation}\label{Lie expansion Hamiltonian}
H \circ \Phi_F(\tau, \frak x) =  H(\frak x) + \tau \{ H, F \}(\frak x) + \tau^2 \int_0^1 (1 - t) \{ \{H, F \}, F \} \circ \Phi_F(t\tau, \frak x)\, d t\,. 
\end{equation}
We will also need to consider $C^\infty$-smooth vector fields, which are not necessarily Hamiltonian,
$$
X = (X^{(\theta)}, \, X^{(y)}, \, X^\bot)  :  {\cal V}^s(\delta) \times [0, \e_0]  \to E_s\, , 
$$
where $X^{(\theta)}$, $X^{(y)}$, and $X^\bot$ are the components of $X$,
$$
X^{(\theta)}, \ X^{(y)} : {\cal V}^s(\delta) \times [0, \e_0]  \to \R^{S_+} \, , \qquad  X^\bot : {\cal V}^s(\delta) \times [0, \e_0] \to H^s_\bot(\T_1)\,. 
$$
The corresponding flow is denoted by $\Phi_X(\tau, \cdot)$. Again we will only consider vector fields $X$ with the property that there exists
$0 < \delta' < \delta$ so that for any $\tau \in [- 1, 1]$, $\Phi_X(\tau, \cdot)$  is well defined on ${\cal V}^s(\delta')$. Given two 
$C^\infty$-smooth vector fields 
$X, Y : {\cal V}^s(\delta) \times  [0, \e_0]  \to E_s$, the commutator $[X, Y]$ is defined as 
\begin{equation}\label{definition notations nonlinear commutators}
[X, Y](\frak x) := d X(\frak x)[Y(\frak x)] - d Y(\frak x)[X(\frak x)]\,. 
\end{equation}
The pull-back of a vector field $X : {\cal V}^s(\delta) \to E_s$ by a $C^\infty$-smooth diffeomorphism $\Phi : {\cal V}^s(\delta') \to {\cal V}^s(\delta)$ 
is defined as,
\begin{equation}\label{def pushforward notations}
\Phi^* X(\frak x) := d \Phi(\frak x)^{ - 1} X(\Phi(\frak x))\, , \qquad \forall \frak x \in  {\cal V}^s(\delta')\, .
\end{equation}
If $ \Phi_\tau(\cdot) \equiv \Phi_Y(\tau, \cdot)$ is the flow of a vector field $Y$, then the Taylor expansion 
of $\tau \mapsto \Phi_\tau^* X (\frak x)$ at $\tau = 0$ reads
\begin{align}\label{espansione commutatori notazioni}
\Phi_\tau^* X (\frak x)  & =   X(\frak x) + \tau \int_0^1 (d \Phi(t\tau, \frak x))^{- 1}[X, Y] (\Phi(t \tau, \frak x))\, d t  \nonumber \\
& = X(\frak x) + \tau [X, Y](\frak x) + \tau^2 \int_0^1 (1 - t) (d \Phi(t\tau, \frak x))^{- 1}[[X, Y], Y] (\Phi(t \tau, \frak x))\, d t \,.
\end{align}
In the case $\tau =1$, we will often write $\Phi_Y^*X$ instead of $\Phi_1^* X$.
Clearly if $X = X_H$, $Y = Y_F$ are Hamiltonian vector fields, then 
$$
[X, Y] = X_{\{ H, F\}}, \quad (\Phi_Y(\tau, \cdot))^* X = X_{H \circ \Phi_Y(\tau, \cdot)}\,. 
$$
Given two linear operators $A, B$, acting on $L^2(\T_1)$ (or $L^2_\bot(\T_1)$),  their commutator is conveniently denoted by $[A, B]_{lin}$,
\begin{equation}\label{def linear commutator}
[A, B]_{lin} = A B - B A\, . 
\end{equation}
Moreover, given a densely defined linear operator $A : L^2_\bot(T_1) \to L^2_\bot(\T_1)$, whose domain contains 
the elements of the Fourier basis $e^{\ii 2 \pi j x}$, $j \in S^\bot$,
we denote by $A_j^{j'}$ or $[A]_j^{j'}$  the (Fourier) matrix coefficients of $A$,
$$
A_j^{j'} := \int_0^1 A[e^{\ii 2 \pi j' x}] e^{- \ii 2 \pi j x}\, dx , \qquad j, j' \in S^\bot\,. 
$$
Given a Banach space $(X, \| \cdot \|_X)$, we denote by $C^\infty_b ({\cal V}^s(\delta) \times [0, \e_0], X)$ the space of $C^\infty$ functions ${\cal V}^s(\delta) \times [0, \e_0] \to X$ with all derivatives bounded. 

\noindent
In our normal form procedure, we need to take into account the order of vanishing with respect to the variables $y$, $w$ and the small parameter $\e$. 
The following definition turns out to be convenient. 
 \begin{definition}\label{def map order p}
 Let $(B, \| \cdot \|_B)$ be a Banach space and $p \in \Z_{\geq 0}$. A $C^\infty$-smooth map 
 $$
 g :  {\cal V}^{s}(\delta) \times  [0, \e_0]  \to B, \ (\frak x, \e) \mapsto g(\frak x, \e)
 $$ 
  is said to be small of order $p$ if for any $ \beta \in \Z_{\ge 0}^{S_+}$ and $k_1, k_2 \in \Z_{\ge 0}$ with $|\beta| + k_1 + k_2 \leq p - 1$
   \begin{equation}\label{estimates map small order p}
  d^{k_2}_\bot\partial_y^\beta  \partial_\e^{k_1} g(\theta, 0, 0, 0)= 0\, , \qquad  \forall \, \theta \in \T^{S_+}\, .
  \end{equation}
% We say that $g$ is homogeneous of order $p$ (with respect to the variables $y$, $w$, and $\e$) if 
% $$
% g(\theta, \lambda y, \lambda w, \lambda \e ) = \lambda^p  g(\theta, y,  w,  \e ) \, , \qquad \forall  \, 0 \le \lambda \le 1 \, , \ \ 
%  \forall \, (\theta, y, w, \e) \in {\cal V}^{s}(\delta) \times [0, \e_0]\, .
% $$ 
  \end{definition}
  Note that if $g$ is small of order $p$, then 
 $$
  \| g ( \frak x, \e) \|_B \lesssim_{g}  ( |y| + \| w \|_s + \e)^p\, , 
  \qquad \forall \,  \frak x = (\theta, y, w) \in {\cal V}^s(\delta), \  \forall \, \e \in [0, \e_0]\, ,
 $$  
 and for any $\alpha \in \Z_{\ge 0}^{S_+}$, $\partial_\theta^\alpha g$ is small of order $p$ as well. 
  
  \smallskip
  
  Given two Banach spaces $(X, \| \cdot \|_X)$, $(Y, \| \cdot \|_Y)$, we denote by ${\cal B}(X, Y)$ the space of bounded linear operators $X \to Y$. 
  If $X = Y$, we write ${\cal B}(X)$ instead of ${\cal B}(X, X)$. Moreover for any integer $p \geq 2$, we denote by ${\cal B}_p(X, Y)$, 
  the space of bounded, $p$-multilinear maps $M : X^p \to Y$, equipped with the standard norm,
  \begin{equation}\label{forme multilineari}
  \| M\|_{{\cal B}_p(X, Y)} := \sup_{\| u_1 \|_{X}, \ldots, \| u_p \|_X \leq 1} \| M[u_1, \ldots, u_p] \|_Y\,, \quad M \in {\cal B}_p(X, Y)\,.
  \end{equation}
  If $X = Y$, we write ${\cal B}_p(X)$ instead of ${\cal B}_p(X, X)$. Furthermore, given open sets $U \subset X$ and $V \subset Y$, we denote
  by $C^\infty_b \big(U, V \big)$ the space of maps $f: U \to V$ which are $C^\infty$-smooth and together with each of its derivatives, bounded.
  
  \medskip

\noindent
{\em Overview of the proof of Theorem  \ref{stability theorem}.}
We prove Theorem \ref{stability theorem} by the means of a normal form procedure. 
A key ingredient are canonical coordinates near a torus ${\frak T}_{\mu(\omega)}$ of arbitrary size, constructed in \cite{Kap-Mon-2}. 
They are obtained by first linearizing the Birkhoff map $\Phi^{kdv}$ at ${\frak T}_{\mu(\omega)}$ and then constructing a symplectic corrector.
The new coordinates yield a family of canonical transformations $\Phi^{kdv}_{\mu}$, parametrized by $\mu \equiv \mu(\omega)$, $\omega \in \Pi$. 
%In the sequel, for notational convenience, we will denote functions of $\mu \in \Xi$ by the same letter, when viewed as functions of $\omega \in \Pi$.
One of the main features of these transformations is that they admit expansions in terms of pseudo-differential operators up to 
a remainder of arbitrary negative order. To prove Theorem  \ref{stability theorem} we then follow a strategy developed in \cite{Berti-Delort}
in the context of water waves.

In a first step, referred to as Step 1, we write the perturbed Hamiltonian $H^{kdv} + \e P_f$  in the new coordinates (cf. Theorem \ref{modified Birkhoff map}). 
More precisely, in Theorem \ref{modified Birkhoff map}, we rephrase \cite[Theorem 1.1]{Kap-Mon-2} in a form taylored to our needs
and in Corollary \ref{espansion hom}, we compute for any given $\mu \equiv \mu(\omega)$, $\omega \in \Pi$, and $\frak x = (\theta, y, w) \in \mathcal V^1(\delta)$ 
the Taylor expansion of ${\cal H}_{\e, \mu} := (H^{kdv} + \e P_f) \circ \Phi^{kdv}_\mu$ 
at $(\theta, 0, 0)$ up to order three in the variables $y$, $w$, and $\e$,
\begin{align}\label{Taylor 1}
& {\cal H}_{\e, \mu}(\theta, y, w)  =  {\cal N}_\mu(y, w)  + {\cal P}_{\e, \mu}(\theta, y, w) \,, \\
& {\cal N}_\mu( y, w)  := \omega \cdot y  + \frac12  \Omega_{S_+}(\omega) [y] \cdot y  + \frac12 \big\langle D_\bot^{- 1} \Omega_\bot(\omega) w\,,\, w \big\rangle  \, ,
\end{align}
where $\Omega_{S_+}(\omega)$ is given by the $S_+ \times S_+$ matrix
$(\partial_{I_j} \omega_i^{kdv}(\mu, 0))_{i, j \in S_+}$  and where
$D^{- 1}_\bot : L^2_\bot(\T_1) \to L^2_\bot(\T_1)$ and  
$\Omega_\bot(\omega) \equiv \Omega_{S^\bot}(\omega): \, L^2_\bot(\T_1) \to L^2_\bot(\T_1)$ are Fourier multipliers in diagonal form, 
\begin{equation}\label{definition Omega bot}
D^{- 1}_\bot [w] : = \sum_{n \in S^\bot}  \frac{1}{2 \pi n} w_n e^{\ii 2\pi n x} \, ,  \qquad 
\Omega_\bot(\omega)[w] := \sum_{n \in S^\bot} \Omega_n(\omega) w_n e^{\ii 2\pi n x}  \, ,
\end{equation}
with $ \Omega_n(\omega)$ given by \eqref{def notation normal frequencies}. 
%We denote functions of $\omega \in \Pi$ by the same letter, when viewed as functions of $\mu \in \Xi$.
In order to simplify notation, in the sequel, we often will not indicate the dependence of quantities such as $\mathcal H_{\e, \mu}$, $\mathcal P_{\e, \mu}$, $\Omega_\bot(\omega)$, $\ldots$ \
on $\e$, $\mu \equiv \mu(\omega)$, and $\omega$.

We note that $\Omega_\bot$ is an unbounded operator.
For any $\frak x = (\theta, y, w)$, ${\cal P}(\frak x) $ can be expanded as
\begin{equation}\label{Taylor 2}
{\cal P}(\frak x) = \e \big( {\cal P}_{00}(\theta) + {\cal P}_{1 0}(\theta) \cdot y 
+ \langle {\cal P}_{0 1}(\theta), w \rangle \big) + {\cal P}_{e}(\frak x) \, ,
\end{equation}
where ${\cal P}_{e}(\frak x)$ is small of order three (cf. Definition \eqref{def map order p}). 
The Hamiltonian vector field $X_{\mathcal H}$, associated to ${\cal H}$, is given at any point $\frak x = (\theta, y, w)$ by
\begin{equation}\label{X cal H intro}
X_{\cal H} (\frak x)= \begin{pmatrix}
- \nabla_y {\cal H} (\frak x)\\
 \nabla_\theta {\cal H}(\frak x) \\
\partial_x \nabla_\bot {\cal H}(\frak x)
\end{pmatrix} = \begin{pmatrix}
-  \omega - \Omega_{S_+ }[ y] - \e {\cal P}_{1 0}(\theta) - \nabla_y {\cal P}_{e}(\frak x)  \\
 \e \nabla_\theta \big( {\cal P}_{00}(\theta) + {\cal P}_{1 0}(\theta) \cdot y + \langle {\cal P}_{0 1}(\theta), w \rangle \big)  + \nabla_\theta {\cal P}_{e}(\frak x) \\
\ii \Omega_\bot w +  \e \partial_x {\cal P}_{0 1}(\theta) + \partial_x \nabla_\bot {\cal P}_{e} (\frak x)
\end{pmatrix} \, .
\end{equation}
We also show that the normal component $\partial_x \nabla_\bot {\cal P}_{e}$ of the Hamiltonian vector field $X_{\mathcal P_{e}}$
is the sum of a para-differential vector field of order one (cf. Definition \ref{paradiff vector fields} in Section \ref{section paradiff nonlinear}) 
and a smoothing vector field (cf. Definition \ref{def smoothing vector fields} in Section \ref{section paradiff nonlinear}), i.e.,  for $\frak x = (\theta, y, w)$,  
\begin{equation}\label{expansion P e}
\partial_x \nabla_\bot {\cal P}_{e}(\frak x) = \Pi_\bot \sum_{k = 0}^{N+1} T_{a_{1 - k}(\frak x)} \partial_x^{1 - k} w + 
{\cal R}^\bot_N(\frak x) \, ,
\end{equation}
where for any $0 \le k \le N + 1$, $T_{a_{1 - k}(\frak x)}$ is the operator of para-multiplication 
with $a_{1 - k}(\frak x) \in H^s(\T_1)$ (cf. \eqref{definizione paraprodotto} in Section \ref{para-differential calculus}),
%with $a_{1-k}$ being a map $a_{1-k} : \mathcal V^{s+\sigma}(\delta) \times [0, \e_0] \to H^s(\T_1)$ for some $\sigma \ge 0$, 
which is small of order one, and 
where $ {\cal R}^\bot_{N}(\frak x)$  is a regularizing vector field, which is small of order two.

In Step 2, we apply a regularization procedure, which conjugates the vector field \eqref{X cal H intro} to another one,
which is a smoothing perturbation of a vector field in diagonal form. 
Since the torus ${\frak T}_{\mu(\omega)}$ in the coordinates $(\theta, y, w)$ is described by 
$\{ y = 0, w = 0 \}$, the variables $y$, $w$ can be used to measure the distance of a solution of the equation 
\begin{equation}\label{main system intro}
\begin{cases}
\partial_t \theta = - \nabla_y {\cal H} \\
\partial_t y =  \nabla_\theta {\cal H} \\
\partial_t w = \partial_x \nabla_\bot {\cal H}
\end{cases}
\end{equation}
from ${\frak T}_{\mu(\omega)}$. Theorem \ref{stability theorem} follows from Theorem \ref{long time ex action-angle} in Section \ref{normal form theorem},
which states that for $\mu$ in a large subset of $\Xi$ and for any initial data $\frak x_0 = (\theta_0, y_0, w_0)$, 
satisfying $|y_0|, \| w_0 \|_s \leq \e$ with $s > 0$ large enough, 
the solution $t \mapsto \frak x(t) = (\theta(t), y(t), w(t))$ of \eqref{main system intro} exists on a time interval of the form $[- T, T]$ with $T \equiv T_{\e, s, \gamma} = O(\e^{- 2})$ and 
$$
|y(t)|, \, \| w(t) \|_s \lesssim_{s, \gamma} \e,  \quad \forall t \in [- T, T]\,. 
$$
We deduce Theorem \ref{long time ex action-angle} from Theorem \ref{teorema totale forma normale} 
and a local existence Theorem (cf. Appendix \ref{appendix loc well posed}), using energy estimates (cf. Section \ref{conclusioni forma normale}).
Theorem \ref{teorema totale forma normale} provides coordinates having the property that the vector field in \eqref{main system intro}, 
when expressed in these coordinates, is a vector field $X = (X^{(\theta)}, X^{(y)}, X^{\bot})$ with the following two features: 
(F1) The $y$-component $X^{(y)}$ of $X$ is small of order three. 
%More precisely, for some $\sigma > 0$,
%$$
%|X^{(y)}(\frak x)| \lesssim \big( |y| + \| w \|_\sigma  + \e \big)^3 \qquad \forall \, \frak x = (\theta, y, w) \in \mathcal V^s(\delta)\, ,
%\, \forall \e \in [0, \e_0] \,. 
%$$
(F2) The normal component $X^\bot(\frak x)$ of $X(\frak x)$ at $\frak x = (\theta, y, w)$ reads 
\begin{equation}\label{campo vettoriale finalissimo intro}
X^\bot(\frak x) = \ii \Omega_\bot w + {\mathtt D}^\bot(\frak x)[w] + \Pi_\bot T_{a(\frak x)} \partial_x w + {\cal R}^\bot(\frak x)\, ,
\end{equation}
where ${\mathtt D}^\bot(\frak x)$ is a skew-adjoint Fourier multiplier of order one (depending nonlinearly on $\frak x$), 
$a(\frak x) \in H^s(\T_1)$ is small of order two, 
%or more precisely, for some $\sigma \ge 0$,
%$$
%\| a(\frak x) \|_{s} \lesssim_s  ( |y| + \| w \|_{s + \sigma} + \e)^2 \, , 
%$$
and the remainder ${\cal R}^\bot(\frak x)$ is small of order three.
% i.e.,
%$$
%\| {\cal R}(\frak x) \|_s \lesssim_s \big(|y| + \| w \|_s +  \e \big)^3 \, .
%$$
In broad terms, our normal form procedure {\it diagonalizes} the normal component $X^\bot$ of the vector field $X$ up to a term,
which is small of order three and which can be controlled by energy estimates. 
The procedure consists in {\it eliminating}/{\it normalizing} the terms of the Taylor expansion \eqref{Taylor 1} - \eqref{Taylor 2}
of $X_{\cal H}$, which are $p$-homogeneous in $y$, $w$, $\e$ with $0 \le p \le 2$ (cf. Definition \ref{def map order p}). 

Based on the normal form procedure, developed in Section \ref{forma normale smoothing standard} and Section \ref{normalization II},
Theorem \ref{teorema totale forma normale} is proved in Section \ref{conclusioni forma normale}.
In Section \ref{measure estimates} we show that the Lebesgue measure   $|\Pi \setminus \Pi_\gamma |$  
of $ \Pi \setminus \Pi_\gamma$ (cf. \eqref{condizioni forma normale}) satisfies
$|\Pi \setminus \Pi_\gamma| \lesssim \gamma^{\mathtt a} $  for some $0 < \mathtt a < 1$. 
As already mentioned in item (ii) of {\em Comments on Theorem \ref{stability theorem}}, a key ingredient of the proof is the case $n=3$ 
of Fermat's Last Theorem, proved by Euler \cite{Euler} (cf. Lemma \ref{Lemma three wave res}). 
Section \ref{para-differential calculus} and Section \ref{section paradiff nonlinear} are prelimimary where para-differential calculus and 
para-differential vector fields are discussed to the extent needed in the paper.

\smallskip

We finish our overview of the proof of Theorem  \ref{stability theorem} by describing in some more detail the normal form procedure, developed 
in Sections \ref{forma normale smoothing standard} - \ref{normalization II}, to prove Theorem \ref{teorema totale forma normale}. 
In order to setup such a procedure in an effective way, we introduce, in the spirit of \cite{Delort}, \cite{Berti-Delort}, \cite{Feola-Iandoli}, 
various classes of para-differential and smoothing vector fields, 
which possibly depend in a nonlinear fashion on $\frak x = (\theta, y, w)$, and develop a symbolic calculus for them - see Section \ref{section paradiff nonlinear}. 
The order of homogeneity in our symbol classes is computed with respect to $y$, $w$, $\e$ 
where we recall that $y$, $w$ (together with $\theta$) are phase space variables and $\e$ is the perturbation parameter appearing in \eqref{1.1} and \eqref{initial value}. 
Our normal form procedure is split into two steps which we now describe.

\medskip

\noindent
In a first step, presented in Section \ref{forma normale smoothing standard}, we normalize the terms in the Taylor expansion of the Hamiltonian ${\cal H}$,
which are linear with respect to the normal variable $w$ and homogeneous of order at most three in $(y, w, \e)$. 
Equivalently, this means that we normalize the terms in the Taylor expansion of the Hamiltonian vector field $X_{{\cal H}}$ 
which do not contain $w$ and are homogeneous of order at most two. 
This is achieved by a {\it standard normal form procedure} which consists in
%where we eliminate the terms in ${\cal H}$ which are at most linear in $w$ up to terms, which are small of order four,
 constructing a canonical transformation, given by the time one flow map $\Phi_\mathcal F$ of a Hamiltonian vector field $X_{\mathcal F}$
 with a Hamiltonian $\mathcal F$ of the form
\begin{equation}\label{generatrici pezzi lineari intro}
{\cal F}(\theta, y, w) := {\cal F}_{0}(\theta, y) + \big\langle {\cal F}_{1}(\theta, y), w \big\rangle \, ,
\end{equation}
with the property that $X_{\mathcal F }$
%generated by a Hamiltonian of the form \eqref{generatrici pezzi lineari intro} 
is a  smoothing Hamiltonian vector field (cf. Lemma \ref{prop astratte campi vettoriali smoothing NF}).
Hence its flow is a smoothing perturbation of the identity, 
implying that the Hamiltonian vector field of the Hamiltonian ${\cal H} \circ \Phi_{\cal F}$ 
has a normal component, which is again of the form \eqref{expansion P e}
(cf.  Lemma \ref{push forward smoothing 1}). To construct $\mathcal F$, 
we only need to impose zeroth and first  Melnikov conditions on $\omega$,
i.e., $\omega \in \Pi_\gamma^{(0)} \cap \Pi_\gamma^{(1)}$  (cf.  \eqref{condizioni forma normale}). 
For notational convenience, the Hamiltonian vector field obtained in this way is again denoted by 
$X = (X^{(\theta)}, X^{(y)}, X^\bot)$. The $y-$component 
$X^{(y)}$ is small of order three and the normal component $X^\bot$ of $X$ at $\frak x = (\theta, y, w)$ has the form 
\begin{equation}\label{forma X bot intro}
X^\bot (\frak x) = \ii \Omega_\bot [w] +  X^\bot_1(\theta, y)[w] + X^\bot_2(\theta)[w, w] + \text{term small of order three}
\end{equation}
where 
\begin{equation}\label{X1 X2 intro}
\begin{aligned}
& X^\bot_1(\theta, y)[w]  = \Pi_\bot \sum_{k = 0}^{N+1} T_{a_{1 - k}(\theta, y)} \partial_x^{1 - k}w + {\cal R}^\bot_{N, 1}(\theta, y)[w]\,, \\
& X^\bot_2(\theta)[w, w] = \Pi_\bot  \sum_{k = 0}^{N+1} T_{A_{1 - k}(\theta)[w]} \partial_x^{1 - k} w + {\cal R}^\bot_{N, 2}(\theta)[w, w] \, ,
\end{aligned}
\end{equation}
and for any $0 \le k \le N + 1$, $a_{1 - k} (\theta, y)$ is small of order one, $w \mapsto A_{1 - k}(\theta)[w]$ is a linear operator, whereas
$w \mapsto {\cal R}^\bot_{N, 1}(\theta, y)[w]$ is a linear smoothing operator (smoothing of order $N+1$), 
and $w \mapsto {\cal R}^\bot_{N, 2}(\theta)[w,w]$ is a quadratic smoothing operator (smoothing of order $N+1$). 
The term in \eqref{forma X bot intro}, which is small of order three, is the sum of a para-differential vector field of order one and a smoothing vector field. 

\smallskip

\noindent
The second step  of our normal form procedure is developed in Section \ref{normalization II}. 
Since $\Pi_\gamma^{(3)}$ (cf. \eqref{condizioni forma normale}) allows for a {\it loss of derivatives in space}, 
we first need to reduce the terms in the Taylor expansion of the normal component $X^\bot$ of $X$, which are linear and quadratic in $w$,
to constant coefficients up to smoothing terms - see Subsection \ref{sec regolarizzazione w w2}.
This regularization procedure is achieved by constructing a transformation which is {\em not canonical}, but
nevertheless preserves the following important property, needed for the energy estimates: 
the linearization of $X^\bot$ at $w = 0$ equals $X^\bot_1(\theta, y)$ and hence {\em is Hamiltonian}. 
In particular, the diagonal elements of the Fourier matrix representation of the linear operator $X_1^\bot(\theta, y)$ are purely imaginary,
\begin{equation}\label{prop parte diagonale intro}
[X^\bot_1(\theta, y)]_j^j \in \ii \R, \qquad \forall j \in S^\bot\,. 
\end{equation}
We remark that in the spirit of \cite{Feola-Iandoli}, 
one could construct a canonical transformation, but the construction of the one in Subsection \ref{sec regolarizzazione w w2}
is technically simpler and due to \eqref{prop parte diagonale intro} suffices for our purposes. 

We now describe the second step of our normal form procedure in more detail. 
We begin by normalizing the operator
$$
\Pi_\bot T_{a_1(\theta, y)} \partial_x + \Pi_\bot T_{ A_1(\theta)[w]} \partial_x =  \Pi_\bot T_{a_1(\theta, y) + A_1(\theta)[w]} \partial_x 
$$ 
in the expansion of the vector field $X^\bot_1(\theta, y)[w] + X^\bot_2(\theta)[w, w]$ (cf. \eqref{forma X bot intro}, \eqref{X1 X2 intro}). 
We transform the vector field in \eqref{forma X bot intro} by the means of the time one flow map $\Phi_{Y}$
of the vector field
$$
Y(\theta, y, w) = \big(0, \, 0, \,  \Pi_\bot T_{b(\theta, y) + B(\theta)[w]} \partial_x^{- 1} w \big)
$$
with $b$ and $B$ given by 
\begin{equation}\label{def gn intro}
b(\theta, y) := \frac13 \partial_x^{- 1}\big(\langle a_{1}(\theta, y) \rangle_x  - a_{1}(\theta, y) \big), 
\qquad  B(\theta)[w] := \frac13 \partial_x^{- 1}\big(\langle  A_{1}(\theta)[w] \rangle_x - A_{1}(\theta)[w] \big).
\end{equation} 
(Recall that for $a \in L^2(\T_1)$, $\langle a \rangle_x = \int_0^1 a\, d x$.) Note that $b$ and $B$ satisfy
\begin{equation}\label{omologica lin in w intro}
3 \partial_x b(\theta, y) + a_{1 }(\theta, y) = \langle a_{1 }(\theta, y) \rangle_x, \qquad
 3 \partial_x B(\theta)[w] + A_{1 }(\theta)[w] = \langle A_{1 }(\theta)[w] \rangle_x.
\ee
For notational convenience, we denote the transformed vector field also by $X_{1} = (X_{1}^{(\theta)}, \, X_{1}^{(y)}, \, X_{1}^\bot)$.
We show that $X_{1}^{(y)}$ is small of order three and that $ X_{1}^\bot(\theta, y, w)$ has the form 
\begin{equation}\label{intro bla bla 0}
 \ii \Omega_\bot w + {\cal D}^{\bot}_{1, 1}(\theta, y)[w] + {\cal D}^{\bot}_{1, 2}(\theta, w)[w] + 
 X_{1, 1}^{\bot}(\theta, y)[w] + X_{1, 2}^{\bot}(\theta)[w, w] + \text{term small of order three} 
 \end{equation}
 with
$$ 
{\cal D}^{\bot}_{1, 1}(\theta, y) := \langle a_1(\theta, y) \rangle_x \partial_x , 
\qquad {\cal D}^{\bot}_{1, 2}(\theta, w) :=   \langle A_1(\theta)[w] \rangle_x \partial_x\, , \quad
$$
$$
X_{1, 1}^{\bot}(\theta, y)[w] : = \Pi_\bot \sum_{k = 1}^{N+1} T_{a_{1,1 - k}(\theta, y)} \partial_x^{ 1- k} w + {\cal R}_{N, 1}^{\bot}(\theta, y)[w]\, ,  \qquad 
$$
$$
X_{1, 2}^{\bot}(\theta)[w, w] :=  \Pi_\bot \sum_{k = 1}^{N + 1} T_{A_{1, 1 - k}(\theta)[w]} \partial_x^{1 - k} w + {\cal R}_{N, 2}^{\bot}(\theta)[w, w]\, , \qquad 
$$
where for any $1 \le k \le N+1 $, $a_{1, 1 - k}(\theta, y)$ is small of order one and 
$w \mapsto A_{1, 1 - k}(\theta)[w]$ is a linear operator.
%smooth with respect to $\theta$.
Furthermore,  ${\cal R}_{N, 1}^{\bot}(\theta, y)$ is a smoothing linear operator and ${\cal R}_{N, 2}^{\bot}(\theta)$ is a smoothing bilinear operator. 
The term in \eqref{intro bla bla 0}, which is small of order three, is the sum of a para-differential vector field of order one and a smoothing vector field. 
We also show that the linear vector field $X_{1, 1}^{\bot}(\theta, y)[w] $ in \eqref{intro bla bla 0} satisfies the property \eqref{prop parte diagonale intro}, 
i.e., $[X_{1, 1}^{\bot}(\theta, y)]_j^j \in \ii \R$ for any $j \in S^\bot$, and that the Fourier multiplier ${\cal D}^{\bot}_{1, 1}(\theta, y)$ is skew-adjoint. 
By iterating this procedure $N + 2$ times, one gets a vector field, which we denote by $X_{4} = (X_{4}^{(\theta)}, X_{4}^{(y)}, X_{4}^\bot)$ 
(cf. Proposition \ref{prop total regularization}),
with the following properties:  $X_{4}^{(y)}$ is small of order three and $X_{4}^\bot(\theta, y, w) $ has the form 
\begin{equation}\label{forma X4 intro}
\begin{aligned}
 \ii \Omega_\bot w + {\cal D}_{4, 1}^{\bot}(\theta, y)[w] + {\cal D}_{4, 2}^{\bot}(\theta, w)[w]   
 + {\cal R}_{N, 1}^{\bot}(\theta, y)[w] + {\cal R}_{N, 2}^{\bot}(\theta)[w, w] + \text{term small of order three} \, .
\end{aligned}
\end{equation}
Here ${\cal D}_{4, 1}^{\bot}(\theta, y)$ and ${\cal D}_{ 4, 2}^{\bot}(\theta, w)$ are Fourier multipliers of the form 
\begin{equation}\label{forma cal D (4) intro}
\begin{aligned}
{\cal D}_{4, 1}^{\bot}(\theta, y) = \sum_{k = 0}^{N+1} \lambda_{1 - k}(\theta, y) \partial_x^{1 - k} \, , \qquad 
{\cal D}_{4, 2}^{\bot}(\theta, w) := \sum_{k = 0}^{N+1} \Lambda^\bot_{1 - k}(\theta) [w] \partial_x^{1 - k} \, ,
\end{aligned}
\end{equation}
where for any $0 \le k \le N +1$, $\lambda_{1 - k}(\theta, y) \in \R$ is small of order one and 
$w \mapsto \Lambda^\bot_{1 - k}(\theta)[w] \in \R$ is a linear operator. 
The remainder ${\cal R}_{N, 1}^{\bot}(\theta, y)$ is a smoothing linear operator and ${\cal R}_{N, 2}^{\bot}(\theta)$ is a smoothing bilinear operator. 
In addition, the Fourier multiplier ${\cal D}_{4, 1}^{\bot}(\theta, y)$ is skew-adjoint. Moreover we show that
\begin{equation}\label{prop cal R 4 (1) intro}
[{\cal R}_{N, 1}^{\bot}(\theta, y)]_j^j \in \ii \R, \quad \forall j \in S^\bot.
\end{equation} 
Since the  transformation $\Phi_Y$ and the subsequent transformations constructed in the interative procedure are not canonical, the linear operator $ {\cal D}_{4, 2}^{\bot}(\theta, w)$ is not necessarily skew-adjoint. 
However the leading order term $\Lambda^\bot_1(\theta)[w] \partial_x$ of $ {\cal D}_{4, 2}^{\bot}(\theta, w)$ is skew-adjoint since $\Lambda^\bot_1(\theta)[w]\in \R$. 

In Subsection \ref{normalizzazione Fourier multipliers} we design a normal form procedure to remove
\begin{equation}\label{ordine zero intro}
\sum_{k = 1}^{N+1} \Lambda^\bot_{1 - k}(\theta) [w] \partial_x^{1 - k}
\end{equation} 
from $ {\cal D}_{4, 2}^{\bot}(\theta, w)$ which requires to impose 
first Melnikov conditions on $\omega$ (cf. definition \eqref{condizioni forma normale} of $\Pi_\gamma^{(1)}$). 
We transform the vector field $X_{4}$ (cf.  \eqref{forma X4 intro}) by the means of the time one flow map of a vector field,
which in view of \eqref{ordine zero intro}  is chosen to be of the form
\begin{equation}\label{generatrice multiplier intro}
\big( 0, \, 0, \, \sum_{k = 1}^{N+1} \Xi^\bot_{1 - k}(\theta)[w] \partial_x^{1 - k}w \big) \, 
\end{equation}
where for any $1 \le k \le N +1$, the linear functional $w \mapsto \Xi^\bot_{1 - k}(\theta)[w]$ is a solution of
\begin{equation}\label{eq homologica fourier multiplier intro}
\omega \cdot \partial_\theta \,  \Xi^\bot_{1 - k}(\theta)[w]  - \Xi^\bot_{1 - k}(\theta)[ \ii \Omega_\bot w] + \Lambda^\bot_{1 - k}(\theta)[w] = 0 \, .
\end{equation}
The latter equation can be solved if $\omega \in \Pi_\gamma^{(1)}$ (first Melnikov conditions). 
The transformed vector field is denoted by $X_{5} = (X_{5}^{(\theta)}, X_{5}^{(y)}, X_{5}^\bot)$.
We show that $X_{5}^{(y)}$ is small of order three and that $X_{5}^\bot(\theta, y, w)$ has the form  
\begin{equation}\label{X5 intro}
\begin{aligned}
%X_5^\bot(\theta, y, w) = 
\ii \Omega_\bot w + {\cal D}^\bot_{5}(\theta, y, w)[w] + {\cal R}^\bot_{N, 1}(\theta, y)[w] + {\cal R}_{N, 2}^\bot(\theta)[w, w] + \text{ term small of order three} ,
\end{aligned}
\end{equation}
where 
\begin{equation}\label{cal D 5 intro}
{\cal D}^\bot_{5}(\frak x) := {\cal D}^\bot_{4, 1}(\theta, y) + \Lambda^\bot_1(\theta)[w] \partial_x
\end{equation}
and ${\cal R}_{N, 1}^\bot$, ${\cal R}_{N, 2}^\bot$ are as in \eqref{forma X4 intro}. Clearly, the Fourier multiplier ${\cal D}^\bot_{5}(\frak x)$ is skew-adjoint. \\
Finally in Section \ref{section smoothing remainders} we normalize the term in the Taylor expansion of the $\theta$-component $X_{5}^{(\theta)}$ of $X_{5}$, 
which is quadratic in $w$, and normalize the smoothing vector fields ${\cal R}_{N, 1}^\bot$ and ${\cal R}_{N, 2}^\bot$ in $X^\bot_5$. 
Let us explain in more detail how to achieve the latter.
We transform the vector field $X_{5}$ by the time one flow map generated by the vector field 
\begin{equation}\label{smoothing vector field intro}
\big( 0, \, 0, \, \, {\cal S}^\bot_1(\theta, y)[w] + {\cal S}^\bot_2(\theta)[w, w] \big)
\end{equation}
where ${\cal S}^\bot_1(\theta, y)$ is a smoothing linear operator and ${\cal S}^\bot_2(\theta)$ is a smoothing bilinear operator. 
They are chosen to be solutions of
\begin{equation}\label{eq omologica 1 smoothing intro}
 - \omega \cdot \partial_\theta \, {\cal S}^\bot_1(\theta, y) + [\ii \Omega_\bot , \, {\cal S}^\bot_1(\theta, y)]_{lin} + {\cal R}^\bot_{N, 1}(\theta, y) = {\cal Z}^\bot (y) \, \qquad
 \end{equation}
 and, respectively,
 \begin{equation}\label{eq omologica 2 smoothing intro}
 -  \omega \cdot \partial_\theta \, {\cal S}^\bot_2(\theta)[w, w] +\ii \Omega_\bot {\cal S}^\bot_2(\theta)[w, w] - 
 {\cal S}^\bot_2(\theta)\big( [\ii  \Omega_\bot w, w] + [w, \ii \Omega_\bot w] \big) + {\cal R}^\bot_{N, 2}(\theta)[ w, w] = 0 \, ,
\end{equation}
 where
\begin{equation}\label{def Z}
 {\cal Z}^\bot(y) := {\rm diag}_{j \in S^\bot} [\widehat{\cal R}^\bot_{N, 1}(0, y)]_j^j  \, , \qquad 
 [\widehat{\cal R}^\bot_{N, 1}(0, y)]_j^j  := \frac{1}{(2 \pi)^{S_+}} \int_{\T^{S_+}} [{\cal R}^\bot_{N, 1}(\theta, y)]_j^j\, d \theta\, .
\end{equation}
%where given two linear operators $A, B$, $[A, B]_{lin} := AB - BA$. 

\noindent
Equation  \eqref{eq omologica 1 smoothing intro} can be solved by imposing the {\em second} Melnikov conditions
on  $ \omega$, i.e., $\omega \in \Pi_\gamma^{(2)}$, 
and equation  \eqref{eq omologica 2 smoothing intro} by imposing the {\em third} Melnikov conditions,
$\omega \in \Pi_\gamma^{(3)}$ - see Lemma \ref{lemma equazioni omologiche smoothing}.
Note that in equation \eqref{eq omologica 2 smoothing intro}, the right hand side vanishes, meaning that the left hand side does not contain any resonant terms.
Finally we get a vector field 
$X_{6} =  (X_{6}^{(\theta)}, X_{6}^{(y)}, X_{6}^\bot)$ where $X_{6}^{(y)}$ 
is small of order three and $X_{6}^\bot(\frak x)$ has the form 
\begin{equation}\label{X6 bot}
\begin{aligned}
X_{6}^\bot(\frak x) = \ii \Omega_\bot w 
+  {\cal D}^\bot_{5}(\frak x)[w] + {\cal Z}^\bot (y)[w]
+ \text{term small of order three}\,. 
\end{aligned}
\end{equation}
By the property \eqref{prop cal R 4 (1) intro} and  the definition  \eqref{def Z} of ${\cal Z}^\bot(y)$, 
it follows that ${\cal Z}^\bot(y)$ and hence $ {\cal D}^\bot_{5}(\frak x) + {\cal Z}^\bot (y)$ are skew-adjoint Fourier multiplier. 
Finally one shows that $X_{6}^\bot$ in \eqref{X6 bot} has the form stated in \eqref{campo vettoriale finalissimo intro}.

%
%\bigskip
%
%***************
%
%\bigskip
%
%
%This is enough to get that the solution stays close to the finite gap manifold up to times of order $O(\e^{- 2})$ (see Section \ref{conclusioni forma normale}). 
%This step is inspired by \cite{Berti-Delort}, \cite{Feola-Iandoli}. 
%It uses para-differential calculus. \\
%In a third step, we apply a Birkhoff normal form procedure in order to get the claimed stability over times of order $O(\e^{- 2})$.
%For this third step we need to impose the nonresonance conditions given in \eqref{condizioni forma normale}.
%Finally, we show that the Lebesgue measure   $|\Pi \setminus \Pi_\gamma |$  
%of $ \Pi \setminus \Pi_\gamma$ satisfies
%$|\Pi \setminus \Pi_\gamma| \lesssim \gamma^{\mathtt a} $
% for some $0 < \mathtt a < 1$. It is based on the case $n=3$ of Fermat's Last Theorem,{proved by Euler \cite{Euler} } (cf. Proposition \ref{prop measure total}). 
%

\medskip
   
\noindent
{\em Related work.} 
Prior to our work, 
no results have been obtained on the long time asymptotics of the solutions of Hamiltonian perturbations of
integrable PDEs such as the KdV or the nonlinear Schr\"odinger equation on $\T_1$ with initial data close to a {\em periodic multi-soliton of possibly large amplitude}.
For Hamiltonian perturbations of linear integrable PDEs on $\T_1$, which satisfy nonresonance conditions, 
a by now standard normal form method has been developed allowing to prove the stability of the {\em equilibrium solution $u\equiv 0$} of (Hamiltonian) perturbations 
for time intervals of large size -- see e.g. \cite{Bam}, \cite{Bam2}, \cite{BamG}, \cite{Berti-Delort}, \cite{Bou}, \cite{CLSY}, \cite{Delort}, \cite{Feola-Iandoli}
and references therein.
More recently, these techniques have been refined so that in specific cases, such results can also be proved for Hamiltonian perturbations 
of resonant linear integrable PDEs by approximating the perturbed equation by nonlinear integrable systems, satisfying
nonresonance conditions -- see \cite{Bou}, \cite{BFG} for Hamiltonian perturbations of the linear Schr\"odinger equation and \cite{BG} for such perturbations
of the Airy equation as well as the linearized Benjamin-Ono equation. We remark that for the Airy equation, the Hamiltonian perturbations considered  in \cite{BG} 
are of the form $\partial_x \nabla P_f$ (cf. \eqref{cal K def intro} - \eqref{f def intro}) with the density $f(u(x))$ not explicitly depending on $x$ and $f(z)$ 
being analytic in a neighborhood of $z=0$ in $\C$.\\
Finally, we mention the recent paper  \cite{BKM} where it is proved by KAM methods that many periodic multi-solitons persist 
under quasi-linear perturbations of the KdV equation. As in this paper,
a key ingredient are the normal form coordinates, constructed in \cite{Kap-Mon-2}.

\medskip

\noindent
{\em Acknowledgments:} We would like to thank Michela Procesi for the example in Remark \ref{remark on Pi_gamma^4}  and very valuable feedback
and Massimiliano Berti for insightful discussions.

T. K. is supported by Swiss National Foundation. R. M. is supported by INDAM-GNFM.

%%%%%%%%%%%%%%%%%%%%%%%%%%%%%%%%%%%%%%%%%%%%%%%%%%%%%%%%%%%%%%
%%%%%%%%%%%%%%%%%%%%%%%%%%%%%%%%%%%%%%%%%%%%%%%%%%%%%%%%%%%%%%

\section{Para-differential calculus}\label{para-differential calculus}

In this section we review some standard notions and results of the para-differential calculus, needed throughout the paper. For details we refer to \cite{Metivier}. 

We begin with reviewing the notion of para-product. To this end we need the following
\begin{definition}\label{cut-off function}
A function $\psi \in C^\infty(\R \times \R)$ is said to be an admissible cut-off function, if there exist $0 < \e' < \e <1$ so that 
$$
{\rm supp}(\psi) \subseteq \{ (\eta, \xi) \in \R \times \R : |\eta| \leq \e \langle \xi \rangle\}\,, \qquad \quad
\psi(\eta, \xi) = 1 \,, \quad \forall (\eta, \xi) \in \R \times \R \ \text{with} \  |\eta| \leq \e' \langle \xi\rangle\,,
$$
and
$$
|\partial_\eta^{\alpha} \partial_\xi^\beta \psi(\eta, \xi)| \lesssim_{\alpha, \beta} \langle \xi \rangle^{- \alpha - \beta}\,, \qquad \forall (\alpha, \beta) \in \Z_{\geq 0} \times \Z_{\geq 0}
$$
where by  \eqref{def langle y rangle} $\langle \xi \rangle = \max\{1, |\xi | \}$. 
\end{definition}
Given a cut-off function $\psi$ as in Definition \ref{cut-off function}, 
the para-product $T_a u$ of a function $a\in H^{1}(\T_1)$ with a function $u \in H^s(\T_1)$, $s \ge 1$, is defined as 
\begin{equation}\label{definizione paraprodotto}
T_a u(x) := \sigma_a(x, D)u(x) = \sum_{\xi \in \Z} \sigma_a(x, \xi) \widehat u(\xi) e^{\ii 2 \pi \xi x}\,,\qquad \sigma_a(x, \xi) := \sum_{\eta \in \Z} \psi(\eta, \xi) \widehat a(\eta) e^{\ii 2 \pi \eta x }\, ,
\end{equation}
where $\widehat a(\eta)$, also denoted by $a_\eta$, is the $\eta$th Fourier coefficient of $a$,
$$
\widehat a(\eta) = \int_0^1 a(x) e^{- \ii 2\pi \eta x} d x \, .
$$

\begin{lemma}\label{prop 1 paraproduct}
For any $a \in H^{1}(\T_1)$ and $s \geq 1$,  $T_a$ is in ${\cal B}(H^s(\T_1), H^s(\T_1))$ and 
\begin{equation}\label{stima elementare paraproduct}
\| T_a \|_{{\cal B}(H^s, H^s)} \lesssim_s \| a \|_{1}\,. 
\end{equation}
Furthermore, for any $s \ge 1$, the map $H^1(\T_1) \to {\cal B}(H^s(\T_1), H^s(\T_1)), \, a \mapsto T_a$, is linear.
\end{lemma}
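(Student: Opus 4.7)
The plan is to work on the Fourier side: expand $T_a u$ in terms of Fourier coefficients of $a$ and $u$, exploit the localization built into the admissible cut-off $\psi$ to transfer the weight $\langle n \rangle^s$ onto $\widehat{u}$, and then reduce the estimate to a convolution inequality plus Cauchy--Schwarz.

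First, from \eqref{definizione paraprodotto} I would compute the $n$-th Fourier coefficient of $T_a u$ as
$$
\widehat{T_a u}(n) = \sum_{\eta \in \Z} \psi(\eta, n - \eta) \, \widehat{a}(\eta)\, \widehat{u}(n - \eta)\, .
$$
The crucial input is the support condition on $\psi$ from Definition \ref{cut-off function}: on $\mathrm{supp}(\psi)$ one has $|\eta| \leq \e \langle n - \eta \rangle$ with $\e < 1$, and hence by the triangle inequality
$$
\langle n \rangle \leq 1 + |n - \eta| + |\eta| \leq (1 + \e) \langle n - \eta \rangle + 1 \lesssim \langle n - \eta \rangle\, .
$$
Combining this with the uniform bound $|\psi| \lesssim 1$ gives
$$
\langle n \rangle^s \bigl| \widehat{T_a u}(n) \bigr| \lesssim_s \sum_{\eta \in \Z} |\widehat{a}(\eta)|\, \langle n - \eta \rangle^s |\widehat{u}(n - \eta)|\, .
$$

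The right hand side is the convolution of $\{|\widehat{a}(\eta)|\}_\eta$ with $\{\langle \eta \rangle^s |\widehat{u}(\eta)|\}_\eta$, so Young's inequality $\ell^1 \ast \ell^2 \hookrightarrow \ell^2$ yields
$$
\| T_a u \|_s \lesssim_s \| \widehat{a}\|_{\ell^1(\Z)} \| u \|_s\, .
$$
Since $\T_1$ is one-dimensional, a direct application of Cauchy--Schwarz gives
$$
\| \widehat{a}\|_{\ell^1(\Z)} \leq \Bigl( \sum_{\eta \in \Z} \langle \eta \rangle^{-2} \Bigr)^{1/2} \Bigl( \sum_{\eta \in \Z} \langle \eta \rangle^2 |\widehat{a}(\eta)|^2 \Bigr)^{1/2} \lesssim \| a \|_1\, ,
$$
which combines with the previous display to produce the desired estimate \eqref{stima elementare paraproduct}. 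Linearity of $a \mapsto T_a$ is then immediate, since both the symbol map $a \mapsto \sigma_a(x, \xi)$ in \eqref{definizione paraprodotto} and the operator $\sigma \mapsto \sigma(x, D)$ depend linearly on $a$ through the Fourier coefficients $\widehat{a}(\eta)$.

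I do not expect any serious obstacle here; the only delicate point is the transfer of the weight $\langle n \rangle^s \lesssim \langle n - \eta \rangle^s$, which relies in an essential way on the strict inequality $\e < 1$ in the support condition of $\psi$. Without it, the high-frequency part of $a$ would interact resonantly with $u$ and the convolution structure would not yield a bound by $\| a \|_1 \| u \|_s$ alone.
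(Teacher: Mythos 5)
Your proof is correct and self-contained. The paper does not actually supply a proof of this lemma; it simply refers to M\'etivier's book for the standard facts of para-differential calculus. Your argument is the natural direct one in the periodic 1D setting: passing to Fourier coefficients, using the support condition $|\eta| \le \e\langle \xi\rangle$ with $\e<1$ to move the weight $\langle n\rangle^s$ onto $\widehat u$, and closing with Young's inequality $\ell^1 * \ell^2 \hookrightarrow \ell^2$ followed by Cauchy--Schwarz to bound $\|\widehat a\|_{\ell^1}$ by $\|a\|_1$. Every step checks out: the boundedness $|\psi|\lesssim 1$ is the $\alpha=\beta=0$ case of the derivative estimates in Definition~\ref{cut-off function}, the identity for $\widehat{T_a u}(n)$ follows from the change of summation variable $\xi=n-\eta$ in \eqref{definizione paraprodotto}, and the constant in $\langle n\rangle\lesssim\langle n-\eta\rangle$ indeed depends on $\e<1$ being fixed, exactly as you observe. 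The only difference from the textbook route (e.g.\ in M\'etivier) is one of generality: the standard proof proceeds via a dyadic decomposition to obtain the sharper bound $\|T_a\|_{\mathcal B(H^s)}\lesssim_s\|a\|_{L^\infty}$, valid in any dimension, from which the $\|a\|_1$ bound follows by Sobolev embedding in 1D. Your argument short-circuits the Littlewood--Paley machinery, controlling $\|T_a\|_{\mathcal B(H^s)}$ by $\|\widehat a\|_{\ell^1}$ directly; this is slightly lossier than the $L^\infty$ bound but is entirely adequate for the statement as given and considerably more elementary.
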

Given two functions $a, u \in H^s(\T_1)$ with $s \geq  1$, their product can be split as
\begin{equation}\label{paraprodotto}
a u = T_a u + T_u a + {\cal R}^{(B)}(a, u)\,, 
\end{equation}
where the remainder ${\cal R}^{(B)}(a, u)$ is given by
\begin{equation}\label{resto paraprodotto}
{\cal R}^{(B)}(a, u) (x)= \sum_{\eta, \xi \in \Z} \omega(\eta, \xi)\widehat a(\eta) \widehat u(\xi) e^{\ii 2 \pi (\eta + \xi) x}\,,
\qquad \omega(\eta, \xi) := 1- \psi(\eta, \xi)- \psi(\xi, \eta)\,.
\end{equation}
Note that the support ${\rm supp}(\omega)$ of $\omega : \Z \times \Z \to \R$ satisfies
\begin{equation}\label{supporto di omega}
 \big\{ (\eta, \xi) \in \Z^{2 } : \e \langle \xi \rangle  <  |\eta| < \frac{\langle \xi \rangle}{\e}   \big\} \cup \{ (0,0) \}
 \subseteq {\rm supp}(\omega) \subseteq \big\{ (\eta, \xi) \in \Z^{2 } : \e' \langle \xi \rangle <  |\eta| <  \frac{ \langle \xi \rangle }{\e'} \big\} \cup \{ (0,0) \} \,.
\end{equation}
The main feature of ${\cal R}^{(B)}(a, u)$ is that it is a regularizing bilinear operator in the following sense.
\begin{lemma}\label{lemma remainder paraprod}
For any $s_1, s_2 \geq 0$,  
$$
\mathcal R^{(B)} : H^{s_1 + 1}(\T_1) \times H^{s_2}(\T_1)  \to H^{s_1 + s_2}(\T_1), \, (a, u) \mapsto {\cal R}^{(B)}(a, u)
$$ 
%(see \eqref{resto paraprodotto}) 
is a bilinear map, satisfying 
\begin{equation}\label{stima resto paraprodotto}
\|{\cal R}^{(B)}(a, u) \|_{s_1 + s_2} \lesssim_{s_1, s_2}  \| a \|_{s_1 + 1} \| u \|_{s_2}\, \qquad \forall \, a \in H^{s_1 + 1}(\T_1), \, u \in H^{s_2}(\T_1)\, .
\end{equation}
\end{lemma}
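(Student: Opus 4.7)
The plan is to prove the estimate by working directly on the Fourier side, exploiting the crucial fact that on the support of $\omega$ the frequencies $\eta$ and $\xi$ are comparable. Writing
$$
\widehat{{\cal R}^{(B)}(a,u)}(k) = \sum_{\eta + \xi = k} \omega(\eta, \xi)\,\widehat a(\eta)\,\widehat u(\xi)\,,
$$
the key observation is that the support condition \eqref{supporto di omega} implies, for every nonzero $(\eta, \xi) \in {\rm supp}(\omega)$, the two-sided bound $\e' \langle \xi \rangle \le |\eta| \le \langle \xi \rangle/\e'$, hence $\langle \eta \rangle \sim_{\e'} \langle \xi \rangle$. In particular,
$$
\langle k \rangle = \langle \eta + \xi \rangle \le \langle \eta \rangle + \langle \xi \rangle \lesssim \langle \eta \rangle \sim \langle \xi \rangle
\qquad \forall\, (\eta, \xi) \in {\rm supp}(\omega) \,.
$$
Since $s_1, s_2 \ge 0$, this yields the pointwise frequency estimate $\langle k \rangle^{s_1 + s_2} \lesssim \langle \eta \rangle^{s_1} \langle \xi \rangle^{s_2}$ on the support of $\omega$.

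Next I introduce the nonnegative sequences $\alpha_\eta := \langle \eta \rangle^{s_1 + 1} |\widehat a(\eta)|$ and $\beta_\xi := \langle \xi \rangle^{s_2} |\widehat u(\xi)|$, so that $\|\alpha\|_{\ell^2} = \|a\|_{s_1 + 1}$ and $\|\beta\|_{\ell^2} = \|u\|_{s_2}$. Using $|\omega(\eta, \xi)| \lesssim 1$ and the frequency estimate just derived, I bound
$$
\langle k \rangle^{s_1 + s_2} \bigl|\widehat{{\cal R}^{(B)}(a,u)}(k)\bigr| \;\lesssim\; \sum_{\eta + \xi = k} \frac{\alpha_\eta}{\langle \eta \rangle}\,\beta_\xi \,.
$$
Applying the Cauchy--Schwarz inequality to this convolution sum and using $\sum_{\eta} \langle \eta \rangle^{-2} < \infty$, I get
$$
\bigl(\langle k \rangle^{s_1 + s_2} |\widehat{{\cal R}^{(B)}(a,u)}(k)|\bigr)^2 \;\lesssim\; \Bigl(\sum_{\eta} \frac{1}{\langle \eta \rangle^2}\Bigr) \sum_{\eta + \xi = k} \alpha_\eta^2 \,\beta_\xi^2 \;\lesssim\; \sum_{\eta + \xi = k} \alpha_\eta^2 \,\beta_\xi^2\,.
$$
Summing over $k \in \Z$ and recognizing the result as a product of $\ell^2$ norms of $\alpha^2$ and $\beta^2$ convolved (equivalently, swapping the order of summation), I obtain
$$
\|{\cal R}^{(B)}(a,u)\|_{s_1 + s_2}^2 \;\lesssim_{s_1, s_2}\; \|\alpha\|_{\ell^2}^2 \,\|\beta\|_{\ell^2}^2 \;=\; \|a\|_{s_1 + 1}^2 \,\|u\|_{s_2}^2\,,
$$
which is the claimed estimate. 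Bilinearity is immediate from the definition \eqref{resto paraprodotto}.

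There is no real obstacle here: the proof is entirely mechanical once the support property \eqref{supporto di omega} is used to force $\langle \eta \rangle \sim \langle \xi \rangle$. The only small point to be careful about is that the extra derivative in $\|a\|_{s_1+1}$ (compared with the $s_1$ that one might naively expect) is precisely what supplies the $\langle \eta \rangle^{-1}$ factor needed to exchange an $\ell^1$-type summation for an $\ell^2$-type one via Cauchy--Schwarz. By the symmetric roles of $\eta$ and $\xi$ on ${\rm supp}(\omega)$, one could equally well close the estimate with $\|a\|_{s_1}\|u\|_{s_2 + 1}$, but only the asymmetric form is needed in the sequel.
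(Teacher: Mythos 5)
Your proof is correct and is the standard Fourier-side argument for Bony's remainder estimate, which is what the paper implicitly relies on by deferring to the para-differential calculus reference \cite{Metivier}. The key points---comparability of $\langle\eta\rangle$ and $\langle\xi\rangle$ on $\mathrm{supp}(\omega)$ (which in particular forces $\eta\neq 0$ off the origin, so $|\eta|=\langle\eta\rangle$), the resulting bound $\langle k\rangle^{s_1+s_2}\lesssim\langle\eta\rangle^{s_1}\langle\xi\rangle^{s_2}$, and the Cauchy--Schwarz step that spends the extra derivative on $a$ as the summable weight $\langle\eta\rangle^{-2}$---are all correctly identified and executed; the $(0,0)$ term is trivially absorbed since there $\langle k\rangle=\langle\eta\rangle=\langle\xi\rangle=1$.
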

Next, we discuss the standard symbolic calculus for para-differential operators to the extent needed in this paper. 
It suffices to consider operators of the form 
\begin{equation}\label{nostri paradiff}
 T_a \partial_x^m, \qquad a \in H^1(\T_1), \ m \in \Z\,, 
\end{equation}
where we recall that for any $m \in \Z$,  the Fourier multiplier $\partial_x^m$ is defined by 
$$
\partial_x^m [e^{\ii 2 \pi j x}] := ( \ii 2 \pi  j)^m e^{\ii 2 \pi j x}\,, \ \ \forall \, j \neq 0\,, \qquad \partial_x^m[1] := 0\,.
$$
Alternatively, $\partial_x^m$ can be written as the pseudo-differential operator ${\rm Op}( ( \ii 2 \pi \xi)^m \chi(\xi))$ with symbol $( \ii 2 \pi \xi)^m \chi(\xi)$
where $\chi: \R \to \R$ is a $C^\infty-$smooth cut-off function, satisfying 
\begin{equation}\label{definizione cut off partial x m generale}
\chi(\xi) = 1\,, \ \  \forall \, |\xi| \geq \frac23\,, \qquad \chi(\xi) = 0\,, \ \  \forall \, |\xi| \leq \frac13\,. 
\end{equation}
The symbol of an operator of the form \eqref{nostri paradiff} is given by 
$$
\sigma_{a}(x, \xi) = \sum_{\eta \in \Z} \psi(\eta, \xi) \widehat a(\eta) ( \ii 2 \pi \xi )^m e^{\ii 2 \pi \eta x}\,. 
$$
\begin{lemma}\label{lemma paraprodotto fine}
Let $a, b \in H^{N+3}(\T_1)$ with $N \in \N$. Then
$$
T_a \circ T_b = T_{ab}  + {\cal R}_N(a, b)
$$
where for any $s \geq 0$, 
$$
\mathcal R_N : H^{N+3}(\T_1) \times H^{N+3} (\T_1) \to {\cal B}\big( H^s(\T_1), H^{s + N+1}(\T_1)\big), \, (a, b) \mapsto  {\cal R}_N(a, b)\, ,
$$  
is a bilinear map, satisfying 
$$
\| {\cal R}_N(a, b)\|_{{\cal B}(H^s, H^{s + N+1})} \lesssim_{s, N} \| a \|_{N+3} \| b \|_{N+3} \, , \qquad  \forall  \, a, b \in H^{N+3}(\T_1). 
$$
\end{lemma}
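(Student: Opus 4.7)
\medskip

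\noindent\emph{Proof plan.} The proof is a direct computation at the level of Fourier kernels, combined with a careful cut-off decomposition and Taylor expansion of the admissible cut-off $\psi$.

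\smallskip

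\noindent\textbf{Step 1 (Kernel of the difference).} Starting from the definition \eqref{definizione paraprodotto}, a direct computation of Fourier coefficients yields
\[
\big( (T_a T_b - T_{ab}) u \big)^{\wedge}(\nu) \;=\; \sum_{\xi \in \Z}\,\Big(\sum_{\eta_1+\eta_2=\nu-\xi} K(\eta_1,\eta_2,\xi)\, \widehat a(\eta_1)\, \widehat b(\eta_2)\Big)\, \widehat u(\xi),
\]
where
\[
K(\eta_1,\eta_2,\xi) \;=\; \psi(\eta_1,\xi+\eta_2)\,\psi(\eta_2,\xi) \;-\; \psi(\eta_1+\eta_2,\xi).
\]
Bilinearity of $(a,b) \mapsto \mathcal R_N(a,b)$ is now transparent from this formula, so it remains to produce the smoothing estimate.

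\smallskip

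\noindent\textbf{Step 2 (Partition of phase space).} I would split $K$ according to a dyadic partition of the $(\eta_1,\eta_2)$-plane relative to $\langle \xi\rangle$: a ``diagonal'' region $D_\xi := \{(\eta_1,\eta_2):\, |\eta_1|,|\eta_2| \le \tfrac{\e'}{4}\langle \xi\rangle\}$ and its complement. Off the diagonal region, by the support property of $\psi$ at least one of $|\eta_1|, |\eta_2|, |\eta_1+\eta_2|$ is comparable to $\langle \xi\rangle$; in particular $|\eta_1|+|\eta_2| \gtrsim \langle \xi\rangle$, so we have the pointwise bound
\[
\mathbf 1_{D_\xi^c}\, |K(\eta_1,\eta_2,\xi)| \;\lesssim\; \frac{\langle \eta_1\rangle^{N+1}+\langle \eta_2\rangle^{N+1}}{\langle \xi\rangle^{N+1}}.
\]
On the diagonal region, Taylor-expanding $\psi(\eta_1,\xi+\eta_2)$ and $\psi(\eta_1+\eta_2,\xi)$ in $\eta_2$ (respectively $\eta_1$) to order $N+1$ around the point where they both equal $1$, the zeroth-order terms and the polynomial corrections cancel (since $\psi\equiv 1$ on $D_\xi$), and one is left with a Taylor remainder satisfying the same bound with the help of the estimate $|\partial_\xi^{\alpha}\partial_\eta^{\beta}\psi(\eta,\xi)|\lesssim \langle \xi\rangle^{-\alpha-\beta}$ from Definition \ref{cut-off function}. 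Summing,
\[
|K(\eta_1,\eta_2,\xi)| \;\lesssim_N\; \frac{\langle \eta_1\rangle^{N+1}+\langle \eta_2\rangle^{N+1}}{\langle \xi\rangle^{N+1}}, \qquad \forall\, \eta_1,\eta_2,\xi.
\]

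\smallskip

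\noindent\textbf{Step 3 (Operator norm estimate).} Using $\nu=\xi+\eta_1+\eta_2$ and $\langle \nu\rangle \lesssim \langle \xi\rangle + \langle \eta_1\rangle + \langle \eta_2\rangle$, the bound from Step 2 gives, for every $s\ge 0$,
\[
\langle \nu\rangle^{s+N+1}\,|K(\eta_1,\eta_2,\xi)| \;\lesssim_{s,N}\; \langle \xi\rangle^{s}\,\big(\langle \eta_1\rangle^{s+N+1}\langle \eta_2\rangle^{N+1} + \langle \eta_1\rangle^{N+1}\langle \eta_2\rangle^{s+N+1}\big).
\]
Plugging this into the kernel representation and applying Cauchy--Schwarz in $\eta_1,\eta_2$ (using $\sum_\eta \langle \eta\rangle^{-2} <\infty$ to absorb two extra powers of $\langle \eta_i\rangle$) together with a Young/Minkowski inequality in $\xi$, one obtains
\[
\|\mathcal R_N(a,b)\, u\|_{s+N+1} \;\lesssim_{s,N}\; \|a\|_{N+3}\,\|b\|_{N+3}\,\|u\|_{s},
\]
which is the desired bound.

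\smallskip

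\noindent\textbf{Expected obstacle.} The routine part is the operator-norm estimate in Step 3; the only delicate point is Step 2, i.e.\ booking the cancellations in $K$ on the diagonal region together with the support analysis on its complement in such a way that the factor $\langle \xi\rangle^{-(N+1)}$ truly appears. The number $N+3$ (rather than $N+1$) on the right-hand side of the estimate comes precisely from the two extra derivatives used to make the summation in $\eta_1,\eta_2$ absolutely convergent in the Cauchy--Schwarz step; this is an artifact of working with weighted $\ell^2$ norms and cannot be avoided without a finer Littlewood--Paley decomposition.
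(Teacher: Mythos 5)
The paper states this lemma without proof and refers the reader to M\'etivier's book for standard para-differential calculus; there is therefore no ``paper proof'' to compare against, and I evaluate your argument on its own merits. Your overall plan---pass to the trilinear Fourier kernel $K(\eta_1,\eta_2,\xi)=\psi(\eta_1,\xi+\eta_2)\psi(\eta_2,\xi)-\psi(\eta_1+\eta_2,\xi)$, split the $(\eta_1,\eta_2)$-plane into a region near the origin and its complement, and conclude by Cauchy--Schwarz and Young---is the standard and correct one, and Step~1 is right.

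Two issues. The Taylor-expansion discussion in Step~2 is a detour. With $D_\xi = \{|\eta_1|,|\eta_2| \le \tfrac{\e'}{4}\langle\xi\rangle\}$, one has $|\eta_2|\le\e'\langle\xi\rangle$, $|\eta_1+\eta_2|\le\tfrac{\e'}{2}\langle\xi\rangle\le\e'\langle\xi\rangle$, and $|\eta_1|\le\tfrac{\e'}{4}\langle\xi\rangle\le\tfrac{\e'}{3}\langle\xi+\eta_2\rangle<\e'\langle\xi+\eta_2\rangle$ (using $\langle\xi+\eta_2\rangle\ge\tfrac34\langle\xi\rangle$ on $D_\xi$). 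By the definition of an admissible cut-off all three values of $\psi$ equal $1$, so $K\equiv 0$ on $D_\xi$; there is no Taylor remainder to estimate. The bound $|K|\lesssim 1$ together with $\max\{|\eta_1|,|\eta_2|\}\gtrsim\langle\xi\rangle$ on $D_\xi^c$ is all that is needed.

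The genuine gap is in Step~3. The displayed bound
$\langle\nu\rangle^{s+N+1}|K|\lesssim_{s,N}\langle\xi\rangle^s\big(\langle\eta_1\rangle^{s+N+1}\langle\eta_2\rangle^{N+1}+\langle\eta_1\rangle^{N+1}\langle\eta_2\rangle^{s+N+1}\big)$
carries $s$-dependent powers of $\langle\eta_i\rangle$. Running your Cauchy--Schwarz/Young argument with this bound, the two extra powers needed to make $\sum_{\eta_1}$ converge land on top of $\langle\eta_1\rangle^{s+N+1}$, and one obtains $\|a\|_{s+N+3}\|b\|_{N+3}+\|a\|_{N+3}\|b\|_{s+N+3}$, \emph{not} the asserted $\|a\|_{N+3}\|b\|_{N+3}$. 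The missing observation is that on the support of $K$ one actually has $\langle\nu\rangle\lesssim\langle\xi\rangle$: if $\psi(\eta_1+\eta_2,\xi)\ne 0$ then $|\nu-\xi|=|\eta_1+\eta_2|\le\e\langle\xi\rangle$, and if $\psi(\eta_1,\xi+\eta_2)\psi(\eta_2,\xi)\ne 0$ then $|\eta_2|\le\e\langle\xi\rangle$ and $|\eta_1|\le\e(1+\e)\langle\xi\rangle$, so again $|\nu-\xi|\lesssim\langle\xi\rangle$. Using this, $\langle\nu\rangle^{s+N+1}\lesssim\langle\xi\rangle^{s+N+1}$, and together with $|K|\lesssim_N\langle\xi\rangle^{-(N+1)}(\langle\eta_1\rangle^{N+1}+\langle\eta_2\rangle^{N+1})$ you get the correct intermediate estimate
$\langle\nu\rangle^{s+N+1}|K|\lesssim_{s,N}\langle\xi\rangle^s\big(\langle\eta_1\rangle^{N+1}+\langle\eta_2\rangle^{N+1}\big)$,
with no $s$-dependence in the $\eta$-powers. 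Then Young in $\xi$ and Cauchy--Schwarz in $\eta_1,\eta_2$ (absorbing two powers with $\sum_\eta\langle\eta\rangle^{-4}<\infty$) give exactly $\|a\|_{N+3}\|b\|_{N+3}\|u\|_s$, as you state in the concluding remark but did not justify.
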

\begin{lemma}\label{composizione paraprodotto e derivate}
Let $m \in \Z$, $N \in \N$. Then there exist an integer $\sigma_N > N + m$ and combinatorial constants $(K_{n, m})_{1 \le n  \le N + m}$, with 
%$K_{0, m} = 1$, 
$K_{1, m} = m$ so that  for any $a \in H^{\sigma_N}(\T_1)$
$$
\partial_x^m \circ T_a =   T_{ a} \partial_x^{m } + \sum_{n = 1}^{N+m} K_{n, m} T_{\partial_x^n a} \partial_x^{m - n} + {\cal R}_{N, m}(a)\, ,
$$
where for any $s \geq 0$, the map 
$$
{\cal R}_{N, m} : H^{\sigma_N}(\T_1) \to {\cal B}(H^s(\T_1), H^{s + N + 1}(\T_1)), \, a \mapsto {\cal R}_{N, m}(a)
$$ 
is linear and  satisfies the estimate 
$$
\| {\cal R}_{N, m}(a)\|_{ {\cal B}(H^s, H^{s + N + 1})} \lesssim_{s, m, N} \| a \|_{\sigma_N}\,, \quad \forall a \in H^{\sigma_N}(\T_1) ,
$$
and where we use the customary convention that the sum $\sum_{n = 1}^{N+m}$ equals $0$ if $N+m < 1$.
\end{lemma}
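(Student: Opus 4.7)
The plan is to proceed via direct Fourier analysis. For $u \in H^s(\T_1)$, definition \eqref{definizione paraprodotto} gives
$$(\partial_x^m \circ T_a) u(x) = \sum_{k,\xi \in \Z} \psi(k-\xi,\xi)\, \widehat a(k-\xi)\, (\ii 2\pi k)^m \chi(k)\, \widehat u(\xi)\, e^{\ii 2 \pi k x}.$$
Each summand on the right-hand side of the claimed identity is likewise a Fourier multiplier whose symbol involves $\psi(k-\xi,\xi)$ and powers of $k-\xi$ and $\xi$. Substituting $\eta := k - \xi$ and noting that on $\mathrm{supp}(\psi)$ one has $|\eta| \le \varepsilon \langle \xi\rangle$ with $\varepsilon < 1$, the problem reduces to matching the symbol $(\eta+\xi)^m$ against a polynomial expansion in $\eta$ and (possibly negative) powers of $\xi$, modulo a smoothing tail.

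The crucial identity is the generalized binomial expansion
$$(\eta+\xi)^m = \xi^m \sum_{n=0}^\infty \binom{m}{n} (\eta/\xi)^n,$$
which on $\mathrm{supp}(\psi) \cap \{\xi \neq 0\}$ converges geometrically, since $\eta,\xi \in \Z$ and $|\eta/\xi| \leq \varepsilon < 1$. Setting $K_{n,m} := \binom{m}{n}$ (so $K_{1,m} = m$ as required, and $K_{n,m} = 0$ for $n > m$ when $m \ge 0$, which handles the case where the series terminates), truncation at $n = N+m$ produces the leading terms $T_a \partial_x^m + \sum_{n=1}^{N+m} K_{n,m} T_{\partial_x^n a} \partial_x^{m-n}$ when pulled back through the Fourier calculation, and leaves the tail
$$R_{N,m}(\eta, \xi) := \sum_{n > N+m} \binom{m}{n} \eta^n \xi^{m-n},$$
which obeys $|R_{N,m}(\eta, \xi)| \lesssim_{N,m} |\xi|^{-N-1} |\eta|^{N+m+1}$ uniformly on $\mathrm{supp}(\psi)\cap\{\xi \neq 0\}$ by the geometric decay.

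The remainder $\mathcal R_{N,m}(a)$ is then the Fourier multiplier operator with symbol $\psi(\eta,\xi) R_{N,m}(\eta,\xi)$ (augmented by the finite-rank piece coming from the degenerate frequencies $\xi = 0$, where $\psi(\eta,0)\neq 0$ forces $\eta = 0$). Since $\langle k\rangle = \langle \eta+\xi\rangle \sim \langle \xi\rangle$ on $\mathrm{supp}(\psi)$, we have
$$\langle k\rangle^{s+N+1} |R_{N,m}(\eta,\xi)| \lesssim \langle \xi\rangle^s |\eta|^{N+m+1}.$$
A Young / Cauchy--Schwarz estimate in the convolution variable $\eta$, combined with the summability of $\langle \eta\rangle^{N+m+1}|\widehat a(\eta)|$ guaranteed whenever $\|a\|_{\sigma_N}$ is finite for some $\sigma_N$ exceeding $N+m$ by a fixed margin (to absorb an $\ell^1$--$\ell^2$ embedding), yields the bound $\|\mathcal R_{N,m}(a) u\|_{s+N+1} \lesssim_{s,m,N} \|a\|_{\sigma_N} \|u\|_s$. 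Linearity of $a \mapsto \mathcal R_{N,m}(a)$ is automatic from the construction.

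The main obstacle, more bookkeeping than conceptual, will be handling the cutoff mismatches between $\chi(k)$, $\chi(\xi)$, and the implicit support constraint from $\psi$: on $\mathrm{supp}(\psi)\cap\{\xi\neq 0\}$ one has $|\xi| \ge 1$ and hence $\chi(\xi) = 1$, and moreover $|k| = |\eta+\xi| \ge |\xi| - |\eta| \ge (1-\varepsilon)|\xi| > 0$, so $\chi(k) = 1$ as well, but a clean writeup must track the finite-rank contributions at the excluded modes and verify they are smoothing of arbitrary order.
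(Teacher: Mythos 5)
The paper offers no proof of this lemma (it is treated as a standard fact of para-differential calculus and referenced to M\'etivier's book), so there is nothing in-text to compare against. Your proposal is the standard Fourier-side symbol computation and it is essentially correct: reducing the identity to expanding the symbol $(\eta+\xi)^m$ via the generalized binomial series, using $|\eta|\le\varepsilon\langle\xi\rangle$ on $\mathrm{supp}(\psi)$ to get geometric convergence and the tail bound $|\eta|^{N+m+1}\langle\xi\rangle^{-N-1}$, then converting the symbol bound to the operator bound via Young's inequality with $\sum_\eta\langle\eta\rangle^{N+m+1}|\widehat a(\eta)|\lesssim\|a\|_{\sigma_N}$ for $\sigma_N\ge N+m+2$. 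The cutoff discussion is right in substance (in fact even simpler than you suggest: on $\mathrm{supp}(\psi)$ with $\eta,\xi\in\Z$, the mode $\xi=0$ forces $\eta=0$ and hence $k=0$, where both $\partial_x^m T_a u$ and $T_a\partial_x^m u$ vanish because of $\chi(0)=0$, so there is no finite-rank correction at all).

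One bookkeeping slip: your remainder symbol $R_{N,m}(\eta,\xi)=\sum_{n>N+m}\binom{m}{n}\eta^n\xi^{m-n}$ is only correct when $N+m\ge 0$. For $N+m<0$ (which does occur, since $m\in\Z$ is arbitrary) the lemma still keeps the term $T_a\partial_x^m$ on the right, corresponding to $n=0$; your prose acknowledges this, but the formula $\sum_{n>N+m}$ would then start the tail at $n=0$ and absorb $T_a\partial_x^m$ into the remainder, which contradicts the stated identity. The fix is to start the tail at $n=\max(1,N+m+1)$. In that regime the remainder is $\lesssim|\eta|\,|\xi|^{m-1}$, which is $(1-m)$-smoothing, hence more than $(N+1)$-smoothing since $N+m<0$ gives $1-m\ge N+2$; so the estimate still goes through, and the error is purely notational.
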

Combining Lemma \ref{lemma paraprodotto fine} and Lemma \ref{composizione paraprodotto e derivate} yields the following 
\begin{lemma}\label{lemma composizione nostri simboli}
Let $m, m' \in \Z$, $N \in \N$. Then there exists an integer $\sigma_N > N + m$ so that for any $a, b \in H^{\sigma_N}(\T_1)$,
\begin{equation}\label{formula espansione composizione}
T_a \partial_x^m \circ T_b \partial_x^{m'} =  T_{a b} \partial_x^{m + m' } + 
 \sum_{n = 1}^{N+m+m'} K_{n, m} T_{a \partial_x^n b} \partial_x^{m + m' - n} + {\cal R}_{N, m, m'}(a, b) \, ,
\end{equation}
where $K_{n, m}$ are the combinatorial constants of Lemma \ref{composizione paraprodotto e derivate} and where
for any $s \geq 0$, the map 
$$
{\cal R}_{N, m, m'} : H^{\sigma_N}(\T_1) \times H^{\sigma_N}(\T_1) \to {\cal B}(H^s(\T_1), \, H^{s + N + 1}(\T_1)), \, (a, b) \mapsto {\cal R}_{N, m, m'}(a, b)
$$
 is bilinear and satisfies the estimate
 $$
 \| {\cal R}_{N, m, m'}(a, b)\|_{{\cal B}(H^s, H^{s + N + 1})} \lesssim_{s, m, N} \| a \|_{\sigma_N} \| b \|_{\sigma_N} \, , \qquad \forall \, a, b \in H^{\sigma_N}(\T_1).
 $$ 
 According to Lemma \ref{lemma paraprodotto fine},  in the case $m=0$, a possible choice is $\sigma_N= N+ 3$,  $K_{n, 0} = 0$ for $1 \le n \le N+m'$.
\end{lemma}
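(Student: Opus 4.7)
The plan is to derive this composition formula by iterating the two immediately preceding lemmas, while carefully tracking the orders of smoothing needed so that the final remainder lies in $\mathcal B(H^s,H^{s+N+1})$.

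First I would rewrite the ``middle'' composition $\partial_x^m\circ T_b$ via Lemma \ref{composizione paraprodotto e derivate}, but applied with its index raised from $N$ to $N+m'$, so that the resulting remainder gains $N+m'+1$ derivatives. This produces
\[
\partial_x^m\circ T_b \;=\; T_b\,\partial_x^m \;+\; \sum_{n=1}^{N+m+m'} K_{n,m}\,T_{\partial_x^n b}\,\partial_x^{m-n} \;+\; \widetilde{\mathcal R}(b),
\]
where $\widetilde{\mathcal R}(b)\in\mathcal B(H^s,H^{s+N+m'+1})$ is linear in $b$, with operator norm controlled by $\|b\|_{\widetilde\sigma_N}$ for some integer $\widetilde\sigma_N>N+m+m'$ supplied by that lemma.

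Next I would multiply on the left by $T_a$ and expand each composition $T_a\circ T_b$, respectively $T_a\circ T_{\partial_x^n b}$, by invoking Lemma \ref{lemma paraprodotto fine} with its index raised from $N$ to $N+m'$. Each of these composites equals the expected para-product with symbol $ab$, respectively $a\,\partial_x^n b$, plus a bilinear remainder $\mathcal R_{N+m'}(\cdot,\cdot)\in\mathcal B(H^s,H^{s+N+m'+1})$, bounded by the product of $H^{N+m'+3}$-norms of its two arguments. Composing all terms on the right by $\partial_x^{m'}$ and collecting the principal contributions yields exactly
\[
T_{ab}\,\partial_x^{m+m'} \;+\; \sum_{n=1}^{N+m+m'} K_{n,m}\,T_{a\partial_x^n b}\,\partial_x^{m+m'-n},
\]
while all the remaining pieces, namely $T_a\,\widetilde{\mathcal R}(b)\,\partial_x^{m'}$, $\mathcal R_{N+m'}(a,b)\,\partial_x^{m'}$, and $\mathcal R_{N+m'}(a,\partial_x^n b)\,\partial_x^{m+m'-n}$ for $1\le n\le N+m+m'$, are absorbed into $\mathcal R_{N,m,m'}(a,b)$. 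Each of these pieces is bilinear in $(a,b)$ by the linearity statements already established; that it belongs to $\mathcal B(H^s,H^{s+N+1})$ follows from the continuity of $\partial_x^{m+m'-n}:H^{s+N+m'+1}\to H^{s+N+1}$ (once we have enough smoothing to absorb the loss $m+m'-n\le m+m'-1< m'+1$ times $\partial_x$, which is exactly what the extra $m'$ in our expansion indices provides) combined with Lemma \ref{prop 1 paraproduct} for $T_a$. Choosing $\sigma_N$ to be the maximum of the regularity thresholds demanded by the various applications (so $\sigma_N\ge\widetilde\sigma_N+m'$ and $\sigma_N\ge N+m'+3$, increased by $n\le N+m+m'$ for the $\partial_x^n b$ terms) and accounting for the derivatives falling on $b$ inside the expansion ensures the claimed bilinear bound $\|\mathcal R_{N,m,m'}(a,b)\|_{\mathcal B(H^s,H^{s+N+1})}\lesssim\|a\|_{\sigma_N}\|b\|_{\sigma_N}$.

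The only delicate point is the bookkeeping of orders: one must raise the expansion depth in the intermediate use of Lemma \ref{composizione paraprodotto e derivate} by exactly $m'$ to compensate for the loss of regularity induced by composing with $\partial_x^{m'}$ on the right; I expect this to be the main (though still routine) obstacle. The degenerate case $m=0$ falls out immediately: by Lemma \ref{lemma paraprodotto fine} applied directly to $T_a\circ T_b$, one gets $T_{ab}+\mathcal R_N(a,b)$, no intermediate commutator terms appear, all constants $K_{n,0}$ vanish, and $\sigma_N=N+3$ suffices, consistent with the statement.
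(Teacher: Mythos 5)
Your overall strategy is exactly the one the paper intends (the paper gives no separate proof, only the remark ``Combining Lemma~\ref{lemma paraprodotto fine} and Lemma~\ref{composizione paraprodotto e derivate} yields the following''), namely: expand $\partial_x^m\circ T_b$ by Lemma~\ref{composizione paraprodotto e derivate}, then expand the resulting $T_a\circ T_b$ and $T_a\circ T_{\partial_x^n b}$ by Lemma~\ref{lemma paraprodotto fine}, with depths enlarged so that after composing with the extra Fourier multipliers the remainder still gains $N+1$ derivatives. However, the bookkeeping of how much to enlarge has a genuine gap as soon as $m\ge 2$.

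You raise the index in Lemma~\ref{lemma paraprodotto fine} from $N$ to $N+m'$ and justify the loss of derivatives by the chain ``$m+m'-n\le m+m'-1< m'+1$'' for $n\ge 1$. But $m+m'-1< m'+1$ is equivalent to $m<2$, so this inequality is false for every $m\ge 2$. Concretely, the leading remainder piece in your expansion is $\mathcal R_{N+m'}(a,b)\,\partial_x^{m+m'}$ (in your list you wrote $\partial_x^{m'}$, but the full composition also carries the $\partial_x^m$ coming from the first-step expansion of $\partial_x^m\circ T_b$); this maps $H^s\to H^{s-m-m'}\to H^{s-m-m'+N+m'+1}=H^{s+N+1-m}$, which is $m$ derivatives short of the target $H^{s+N+1}$ whenever $m>0$. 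The terms $\mathcal R_{N+m'}(a,\partial_x^n b)\,\partial_x^{m+m'-n}$ land in $H^{s+N+1+n-m}$ and have the same deficit for $1\le n<m$. The remedy is to raise the Lemma~\ref{lemma paraprodotto fine} index to $N+m+m'$ rather than $N+m'$ (or, equivalently, per term to $N+m+m'-n$). Your choice of $N+m'$ for the intermediate expansion of $\partial_x^m\circ T_b$ via Lemma~\ref{composizione paraprodotto e derivate} is correct, since that remainder only meets the final $\partial_x^{m'}$ and not an extra $\partial_x^m$. With the corrected index the threshold $\sigma_N$ then picks up a dependence on $m'$ as well (which the paper's notation leaves implicit), and the $m=0$, $m'\le 0$ endpoint you treat separately remains consistent.
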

%\begin{proof}
%By Lemma \ref{composizione paraprodotto e derivate}, one has that 
%$$
%\partial_x^m \circ T_b \partial_x^{m'} = \sum_{n = 0}^N T_{\partial_x^n b} \partial_x^{m + m' - n} + {\cal Q}_N(b) \partial_x^{m'}
%$$
%and for any $s \geq 0$, the map $H^{N + 2} \mapsto {\cal B}(H^s, H^{s + N + 1 - m - m'})$, $b \mapsto {\cal Q}_N(b) \partial_x^{m'}$ is linear and continuous with 
%$$
%\| {\cal Q}_N (b)\partial_x^{m'} \|_{{\cal B}(H^s, H^{s + N + 1 - m - m'})} \lesssim \| b \|_{N + 2}\,. 
%$$
%Furthermore, by applying Lemma \ref{lemma paraprodotto fine}, for any $n = 0, 1, \ldots, N$ (choosing $\rho = N + 3 - n$)
%$$
%T_{a} T_{\partial_x^n b} \partial_x^{m + m' - n}= T_{a \partial_x^n b} \partial_x^{m + m' - n} + {\cal Q}_n(a, b)\,, \quad {\cal Q}_n(a, b) := {\cal R}(a, \partial_x^n b) \partial_x^{m + m' - n}
%$$
%And the bilinear form $(a, b) \mapsto {\cal Q}_n(a, b)$ satisfies for any $s \geq 0$ the estimate 
%\begin{equation}\label{superman 10}
%\| {\cal Q}_n(a, b)\|_{{\cal B}(H^s, H^{s + N + 1 - m - m'})} \lesssim \| {\cal R}(a, \partial_x^n b)\|_{{\cal B}(H^{s - m - m' + n }, H^{s + N + 1 - m - m'})}  \lesssim  \| a \|_{N + 3} \| b \|_{N + 3}\,.
%\end{equation}
%The claimed statement then follows by defining ${\cal R}_N (a, b) := T_a \circ {\cal Q}_N(b) \partial_x^{m'} + \sum_{n = 0}^N {\cal Q}_n(a, b)$. 
%\end{proof}
Using that $K_{1,m} = m$, one infers from Lemma \ref{lemma composizione nostri simboli} an expansion of the commutator 
$[T_a \partial_x^m \,,\, T_b \partial_x^{m'} ]_{lin}$.
\begin{corollary}[\bf Commutator expansion]\label{corollario espansione commutatore}
Let $m, m' \in \Z$, $N \in \N$. Then there exists $\sigma_N > N +m +m'$ so that for any $a, b \in H^{\sigma_N}(\T_1)$, 
$[T_a \partial_x^m \,,\, T_b \partial_x^{m'} ]_{lin} $
has an expansion of the form
\begin{equation}\label{formula espansione commutatore}
  T_{m a \partial_x b - m' b \partial_x a  } \partial_x^{m + m' - 1} 
+ \sum_{n = 2}^{N+m+m'} \big( K_{n, m} T_{a \partial_x^n b} - K_{n,m'} T_{b  \partial_x^n a } \big) \partial_x^{m + m' - n} + {\cal R}^{\mathcal C}_{N, m, m'}(a, b)\, ,
\end{equation}
where for any $s \geq 0$, the map 
$$
{\cal R}^{\mathcal C}_{N, m, m'} : 
H^{\sigma_N}(\T_1) \times H^{\sigma_N}(\T_1) \to {\cal B}\big( H^s(\T_1), H^{s + N + 1}(\T_1) \big), \, (a, b) \mapsto {\cal R}^\mathcal C_{N, m, m'}(a, b)
$$ 
is bilinear and satisfies
$$
\| {\cal R}^\mathcal C_{N, m, m'}(a, b)\|_{{\cal B}(H^s, H^{s + N + 1})} \lesssim_{s, m, m', N} \| a \|_{\sigma_N} \| b \|_{\sigma_N}\, , \qquad \forall \, a, b \in H^{\sigma_N}(\T_1) \,.
$$   
 According to Lemma \ref{lemma paraprodotto fine},  in the case $m=0, m'=0$, 
 $[T_a \, ,\, T_b ]_{lin} =  {\cal R}_N(a, b) -  {\cal R}_N(b, a)$.  Hence a possible choice is $\sigma_N= N+ 3$,  $K_{n, 0} = 0$ for $1 \le n \le N$.
\end{corollary}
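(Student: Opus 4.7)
The plan is to derive the commutator expansion directly from the composition formula of Lemma \ref{lemma composizione nostri simboli}, which we apply twice: once to $T_a \partial_x^m \circ T_b \partial_x^{m'}$ and once to $T_b \partial_x^{m'} \circ T_a \partial_x^m$, and then subtract. Let $\sigma_N$ be the regularity threshold provided by that lemma, chosen large enough to accommodate both orderings.

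From Lemma \ref{lemma composizione nostri simboli} we obtain
\begin{align*}
T_a \partial_x^m \circ T_b \partial_x^{m'} &= T_{ab} \partial_x^{m+m'} + \sum_{n=1}^{N+m+m'} K_{n,m} T_{a\partial_x^n b} \partial_x^{m+m'-n} + \mathcal{R}_{N,m,m'}(a,b),\\
T_b \partial_x^{m'} \circ T_a \partial_x^{m} &= T_{ba} \partial_x^{m+m'} + \sum_{n=1}^{N+m+m'} K_{n,m'} T_{b\partial_x^n a} \partial_x^{m+m'-n} + \mathcal{R}_{N,m',m}(b,a).
\end{align*}
Subtracting, the principal symbols $T_{ab}\partial_x^{m+m'}$ and $T_{ba}\partial_x^{m+m'}$ cancel by commutativity of pointwise multiplication together with the linearity of $a\mapsto T_a$ from Lemma \ref{prop 1 paraproduct}. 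The $n=1$ term then becomes $K_{1,m} T_{a\partial_x b}\partial_x^{m+m'-1} - K_{1,m'} T_{b\partial_x a}\partial_x^{m+m'-1}$, which by $K_{1,m}=m$, $K_{1,m'}=m'$ and linearity equals $T_{m a\partial_x b - m' b \partial_x a}\partial_x^{m+m'-1}$, matching the asserted leading-order term. For $n \geq 2$ the summand is simply $(K_{n,m} T_{a\partial_x^n b} - K_{n,m'} T_{b\partial_x^n a})\partial_x^{m+m'-n}$.

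The remainder is then defined as $\mathcal{R}^{\mathcal C}_{N,m,m'}(a,b) := \mathcal{R}_{N,m,m'}(a,b) - \mathcal{R}_{N,m',m}(b,a)$; bilinearity in $(a,b)$ and the operator bound $\| \mathcal{R}^{\mathcal C}_{N,m,m'}(a,b)\|_{\mathcal{B}(H^s, H^{s+N+1})} \lesssim_{s,m,m',N} \|a\|_{\sigma_N}\|b\|_{\sigma_N}$ follow at once from the corresponding properties of $\mathcal{R}_{N,m,m'}$ and $\mathcal{R}_{N,m',m}$ in Lemma \ref{lemma composizione nostri simboli} (with $\sigma_N$ taken to be the max of the two thresholds). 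For the degenerate case $m=m'=0$, the composition formula reduces via Lemma \ref{lemma paraprodotto fine} to $T_a T_b = T_{ab} + \mathcal{R}_N(a,b)$, so the leading symbol cancellation gives $[T_a,T_b]_{lin} = \mathcal{R}_N(a,b) - \mathcal{R}_N(b,a)$ directly, consistent with the stated special case.

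No step is a serious obstacle—the whole argument is essentially bookkeeping—but one point warrants care: verifying that $\sigma_N$ can be chosen uniformly for both orderings of the composition (so that the combined statement has a single regularity threshold) and tracking the index $N+m+m'$ correctly in the truncated sums, since when $m$ or $m'$ is negative the upper limit may shrink and one must check that the empty-sum convention handles this case cleanly.
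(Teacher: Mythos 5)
Your proof is correct and follows precisely the route indicated by the paper: apply Lemma \ref{lemma composizione nostri simboli} to both orderings of the composition, subtract, cancel the leading $T_{ab}\partial_x^{m+m'}$ terms, and use $K_{1,m}=m$ together with the linearity of $a\mapsto T_a$ to collect the $n=1$ contribution into $T_{ma\partial_x b - m' b\partial_x a}\partial_x^{m+m'-1}$. The remainder definition and the handling of the $m=m'=0$ case via Lemma \ref{lemma paraprodotto fine} are exactly as the paper intends.
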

Finally, we discuss the adjoint $T_a^\top$ of $T_a$ with respect to the standard $L^2-$inner product.
\begin{lemma}\label{lemma aggiunto paraprodotto}
Let $a \in H^{N+1}(\T_1)$ with $N \in \N$. Then $T_a^\top= T_{ a} + {\cal R}_\top( a)$ where for any $s \geq 0$, the map 
$$
\mathcal R_\top : H^{N+1}(\T_1) \to {\cal B}\big( H^s(\T_1), H^{s + N + 1}(\T_1) \big), \, a \mapsto {\cal R}_\top(a) \, ,
$$ 
is linear and  for any $a \in H^{N+1}(\T_1)$ satisfies $\| {\cal R}_\top(a)\|_{{\cal B}(H^s, H^{s + N+1})} \lesssim_{s, N} \| a \|_{N+1}$.  
\end{lemma}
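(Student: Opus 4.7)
\smallskip

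\noindent
\emph{Proof plan for Lemma \ref{lemma aggiunto paraprodotto}.} The plan is to compute the Fourier matrix coefficients of $T_a$ and of its transpose $T_a^{\top}$, show that their difference is supported in a frequency region where the three scales $\langle j\rangle$, $\langle k\rangle$, $\langle j-k\rangle$ are comparable, and then exploit this to read off $(N+1)$-regularization from the Sobolev regularity of $a$.

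First, from the definition \eqref{definizione paraprodotto}, a direct computation gives the Fourier matrix coefficient
\[
[T_a]_j^k \ = \ \int_0^1 T_a(e^{\ii 2\pi k x})\, e^{-\ii 2\pi j x}\, d x \ = \ \psi(j-k,\,k)\,\widehat a(j-k),
\qquad j,k\in\Z.
\]
Using that the transpose with respect to the real $L^2$-pairing $\langle f,g\rangle=\int fg\,dx$ satisfies $[T_a^{\top}]_j^k = [T_a]_{-k}^{-j}$, we obtain
\[
[T_a^{\top}-T_a]_j^k \ = \ \widehat a(j-k)\,\bigl[\psi(j-k,\,-j)-\psi(j-k,\,k)\bigr].
\]
Thus $\mathcal R_\top(a)$ has matrix coefficients $m(j,k):=\widehat a(j-k)\,\Delta(j,k)$, with $\Delta(j,k):=\psi(j-k,-j)-\psi(j-k,k)$, and the map $a\mapsto \mathcal R_\top(a)$ is plainly linear.

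Next I would identify the frequency region on which $\Delta(j,k)\ne 0$. By the support and normalization properties of $\psi$ in Definition \ref{cut-off function}, the function $\psi(\eta,\xi)$ vanishes for $|\eta|>\e\langle \xi\rangle$ and equals $1$ for $|\eta|\le \e'\langle \xi\rangle$. Consequently, if $|j-k|>\e\max(\langle j\rangle,\langle k\rangle)$ then both terms in $\Delta(j,k)$ vanish, while if $|j-k|\le \e'\min(\langle j\rangle,\langle k\rangle)$ then both terms equal $1$ and $\Delta(j,k)=0$ as well. Hence $\Delta(j,k)\ne 0$ forces
\[
\e'\min(\langle j\rangle,\langle k\rangle)\ \lesssim\ |j-k|\ \lesssim\ \max(\langle j\rangle,\langle k\rangle),
\]
which, together with $|\Delta(j,k)|\le 2$, implies that on the support of $m(j,k)$ we have the comparability
\[
\langle j\rangle\ \sim\ \langle k\rangle\ \sim\ \langle j-k\rangle.
\]
This is the crucial structural fact that produces regularization.

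Finally, I would deduce the operator bound by Schur's test. Given $u\in H^s(\T_1)$, write $\mathcal R_\top(a)u=\sum_j c_j e^{\ii 2\pi j x}$ with $c_j=\sum_k m(j,k)\widehat u(k)$. Using $\langle j\rangle^{s+N+1}\lesssim \langle j-k\rangle^{N+1}\langle k\rangle^{s}$ on the support of $m$, one has
\[
\langle j\rangle^{s+N+1}|c_j|\ \lesssim\ \sum_{k}\,\langle j-k\rangle^{N+1}|\widehat a(j-k)|\,\langle k\rangle^{s}|\widehat u(k)|,
\]
which is the convolution of the sequences $b(\eta):=\langle \eta\rangle^{N+1}|\widehat a(\eta)|$ and $v(k):=\langle k\rangle^{s}|\widehat u(k)|$. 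Taking $\ell^2_j$-norms and applying Young's inequality in the form $\|b*v\|_{\ell^2}\le \|b\|_{\ell^2}\|v\|_{\ell^1}$ would lose too much in $u$; instead I would restore the comparability $\langle j\rangle\sim\langle k\rangle\sim\langle j-k\rangle$ to carry out a standard Schur argument: split the sum into the two Schur estimates $\sup_j\sum_k|m(j,k)|\langle j\rangle^{N+1}/\langle k\rangle^{0}$ and its transpose, which, by the comparability of scales and $|\widehat a(\eta)|\le \langle \eta\rangle^{-(N+1)}\|a\|_{N+1}$ (Cauchy--Schwarz on Fourier coefficients of $a\in H^{N+1}$), reduce to absolutely convergent sums controlled by $\|a\|_{N+1}$. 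This yields $\|\mathcal R_\top(a)\|_{\mathcal B(H^s,H^{s+N+1})}\lesssim_{s,N}\|a\|_{N+1}$.

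The main obstacle is the Schur step: the natural convolution bound with $a\in H^{N+1}$ does not automatically put $\langle\eta\rangle^{N+1}\widehat a(\eta)$ in $\ell^1$, so one must fully exploit the three-way comparability $\langle j\rangle\sim\langle k\rangle\sim\langle j-k\rangle$ on the support of $m(j,k)$ to distribute the weight $\langle j\rangle^{s+N+1}$ so that both Schur sums converge with only an $\ell^2$-bound on the Fourier coefficients of $a$. Once this comparability is exploited correctly, the $(N+1)$-smoothing of $\mathcal R_\top(a)$ follows without any further loss.
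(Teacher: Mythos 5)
Your computation of the Fourier matrix coefficients and the subsequent support analysis are both correct and are the right first two steps: indeed $[T_a^\top - T_a]_j^k = \widehat a(j-k)\,\Delta(j,k)$ with $\Delta(j,k)=\psi(j-k,-j)-\psi(j-k,k)$, and on the support of $\Delta$ one has $\langle j\rangle\sim\langle k\rangle\sim\langle j-k\rangle$. (A useful refinement you could add: $j$ and $k$ must have the \emph{same sign} there, since otherwise $|j-k|=|j|+|k|\geq\max(\langle j\rangle,\langle k\rangle)>\e\max(\langle j\rangle,\langle k\rangle)$.) The paper itself gives no proof of this lemma, so what follows is purely an assessment of whether your sketch closes.

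It does not, and the gap is exactly where you flag it: the three-way comparability is not enough to make the Schur sums converge. With the pointwise bound $|\widehat a(\eta)|\leq\langle\eta\rangle^{-(N+1)}\|a\|_{N+1}$ you obtain $|m(j,k)|\,\langle j\rangle^{s+N+1}\langle k\rangle^{-s}\lesssim\|a\|_{N+1}$ uniformly on the support, but for a fixed $j$ the support contains $\sim\langle j\rangle$ values of $k$, so the Schur row sum is $\sim\langle j\rangle\,\|a\|_{N+1}$, which diverges as $|j|\to\infty$; no choice of power weights repairs this, because the comparability pins down the dyadic scale of all three frequencies without shrinking the number of interacting pairs. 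Doing the estimate via Cauchy--Schwarz on the dyadic block $|j|\sim 2^m$ instead, the $\ell^2\to\ell^2$ norm of that block is controlled by $\sum_{|\eta|\sim 2^m}|\widehat a(\eta)|\lesssim 2^{m/2}\bigl(\sum_{|\eta|\sim 2^m}\langle\eta\rangle^{2(N+1)}|\widehat a(\eta)|^2\bigr)^{1/2}2^{-m(N+1)}$, which leaves a residual factor $2^{m/2}$ that $\|a\|_{N+1}$ alone cannot absorb. The argument does close with one extra derivative: for $a\in H^{N+2}$ one has $\langle\cdot\rangle^{N+1}\widehat a\in\ell^1$ (Cauchy--Schwarz against $\langle\cdot\rangle^{-1}\in\ell^2$), and Young's inequality $\|b*v\|_{\ell^2}\leq\|b\|_{\ell^1}\|v\|_{\ell^2}$ then yields the $(N+1)$-smoothing directly, with no Schur test required. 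This is in line with the thresholds in the neighbouring statements (Lemma \ref{lemma paraprodotto fine} assumes $a,b\in H^{N+3}$ for $(N+1)$-smoothing), so the threshold $a\in H^{N+1}$ in the present lemma is an endpoint that your route does not reach. In the paper's applications $a$ is $C^\infty$, so nothing downstream is affected, but as a proof of the lemma as literally stated your sketch has a genuine gap at the final step.
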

%\begin{proof}
%One has that $T_a = {\rm Op}(\sigma_a)$ where 
%$$
%\sigma_a (x, \xi ) = \sum_{\eta \in \Z} \psi(\eta, \xi) \widehat a(\eta) e^{\ii \eta x}\,. 
%$$
%A direct calculation shows that $T_{a}^t = {\rm Op}(\sigma_a^t)$ where the symbol $\sigma_a^t(x, \xi)$ is given by 
%$$
%\sigma_a^t(x, \xi) :=  \sum_{k \in \Z} \widehat \sigma_a(k, \xi - k) e^{\ii 2 \pi k x }  =  \sum_{k \in \Z} \psi(k, \xi - k) \widehat a(k) e^{\ii 2 \pi k x } \,.
%$$
%Writing $\psi(k, \xi - k) = \psi(k, \xi)  + \big( \psi(k, \xi - k) - \psi(k, \xi)\big)$ and using that $\psi$ is a real valued function, one obtains that 
%$$
%\sigma_a^t (x, \xi) = \sigma_{a}(x, \xi) + r_a^t(x, \xi)\,, \quad r_a^t (x, \xi) :=  \sum_{k \in \Z} \big( \psi(k, \xi - k) - \psi(k, \xi) \big) \widehat a(k) e^{\ii 2 \pi k x } \,. 
%$$
%One has that $\psi(k, \xi - k) - \psi(k, \xi)  = p(k, \xi) k$ where $p(k, \xi) := - \int_0^t \partial_\xi \psi(k, \xi - t k)\, d t$. Recalling the definition of the cut-off function $\psi$ given in \eqref{cut-off function}, one can deduce that $p(k, \xi) = 0$ for $\langle \xi \rangle \lesssim \langle k \rangle$. Therefore, one has that $\sup_{\xi \in \R} \langle \xi \rangle^{\rho - 1} \|r_a^*(\cdot, \xi) \|_1 \lesssim \| a \|_\rho$. By  one deduce that for any $s \geq 0$, the $\R$-linear map $H^\rho \mapsto {\cal B}(H^s, H^{s + \rho - 1})$, $a \mapsto {\rm Op}(r_a^*) $ is continuous. The claimed statement then follows by setting ${\cal R}_*(a) := {\rm Op}(r_a^*)$. 
%\end{proof}
Combining Lemma \ref{composizione paraprodotto e derivate} and Lemma \ref{lemma aggiunto paraprodotto} yields the following
\begin{corollary}\label{aggiunto nostri operatori}
Let $m \in \Z$, $N \in \N$. Then there exists an integer $\sigma_N > N + m$ so that for any  $a \in H^{\sigma_N}(\T_1)$,  
$(T_a \partial_x^m)^\top$ admits the expansion 
$$
(T_a \partial_x^m)^\top = (- 1)^m T_{  a} \partial_x^{m } +  (- 1)^m\sum_{n = 1}^{N+m} K_{n, m}T_{\partial_x^n  a} \partial_x^{m - n} + {\cal R}_{\top, N, m}(a)  ,
$$
where $K_{n, m}$ are the combinatorial constants of Lemma \ref{composizione paraprodotto e derivate}, and
where for any $s \geq 0$, the map 
$$
{\cal R}_{\top, N, m} : H^{\sigma_N}(\T_1) \to {\cal B}(H^s(\T_1), H^{s + N + 1}(\T_1)), \, a \mapsto {\cal R}_{\top, N, m}(a),
$$ 
is linear and for any $a \in H^{\sigma_N}(\T_1)$ satisfies $\| {\cal R}_{\top, N, m}(a)\|_{{\cal B}(H^s, H^{s + N + 1})} \lesssim_{s, N} \| a \|_{\sigma_N}$.   
\end{corollary}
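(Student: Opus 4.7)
The plan is to combine, in a purely algebraic way, two results established earlier in this section: the adjoint formula for a pure paraproduct (Lemma \ref{lemma aggiunto paraprodotto}) and the commutation expansion of $\partial_x^m$ past a paraproduct (Lemma \ref{composizione paraprodotto e derivate}). The starting observation is that $(\partial_x^m)^\top = (-1)^m \partial_x^m$ on $L^2(\T_1)$, which follows from the Fourier multiplier representation $\partial_x^m = {\rm Op}((\ii 2\pi\xi)^m \chi(\xi))$ together with the evenness and reality of $\chi$: the adjoint symbol is the complex conjugate, and $\overline{(\ii 2\pi\xi)^m}\,\chi(\xi) = (-1)^m(\ii 2\pi\xi)^m\chi(\xi)$. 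This reduces the claim to analyzing $(T_a \partial_x^m)^\top = (-1)^m \partial_x^m T_a^\top$.

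Next I would apply Lemma \ref{lemma aggiunto paraprodotto} with its smoothing parameter taken to be $N' := N + m$ rather than $N$, obtaining $T_a^\top = T_a + \widetilde{\mathcal R}_\top(a)$ with $\widetilde{\mathcal R}_\top(a) \in \mathcal B(H^s, H^{s + N + m + 1})$, provided $a \in H^{N+m+1}(\T_1)$. The shift $N \mapsto N+m$ is essential: the subsequent front factor $\partial_x^m$ consumes exactly $m$ orders of smoothing, so only a gain of $N+1$ derivatives survives, which is the target order. Substituting and then expanding $\partial_x^m \circ T_a$ by Lemma \ref{composizione paraprodotto e derivate} for $a \in H^{\sigma_N^{(0)}}(\T_1)$ with some $\sigma_N^{(0)} > N+m$, one obtains the main terms $T_a \partial_x^m + \sum_{n=1}^{N+m} K_{n,m} T_{\partial_x^n a} \partial_x^{m-n}$ multiplied by $(-1)^m$, plus the remainder
$$
\mathcal R_{\top, N, m}(a) := (-1)^m \mathcal R_{N,m}(a) + (-1)^m \partial_x^m \widetilde{\mathcal R}_\top(a).
$$
With $\sigma_N := \max\{\sigma_N^{(0)}, N+m+1\}$, linearity of $a \mapsto \mathcal R_{\top, N, m}(a)$ and the bound $\|\mathcal R_{\top, N, m}(a)\|_{\mathcal B(H^s, H^{s+N+1})} \lesssim_{s,N} \|a\|_{\sigma_N}$ follow at once from the corresponding properties of the two constituent remainders.

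The only real obstacle here is the bookkeeping of the smoothing indices, which must be arranged so that both $\mathcal R_{N,m}(a)$ and $\partial_x^m \widetilde{\mathcal R}_\top(a)$ gain exactly $N+1$ derivatives. Once one recognizes that Lemma \ref{lemma aggiunto paraprodotto} must be invoked with the shifted parameter $N+m$ to compensate for the $m$ derivatives lost to the front factor $\partial_x^m$, the remaining steps are mechanical substitutions and there is no further analytic content.
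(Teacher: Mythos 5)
Your proof is correct and follows essentially the same route the paper indicates, namely the reduction $(T_a\partial_x^m)^\top = (-1)^m\,\partial_x^m\circ T_a^\top$, then Lemma \ref{lemma aggiunto paraprodotto} with a shifted smoothing index to replace $T_a^\top$ by $T_a$ up to a regularizing error, then Lemma \ref{composizione paraprodotto e derivate} to commute $\partial_x^m$ to the right of $T_a$; the paper's only hint at a proof is precisely the sentence ``Combining Lemma \ref{composizione paraprodotto e derivate} and Lemma \ref{lemma aggiunto paraprodotto} yields the following''. Two small points deserve care. First, the choice $N' := N+m$ when invoking Lemma \ref{lemma aggiunto paraprodotto} requires $N+m\in\N$, i.e.\ $N+m\ge 1$; in the degenerate regime $m\le -N$, where the sum in the corollary is empty by convention, one must take instead, say, $N':=\max\{N+m,1\}$, and the index count $N'+1-m\ge N+1$ still holds, so $\partial_x^m\widetilde{\mathcal R}_\top(a)$ remains $(N+1)$-regularizing. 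Second, the transpose of a Fourier multiplier $\mathrm{Op}(\sigma)$ with respect to the real $L^2$-pairing has symbol $\sigma(-\xi)$ rather than $\overline{\sigma(\xi)}$; in your argument this is harmless because for $\sigma(\xi)=(\ii\, 2\pi\xi)^m\chi(\xi)$ with $\chi$ real and even one has $\sigma(-\xi)=\overline{\sigma(\xi)}=(-1)^m\sigma(\xi)$, giving exactly the sign you need, but the justification should be phrased through $\sigma(-\xi)$.
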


\section{Para-differential vector fields}\label{section paradiff nonlinear}
In this section we introduce several classes of vector fields, compute the commutators between vector fields from these classes and study their flows.
As part of the proof of Theorem \ref{stability theorem} ,
these vector fields are used to transform equation \eqref{1.1} into normal form.

\noindent
\subsection{Definitions}

\begin{definition}[\bf Para-differential vector fields]\label{paradiff vector fields}
Let $N$, $p \in \N$ and $m \in \Z$. A vector field $X^\bot$ in normal direction, defined on a subset of $\mathcal E$ and depending on the parameters $\e$ and $\mu$, 
is said to be of class ${\cal O B}(m, N)$, $X^\bot \in {\cal O B}(m, N)$, if it is of the form
\begin{equation}\label{formula generale paradiff vector field}
X^\bot(\frak x) =\Pi_\bot  \sum_{k = 0}^{N + m} T_{a_{m - k}(\frak x)} \partial_x^{m - k} w
\end{equation}
and has the following property: there are integers $\sigma_N, s_N \geq 0$ so that for any $s \geq s_N$ there exist $0 < \delta \equiv \delta(s, N) < 1$ 
and $0 < \e_0 \equiv \e_0(s, N) < 1$ so that for any $0 \le k \le N + m$ 
$$
a_{m - k} :  {\cal V}^{s  + \sigma_N}(\delta) \times [0, \e_0]   \to H^s(\T_1), \, (\frak x, \e) \mapsto a_{m - k}(\frak x) \equiv a_{m - k}(\frak x, \e)
$$
is $C^\infty-$smooth and together with each of its derivatives bounded.
$X^{\bot}$ is said to be of class ${\cal OB}^p(m, N)$ if it is in ${\cal OB}(m, N)$ and in addition, the functions $a_{m -k}$ are small of order $p - 1$.
\end{definition}
\begin{remark}
$(i)$ If $N+ m < 0$ in \eqref{formula generale paradiff vector field}, the sum is defined to be the zero vector field.
As a consequence, ${\cal OB}(m, N) = \{ 0 \}$ if $N + m  <  0$. Throughout the paper, the same convention holds for any sum of terms, indexed by an empty set,
and for any of the used classes of vector fields. 

\noindent
$(ii)$ We point out that 
the bounds are  uniform in the parameter $\mu$, but no regularity assumptions with respect to $\mu$ are required.
Throughout the paper, the same convention holds. 
\end{remark}
\begin{definition}[\bf Fourier multiplier vector fields]\label{classe fourier multipliers}
Let $N$, $p \in \N$ and $m \in \Z$. A vector field $\mathcal M^\bot$ in normal direction, defined on a subset of $\mathcal E$ and depending on the parameters $\e$ and $\mu$, 
is said to be of class ${\cal O F}(m, N)$, $\mathcal M^\bot \in {\cal O F}(m, N)$, if it is of the form
\begin{equation}\label{fourier multiplier vector field}
\mathcal M^\bot(\frak x) = \sum_{k = 0}^{N + m} \lambda_{m - k}(\frak x) \partial_x^{m - k} w
\end{equation}
and has the following property: there exist an integer $\sigma_N \geq 0$, $0 < \delta \equiv \delta(N) <1$, and $0 < \e_0 \equiv \e_0(N) < 1$  
so that for any $0 \le k \le N + m$,
$$
\lambda_{m - k} :  {\cal V}^{\sigma_N}(\delta) \times [0, \e_0]  \to \R, \, (\frak x, \e) \mapsto \lambda_{m - k}(\frak x) \equiv \lambda_{m - k}(\frak x, \e)
$$
is $C^\infty$-smooth and together with each of its derivatives bounded. 
 $\mathcal M^{\bot}$ is said to be of class  ${\cal OF}^p(m, N)$ if it is in ${\cal OF}(m, N)$ 
and in addition, the functions $\lambda_{m -k}$ are small of order $p - 1$.
\end{definition}
%To shorten notations, we often omit to write the explicit dependence on $\e$. 
%\begin{definition}[\bf Para-differential vector fields small of order $p$]
%Given $m \in \Z,  N, p \in \N$ we say that 
%\end{definition}
%\begin{definition}[\bf Para-differential vector fields homogeneous of order $p$]
%Given $m \in \Z,  N, p \in \N$ we say that ${\cal P} \in {\cal OB}^p_{hom}(m, N)$ if ${\cal P} \in {\cal OB}^p(m, N)$ and the functions $a_{m -k}$ in the Definition \ref{paradiff vector fields} are homogeneous w.r. to $(\e, y, w)$ of order $p - 1$. 
%\end{definition}
\begin{definition}[\bf Smoothing vector fields]\label{def smoothing vector fields}
Let $N$, $p \in \N$. A vector field $\mathcal R$, defined on a subset of $\mathcal E$ and depending on the parameters $\e$ and $\mu$, 
is said to be of class ${\cal OS}( N)$, $\mathcal R \in {\cal OS}(N)$, if 
there exist $s_N \geq 0$ so that for any $s \geq s_N$, there exist $0 < \delta \equiv \delta(s, N) <1$ and $0 < \e_0 \equiv \e_0(s, N) < 1$ with the property that 
$$
{\cal R} :  {\cal V}^s(\delta) \times [0, \e_0] \to E_{s + N + 1}, \, (\frak x, \e) \mapsto {\cal R}(\frak x) \equiv {\cal R}(\frak x, \e)
$$
is $C^\infty$-smooth and together with each of its dervatives bounded.
${\cal R}$ is said to be of class ${\cal OS}^p(N)$ if it is in ${\cal OS}(N)$ and in addition is small of order $p$.
\end{definition}
%\begin{lemma}\label{espansione astratta componenti omogenee}
%Let $N, p \in \N$ and $m \in \Z$.
%For any $X^\bot \in {\cal OB}^p(m, N)$ and any $q \geq 1$, one has 
%$$
%X^\bot = \sum_{j = 0}^{q - 1} X^\bot_{p + j} + {\cal R}^\bot_{ p + q}\, , \qquad X^\bot_{p + j} \in {\cal OB}_{hom}^{p + j}(m, N) ,  \qquad {\cal R}^\bot_{p + q} \in {\cal OB}^{p + q}(m, N)\, .
%$$
%Corresponding statements hold for vector fields in ${\cal OF}^p(m, N)$ and ${\cal OS}^p(N)$. 
%\end{lemma}
%\begin{proof}
%The claimed expansion of $X^\bot(\frak x) = \Pi_\bot  \sum_{k = 0}^{N + m} T_{a_{m - k}(\frak x; X^\bot)} \partial_x^{m - k} w$
%is obtained by computing the Taylor expansion of the functions $a_{m - k}$, $0 \le k \le  m + N$, with respect to $(y, w, \e)$ at $(y, w, \e) = (0, 0, 0)$. 
%Since $X^\bot \in {\cal OB}^p(m, N)$, for any $0 \le k \le N+m$, $a_{m -k}$ is small of order $p - 1$ and hence its Taylor expansion reads
%$$
%a_{m - k}= \sum_{j = 0}^{q - 1} a_{m - k}^{(p - 1 + j)} + r_{m - k}^{p - 1 + q},
%$$
%where for any $0 \le j \le q-1$, $a_{m - k}^{(p -1 + j)} $ is homogeneous of order $p - 1 + j$ with respect to $(y, w, \e)$ and where $r_{m - k}^{p + q - 1}$ is small of order $p -1 + q $. 
%The corresponding results for vector fields in  ${\cal OF}^p(m, N)$ and ${\cal OS}^p(N)$ are proved in the same way.
%\end{proof}
\begin{remark}\label{smooth and bounded}
For notational convenience, in the sequel, we refer to a function, which is $C^\infty$-smooth and together with each of its derivatives bounded,
as a function which is $C^\infty$-smooth and bounded.
\end{remark}
Next we introduce  special classes of vector fields which are small of order $2$ with respect to $y$, $w$, $\e$. 
\begin{definition}\label{paradiff omogenei di ordine 2}
Let $N \in \N$ and $m \in \Z$.\\
$(i)$ Assume that $X^\bot(\frak x) = \Pi_\bot \sum_{k = 0}^{m + N} T_{a_{m - k}(\frak x)} \partial_x^{m - k} w$ is of class ${\cal OB}^2(m, N)$. \\
$(i1)$  $X^\bot$ is said to be of class ${\cal OB}^2_{w}(m , N)$ if it is linear with respect to $w$. As a consequence,
for any $0 \le k \le  m + N$, the coefficient $a_{m - k}$ is small of order one and independent of $w$.
More precisely, there is an integer $s_N \ge 0$ so that for any $s \geq s_N$, there exist $0 < \delta \equiv \delta(s, N) < 1$ 
and $\e_0 \equiv \e_0(s, N) > 0$ with the property that 
$$
a_{m - k} : \T^{S_+} \times B_{S_+}(\delta) \times [0, \e_0] \to  H^s(\T_1), \, (\theta, y, \e) \mapsto a_{m - k}(\theta, y) \equiv a_{m - k}(\theta, y, \e)
$$
is $C^\infty$-smooth and bounded (cf. Remark \ref{smooth and bounded}).
In this case, we often write $X^\bot(\theta, y)[w]$ instead of $X^\bot(\frak x)$ where 
%$X^\bot(\theta, y)$ is the linear operator 
$$
X^\bot(\theta, y) := \Pi_\bot \sum_{k = 0}^{N+m} T_{a_{m - k}(\theta, y)} \partial_x^{m - k}\, .
$$
\noindent
$(i2)$ $X^\bot$ is said be of class ${\cal OB}^2_{ww}(m, N)$ if it is quadratic with respect to $w$ and independent of $y$. As a consequence,
for any $0 \le k \le  m + N$, the coefficient $a_{m - k}$ is linear with respect to $w$ and independent of $y$. 
More precisely,  there are integers $s_N \ge 0$, $\sigma_N \ge 0$ so that for any $s \geq s_N$ there exist $0 < \delta \equiv \delta(s, N) < 1$ 
and $0 < \e_0 \equiv \e_0(s, N) < 1$  with the property that 
$$
a_{m - k} : \T^{S_+} \times H_\bot^{s + \sigma_N} \times [0, \e_0] \to  H^s(\T_1), \, (\theta, w, \e) \mapsto a_{m - k}(\theta, w) \equiv A_{m - k}(\theta)[w] \, ,
$$
 with 
 $$
 A_{m - k}: \T^{S_+} \times [0, \e_0] \to {\cal B}(H^{s + \sigma_N}_\bot(\T_1), H^s(\T_1)), \, (\theta, \e)  \mapsto A_{m - k}(\theta) \equiv A_{m - k}(\theta, \e) \ 
 $$
 being $C^\infty$-smooth and bounded. In this case we often write $X^\bot(\theta, w)[w]$ instead of $X^\bot(\frak x)$  where 
$$
X^\bot(\theta, w) = \Pi_\bot \sum_{k = 0}^{N+m} T_{A_{m - k}(\theta)[w]} \partial_x^{m - k} \, .
$$
$(ii)$ Assume that $\mathcal M^\bot(\frak x) = \sum_{k = 0}^{N + m} \lambda_{m - k}(\frak x) \partial_x^{m - k} w$ is of class ${\cal OF}^2(m, N)$. \\
$(ii1)$ $\mathcal M^\bot$ is said to be of class ${\cal OF}^2_w(m, N)$ if it is linear with respect to $w$. More precisely, 
there exist $0 < \delta \equiv \delta(N) < 1$ and $0 < \e_0 \equiv \e_0(N) < 1$ with the property that 
for any $0 \le k \le  m + N$, 
$$
\lambda_{m - k} : \T^{S_+} \times B_{S_+}(\delta) \times [0, \e_0] \to \R, \, (\theta, y, \e) \mapsto  \lambda_{m-k}(\theta, y) \equiv  \lambda_{m-k}(\theta, y, \e)
$$
is $C^\infty$-smooth and bounded. \\
$(ii2)$ $\mathcal M^\bot$ is said to be of class ${\cal OF}^2_{ww}(m, N)$ if it is quadratic with respect to $w$ and independent of $y$. More precisely,
there exist an integer $\sigma_N \ge 0$, $0 < \e_0 \equiv \e_0(N) < 1$,  and 
 for any $0 \le k \le  m + N$ a $C^\infty-$smooth map
 $$
\Lambda_{m -k} : \T^{S_+} \times [0, \e_0] \to {\cal B}(H^{\sigma_N}_\bot(\T_1), \R), \, \theta \mapsto \Lambda_{m -k} (\theta) \equiv \Lambda_{m -k} (\theta, \e),
$$
so that $\lambda_{m - k}(\frak x) = \Lambda_{m - k}(\theta)[w]$.\\
$(iii)$ Assume that ${\cal R}$ is a smoothing vector field of class ${\cal OS}^2(N)$. \\
$(iii1)$ ${\cal R}$ is said to be of class ${\cal OS}_{w}^2(N)$ if  ${\cal R}(\frak x)$ of the form ${\frak R}(\theta, y)[w]$
with $\frak R$ having the following property: there is an integer $s_N \ge 0$ so that for any $s \ge s_N$, there exist $0 < \delta \equiv \delta(s, N) < 1 $ 
and $0 < \e_0 \equiv \e_0(s, N) < 1$ with the property that
$$
{\frak R} : \T^{S_+} \times B_{S_+}(\delta) \times [0, \e_0] \to {\cal B}(H^s(\T_1), H^{s + N + 1}(\T_1)), \, (\theta, y, \e) \mapsto {\frak R}(\theta, y) \equiv {\frak R}(\theta, y; \e)
$$ 
is $C^\infty$-smooth, bounded, and small of order one. In the sequel, we will also write ${\cal R}(\theta, y)[w]$ for ${\frak R}(\theta, y)[w]$.\\
$(iii2)$  ${\cal R}$ is said to be of class ${\cal OS}^2_{ww}(N)$ if ${\cal R}$ is quadratic with respect to $w$ and independent of $y$. 
More precisely,  ${\cal R}(\frak x)$ is of the form ${\frak R}(\theta)[w, w]$ with $\frak R$ having the following property: there is an integer $s_N \ge  0$ so that for any $s \ge s_N$
there exists $0 < \e_0 \equiv \e_0(s, N) < 1$ with the property that 
$$
{\frak R} : \T^{S_+} \times [0, \e_0] \to {\cal B}_2\big(H^s_\bot(\T_1) , H^{s + N + 1}_\bot(\T_1)\big), \, (\theta, \e) \mapsto {\frak R}(\theta) \equiv {\frak R}(\theta, \e)
$$
is $C^\infty$-smooth and bounded. 
%Here ${\cal B}_2\big(H^s_\bot(\T_1) , H^{s + N}_\bot(\T_1)\big)$ denotes the space of bounded  bilinear maps $H^s_\bot(\T_1) \times H^s_\bot(\T_1) \to H^{s + N}_\bot(\T_1)$. 
In the sequel, we will often write ${\cal R}(\theta)[w, w]$ instead of ${\frak R}(\theta)[w, w]$.
\end{definition}
\begin{remark}\label{remark fourier multiplier paradiff}
%[\bf Fourier multipliers and para-differential vector fields]
For any $N \in \N$ and $m \in \Z$, the following inclusions between the classes of vector fields introduced above hold:
$$
\begin{aligned}
& {\cal OF}(m, N) \subseteq {\cal OB}(m, N), \qquad \  {\cal OF}^p(m, N) \subseteq {\cal OB}^p(m, N)\,, \\
& {\cal OF}^2_w(m, N) \subseteq {\cal OB}^2_{w}(m, N), \quad {\cal OF}^2_{ww}(m, N) \subseteq {\cal OB}^2_{ww}(m, N)\,.  \quad
\end{aligned}
$$
These inclusions hold since by \eqref{definizione paraprodotto} the operator 
$T_\lambda$ of para-multiplication with any constant $\lambda \in \R$ satisfies $\Pi_\bot T_\lambda = \lambda \Pi_\bot$. 
%and hence for any $m \in \Z$, $\lambda \partial_x^m = T_a \partial_x^m$ which is the para-differential operator associated to the constant function $a(x) \equiv \lambda$. 
\end{remark}
For notational convenience, we will often not distinguish between a vector field $X$ of the form $(0, 0, X^\bot)$ and its normal component $X^\bot$.
Given two vector fields $X$ and $Y$, defined on a subset of $\mathcal E$ and depending on the parameters $\e$ and $\mu$,  we write
$$
X = Y + {\cal O}_1 + \cdots + {\cal O}_n
$$
if for any $1 \le j \le n,$ there exists a vector field $X_j  \in {\cal O}_j$ so that
$X = Y + X_1 + \cdots + X_n$.
Here ${\cal O}_j$ denotes any of the  classes of vector fields introduced above.

\subsection{Commutators}

\begin{lemma}[\bf Commutators I]\label{Commutator of smoothing vector fields}
Let $N$, $p$, and $q$ be in $\N$.\\
(i) For any smoothing vector fields ${\cal R}$, ${\cal Q} \in {\cal OS}(N)$, the commutator $[{\cal R}, {\cal Q}]$ is also in ${\cal OS}(N)$. \\
$(ii)$ For any vector fields ${\cal R} \in {\cal OS}^p(N)$ and ${\cal Q} \in {\cal OS}^q(N)$, one has $[{\cal R}, {\cal Q}] \in {\cal OS}^{p + q - 1}(N)$
\end{lemma}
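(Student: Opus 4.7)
The plan is to compute both parts directly from the pointwise formula
\[ [\mathcal R, \mathcal Q](\frak x) = d\mathcal R(\frak x)[\mathcal Q(\frak x)] - d\mathcal Q(\frak x)[\mathcal R(\frak x)] \]
and to read off the two required properties, the regularity gain and the order of vanishing, term by term.

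For (i), set $s_N := \max\{s_N^{\mathcal R}, s_N^{\mathcal Q}\}$ and, for $s \geq s_N$, choose $\delta := \min\{\delta^{\mathcal R}, \delta^{\mathcal Q}\}$ and $\e_0 := \min\{\e_0^{\mathcal R}, \e_0^{\mathcal Q}\}$ small enough that both $\mathcal R$ and $\mathcal Q$ are $C^\infty$-smooth bounded maps from $\mathcal V^s(\delta) \times [0, \e_0]$ into $E_{s+N+1}$. The key observation is that, since $\mathcal R$ takes values in $E_{s+N+1}$, its differential at each $\frak x$ is naturally a bounded linear map $d\mathcal R(\frak x) : E_s \to E_{s+N+1}$, and $\frak x \mapsto d\mathcal R(\frak x)$ is itself a $C^\infty$-smooth bounded map into $\mathcal B(E_s, E_{s+N+1})$. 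Since $\mathcal Q(\frak x) \in E_{s+N+1} \hookrightarrow E_s$, the evaluation $d\mathcal R(\frak x)[\mathcal Q(\frak x)]$ lies in $E_{s+N+1}$ and depends $C^\infty$-smoothly on $(\frak x, \e)$ with all derivatives bounded. The symmetric argument applies to $d\mathcal Q(\frak x)[\mathcal R(\frak x)]$, giving $[\mathcal R, \mathcal Q] \in \mathcal{OS}(N)$.

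For (ii) I track the order of vanishing. Writing $\mathcal Q(\frak x) = (\mathcal Q^{(\theta)}, \mathcal Q^{(y)}, \mathcal Q^\bot)(\frak x)$, the chain rule expands
\[ d\mathcal R(\frak x)[\mathcal Q(\frak x)] = \partial_\theta \mathcal R(\frak x)[\mathcal Q^{(\theta)}(\frak x)] + \partial_y \mathcal R(\frak x)[\mathcal Q^{(y)}(\frak x)] + d_\bot \mathcal R(\frak x)[\mathcal Q^\bot(\frak x)]. \]
Definition \ref{def map order p} immediately yields that if $g$ is small of order $p$, then $\partial_y g$ and $d_\bot g$ are small of order $p-1$, while $\partial_\theta g$ is again small of order $p$, since $\partial_\theta$ commutes with evaluation at $(y, w, \e) = 0$. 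Next, a pigeonhole argument based on the Leibniz rule shows that whenever $f$ is small of order $a$, $g$ is small of order $b$, and $\Phi$ is a bounded bilinear map from their target spaces into some Banach space, the composition $\frak x \mapsto \Phi(f(\frak x), g(\frak x))$ is small of order $a+b$: any partial derivative of total order $\leq a+b-1$ in $(y, w, \e)$ decomposes under Leibniz into summands each containing either a derivative of $f$ of order $\leq a-1$ or of $g$ of order $\leq b-1$, hence vanishing at $(\theta, 0, 0, 0)$. Applying this with $(a,b) = (p, q)$ for the $\theta$-summand and $(a, b) = (p-1, q)$ for the other two, the three summands are small of order $p+q$, $p+q-1$, $p+q-1$, so their sum is small of order $p+q-1$. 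The term $d\mathcal Q(\frak x)[\mathcal R(\frak x)]$ is treated identically with $p$ and $q$ exchanged, and (ii) follows.

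Nothing here is genuinely delicate; the only points to keep track of are, first, the interpretation of $d\mathcal R(\frak x)$ as a bounded operator $E_s \to E_{s+N+1}$ rather than $E_s \to E_s$, which is what guarantees the $N+1$-smoothing of the composition, and second, that the $-1$ in $p+q-1$ arises precisely from pairing the $(y, w)$-derivative of $\mathcal R$ (small of order $p-1$) with the $(y, w)$-components of $\mathcal Q$ (small of order $q$), while the $\theta$-component pairing is strictly better.
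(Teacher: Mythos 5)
Your proof is correct and supplies exactly the details that the paper's one-sentence proof (which merely invokes Definition \ref{def smoothing vector fields} and the commutator formula \eqref{definition notations nonlinear commutators}) leaves implicit; the two observations you isolate — the embedding $E_{s+N+1}\hookrightarrow E_s$ making the evaluation $d\mathcal R(\frak x)[\mathcal Q(\frak x)]$ land back in $E_{s+N+1}$ for part (i), and the Leibniz pigeonhole together with the fact that $\partial_\theta$ preserves the order of vanishing for part (ii) — are precisely the intended argument.
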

\begin{proof}
The two items follow from  Definition \ref{def smoothing vector fields} (smoothing vector fields) 
and the definition \eqref{definition notations nonlinear commutators} of the commutator. 
\end{proof}
\begin{lemma}[\bf Commutators II]\label{comm smoothing paradiff}
Let $N$, $p$, $q \in \N$ and $m \in \Z$.\\
If  $X= (0, 0, X^\bot)$ with $X^\bot \in {\cal O B}(m, N)$ and  ${\cal R} = (\mathcal R^{(\theta)}, \mathcal R^{(y)}, \mathcal R^\bot) \in {\cal OS}(N)$, then
 \begin{equation}\label{formula II}
 [(0, 0, X^\bot), \, {\cal R}] = (0,0, \,  \mathcal C^\bot_{[X, \mathcal R]}) + {\cal R}_{[X, \mathcal R]}\, , \qquad  
 \mathcal C^\bot_{[X, \mathcal R]} \in {\cal OB}(m, N)\, , \quad {\cal R}_{[X, \mathcal R]} \in {\cal OS}(N - m)\, .
 \end{equation}
If  $X^\bot \in {\cal O B}^p(m, N)$ and  ${\cal R} \in {\cal OS}^q(N)$, then
%\begin{equation}\label{formula IIb}
 $  \mathcal C^\bot_{[X, \mathcal R]} \in {\cal OB}^{p + q  -1}(m, N)$ and ${\cal R}_{[X, \mathcal R]}  \in {\cal OS}^{p + q - 1}(N - m)$.
% \end{equation}
\end{lemma}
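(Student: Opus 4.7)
The plan is to compute $[(0,0,X^\bot),\mathcal R]$ directly from \eqref{definition notations nonlinear commutators} and split the result into its paradifferential and smoothing parts. Setting $Y := (0,0,X^\bot)$ and using the product rule on $X^\bot(\frak x) = \Pi_\bot \sum_{k=0}^{N+m} T_{a_{m-k}(\frak x)}\partial_x^{m-k} w$, one obtains
\begin{equation*}
dY(\frak x)[\mathcal R(\frak x)] = \Bigl(0, 0,\, \Pi_\bot\sum_{k=0}^{N+m} T_{c_{m-k}(\frak x)}\partial_x^{m-k} w + \Pi_\bot\sum_{k=0}^{N+m} T_{a_{m-k}(\frak x)}\partial_x^{m-k}\mathcal R^\bot(\frak x)\Bigr),
\end{equation*}
with $c_{m-k}(\frak x) := d a_{m-k}(\frak x)[\mathcal R(\frak x)]$, while, since $Y$ has no $\theta$- or $y$-component, $d\mathcal R(\frak x)[Y(\frak x)] = d_\bot\mathcal R(\frak x)[X^\bot(\frak x)]$. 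I would declare the first summand of the $w$-component to be $\mathcal C^\bot_{[X,\mathcal R]}(\frak x)$, which is manifestly of the form \eqref{formula generale paradiff vector field}, and bundle the remaining three pieces (the two finite-dimensional components and the two $w$-component remainders involving $\mathcal R^\bot$ or $d_\bot\mathcal R^\bot[X^\bot]$) into $\mathcal R_{[X,\mathcal R]}(\frak x)$.

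The verification $\mathcal C^\bot_{[X,\mathcal R]}\in\mathcal{OB}(m,N)$ reduces to showing that each $c_{m-k}$ is $C^\infty$-smooth and bounded as a map $\mathcal V^{s+\sigma_N'}(\delta)\times[0,\e_0] \to H^s(\T_1)$ for some enlarged loss $\sigma_N'$, which follows from the chain rule and the smoothness and boundedness of $a_{m-k}$ and $\mathcal R$ at every regularity level. For the smoothing piece, the key point is that $X^\bot(\frak x) \in H^{s-m}_\bot(\T_1)$, so (choosing $s$ large enough that $s-m \geq s_N$) the differential $d_\bot\mathcal R(\frak x)$, viewed at level $s-m$, sends $X^\bot(\frak x)$ into $E_{s-m+N+1} = E_{s+(N-m)+1}$; likewise $\partial_x^{m-k}\mathcal R^\bot(\frak x)\in H^{s+(N-m)+1}(\T_1)$, and paramultiplication by $a_{m-k}(\frak x)$ preserves this regularity by Lemma \ref{prop 1 paraproduct}. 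Combining these observations yields $\mathcal R_{[X,\mathcal R]}\in\mathcal{OS}(N-m)$.

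For the quantitative refinement under the $\mathcal{OB}^p, \mathcal{OS}^q$ hypotheses, I would use two observations. First, any $X^\bot\in\mathcal{OB}^p(m,N)$ is itself small of order $p$ as a map into $H^{s-m}_\bot$, because it is linear in $w$ (accounting for one order) and its coefficients vanish of order $p-1$ at $(y,w,\e)=(0,0,0)$. Second, any Fréchet derivative $d_\bot \mathcal R$ or $\partial_y \mathcal R$ of $\mathcal R\in\mathcal{OS}^q$ is small of order $q-1$, while $\partial_\theta\mathcal R$ retains order $q$. Each of the three smoothing summands in $\mathcal R_{[X,\mathcal R]}$ then pairs a factor of order $q-1$ with a factor of order $p$ (or, in the $\partial_x^{m-k}\mathcal R^\bot$ term, a coefficient of order $p-1$ with $\mathcal R^\bot$ of order $q$), giving smallness of order $p+q-1$; hence $\mathcal R_{[X,\mathcal R]}\in\mathcal{OS}^{p+q-1}(N-m)$. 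For the coefficient
$c_{m-k} = \partial_\theta a_{m-k}\cdot\mathcal R^{(\theta)} + \partial_y a_{m-k}\cdot\mathcal R^{(y)} + d_\bot a_{m-k}[\mathcal R^\bot]$,
the $\partial_\theta$-term has order $(p-1)+q$ while the $\partial_y$- and $d_\bot$-terms have order $(p-2)+q$; the worst case $p+q-2$ is exactly what is required for $\mathcal C^\bot_{[X,\mathcal R]}\in\mathcal{OB}^{p+q-1}(m,N)$. The main difficulty is purely organizational — distributing the many derivatives arising in $dY[\mathcal R]$ and $d\mathcal R[Y]$ into the correct class with the correct quantitative order — and no analytic input beyond the mapping properties of paraproducts is needed.
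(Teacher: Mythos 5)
Your proposal is correct and follows essentially the same route as the paper's proof: you compute $[X,\mathcal R]$ directly from the commutator formula, isolate the para-differential piece $\Pi_\bot\sum_k T_{d a_{m-k}(\frak x)[\mathcal R(\frak x)]}\partial_x^{m-k}w$ as $\mathcal C^\bot_{[X,\mathcal R]}$, and collect the remaining terms — the two finite-dimensional components of $-d_\bot\mathcal R[X^\bot]$ together with $\Pi_\bot\sum_k T_{a_{m-k}}\partial_x^{m-k}\mathcal R^\bot$ and $-d_\bot\mathcal R^\bot[X^\bot]$ — into the smoothing remainder, observing that applying $\partial_x^{m-k}$ to $\mathcal R^\bot$ and evaluating $d_\bot\mathcal R$ on $X^\bot\in H^{s-m}_\bot$ each cost $m$ derivatives, giving $\mathcal{OS}(N-m)$. (The paper organizes this identically, merely splitting $X=\sum_k X_k$ first; one small slip: you bundle four pieces, not three, but this is purely a miscount in the prose.) For the quantitative refinement the paper only writes ``by similar arguments,'' and your explicit order-counting — $\theta$-derivatives of $a_{m-k}$ retain order $p-1$ while $y$- and $w$-derivatives drop to $p-2$, $X^\bot$ itself is small of order $p$ because the extra factor of $w$ supplies one order, and Fr\'echet derivatives of $\mathcal R$ drop to order $q-1$ — is exactly the bookkeeping the authors intend; the worst-case orders $p+q-2$ for the coefficients of $\mathcal C^\bot$ and $p+q-1$ for $\mathcal R_{[X,\mathcal R]}$ match the claim.
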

\begin{proof}
By \eqref{formula generale paradiff vector field},  $X$ can be written as $X(\frak x) := \sum_{k = 0}^{N + m} X_k(\frak x)$ where
$$
X_k(\frak x) = \big(0,0,  \, \Pi_\bot T_{a_{m -k}(\frak x)} \partial_x^{m - k} w \big), 
\qquad  \forall \, 0 \le k \le N+m\, .
$$ 
For any $0 \le k \le N+m$, the commutator $ [X_k, {\cal R}] (\frak x)= d X_k(\frak x)[{\cal R}(\frak x)] - d {\cal R}(\frak x) [X_k(\frak x)]$ can be computed as
$$
 [X_k, {\cal R}](\frak x) = (0 ,0,  \Pi_\bot T_{a_{m - k}(\frak x)} \partial_x^{m - k} {\cal R}^\bot (\frak x)) 
 + (0,0,  \Pi_\bot T_{d a_{m - k}(\frak x)[{\cal R}(\frak x)]} \partial_x^{m - k} w) - d {\cal R}(\frak x) [X_k(\frak x)] ,
$$
where $\mathcal R = (\mathcal R^{(\theta)}, \mathcal R^{(y)}, \mathcal R^\bot)$. 
Note that $d {\cal R}(\frak x) [X_k(\frak x)] \in {\cal OS}(N - m)$ and that for any $0 \le k \le N+m$,
$$
 \big(0 ,0, \, \Pi_\bot T_{a_{m - x}(\frak x)} \partial_x^{m - k} {\cal R}^\bot(\frak x) \big)  \in {\cal OS}(N - (m - k)) \subseteq {\cal OS}(N - m).
$$ 
Formula \eqref{formula II} then follows by setting 
$ \mathcal C^\bot_{[X, \mathcal R]}(\frak x) := \Pi_\bot  \sum_{k = 0}^{m + N} T_{d a_{m - k}(\frak x)[{\cal R}(\frak x)]} \partial_x^{m - k} w$, and 
$$
{\cal R}_{[X, \mathcal R]}(\frak x) := \sum_{k = 0}^{m + N}\big(0, 0, \, \Pi_\bot T_{a_{m - k}(\frak x)} \partial_x^{m - k} {\cal R}^\bot(\frak x) \big) - d {\cal R}(\frak x) [X_k(\frak x)]\,. 
$$
The remaining part of the lemma is proved by using similar arguments.
\end{proof}
%Note that by \eqref{commutatore + splitting}, one gets that  
%\begin{equation}\label{commutatore paradiff vector fields}
%[X, Y]^S = 0, \quad [X, Y]^\bot(\frak x)  = d_\bot X^\bot(\frak x)[Y^\bot(\frak x)] - d_\bot Y^\bot(\frak x)[X^\bot(\frak x)]\,. 
%\end{equation}
\begin{lemma}[\bf Commutators III]\label{commutatore due paradiff nonlin}
Let $N$, $p$, $q \in \N$,  $m$, $m' \in \Z$, and  let $m_* := \max\{ m + m' - 1, \, m, \, m' \}$.
For any $X^\bot \in {\cal OB}(m, N)$ and $Y^\bot \in {\cal OB}(m', N)$, one has
$$
[X^\bot , Y^\bot] = \mathcal C^\bot_{[X^\bot, Y^\bot]}+ {\cal R}^\bot_{[X^\bot, Y^\bot]} \, , \qquad {\cal C}^\bot_{[X^\bot, Y^\bot]} \in {\cal OB}(m_*, N),
\quad {\cal R}^\bot_{[X^\bot, Y^\bot]} \in {\cal OS}(N).
$$
If in fact  $X^\bot \in {\cal OB}^p(m, N)$ and $Y^\bot \in {\cal OB}^q(m', N)$, then
$$
{\cal C}^\bot_{[X^\bot, Y^\bot]} \in {\cal OB}^{p + q - 1}(m_*, N), \qquad {\cal R}^\bot_{[X^\bot, Y^\bot]} \in {\cal OS}^{p + q - 1}(N).
$$ 
%Let $X := (0, {\cal P}_X)$, $ := (0, {\cal P}_Y)$ with ${\cal P}_X \in {\cal OB}(m, N)$, ${\cal P}_Y \in {\cal OB}(m', N)$. Then the commutator $[X, Y] = (0, {\cal P}_{[X, Y]})$ with ${\cal P}_{[X, Y]} = {\cal P}_m + {\cal P}_{m'} + {\cal P}_{m + m' - 1} + {\cal R}_N $ where ${\cal P}_m \in {\cal OB}(m, N)$, ${\cal P}_{m'} \in {\cal O B}(m', N)$, ${\cal P}_{m + m' - 1} \in {\cal OB}(m + m' - 1, N)$, ${\cal R}_N \in {\cal OS}(N)$. Furthermore, 
%$$
%{\cal P}_{m + m' - 1} = \Pi_\bot T_{(m a b_x - m' a_x b )(\frak x) } \partial_x^{m + m' - 1} w + {\cal OB}(m + m' - 2, N)\,. 
%$$
%If ${\cal P}_X \in {\cal OB}^p(m, N)$ and ${\cal P}_Y \in {\cal OB}^q(m', N)$, then ${\cal P}_m \in {\cal OB}^{p + q - 1}(m, N)$, ${\cal P}_{m'} \in {\cal O B}^{p + q - 1}(m', N)$, ${\cal P}_{m + m' - 1} \in {\cal OB}^{p + q - 1}(m + m' - 1, N)$, ${\cal R}_N \in {\cal OS}^{p + q - 1}(N)$. 
\end{lemma}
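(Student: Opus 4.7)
The plan is to expand the commutator using the chain rule applied to the explicit form \eqref{formula generale paradiff vector field}. Write $X^\bot(\frak x) = \Pi_\bot \sum_{k=0}^{N+m} T_{a_{m-k}(\frak x)} \partial_x^{m-k} w$ and analogously $Y^\bot(\frak x) = \Pi_\bot \sum_{k'=0}^{N+m'} T_{b_{m'-k'}(\frak x)} \partial_x^{m'-k'} w$, viewed as vector fields $(0,0,X^\bot)$ and $(0,0,Y^\bot)$ on $\mathcal E$. Differentiating the coefficient $a_{m-k}(\frak x)$ along $Y(\frak x)=(0,0,Y^\bot(\frak x))$ contributes
\[
\Pi_\bot \sum_{k=0}^{N+m} T_{d a_{m-k}(\frak x)[Y(\frak x)]} \partial_x^{m-k} w,
\]
which is itself a para-differential vector field in $\mathcal{OB}(m,N)$, since $\frak x \mapsto d a_{m-k}(\frak x)[Y(\frak x)]$ is $C^\infty$-smooth and bounded into $H^s(\T_1)$ as a composition of smooth bounded maps. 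Differentiating the $w$-factor contributes $\Pi_\bot \sum_{k} T_{a_{m-k}(\frak x)} \partial_x^{m-k} Y^\bot(\frak x)$; substituting the explicit form of $Y^\bot$ turns this into a double sum of operator compositions $T_{a_{m-k}} \partial_x^{m-k} \,\Pi_\bot\, T_{b_{m'-k'}} \partial_x^{m'-k'} w$. Since the intermediate $\Pi_\bot = I - \Pi_S$ differs from $I$ by the finite-rank projector onto the $S$-modes, the $\Pi_S$ contribution is absorbed into a smoothing vector field in $\mathcal{OS}(N)$ of any prescribed order.

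After the analogous expansion of $dY^\bot(\frak x)[X(\frak x)]$ and subtraction, the coefficient-differentiation pieces assemble into an element of $\mathcal{OB}(m,N) + \mathcal{OB}(m',N) \subseteq \mathcal{OB}(\max(m,m'),N)$, while the composition pieces become $\Pi_\bot \sum_{k,k'} [T_{a_{m-k}(\frak x)} \partial_x^{m-k},\, T_{b_{m'-k'}(\frak x)} \partial_x^{m'-k'}]_{lin} w$ plus a smoothing remainder. To each inner commutator I will apply Corollary \ref{corollario espansione commutatore}: its leading term is a para-differential operator of order $m+m'-k-k'-1$ with symbol $(m-k)\,a_{m-k}\,\partial_x b_{m'-k'} - (m'-k')\,b_{m'-k'}\,\partial_x a_{m-k}$, followed by finitely many strictly lower-order para-differential terms and a smoothing remainder in $\mathcal{OS}(N)$. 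Summing over $k$ and $k'$ (and invoking the empty-sum convention if $N+m+m' < 1$) produces a para-differential vector field of maximal order $m+m'-1$.

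Collecting the three sources of para-differential order, namely $m$, $m'$, and $m+m'-1$, one obtains that the para-differential portion $\mathcal{C}^\bot_{[X^\bot,Y^\bot]}$ lies in $\mathcal{OB}(m_*,N)$ with $m_* = \max\{m,\,m',\,m+m'-1\}$, while all smoothing pieces combine into $\mathcal{R}^\bot_{[X^\bot,Y^\bot]} \in \mathcal{OS}(N)$. For the smallness refinement, suppose the $a_{m-k}$ are small of order $p-1$ and the $b_{m'-k'}$ of order $q-1$ in the sense of Definition \ref{def map order p}. A single Fréchet differentiation in the normal direction lowers smallness by one, and $Y^\bot$ is itself small of order $q$; hence $d a_{m-k}(\frak x)[Y(\frak x)]$ is small of order $(p-2)+q = p+q-2$, and the associated $w$-linear vector field lies in $\mathcal{OB}^{p+q-1}(m,N)$. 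The composition-commutator coefficients of the form $a_{m-k}\,\partial_x^n b_{m'-k'}$ are products of a function small of order $p-1$ with one small of order $q-1$, hence small of order $p+q-2$, yielding $\mathcal{OB}^{p+q-1}(m_*,N)$ for the para-differential part and $\mathcal{OS}^{p+q-1}(N)$ for the smoothing remainders.

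The main technical obstacle will be the careful bookkeeping: one must verify that after the double summation in $k,k'$ every resulting order from $-N$ up to $m_*$ is matched by a para-differential coefficient consistent with the definition of $\mathcal{OB}(m_*,N)$, that the intermediate $\Pi_\bot$ insertions really contribute only smoothing terms at every stage, and that the orders of smallness combine without off-by-one errors under the successive operations of Fréchet differentiation, para-product composition, and commutator expansion, so that the uniform exponent $p+q-1$ is obtained for both $\mathcal{C}^\bot_{[X^\bot,Y^\bot]}$ and $\mathcal{R}^\bot_{[X^\bot,Y^\bot]}$.
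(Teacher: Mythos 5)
Your proposal is correct and follows essentially the same route as the paper's own argument: expand $X^\bot$ and $Y^\bot$ into their constituent terms $\Pi_\bot T_{a_{m-k}}\partial_x^{m-k}w$, split the nonlinear commutator into the coefficient-differentiation pieces (which stay at orders $m$ and $m'$) and the operator-composition pieces, absorb the $\Pi_\bot-\mathrm{Id}$ insertions into $\mathcal{OS}(N)$, apply Corollary \ref{corollario espansione commutatore} to the linear commutators $[T_a\partial_x^n, T_b\partial_x^{n'}]_{lin}$ to reach order $m+m'-1$, and track the smallness orders by adding $(p-1)$, $(q-1)$, and the differentiation count exactly as you did. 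The only point you slide over is the regularity bookkeeping for $\frak x \mapsto d_\bot a_{m-k}(\frak x)[Y^\bot(\frak x)]$: since $Y^\bot$ costs $m'$ derivatives, one must (as the paper does implicitly) enlarge the loss-of-regularity constant $\sigma_N$ to guarantee this pairing is $C^\infty$-smooth and bounded into $H^s$, but the definition of $\mathcal{OB}(m,N)$ permits this.
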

\begin{proof}
By formula \eqref{formula generale paradiff vector field}, $X^\bot \in {\cal OB}(m, N)$ and $Y^\bot \in {\cal OB}(m', N)$ are of the form
$$
X^\bot(\frak x) =\Pi_\bot  \sum_{k = 0}^{N + m} T_{a_{m - k}(\frak x)} \partial_x^{m - k} w\, , \qquad
Y^\bot(\frak x) =\Pi_\bot  \sum_{k = 0}^{N + m'} T_{b_{m' - k}(\frak x)} \partial_x^{m' - k} w\, .
$$
With $X^\bot = \sum_{k = 0}^{N + m} X^\bot_k$ and $Y^\bot = \sum_{j = 0}^{N + m'} Y^\bot_j$ one gets 
$[X^\bot, Y^\bot] =  \sum_{k = 0}^{N + m}\sum_{j = 0}^{N + m'}  [X^\bot_k, Y^\bot_j] $ where 
$$
X^\bot_k(\frak x) = \Pi_\bot T_{a_{m - k}(\frak x)} \partial_x^{m - k} w,  \ \  \forall \, 0 \le k \le  N+m\, ,
\qquad Y_j^\bot(\frak x) :=  \Pi_\bot T_{b_{m' - j}(\frak x)} \partial_x^{m' - j} w, \ \ \forall \, 0 \le j \le  N+m'.
$$
To 
compute $[X^\bot_k, Y^\bot_j] $ for $k$, $j$ in the corresponding ranges, for notational convenience we let
$$
X^\bot_* := X^\bot_k, \quad Y^\bot_* := Y^\bot_j, \quad 
a(\frak x) := a_{m - k}(\frak x), \quad b(\frak x) := b_{m' - j}(\frak x), \quad n := m - k, \quad n' := m' - j.
$$  
One computes
$$
[ X^\bot_*, Y^\bot_* ]  = [\Pi_\bot T_{a} \partial_x^{n}\,,\, \Pi_\bot T_{b} \partial_x^{n'}]_{lin} w + 
\Pi_\bot 
T_{d_\bot  a(\frak x)[Y_*^\bot(\frak x)]} \partial_x^{n} w - \Pi_\bot T_{d_\bot  b(\frak x)[X_*^\bot(\frak x)]} \partial_x^{n'} w .
$$
Using the formula
$$
\Pi_\bot T_{a} \partial_x^{n} \circ \Pi_\bot T_{b} \partial_x^{n'} = 
\Pi_\bot \circ \big( T_{a} \partial_x^{n} \circ  T_{b} \partial_x^{n'}  + 
T_{a} \partial_x^{n} \circ ( \Pi_\bot -  \text{Id})  T_{b} \partial_x^{n'} \big),
$$
and the corresponding one  for $\Pi_\bot T_{b} \partial_x^{n'} \circ \Pi_\bot T_{a} \partial_x^{n}$, one obtains
$[X^\bot_*, \, Y^\bot_*] = {\cal C}^\bot_1 + {\cal R}^\bot_1$ where
$$
 {\cal C}^\bot_1 (\frak x)  := \Pi_\bot [T_{a} \partial_x^{n}, \, T_b \partial_x^{n'}]_{lin} w + 
\Pi_\bot T_{d_\bot  a(\frak x)[Y_*^\bot
(\frak x)]} \partial_x^{n} w - \Pi_\bot T_{d_\bot  b(\frak x)
[X_*^\bot(\frak x)]} \partial_x^{n'} w 
$$
and
$$
{\cal R}^\bot_1(\frak x) := \Pi_\bot T_{a(\frak x)} \partial_x^n \circ (\Pi_\bot - {\rm Id}) T_{b(\frak x)} \partial_x^{n'} w \, - \, 
\Pi_\bot T_{b(\frak x)} \partial_x^{n'} \circ (\Pi_\bot - {\rm Id}) T_{a(\frak x)} \partial_x^{n} w\,.  
$$
Since by assumption, there exist integers $s_N \ge 0$, $\sigma_N \ge 0$, so that for any $s \geq s_N$ there is $0 < \delta \equiv \delta(s, N) < 1$
and $0 < \e_0 \equiv \e_0(s, N) < 1$
with the property that
$a, b: {\cal V}^{s + \sigma_N}(\delta) \times  [0, \e_0]  \to  H^s(\T_1)$ are $C^\infty$-smooth and bounded, 
it then follows that
$$
\Pi_\bot T_{d_\bot  a(\frak x)[
Y_*
(\frak x)]} \partial_x^n w \in {\cal OB}(n, N), \qquad \Pi_\bot T_{d_\bot  b(\frak x)[
X_*
(\frak x)]} \partial_x^{n'} w \in {\cal OB}(n', N),
$$
and, in view of Corollary \ref{corollario espansione commutatore}, that
$$
\Pi_\bot [T_a \partial_x^n, \, T_b \partial_x^{n'}]_{lin} w = {\cal OB}(n + n' - 1, N) + {\cal OS}(N) \, .
$$
Furthermore, since $\Pi_\bot - {\rm Id}$ is a smoothing operator, one concludes that $ {\cal R}^\bot_1 \in {\cal OS}(N)$.
Altogether, we have proved that $[X^\bot_*, \, Y^\bot_*]$ is of the form  ${\cal C}^\bot_{[X^\bot_*, \, Y^\bot_*]} + {\cal R}^\bot_{[X^\bot_*, \, Y^\bot_*]}$
where 
$$
{\cal C}^\bot_{[X^\bot_*, \, Y^\bot_*]}  \in  {\cal OB}(n_*, N) , 
\quad n_* = \max\{n + n' - 1, n , n' \} \le m_*\, , 
\qquad {\cal R}^\bot_{[X^\bot_*, \, Y^\bot_*]} \in {\cal OS}(N).
$$
If in fact $X_*^\bot \in {\cal OB}^p(m, N)$ and $Y_*^\bot \in {\cal OB}^q(m', N)$, then $a$ is small of order $p - 1$, $b$ is small of order $q - 1$
and it follows that
$$
 \Pi_\bot T_{d_\bot  a(\frak x)[
 Y_*^\bot
 (\frak x)]} \partial_x^n w \in {\cal OB}^{p + q - 1}(n, N), \qquad \Pi_\bot T_{d_\bot  b(\frak x)[
 X_*^\bot
 (\frak x)]} \partial_x^{n'} w \in {\cal OB}^{p + q - 1}(n', N)\,, 
$$
$$
\Pi_\bot [T_a \partial_x^n, \, T_b \partial_x^{n'}]_{lin} w =  {\cal OB}^{p + q - 1}(n + n'- 1, N) + {\cal OS}^{p + q - 1}(N) \,, \qquad  {\cal R}^\bot_1 \in {\cal OS}^{p + q - 1}(N)\,. 
$$
One then infers that ${\cal C}^\bot_{[X^\bot_*, \, Y^\bot_*]} \in  {\cal OB}^{p + q - 1}(n_*, N)$ and ${\cal R}^\bot_{[X^\bot_*, \, Y^\bot_*]} \in {\cal OS}^{p + q - 1}(N)$.
\end{proof}
\begin{lemma}[{\bf Commutators IV}]\label{lemma commutatori Fourier multiplier}
Let $N$, $p$, $q \in \N$,  $m$, $m' \in \Z$, and  let $m_* := \max\{ m + m' - 1, \, m, \, m' \}$.

\smallskip
\noindent
$(i)$ For any $\mathcal M^\bot \in {\cal OF}(m, N)$ and $\mathcal M'^\bot \in {\cal OF}(m', N)$
$$
[\mathcal M^\bot, \, \mathcal M'^\bot]   \in {\cal OF}\big(m\lor m' , N \big).
$$  
If in fact $ \mathcal M^\bot \in {\cal OF}^p(m, N)$ and  $\mathcal M'^\bot \in {\cal OF}^q(m', N)$, 
then $[\mathcal M^\bot , \mathcal M'^\bot] \in {\cal OF}^{p + q - 1}(m \lor m' , N )$. 

\smallskip
\noindent
$(ii)$ For any $X^\bot \in {\cal OB}(m, N)$ and $\mathcal M^\bot \in {\cal OF}(m', N)$, 
$$
[X^\bot, \mathcal M^\bot] = {\cal C}^\bot_{[X^\bot, \mathcal M^\bot]} + {\cal R}^\bot_{[X^\bot, \mathcal M^\bot]}, \qquad  
{\cal C}^\bot_{[X^\bot, \mathcal M^\bot]} \in {\cal OB}(m_* ,N), \qquad  {\cal R}^\bot_{[X^\bot, \mathcal M^\bot]} \in {\cal OS}(N).
$$ 
If $X^\bot \in {\cal OB}^p(m, N)$ and $\mathcal M^\bot \in {\cal OF}^q(m', N)$, then 
$$
{\cal C}^\bot_{[X^\bot, \mathcal M^\bot]} \in {\cal OB}^{p + q - 1}(m_*, N),\qquad  
{\cal R}^\bot_{[X^\bot, \mathcal M^\bot]} \in {\cal OS}^{p + q - 1}(N).
$$ 

\smallskip
\noindent
$(iii)$ For any $\mathcal M= (0, 0, \mathcal M^\bot)$ with $\mathcal M^\bot \in {\cal OF}(m, N)$ and 
$\mathcal R = \big( \mathcal R^{(\theta)}, \mathcal R^{(y)}, \mathcal R^\bot \big) \in {\cal OS}(N)$
$$
[\mathcal M, \mathcal R] = (0,0,  {\cal C}^\bot_{[\mathcal M, \mathcal R^]}) + {\cal R}_{[\mathcal M, \mathcal R]}, \qquad  
{\cal C}^\bot_{[\mathcal M, \mathcal R]} \in {\cal OF}(m, N), \qquad {\cal R}_{[\mathcal M, \mathcal R]} \in {\cal OS}(N - m)
$$ 
If $\mathcal M^\bot \in {\cal OF}^p(m, N)$ and $\mathcal R \in {\cal OS}^q(N)$, then ${\cal C}^\bot_{[\mathcal M, \mathcal R]} \in {\cal OF}^{p + q - 1}(m, N)$ 
and ${\cal R}_{[M, \mathcal R]} \in {\cal OS}^{p + q - 1}(N)$.  
\end{lemma}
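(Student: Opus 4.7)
The three items will be proved by reducing each commutator to single-summand computations and summing. What distinguishes (i) from the general para-differential commutator bound in Lemma \ref{commutatore due paradiff nonlin} is that Fourier multiplier coefficients are $x$-independent scalars, and hence at leading order the multipliers commute as linear operators; this is what produces the sharper order $m\lor m'$ in place of $m_*=\max\{m+m'-1,m,m'\}$.

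For (i), I would expand $\mathcal M^\bot(\frak x)=\sum_k\lambda_{m-k}(\frak x)\partial_x^{m-k}w$ and $\mathcal M'^\bot$ analogously, and treat a generic pair of summands $\lambda\partial_x^n w$ and $\mu\partial_x^{n'}w$ via the definition \eqref{definition notations nonlinear commutators}. Since $\lambda(\frak x),\mu(\frak x)\in\R$ do not depend on $x$, the operator $\partial_x^n$ commutes with the scalar $\mu(\frak x)$, giving
\[
[\lambda\partial_x^n w,\,\mu\partial_x^{n'}w](\frak x) = d_\bot\lambda(\frak x)[\mu(\frak x)\partial_x^{n'}w]\,\partial_x^n w - d_\bot\mu(\frak x)[\lambda(\frak x)\partial_x^n w]\,\partial_x^{n'}w,
\]
the leading term $\lambda\mu\,\partial_x^{n+n'}w$ cancelling by commutativity of scalar multiplication. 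The two residual summands are Fourier multipliers of orders $n$ and $n'$ respectively, with scalar coefficients that are $C^\infty$-smooth in $\frak x$ by the chain rule applied to $\lambda,\mu\in C^\infty_b$. Summing over $k,j$ then yields membership in ${\cal OF}(m\lor m',N)$. Under the smallness hypotheses ${\cal OF}^p$ and ${\cal OF}^q$, a routine product-rule count for orders of vanishing at the base point $(\theta,0,0,0)$ will show that the residual coefficients are small of order $p+q-2$, placing the result in ${\cal OF}^{p+q-1}(m\lor m',N)$.

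For (ii), Remark \ref{remark fourier multiplier paradiff} gives ${\cal OF}(m',N)\subseteq{\cal OB}(m',N)$, so the claim is an immediate consequence of Lemma \ref{commutatore due paradiff nonlin} applied to $X^\bot\in{\cal OB}(m,N)$ and $\mathcal M^\bot\in{\cal OB}(m',N)$, and the small-of-order refinement comes from the corresponding part of that lemma via ${\cal OF}^q\subseteq{\cal OB}^q$.

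For (iii), I would expand $[\mathcal M,\mathcal R](\frak x)=d\mathcal M(\frak x)[\mathcal R(\frak x)]-d\mathcal R(\frak x)[\mathcal M(\frak x)]$. Since only the normal component of $\mathcal M$ is nonzero, the first term splits, summand-wise, into
\[
d\lambda_{m-k}(\frak x)[\mathcal R(\frak x)]\,\partial_x^{m-k}w + \lambda_{m-k}(\frak x)\,\partial_x^{m-k}\mathcal R^\bot(\frak x):
\]
the first piece has a $C^\infty$-smooth scalar coefficient (by the chain rule, since $\lambda_{m-k}\in C^\infty_b$ and $\mathcal R$ is $C^\infty_b$ into $E_{s+N+1}$), so it contributes to ${\cal OF}(m,N)$; the second piece applies $\partial_x^{m-k}$ to the smoothing function $\mathcal R^\bot$, losing at most $m$ derivatives of smoothing and belonging to ${\cal OS}(N-m)$. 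The remaining term $d\mathcal R(\frak x)[\mathcal M(\frak x)]$ will also lie in ${\cal OS}(N-m)$: $\mathcal M(\frak x)\in E_{s-m}$, and smoothing vector fields continue to gain $N+1$ derivatives when differentiated on a slightly rougher input, so the output lies in $E_{s-m+N+1}=E_{s+(N-m)+1}$. Collecting terms produces the claimed decomposition, and the small-of-order refinement follows from the same product-rule count as in (i). The only real obstacle is the careful adjustment of the Sobolev thresholds $\sigma_N$, $s_N$ to accommodate the loss of derivatives under the chain rule and the extension of $d\mathcal R$ to inputs in $E_{s-m}$; this is technical but built into Definitions \ref{paradiff vector fields}--\ref{def smoothing vector fields}.
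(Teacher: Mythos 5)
Your proposal is correct and fills in exactly the details that the paper elides (the paper's own proof is the one-line remark that the claims "follow by arguing as in the proofs of Lemma~\ref{comm smoothing paradiff} and Lemma~\ref{commutatore due paradiff nonlin}").

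Two observations worth flagging. For item~(ii), your route via the inclusion ${\cal OF}(m',N)\subseteq{\cal OB}(m',N)$ from Remark~\ref{remark fourier multiplier paradiff} is a clean shortcut: it turns (ii) into a literal corollary of Lemma~\ref{commutatore due paradiff nonlin} rather than a re-run of its proof, and the smallness refinement transfers immediately since ${\cal OF}^q\subseteq{\cal OB}^q$. For item~(i), you have correctly isolated the mechanism that produces the sharper Fourier-multiplier order $m\lor m'$ in place of $m_*$: because the coefficients $\lambda(\frak x),\mu(\frak x)\in\R$ are $x$-independent, the leading $\lambda\mu\,\partial_x^{n+n'}w$ contributions cancel exactly in $dX^\bot[Y^\bot]-dY^\bot[X^\bot]$, leaving only the two "differentiate-the-coefficient" terms of orders $n$ and $n'$ — no commutator expansion of para-differential symbols (and hence no ${\cal OS}(N)$ remainder) is needed. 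This is precisely why (i) lands in ${\cal OF}(m\lor m',N)$ with no smoothing remainder while (ii) and (iii) do produce one. Your handling of the Sobolev-index bookkeeping (evaluating $d_\bot\lambda(\frak x)$ on $\partial_x^{n'}w$, and extending $d\mathcal R(\frak x)$ to inputs in $E_{s-m}$) is the right thing to do and is consistent with the conventions in Definitions~\ref{paradiff vector fields}--\ref{def smoothing vector fields}.
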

\begin{proof}
Since the claims of the lemma follow by arguing as in the proofs of  Lemma \ref{comm smoothing paradiff} and Lemma \ref{commutatore due paradiff nonlin}, the details of the proofs are omitted. 
\end{proof}

\subsection{Flows of para-differential vector fields}

In this subsection we study the flow of para-differential vector fields of the form $Y = (0, 0, \, Y^\bot)$  with
\begin{equation}\label{campo vettoriale astratto flussi paradiff}
\begin{aligned}
Y^\bot(\frak x) = \Pi_\bot T_{a_{m}(\frak x)} \partial_x^{ m} w \in {\cal OB}^p(m, N),  \qquad  N, \, p \ge 1\,, \ m \le 0\, . 
\end{aligned}
\end{equation}
By Definition  \ref{formula generale paradiff vector field},  there are integers $s_N \geq 0$, $\sigma_N\ge 0$ so that for any $s \geq s_N$ 
there exist $0 < \delta \equiv \delta(s, N) < 1$  and $0 < \e_0 \equiv \e_0(s, N) < 1$ with the property that
$$
a_{m} :  {\cal V}^{s  + \sigma_N}(\delta) \times [0, \e_0]   \to H^s(\T_1), \, (\frak x, \e) \mapsto a_{m}(\frak x) \equiv a_{m}(\frak x, \e)
$$
is $C^\infty-$smooth and bounded. In the sequel, we will often {\em tacitly} increase $s_N$, $\sigma_N$ 
and decrease $\delta \equiv \delta(s, N)$, $\e_0\equiv \e_0(s, N)$, whenever needed.

Denote by $\Phi_Y (\tau, \cdot)$
%, or alternatively by $\Phi (\tau, \cdot; Y)$, 
the flow associated with $Y$. 
By the standard ODE theorem in Banach spaces, for any $s \ge s_N$, there exist $0 < \delta \equiv \delta(s, N) < 1$, and 
$0 < \e_0 \equiv \e_0(s, N) \ll  \delta$,
so that for any $-1 \le \tau \le 1$, 
\begin{equation}\label{domain flow}
\Phi_Y(\tau, \cdot) \in C^\infty_b \big({\cal V}^s(\delta) \times [0, \e_0], \, {\cal V}^s(2 \delta) \big)\, .
\end{equation}
It then follows that for any $-1 \le  \tau \le 1$ and any $\frak x \in \mathcal V^s(\delta),$ one has
$\Phi_Y(- \tau, \Phi_Y(\tau, \frak x)) = \frak x$.
\begin{remark}\label{inverse of flow}
For notational convenience,
%given any $-1 \le \tau \le 1$, 
$\Phi_Y(- \tau, \cdot)$ is referred to as the inverse of $\Phi_Y(\tau, \cdot)$ and we write $\Phi_Y(\tau, \cdot)^{-1} = \Phi_Y(- \tau, \cdot)$. In particular, $\Phi_Y(1, \cdot)^{-1} = \Phi_Y(- 1, \cdot)$. 
Using our convention of tacitly decreasing $\delta$ and $\e_0$, if needed, $\Phi_Y(\tau, \cdot)^{-1}$ is defined  for $(\frak x, \e)  \in {\cal V}^s(\delta) \times [0, \e_0]$.
More generally,  a similar convention is used for diffeomorphisms between neighborhoods of $\T^{S_+} \times 0 \times 0$ in $\mathcal E_s$ throughout the paper. 
\end{remark}

The following lemma provides a para-differential expansion of the flow $\Phi_Y (\tau, \cdot)$. 
\begin{lemma}\label{expansion flow paradiff} 
Let $N$, $p \in \N$ and assume that the normal component $Y^\bot$ of $Y = (0, 0, \, Y^\bot)$ satisfies \eqref{campo vettoriale astratto flussi paradiff}. 
Then for any $-1 \le \tau \le 1$, $\Phi_Y(\tau, \frak x)$ admits an expansion of the form
$$
\Phi_Y(\tau, \frak x) = \frak x + \big(0, 0, \, {\Upsilon}^\bot(\tau, \frak x) + {\cal R}^\bot_{N}(\tau, \frak x) \big)  
$$
where 
$${\Upsilon}^\bot(\tau, \frak x) = \Pi_\bot \sum_{k = 0 }^{N +m} T_{b_{m - k}(\tau, \frak x)}\partial_x^{m - k} w  \in {\cal OB}^p(m, N)\, , \qquad
{\cal R}^\bot_{N}(\tau, \frak x) \in {\cal OS}^{2 p - 1}(N) \, . 
$$
\end{lemma}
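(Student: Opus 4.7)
My plan is to use a Picard-type iteration of the integral form of the flow, combined with the symbolic calculus of Section \ref{para-differential calculus}, to extract the expansion order by order. Since $Y = (0, 0, Y^\bot)$ and the first two components of $Y$ vanish identically, the variables $\theta$ and $y$ are preserved along the trajectories of $\Phi_Y$, so setting $\Phi(\tau, \frak x) := \Phi_Y(\tau, \frak x)^{(w)} - w$, the task reduces to proving the claimed expansion for $\Phi$. The existence, uniqueness and $C^\infty_b$-regularity of $\Phi$ on $[-1, 1] \times {\cal V}^s(\delta) \times [0, \e_0]$ follow from standard Banach-space ODE theory applied to
$$\dot \Phi(\tau) = \Pi_\bot T_{a_m(\theta, y, w + \Phi(\tau))} \partial_x^m (w + \Phi(\tau)), \qquad \Phi(0) = 0,$$
since $m \le 0$ ensures that $Y^\bot$ acts boundedly from $H^s_\bot$ into itself by Lemma \ref{prop 1 paraproduct}, with Lipschitz constant uniform on bounded sets.

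Next I would build the para-differential expansion via an inductive Picard scheme. Set $v_0 := 0$ and for $j \ge 0$
$$v_{j+1}(\tau, \frak x) := \int_0^\tau \Pi_\bot T_{a_m(\theta, y, w + v_j(s, \frak x))} \partial_x^m \bigl(w + v_j(s, \frak x)\bigr) \, ds.$$
The key claim, proved by induction on $j$, is that each iterate decomposes as $v_j = \mathring v_j + r_j$ with $\mathring v_j \in {\cal OB}^p(m, N)$ (coefficients smooth in $\tau$) and $r_j \in {\cal OS}^{2p-1}(N)$. The inductive step relies on three tools: $(i)$ the Taylor expansion $a_m(\theta, y, w + v_j) = a_m(\theta, y, w) + \int_0^1 d_w a_m(\theta, y, w + t v_j)[v_j]\, dt$, which splits the symbol into its value at $v_j = 0$ plus a correction carrying the structure of $v_j$; $(ii)$ the composition expansion of Lemma \ref{lemma composizione nostri simboli} applied to $T_{a_m} \partial_x^m \circ T_c \partial_x^{m-k}$, which writes the product as a finite sum of para-differential operators of orders ranging from $2m-k$ down to $-N$, plus a smoothing remainder of order $N+1$ lying in ${\cal OS}(N)$; $(iii)$ integration in $\tau$ preserves each of the classes ${\cal OB}^p(m,N)$ and ${\cal OS}^{2p-1}(N)$. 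Picard convergence $v_j \to \Phi$ in $C^0([-1,1], H^s_\bot)$, together with consistency of the inductive decompositions, yields limits $\Upsilon^\bot \in {\cal OB}^p(m, N)$ and ${\cal R}^\bot_N \in {\cal OS}^{2p-1}(N)$; note that because $\Upsilon^\bot$ is a finite sum with at most $N+m+1$ terms, convergence here is really stabilization of the extracted coefficients once $j$ is large enough.

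For the smallness accounting: since $Y^\bot \in {\cal OB}^p(m, N)$, the symbol $a_m$ is small of order $p-1$ and its $w$-derivative $d_w a_m$ is small of order $p-2$, so $v_1 = \tau \Pi_\bot T_{a_m(\frak x)} \partial_x^m w$ is small of order $p$. In passing from $v_j$ to $v_{j+1}$, every new contribution combines one factor of order at least $p-1$ (the symbol or its $w$-derivative) with one factor of order $p$ (namely $v_j$ or $\partial_x^m v_j$), producing terms small of order at least $2p-1$. Consequently the smoothing remainder $\mathcal R^\bot_N$ inherits smallness of order $2p-1$. The required $C^\infty_b$-bounds on the coefficients $b_{m-k}(\tau, \frak x)$ and on ${\cal R}^\bot_N(\tau, \frak x)$ in $(\tau, \frak x, \e)$ follow from smooth parameter dependence of the Picard iterates combined with the fact that the extracted coefficients are smooth polynomial expressions in $a_m$, its derivatives, and the preceding iterates.

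The principal obstacle is the combinatorial bookkeeping: each substitution generates many terms through the composition and Taylor expansions, and one must verify carefully that every term is correctly placed in ${\cal OB}^p(m, N)$ or in ${\cal OS}^{2p-1}(N)$ with the right smoothness and smallness orders. The hypothesis $m \le 0$ is essential, since it is what ensures that each iteration gains $|m|$ derivatives under composition, that the smoothing order of the residual eventually exceeds $N+1$, and that $Y^\bot$ defines a well-posed flow on $H^s_\bot$ to begin with.
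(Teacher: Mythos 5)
Your overall strategy — integral form of the flow, the composition calculus of Lemma \ref{lemma composizione nostri simboli}, smallness bookkeeping to sort terms between ${\cal OB}^p(m,N)$ and ${\cal OS}^{2p-1}(N)$ — overlaps with the paper's, but the paper does not use a Picard iteration. It treats $\Phi_Y$ as a known object (existence and $C^\infty_b$-regularity from ODE theory, cf.\ \eqref{domain flow}), makes an ansatz of the form \eqref{ansatz} for $\Phi_Y^\bot(\tau,\frak x)$ itself, substitutes into the integral equation \eqref{integral equation}, and matches orders. The outcome is a \emph{triangular} system of integral equations for the coefficients $b_{m-k}$: the paper defines $b_m(\tau,\frak x) = \int_0^\tau a_m(\Phi_Y(t,\frak x))\,dt$ directly in terms of the flow, and each subsequent $b_{m-k}$ depends only on coefficients of strictly lower index, so they are determined by a finite recursion (for $m=0$ the leading coefficient additionally satisfies a linear integral equation solved explicitly). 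The remainder then solves the linear Volterra equation \eqref{eq integrale resto} whose coefficient operator is \emph{given} (it involves only $a_m\circ\Phi_Y$, not the unknown), closed by Gronwall. This cleanly separates existence (from the abstract ODE theorem) from structure (from the ansatz).

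The genuine gap in your proposal is the claim that the Picard iterates' para-differential coefficients ``stabilize'' once $j$ is large enough. They do not: at every step the coefficient $b_m^{(j+1)}(\tau,\frak x)=\int_0^\tau a_m(\theta,y,w+v_j(s,\frak x))\,ds$ still depends on $v_j$, which changes at every iteration, so the $b_{m-k}^{(j)}$ merely converge as $j\to\infty$ rather than becoming eventually constant. Consequently, invoking $v_j\to\Phi$ in $C^0([-1,1],H^s_\bot)$ is not enough: you would need to show that the Picard map is contractive in a topology strong enough to control the split $v_j = \mathring v_j + r_j$, i.e.\ to establish Cauchy estimates on the coefficients $b_{m-k}^{(j)}$ in $C^\infty_b({\cal V}^{s+\sigma_N}(\delta)\times[0,\e_0],H^s)$ and on $r_j$ in the ${\cal OS}^{2p-1}(N)$ norms — a non-trivial extra layer of work that the paper avoids. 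A smaller imprecision: you write that ``the symbol or its $w$-derivative'' has smallness order $p-1$, but if $a_m$ is small of order $p-1$ then $d_\bot a_m$ is small of order $p-2$; your stated conclusion ($2p-1$ for the full vector field contribution) still comes out right because the vector field carries an extra factor of $w$, but the reasoning as written is off by one.
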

\begin{proof}
The normal component $\Phi_Y^\bot(\tau, \frak x)$ of the flow $\Phi_Y(\tau, \frak x)$ satisfies the integral equation
\begin{equation}\label{integral equation}
\Phi_{Y}^\bot(\tau, \frak x) = w  + \int_0^\tau Y^\bot(\Phi_Y(t, \frak x))\, d t\, , \qquad \forall \, -1 \le \tau \le 1\,.
\end{equation}
To solve it, we make the ansatz that $\Phi_Y^\bot(\tau, \frak x)$ admits an expansion of the form
\begin{equation}\label{ansatz}
\Phi_Y^\bot(\tau, \frak x) = w +  {\Upsilon}^\bot(\tau, \frak x)  + {\cal R}^\bot_{N}(\tau, \frak x)\, , \qquad
{\Upsilon}^\bot(\tau, \frak x) = \Pi_\bot \sum_{k = 0 }^{N +m} T_{b_{m - k}(\tau, \frak x)} \partial_x^{m - k} w \, ,
\end{equation}
with the property that there exist $s_N \ge 0$, $\sigma_N \ge 0$ so that the following holds: for any $s \geq s_N$, 
there exist $0 < \delta \equiv \delta(s, N) < 1$ and $0 < \e_0 \equiv \e_0(s, N) < 1$ so that 
for any $-1 \le \tau \le 1$ and $ 0 \le k \le N+m$,
\begin{equation}\label{ansatz b - j cal RN}
b_{m-k}(\tau, \cdot) \in C^\infty_b \big( {\cal V}^{s + \sigma_N}(\delta) \times [0, \e_0] , \, H^s(\T_1) \big),   \ \  b_{m - k}  \ \text{small of order } p - 1\,, 
\qquad {\cal R}^\bot_{N}(\tau, \cdot ) \in {\cal OS}^p(N)\,.
\end{equation}
To determine $\big( b_{m-k}\big)_{0 \le k \le N+m}$ and $ {\cal R}^\bot_{N}$, 
in terms of the coefficient $a_{m}$ of $Y^\bot$ in \eqref{campo vettoriale astratto flussi paradiff},
%\begin{equation}\label{espansione Y bot lemma flusso}
%Y^\bot (\frak x) = \Pi_\bot  T_{a_{m}(\frak x)} \partial_x^{ m} w
%\end{equation}
%with $a_{m}\in C^\infty_b({\cal V}^{s + \sigma_N}(\delta) \times [0, \e_0], H^s(\T_1))$ for $s \geq s_N$. 
we compute the expansion of the right hand side of the equation \eqref{integral equation} by 
substituting the ansatz \eqref{ansatz} into the integrand $Y^\bot (\Phi_{Y}(t, \frak x))$. In view of definition \eqref{campo vettoriale astratto flussi paradiff}
of $Y^\bot$, one gets for any $-1 \le t \le 1$,
\begin{equation}\label{marmelade 0}
\begin{aligned}
 Y^\bot(\Phi_Y(t, \frak x)) & = \Pi_\bot  T_{a_{m}(\Phi_Y(t, \frak x))} \partial_x^{ m} \Phi_Y^\bot (t, \frak x) \\
& = \Pi_\bot  T_{a_{m}(\Phi_Y(t, \frak x))} \partial_x^{m} \Big(w +  \Pi_\bot \sum_{k = 0  }^{N  +m} T_{b_{m-k}(t, \frak x)} \partial_x^{m-k} w 
+ {\cal R}^\bot_{N}(t, \frak x) \Big) \,. \\
\end{aligned}
\end{equation} 
Using that $\Pi_\bot - {\rm Id}$ is a smoothing operator
%using the ansatz \eqref{ansatz b - j cal RN} 
and that $\Phi_Y(\tau, \cdot) \in C^\infty_b\big( {\cal V}^s(\delta)\times [0, \e_0] , \, {\cal V}^s(2 \delta) \big)$ one gets
\begin{equation}\label{romolo 0}
\begin{aligned}
&  \Pi_\bot  T_{a_{m}(\Phi_Y(t, \frak x))} \partial_x^{m}  (\Pi_\bot - {\rm Id}) 
\sum_{k = 0}^{N  +m} T_{b_{m-k}(t, \frak x)} \partial_x^{m-k} w \, \in {\cal OS}^{2 p - 1}(N) \stackrel{p \geq 1}{\subseteq} {\cal OS}^p(N)\,, \\
&  \Pi_\bot \sum_{k = N+1+2m}^{N +m}  T_{a_{m}(\Phi_Y(t, \frak x))} \partial_x^{m} T_{b_{m-k}(t, \frak x)} \partial_x^{m-k} w  \in {\cal OS}^{2 p - 1}(N)
 \stackrel{p \geq 1}{\subseteq} {\cal OS}^p(N)
\end{aligned}
\end{equation}
where we recall that $m \le 0$ and that by our convention, a sum of terms over an empty index set equals $0$. 
Moreover, by increasing $s_N, \sigma_N$ if needed, it follows that for any $s \ge s_N$ and $-1 \le t \le 1,$
the map $A(t, \frak x) := \Pi_\bot T_{a_{m}(\Phi_Y(t, \frak x))} \partial_x^{m}$ satisfies (after decreasing $\delta$ and $\e_0$ if necessary)
$$
A(t, \cdot)  \in  C^\infty_b\big( {\cal V}^{s+ \sigma_N}(\delta) \times [0, \e_0], \ {\cal B}(H_\bot^{s + N + 1}(\T_1)) \big) 
$$
and hence in view of \eqref{ansatz b - j cal RN},
\begin{equation}\label{def op cal A lemma flusso}
 A(t, \cdot)[{\cal R}^\bot_{N}(t, \cdot)] \in  {\cal OS}^{2 p - 1}(N)  \stackrel{p \geq 1}{\subseteq} {\cal OS}^p(N) \, .
\end{equation}
In view of \eqref{romolo 0} - \eqref{def op cal A lemma flusso}, we rewrite \eqref{marmelade 0} as 
\begin{equation}\label{marmelade 100}
\begin{aligned}
 Y^\bot(\Phi_Y(t, \frak x)) & =\Pi_\bot T_{a_{m}(\Phi_Y(t, \frak x))} \partial_x^{m}w 
 +  \Pi_\bot \sum_{ k= 0}^{N + 2m} T_{a_{m}(\Phi_Y(t, \frak x))} \partial_x^{m} T_{b_{m-k}(t, \frak x)} \partial_x^{m-k} w 
 %\\ & \quad + {\cal A}(t, \frak x){\cal R}^\bot_{N, \Phi}(t, \frak x) 
 + {\cal OS}^p(N) \, .
\end{aligned}
\end{equation}
Since $a_{m}$ and $b_{m-k}$ are small of order $p - 1$ (cf. \eqref{ansatz b - j cal RN}),
it follows from Lemma \ref{lemma composizione nostri simboli} that for any $0 \le k \le N + 2m$,
the term $T_{a_{m}(\Phi_Y(t, \frak x))} \partial_x^{m} T_{b_{m-k}(t, \frak x)} \partial_x^{m-k} w $ has an expansion of the form 
\begin{equation}\label{marmelade 1}
T_{a_{m}(\Phi_{Y}(t, \frak x)) b_{m-k}(t, \frak x)} \partial_x^{ 2m - k } w + 
 \sum_{j = 1}^{N + 2m - k} K(j, m) T_{a_{m}(\Phi_{Y}(t, \frak x)) \partial_x^j b_{m-k}(t, \frak x)} \partial_x^{ 2m - k - j} w + {\cal OS}^{2p -1}(N) 
\end{equation}
with the constants $K(j, m)$ given as in Lemma \ref{lemma composizione nostri simboli},  implying that 
\begin{equation}\label{marmelade 2}
\begin{aligned}
& \Pi_\bot \sum_{ k=0}^{N + 2m}  T_{a_{m}(\Phi_Y(t, \frak x))} \partial_x^{m} T_{b_{m - k}(t, \frak x)} \partial_x^{m-k} w 
 = \Pi_\bot \sum_{k=0}^{N +2m} T_{a_{m}(\Phi_{Y}(t, \frak x)) b_{m-k}(t, \frak x)} \partial_x^{ 2m - k} w \\
& + \Pi_\bot \sum_{k=0}^{N +2m} \sum_{j=1}^{N+2m - k} K(j, m) T_{a_{m}(\Phi_{Y}(t, \frak x)) \partial_x^j b_{m - k}(t, \frak x)} \partial_x^{ 2m - k - j} w 
   + {\cal OS}^{2 p - 1}(N)  \\
& =  \Pi_\bot \sum_{i = 0}^{N +2m} T_{g_{2m - i}(t, \frak x)} \partial_x^{2m - i} w + {\cal OS}^{2 p - 1}(N)\, ,
\end{aligned}
\end{equation}
where $g_{2m}(t, \frak x) = a_{m}(\Phi_{Y}(t, \frak x)) b_{m}(t, \frak x)$
and for any $1 \le i \le N + 2m$,
\begin{equation}\label{def g eta}
g_{2m-i}(t, \frak x) = a_{m}(\Phi_{Y}(t, \frak x)) b_{m-i}(t, \frak x) + 
\sum_{k= 1}^{i-1} K(i-k, m) a_{m}(\Phi_{Y}(t, \frak x)) \partial_x^{i-k} b_{m - k}(t, \frak x) \,.
\end{equation}
%Hence 
%\begin{equation}
%\begin{aligned}
%\sum_{j = k  }^{N  - 1} T_{a(\theta, y, \Phi_{\cal F}(\zeta, \frak x))} \partial_x^{- k} T_{a_{- j}(\zeta, w)} \partial_x^{- j} w & = \sum_{j = k  }^{N  - 1} \sum_{i = 0}^{N - k - j - 1} C(i, k, j) T_{a(\theta, y, \Phi_{\cal F}(\zeta, \frak x)) \partial_x^i a_{- j}(\tau, \frak x)} \partial_x^{- k - j - i} w + \sum_{j = k  }^{N  - 1} {\cal R}^{(1)}_{N, k, j}(\zeta, \frak x) \\
%& = \sum_{n = k}^{N - k - 1} \sum_{ j = k}^{n} C(n - j , k, j) T_{a(\theta, y, \Phi_{\cal F}(\zeta, \frak x)) \partial_x^{n - j}a_{- j}(\zeta, \frak x)} \partial_x^{- k - n } w \\
%& \quad + \sum_{n = N - k }^{N - 1} \sum_{ j = k}^{n} C(n - j , k, j) T_{a(\theta, y, \Phi_{\cal F}(\zeta, \frak x)) \partial_x^{n - j}a_{- j}(\zeta, \frak x)} \partial_x^{- k - n } w \\
%& \quad +\sum_{j = k  }^{N  - 1} {\cal R}^{(1)}_{N, k, j}(\zeta, \frak x) 
%\end{aligned}
%\end{equation}
Combining \eqref{integral equation}-\eqref{def g eta} then yields the following identity,
$$
\begin{aligned}
\Pi_\bot \sum_{k = 0 }^{N +m} T_{b_{m - k}(\tau, \frak x)} \partial_x^{m - k} w & = 
 \Pi_\bot \big( \int_{0}^\tau T_{a_{m}(\Phi_Y(t, \frak x))} d t \big) \,  \partial_x^{m}w 
 +  \Pi_\bot \big( \int_{0}^\tau T_{a_{m}(\Phi_Y(t, \frak x)) b_m(t, \frak x)} d t \big) \, \partial_x^{2m}w \\
&  + \Pi_\bot  \sum_{i = 1}^{N +2m} \big(\int_0^\tau T_{g_{2m - i}(t, \frak x)}  d t\big) \, \partial_x^{2m - i} w  + {\cal OS}^{2 p - 1}(N)\, .
\end{aligned}
$$
Let us first consider the case where $m \le -1$. 
We then require that the coefficients $b_{m-k}$, $0 \le k \le N+m$, satisfy the following system of equations,
\begin{equation}\label{marmelade 3}
\begin{aligned}
b_{m}(\tau, \frak x) & = \int_0^\tau a_{m}(\Phi_{Y}(t, \frak x))\, d t, \qquad \qquad \ \ \  b_{m - k}(\tau, \frak x)  = 0, \quad  \forall \, 1 \le k \le |m| -1, \\
b_{2m}(\tau, \frak x) & = \int_0^\tau  a_{m}(\Phi_Y(t, \frak x)) b_m(t, \frak x) \, dt,
\quad \  \  b_{m - k}(\tau, \frak x)  = 
\int_0^\tau  g_{m - k}(t, \frak x)\, d t, \ \ \forall \,  |m| + 1 \le k \le N + 2m . \\
\end{aligned}
\end{equation}
Since for  any $ |m| + 1 \le k \le N + 2m$,  $g_{m-k}$ only depends on  $b_{m-k'}$ with $k' \le k + m \le k -1$ (cf. \eqref{def g eta}),
the coefficients $b_{m-k}$ are determined inductively in terms of $a_{m}$. 
One then verifies that the properties of the coefficients $b_{m-k}$, stated in ansatz \eqref{ansatz b - j cal RN}, are satisfied. 
% and the remainder ${\cal R}_N(\tau, \frak x)$, by using the integral equation \eqref{integral equation}. 
The remainder ${\cal R}^\bot_{N}$ then satisfies the following integral equation 
\begin{equation}\label{eq integrale resto}
{\cal R}^\bot_{N}(\tau, \frak x)  = {\cal Q}^\bot_N(\tau, \frak x) + \int_0^\tau A(t, \frak x)[{\cal R}^\bot_{N}(t, \frak x)] d t\, ,
\end{equation}
where ${\cal Q}^\bot_{N}(\tau, \cdot) \in {\cal OS}^{2 p- 1}(N)$ is given by the sum of the two terms in \eqref{romolo 0}
and the operator $A(t, \frak x)$ is defined in \eqref{def op cal A lemma flusso}. 
By increasing $s_N$ if needed, it follows that for any $s \geq s_N$, 
$$
\| {\cal R}^\bot_N(\tau, \frak x) \|_{s + N + 1} \leq 
\sup_{\tau \in [- 1, 1]} \| {\cal Q}^\bot_N(\tau, \frak x) \|_{s + N +1} + \int_0^\tau \| A(t, \frak x) \|_{{\cal B}(H_\bot^{s + N + 1}(\T_1))} \| {\cal R}^\bot_N(t, \frak x)\|_{s + N +1}\, d t
$$
and hence by the Gronwall Lemma, one infers that ${\cal R}^\bot_N$ satisfies
$$
\| {\cal R}^\bot_N(\tau, \frak x) \|_{s + N +1} 
\lesssim_{s, N} {\rm exp}\big( \int_{-1}^1 \| A(t, \frak x) \|_{{\cal B}(H_\bot^{s + N + 1}(\T_1))}\, d t \big)\sup_{t \in [- 1, 1]} \| {\cal Q}^\bot_N(t, \frak x) \|_{s + N +1}\, ,
$$
implying that $ \| {\cal R}^\bot_N(\tau, \frak x) \|_{s + N + 1} \lesssim_{s, N} (\e + \| y \| + \| w \|_s)^{2 p - 1}$. 
Similar estimates hold for the derivatives of ${\cal R}^\bot_N$. Altogether we have shown that ${\cal R}^\bot_N \in {\cal OS}^{2 p - 1}(N)$. 
%$${\cal R}^\bot_N : {\cal V}^s(\delta) \times [0, \e_0] \to H^{s + N + 1}_\bot(\T_1), \ \frak x \mapsto {\cal R}^\bot_N(\tau, \frak x; \Phi_Y) \,. $$

Finally let us consider case $m=0$. We then require that the coefficients $b_{-k}$, $0 \le k \le N$, satisfy the following system of equations,
\begin{equation}\label{marmelade 3}
\begin{aligned}
b_{0}(\tau, \frak x)  = \int_0^\tau a_{0}(\Phi_{Y}(t, \frak x))\, d t + \int_0^\tau  a_{0}(\Phi_Y(t, \frak x)) b_0(t, \frak x) \, dt,  \qquad
 b_{ - k}(\tau, \frak x)   = 
\int_0^\tau  g_{ - k}(t, \frak x)\, d t, \ \forall \,    1 \le k \le N . \nonumber
\end{aligned}
\end{equation}
The solution $b_0$ then reads  $b_{0}(\tau, \frak x)  = e^{\int_0^\tau  a_{0}(\Phi_Y(t, \frak x)) \, dt}  -1$. 
The remaining part of the proof then follows as in the case $m \le -1$.
\end{proof}
\begin{lemma}\label{Corollario d Phi F}
Let $N$, $p \in \N$ and let $\Phi_Y(\tau, \frak x)$ denote the flow map considered in Lemma \ref{expansion flow paradiff},
corresponding to the vector field $Y = (0, 0, \, Y^\bot)$, with $Y^\bot(\frak x) = \Pi_\bot T_{a_{m}(\frak x)} \partial_x^{ m} w$ 
and $m \le 0$, satisfying \eqref{campo vettoriale astratto flussi paradiff}.
Then for any $-1\le  \tau \le 1,$
$d \Phi_Y( \tau, \frak x)^{- 1}[\widehat{\frak x}]$  admits an expansion of the form
\begin{equation}\label{espansion d Phi tau inverse}
d \Phi_Y( \tau, \frak x)^{- 1}[\widehat{\frak x}] = \widehat{\frak x}  
+ \big(0, 0, \, {\Upsilon}^\bot(\tau, \frak x)[\widehat{\frak x} ] 
+ {\cal R}^\bot_{N}(\tau, \frak x)[\widehat{\frak x} ]  \big) \, ,
\end{equation}
$$
{\Upsilon}^\bot(\tau, \frak x) [\widehat{\frak x} ]  := \Pi_\bot \sum_{k = 0 }^{N +m} T_{b_{m - k}(\tau, \frak x)}\partial_x^{m - k} [\widehat w]
+ \Pi_\bot \sum_{k = 0}^{N +m} T_{B_{m - k}(\tau, \frak x)[\widehat{\frak x}]} \partial_x^{m - k} w 
$$
with the following properties: there exist $s_N$, $\sigma_N \geq N$ so that for any $s \geq s_N$, 
there exist $\delta \equiv \delta(s, N) > 0$ and $0 < \e_0 \equiv \e_0(s, N) < 1$ so that the following holds: 
for any $0 \le k \le N + m$ and $-1 \le \tau \le 1$,
$$
\begin{aligned}
 b_{m - k}(\tau, \cdot) \in C^\infty_b( {\cal V}^{s + \sigma_N}(\delta) & \times [0, \e_0] , \, H^s(\T_1)), 
\qquad B_{m - k}(\tau, \cdot) \in C^\infty_b\big( {\cal V}^{s + \sigma_N}(\delta) \times [0, \e_0], \, {\cal B}(E_{s + \sigma_N}, H^s(\T_1))\big)\,, \\
& {\cal R}^\bot_N(\tau, \cdot) \in C^\infty_b \big(  {\cal V}^s(\delta) \times [0, \e_0], \, {\cal B}(H^s(\T_1), H^{s + N +1}_\bot(\T_1)) \big)
\end{aligned}
$$
with $b_{m- k}(\tau, \cdot)$, $B_{m- k}(\tau, \cdot)$, and ${\cal R}^\bot_N(\tau, \cdot)$ being small of order $p - 1$,
and the expansion above holds for any $\frak x \in \mathcal V^{s+\sigma_N}(\delta)$ and $\widehat{\frak x} \in E_{s+\sigma_N}$.
\end{lemma}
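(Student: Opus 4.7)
The plan is to deduce the expansion directly from Lemma \ref{expansion flow paradiff} by exploiting the fact that $\Phi_Y(\tau,\cdot)^{-1}=\Phi_Y(-\tau,\cdot)$ (cf.\ Remark \ref{inverse of flow}). Differentiating the identity $\Phi_Y(-\tau,\Phi_Y(\tau,\frak x))=\frak x$ with respect to $\frak x$ and applying the chain rule gives
$$ d\Phi_Y(\tau,\frak x)^{-1}[\widehat{\frak x}]=d_{\frak z}\Phi_Y(-\tau,\frak z)[\widehat{\frak x}]\big|_{\frak z=\Phi_Y(\tau,\frak x)}. $$
Lemma \ref{expansion flow paradiff} (applied with $\tau$ replaced by $-\tau$) expresses
$$ \Phi_Y(-\tau,\frak z)=\frak z+\bigl(0,0,\Pi_\bot\sum_{k=0}^{N+m}T_{\tilde b_{m-k}(\tau,\frak z)}\partial_x^{m-k}w_\frak z+\tilde{\mathcal R}^\bot_N(\tau,\frak z)\bigr), $$
where $w_\frak z$ denotes the normal component of $\frak z$, the coefficients $\tilde b_{m-k}(\tau,\cdot)$ are $C^\infty_b$ and small of order $p-1$, and $\tilde{\mathcal R}^\bot_N(\tau,\cdot)\in\mathcal{OS}^{2p-1}(N)\subseteq\mathcal{OS}^p(N)$.

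Next I would differentiate this expansion in $\frak z$ by the product rule, which yields two contributions in the para-differential part: a piece where the derivative falls on $w_\frak z\mapsto\widehat w$ and a piece where it falls on the symbol via $d\tilde b_{m-k}(\tau,\frak z)[\widehat{\frak x}]$. Evaluating the result at $\frak z=\Phi_Y(\tau,\frak x)$, and setting
$$ b_{m-k}(\tau,\frak x):=\tilde b_{m-k}(\tau,\Phi_Y(\tau,\frak x)),\qquad B_{m-k}(\tau,\frak x)[\widehat{\frak x}]:=d_{\frak z}\tilde b_{m-k}(\tau,\Phi_Y(\tau,\frak x))[\widehat{\frak x}], $$
the first piece produces exactly $\Pi_\bot\sum_k T_{b_{m-k}(\tau,\frak x)}\partial_x^{m-k}[\widehat w]$. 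The second piece has the form $\Pi_\bot\sum_k T_{B_{m-k}(\tau,\frak x)[\widehat{\frak x}]}\partial_x^{m-k}w_\frak z\bigl|_{\frak z=\Phi_Y(\tau,\frak x)}$; writing $w_\frak z\bigl|_{\frak z=\Phi_Y(\tau,\frak x)}=w+\Upsilon^\bot(\tau,\frak x)+\mathcal R^\bot_N(\tau,\frak x)$ splits it into the desired main term $\Pi_\bot\sum_k T_{B_{m-k}(\tau,\frak x)[\widehat{\frak x}]}\partial_x^{m-k}w$ plus correction terms. By Lemma \ref{lemma composizione nostri simboli} (composition of para-products with $\Upsilon^\bot$) and the $N+1$-smoothing of $\mathcal R^\bot_N$, these corrections together with $d_\frak z\tilde{\mathcal R}^\bot_N(\tau,\frak z)[\widehat{\frak x}]\bigl|_{\frak z=\Phi_Y(\tau,\frak x)}$ can be absorbed into a remainder $\mathcal R^\bot_N(\tau,\frak x)[\widehat{\frak x}]$ that is linear in $\widehat{\frak x}$, lies in $\mathcal B(H^s(\T_1),H^{s+N+1}_\bot(\T_1))$, and is $C^\infty_b$-bounded and small of order $p-1$.

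To verify the claimed smallness and regularity, I would use that $\Phi_Y(\tau,\frak x)-\frak x$ is small of order $p$ (so in particular $\Phi_Y(\tau,\frak x)$ maps a neighborhood of $\T^{S_+}\times 0\times 0$ into another such neighborhood after tacitly shrinking $\delta$ and $\e_0$ as in Remark \ref{inverse of flow}), whence composition with $\tilde b_{m-k}$ preserves the order of vanishing $p-1$, and similarly for $B_{m-k}$. The Sobolev loss is tracked by enlarging $\sigma_N$ (choosing it $\geq N$ and at least as large as the loss in Lemma \ref{expansion flow paradiff}) so that $\tilde b_{m-k}(\tau,\Phi_Y(\tau,\frak x))$ and $d\tilde b_{m-k}(\tau,\Phi_Y(\tau,\frak x))[\widehat{\frak x}]$ are defined for $\frak x\in\mathcal V^{s+\sigma_N}(\delta)$ and $\widehat{\frak x}\in E_{s+\sigma_N}$.

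The main technical obstacle is precisely the bookkeeping of these smallness orders and Sobolev indices through the chain rule: one needs to check that the composition $d\tilde b_{m-k}(\tau,\Phi_Y(\tau,\cdot))$ does not destroy either the $C^\infty_b$-bound or the $p-1$-order vanishing at $(\theta,0,0,0)$, and that the various correction terms generated when $w_\frak z$ is expanded into $w+\Upsilon^\bot+\mathcal R^\bot_N$ truly land in the smoothing class $\mathcal{OS}^p(N)$ with the required gain of $N+1$ derivatives---this uses the fact that $\Upsilon^\bot(\tau,\frak x)\in\mathcal{OB}^p(m,N)$ with $m\leq 0$, so that compositions $T_{\,\cdot\,}\partial_x^{m-k}\circ\Upsilon^\bot$ are handled either by the symbolic calculus of Section \ref{para-differential calculus} (when both factors are para-differential) or by the Sobolev continuity of $\Pi_\bot$ together with the regularity of $\mathcal R^\bot_N$. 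Once this bookkeeping is carried out, no estimate beyond those of Section \ref{para-differential calculus} and Lemma \ref{expansion flow paradiff} is needed, and the expansion \eqref{espansion d Phi tau inverse} follows.
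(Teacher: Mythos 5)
Your proposal follows the same route as the paper: both use the identity $d\Phi_Y(\tau,\frak x)^{-1}=d\Phi_Y(-\tau,\Phi_Y(\tau,\frak x))$, apply the expansion of Lemma~\ref{expansion flow paradiff} at time $-\tau$, differentiate in $\frak z$, split the contribution where the derivative hits $w_\frak z$ from the one where it hits the symbol, and absorb the terms arising from replacing $\Phi_Y^\bot(\tau,\frak x)$ by $w$ (via Lemma~\ref{lemma composizione nostri simboli} and the smoothing remainder) into $\mathcal R^\bot_N$. The bookkeeping you flag as the main obstacle is exactly what the paper handles, so this is essentially the paper's proof.
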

\begin{proof}
First we note that for any $-1 \le \tau \le 1$, $d \Phi_Y(\tau, \frak x)^{- 1} = d \Phi_Y(- \tau, \Phi_Y(\tau, \frak x)) $ and that by Lemma \ref{expansion flow paradiff},
$$
\Phi_Y(\tau, \frak x) = \frak x + \Big( 0, 0, \, \Pi_\bot \sum_{k = 0}^{N +m} T_{b_{m - k}(\tau, \frak x; \Phi_Y)} \partial_x^{m - k} w + {\cal R}^\bot_N(\tau, \frak x; \Phi_Y) \Big)
$$
with $b_{m-k}(\tau, \cdot; \Phi_Y) \in C^\infty_b\big( {\cal V}^{s + \sigma_N}(\delta) \times [0, \e_0], \, H^s(\T_1) \big)$ being small of order $p - 1$ 
and ${\cal R}^\bot_N(\tau, \cdot; \Phi_Y) \in {\cal OS}^p(N)$. 
To simplify notation, let $\widetilde b_{m-k}(\tau, \frak x):= b_{m-k}(\tau, \frak x; \Phi_Y)$ and $\widetilde {\cal R}^\bot_N(\tau, \frak x) := {\cal R}^\bot_N(\tau, \frak x; \Phi_Y)$.
Then the normal component of 
$d \Phi_Y( \tau, \frak x)^{- 1}[\widehat{\frak x}] - \widehat{\frak x}$ can be computed as follows
$$
\Pi_\bot \sum_{k = 0}^{N +m} T_{\widetilde b_{m-k}(- \tau, \Phi_Y(\tau, \frak x))} \partial_x^{m-k} \widehat w 
+ \Pi_\bot \sum_{k = 0}^{N +m} T_{d \widetilde b_{m-k}(- \tau, \Phi_Y(\tau, \frak x))[\widehat{\frak x}]} \partial_x^{m-k} \Phi_Y^\bot(\tau, \frak x) 
+ d \widetilde{{\cal R}}^\bot_N(- \tau, \Phi_Y(\tau, \frak x)) [\widehat{\frak x}] \, .
$$ 
By expanding the terms 
$T_{d \widetilde b_{m-k}(- \tau, \Phi_Y(\tau, \frak x))[\widehat{\frak x}]} \partial_x^{m-k} \Phi_Y^\bot(\tau, \frak x)$ with the help of Lemma \ref{lemma composizione nostri simboli},
one is led to define 
$b_{m-k}(\tau, \frak x)$, $B_{m-k}(\tau, \frak x)$, and ${\cal R}^\bot_N(\tau, \frak x)$
with the claimed properties.
\end{proof}
Combining Lemma \ref{expansion flow paradiff} and Lemma \ref{Corollario d Phi F}, one obtains an expansion of the
pullback of various types of vector fields by the time one flow map $\Phi_Y(1, \cdot)$:
\begin{lemma}\label{lemma coniugazioni}
Let $N$, $p$, $q \in \N$ and let $\Phi_Y(1, \frak x)$ denote the time one flow map,
corresponding to the vector field $Y = (0, 0, \, Y^\bot)$, with $Y^\bot(\frak x) = \Pi_\bot T_{a_{m}(\frak x)} \partial_x^{ m} w$ 
and $m \le 0$, satisfying \eqref{campo vettoriale astratto flussi paradiff} (cf.  Lemma \ref{expansion flow paradiff}). 
Then the following holds:\\
$(i)$ For any $X := (0,0, X^\bot)$ with $X^\bot \in {\cal OB}^q(n, N)$ and $n \geq 0$,  the pullback 
$\Phi_Y^* X(\frak x) = d\Phi_Y(1, \frak x)^{-1} X(\Phi_Y(1, \frak x))$ of $X$ by $\Phi_Y(1, \cdot)$ admits an expansion of the form
$$
\Phi_Y^* X (\frak x)=    \big(0, 0, \, X^\bot(\frak x) + \Upsilon^\bot(\frak x) + {\cal R}_N^\bot(\frak x) \big)
$$
where $\Upsilon^\bot \in {\cal OB}^{p + q - 1}(n, N)$ and ${\cal R}_N^\bot \in {\cal OS}^{p + q - 1}(N)$. 

%\noindent
%$(ii)$ For any $\mathcal M = (0, 0, {\cal M}^\bot)$ with ${\cal M}^\bot \in {\cal OF}^q(n, N)$ and $n \geq 0$,  the pullback $\Phi_Y^* \mathcal M$ 
%of $\mathcal M$ by $\Phi_Y(1, \cdot)$admits an expansion of the form
%$$
%\Phi_Y^* \mathcal M(\frak x) = \big(0, 0, \, \mathcal M^\bot(\frak x) + \Upsilon^\bot(\frak x) + {\cal R}_N^\bot(\frak x) \big), 
%$$
%where $\Upsilon^\bot \in {\cal OB}^{p + q - 1}(n, N)$ and  ${\cal R}_N^\bot \in {\cal OS}^{p + q - 1}(N)$. 
%
\noindent
$(ii)$ For any $X$ in ${\cal OS}^q(N)$, the pullback $\Phi_Y^* X$ of $X$ by $\Phi_Y(1, \cdot)$ admits an expansion of the form
$$
\Phi_Y^* X(\frak x) =  X(\frak x) + \big(0, 0, \, \Upsilon^\bot(\frak x) \big) + {\cal R}_N(\frak x)
$$ 
where $ \Upsilon^\bot  \in {\cal OB}^{p + q - 1}(m, N)$ and ${\cal R}_N \in {\cal OS}^{p + q - 1}(N)$. 
\end{lemma}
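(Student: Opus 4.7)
The plan is to derive both items from the Lie--Taylor formula \eqref{espansione commutatori notazioni}, iterated to sufficient order. Writing $\mathrm{ad}_Y X := [X,Y]$ and $\mathrm{ad}_Y^k := \mathrm{ad}_Y \circ \cdots \circ \mathrm{ad}_Y$ ($k$ times), successive application of \eqref{espansione commutatori notazioni} yields, for any fixed $K \geq 1$,
\begin{equation*}
\Phi_Y^{*} X(\frak x) \;=\; \sum_{k=0}^{K-1}\frac{1}{k!}\,\mathrm{ad}_Y^k X(\frak x) \;+\; \frac{1}{(K-1)!}\int_0^1 (1-t)^{K-1}\, d\Phi_Y(t,\frak x)^{-1}\,\mathrm{ad}_Y^K X(\Phi_Y(t,\frak x))\,dt.
\end{equation*}
I would then identify the class of each $\mathrm{ad}_Y^k X$ inductively in $k$, and treat the integral remainder separately, using the expansions of $\Phi_Y$ and $d\Phi_Y^{-1}$ provided by Lemma \ref{expansion flow paradiff} and Lemma \ref{Corollario d Phi F}, together with the symbolic calculus of Section \ref{para-differential calculus}.

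For item (i), since $X^\bot \in \mathcal{OB}^q(n,N)$ and $Y^\bot \in \mathcal{OB}^p(m,N)$ with $m \leq 0$, Lemma \ref{commutatore due paradiff nonlin} gives $[X,Y] = (0,0,\mathcal{C}^\bot + \mathcal{R}^\bot)$ with $\mathcal{C}^\bot \in \mathcal{OB}^{p+q-1}(m_*,N)$ and $\mathcal{R}^\bot \in \mathcal{OS}^{p+q-1}(N)$; crucially $m_* = \max\{n+m-1,\,n,\,m\}=n$ because $m\leq 0$. Iterating and applying Lemma \ref{comm smoothing paradiff} to brackets of the form $[\text{smoothing},Y]$ (whose smoothing part lands in $\mathcal{OS}(N-m)\subseteq\mathcal{OS}(N)$), one obtains by induction that every $\mathrm{ad}_Y^k X$ for $k\geq 1$ has vanishing $\theta$- and $y$-components and normal component in $\mathcal{OB}^{q+k(p-1)}(n,N) + \mathcal{OS}^{q+k(p-1)}(N) \subseteq \mathcal{OB}^{p+q-1}(n,N) + \mathcal{OS}^{p+q-1}(N)$. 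Thus the finite sum $\sum_{k=1}^{K-1}(k!)^{-1}\,\mathrm{ad}_Y^k X$ already fits into the desired form $\Upsilon^\bot + \mathcal{R}_N^\bot$. Item (ii) is analogous: the first bracket $[X,Y]$ between the smoothing $X\in\mathcal{OS}^q(N)$ and $Y\in\mathcal{OB}^p(m,N)$ yields, by Lemma \ref{comm smoothing paradiff}, a normal component in $\mathcal{OB}^{p+q-1}(m,N) + \mathcal{OS}^{p+q-1}(N-m) \subseteq \mathcal{OB}^{p+q-1}(m,N) + \mathcal{OS}^{p+q-1}(N)$, and further commutators preserve this class exactly as in (i) since $m\leq 0$.

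The main obstacle is the integral remainder. Choosing $K$ large enough so that $q+K(p-1)\geq p+q-1$ (any $K\geq 1$ suffices), one must still extract from the integrand $d\Phi_Y(t,\frak x)^{-1}\,\mathrm{ad}_Y^K X(\Phi_Y(t,\frak x))$ a paradifferential piece of order $\leq n$ (resp.\ $\leq m$ for item (ii)) plus a smoothing piece, with the announced smallness. My plan is: first, expand $d\Phi_Y(t,\frak x)^{-1}$ by Lemma \ref{Corollario d Phi F} as $\mathrm{Id}$ plus a paradifferential operator of order $\leq m \leq 0$ plus a smoothing remainder; next, expand each coefficient $a(\Phi_Y(t,\frak x))$ appearing in $\mathrm{ad}_Y^K X(\Phi_Y(t,\frak x))$ around $a(\frak x)$ by a finite Taylor expansion along the curve $\tau\mapsto\Phi_Y(\tau,\frak x)$, using the explicit description of $\Phi_Y(\tau,\cdot)-\mathrm{Id}$ from Lemma \ref{expansion flow paradiff}; finally reorganise the resulting products of para-multipliers and derivatives via the composition Lemma \ref{lemma composizione nostri simboli}. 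Smoothness in $\frak x$ and $\e$, and the required $H^{s+N+1}$-bounds on the smoothing remainder, then follow from differentiation under the integral sign combined with Gr\"onwall-type estimates exactly as in the proof of Lemma \ref{expansion flow paradiff}. This bookkeeping---in which $\frak x$-dependent symbols are unfolded along the flow and then recombined into standard paradifferential form---is the step that demands the most care.
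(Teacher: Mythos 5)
Your approach is essentially the one the paper uses: after the discussion of higher-order iteration you correctly reduce to $K=1$, i.e.\ the first-order Lie expansion $\Phi_Y^*X = X + \int_0^1 d\Phi_Y(t,\cdot)^{-1}[X,Y](\Phi_Y(t,\cdot))\,dt$, observe that $m_*=n$ (resp.\ that the smoothing class is preserved since $m\le 0$), and analyze the integrand via Lemmata \ref{commutatore due paradiff nonlin}, \ref{comm smoothing paradiff}, \ref{expansion flow paradiff}, \ref{Corollario d Phi F}, and \ref{lemma composizione nostri simboli}. The extra Taylor expansion of the coefficients $a(\Phi_Y(t,\frak x))$ around $a(\frak x)$ is unnecessary (one simply treats $a\circ\Phi_Y(t,\cdot)$ as a new coefficient with the same regularity and smallness), but it does no harm.
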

\begin{proof}
We only prove item $(i)$ since item $(ii)$ can be proved by similar arguments. Since by \eqref{espansione commutatori notazioni} with $\tau =1$
$$
\Phi_Y^* X( \frak x) = X(\frak x) + \int_0^1 (d\Phi_Y(t, \frak x))^{- 1}[X, Y](\Phi_Y(t, \frak x))\, d t,
$$
we analyze for any $t \in [0, 1]$ the vector field 
\begin{equation}\label{campo vettoriale nella proof}
Z(t, \frak x) := (d\Phi_Y(t, \frak x) )^{- 1}[X, Y](\Phi_Y(t, \frak x)).
\end{equation} 
Recall that $Y^\bot(\frak x) = \Pi_\bot T_{a_{m}(\frak x)} \partial_x^{m} w \in {\cal OB}^p(m, N)$.
Taking into account that $m_* = \max\{n+m-1, m, n\} = n$ (since $n \geq 0 \ge m$), it follows from Lemma \ref{commutatore due paradiff nonlin} that
$[X, Y] = \big(0, 0, [X^\bot , Y^\bot]\big)$ satisfies
$$
 [X^\bot , Y^\bot] =  \mathcal C^\bot_{[X^\bot, Y^\bot]}+ {\cal R}^\bot_{[X^\bot, Y^\bot]} , \qquad
  \mathcal C^\bot_{[X^\bot, Y^\bot]} \in {\cal OB}^{p + q - 1}(n, N), \quad  {\cal R}^\bot_{[X^\bot, Y^\bot]}  \in {\cal OS}^{p + q - 1}(N).
$$ 
By Definitions \ref{paradiff vector fields} - \ref{def smoothing vector fields},  and  Lemma \ref{expansion flow paradiff}, Lemma \ref{Corollario d Phi F},
as well as Lemma \ref{lemma composizione nostri simboli}, one obtains 
\begin{equation}\label{scrittura esplicita comm X Y}
\int_0^1 Z(t, \frak x)\, d t = \big(0, 0, \, \Upsilon^\bot(\frak x) + {\cal R}_N^\bot(\frak x) \big)
\end{equation}
with $\Upsilon^\bot(\frak x) \in {\cal OB}^{p + q - 1}(n, N)$ and ${\cal R}_N^\bot(\frak x) \in {\cal OS}^{p + q - 1}(N)$. 
\end{proof}
Next we  analyze the pullback $\Phi_Y^* X_{\cal N}$ of the Hamiltonian vector field $X_{{\cal N}}(\frak x)$ with $\mathcal N$ being the following Hamiltonian
in normal form (cf. \eqref{hamiltoniana totale}),
\begin{equation}\label{cal N Q Omega bot}
 {\cal N}(\frak x) := (\omega + \e \widehat \omega) \cdot y  + Q(y) +  \frac12 \big\langle D^{- 1}_\bot \Omega_\bot w, w \big\rangle\,, \qquad  \omega \in \Pi \, , \ \ \widehat \omega \in \R^{S_+}\, , 
 \end{equation}
 where the Fourier multipliers $D^{- 1}_\bot$ and  $\Omega_\bot \equiv  \Omega_\bot(\omega)$ are given by \eqref{definition Omega bot}
and $Q$ is assumed to be a map in $C^\infty_b(B_{S_+}(\delta) \times [0, \e_0], \, \R)$ with $Q(0) = 0$ and $\nabla_y Q(0) = 0$.
Since $\partial_x D_\bot^{- 1} \Omega_\bot = \ii \Omega_\bot$, the vector field $X_{{\cal N}}(\frak x)$ then reads
\begin{equation}\label{forma campo vettoriale forma normale astratto}
X_{\cal N}(\frak x) = \begin{pmatrix}
-  \nabla_y {\cal N}(\frak x) \\
 \nabla_\theta {\cal N}(\frak x) \\
\partial_x \nabla_\bot {\cal N}(\frak x)
\end{pmatrix} =  \begin{pmatrix}
- \omega -  \e \widehat \omega - \nabla_y Q(y) \\
0 \\
\ii \Omega_\bot w 
\end{pmatrix}
\end{equation}
and its differential is given by 
\begin{equation}\label{forma campo vettoriale forma normale astratto differenziale}
d X_{{\cal N}}(\frak x) = \begin{pmatrix}
0 & - d_y \nabla_y Q(y) & 0 \\
0 & 0 & 0 \\
0 & 0 & \ii \Omega_\bot
\end{pmatrix} \, .
\end{equation}
Note that  ${\cal N}(\frak x)$ does not depend on $\theta$, but only on $y$, $w$, and $\e$. For notational convenience, we will
often write ${\cal N}(y, w)$ instead of ${\cal N}(\frak x)$.
The following result on the expansion of $\ii \Omega_\bot$ can be found in \cite{Kap-Mon-2}.
\begin{lemma}[{\cite[Lemma C.7]{Kap-Mon-2}}]\label{lemma espansione Omega bot}
For any $N \in \N$, the Fourier multiplier $\ii \Omega_\bot$ has an expansion of the form 
$$
\ii \Omega_\bot = - \partial_x^3 + \sum_{k = 1}^N c_{- k} \partial_x^{- k} + {\cal R}^\bot_N\, ,
$$
where $c_{- k}\equiv c_{-k}(\omega)$ are real constants, depending only on the parameter $\omega \in \Pi$, and 
 ${\cal R}^\bot_N \equiv \mathcal R_N^\bot(\omega)$ is in ${\cal B}(H^s_\bot(\T_1), H^{s + N + 1}_\bot(\T_1))$ for any 
 $s \in \R$. 
\end{lemma}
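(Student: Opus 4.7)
The plan is to establish the claimed expansion by first deriving a full asymptotic expansion of the scalar frequencies $\Omega_n(\omega) = \omega_n^{kdv}(\mu(\omega), 0)$ as $|n| \to \infty$, and then algebraically matching it with the Fourier symbols of $-\partial_x^3$ and the $\partial_x^{-k}$.

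First, I would use the fact that since $\mu(\omega) \in \Xi$ corresponds to a finite gap potential $q_\omega \in \bigcap_{s \ge 0} H^s_0(\T_1)$, which is in particular $C^\infty$-smooth, one has available the classical asymptotic expansion of KdV frequencies at a smooth zero-mean periodic potential: for every $M \in \N$,
\begin{equation*}
\Omega_n(\omega) = (2\pi n)^3 + \sum_{j=1}^{M} \beta_{2j-1}(\omega)\,(2\pi n)^{-(2j-1)} + O(|n|^{-(2M+1)}), \qquad n \in S^\bot,
\end{equation*}
with real coefficients $\beta_{2j-1}(\omega)$, uniformly in $\omega$ in the compact set $\Pi$. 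Only odd negative powers of $n$ appear, due to the symmetry $\omega_{-n}^{kdv} = -\omega_n^{kdv}$. Such an expansion is a known consequence of the representation of the KdV frequencies as derivatives of $\mathcal H^{kdv}$ with respect to the actions, combined with the standard asymptotic expansion of the periodic/Dirichlet spectrum of the Schr\"odinger operator $-\partial_x^2 + q_\omega$ for $C^\infty$ potentials (see e.g.~\cite{KP}); alternatively, for finite gap potentials, it can be derived directly via residue computations on the underlying hyperelliptic spectral curve. The uniformity in $\omega$ follows from the compactness of $\Pi$ and the continuity of $\mu : \Pi \to \Xi$.

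Next, I would identify Fourier symbols. Since $-\partial_x^3$ and $\partial_x^{-k}$ have symbols $\ii(2\pi n)^3$ and $(\ii 2\pi n)^{-k}$ respectively, and since $(\ii 2\pi n)^{-(2j-1)} = (-1)^{j-1}\ii\,(2\pi n)^{-(2j-1)}$, setting $c_{-(2j-1)}(\omega) := (-1)^{j-1}\beta_{2j-1}(\omega) \in \R$ and $c_{-k}(\omega) := 0$ for $k$ even gives
\begin{equation*}
\ii\,\Omega_n(\omega) = \ii(2\pi n)^3 + \sum_{k=1}^{N} c_{-k}(\omega)\,(\ii 2\pi n)^{-k} + \ii\,r_N(n,\omega), \qquad n \in S^\bot,
\end{equation*}
where $r_N(n,\omega) = O(|n|^{-(N+1)})$, uniformly in $\omega \in \Pi$, upon choosing $M$ sufficiently large. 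The operator $\mathcal R^\bot_N$ defined as the Fourier multiplier with symbol $\ii\,r_N(n,\omega)$ on $\{e^{\ii 2\pi n x}\}_{n \in S^\bot}$ then maps $H^s_\bot(\T_1)$ into $H^{s+N+1}_\bot(\T_1)$ for every $s \in \R$, as the decay of its symbol gains exactly $N+1$ derivatives.

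The main obstacle in this proof strategy is the derivation of the full asymptotic expansion of $\Omega_n(\omega)$ uniformly in $\omega \in \Pi$. Although this is a classical fact, a rigorous proof requires a delicate asymptotic analysis of the Floquet discriminant (equivalently, of the periodic and Dirichlet eigenvalues of $-\partial_x^2 + q_\omega$) for $C^\infty$ potentials, together with the observation that all even-power corrections vanish by the odd symmetry in $n$. Once the expansion is established, the remainder of the argument reduces to a purely algebraic matching of Fourier symbols and an elementary bound on the tail $r_N$.
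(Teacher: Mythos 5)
The paper does not prove this lemma; it cites it verbatim as Lemma~C.7 of \cite{Kap-Mon-2}, so there is no in-paper proof to compare your attempt against. That said, your approach is essentially the standard one used to prove such pseudodifferential expansions of $\ii\Omega_\bot$ and is almost certainly the same route taken in \cite{Kap-Mon-2}: establish a full asymptotic expansion of the scalar frequencies $\Omega_n(\omega)$ in inverse powers of $n$, kill the even-order contributions using the symmetry $\Omega_{-n}=-\Omega_n$, and identify the resulting terms with the symbols $(\ii 2\pi n)^{-k}$ of $\partial_x^{-k}$. The parity argument is correct: writing $\Omega_n = (2\pi n)^3 + \sum_{k\geq 1} a_k(\omega)(2\pi n)^{-k}$ and imposing $\Omega_{-n}=-\Omega_n$ forces $a_k=0$ for $k$ even, and this is also exactly what is needed for the $c_{-k}$ in the lemma to be real (since $(\ii 2\pi n)^{-k}$ is purely imaginary exactly when $k$ is odd). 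The symbol-matching and the smoothing estimate for $\mathcal R^\bot_N$ are correct: a Fourier multiplier with symbol $O(|n|^{-(N+1)})$ maps $H^s_\bot$ to $H^{s+N+1}_\bot$ for every $s\in\R$.

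Two minor remarks. First, your formula $c_{-(2j-1)} = (-1)^{j-1}\beta_{2j-1}$ has the wrong sign; since $(\ii 2\pi n)^{-(2j-1)} = (-1)^j\,\ii\,(2\pi n)^{-(2j-1)}$, the correct identification is $c_{-(2j-1)} = (-1)^j\beta_{2j-1}$. This is immaterial since the lemma only asserts that the $c_{-k}$ are real constants. Second, and more substantively, the crucial input you invoke---the full asymptotic expansion of $\omega_n^{kdv}(\mu(\omega),0)$ to arbitrary order with real coefficients, uniformly on the compact parameter set $\Pi$---is not available off the shelf at the level of generality you appeal to. Theorem~1.2 of \cite{KM} (used in the paper's Lemma~\ref{properties Omega}) only gives the leading correction $\Omega_n = (2\pi n)^3 + O(n^{-1})$. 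To get the full expansion one needs the machinery of \cite{Kap-Mon-2} (e.g., the expansions of the Birkhoff map and its inverse in terms of pseudodifferential operators, from which the frequency asymptotics are extracted), or alternatively, for finite-gap potentials, an explicit computation from the underlying spectral curve. You correctly flag this as the main obstacle; it is in fact the entire content of the cited lemma, so the proof proposal is better viewed as a reduction of Lemma~\ref{lemma espansione Omega bot} to the asymptotic expansion of the KdV frequencies rather than a self-contained proof.
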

\begin{lemma}\label{corollario coniugazione Omega bot}
Let $X_{\mathcal N}$ be the vector field given by \eqref{forma campo vettoriale forma normale astratto} and
$Y = (0, 0, \, Y^\bot)$ be the vector field with
$Y^\bot(\frak x) = \Pi_\bot T_{a_{m}(\frak x)} \partial_x^{ m} w$ 
and $m \le 0$, satisfying \eqref{campo vettoriale astratto flussi paradiff} with $p, N \in \N$. 
Furthermore let $\Phi_Y(1, \frak x)$ be the time one flow map corresponding to the vector field $Y$
 (cf.  Lemma \ref{expansion flow paradiff}).
Then the following holds:\\
 \noindent
$(i)$ If in addition $Y^\bot(\frak x) =  \Pi_\bot T_{a_{m}(\frak x)} \partial_x^{ m} w$ is in ${\cal OB}^2_{w}(m, N)$, hence $a_m(\frak x) \equiv a_m(\theta, y)$ 
independent of $w$, and  if $\langle a_{m}(\frak x) \rangle_x = 0$,
then $[X_{\cal N}, Y]$ is of the form  $\big( 0, 0 , \, [X_{\cal N}, Y]^\bot  \big)$ with  $[X_{\cal N}, Y]^\bot \in {\cal OB}^2(2 + m, N)$ and admits an expansion of the form
$$
[X_{\cal N}, Y]^\bot (\frak x) =   \Pi_\bot T_{-3\partial_x a_{m}(\frak x)} \partial_x^{2 + m} w + 
\mathcal C^\bot( \frak x)  + {\cal R}^\bot_{N}( \frak x)  + {\cal OB}^3(m, N),
$$
where  $\mathcal C^\bot( \frak x)  \in {\cal OB}^2_{w}(1 + m, N)$ and 
${\cal R}^\bot_{N}( \frak x) \in {\cal OS}_{w}^2(N)$.
Moreover $\mathcal C^\bot( \frak x)$ and ${\cal R}^\bot_{N}( \frak x)$
are of the form $\mathcal C^\bot( \frak x)  = \mathcal C^\bot( \theta, y) [w]$ and, respectively,
${\cal R}^\bot_{N}( \frak x) = {\cal R}^\bot_{N}(\theta, y)[w] $, and the  diagonal matrix elements of $\mathcal C^\bot( \theta, y)$ and ${\cal R}^\bot_{N}(\theta, y)$ vanish,
$$
[\mathcal C^\bot( \theta, y)]_j^j = 0, \qquad [\mathcal R^\bot_N( \theta, y)]_j^j = 0, \ \ \qquad \forall j \in S^\bot\,. 
$$ 

\noindent
$(ii)$ If in addition $Y^\bot(\frak x)$ is in ${\cal OB}^2_{ww}(m, N)$, hence $a_m(\frak x)$ of the form $A_{m}(\theta)[w]$,
then $[X_{\cal N}, Y](\frak x)$ is of the form $(0, 0, [X_{\cal N}^\bot, Y^\bot](\frak x))$ 
with $[X_{\cal N}, Y]^\bot  \in {\cal OB}^2(2 + m, N)$ and admits an expansion of the form
$$
[X_{\cal N}, Y]^\bot  (\frak x)=  \Pi_\bot  T_{- 3\partial_x A_{m}(\theta)[w]} \partial_x^{2 + m} w  +{\cal C}^\bot(\frak x)
+ {\cal R}^\bot_{N}( \frak x)  + {\cal OB}^3(m, N) \, ,
$$
where $\mathcal C^\bot( \frak x)  \in {\cal OB}^2_{ww}(1 + m, N)$ and 
${\cal R}^\bot_{N}( \frak x) \in {\cal OS}_{ww}^2(N)$. 
\end{lemma}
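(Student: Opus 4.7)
The plan is to compute $[X_{\mathcal N}, Y]$ directly from the definition of the commutator of vector fields, using the explicit forms \eqref{forma campo vettoriale forma normale astratto} and \eqref{forma campo vettoriale forma normale astratto differenziale}, and then expand the resulting operator commutator $[\ii\Omega_\bot, T_{a_m}\partial_x^m]_{lin}$ via Lemma \ref{lemma espansione Omega bot} and the symbolic calculus of Section \ref{para-differential calculus}. Since $Y$ has vanishing $\theta$- and $y$-components and $X_{\mathcal N}^{(y)} = 0$, a direct application of \eqref{definition notations nonlinear commutators} shows that $[X_{\mathcal N}, Y]$ has zero $\theta$- and $y$-components, while its normal component is
\begin{equation*}
[X_{\mathcal N}, Y]^\bot(\frak x) = \Pi_\bot [\ii\Omega_\bot, T_{a_m(\frak x)}\partial_x^m]_{lin} w + \Pi_\bot T_{(\omega + \e\widehat\omega + \nabla_y Q(y))\cdot \partial_\theta a_m(\frak x) - d_w a_m(\frak x)[\ii\Omega_\bot w]}\partial_x^m w,
\end{equation*}
where $d_w a_m = 0$ in case (i) and $d_w a_m[\ii\Omega_\bot w] = A_m(\theta)[\ii\Omega_\bot w]$ in case (ii).

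Next I would insert $\ii\Omega_\bot = -\partial_x^3 + \sum_{k=1}^N c_{-k}\partial_x^{-k} + {\cal R}_N^\bot(\omega)$ from Lemma \ref{lemma espansione Omega bot} and apply Corollary \ref{corollario espansione commutatore} to each resulting operator commutator. Since $K_{1,3} = 3$, the only contribution of order $m+2$ comes from $[-\partial_x^3, T_{a_m}\partial_x^m]_{lin}$ and equals $\Pi_\bot T_{-3\partial_x a_m}\partial_x^{m+2} w$, which is the advertised leading term. All subsequent terms from $[-\partial_x^3, T_{a_m}\partial_x^m]_{lin}$ have order at most $m+1$, those from $[c_{-k}\partial_x^{-k}, T_{a_m}\partial_x^m]_{lin}$ have order at most $m-2$, and together with $\Pi_\bot T_{\omega\cdot\partial_\theta a_m}\partial_x^m w$ they assemble into ${\cal C}^\bot \in {\cal OB}^2_w(m+1, N)$ in case (i), respectively ${\cal OB}^2_{ww}(m+1, N)$ in case (ii); the smoothing remainders produced by Corollary \ref{corollario espansione commutatore}, together with $[{\cal R}_N^\bot(\omega), T_{a_m}\partial_x^m]_{lin}$, form ${\cal R}_N^\bot \in {\cal OS}^2_w(N)$ (respectively ${\cal OS}^2_{ww}(N)$). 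The term $\Pi_\bot T_{(\e\widehat\omega + \nabla_y Q(y))\cdot \partial_\theta a_m}\partial_x^m w$ is small of order three because $\e\widehat\omega + \nabla_y Q(y)$ vanishes at $(y, \e) = (0, 0)$ while $\partial_\theta a_m$ is small of order one, so it lies in ${\cal OB}^3(m, N)$. In case (ii), the extra term $-\Pi_\bot T_{A_m(\theta)[\ii\Omega_\bot w]}\partial_x^m w$ is quadratic in $w$ of order $m$ and fits into ${\cal OB}^2_{ww}(m, N) \subseteq {\cal OB}^2_{ww}(m+1, N)$ once $\sigma_N$ is enlarged by three to absorb the regularity lost by $\ii\Omega_\bot$, so it is absorbed into ${\cal C}^\bot$.

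The vanishing-diagonal statement in case (i) rests on two independent facts. The algebraic identity $[\ii\Omega_\bot, A]_j^j = \ii\Omega_j A_j^j - A_j^j \ii\Omega_j = 0$, valid for every $j \in S^\bot$ and every linear operator $A$, shows that the full commutator $[\ii\Omega_\bot, T_{a_m}\partial_x^m]_{lin}$ has zero diagonal matrix elements. Moreover, every individual para-differential summand appearing in the expansion of ${\cal C}^\bot$ is either of the form $T_{\partial_x^n a_m}\partial_x^\ell$ with $n \ge 1$, so that $\widehat{\partial_x^n a_m}(0) = 0$ automatically, or of the form $T_{\omega\cdot\partial_\theta a_m}\partial_x^m$, whose diagonal matrix element at $j$ is $(\ii 2\pi j)^m\, \omega \cdot \partial_\theta \widehat{a_m}(0)$ and vanishes by virtue of the hypothesis $\langle a_m\rangle_x = 0$. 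Consequently $[{\cal C}^\bot(\theta, y)]_j^j = 0$; combining the algebraic identity for the full commutator with the vanishing diagonal of the leading term $T_{-3\partial_x a_m}\partial_x^{m+2}$ and of ${\cal C}^\bot$ then forces $[{\cal R}_N^\bot(\theta, y)]_j^j = 0$ as well.

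The main technical obstacle will not be the algebra of the commutator, which is clean, but rather the careful bookkeeping needed to verify that each individual summand produced by iterating Corollary \ref{corollario espansione commutatore} and Lemma \ref{lemma paraprodotto fine} satisfies the uniform $C^\infty_b$ bounds, the correct smoothing order $N+1$, and the advertised order of smallness in $(y, w, \e)$ demanded by the rather fine-grained Definition \ref{paradiff omogenei di ordine 2}. In case (ii), additional care is needed to ensure that every time a coefficient involves $\ii\Omega_\bot w$, the loss of three $x$-derivatives is absorbed by a corresponding increase of $\sigma_N$, so that the symbols produced remain bounded operators $H^{s+\sigma_N}_\bot \to H^s$ of the type required by Definition \ref{paradiff omogenei di ordine 2}.
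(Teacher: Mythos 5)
Your proposal is correct and follows essentially the same route as the paper's own proof: compute $[X_{\mathcal N}, Y]$ from the block form \eqref{forma campo vettoriale forma normale astratto differenziale}, expand $[\ii\Omega_\bot, T_{a_m}\partial_x^m]_{lin}$ via Lemma \ref{lemma espansione Omega bot} and Corollary \ref{corollario espansione commutatore} to extract the leading $T_{-3\partial_x a_m}\partial_x^{m+2}$ term, place the $\omega\cdot\partial_\theta$ term in ${\cal C}^\bot$ and the $\e\widehat\omega$, $\nabla_y Q$ terms in ${\cal OB}^3(m,N)$, and deduce the vanishing diagonals from the algebraic identity for Fourier-multiplier commutators combined with the zero-average hypothesis. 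No gaps.
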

\begin{proof}
$(i)$ Since $Y^\bot(\frak x) =  \Pi_\bot T_{a_{m}(\frak x)} \partial_x^{ m} w$ is in ${\cal OB}^2_{w}(m, N)$, $a_m$ is independent of $w$ and
for any $s \ge s_N$,
\begin{equation}\label{tailandia 1}
 a_{m} \in C^\infty_b \big( {\cal V}^{s + \sigma_N}(\delta) \times [0, \e_0], \, H^s(\T_1) \big) \ \  \text{small of order one.}
 \end{equation}  
For notational convenience, we write  $Y^\bot(\theta, y)[w]$ instead of  $Y^\bot(\frak x)$ (similarly as we write $a_m(\theta, y)$ instead of $a_m(\frak x)$).
Then  $ [X_{\cal N}, Y](\frak x)  = d X_{\cal N}(y, w)[Y(\frak x)] - d Y(\frak x)[X_{\cal N}(y, w)]$ can be computed as
\begin{equation}\label{formula generale commutatore paradiff forma normale}
\begin{aligned}
& [X_{\cal N}, Y](\frak x)   \stackrel{\eqref{forma campo vettoriale forma normale astratto}, \eqref{forma campo vettoriale forma normale astratto differenziale}}{=} \begin{pmatrix}
0 & - d_y (\nabla_y Q(y)) & 0 \\
0 & 0 & 0 \\
0 & 0 & \ii \Omega_\bot
\end{pmatrix} \begin{pmatrix}
0 \\
0 \\
Y^\bot(\frak x)
\end{pmatrix} \\
& \quad - \begin{pmatrix}
0 & 0 & 0 \\
0 & 0 & 0 \\
\partial_\theta Y^\bot(\frak x)& \partial_y Y^\bot(\frak x) & Y^\bot(\theta, y)
\end{pmatrix} \begin{pmatrix} 
- \omega -  \e \widehat \omega - \nabla_y Q(y) \\
0 \\
\ii \Omega_\bot w
\end{pmatrix} 
 = \begin{pmatrix}
0 \\
0 \\
[X_{\cal N}, Y]^\bot (\frak x)
\end{pmatrix} 
\end{aligned}
\end{equation}
where
$$
[X_{\cal N}, Y]^\bot(\frak x) :=\Big(  [\ii \Omega_\bot , \, Y^\bot(\theta, y)]_{lin} 
+ (\omega +  \e \widehat \omega) \cdot \partial_\theta \, Y^\bot(\theta, y)  + \nabla_y Q(y) \cdot \partial_\theta \, Y^\bot(\theta, y)\Big)[w]  
$$
By \eqref{cal N Q Omega bot}, $\nabla_y Q(y)$ is small of order one and hence
$$
\begin{aligned}
& \omega \cdot \partial_\theta \, Y^\bot(\theta, y)[w] =  \Pi_\bot T_{\omega \cdot \partial_\theta a_{m}(\theta, y)} \partial_x^{m} w \in {\cal OB}^2_{w}(m, N)\,, \\
& \e \widehat \omega \cdot \partial_\theta \, Y^\bot(\theta, y)[w] = \e \Pi_\bot T_{\widehat \omega \cdot \partial_\theta a_{m}(\theta, y)} \partial_x^{ m} w \in {\cal OB}^3(m, N)\,, \\
& \nabla_y Q(y) \cdot \partial_\theta \, Y^\bot(\theta, y)[w] = \Pi_\bot T_{\nabla_y Q(y) \cdot \partial_\theta \, a_{m}(\theta, y)} \partial_x^{m} w \in {\cal OB}^3( m, N)\, .
\end{aligned}
$$
Furthermore by \eqref{tailandia 1}, Corollary \ref{corollario espansione commutatore}, and  Lemma \ref{lemma espansione Omega bot}, one sees that  
\begin{equation}\label{tailandia 2}
\begin{aligned}
& \big[ \ii \Omega_\bot , Y^\bot(\theta, y) \big]_{lin} w =  \Pi_\bot T_{- 3\partial_x a_{m}(\theta, y)} \partial_x^{2 + m} w 
+ {\cal C}^{(1)}(\theta, y)[w] + {\cal R}^\bot_N(\theta, y)[w]\,, \\
& {\cal C}^{(1)}(\theta, y)[w] \in {\cal OB}^2_{w}(1 + m, N), \qquad {\cal R}^\bot_N(\theta, y)[w] \in {\cal OS}_{w}^2(N)\,. 
\end{aligned}
\end{equation}
Altogether we have shown that
$$
\begin{aligned}
&[X_{\cal N}, Y]^\bot(\frak x) =  \Pi_\bot  T_{-3\partial_x a_{m}(\theta, y)} \partial_x^{2 + m} w + {\cal C}^\bot(\theta, y)[w] + {\cal R}^\bot_N(\theta, y)[w] + {\cal OB}^3( m, N) \,,  \\
& \mathcal C^\bot( \theta, y) [w] := {\cal C}^{(1)}(\theta, y)[w]  + \omega \cdot \partial_\theta Y^\bot(\theta, y)[w] \in {\cal OB}^2_{w}(1 + m, N)\, .
%& {\cal R}^\bot_{N}( \theta, y)[w] := {\cal R}^\bot_N(\theta, y)[w] \in {\cal OS}_{w}^2(N)\,. 
\end{aligned}
$$
For any $j \in S^\bot$,  the diagonal matrix element $[\omega \cdot \partial_\theta \, Y^\bot(\theta, y)]_j^j$ vanishes,
$$
[\omega \cdot \partial_\theta \, Y^\bot(\theta, y)]_j^j = \omega \cdot \partial_\theta \langle a_{m}(\theta, y) \big\rangle_x   (\ii 2 \pi j)^{m} = 0,
$$
since by assumption $\langle a_{m}(\theta, y) \rangle_x = 0$, and so does the diagonal matrix element 
$\big[ [\ii \Omega_\bot\,,\, Y^\bot(\theta, y)]_{lin} \big]_j^j$, implying together with \eqref{tailandia 2}
$$
[ \mathcal C^\bot( \theta, y)]_j^j = 0, \qquad [{\cal R}^\bot_{N}( \theta, y)]_j^j = 0\, , \qquad \ \ \forall j \in S^\bot \, .
$$

\noindent
$(ii)$ 
Since $Y^\bot(\frak x) =  \Pi_\bot T_{a_{m}(\frak x)} \partial_x^{ m} w$ is in ${\cal OB}^2_{ww}(m, N)$, 
it follows from Definition \ref{paradiff omogenei di ordine 2} that
 $a_m(\frak x)$ is of the form $a_m(\frak x) = A_{m}(\theta)[w]$ and that for any $s \geq s_N$,
 \begin{equation}\label{tailandia 1 1}
 A_{m} \in C^\infty_b\big(\T^{S_+}, \, {\cal B}(H^{s + \sigma_N}_\bot(\T_1), H^s(\T_1) \big) \, .
 \end{equation}  
For notational convenience, we write $a_m(\theta, w)$ instead of $a_m(\frak x)$.
%and  $Y^\bot(\theta, w)$ instead of  $Y^\bot(\frak x)$.
 Arguing as in \eqref{formula generale commutatore paradiff forma normale}, one sees that 
$ [X_{\cal N}, Y](\frak x)  = d X_{\cal N}(y, w)[Y(\frak x)] - d Y(\frak x)[X_{\cal N}(y, w)]$ can be computed as
\begin{equation}\label{formula generale commutatore paradiff forma normale 1}
\begin{aligned}
& [X_{\cal N}, Y](\frak x)  = \begin{pmatrix}
0 & - d_y( \nabla_y Q(y)) & 0 \\
0 & 0 & 0 \\
0 & 0 & \ii \Omega_\bot
\end{pmatrix} \begin{pmatrix}
0 \\
0 \\
Y^\bot(\frak x)
\end{pmatrix} \\
& \quad - \begin{pmatrix}
0 & 0 & 0 \\
0 & 0 & 0 \\
d_\theta Y^\bot(\frak x)& 0& d_\bot Y^\bot(\frak x)
\end{pmatrix} \begin{pmatrix} 
- \omega -  \e \widehat \omega - \nabla_y Q(y) \\
0 \\
\ii \Omega_\bot w
\end{pmatrix}
 = \begin{pmatrix}
0 \\
0 \\
[X_{\cal N}, Y]^\bot (\frak x)
\end{pmatrix}   
\end{aligned}
\end{equation}
where
$$
\begin{aligned}
& [X_{\cal N}, Y]^\bot (\frak x) =  \ii \Omega_\bot [Y^\bot(\frak x)] - d_\bot Y^\bot(\frak x)[\ii \Omega_\bot w] + ( \omega +  \e \widehat \omega) \cdot \partial_\theta \, Y^\bot(\frak x)  
 + \nabla_y Q(y) \cdot \partial_\theta \, Y^\bot(\frak x) \, .
\end{aligned}
$$
Since by \eqref{cal N Q Omega bot}, $\nabla_y Q(y)$ is small of order one, one infers that
\begin{equation}\label{tailandia 100}
\begin{aligned}
& \omega \cdot \partial_\theta \, Y^\bot(\frak x) =  \Pi_\bot T_{\omega \cdot \partial_\theta \, A_{m}(\theta)[w]} \partial_x^{m} w \in {\cal OB}^2_{ww}(m, N)\,, \\
& \e \widehat \omega \cdot \partial_\theta \, Y^\bot(\frak x) = \e \Pi_\bot T_{\widehat \omega \cdot \partial_\theta \, A_{m}(\theta)[w]} \partial_x^{m} w \in {\cal OB}^3(m, N)\,, \\
& \nabla_y Q(y) \cdot \partial_\theta \, Y^\bot(\frak x) = \Pi_\bot T_{\nabla_y Q(y) \cdot \partial_\theta \, A_{m}(\theta)[w]} \partial_x^{m} w \in {\cal OB}^3(m, N)\,.
\end{aligned}
\end{equation}
Furthermore, $ \ii \Omega_\bot [Y^\bot(\frak x)] - d_\bot Y(\frak x)[\ii \Omega_\bot w]$ can be computed as 
\begin{equation}\label{tailandia 101}
\begin{aligned}
& \ii \Omega_\bot \Pi_\bot T_{A_{m}(\theta)[w]} \partial_x^{m} w 
- \Pi_\bot T_{A_{m}(\theta)[w]} \partial_x^{m} \ii \Omega_\bot w - \Pi_\bot T_{A_{m}(\theta)[\ii \Omega_\bot w]} \partial_x^{m} w  \\
& = \Pi_\bot \big[\ii \Omega_\bot\,,\, T_{A_{m}(\theta)[w]} \partial_x^{m} \big]_{lin} w - \Pi_\bot T_{A_{m}(\theta)[\ii \Omega_\bot w]} \partial_x^{ m} w\,. 
\end{aligned}
\end{equation}
By \eqref{tailandia 1 1}, Corollary \ref{corollario espansione commutatore}, and Lemma \ref{lemma espansione Omega bot} one has 
\begin{equation}\label{tailandia 110}
\begin{aligned}
& \Pi_\bot T_{A_{m}(\theta)[\ii \Omega_\bot w]} \partial_x^{ m} w \in {\cal OB}_{ww}^2( m, N)\, ,\\
& \Pi_\bot \big[\ii \Omega_\bot\,,\, T_{A_{m}(\theta)[w]} \partial_x^{m} \big]_{lin} w = \Pi_\bot T_{- 3\partial_x A_{m}(\theta)[w]} \partial_x^{2 + m}w 
+ {\cal C}^{(1)}(\frak x) + {\cal R}^\bot_N(\frak x) + {\cal OB}^3( m, N) \,,  \\
& \mathcal C^{(1)}( \frak x) \in {\cal OB}_{ww}^2( 1 + m, N)\, , \qquad {\cal R}^\bot_N(\frak x) \in {\cal OS}_{ww}^2(N)\, .
\end{aligned}
\end{equation}
Altogether, the identities \eqref{tailandia 100}-\eqref{tailandia 110} yield 
$$
\begin{aligned}
&[X_{\cal N}, Y]^\bot(\frak x) =  \Pi_\bot  T_{-3\partial_x A_{m}(\theta)[w]} \partial_x^{2 + m} w 
+  \mathcal C^\bot( \frak x) + {\cal R}^\bot_N(\frak x) + {\cal OB}^3( m, N) \,,  \\
& \mathcal C^\bot( \frak x)  := {\cal C}^{(1)}(\frak x) 
+  \Pi_\bot T_{\omega \cdot \partial_\theta \, A_{m}(\theta)[w]} \partial_x^{m} w  -  \Pi_\bot T_{A_{m}(\theta)[\ii \Omega_\bot w]} \partial_x^{ m} w \in {\cal OB}^2_{ww}(1 + m, N)
\end{aligned}
$$
and hence item $(ii)$ is proved.
\end{proof}
\begin{lemma}\label{corollario flusso con Omega bot}
Let $X_{\mathcal N}$ be the vector field given by \eqref{forma campo vettoriale forma normale astratto} and let 
$Y(\frak x) = (0, 0, \, Y^\bot(\frak x))$ where $Y^\bot(\frak x)= \big(0, 0, Y^\bot_{0}(\frak x) + Y^\bot_{1}(\frak x) \big)$ and
 \begin{equation}\label{ucraina 0}
  Y^\bot_{0}(\frak x) \equiv Y^\bot_{0}(\theta, y)[w] = \Pi_\bot T_{a_{m}(\theta, y)} \partial_x^{m} w \in {\cal OB}^2_{w}(m, N), \ \
  Y^\bot_{1}(\frak x)= \Pi_\bot T_{A_{ m}(\theta)[w]} \partial_x^{m} w \in {\cal OB}_{ww}^2(m, N),
 \end{equation}
with  $N \in \N$ and $m \le 0$. If in addition $\langle a_{m} (\theta, y) \rangle_x = 0$, 
then the pullback $X_{\mathcal N, \Phi} \equiv \Phi_Y^* X_{\cal N}$ of the vector field $X_{\cal N}$ 
by  be the time one flow map $\Phi_Y(1, \cdot)$
corresponding to $Y$ has an expansion of the form 
$$
X_{\mathcal N, \Phi}(\frak x) = \big( - \omega -  \e \widehat \omega -  \nabla_y Q(y), \ 0, \ X_{\mathcal N, \Phi}^\bot(\frak x) \big)
$$
where 
$$
\begin{aligned}
 X_{\mathcal N, \Phi}^\bot(\frak x) & = \ii \Omega_\bot w + \Pi_\bot  T_{- 3\partial_x (a_{ m}(\theta, y) + A_{m}(\theta)[w] )} \partial_x^{2 + m} w 
+ {\cal C}_0^\bot(\theta, y)[w] + {\cal C}_1^\bot(\frak x) \\
&  \qquad + {\cal R}^\bot_{N, 0}(\theta, y )[w] + {\cal R}^\bot_{N, 1}(\frak x)
+ {\cal OB}^3(2 + m, N) + {\cal OS}^3(N)
\end{aligned}
$$
and  ${\cal C}_0^\bot(\theta, y)$,  $ {\cal R}^\bot_{N, 0}(\theta, y)$,  
and ${\cal C}_1^\bot(\frak x) $, ${\cal R}^\bot_{N, 1}(\frak x)$ 
are given by Lemma \ref{corollario coniugazione Omega bot}. Hence these terms satisfy
$$
\begin{aligned}
& {\cal C}^\bot_0(\theta, y)[w]  \in {\cal OB}^2_{w}(1 + m, N), \qquad {\cal R}^\bot_{N, 0}(\theta, y)[w] \in {\cal OS}_{w}^2(N)\,, \\
& {\cal C}_1^\bot(\frak x) \in {\cal OB}^2_{ww}(1 + m, N), \  \qquad  \quad \ {\cal R}^\bot_{N, 1}(\frak x) \in {\cal OS}^2_{ww}(N)\, ,
\end{aligned}
$$
and the diagonal matrix elements of ${\cal C}_0^\bot(\theta, y)$ and ${\cal R}^\bot_{N, 0}(\theta, y)$ vanish,
$$
[{\cal C}_0^\bot(\theta, y)]_j^j = 0, \qquad [{\cal R}^\bot_{N, 0}(\theta, y)]_j^j = 0, \quad \qquad \forall j \in S^\bot\,. 
$$
%$(ii)$ If $Y \in {\cal OS}^p(N)$, then $\Phi_Y^* X_\Omega = X_\Omega + [X_\Omega, Y] + {\cal R}$, where ${\cal R} \in {\cal OS}^{2p - 1}(N - 3)$
\end{lemma}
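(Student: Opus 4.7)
The plan is to apply the Lie--Taylor expansion of the pullback recalled in \eqref{espansione commutatori notazioni} at $\tau = 1$,
$$
\Phi_Y^* X_{\mathcal N}(\frak x) \;=\; X_{\mathcal N}(\frak x) + [X_{\mathcal N},Y](\frak x) + \mathcal R_{\rm Lie}(\frak x), \qquad
\mathcal R_{\rm Lie}(\frak x) := \int_0^1 (1-t)\, d\Phi_Y(t,\frak x)^{-1}\,[[X_{\mathcal N},Y],Y](\Phi_Y(t,\frak x))\,dt,
$$
and to split $Y = Y_0 + Y_1$ with $Y_j = (0,0,Y_j^\bot)$ as in \eqref{ucraina 0}.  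By linearity of the commutator, $[X_{\mathcal N},Y] = [X_{\mathcal N},Y_0] + [X_{\mathcal N},Y_1]$, and Lemma~\ref{corollario coniugazione Omega bot}(i) applied to $Y_0$ (using the hypothesis $\langle a_m(\theta,y)\rangle_x = 0$) together with Lemma~\ref{corollario coniugazione Omega bot}(ii) applied to $Y_1$ produces the explicit expansion
$$
[X_{\mathcal N},Y]^\bot(\frak x) = \Pi_\bot T_{-3\partial_x(a_m(\theta,y)+A_m(\theta)[w])}\partial_x^{2+m}w + \mathcal C_0^\bot(\theta,y)[w] + \mathcal C_1^\bot(\frak x) + \mathcal R_{N,0}^\bot(\theta,y)[w] + \mathcal R_{N,1}^\bot(\frak x) + \mathcal{OB}^3(m,N),
$$
with the class memberships and diagonal-vanishing properties stated in Lemmas~\ref{corollario coniugazione Omega bot}(i)--(ii); since $m\le 0$ one has $\mathcal{OB}^3(m,N)\subseteq\mathcal{OB}^3(2+m,N)$, absorbing these $\mathcal{OB}^3(m,N)$ error terms into the final $\mathcal{OB}^3(2+m,N)$ bucket.

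For the $\theta$- and $y$-components, I would use the structural fact that $Y=(0,0,Y^\bot)$, whence the flow $\Phi_Y(\tau,\cdot)$ fixes the $(\theta,y)$ coordinates and $d\Phi_Y(\tau,\cdot)^{-1}$ is block lower-triangular, acting as the identity on the $\theta$- and $y$-components.  Since $X_{\mathcal N}^{(\theta)}$ depends only on $y$ (cf. \eqref{forma campo vettoriale forma normale astratto}) and $X_{\mathcal N}^{(y)}\equiv 0$, the $\theta$-component of $\Phi_Y^* X_{\mathcal N}$ is simply $-\omega - \e\widehat\omega - \nabla_y Q(y)$ and the $y$-component is $0$.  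The same triangular structure shows that commutators of vector fields of the form $(0,0,\cdot)$ remain of this form, so $[X_{\mathcal N},Y]$ and $[[X_{\mathcal N},Y],Y]$ both live in the normal direction, and hence so does $\mathcal R_{\rm Lie}$.

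It remains to prove that $\mathcal R_{\rm Lie} \in (0,0,\mathcal{OB}^3(2+m,N)+\mathcal{OS}^3(N))$.  By the analysis of $[X_{\mathcal N},Y]^\bot$ above, this commutator is a sum of terms in $\mathcal{OB}^2(2+m,N)$ and $\mathcal{OS}^2(N)$.  Now I would invoke Lemma~\ref{commutatore due paradiff nonlin} (Commutators III) to compute the commutator of the $\mathcal{OB}^2(2+m,N)$ part with $Y\in\mathcal{OB}^2(m,N)$: since $m\le 0$, one has $m_*=\max\{(2+m)+m-1,\,2+m,\,m\}=2+m$, yielding something in $\mathcal{OB}^3(2+m,N)+\mathcal{OS}^3(N)$.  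For the $\mathcal{OS}^2(N)$ part I would invoke Lemma~\ref{comm smoothing paradiff} (Commutators II), which gives a contribution in $\mathcal{OB}^3(m,N)+\mathcal{OS}^3(N-m)\subseteq\mathcal{OB}^3(2+m,N)+\mathcal{OS}^3(N)$ again thanks to $m\le 0$.  Hence $[[X_{\mathcal N},Y],Y]^\bot \in \mathcal{OB}^3(2+m,N)+\mathcal{OS}^3(N)$.  Finally, by Lemma~\ref{lemma coniugazioni} (with $p=2$, $q=3$, and $n=2+m$, enlarged to $\max(0,2+m)\ge 0$ via the inclusion $\mathcal{OB}^3(n,N)\subseteq\mathcal{OB}^3(\max(0,n),N)$) the pullback $\Phi_t^*[[X_{\mathcal N},Y],Y]$ remains in the same class uniformly in $t\in[0,1]$, and this class is preserved by integration in $t$, so $\mathcal R_{\rm Lie}\in (0,0,\mathcal{OB}^3(2+m,N)+\mathcal{OS}^3(N))$.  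Combining Steps 1--3 yields the stated expansion.

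The main obstacle is the bookkeeping in Step~3: namely, verifying uniformly in $t\in[0,1]$ that the pullback under $\Phi_Y(t,\cdot)$ of the iterated commutator $[[X_{\mathcal N},Y],Y]$ respects both the order $2+m$ and the smallness order three.  This requires keeping track of the smoothing order $N-m$ that appears in Lemma~\ref{comm smoothing paradiff} (harmless since $m\le 0$), and handling the case $2+m<0$ when invoking Lemma~\ref{lemma coniugazioni}(i), which was stated under the restriction $n\ge 0$ but whose proof extends verbatim to $n<0$ thanks to the inclusions $\mathcal{OB}(n,N)\subseteq\mathcal{OB}(0,N)$ for $n\le 0$.
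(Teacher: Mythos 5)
Your proof follows the same route as the paper: Lie--Taylor expansion $\Phi_Y^* X_{\mathcal N} = X_{\mathcal N} + [X_{\mathcal N},Y] + \mathcal R_{\rm Lie}$, the splitting $Y=Y_0+Y_1$ and Lemma~\ref{corollario coniugazione Omega bot}(i)--(ii) for $[X_{\mathcal N},Y]$, then Lemmas~\ref{comm smoothing paradiff}, \ref{commutatore due paradiff nonlin}, \ref{lemma coniugazioni} to place $\mathcal R_{\rm Lie}$ in $(0,0,\mathcal{OB}^3(2+m,N)+\mathcal{OS}^3(N))$. The block lower-triangular structure of $d\Phi_Y$ you invoke for the $\theta$- and $y$-components is a clean way to see what the paper extracts directly from the Lie expansion, and your classification $[[X_{\mathcal N},Y],Y]^\bot\in\mathcal{OB}^3(2+m,N)+\mathcal{OS}^3(N)$ is the correct smallness order (the paper's \eqref{libellula 1} records $\mathcal{OB}^2$, which should read $\mathcal{OB}^3$; both imply the final claim).

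There is, however, one defect in your last step. You invoke Lemma~\ref{lemma coniugazioni}(i), stated for $n\ge 0$, with $n=2+m$, and propose to extend it to $n<0$ ``verbatim'' via the inclusion $\mathcal{OB}(n,N)\subseteq\mathcal{OB}(0,N)$. That inclusion only lets you run the lemma with $n$ replaced by $0$, so it yields $\Upsilon^\bot\in\mathcal{OB}^{p+q-1}(0,N)$ rather than $\mathcal{OB}^{p+q-1}(2+m,N)$ --- exactly losing the order $2+m$ you are trying to track when $2+m<0$. The correct repair is not the inclusion but a re-examination of the proof of Lemma~\ref{lemma coniugazioni}(i): the hypothesis $n\ge 0$ is used there only to guarantee the identity $m_*=\max\{n+m-1,n,m\}=n$ from Lemma~\ref{commutatore due paradiff nonlin}. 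For $n=2+m$ with $m\le 0$ one has $n>m$ and $n+m-1=2m+1\le 2+m=n$, so $m_*=n$ holds automatically regardless of the sign of $n$, and the proof of Lemma~\ref{lemma coniugazioni}(i) goes through unchanged. With that substitution your argument matches the paper's; note that the paper also cites Lemma~\ref{lemma coniugazioni} without commenting on $n<0$, so you caught a point the paper glosses over --- the fix just needs to be the one that preserves the order, not the inclusion.
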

\begin{proof}
By \eqref{espansione commutatori notazioni}, $X_{\mathcal N, \Phi}$ can be expanded as
$$
X_{\mathcal N, \Phi} = \Phi_Y^* X_{\cal N} = X_{\cal N} + [X_{\cal N}, Y] + Z, \qquad  
Z(\frak x) :=  \int_0^1 (1 - t) (d \Phi_Y(t, \frak x))^{- 1}[[X_{\cal N}, Y], Y](\Phi_Y(t, \frak x))\, d t .
$$ 
By Lemma \ref{corollario coniugazione Omega bot}, one has $ [X_{\cal N}, Y] = \big( 0, \, 0, \, [X_{\cal N}, Y]^\bot \big)$
with $  [X_{\cal N}, Y]^\bot \in {\cal OB}^2(2 + m, N)$ given by
\begin{equation}\label{libellula 0}
   \Pi_\bot  T_{-3\partial_x ( a_{m}(\theta, y) + A_{m}(\theta)[w] )} \partial_x^{2 + m} w 
+ {\cal C}_0^\bot(\theta, y)[w] + {\cal R}^\bot_{N, 0}(\theta, y)[w] 
 +  {\cal C}_1^\bot(\frak x)+ {\cal R}^\bot_{N, 1}(\frak x) + {\cal OB}^3(m, N) \, ,
\end{equation}
where ${\cal C}_0^\bot(\theta, y)$,  $ {\cal R}^\bot_{N, 0}(\theta, y)$,  
and ${\cal C}_1^\bot(\frak x) $, ${\cal R}^\bot_{N, 1}(\frak x)$ 
are given as in Lemma \ref{corollario coniugazione Omega bot}. In particular, the diagonal matrix elements of 
${\cal C}_0^\bot(\theta, y)$ 
and ${\cal R}^\bot_{N, 0}(\theta, y)$ vanish. 
Furthermore, by Lemmata \ref{comm smoothing paradiff}, \ref{commutatore due paradiff nonlin}, one infers that
\begin{equation}\label{libellula 1}
[[X_{\cal N}, Y], Y] (\frak x)= \big( 0, \, 0, \, {\cal C}_2^\bot(\frak x) + {\cal R}^\bot_{N, 2}(\frak x) \big) , 
\qquad {\cal C}_2^\bot \in {\cal OB}^{2}(2 + m, N), \quad {\cal R}^\bot_{N, 2} \in {\cal OS}^{3}(N )  ,
\end{equation}
and hence concludes by Lemma \ref{lemma coniugazioni} that 
\begin{equation}\label{libellula 2}
Z(\frak x) = \big( 0, \, 0, \, {\cal C}^\bot_3(\frak x) + {\cal R}^\bot_{N, 3}(\frak x) \big), \qquad {\cal C}^\bot_3 \in {\cal OB}^{3}(2 + m, N), \quad {\cal R}^\bot_{N, 3} \in {\cal OS}^{3}(N)\,. 
\end{equation}
The claimed statement then follows by \eqref{libellula 0}-\eqref{libellula 2}. 
%$$
%\begin{aligned}
%& [X_\Omega, Y](\frak x) = \begin{pmatrix}
%0 \\
%[X_\Omega, Y]^\bot
%\end{pmatrix}\,, \\
%&  [X_\Omega, Y]^\bot(\frak x) :=D^{- 1} \Omega_\bot Y^\bot(\frak x) - d_\bot Y^\bot(\frak x)[|D|^{- 1}\Omega_\bot w] \\
%& \qquad =D^{- 1} \Omega_\bot Y^\bot(\frak x) - d_\bot {\cal P}_Y(\frak x)[|D|^{- 1}\Omega_\bot w] +D^{- 1} \Omega_\bot {\cal R}_Y(\frak x) - d_\bot {\cal R}_Y(\frak x)[|D|^{- 1}\Omega_\bot w]\,.
%\end{aligned} 
%$$
%Clearly $|D|^{- 1}\Omega_\bot {\cal R}_Y(\frak x) - d_\bot {\cal R}_Y(\frak x)[|D|^{- 1}\Omega_\bot w] \in {\cal OS}^p(N - 3)$. Moreover, one has 
%$$
%\begin{aligned}
%&D^{- 1} \Omega_\bot Y^\bot(\frak x) - d_\bot {\cal P}_Y(\frak x)[|D|^{- 1}\Omega_\bot w] = \Pi_\bot \sum_{j = m}^{N - m} \Big(|D|^{- 1}\Omega_\bot T_{a_{- j}} \partial_x^{- j} w - T_{a_{- j}} \partial_x^{- j}D^{- 1} \Omega_\bot w - T_{d_\bot a_{- j}(\frak x)[|D|^{- 1}\Omega_\bot w]} \partial_x^{- j} w\Big) \\
%& \qquad = \Pi_\bot \sum_{j = m}^{N - m} [|D|^{- 1}\Omega_\bot, T_{a_{- j}} \partial_x^{- j}]_{lin} w + {\cal OB}^p(- m, N)
% \end{aligned}
%$$
%where ${\cal P}_1 \in {\cal OB}(- m, N)$. By Lemma \ref{lemma espansione Omega bot} and using Lemma \ref{corollario espansione commutatore} to expand the linear commutator $[|D|^{- 1}\Omega_\bot, T_{a_{- j}} \partial_x^{- j}]_{lin}$, one obtains that for any $j =m, \ldots, N - m$,  
%$$
%[|D|^{- 1}\Omega_\bot, T_{a_{- j}} \partial_x^{- j}]_{lin} w = \Pi_\bot 3 T_{\partial_x a_{- j}} \partial_x^{2 - j} w + {\cal OB}^p(1 - j, N) + {\cal OS}^p(N)\,.
%$$
%Therefore the claimed statement has been proved.  
\end{proof}

\subsection{Flows of Fourier multiplier vector fields and smoothing vector fields}
In this subsection we discuss additional properties of Fourier multiplier vector fields and smooth vector fields and their flows, 
needed in  Subsection \ref{normalizzazione Fourier multipliers}.

We begin by considering the flows corresponding to Fourier multiplier vector fields.
Let ${\cal M}$ be a vector field of the form $(0, 0, {\cal M}^\bot)$ with ${\cal M}^\bot \in  {\cal OF}^p(0, N)$ and $N, p \in \N$ 
(cf. Definition \ref{classe fourier multipliers}).
Then $\mathcal M^\bot(\frak x)$ has an expansion of the form
$\mathcal M^\bot(\frak x) = \sum_{k = 0}^{N} \lambda_{ - k}(\frak x) \partial_x^{ - k} w$ with the property that
there exist $\sigma_N \geq 0$, $0 < \delta \equiv \delta(N)  <1$, and $0 < \e_0 \equiv \e_0(N) < 1$, so that for any $0 \le k \le N$,
$$
\lambda_{- k} :  {\cal V}^{\sigma_N}(\delta) \times [0, \e_0]  \to \R, \, (\frak x, \e) \mapsto \lambda_{m - k}(\frak x) \equiv \lambda_{- k}(\frak x, \e)
$$
is $C^\infty$-smooth and bounded. 
We denote by $\Phi_{\cal M} (\tau, \cdot)$ the flow corresponding to the vector field ${\cal M}$. 
By the standard ODE theorem in Banach spaces, there exist $s_N \ge 0$ so that for any $s \ge s_N$, there exist $0 < \delta \equiv \delta(s, N) < 1$, and 
$0 < \e_0 \equiv\e_ 0(s, N) \ll  \delta$, so that 
$$
\Phi_\mathcal M(\tau, \cdot) \in C^\infty_b \big({\cal V}^s(\delta) \times [0, \e_0], \, {\cal V}^s(2 \delta) \big)\, ,  \qquad   \forall \, -1  \le \tau \le 1 \, .
$$
The following lemma can be proved arguing as in the proof of Lemma \ref{expansion flow paradiff} (actually, the proof is simpler). 
\begin{lemma}\label{expansion flow multiplier}
For any $\tau \in [- 1, 1]$, the flow map $\Phi_{\cal M}(\tau, \cdot)$ admits an expansion of the form
$$
\Phi_{\cal M}(\tau, \frak x) = \frak x + (0, 0, \Upsilon^\bot(\tau, \frak x) + {\cal R}^\bot_N(\tau, \frak x))  
$$
where $\Upsilon^\bot(\tau, \cdot) \in {\cal OF}^p(0, N)$ and ${\cal R}^\bot_N \in {\cal OS}^{2 p - 1}(N)$. 
\end{lemma}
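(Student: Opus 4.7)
The plan is to mirror the proof of Lemma \ref{expansion flow paradiff}, exploiting that the situation here is substantially simpler: the ``symbols'' $\lambda_{-k}(\frak x)$ are $x$-independent scalars, so the Fourier multipliers $\lambda_{-k}(\frak x)\partial_x^{-k}$ commute with any other $\partial_x^{-j}$ and their compositions are exact Fourier multipliers with no para-product correction. Consequently no application of Lemma \ref{lemma composizione nostri simboli} is needed and no lower-order corrections appear beyond truncation at order $-N-1$.

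First I would set up the integral equation for the normal component,
\begin{equation*}
\Phi_{\cal M}^\bot(\tau, \frak x) = w + \int_0^\tau \mathcal M^\bot(\Phi_{\cal M}(t, \frak x))\, d t\, ,
\end{equation*}
and make the ansatz
\begin{equation*}
\Phi_{\cal M}^\bot(\tau, \frak x) = w + \Upsilon^\bot(\tau, \frak x) + {\cal R}^\bot_N(\tau, \frak x)\, , \qquad \Upsilon^\bot(\tau, \frak x) = \sum_{k=0}^{N} \mu_{-k}(\tau, \frak x)\, \partial_x^{-k} w\, ,
\end{equation*}
with the expected properties that $\mu_{-k}(\tau,\cdot)$ is $C^\infty$-smooth, bounded, and small of order $p-1$, and that ${\cal R}^\bot_N(\tau,\cdot) \in {\cal OS}^{2p-1}(N)$. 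Inserting the ansatz into the integrand and using that the $\lambda_{-k}(\Phi_{\cal M}(t,\frak x))$ commute with $\partial_x^{-j}$, one obtains
\begin{equation*}
\mathcal M^\bot(\Phi_{\cal M}(t,\frak x)) = \sum_{i=0}^{N} g_{-i}(t,\frak x)\, \partial_x^{-i} w + {\cal Q}^\bot_N(t,\frak x)\, ,
\end{equation*}
where $g_0(t,\frak x) = \lambda_0(\Phi_{\cal M}(t,\frak x))(1 + \mu_0(t,\frak x))$, and for $1 \le i \le N$,
\begin{equation*}
g_{-i}(t,\frak x) = \lambda_{-i}(\Phi_{\cal M}(t,\frak x)) + \sum_{\substack{0\le k,j \le N\\ k+j = i}} \lambda_{-k}(\Phi_{\cal M}(t,\frak x))\, \mu_{-j}(t,\frak x)\, ,
\end{equation*}
while ${\cal Q}^\bot_N(t,\cdot)$ collects the tail $\sum_{k+j > N} \lambda_{-k}\mu_{-j}\,\partial_x^{-(k+j)}w$ and the term $\sum_{k=0}^N \lambda_{-k}\partial_x^{-k}{\cal R}^\bot_N(t,\frak x)$, both of which lie in ${\cal OS}^{2p-1}(N)$ because products $\lambda_{-k}\mu_{-j}$ are small of order $2(p-1)$ and the truncation gains $N+1$ derivatives.

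Matching coefficients then gives, for $0 \le i \le N$, the scalar integral equations $\mu_{-i}(\tau,\frak x) = \int_0^\tau g_{-i}(t,\frak x)\,d t$, which can be solved recursively: the case $i=0$ is a scalar linear Volterra equation for $\mu_0$ (solvable explicitly as an exponential), and each $\mu_{-i}$ with $i \ge 1$ depends only on $\mu_{-j}$ with $j < i$ (apart from the self-interaction with $\mu_0$ which is again linear Volterra). Smoothness, boundedness, and smallness of order $p-1$ propagate through the recursion from the corresponding properties of $\lambda_{-k}$. The remainder ${\cal R}^\bot_N(\tau,\frak x)$ then satisfies a linear integral equation
\begin{equation*}
{\cal R}^\bot_N(\tau,\frak x) = \widetilde{\cal Q}^\bot_N(\tau,\frak x) + \int_0^\tau A(t,\frak x)[{\cal R}^\bot_N(t,\frak x)]\, d t\, , \qquad A(t,\frak x) := \sum_{k=0}^N \lambda_{-k}(\Phi_{\cal M}(t,\frak x))\partial_x^{-k}\, ,
\end{equation*}
with $\widetilde{\cal Q}^\bot_N(\tau,\cdot) \in {\cal OS}^{2p-1}(N)$; a Gronwall argument in $H^{s+N+1}$, together with the corresponding estimates for derivatives in $\frak x$ and $\e$ obtained by differentiating the equation, yields ${\cal R}^\bot_N \in {\cal OS}^{2p-1}(N)$.

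There is no serious obstacle here — the argument is structurally identical to the one for Lemma \ref{expansion flow paradiff}, only streamlined by the absence of para-product remainders. The one bookkeeping point worth being careful about is tracking the homogeneity in $(y,w,\e)$: the leading coefficients $\mu_{-i}$ inherit exactly the order $p-1$ of the $\lambda_{-k}$ (they are \emph{not} quadratically small), whereas the remainder picks up an extra factor because every term entering $\widetilde{\cal Q}^\bot_N$ contains a product of two small-of-order-$(p-1)$ symbols or a composition with a smoothing vector field, giving the final order $2p-1$.
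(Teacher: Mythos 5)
Your proposal is correct and follows exactly the route the paper indicates (the paper states only that the lemma ``can be proved arguing as in the proof of Lemma \ref{expansion flow paradiff}'' and that the argument is simpler, which is precisely what you execute). The simplification you identify is the right one: because the $\lambda_{-k}$ are scalars, the compositions $\lambda_{-k}\partial_x^{-k}\circ\mu_{-j}\partial_x^{-j}$ are exact Fourier multipliers with no para-product remainders, so Lemma \ref{lemma composizione nostri simboli} is not invoked, and the recursion for the $\mu_{-i}$ together with the Volterra/Gronwall argument for ${\cal R}^\bot_N$ closes as in the $m=0$ case of Lemma \ref{expansion flow paradiff}.
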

The following lemma can be proved arguing as in the proof of Lemma \ref{Corollario d Phi F}. 
\begin{lemma}\label{Corollario d Phi cal M}
Let $\Phi_\mathcal M(\tau, \frak x)$ denote the flow map considered in Lemma \ref{expansion flow multiplier},
corresponding to the vector field $\mathcal M = (0, 0, \, \mathcal M^\bot)$ with ${\cal M}^\bot \in  {\cal OF}^p(0, N)$ and $N, p \in \N$.
Then 
$d \Phi_\mathcal M( \tau, \frak x)^{- 1}[\widehat{\frak x}]$  admits an expansion of the form
\begin{equation}\label{espansion d Phi tau inverse 2}
d \Phi_\mathcal M( \tau, \frak x)^{- 1}[\widehat{\frak x}] = \widehat{\frak x}  
+ \big(0, 0, \, {\Upsilon}^\bot(\tau, \frak x)[\widehat{\frak x} ] 
+ {\cal R}^\bot_{N}(\tau, \frak x)[\widehat{\frak x} ]  \big) \, ,
\end{equation}
$$
{\Upsilon}^\bot(\tau, \frak x) [\widehat{\frak x} ]  := 
 \sum_{k = 0}^{N } \lambda_{- k}(\tau, \frak x) \partial_x^{- k} \widehat w 
+ \sum_{k = 0}^{N } \eta_{- k}(\tau, \frak x)[\widehat{\frak x}] \partial_x^{- k } w \, ,
$$
with the following properties: there exist $s_N$, $\sigma_N \geq N$ so that for any $s \geq s_N$, 
there exist $0 < \delta \equiv \delta(s, N) < 1$ and $0 < \e_0 \equiv \e_0(s, N) < 1$ so that the following holds: for any $0 \le k \le N$ and $-1 \le \tau \le 1$,
$$
\begin{aligned}
& \lambda_{- k} \in C^\infty_b({\cal V}^{ \sigma_N}(\delta) \times [0, \e_0], \, \R),
\qquad \eta_{- k} \in C^\infty_b\big( {\cal V}^{ \sigma_N}(\delta) \times [0, \e_0],  \, {\cal B}(E_{ \sigma_N}, \R)\big)\,, \\
& {\cal R}^\bot_N \in C^\infty_b \big(  {\cal V}^s(\delta) \times [0, \e_0], \, {\cal B}(H^s_\bot(\T_1), H^{s + N +1}_\bot(\T_1)) \big),
\end{aligned}
$$
and  $\lambda_{- k}(\tau, \cdot)$, $\eta_{- k}(\tau, \cdot)$, and ${\cal R}^\bot_N(\tau, \cdot)$ are small of order $p - 1$. 
\end{lemma}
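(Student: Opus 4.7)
The plan is to mimic the proof of Lemma \ref{Corollario d Phi F}, replacing the para-differential building blocks by Fourier multiplier ones. The point of departure is the identity
$$
d\Phi_{\mathcal M}(\tau, \frak x)^{-1} \;=\; d\Phi_{\mathcal M}(-\tau, \Phi_{\mathcal M}(\tau, \frak x)), \qquad -1 \le \tau \le 1,
$$
valid on a suitably shrunken neighborhood (cf.\ Remark \ref{inverse of flow}). By Lemma \ref{expansion flow multiplier} applied with $\tau$ replaced by $-\tau$, one has
$$
\Phi_{\mathcal M}(-\tau, \frak z) \;=\; \frak z + \bigl(0, 0,\, \widetilde\Upsilon^\bot(-\tau, \frak z) + \widetilde{\mathcal R}^\bot_N(-\tau, \frak z)\bigr),
$$
with $\widetilde\Upsilon^\bot(-\tau, \frak z) = \sum_{k=0}^N \widetilde\lambda_{-k}(-\tau, \frak z)\, \partial_x^{-k} w_{\frak z}$ belonging to $\mathcal{OF}^p(0,N)$ and $\widetilde{\mathcal R}^\bot_N(-\tau, \cdot) \in \mathcal{OS}^{2p-1}(N) \subseteq \mathcal{OS}^{p}(N)$. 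In particular each $\widetilde\lambda_{-k}(-\tau, \cdot)$ is $C^\infty$-smooth, bounded, and small of order $p-1$.

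Next I would differentiate this expression at a generic $\frak z$ in the direction $\widehat{\frak x} \in E_s$. Since the $\theta$- and $y$-components of $\Phi_{\mathcal M}(-\tau, \cdot)$ are the identity, the only nontrivial contribution is in the normal component, where linearity in $w$ of $\widetilde\Upsilon^\bot$ and the chain rule give
$$
d\Phi_{\mathcal M}(-\tau, \frak z)[\widehat{\frak x}] \;=\; \widehat{\frak x} + \Bigl(0,0,\; \sum_{k=0}^N \widetilde\lambda_{-k}(-\tau, \frak z)\,\partial_x^{-k}\widehat w \,+\, \sum_{k=0}^N d\widetilde\lambda_{-k}(-\tau, \frak z)[\widehat{\frak x}]\,\partial_x^{-k} w_{\frak z} \,+\, d\widetilde{\mathcal R}^\bot_N(-\tau, \frak z)[\widehat{\frak x}]\Bigr).
$$
The first sum acts on the differential argument $\widehat w$ and is of exactly the shape of the first sum in \eqref{espansion d Phi tau inverse 2}; the second is a Fourier-multiplier-type operator acting on the base $w_{\frak z}$ whose coefficient depends linearly on $\widehat{\frak x}$; the derivative $d\widetilde{\mathcal R}^\bot_N(-\tau, \frak z)$ is a linear map with values in $H^{s+N+1}_\bot(\T_1)$ and stays smoothing by Definition \ref{def smoothing vector fields}.

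Finally, I would substitute $\frak z = \Phi_{\mathcal M}(\tau, \frak x)$ and reorganize. Define
$$
\lambda_{-k}(\tau, \frak x) := \widetilde\lambda_{-k}\bigl(-\tau, \Phi_{\mathcal M}(\tau, \frak x)\bigr), \qquad \eta_{-k}(\tau, \frak x)[\,\cdot\,] := d\widetilde\lambda_{-k}\bigl(-\tau, \Phi_{\mathcal M}(\tau, \frak x)\bigr)[\,\cdot\,],
$$
for $0 \le k \le N$. The first sum of the identity above becomes $\sum_k \lambda_{-k}(\tau, \frak x)\partial_x^{-k}\widehat w$, as required. For the middle sum, use that the $w$-component of $\Phi_{\mathcal M}(\tau, \frak x)$ equals $w + \widetilde\Upsilon^\bot(\tau, \frak x) + \widetilde{\mathcal R}^\bot_N(\tau, \frak x)$, so each term $\eta_{-k}(\tau, \frak x)[\widehat{\frak x}]\,\partial_x^{-k} w_{\Phi_{\mathcal M}(\tau, \frak x)}$ decomposes, via the Fourier multiplier composition trivially available in this setting, into a term of the desired form $\eta_{-k'}(\tau, \frak x)[\widehat{\frak x}]\,\partial_x^{-k'} w$ plus a smoothing contribution which, together with $d\widetilde{\mathcal R}^\bot_N(-\tau, \Phi_{\mathcal M}(\tau, \frak x))\circ d\Phi_{\mathcal M}(\tau, \frak x)$, is absorbed into $\mathcal R^\bot_N(\tau, \frak x)[\widehat{\frak x}]$.

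The only nontrivial bookkeeping, and the main place to be careful, is verifying the smallness orders and the smoothness/boundedness claims. Since $\widetilde\lambda_{-k}$ is small of order $p-1$ and $\Phi_{\mathcal M}(\tau, \frak x) - \frak x$ is small of order $p-1$ (again by Lemma \ref{expansion flow multiplier}), composition with $\Phi_{\mathcal M}(\tau, \cdot)$ preserves smallness of order $p-1$; hence $\lambda_{-k}(\tau, \cdot)$, $\eta_{-k}(\tau, \cdot)$, and $\mathcal R^\bot_N(\tau, \cdot)$ are all small of order $p-1$, as asserted. The $C^\infty_b$-regularity on $\mathcal V^{\sigma_N}(\delta) \times [0, \e_0]$ (for possibly enlarged $\sigma_N, s_N$ and shrunk $\delta, \e_0$) follows from the chain rule applied to the composition, using the corresponding properties of $\widetilde\lambda_{-k}$, $\widetilde{\mathcal R}^\bot_N$, and the flow itself, exactly as in the proof of Lemma \ref{Corollario d Phi F}.
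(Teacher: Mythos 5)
Your argument is correct and follows exactly the route the paper indicates: Lemma \ref{Corollario d Phi cal M} is proved ``arguing as in the proof of Lemma \ref{Corollario d Phi F}'', i.e.\ starting from $d\Phi_{\mathcal M}(\tau,\frak x)^{-1}=d\Phi_{\mathcal M}(-\tau,\Phi_{\mathcal M}(\tau,\frak x))$, differentiating the expansion of $\Phi_{\mathcal M}(-\tau,\cdot)$ from Lemma \ref{expansion flow multiplier}, substituting $\frak z=\Phi_{\mathcal M}(\tau,\frak x)$, and expanding $\partial_x^{-k}w_{\Phi_{\mathcal M}(\tau,\frak x)}$ (here trivial Fourier-multiplier composition in place of the para-differential Lemma \ref{lemma composizione nostri simboli}). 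One cosmetic slip: the smoothing contribution you need is $d\widetilde{\mathcal R}^\bot_N(-\tau,\Phi_{\mathcal M}(\tau,\frak x))[\widehat{\frak x}]$ with no extra post-composition by $d\Phi_{\mathcal M}(\tau,\frak x)$, since the chain rule is already accounted for by the inverse identity; this does not affect the conclusion.
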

The following lemma can be proved arguing as in the proof of Lemma \ref{lemma coniugazioni}. 
\begin{lemma}\label{lemma coniugazioni multiplier}
Let $\Phi_\mathcal M(1, \frak x)$ denote the time one flow map considered in Lemma \ref{expansion flow multiplier},
corresponding to the vector field $\mathcal M = (0, 0, \, \mathcal M^\bot)$, with ${\cal M}^\bot \in  {\cal OF}^p(0, N)$ and $N$, $p \in \N$.
%$\mathcal M^\bot(\frak x) = \sum_{k=0}^N \lambda_{m(\frak x)} \partial_x^{-k} w$  
Then the following holds:\\
$(i)$ For any  $X := (0,0,  X^\bot)$ with $X^\bot \in {\cal OB}^q(n, N)$  and $q \geq 1$, $n \geq 0$, the pullback $\Phi_{\cal M}^* X$ 
of $X$ by $\Phi_\mathcal M(1, \cdot)$ admits an expansion of the form  
$$
\Phi_{\cal M}^* X(\frak x) = \big(0, 0, X^\bot(\frak x) + \Upsilon^\bot(\frak x) + \mathcal R^\bot_N(\frak x) \big) \, , \qquad
 \Upsilon^\bot \in {\cal OB}^{p + q - 1}(n, N), \quad \mathcal R^\bot_N \in {\cal OS}^{p + q - 1}(N).
 $$ 
\noindent
$(ii)$ For any $\mathcal M_1 = \big( 0, 0, {\cal M}_1^\bot \big)$ with ${\cal M}_1^\bot \in {\cal OF}^q(n, N)$ and $q \geq 1$, $n \geq 0$, 
the pullback $\Phi_{\cal M}^* \mathcal M_1$ 
of $\mathcal M_1$ by $\Phi_\mathcal M(1, \cdot)$
admits an expansion of the form 
$$
\Phi_{{\cal M}}^* \mathcal M_1(\frak x) = \big(0, 0, \mathcal M^\bot_1(\frak x) + \Upsilon^\bot(\frak x) + {\cal R}^\bot_N(\frak x) \big), \qquad
\Upsilon^\bot \in {\cal OF}^{p + q - 1}(n, N), \quad
{\cal R}^\bot_N \in {\cal OS}^{p + q - 1}(N).
$$ 
\noindent
$(iii)$ For any $X \in {\cal OS}^q(N)$, the pullback  $\Phi_{\cal M}^* X$ of $X$ by $\Phi_\mathcal M(1, \cdot)$ admits an expansion of the form 
$$
\Phi_{\cal M}^* X (\frak x)= X(\frak x) +  \big(0,0, \,  \Upsilon^\bot(\frak x) \big) + \mathcal R_N(\frak x)
$$ 
where $ \Upsilon^\bot \in {\cal OF}^{p + q - 1}(0, N)$ and $\mathcal R_N \in {\cal OS}^{p + q - 1}(N)$.

\end{lemma}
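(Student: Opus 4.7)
\begin{pf}[Proof plan for Lemma \ref{lemma coniugazioni multiplier}]
The plan is to mimic the proof of Lemma \ref{lemma coniugazioni}, replacing every use of Lemma \ref{commutatore due paradiff nonlin} and Lemma \ref{comm smoothing paradiff} by the corresponding items of Lemma \ref{lemma commutatori Fourier multiplier}, and every use of Lemma \ref{expansion flow paradiff} and Lemma \ref{Corollario d Phi F} by Lemma \ref{expansion flow multiplier} and Lemma \ref{Corollario d Phi cal M}. The starting point is, in each of the three cases, the first order Taylor formula \eqref{espansione commutatori notazioni} with $\tau = 1$,
$$
\Phi_\mathcal{M}^* Z(\frak x) \, = \, Z(\frak x) \, + \, \int_0^1 \big(d \Phi_\mathcal{M}(t, \frak x)\big)^{-1} [Z, \mathcal{M}]\big(\Phi_\mathcal{M}(t, \frak x)\big)\, d t\, ,
$$
applied to $Z = X$, $Z = \mathcal{M}_1$ and $Z = X \in \mathcal{OS}^q(N)$ respectively. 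The second summand is then analyzed in exactly the way \eqref{campo vettoriale nella proof}--\eqref{scrittura esplicita comm X Y} treats its paradifferential analogue.

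For item $(i)$, since $n \geq 0$ one has $m_* = \max\{n + 0 - 1, 0, n\} = n$, so Lemma \ref{lemma commutatori Fourier multiplier}$(ii)$ gives
$$
[X, \mathcal{M}] \, = \, \big(0, 0, \, \mathcal{C}^\bot_{[X^\bot, \mathcal{M}^\bot]} + \mathcal{R}^\bot_{[X^\bot, \mathcal{M}^\bot]}\big)\, , \qquad \mathcal{C}^\bot_{[X^\bot, \mathcal{M}^\bot]} \in \mathcal{OB}^{p + q - 1}(n, N)\, , \quad \mathcal{R}^\bot_{[X^\bot, \mathcal{M}^\bot]} \in \mathcal{OS}^{p + q - 1}(N)\, .
$$
By Lemma \ref{expansion flow multiplier}, $\Phi_\mathcal{M}(t, \frak x)$ differs from $\frak x$ only in its normal component, by an element of $\mathcal{OF}^p(0, N) + \mathcal{OS}^{2p-1}(N)$; this is enough (after a routine chain rule argument exploiting that the coefficients of $\mathcal{C}^\bot_{[X^\bot,\mathcal{M}^\bot]}$ and the operator valued map $\mathcal{R}^\bot_{[X^\bot,\mathcal{M}^\bot]}$ are $C^\infty_b$) to guarantee that $\mathcal{C}^\bot_{[X^\bot, \mathcal{M}^\bot]}\circ \Phi_\mathcal{M}$ stays in $\mathcal{OB}^{p+q-1}(n, N)$ and $\mathcal{R}^\bot_{[X^\bot,\mathcal{M}^\bot]} \circ \Phi_\mathcal{M}$ stays in $\mathcal{OS}^{p+q-1}(N)$. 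Finally, the prefactor $(d \Phi_\mathcal{M}(t, \frak x))^{-1}$ is expanded by Lemma \ref{Corollario d Phi cal M}: its action on a vector of the form $(0, 0, v)$ equals $(0, 0, v + \Upsilon^\bot(t, \frak x)[v] + \mathcal{R}^\bot_N(t, \frak x)[v])$, where $\Upsilon^\bot$ is a Fourier-multiplier-type linear map with coefficients small of order $p-1$ and $\mathcal{R}^\bot_N$ is a smoothing operator small of order $p-1$. Combining via Lemma \ref{lemma commutatori Fourier multiplier}$(ii)$ (to handle $\Upsilon^\bot(t,\frak x) \circ \mathcal{C}^\bot_{[X^\bot,\mathcal{M}^\bot]}$ of Fourier-multiplier type of order $0$ composed with a paradifferential operator of order $n$) and using the smoothing action of $\mathcal{R}^\bot_N(t, \frak x)$ on the remaining terms, one obtains an integrand of the form $\mathcal{OB}^{p+q-1}(n, N) + \mathcal{OS}^{p+q-1}(N)$, and integrating preserves these classes.

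Item $(ii)$ is handled identically, except that Lemma \ref{lemma commutatori Fourier multiplier}$(i)$ is used instead: since $\mathcal{M}_1^\bot$ and $\mathcal{M}^\bot$ are both Fourier multipliers, the commutator $[\mathcal{M}_1, \mathcal{M}]$ is itself of class $\mathcal{OF}^{p+q-1}(n \vee 0, N) = \mathcal{OF}^{p+q-1}(n, N)$, with no smoothing remainder produced at this stage; and the composition of Fourier multipliers with the flow $\Phi_\mathcal{M}$ and the prefactor $(d\Phi_\mathcal{M})^{-1}$ again stays in $\mathcal{OF}^{p+q-1}(n, N) + \mathcal{OS}^{p+q-1}(N)$ by Lemma \ref{expansion flow multiplier}, Lemma \ref{Corollario d Phi cal M} and Lemma \ref{lemma commutatori Fourier multiplier}$(i)$. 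Item $(iii)$ proceeds the same way, starting instead from Lemma \ref{lemma commutatori Fourier multiplier}$(iii)$, which gives $[\mathcal{M}, X] = (0, 0, \mathcal{C}^\bot) + \mathcal{R}$ with $\mathcal{C}^\bot \in \mathcal{OF}^{p+q-1}(0, N)$ and $\mathcal{R} \in \mathcal{OS}^{p+q-1}(N)$; insertion into the Taylor formula yields a contribution of the form $(0, 0, \Upsilon^\bot) + \mathcal{R}_N$ with $\Upsilon^\bot \in \mathcal{OF}^{p+q-1}(0, N)$ and $\mathcal{R}_N \in \mathcal{OS}^{p+q-1}(N)$, as claimed.

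The only nonroutine point is bookkeeping: one must check that all compositions arising from the chain rule and from multiplying $(d \Phi_\mathcal{M})^{-1}$ into $[Z, \mathcal{M}] \circ \Phi_\mathcal{M}$ respect the order $n$, the smoothing exponent $N$, and the order of smallness $p + q - 1$. This is, however, purely mechanical once one observes that Fourier multipliers of order $0$ act boundedly on every Sobolev space and that smoothing operators absorb anything bounded. The loss of degrees of freedom in $s$, $\sigma_N$, $\delta$, $\e_0$ is handled by the usual device of tacitly enlarging $s_N$, $\sigma_N$ and shrinking $\delta$, $\e_0$ a finite number of times, as in the paragraph preceding Lemma \ref{expansion flow paradiff}.
\end{pf}
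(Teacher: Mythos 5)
Your proposal is correct and follows exactly the strategy the paper indicates for this lemma, namely ``arguing as in the proof of Lemma \ref{lemma coniugazioni}'': start from the first order Taylor formula \eqref{espansione commutatori notazioni} with $\tau=1$, replace the commutator estimates of Lemma \ref{comm smoothing paradiff} and Lemma \ref{commutatore due paradiff nonlin} by the corresponding items of Lemma \ref{lemma commutatori Fourier multiplier}, and the flow/differential expansions of Lemma \ref{expansion flow paradiff} and Lemma \ref{Corollario d Phi F} by those of Lemma \ref{expansion flow multiplier} and Lemma \ref{Corollario d Phi cal M}. Your bookkeeping of the orders $m_*=n$ (for $n\geq 0$, $m'=0$), the smallness exponent $p+q-1$, and the smoothing exponent $N$ matches the paper's, and the comment at the end about tacitly enlarging $s_N,\sigma_N$ and shrinking $\delta,\e_0$ is the right way to absorb the finite loss of regularity incurred by the compositions.
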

Next we consider ${\cal M} := (0, 0, {\cal M}^\bot)$ with $ {\cal M}^\bot \in {\cal OF}^2_{ww}(0, N)$ and $N \in \N$ (cf.  Definition \ref{paradiff omogenei di ordine 2}-$(ii2)$), i.e.,
${\cal M}^\bot(\frak x) = {\cal M}^\bot(\theta, w)[w]$ with $ {\cal M}^\bot(\theta, w) = \sum_{k=0}^N  \Lambda_{- k}(\theta)[w] \partial_x^{-k}$
where, for some integer $\sigma_N \ge 0$ and some $0 < \e_0 \equiv \e_0(N) < 1$,
\begin{equation}\label{mclauren}
 \Lambda_{ -k} : \T^{S_+} \times [0, \e_0] \to {\cal B}(H^{\sigma_N}_\bot(\T_1), \R), \, \theta \mapsto \Lambda_{-k} (\theta) \equiv \Lambda_{-k} (\theta, \e), \qquad \, 0 \le k \le N,
\end{equation} 
are $C^\infty-$smooth.
To obtain an expansion of the pullback $\Phi_{\cal M}^* X_\mathcal N$ of the vector field $X_\mathcal N$, 
defined in \eqref{forma campo vettoriale forma normale astratto},  by $\Phi_\mathcal M(1, \cdot)$, we first need to compute
the one of the commutator $[X_{\cal N}, \cal M]$.
% of  $X_{{\cal N}}$ with $\mathcal M$. 
\begin{lemma}\label{commutatore forma normale fourier multiplier}
The commutator $[X_{\cal N}, \cal M](\frak x)$ admits an expansion of the form
$$
[X_{\cal N}, {\cal M}](\frak x) = 
\big( 0,  0, \  \omega \cdot \partial_\theta ( {\cal M}^\bot(\theta, w)[w]) - {\cal M}^\bot(\theta, \ii \Omega_\bot  w)[w] + {\cal OF}^3(0, N) \big) \, .
$$
\end{lemma}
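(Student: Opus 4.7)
The plan is to compute $[X_\mathcal{N},\mathcal{M}](\frak x) = dX_\mathcal{N}(\frak x)[\mathcal{M}(\frak x)] - d\mathcal{M}(\frak x)[X_\mathcal{N}(\frak x)]$ directly from the explicit formulas \eqref{forma campo vettoriale forma normale astratto}--\eqref{forma campo vettoriale forma normale astratto differenziale}, and observe algebraic cancellations between the terms involving $\ii\Omega_\bot$. First, since $\mathcal{M}=(0,0,\mathcal{M}^\bot)$ and $dX_\mathcal{N}$ is given by \eqref{forma campo vettoriale forma normale astratto differenziale}, only the $(3,3)$-block contributes, yielding
$$
dX_\mathcal{N}(\frak x)[\mathcal{M}(\frak x)] \;=\; \bigl(0,\,0,\,\ii \Omega_\bot \, \mathcal{M}^\bot(\theta,w)[w]\bigr)\,.
$$

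Next, since $\mathcal{M}^\bot(\frak x) = \mathcal{M}^\bot(\theta,w)[w]$ depends only on $\theta$ and $w$ (not on $y$), and is bilinear in $w$ (recall $\Lambda_{-k}(\theta)$ is a linear functional of $w$), I would compute
$$
d\mathcal{M}(\frak x)[X_\mathcal{N}(\frak x)] \;=\; \bigl(0,\,0,\,-(\omega+\e\widehat\omega+\nabla_y Q(y))\cdot\partial_\theta\bigl(\mathcal{M}^\bot(\theta,w)[w]\bigr) + \mathcal{M}^\bot(\theta,\ii\Omega_\bot w)[w] + \mathcal{M}^\bot(\theta,w)[\ii\Omega_\bot w]\bigr)\,,
$$
where the minus sign in front of the $\partial_\theta$-term comes from the $\theta$-component of $X_\mathcal{N}$, and the last two summands come from bilinearity together with $d_\bot Y^\bot(\frak x)[\ii\Omega_\bot w] = Y^\bot(\theta,\ii\Omega_\bot w)[w] + Y^\bot(\theta,w)[\ii\Omega_\bot w]$.

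The crucial cancellation is now the identity
$$
\ii\Omega_\bot\bigl(\mathcal{M}^\bot(\theta,w)[w]\bigr) \;=\; \mathcal{M}^\bot(\theta,w)[\ii\Omega_\bot w]\,,
$$
which holds because each term of $\mathcal{M}^\bot(\theta,w)[w] = \sum_{k=0}^{N}\Lambda_{-k}(\theta)[w]\,\partial_x^{-k} w$ is just the scalar $\Lambda_{-k}(\theta)[w]\in\R$ times $\partial_x^{-k} w$, and the Fourier multipliers $\ii\Omega_\bot$ and $\partial_x^{-k}$ commute. Subtracting yields
$$
[X_\mathcal{N},\mathcal{M}]^\bot(\frak x) \;=\; \omega\cdot\partial_\theta\bigl(\mathcal{M}^\bot(\theta,w)[w]\bigr)\;-\;\mathcal{M}^\bot(\theta,\ii\Omega_\bot w)[w]\;+\;R(\frak x)\,,
$$
with $R(\frak x) := \e\,\widehat\omega\cdot\partial_\theta(\mathcal{M}^\bot(\theta,w)[w]) + \nabla_y Q(y)\cdot\partial_\theta(\mathcal{M}^\bot(\theta,w)[w])$.

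It remains to verify $R\in\mathcal{OF}^3(0,N)$. Writing $R = \sum_{k=0}^{N}\rho_{-k}(\frak x)\,\partial_x^{-k}w$ with $\rho_{-k}(\frak x) := \e\,\widehat\omega\cdot\partial_\theta\Lambda_{-k}(\theta)[w] + \nabla_y Q(y)\cdot\partial_\theta\Lambda_{-k}(\theta)[w]$, one checks from Definition \ref{def map order p} that each $\rho_{-k}$ is small of order $2$: the first summand carries an explicit $\e$ and a linear factor in $w$, hence vanishes together with all derivatives of total order $\le 1$ at $(\theta,0,0,0)$; the second vanishes to the same order since $\nabla_y Q(0)=0$ (by the assumptions $Q(0)=0,\ \nabla_y Q(0)=0$ following \eqref{cal N Q Omega bot}) and the factor $\Lambda_{-k}(\theta)[w]$ is linear in $w$. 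Thus $R\in\mathcal{OF}^3(0,N)$, which completes the identification of $[X_\mathcal{N},\mathcal{M}]$ claimed in the lemma. The argument is essentially a bookkeeping computation; the main point to verify carefully is the commutation of $\ii\Omega_\bot$ with the scalar coefficients $\Lambda_{-k}(\theta)[w]$, which is what makes the leading $\ii\Omega_\bot$-terms cancel and leaves only the homological-type expression.
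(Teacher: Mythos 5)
Your proof is correct and follows essentially the same route as the paper: compute the two halves of the commutator from the explicit block formulas \eqref{forma campo vettoriale forma normale astratto}--\eqref{forma campo vettoriale forma normale astratto differenziale}, observe that $\ii\Omega_\bot$ and the scalar-coefficient Fourier multiplier ${\cal M}^\bot(\theta,w)$ commute (so that $\ii\Omega_\bot {\cal M}^\bot(\theta,w)[w]$ cancels against ${\cal M}^\bot(\theta,w)[\ii\Omega_\bot w]$), and absorb $(\e\widehat\omega+\nabla_y Q(y))\cdot\partial_\theta({\cal M}^\bot(\theta,w)[w])$ into $\mathcal{OF}^3(0,N)$ by checking the coefficients are small of order two.
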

\begin{proof}
By \eqref{mclauren} the differential of $\mathcal M$ can be computed as 
$$
d {\cal M}(\frak x)[\widehat{\frak x}] = \big( 0, \, 0, \ 
{\cal M}^\bot(\theta, w)[\widehat w] + {\cal M}^\bot(\theta, \widehat w)[w] + d_\theta \big( {\cal M}(\theta, w)[w] \big)[\widehat \theta] \big)\,. 
$$
By \eqref{forma campo vettoriale forma normale astratto}, \eqref{forma campo vettoriale forma normale astratto differenziale}, 
the commutator $ [X_{\cal N}, \mathcal M](\frak x)  = d X_{\cal N}(y, w)[\mathcal M(\frak x)] - d \mathcal M(\frak x)[X_{\cal N}(y, w)]$ is given by
$$
\begin{aligned}
& [X_{\cal N}, {\cal M}](\frak x) = \big( 0, 0, \,  \ii \Omega_\bot {\cal M}^\bot(\theta, w)[w]  \big)\\
&  - 
\big( 0,  0, \, 
{\cal M}^\bot(\theta, w)[\ii \Omega_\bot  w] + {\cal M}^\bot(\theta, \ii \Omega_\bot  w)[w] 
- d_\theta \big( {\cal M}^\bot(\theta, w)[w] \big)[\omega + \e \widehat \omega + \nabla_y Q(y)] \big) \\
& = \big( 0, 0, \, 
[\ii \Omega_\bot, {\cal M}^\bot(\theta, w)]_{lin} w  - {\cal M}^\bot(\theta, \ii \Omega_\bot  w)[w] 
+ (\omega + \e \widehat \omega + \nabla_y Q(y)) \cdot \partial_\theta \big( {\cal M}^\bot(\theta, w)[w] \big) \big) \, .
\end{aligned}
$$
Since ${\cal M}^\bot(\theta, w)$ and $\ii \Omega_\bot$ are both Fourier multipliers, the linear commutator $[\ii \Omega_\bot, {\cal M}^\bot(\theta, w)]_{lin}$ vanishes.  
The lemma then follows in view of the fact that 
$$
(\e \widehat \omega + \nabla_y Q(y)) \cdot \partial_\theta \big( {\cal M}^\bot(\theta, w)[w] \big) =
 \sum_{k=0}^N (\e \widehat \omega + \nabla_y Q(y)) \cdot \partial_\theta)( \Lambda_{- k}(\theta)[w]) \partial_x^{-k} [w]
\in {\cal OF}^3(0, N).
$$ 
\end{proof}
\begin{lemma}\label{psuh forward multiplier X cal N}
The pullback $\Phi_{\cal M}^* X_\mathcal N$ of the vector field $X_\mathcal N$ by $\Phi_\mathcal M(1, \cdot)$ with ${\cal M}$ given by  \eqref{mclauren}
admits an expansion of the form  
$$
\Phi_{\cal M}^* X_{\cal N} (\frak x)=  \begin{pmatrix}
- \omega - \e \widehat \omega - \nabla_y Q(y) \\
0 \\
\ii \Omega_\bot w + \omega \cdot \partial_\theta ( {\cal M}^\bot(\theta, w)[w]) - {\cal M}^\bot(\theta, \ii \Omega_\bot w)[w] 
+ {\cal OF}^3(0, N)  + {\cal OS}^3(N) 
\end{pmatrix} .
$$
\end{lemma}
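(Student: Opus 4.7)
The plan is to apply the Lie expansion \eqref{espansione commutatori notazioni} with $\tau = 1$ and the vector field $Y$ replaced by $\mathcal M$, yielding
\begin{equation*}
\Phi_{\mathcal M}^* X_{\mathcal N}(\frak x) = X_{\mathcal N}(\frak x) + [X_{\mathcal N}, \mathcal M](\frak x) + R(\frak x),
\end{equation*}
\begin{equation*}
R(\frak x) := \int_0^1 (1 - t)\, d\Phi_{\mathcal M}(t, \frak x)^{-1}\, [[X_{\mathcal N}, \mathcal M], \mathcal M](\Phi_{\mathcal M}(t, \frak x))\, dt,
\end{equation*}
and then to read off each component of the result using the preceding lemmas of this section.

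For the $\theta$- and $y$-components, I would note that by Lemma \ref{expansion flow multiplier} the diffeomorphism $\Phi_{\mathcal M}(1, \cdot)$ acts as the identity on the $\theta$ and $y$ coordinates, modifying only the normal coordinate, and that by Lemma \ref{Corollario d Phi cal M} the inverse differential $d\Phi_{\mathcal M}(1, \frak x)^{-1}$ has the same property. Since the first two components of $X_{\mathcal N}$ in \eqref{forma campo vettoriale forma normale astratto} depend only on $y$, they are therefore transported unchanged to $(-\omega - \e\widehat\omega - \nabla_y Q(y),\, 0)$.

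For the normal component, the contribution of $X_{\mathcal N}$ itself is $\ii\Omega_\bot w$, and Lemma \ref{commutatore forma normale fourier multiplier} supplies the explicit contribution of $[X_{\mathcal N}, \mathcal M]$, namely $\omega \cdot \partial_\theta(\mathcal M^\bot(\theta, w)[w]) - \mathcal M^\bot(\theta, \ii\Omega_\bot w)[w] + {\cal OF}^3(0, N)$. It remains to absorb $R$ into ${\cal OF}^3(0, N) + {\cal OS}^3(N)$. By Lemma \ref{commutatore forma normale fourier multiplier} the vector field $[X_{\mathcal N}, \mathcal M]$ has the form $(0, 0, Z^\bot)$ with $Z^\bot \in {\cal OF}^2(0, N)$, after suitably enlarging the parameter $\sigma_N$ of Definition \ref{classe fourier multipliers} to accommodate the three derivatives lost in the composition $\Lambda_{-k}(\theta) \circ \ii\Omega_\bot$. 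Applying Lemma \ref{lemma commutatori Fourier multiplier}$(i)$ to the pair $Z^\bot, \mathcal M^\bot \in {\cal OF}^2(0, N)$ then yields $[[X_{\mathcal N}, \mathcal M], \mathcal M] = (0, 0, [Z^\bot, \mathcal M^\bot])$ with $[Z^\bot, \mathcal M^\bot] \in {\cal OF}^{3}(0, N)$.

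Finally, for each $t \in [0, 1]$ the integrand of $R$ is the pullback by $\Phi_{\mathcal M}(t, \cdot)$ of a vector field in $(0, 0, {\cal OF}^3(0, N))$. Since the time-one flow of the rescaled generator $t\mathcal M$ coincides with $\Phi_{\mathcal M}(t, \cdot)$ and $t\mathcal M$ lies in the same class as $\mathcal M$ uniformly in $t \in [0, 1]$, Lemma \ref{lemma coniugazioni multiplier}$(ii)$ places this integrand in $(0, 0, {\cal OF}^{4}(0, N) + {\cal OS}^{4}(N)) \subset (0, 0, {\cal OF}^{3}(0, N) + {\cal OS}^{3}(N))$ with constants uniform in $t$, so integration over $t \in [0, 1]$ preserves these classes. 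The only point requiring any technical care will be the bookkeeping of the loss of derivatives hidden in the linear functionals $w \mapsto \Lambda_{-k}(\theta)[\ii\Omega_\bot w]$ and the verification that the constants in Lemma \ref{lemma coniugazioni multiplier} are uniform in $t$; no new estimates beyond those already supplied by Lemmas \ref{commutatore forma normale fourier multiplier}, \ref{lemma commutatori Fourier multiplier}, and \ref{lemma coniugazioni multiplier} are needed.
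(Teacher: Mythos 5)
Your proposal is correct and follows the paper's own route: the Lie expansion \eqref{espansione commutatori notazioni} of $\Phi_{\mathcal M}^*X_{\mathcal N}$, the explicit commutator formula of Lemma \ref{commutatore forma normale fourier multiplier}, and the flow-expansion Lemmata \ref{expansion flow multiplier}, \ref{Corollario d Phi cal M}, \ref{lemma commutatori Fourier multiplier} to control the remainder. The only small inaccuracy is that Lemma \ref{lemma coniugazioni multiplier}$(ii)$ places the integrand in $\mathcal M_1^\bot + {\cal OF}^{4}(0,N) + {\cal OS}^{4}(N)$ (the term $\mathcal M_1^\bot \in {\cal OF}^{3}(0,N)$ being part of the output, not merely a correction), rather than in ${\cal OF}^{4}(0,N) + {\cal OS}^{4}(N)$ as you wrote; since both sets are contained in ${\cal OF}^{3}(0,N) + {\cal OS}^{3}(N)$, your conclusion is unaffected.
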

\begin{proof}
We argue as in the proof of Lemma \ref{corollario flusso con Omega bot}. By \eqref{espansione commutatori notazioni}, $ \Phi_{\cal M}^* X_{\cal N}$ can be expanded as
$$
\begin{aligned}
& \Phi_{\cal M}^* X_{\cal N}   = X_{\cal N} + [X_{\cal N}, {\cal M}] + Z, \qquad
   Z(\frak x) :=  \int_0^1 (1 - \tau) [d \Phi_{\cal M}(\tau, \frak x)]^{- 1}[[X_{\cal N}, {\cal M}], {\cal M}](\Phi_{\cal M}(\tau, \frak x))\, d \tau\,.
\end{aligned}
$$ 
The claimed statement then follows by applying 
Lemmata \ref{lemma commutatori Fourier multiplier}, \ref{expansion flow multiplier}, \ref{Corollario d Phi cal M}, \ref{commutatore forma normale fourier multiplier}.
\end{proof}

Finally, we consider smoothing vector fields.
Given a smoothing vector field ${\cal Q} \in {\cal OS}^p(N)$ with $N$, $p \in \N$ (cf. Definition \ref{def smoothing vector fields}),
we denote by $\Phi_{\cal Q} (\tau, \cdot)$ the flow corresponding to the vector field ${\cal Q}$. 
By the standard ODE theorem in Banach spaces, there exists $s_N \ge 0$ so that for $s \ge s_N$, there exist $0 < \delta \equiv \delta(s, N) < 1$ and 
$0 < \e_0 \equiv \e_0(s, N) \ll  \delta$, so that 
\begin{equation}\label{flusso smoothing vf 1}
\Phi_\mathcal Q(\tau, \cdot) \in C^\infty_b \big({\cal V}^s(\delta) \times [0, \e_0], \, {\cal V}^s(2 \delta) \big)\, ,  
\quad \Phi_\mathcal Q(\tau, \cdot) -  \text{Id} \quad \text{small of order } p,
\qquad   \forall \, -1  \le \tau \le 1 \, .
\end{equation}
%The following lemma can be proved arguing as in the proof of Lemma \ref{expansion flow paradiff} and Lemma \ref{Corollario d Phi F}. 
\begin{lemma}\label{flusso smoothing vector fields}
Let ${\cal Q} \in {\cal OS}^p(N)$ with $N$, $p \in \N$. For any $-1 \le \tau \le 1$, the following holds. 

\noindent
$(i)$ The flow map $\Phi_\mathcal Q(\tau, \cdot)$ admits an expansion of the form
$$
\Phi_\mathcal Q(\tau, \frak x) = \frak x + {\cal R}_N(\tau, \frak x), \qquad {\cal R}_N(\tau, \cdot) \in {\cal OS}^p(N).
$$
\noindent
$(ii)$ The map $d \Phi_\mathcal S(\tau, \frak x)^{- 1}$ admits an expansion of the form 
$$
d \Phi_\mathcal Q(\tau, \frak x)^{- 1}[\widehat{\frak x}] = \widehat{\frak x} + {\cal R}_N(\tau, \frak x)[\widehat{\frak x}]
$$
where there exists $s_N \ge 0$  so that for any $s \geq s_N$ there are $0 < \delta \equiv \delta(s, N) < 1$ and $0 < \e_0 \equiv \e_0(s, N) < 1$ such that 
$$
{\cal R}_N(\tau, \cdot) \in C^\infty_b \big( {\cal V}^s(\delta)\times [0, \e_0], \, {\cal B}(E_s, E_{s + N +1}) \big)\,, \qquad \forall \, -1 \le \tau \le 1. 
$$
\end{lemma}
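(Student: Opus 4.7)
\medskip
\noindent
\textit{Proof plan.} The two items follow the same template as Lemma \ref{expansion flow paradiff} and Lemma \ref{Corollario d Phi F}, but the argument is considerably simpler because the generating vector field $\mathcal Q$ is itself smoothing. For item $(i)$, the plan is to start from the integral equation for the flow,
$$
\Phi_{\cal Q}(\tau, \frak x) = \frak x + \int_0^\tau {\cal Q}(\Phi_{\cal Q}(t, \frak x))\, d t,
$$
and simply set ${\cal R}_N(\tau, \frak x) := \int_0^\tau {\cal Q}(\Phi_{\cal Q}(t, \frak x))\, d t$. Using \eqref{flusso smoothing vf 1}, which gives that for $s \ge s_N$ (with $s_N$ chosen large enough) the flow map $\Phi_{\cal Q}(t, \cdot)$ is $C^\infty_b$ from $\mathcal V^s(\delta)\times [0,\e_0]$ into $\mathcal V^s(2\delta)$, and the fact that $\mathcal Q \in \mathcal{OS}^p(N)$ is $C^\infty_b$ as a map into $E_{s+N+1}$ (Definition \ref{def smoothing vector fields}), the composition $\mathcal Q \circ \Phi_{\cal Q}(t,\cdot)$ lies in $C^\infty_b(\mathcal V^s(\delta)\times[0,\e_0], E_{s+N+1})$, and so does its integral in $\tau$.

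It remains to verify that $\mathcal R_N(\tau,\cdot)$ is small of order $p$ in the sense of Definition \ref{def map order p}. Since $\mathcal Q$ is small of order $p \geq 1$, one has $\mathcal Q(\theta, 0, 0, 0) = 0$ for every $\theta \in \T^{S_+}$, so $\Phi_{\cal Q}(\tau, (\theta,0,0); 0) \equiv (\theta, 0, 0)$ by uniqueness of ODE solutions; hence $\mathcal R_N(\tau, (\theta,0,0); 0) = 0$. The vanishing of the derivatives $\partial_\e^{k_1}\partial_y^\beta d^{k_2}_\bot \mathcal R_N(\tau, (\theta, 0, 0); 0)$ for $|\beta| + k_1 + k_2 \le p - 1$ is obtained by differentiating the integral equation inductively: each such derivative is expressed as an integral of combinations of derivatives of $\mathcal Q$ evaluated along the flow and derivatives of $\Phi_{\cal Q}$, and by the inductive hypothesis together with the hypothesis \eqref{estimates map small order p} applied to $\mathcal Q$, all these terms vanish at $(\theta, 0, 0; 0)$. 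This yields ${\cal R}_N(\tau, \cdot) \in {\cal OS}^p(N)$.

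For item $(ii)$, I will use item $(i)$ together with a Neumann-type argument. Differentiating $\Phi_{\cal Q}(\tau, \frak x) = \frak x + {\cal R}_N(\tau, \frak x)$ gives $d\Phi_{\cal Q}(\tau, \frak x) = \mathrm{Id} + d{\cal R}_N(\tau, \frak x)$, where, by item $(i)$ and the chain rule, $d{\cal R}_N(\tau, \cdot) \in C^\infty_b(\mathcal V^s(\delta)\times[0,\e_0], {\cal B}(E_s, E_{s+N+1}))$. Writing
$$
d\Phi_{\cal Q}(\tau, \frak x)^{-1} - \mathrm{Id} = -\, d\Phi_{\cal Q}(\tau, \frak x)^{-1} \circ d{\cal R}_N(\tau, \frak x),
$$
and observing that $d\Phi_{\cal Q}(\tau, \frak x)^{-1} \in {\cal B}(E_s)$ is $C^\infty_b$ on $\mathcal V^s(\delta)\times[0,\e_0]$ (by the standard ODE theorem and Remark \ref{inverse of flow}, after shrinking $\delta, \e_0$ if needed), the composition on the right-hand side lies in $C^\infty_b(\mathcal V^s(\delta)\times[0,\e_0], {\cal B}(E_s, E_{s+N+1}))$, which is the desired statement.

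The argument is essentially routine bookkeeping: the main points to keep in mind are shrinking $\delta$ and $\e_0$ when composing the flow with $\mathcal Q$ so that the image of $\Phi_{\cal Q}(t,\cdot)$ stays inside the domain of $\mathcal Q$ at regularity $s$, and tracking that the Neumann inversion is performed at the level of bounded operators on $E_s$ while the smoothing gain $E_s \to E_{s+N+1}$ is supplied by the factor $d\mathcal R_N(\tau, \frak x)$. No Melnikov-type or para-differential obstruction appears here.
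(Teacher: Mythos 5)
Your item $(i)$ argues exactly as the paper does (Volterra integral equation plus \eqref{flusso smoothing vf 1}, with the order-$p$ vanishing propagated through the integral), so there is nothing to add there. For item $(ii)$ the paper instead uses the flow-reversal identity $d\Phi_\mathcal Q(\tau, \frak x)^{-1} = d\Phi_\mathcal Q(-\tau, \Phi_\mathcal Q(\tau, \frak x))$ and then differentiates the expansion from item $(i)$, whereas you use a Neumann-type identity $A^{-1} = \mathrm{Id} - A^{-1}B$ with $A = d\Phi_\mathcal Q(\tau,\frak x)$ and $B = d\mathcal R_N(\tau,\frak x)$. That is a legitimate alternative and in some ways shorter, since it does not require tracking the composed flow $\Phi_\mathcal Q(-\tau, \Phi_\mathcal Q(\tau, \cdot))$.

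However, the particular factorization you chose does not quite deliver the smoothing gain. You want to conclude that $-\,d\Phi_\mathcal Q(\tau, \frak x)^{-1} \circ d\mathcal R_N(\tau, \frak x) \in \mathcal B(E_s, E_{s+N+1})$. The inner factor $d\mathcal R_N(\tau, \frak x)$ maps $E_s \to E_{s+N+1}$, fine, but then the outer factor $d\Phi_\mathcal Q(\tau, \frak x)^{-1}$ must be applied to elements of $E_{s+N+1}$, and you have only recorded that it lies in $\mathcal B(E_s)$ for $\frak x \in \mathcal V^s(\delta)$. Boundedness of $d\Phi_\mathcal Q(\tau, \frak x)^{-1}$ on $E_{s+N+1}$ is not automatic at this point, because $\Phi_\mathcal Q(\tau, \frak x)$ is only known to live in $\mathcal V^s(2\delta)$, not in $\mathcal V^{s+N+1}(2\delta)$, so the standard ODE theorem at regularity $s+N+1$ does not directly apply to $\frak x \in \mathcal V^s(\delta)$. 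The fix is to use the other factorization coming from $A A^{-1} = \mathrm{Id}$, namely
$$
d\Phi_\mathcal Q(\tau, \frak x)^{-1} - \mathrm{Id} = -\, d\mathcal R_N(\tau, \frak x) \circ d\Phi_\mathcal Q(\tau, \frak x)^{-1}\, ,
$$
which composes as $E_s \xrightarrow{\ d\Phi_\mathcal Q^{-1}\ } E_s \xrightarrow{\ d\mathcal R_N\ } E_{s+N+1}$ and uses only the two facts you actually established: $d\Phi_\mathcal Q(\tau, \frak x)^{-1} \in \mathcal B(E_s)$ and $d\mathcal R_N(\tau, \frak x) \in \mathcal B(E_s, E_{s+N+1})$. (As a by-product this identity also shows, by bootstrapping, that $d\Phi_\mathcal Q(\tau, \frak x)^{-1}$ does preserve $E_{s+N+1}$, but that deduction comes after the left-factorization, not before.) With that one-line swap your argument closes.
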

\begin{proof}
To prove item $(i)$ one uses the Volterra integral equation (cf. \eqref{integral equation}) and \eqref{flusso smoothing vf 1}
(cf. proof of Lemma \ref{expansion flow paradiff}).
To prove item $(ii)$, one argues as in the proof of Lemma \ref{Corollario d Phi F}, using the identity 
$d \Phi_\mathcal Q(\tau, \frak x)^{- 1} = d \Phi_\mathcal Q(- \tau, \Phi_\mathcal Q(\tau, \frak x))$, $-1 \le \tau \le 1$
(cf. Remark \ref{inverse of flow}). 
\end{proof}
\begin{lemma}\label{push forward smoothing 1}
For any ${\cal Q} \in {\cal OS}^p(N)$ with $N$, $p \in \N$, the following holds:

\noindent
$(i)$ For any $X := (0, 0, X^\bot)$ with $X^\bot \in {\cal OB}^q(m, N)$ and $m \in \Z$, $q \in \N$, the pullback $\Phi_{\cal Q}^* X$ of $X$ by $\Phi_{\cal Q}(1, \cdot)$
admits an expansion of the form
$$
\Phi_{\cal Q}^* X(\frak x) = \big(0,0, \, X^\bot(\frak x) +  \Upsilon^\bot(\frak x) + {\cal R}^\bot_N \big), \qquad
\Upsilon^\bot \in {\cal OB}^{p + q - 1}(m, N), \quad {\cal R}^\bot_N \in {\cal OS}^{p + q - 1}(N). 
$$
\noindent
$(ii)$ For any $\mathcal M := (0, 0, {\cal M}^\bot)$ with ${\cal M}^\bot \in {\cal OF}^q(m, N)$ and $m \in \Z$, $q \in \N$, 
the pullback $\Phi_{\cal Q}^* \mathcal M$ of $\mathcal M$ by $\Phi_{\cal Q}(1, \cdot)$
admits an expansion of the form
$$
\Phi_{\cal Q}^* \mathcal M(\frak x) = \big(0, 0, \mathcal M^\bot(\frak x) + \Upsilon^\bot(\frak x) + {\cal R}^\bot_N(\frak x) \big)\, , \qquad
\Upsilon^\bot \in {\cal OF}^{p + q - 1}(m, N), \quad {\cal R}^\bot_N \in {\cal OS}^{p + q - 1}(N) \, .
$$
\noindent
$(iii)$ For any ${\cal Q}_1 \in {\cal OS}^q(N)$ with $q \in \N$, 
the pullback $\Phi_{\cal Q}^* {\cal Q}_1$  of $\mathcal Q_1$ by  $\Phi_{\cal Q}(1, \cdot)$ admits an expansion of the form
 $\Phi_{\cal Q}^* {\cal Q}_1 = {\cal Q}_1 + {\cal OS}^{p + q - 1}(N)$. 
\end{lemma}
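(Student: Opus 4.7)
The plan is to derive all three statements from one and the same Lie--Taylor expansion of the pullback, combined with the commutator lemmas of Subsection 3.2 and the flow expansions in Lemma \ref{flusso smoothing vector fields}. Concretely, I would start from the identity obtained by specializing \eqref{espansione commutatori notazioni} to $Y = \mathcal Q$ and $\tau = 1$,
\begin{equation*}
\Phi_\mathcal Q^* X(\frak x) = X(\frak x) + [X,\mathcal Q](\frak x) + \int_0^1 (1-t)\,(d \Phi_\mathcal Q(t,\frak x))^{- 1}\, [[X,\mathcal Q],\mathcal Q]\big(\Phi_\mathcal Q(t,\frak x)\big)\, dt,
\end{equation*}
and then iterate the same formula on the integrand enough times so that the final remainder is smoothing of order $N$ and small of order $p+q-1$.

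For part (i), the first bracket $[X,\mathcal Q]$ is handled directly by Lemma \ref{comm smoothing paradiff}: it splits as $(0,0,\mathcal C_1^\bot) + \mathcal R_1$ with $\mathcal C_1^\bot \in \mathcal{OB}^{p+q-1}(m,N)$ and $\mathcal R_1$ a smoothing vector field of smallness $p+q-1$. The double commutator $[[X,\mathcal Q],\mathcal Q]$ then splits into an $\mathcal{OB}$--part and a smoothing part by a second application of Lemma \ref{comm smoothing paradiff} together with Lemma \ref{Commutator of smoothing vector fields}, and its order of smallness is at least $p+q-1+(p-1) \geq p+q-1$ since $p \geq 1$. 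At this point I would invoke Lemma \ref{flusso smoothing vector fields} to write $\Phi_\mathcal Q(t,\cdot) = \mathrm{Id} + \mathcal R_N(t,\cdot)$ and $d\Phi_\mathcal Q(t,\cdot)^{-1} = \mathrm{Id} + \widetilde{\mathcal R}_N(t,\cdot)$ with smoothing remainders, and Taylor--expand the coefficients of the $\mathcal{OB}$--piece at $\frak x$ along the smoothing increment $\mathcal R_N(t,\frak x)$; the contributions that arise reassemble into elements of $\mathcal{OB}^{p+q-1}(m,N)$ plus smoothing corrections of the claimed class. Iterating this procedure a sufficient number of times pushes the residual integral term into $\mathcal{OS}^{p+q-1}(N)$, and collecting all the $\mathcal{OB}$--contributions yields $\Upsilon^\bot \in \mathcal{OB}^{p+q-1}(m,N)$.

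Parts (ii) and (iii) follow the same blueprint with different inputs for the first commutator. For (ii) I would use Lemma \ref{lemma commutatori Fourier multiplier}(iii) in place of Lemma \ref{comm smoothing paradiff}, obtaining that $[\mathcal M,\mathcal Q]$ splits into a Fourier multiplier vector field in $\mathcal{OF}^{p+q-1}(m,N)$ plus a smoothing remainder of smallness $p+q-1$; the iteration then proceeds verbatim, noting that the Fourier multiplier structure is stable under brackets with $\mathcal Q$ up to smoothing, again by Lemma \ref{lemma commutatori Fourier multiplier}. For (iii) the conclusion is essentially immediate from Lemma \ref{Commutator of smoothing vector fields}: every iterated commutator of $\mathcal Q_1$ with $\mathcal Q$ stays in $\mathcal{OS}(N)$, and each successive bracket increases the order of smallness by one, so the full Lie--Taylor expansion of $\Phi_\mathcal Q^* \mathcal Q_1 - \mathcal Q_1$ lies in $\mathcal{OS}^{p+q-1}(N)$, the compositions with $\Phi_\mathcal Q(t,\cdot)$ and $d\Phi_\mathcal Q(t,\cdot)^{-1}$ being handled again by Lemma \ref{flusso smoothing vector fields}.

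The main technical obstacle, uniform across all three parts, is to ensure that evaluating a coefficient $a_{m-k}(\frak x)$ of an $\mathcal{OB}$--vector field (respectively a scalar $\lambda_{m-k}(\frak x)$ of an $\mathcal{OF}$--vector field, or a smoothing map $\mathcal R(\frak x)$) at the perturbed point $\Phi_\mathcal Q(t,\frak x)$ does not leave the class and, more importantly, does not destroy the smallness bookkeeping. This will be controlled by Taylor--expanding in the smoothing increment $\Phi_\mathcal Q(t,\frak x)-\frak x \in \mathcal{OS}^p(N)$ and using that $C^\infty_b$--maps have all derivatives bounded, so that each substitution produces either a term of the same class with slightly modified bounded coefficient, or a smoother term with an extra factor of $\mathcal Q$ which raises the order of smallness; the iteration of the Lie--Taylor formula then absorbs everything into the desired $\mathcal{OB}^{p+q-1}(m,N)$/$\mathcal{OF}^{p+q-1}(m,N) + \mathcal{OS}^{p+q-1}(N)$ decomposition.
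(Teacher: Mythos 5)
Your proof is structurally sound and relies on the same arsenal as the paper (the Lie--Taylor expansion \eqref{espansione commutatori notazioni}, the commutator Lemmas \ref{comm smoothing paradiff}, \ref{Commutator of smoothing vector fields}, \ref{lemma commutatori Fourier multiplier}, and the flow Lemma \ref{flusso smoothing vector fields}), but you take a longer route than necessary, and one step of your argument is framed incorrectly. The paper's proof starts from the \emph{first}-order Lie expansion
\begin{equation*}
\Phi_\mathcal Q^* X(\frak x) = X(\frak x) + \int_0^1 d\Phi_\mathcal Q(t, \frak x)^{-1}\,[X, \mathcal Q]\big(\Phi_\mathcal Q(t, \frak x)\big)\, dt,
\end{equation*}
applies Lemma \ref{comm smoothing paradiff} (resp.\ Lemma \ref{lemma commutatori Fourier multiplier}, resp.\ Lemma \ref{Commutator of smoothing vector fields}) \emph{once} to place $[X, \mathcal Q]$ in $\mathcal{OB}^{p+q-1}(m, N) + \mathcal{OS}^{p+q-1}(N-m)$, and then observes that this structure is already stable under composition with $\Phi_\mathcal Q(t, \cdot) = \mathrm{Id} + \mathcal{OS}^p(N)$ and with $d\Phi_\mathcal Q(t, \cdot)^{-1} = \mathrm{Id} + (\text{linear smoothing of order }N+1)$, which is precisely the Taylor-expansion-at-the-perturbed-point argument you correctly identify in your last paragraph. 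So no iteration of the Lie formula is needed: once the stability of the classes under the flow has been established, the first-order expansion closes.

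Your second-order expansion followed by an indeterminate "iterate the same formula enough times" is therefore an unnecessary complication, and the claim that this iteration "pushes the residual integral term into $\mathcal{OS}^{p+q-1}(N)$" is not quite right: the iterated brackets $[[X, \mathcal Q], \mathcal Q], \ldots$ always retain an $\mathcal{OB}$-piece (a para-differential vector field of order $m$, not a smoothing one), so no amount of iteration turns the residual into a pure smoothing term; one simply keeps absorbing $\mathcal{OB}$-pieces into $\Upsilon^\bot$ and $\mathcal{OS}$-pieces into $\mathcal R_N^\bot$, which you do acknowledge afterwards. Also a small precision: for part (iii), each bracket with $\mathcal Q \in \mathcal{OS}^p(N)$ increases the order of smallness by $p-1$, not by one (for $p=1$ the order does not increase at all). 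This is harmless here since Lemma \ref{Commutator of smoothing vector fields} already gives $[\mathcal Q_1, \mathcal Q] \in \mathcal{OS}^{p+q-1}(N)$ after a single bracket, which is all the first-order expansion requires.
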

\begin{proof}
$(i)$ By \eqref{espansione commutatori notazioni}, $\Phi_{\cal Q}^* X(\frak x)$ can be expanded as
$$
\Phi_{\cal Q}^* X(\frak x) = X(\frak x) + Z, \qquad Z :=  \int_0^1 d \Phi_{\cal Q}(t, \frak x)^{- 1}[X, {\cal Q}](\Phi_\mathcal Q(t, \frak x))\, d t\,.
$$
By applying Lemma \ref{comm smoothing paradiff}, one gets that 
$$
[X, {\cal Q}] = \big(0, 0, \, \Upsilon^\bot + {\cal R}^\bot_{[X, {\cal Q}] } \big), 
\qquad \Upsilon^\bot \in {\cal OB}^{p + q - 1}(m, N), \qquad {\cal R}^\bot_{[X, {\cal Q}] } \in {\cal OS}^{p + q - 1}(N + m)\,. 
$$
Item $(i)$ then follows by the definition of $Z$, the property \eqref{flusso smoothing vf 1}, and Lemma \ref{flusso smoothing vector fields}.
Items $(ii)$ and $(iii)$ can be proved similarly,  using in addition Lemma \ref{Commutator of smoothing vector fields} and Lemma \ref{lemma commutatori Fourier multiplier}.  
\end{proof}
We now consider a smoothing vector field ${\cal Q} \in {\cal OS}(N)$, $N \in \N$, of the form $ \mathcal Q := \mathcal Q_0 + \mathcal Q_1$ where
\begin{equation}\label{smoothing vector field per forma normale w}
\begin{aligned}
&   \mathcal Q_0 : = (0, 0, {\cal Q}^\bot_0), \qquad \ \   {\cal Q}^\bot_0(\frak x) \equiv {\cal Q}^\bot_0(\theta, y)[w] \in {\cal OS}_{w}^2(N), \\
&  \mathcal Q_1 :=  ({\cal F}_1, 0,  {\cal Q}^\bot_1)\,, \qquad  {\cal Q}^\bot_1(\frak x) \equiv {\cal Q}^\bot_1(\theta)[w, w] \in {\cal OS}_{ww}^2(N),
%& {\cal Q}_0(\frak x) \equiv {\cal Q}_0(\theta, y)[w] \in {\cal OS}_{w}^2(N), \qquad 
%{\cal Q}_1(\frak x) \equiv {\cal Q}_1(\theta)[w, w] \in {\cal OS}_{ww}^2(N)
\end{aligned}
\end{equation}
(cf. Definition \ref{paradiff omogenei di ordine 2}$(iii)$ for the definitions of ${\cal OS}_{w}^2(N)$ and ${\cal OS}_{ww}^2(N))$ and where 
for some $\sigma_N \ge 0$ and $0 < \e_0 \equiv \e_0(N) < 1$, ${\cal F}_1$ has the form 
\begin{equation}\label{addition smoothing vector field}
\begin{aligned}
& {\cal F}_1(\theta, w) := F_1(\theta)[w, w]\,, \qquad  F_1 \in C^\infty\big( \T^{S_+} \times [0, \e_0], \, {\cal B}_2(H^{\sigma_N}_\bot(\T_1), \R^{S_+}) \big) ,
\end{aligned}
\end{equation}
(cf.  \eqref{forme multilineari} for the definition ${\cal B}_2(H^{\sigma_N}_\bot(\T_1), \R^{S_+})$).
In the next lemma we compute an expansion of $\Phi_{\mathcal Q}^* X_{\cal N}$ 
where $X_{\cal N}$ is the normal form vector field defined in \eqref{forma campo vettoriale forma normale astratto}. 
\begin{lemma}\label{push forward smoothing X cal N}
For $\mathcal Q = \mathcal Q_0 + \mathcal Q_1$ as in \eqref{smoothing vector field per forma normale w}, the following holds.

\noindent
$(i)$ The commutator $[X_{\cal N}, \mathcal Q_0] \in {\cal OS}^2(N - 3)$ has the form $\Upsilon^{(1)} + {\cal OS}^3(N)$ where
$$\Upsilon^{(1)}(\frak x) = 
\Big(0, 0,  \big( [\ii \Omega_\bot , \, {\cal Q}^\bot_0(\theta, y)]_{lin} + \omega \cdot \partial_\theta {\cal Q}^\bot_0(\theta, y) \big)[w] \Big)\,. 
$$
\noindent
$(ii)$ The commutator $[X_{\cal N}, \mathcal Q_1] \in {\cal OS}^2(N - 3)$ has the form  $\Upsilon^{(2)} + {\cal OS}^3(N)$ where
$$
\begin{aligned}
& \Upsilon^{(2)}(\frak x) = \begin{pmatrix}
 \omega \cdot \partial_\theta F_1(\theta)[w, w] - F_1(\theta)[\ii \Omega_\bot w, w] - F_1(\theta)[w, \ii \Omega_\bot w] \\
0 \\
 \ii \Omega_\bot {\cal Q}^\bot_1(\theta)[w, w]- {\cal Q}^\bot_1(\theta)[\ii \Omega_\bot w, w] - {\cal Q}^\bot_1(\theta)[w, \ii \Omega_\bot w]  
 +  \omega \cdot \partial_\theta {\cal Q}^\bot_1(\theta)[w, w]
\end{pmatrix}
%& {\cal R}_{[X_{\cal N}, \mathcal Q_1]}(\frak x) =
% \ii \Omega_\bot {\cal q}^\bot_1(\theta)[w, w]- {\cal Q}^\bot_1(\theta)[\ii \Omega_\bot w, w] - {\cal Q}^\bot_1(\theta)[w, \ii \Omega_\bot w]  \\
%& \qquad \qquad -  \omega \cdot \partial_\theta {\cal Q}^\bot_1(\theta)[w, w] + {\cal OS}^3(N)
\end{aligned}
$$

\noindent
$(iii)$ The pullback $\Phi_{\mathcal Q}^* X_{\cal N}$ is of the form $X_{\cal N} +  \Upsilon^{(1)} +  \Upsilon^{(2)}  + {\cal OS}^3(N )$
with $ \Upsilon^{(1)}$ given by item (i) and $ \Upsilon^{(2)}$ given by item (ii).
\end{lemma}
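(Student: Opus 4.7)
The plan is to prove items (i) and (ii) by direct computation of the commutators $[X_{\mathcal N}, \mathcal Q_0]$ and $[X_{\mathcal N}, \mathcal Q_1]$ in the spirit of \eqref{formula generale commutatore paradiff forma normale}, using the explicit form of $X_{\mathcal N}$ in \eqref{forma campo vettoriale forma normale astratto} and of its differential in \eqref{forma campo vettoriale forma normale astratto differenziale}, and then to derive item (iii) from the Lie--Taylor expansion \eqref{espansione commutatori notazioni} at $\tau = 1$.

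For (i), expanding $[X_{\mathcal N}, \mathcal Q_0](\frak x) = dX_{\mathcal N}(\frak x)[\mathcal Q_0(\frak x)] - d\mathcal Q_0(\frak x)[X_{\mathcal N}(\frak x)]$ shows that only the normal component is nonzero, and it equals
\[
\ii\Omega_\bot\bigl(\mathcal Q_0^\bot(\theta,y)[w]\bigr) - \mathcal Q_0^\bot(\theta,y)[\ii\Omega_\bot w] + (\omega + \varepsilon\widehat\omega + \nabla_y Q(y))\cdot\partial_\theta \mathcal Q_0^\bot(\theta,y)[w].
\]
The sum of the first two summands is the linear commutator $[\ii\Omega_\bot, \mathcal Q_0^\bot(\theta,y)]_{lin}[w]$, and combining it with the $\omega \cdot \partial_\theta$ contribution yields exactly $\Upsilon^{(1)}$. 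Since $\nabla_y Q(0) = 0$ makes $\nabla_y Q(y)$ small of order one, and $\partial_\theta \mathcal Q_0^\bot(\theta,y)[w]$ is small of order two because $\mathcal Q_0^\bot \in \mathcal{OS}^2_w(N)$, the residual $(\varepsilon\widehat\omega + \nabla_y Q(y))\cdot\partial_\theta\mathcal Q_0^\bot(\theta,y)[w]$ lies in $\mathcal{OS}^3(N)$, while the $\mathcal{OS}^2(N-3)$ class of the full commutator is imposed by the three-derivative loss of $\ii\Omega_\bot$ in $[\ii\Omega_\bot, \mathcal Q_0^\bot]_{lin}$. Item (ii) is proved identically except that $\mathcal Q_1$ has a nontrivial $\theta$-component $F_1(\theta)[w,w]$, so $d\mathcal Q_1[X_{\mathcal N}]$ now acts on both slots. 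Differentiating the two bilinear forms in $w$ against $X_{\mathcal N}^\bot = \ii\Omega_\bot w$ produces the symmetric pairs $F_1(\theta)[\ii\Omega_\bot w, w] + F_1(\theta)[w, \ii\Omega_\bot w]$ on the angular side and $\mathcal Q_1^\bot(\theta)[\ii\Omega_\bot w, w] + \mathcal Q_1^\bot(\theta)[w, \ii\Omega_\bot w]$ on the normal side, while $\ii\Omega_\bot\mathcal Q_1^\bot(\theta)[w,w]$ arises from $dX_{\mathcal N}[\mathcal Q_1]$. Collecting the $\omega\cdot\partial_\theta$ contributions and again absorbing the $(\varepsilon\widehat\omega + \nabla_y Q(y))$-terms into $\mathcal{OS}^3(N)$ delivers $\Upsilon^{(2)}$ as claimed.

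For (iii), I would apply the Taylor formula \eqref{espansione commutatori notazioni} with $\tau = 1$ to write
\[
\Phi_{\mathcal Q}^* X_{\mathcal N} = X_{\mathcal N} + [X_{\mathcal N}, \mathcal Q] + \int_0^1 (1-t)\bigl(d\Phi_{\mathcal Q}(t, \cdot)\bigr)^{-1}\bigl[[X_{\mathcal N}, \mathcal Q], \mathcal Q\bigr]\circ\Phi_{\mathcal Q}(t, \cdot)\,dt,
\]
and combine items (i)--(ii) to get $[X_{\mathcal N}, \mathcal Q] = \Upsilon^{(1)} + \Upsilon^{(2)} + \mathcal R$ with $\mathcal R \in \mathcal{OS}^3(N)$. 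The piece $[\mathcal R, \mathcal Q]$ of the double commutator lies in $\mathcal{OS}^{3+2-1}(N) \subseteq \mathcal{OS}^3(N)$ by Lemma \ref{Commutator of smoothing vector fields}(ii), and for the pieces $[\Upsilon^{(i)}, \mathcal Q]$ each factor of $\ii\Omega_\bot$ appearing in $\Upsilon^{(i)}$ gets composed with an operator of smoothing order $N+1$ coming from $\mathcal Q_0^\bot$, $\mathcal Q_1^\bot$, or $F_1$, so that the three-derivative loss is absorbed provided $N$ is taken sufficiently large. Finally, Lemma \ref{flusso smoothing vector fields} ensures that $\Phi_{\mathcal Q}(t, \cdot)$ and $d\Phi_{\mathcal Q}(t, \cdot)^{-1}$ are the identity plus smoothing corrections, so composing the integrand with them preserves the $\mathcal{OS}^3(N)$ class. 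The main technical obstacle is precisely this smoothing-order bookkeeping for the Taylor remainder, which requires checking that every appearance of $\ii\Omega_\bot$ sits next to at least one $\mathcal Q_i$ factor; all other contributions are routine consequences of Lemmata \ref{Commutator of smoothing vector fields} and \ref{flusso smoothing vector fields}.
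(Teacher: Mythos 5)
Your proof of items (i) and (ii) matches the paper's: you compute the commutator directly from the explicit forms of $X_{\mathcal N}$ and $dX_{\mathcal N}$ in \eqref{forma campo vettoriale forma normale astratto}--\eqref{forma campo vettoriale forma normale astratto differenziale}, isolate the $\omega\cdot\partial_\theta$ and $\ii\Omega_\bot$ pieces to form $\Upsilon^{(1)}$, $\Upsilon^{(2)}$, and observe that the $(\e\widehat\omega + \nabla_y Q(y))\cdot\partial_\theta$ contributions are small of order three. The $\mathcal{OS}^2(N-3)$ class of the commutator is attributed, correctly and as in the paper, to the three orders lost through $\ii\Omega_\bot$. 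For item (iii), the overall structure --- Lie--Taylor formula, items (i)--(ii) for $[X_{\mathcal N},\mathcal Q]$, Lemma \ref{Commutator of smoothing vector fields} for the double commutator, Lemma \ref{flusso smoothing vector fields} for the flow --- is also the paper's.

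However, your treatment of the double commutator in (iii) has a gap. You claim that for $[\Upsilon^{(i)},\mathcal Q]$ ``the three-derivative loss is absorbed provided $N$ is taken sufficiently large.'' This is not correct: the loss of three derivatives from $\ii\Omega_\bot$ is a fixed offset relative to $N$, and enlarging $N$ does not close it. Since $\Upsilon^{(i)}\in\mathcal{OS}^2(N-3)$ and $\mathcal Q\in\mathcal{OS}^2(N)\subset\mathcal{OS}^2(N-3)$, Lemma \ref{Commutator of smoothing vector fields}(ii) yields $[\Upsilon^{(i)},\mathcal Q]\in\mathcal{OS}^3(N-3)$, and likewise $[[X_{\mathcal N},\mathcal Q],\mathcal Q]\in\mathcal{OS}^3(N-3)$; then Lemma \ref{push forward smoothing 1}(iii) gives $Z\in\mathcal{OS}^3(N-3)$. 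That is exactly what the paper's proof records, and it is also what feeds through to the cited application (note the $\mathcal{OS}^3(N-6)$ in \eqref{prima expansione X6 bot}). Since $N$ is a free parameter chosen at the end of the scheme, the shift from $N$ to $N-3$ is harmless --- but you should state the remainder class as $\mathcal{OS}^3(N-3)$ rather than attempt to recover $\mathcal{OS}^3(N)$ by an absorption argument that does not go through.
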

\begin{proof} $(i)$
Arguing as in the proof of Lemma \ref{corollario coniugazione Omega bot}$(i)$ (cf. \eqref{formula generale commutatore paradiff forma normale}), 
one sees that $ [X_{\cal N}, \mathcal Q_0] (\frak x)$ is of the form $\big(0, 0,  [X_{\cal N}, \, \mathcal Q_0]^\bot (\frak x)\big)$ where
$$
 [X_{\cal N}, \mathcal Q_0]^\bot (\frak x) = 
\Big( \big[\ii \Omega_\bot , {\cal Q}^\bot_0(\theta, y)\big]_{lin} + (\omega + \e \widehat \omega) \cdot \partial_\theta {\cal Q}^\bot_0(\theta, y) 
 + \nabla_y Q(y) \cdot \partial_\theta {\cal Q}^\bot_0(\theta, y)\Big)[w]\, .
$$
One has
$$
 \omega \cdot \partial_\theta {\cal Q}^\bot_0(\theta, y) [w] \in {\cal OS}_{w}^2(N), \qquad
 \e \widehat \omega \cdot \partial_\theta {\cal Q}^\bot_0(\theta, y) [w]  \in {\cal OS}^3(N), \qquad
   \nabla_y Q(y) \cdot \partial_\theta {\cal Q}^\bot_0(\theta, y)[w] \in {\cal OS}^3(N),
 $$ 
 and since $\ii \Omega_\bot$ is a Fourier multiplier of order three, it follows that 
$\big[\ii \Omega_\bot , {\cal Q}^\bot_0(\theta, y)\big]_{lin} w \in {\cal OS}_{ww}^2(N - 3)$. The claimed statement then follows. 

\noindent
$(ii)$ Arguing as in the proof of Lemma \ref{corollario coniugazione Omega bot}$(ii)$ (cf. \eqref{formula generale commutatore paradiff forma normale 1}), and using that 
$F_1(\theta)[w, w]$ and ${\cal Q}^\bot_1(\theta)[w, w]$ are quadratic forms with respect to $w$, one sees that
$Y := [X_{\cal N}, \mathcal Q_1]$ is of the form  $Y = (Y^{(\theta)}, \, 0, \, Y^\bot)$ where 
%$$
%[X_{\cal N}, X_1](\frak x) = \begin{pmatrix}
%0 \\
%0 \\
%{\cal R}_{[X_{\cal N}, X_1]}
%\end{pmatrix}
%$$
%where 
$$
 Y^{(\theta)}(\frak x) = ( \omega + \e \widehat \omega) \cdot \partial_\theta F_1(\theta)[w, w] - F_1(\theta)[\ii \Omega_\bot w, w] - F_1(\theta)[w, \ii \Omega_\bot w]  
 + \nabla_y Q(y) \cdot \partial_\theta F_1(\theta)[w, w]
$$
$$
\begin{aligned}
Y^\bot(\frak x) & =  \ii \Omega_\bot {\cal Q}^\bot_1(\theta)[w, w]- {\cal Q}^\bot_1(\theta)[\ii \Omega_\bot w, w] - {\cal Q}^\bot_1(\theta)[w, \ii \Omega_\bot w] 
+ ( \omega  +  \e \widehat \omega) \cdot \partial_\theta {\cal Q}^\bot_1(\theta)[w, w]  \qquad \\
& \qquad + \nabla_y Q(y) \cdot \partial_\theta {\cal Q}^\bot_1(\theta)[w, w]\,. 
\end{aligned}
$$
By \eqref{addition smoothing vector field}, 
$ \omega \cdot \partial_\theta F_1(\theta)[w, w] $, $F_1(\theta)[\ii \Omega_\bot w, w]$, and $F_1(\theta)[w, \ii \Omega_\bot w]$ are smooth functions and small of order two, 
whereas $\e \widehat \omega \cdot \partial_\theta F_1(\theta)[w, w]$ and  $\nabla_y Q(y) \cdot \partial_\theta F_1(\theta)[w, w]$ are smooth functions and small of order three. 
(Here we used that by \eqref{cal N Q Omega bot},  $\nabla_y Q(y)$ is small of order one.)
Furthermore, by the definition of $ {\cal Q}^\bot_1$ one has
$ \omega \cdot \partial_\theta {\cal Q}^\bot_1(\theta)[w, w] \in {\cal OS}^2_{ww}(N)$, whereas 
$\e \widehat \omega \cdot \partial_\theta {\cal Q}^\bot_1(\theta)[w, w]$ and $\nabla_y Q(y) \cdot \partial_\theta {\cal Q}^\bot_1(\theta)[w, w]$ are in ${\cal OS}^3(N)$. 
Finally, since $\ii \Omega_\bot$ is a Fourier multiplier of order three, 
$$
\ii \Omega_\bot {\cal Q}^\bot_1(\theta)[w, w]- {\cal Q}^\bot_1(\theta)[\ii \Omega_\bot w, w] - {\cal Q}^\bot_1(\theta)[w, \ii \Omega_\bot w] \in {\cal OS}_{ww}^2(N - 3)\,.
$$ 
The claimed statement then follows.  

\noindent
$(iii)$  By \eqref{espansione commutatori notazioni}, $\Phi_{\cal Q}^* X_\mathcal N(\frak x)$ can be expanded as
$$
\Phi_{\mathcal Q}^* X_{\cal N} = X_{\cal N} + [X_{\cal N}, \mathcal Q] + Z , \quad Z(\frak x) := \int_0^1 (1 - t) d \Phi_\mathcal Q(t, \frak x)^{- 1}[[X_{\cal N}, \mathcal Q], \mathcal Q](\Phi(t, \frak x))\, d t\,. 
$$
By items $(i)$ and $(ii)$, the commutator $[X_{\cal N}, \mathcal Q]$ is in ${\cal OS}^2(N - 3)$, hence by Lemma \ref{Commutator of smoothing vector fields}, 
$[[X_{\cal N}, \mathcal Q], \mathcal Q] \in {\cal OS}^3(N - 3)$. By applying Lemma \ref{push forward smoothing 1}-$(iii)$, one then infers that 
$Z \in {\cal OS}^3(N - 3)$. The claimed expansion then follows by items $(i)$ and $(ii)$. 
\end{proof}

\smallskip

In Section \ref{forma normale smoothing standard}, we use  Hamiltonian vector fields $X_\mathcal F$, corresponding to  Hamiltonians $\mathcal F$, which are affine functions 
with respect to the normal component $w$. More precisely, $\mathcal F$ is assumed to be of the form
\begin{equation}\label{ham linear w general}
{\cal F}(\frak x) := {\cal F}_0(\theta, y) + \big\langle {\cal F}_1(\theta, y)\,,\, w \big\rangle
\end{equation}
where 
\begin{equation}\label{ham linear w general 2}
\begin{aligned}
& {\cal F}_0 \in C^\infty_b\big( \T^{S_+} \times B_{S_+}(\delta) \times  [0, \e_0] , \, \R \big)\,, \qquad 
{\cal F}_1 \in C^\infty_b\big(  \T^{S_+} \times B_{S_+}(\delta)\times [0, \e_0] , \, H^s_\bot(\T_1) \big), \quad \forall s \geq 0\,. 
\end{aligned}
\end{equation}
The Hamiltonian vector field generated by the Hamiltonian ${\cal F}$ is given by 
\begin{equation}\label{ham vector field general smoothing}
X_{\cal F}(\frak x) = \big( - \nabla_\theta {\cal F}(\frak x), \,  \nabla_y {\cal F}(\frak x), \, \partial_x {\cal F}_1(\theta, y) \big).
\end{equation}
The following lemma can be easily deduced by \eqref{ham linear w general}-\eqref{ham vector field general smoothing}.
\begin{lemma}\label{prop astratte campi vettoriali smoothing NF}
The vector field $X_{\cal F}$ is a smoothing vector field of arbitrary order, i.e.,  $X_{\cal F} \in {\cal OS}(N)$ for any $N \in \N$. 
Moreover, if in addition ${\cal F}_0$ is small of order $p$ and ${\cal F}_1$ is small of order $q$, then $\nabla_\theta {\cal F}$ is small of order ${\rm min}\{ p, q + 1\}$, $\nabla_y {\cal F}$ is small of order ${\rm min}\{p - 1, q\}$ and $\partial_x {\cal F}_1$ is small of order $q$. 
\end{lemma}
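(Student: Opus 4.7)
The plan is to unpack the definitions of the Hamiltonian vector field $X_\mathcal{F}$ and of the smoothing class $\mathcal{OS}(N)$, and then verify both claims by direct computation, since no further machinery is needed beyond the fact that $\mathcal{F}_1$ takes values in $H^s_\bot(\T_1)$ for \emph{every} $s \ge 0$.

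First I would write out the three components of $X_\mathcal{F}$ using \eqref{ham linear w general} and \eqref{ham vector field general smoothing}:
\begin{align*}
-\nabla_\theta \mathcal{F}(\frak x) &= -\nabla_\theta \mathcal{F}_0(\theta,y) - \langle \nabla_\theta \mathcal{F}_1(\theta,y), w \rangle, \\
\nabla_y \mathcal{F}(\frak x) &= \nabla_y \mathcal{F}_0(\theta,y) + \langle \nabla_y \mathcal{F}_1(\theta,y), w \rangle, \\
\partial_x \mathcal{F}_1(\theta,y) &= \partial_x \mathcal{F}_1(\theta,y).
\end{align*}
For the first statement (that $X_\mathcal{F} \in \mathcal{OS}(N)$ for any $N$), I would fix $N \in \N$ and argue that the first two components take values in the finite-dimensional space $\R^{S_+}$, so they trivially belong to $C^\infty_b$ with values in any fixed Banach space, and depend smoothly on $\frak x$ via the hypothesis \eqref{ham linear w general 2}. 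For the third component, I would invoke the hypothesis that $\mathcal{F}_1$ is $C^\infty_b$ with values in $H^s_\bot(\T_1)$ for \emph{every} $s \ge 0$; in particular applying this with $s$ replaced by $s + N + 2$ gives $\partial_x \mathcal{F}_1 \in H^{s+N+1}_\bot(\T_1)$, uniformly with all derivatives bounded. Choosing $s_N := 0$ and the $\delta, \e_0$ from \eqref{ham linear w general 2}, this places $X_\mathcal{F}$ in $\mathcal{OS}(N)$ per Definition \ref{def smoothing vector fields}.

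For the smallness claims, I would use the rules governing Definition \ref{def map order p}: differentiation in any of the variables $y$, $w$, $\e$ decreases the order of smallness by one, whereas differentiation in $\theta$ preserves it, and pairing with $w$ (or, more generally, composing with the linear-in-$w$ functional $\langle\cdot, w\rangle$) increases the order by one. Applying this mechanically to the displayed expressions gives: $-\nabla_\theta \mathcal{F}_0$ is small of order $p$ and $\langle \nabla_\theta \mathcal{F}_1, w\rangle$ is small of order $q+1$ (one from $w$, none lost from $\partial_\theta$), so their sum is small of order $\min\{p, q+1\}$; $\nabla_y \mathcal{F}_0$ is small of order $p-1$ and $\langle \nabla_y \mathcal{F}_1, w\rangle$ is small of order $(q-1)+1 = q$, so their sum is small of order $\min\{p-1, q\}$; finally $\partial_x \mathcal{F}_1$ is small of order $q$ since the $x$-derivative acts only on the Sobolev variable and is unrelated to $(y, w, \e)$.

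There is no real obstacle: the argument is a direct verification from the definitions. The only mildly delicate point is checking the order-of-smallness bookkeeping for $\langle \nabla_y \mathcal{F}_1, w\rangle$, where one has to compute the mixed derivative $d_\bot^{k_2}\partial_y^\beta \partial_\e^{k_1}$ at $(y,w,\e)=(0,0,0)$ and observe that nonzero contributions require $k_2 = 1$ (since the map is linear in $w$), leaving $|\beta|+k_1 \le q-2$, which is exactly the condition for smallness of order $q$.
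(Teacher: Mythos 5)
Your proof is correct and fills in exactly the direct-verification argument the paper intends: the paper itself does not include a proof of this lemma, stating only that it "can be easily deduced" from \eqref{ham linear w general}--\eqref{ham vector field general smoothing}. Your two observations --- that the first two components of $X_\mathcal{F}$ land in the finite-dimensional $\R^{S_+}$ so there is nothing to smooth, and that the third component $\partial_x \mathcal{F}_1$ inherits arbitrary-order smoothing because $\mathcal{F}_1$ is $C^\infty_b$ with values in $H^s_\bot(\T_1)$ for \emph{every} $s$ --- are the whole content of the first claim, and your bookkeeping (differentiation in $y$ or $\e$ lowers the order of smallness by one, differentiation in $\theta$ does not, pairing a linear-in-$w$ term with $w$ raises it by one) correctly yields $\min\{p,q+1\}$, $\min\{p-1,q\}$, and $q$.

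One small remark on presentation rather than substance: the phrase "leaving $|\beta|+k_1 \le q-2$, which is exactly the condition for smallness of order $q$" conflates the constraint on the indices with the vanishing condition they must guarantee. What you really mean is that the constraint $|\beta|+k_1+k_2 \le q-1$ with the only nontrivial case $k_2=1$ forces $|\beta|+k_1 \le q-2$, and then the required vanishing $\partial_y^{\beta}\partial_\e^{k_1}\nabla_y\mathcal{F}_1(\theta,0,0)=0$ follows because $|\beta + e_j| + k_1 \le q-1$ and $\mathcal{F}_1$ is small of order $q$. Stating it that way makes the logical chain explicit. Also note that \eqref{ham vector field general smoothing} as written in the paper has the $\theta$- and $y$-gradients transposed relative to the general formula \eqref{campo hamiltoniano notazioni}; you copied the paper's formula, which is the sensible thing to do since the lemma statement is phrased in terms of $\nabla_\theta\mathcal F$ and $\nabla_y\mathcal F$ directly and is therefore unaffected.
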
 

%%%%%%%%%%%%%%%%%%%%%%%%%%%%%%%%%%%%%%%%%%%%%%%%%%%%%%%%%%%%%%%%%%%%%%%%%%%%
%%%%%%%%%%%%%%%%%%%%%%%%%%%%%%%%%%%%%%%%%%%%%%%%%%%%%%%%%%%%%%%%%%%%%%%%%%%%

%\section{Normal form coordinates for the KdV equation}\label{sec canonical coordinates}

\section{Reformulation of Theorem \ref{stability theorem}  and Normal Form Theorem}\label{normal form theorem}

The goal of this section is to describe the normal form coordinates provided by \cite[Theorem 1.1]{Kap-Mon-2},
specifically constructed to analyze perturbations of the KdV equations near finite gap solutions
and then to express equation \eqref{1.1} with respect to these coordinates. 
The main results of this section are Theorem \ref{long time ex action-angle}, which reformulates Theorem \ref{stability theorem} 
in these novel coordinates, and Theorem \ref{teorema totale forma normale} (Normal Form Theorem), which is the key ingredient into the proof of Theorem \ref{long time ex action-angle}.

We begin by rephrasing \cite[Theorem 1.1]{Kap-Mon-2} in a form, adapted to our needs. 
   Without further references, we use the notations introduced in Section \ref{introduzione paper}.
   \begin{theorem}\label{modified Birkhoff map}
Let $S_+ \subseteq \N$ be finite
%$I^{(0)} \in \R^{S_+}_{> 0}$ 
and $\Xi \subset \R_{> 0}^{S_+}$ be compact.   
%$\T^{S_+} \times \{ I^{(0)} \}$ in ${\cal M}_S^o = \T^{S_+} \times \R^{S_+}_{> 0}$. 
Then for $\delta > 0$ sufficiently small with $\Xi + B_{S_+}(\delta) \subset \R^{S_+}_{> 0}$ 
there exists  a $C^\infty$- smooth family of canonical diffeomorphisms
$$
\Psi_\mu : {\cal V}(\delta) \to \Psi_\mu({\cal V}(\delta)) \subseteq L^2_0(\T_1)\,, \,  \frak x \mapsto q,
$$ 
parametrized by $\mu \in \Xi$, with the property that for any $\mu \in \Xi$, $\Psi_\mu(\frak x)$ satisfies 
$$
\Psi_\mu(\theta, y , 0) = 
\Psi_{S_+}(\theta, \mu + y), 
\quad \forall (\theta, y, 0) \in {\cal V}(\delta) \, ,
$$
and is compatible with the scale of Sobolev spaces $H^s_0(\T_1)$, $s \in \Z_{\geq 0}$ (meaning that 
$\Psi_\mu \big( {\cal V}(\delta) \cap \mathcal E_s \big) \subseteq H^s_0(\T_1)$ and $\Psi_\mu : {\cal V}(\delta) \cap \mathcal E_s \to H^s_0(\T_1)$ 
is a $C^\infty$-diffeomorphism onto its image), so that the following holds:
\begin{description}
\item[({\bf AE1})] For any $N \in \N$,  $\mu \in \Xi$, and $\frak x = (\theta, y , w) \in \mathcal V(\delta)$, $\Psi (\frak x) \equiv \Psi_\mu(\frak x)$ 
has an expansion of the form,   
$$
\Psi(\frak x) = 
\Psi_{S_+}(\theta, \mu + y)
 +  w + 
\sum_{k = 1}^N a_{- k}(\frak x; \Psi) \, \partial_x^{- k} w 
+ {\cal R}_{N}(\frak x; \Psi) \, ,
$$ 
where  ${\cal R}_{N}(\theta, y , 0; \Psi) = 0$ and where
for any $s \in \Z_{\geq 0}$  and $1 \le k \le N$,
$$
 {\cal V}(\delta) \to H^s(\T_1),\,  \frak x \mapsto  a_{- k}(\frak x; \Psi), \qquad
  {\cal V}^s(\delta)   \to H^{s + N +1}(\T_1),\, \frak x \mapsto  {\cal R}_N(\frak x; \Psi) ,
$$
are $C^\infty$ maps (cf. \eqref{Vns} for the definition of ${\cal V}^s(\delta)$). % real analytic maps. 
\item[({\bf AE2})]
For any $\frak x = (\theta, y, w)  \in {\cal V}^1(\delta)$ and $\mu \in \Xi$, the transpose $d \Psi_\mu(\frak x)^\top$ 
(with respect to the standard inner products) of the differential
$d \Psi_\mu(\frak x) : E_1 \to H^1_0(\T_1)$ yields a bounded operator $d \Psi(\frak x )^\top \equiv d \Psi_\mu(\frak x )^\top : H^1_0(\T_1) \to E_1$. 
For any $\widehat q \in H^1_0(\T_1)$
and any integer $N \ge 1$, $d \Psi(\frak x )^\top [\widehat q]$ admits an expansion of the form 
$$
d \Psi(\frak x)^\top [\widehat q]= \Big( \, 0, \, 0, \, 
 \Pi_\bot \widehat q  +\Pi_\bot \sum_{k = 1}^N  a_{- k}(\frak x; {d \Psi^\top})  \partial_x^{- k}\widehat q \, 
+ \Pi_\bot  \sum_{k = 1}^N  ( \partial_x^{- k} w) \mathcal A_{- k}(\frak x; {d \Psi^\top})[\widehat q]  \, \Big)
 + {\cal R}_{N}(\frak x; {d \Psi^\top})[\widehat q]
$$
where for any $s \in \N$ and $1 \le  k \le N$, 
$$
{\cal V}^1(\delta)  \to H^s(\T_1)\,, \, \frak x \mapsto a_{- k}(\frak x; {d \Psi^\top})\,,  \qquad 
  {\cal V}^1(\delta)  \to  {\cal B}(H^1_0(\T_1), H^s(\T_1))\,, \, 
\frak x \mapsto \mathcal A_{- k}(\frak x; {d \Psi^\top})\,, 
$$
and
$$
  {\cal V}^s(\delta)  \to {\cal B}(H^s_0(\T_1), E_{s + N +1}), \,
\frak x \mapsto {\cal R}_N(\frak x; {d \Psi^\top})\, , 
$$
are $C^\infty$-smooth, bounded maps. 
 \item[({\bf AE3})] For any $\mu \in \Xi$, the Hamiltonian ${\cal H}_\mu^{kdv} := H^{kdv} \circ \Psi _\mu: {\cal V}^1(\delta) \to \R$ is in normal form up to order three. 
 More precisely,  for any $\frak x = (\theta, y, w) \in  {\cal V}^1(\delta),$ the Taylor expansion of ${\cal H}^{kdv} \equiv {\cal H}_\mu^{kdv}$ at $(\theta, 0 , 0)$ 
  with respect to $y$ and $w$ up to order three reads
\begin{equation}\label{hamiltoniana KdV}
{\cal H}^{kdv} (\frak x)  = e + \omega \cdot y + \frac12  \Omega_{S_+} [y] \cdot y  
+ \frac12\big\langle D^{- 1}_\bot \Omega_\bot w, w  \big\rangle  + {\cal P}^{kdv} (\frak x) \, ,
\end{equation}
where $e := {\cal H}_\mu^{kdv} (0, 0, 0)= H^{kdv}(
\Psi_{S_+}(0, \mu))
$, 
$$
\omega = (\omega_n^{kdv}(\mu, 0))_{n \in S_+}\, , \qquad
\Omega_{S_+} := (\partial_{I_j} \omega^{kdv}_k (\mu, 0))_{j, k \in S_+}\, ,
$$
and for any $w =  \sum_{n \in S^\bot}  w_n e^{\ii 2\pi n x}$,
$D_\bot^{- 1} w := \sum_{n \in S^\bot} \frac{1}{2 \pi n} w_n e^{\ii 2 \pi n x}$, and (cf. \eqref{def notation normal frequencies})
\be\label{Omega-normal}
 \Omega_\bot w: =  \sum_{n \in S^\bot } \Omega_n w_n e^{\ii 2\pi n x}\,, \qquad  
  \Omega_n := \omega_n^{kdv}(\mu , 0)\,, \quad \forall n \in S^\bot  \, .
\ee
Furthermore,
${\cal P}^{kdv} : {\cal V}^1(\delta) \to \R$ is $C^\infty$-smooth,
satisfies
$$
| {\cal P}^{kdv}(\frak x) | \lesssim ( |y| + \| w \|_1)^3 , \qquad \forall \, \frak x= (\theta, y, w) \in \mathcal V^1(\delta)\, , \ \forall \, \mu \in \Xi\, ,
$$
and has the following property: for any integer $N \ge 1$
there exists an integer $\sigma_N \ge N$ (loss of regularity) so that 
$\nabla {\cal P}^{kdv}(\frak x)= (\nabla_{\theta} {\cal P}^{kdv}(\frak x),  \nabla_{y} {\cal P}^{kdv}(\frak x), \nabla_\bot{\cal P}^{kdv}(\frak x))$ 
admits an expansion of the form 
$$
\nabla {\cal P}^{kdv}(\frak x) =
\big(\, 0, \, 0, \, \Pi_\bot  \sum_{k = 0}^N  T_{a_{- k}(\frak x; {\cal P}^{kdv})} \, \partial_x^{- k} w \, \big)   + {\cal R}_N(\frak x; {\cal P}^{kdv}) ,
$$
where there exist integers $s_N > 0$ and $\sigma_N > 0$ so that for any  $s \geq s_N$ and any $0 \le  k \le N$, 
$$
\begin{aligned}
&  {\cal V}^{s + \sigma_N}(\delta)  \to H^s(\T_1), \, \frak x \mapsto  a_{- k}(\frak x; {\cal P}^{kdv}) \, ,  
\qquad
{\cal V}^{s \lor \sigma_N}(\delta) \to \mathcal E_{s + N + 1}, \,  \frak x \mapsto  {\cal R}_N(\frak x; {\cal P}^{kdv}) 
\end{aligned}
$$
are $C^\infty$-smooth and satisfy for any $\theta \in \T_1^{S_+}$, $\mu \in \Xi$,
$$
a_{- k}(\theta, 0, 0; {\cal P}^{kdv}) = 0, \quad  {\cal R}_N(\theta, 0, 0; {\cal P}^{kdv}) = 0, \quad
\partial_y  {\cal R}_N(\theta, 0, 0; {\cal P}^{kdv}) = 0,  \quad d_\bot  {\cal R}_N(\theta, 0, 0; {\cal P}^{kdv}) = 0.
$$ 
\end{description}
\noindent
Here $T_{a_k(\frak x; {\cal P}^{kdv})}$ denotes the operator of para-multiplication with $a_k(\frak x; {\cal P}^{kdv})$ (cf. Definition \ref{definizione paraprodotto}).
\end{theorem}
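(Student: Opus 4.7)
The plan is to derive Theorem \ref{modified Birkhoff map} from \cite[Theorem 1.1]{Kap-Mon-2}, which constructs precisely the family of canonical diffeomorphisms $\Psi_\mu$ with all the analytic properties stated in the first paragraph (compatibility with the Sobolev scale, the normalization $\Psi_\mu(\theta,y,0) = \Psi_{S_+}(\theta,\mu+y)$, and smoothness in $\mu$). Recall that the construction in \cite{Kap-Mon-2} proceeds in two steps: first, one linearizes the inverse Birkhoff map $\Psi^{kdv}$ at the finite gap potential $\Psi_{S_+}(\theta,\mu)$ in the normal variable, obtaining an affine approximation that already provides the rough shape of $\Psi_\mu$; then one composes with a symplectic corrector, constructed by solving a homological equation, which is regularizing of arbitrary order and which restores the canonical character. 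The key input from \cite{Kap-Mon-2} that drives (AE1) is that $d\Psi^{kdv}$ at a finite gap potential admits a pseudo-differential expansion in powers of $\partial_x^{-1}$, a consequence of the sharp asymptotics of the spectral data of the Hill operator.

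For item (AE1), I would organize the two pieces of the construction according to the order in $\partial_x^{-1}$. The linearization contributes the leading term $w$ plus the expansion $\sum_{k=1}^{N} a_{-k}(\mathfrak{x};\Psi)\partial_x^{-k} w$, where the $a_{-k}$ are built from the spectral coefficients of the Lax operator and depend smoothly on $(\theta,y)$ (hence on $\mathfrak{x}$); the symplectic corrector contributes to $\mathcal{R}_{N}$ and is responsible for the full smoothing property. The vanishing $\mathcal{R}_N(\theta,y,0;\Psi)=0$ and the leading $w$ term are forced by the normalization $\Psi_\mu(\theta,y,0) = \Psi_{S_+}(\theta,\mu+y)$ and by the fact that $d_w\Psi_\mu(\theta,y,0)|_{w=0} = \mathrm{Id}$ in the direction of $L^2_\bot$, which is a structural property of the construction.

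For item (AE2), I would simply transpose the expansion of (AE1). Transposing $T_a \partial_x^{-k}$ produces, by Corollary \ref{aggiunto nostri operatori}, a term of the form $(-1)^k T_a \partial_x^{-k}$ up to smoothing contributions, which accounts for the sum $\sum_{k=1}^N a_{-k}(\mathfrak{x};d\Psi^\top)\partial_x^{-k}\widehat q$; the bilinear terms $(\partial_x^{-k}w)\mathcal{A}_{-k}(\mathfrak{x};d\Psi^\top)[\widehat q]$ arise by transposing the differentials $d_{\theta,y} a_{-k}(\mathfrak{x};\Psi)$ that appear when one differentiates the coefficients of the expansion in (AE1) with respect to $(\theta,y)$. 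The regularity of all objects and the smoothing nature of $\mathcal{R}_N(\cdot;d\Psi^\top)$ are inherited from (AE1) together with the boundedness of the transposition operation on our para-differential classes.

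For item (AE3), I would exploit the fundamental fact (recalled in Section \ref{introduzione paper}) that in the Birkhoff coordinates the KdV Hamiltonian is a real-analytic function $\mathcal{H}^{kdv}(I)$ of the actions alone; hence on the torus $\mathfrak{T}_{\mu}$ (that is, at $y=0$, $w=0$) one has $\partial_y \mathcal{H}^{kdv}_\mu = \omega$ and $\partial_y^2 \mathcal{H}^{kdv}_\mu = \Omega_{S_+}$, while in the normal direction one obtains $\frac{1}{2}\langle D_\bot^{-1}\Omega_\bot w, w\rangle$ because the quadratic form $I_n = 2\pi n w_n w_{-n}$ in the Birkhoff coordinates induces precisely the Fourier multiplier $D_\bot^{-1}\Omega_\bot$ upon extracting the quadratic part in $w$. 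The para-differential expansion of $\nabla \mathcal{P}^{kdv}$ is then obtained by composing the expansion of $d\Psi^\top$ from (AE2) with $\nabla H^{kdv}\circ \Psi$ and isolating the cubic-and-higher contributions; the four vanishing conditions at $(\theta,0,0)$ are automatic since, by construction, $\mathcal{P}^{kdv}$ is the remainder of the Taylor expansion beyond order two in $(y,w)$. The main obstacle is the careful bookkeeping of the para-differential and smoothing orders in (AE3) so that the remainder truly lies in the small-of-order-three class needed in subsequent sections; this is handled by systematically applying Lemmata \ref{lemma paraprodotto fine}--\ref{lemma composizione nostri simboli}.
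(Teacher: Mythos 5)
The paper gives no proof of Theorem \ref{modified Birkhoff map} at all: it is introduced with the single sentence ``We begin by rephrasing \cite[Theorem 1.1]{Kap-Mon-2} in a form, adapted to our needs'' and then simply stated. So strictly speaking there is no ``paper's proof'' to match against; the burden is delegated entirely to the earlier reference. Your write-up, by contrast, sketches how the three expansion items would be \emph{derived} from the construction in \cite{Kap-Mon-2} (linearization of $\Psi^{kdv}$ at $\mathfrak T_\mu$ plus a smoothing symplectic corrector), which is more than the paper does and is broadly consistent with the description the authors give in the introduction.

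That said, two details in your sketch are off and worth correcting. First, the expansion in ({\bf AE1}) (and likewise the first sum in ({\bf AE2})) is written with ordinary products $a_{-k}(\frak x;\Psi)\,\partial_x^{-k} w$, not para-products $T_{a_{-k}}\partial_x^{-k} w$. Consequently Corollary \ref{aggiunto nostri operatori}, which treats $(T_a\partial_x^m)^\top$, is not the tool you want in ({\bf AE2}); the transpose of multiplication by a real coefficient is elementary ($(a\,\partial_x^{-k})^\top=(-1)^k\partial_x^{-k}\circ a\,\cdot$), and one then has to re-expand $\partial_x^{-k}\circ a$ with commutator estimates. The paradifferential operators $T_a$ only appear in the expansion of $\nabla\mathcal P^{kdv}$ in ({\bf AE3}). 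Second, the bilinear terms $(\partial_x^{-k}w)\,\mathcal A_{-k}(\frak x;d\Psi^\top)[\widehat q]$ in ({\bf AE2}) do not come from transposing $d_{\theta,y}a_{-k}$; they come from the normal differential $d_\bot a_{-k}(\frak x;\Psi)[\widehat w]$ acting on the coefficient in the product $a_{-k}(\frak x;\Psi)\,\partial_x^{-k}w$. The $(\theta,y)$-derivatives of $\Psi_\mu$ contribute only to the arbitrarily smoothing remainder $\mathcal R_N(\frak x;d\Psi^\top)$, which is why the leading formula in ({\bf AE2}) has a vanishing $\theta$- and $y$-component.
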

\begin{remark}\label{remark D inv Omega bot}
Since $\Omega_{- n} = - \Omega_n$ for any $n \in S^\bot$ (cf. \eqref{kdv frequencies}, \eqref{def notation normal frequencies}), 
the Fourier multiplyer $\ii \Omega_\bot$ 
is a real operator. 
In view of the expansion \eqref{hamiltoniana KdV} and the identity $\partial_x D^{- 1} = \ii$, 
%$$
%\partial_x D^{- 1} \Omega_\bot = \ii \Omega_\bot
%$$
%(since $\partial_x D^{- 1}$ is nothing but the multiplication by $\ii$) 
the component of the Hamiltonian vector field ${\cal H}^{kdv}_\mu$ in the normal direction is given by 
$$
\partial_x \nabla_\bot {\cal H}^{kdv}(\frak x) = \ii \Omega_\bot w + \partial_x \nabla_\bot {\cal P}^{kdv}(\frak x)\,. 
$$
\end{remark}

  Next, we want to express equation \eqref{1.1} in the normal form coordinates provided by Theorem \ref{modified Birkhoff map}. 
  To this end we write the nonlinear  vector field $F(u)$ in the coordinates $(\theta, y, z)$.  
  Recall that $F(u) =  \partial_x \nabla { P}_f(u)$ where ${P}_f(u) := \int_0^1 f(x, u(x))\, d x$ and $f$ is given by \eqref{f def intro}. 
  
\begin{proposition}\label{Ham vector field perturbation}
Let $N \in \N$. Then there exist integers $s_N> 0$, $\sigma_N > 0$   so that for any perturbation 
$P_f (u) = \int_0^1 f(x, u(x))\, d x$ with $f $ $C^{\infty}$-smooth, the following holds. 
For any $\mu \in \Xi$, the gradient of
\begin{equation}\label{perturbation in new coo}
{\cal P}_{f} \equiv {\cal P}_{f, \mu}  := P_f \circ \Psi_\mu : {\cal V}^1(\delta) \to \R
\end{equation} 
admits an expansion of the form 
$$
\nabla {\cal P}_f (\frak x) = \big(0, 0, \, \Pi_\bot  \sum_{k = 0}^N T_{a_{ - k}(\frak x; \nabla \mathcal P_f  )} \partial_x^{- k}w  \big) + {\cal R}_N(\frak x; \nabla \mathcal P_f) \, ,
$$
where for any $s \ge s_N$ and for any $0 \le k \le N$, the maps 
$$
{\cal V}^{s + \sigma_N}(\delta)  \to H^s(\T_1), \, \frak x \mapsto a_{- k}(\frak x;  \nabla \mathcal P_f)\, ,  \qquad
{\cal V}^s(\delta) \to E_{s + N + 1}, \, \frak x \mapsto {\cal R}_N(\frak x; \nabla \mathcal P_f) 
$$ 
are $C^\infty$-smooth.
% Furthermore, the coefficient $$a_1^{\cal P}(\frak x) = a_2^{\cal P}(\frak x) a_1^{\Psi}(\frak x) + \partial_x a_2^{\cal P}(\frak x) + a_1(\frak x; d \Psi^t) a_2^{\cal P}(\frak x)$$ where $a_1^{\Psi}(\frak x)$ (resp. $a_1(\frak x; d \Psi^t)$) is the coefficient of $\partial_x^{- 1}$ in the expansion of $\Psi$ (resp. $d \Psi(\frak x)^t$) of Theorem \ref{modified Birkhoff map}. %\eqref{} (resp. \eqref{}).  
\end{proposition}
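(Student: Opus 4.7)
The plan is to combine the chain rule with the expansions (AE1), (AE2) from Theorem \ref{modified Birkhoff map} and the para-product calculus of Section \ref{para-differential calculus}. By the chain rule,
\[
\nabla \mathcal P_f(\frak x) \;=\; d\Psi_\mu(\frak x)^\top\bigl[\nabla P_f(\Psi_\mu(\frak x))\bigr],
\qquad \nabla P_f(u)(x) = f'(x,u(x)),
\]
so the task reduces to (a) producing a para-differential expansion in $w$ of $f'(x,\Psi_\mu(\frak x)(x))$, and then (b) showing that applying $d\Psi_\mu(\frak x)^\top$ preserves this structure up to a more regular remainder.

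For (a), set $q_0(\theta,y) := \Psi_{S_+}(\theta,\mu+y)$ and, by (AE1), write $\Psi_\mu(\frak x) = q_0 + v(\frak x)$ with
\[
v(\frak x) \;=\; w \;+\; \sum_{k=1}^{N} a_{-k}(\frak x;\Psi)\,\partial_x^{-k}w \;+\; \mathcal R_N(\frak x;\Psi).
\]
Taylor expansion in $\zeta$ at $\zeta = q_0(x)$ gives
\[
f'(x,q_0 + v) \;=\; f'(x, q_0) \;+\; g(x;\frak x)\,v(x), \qquad g(x;\frak x) := \int_0^1 f''\bigl(x,\,q_0 + tv\bigr)\,dt,
\]
and since $f\in C^\infty(\T_1\times\R)$, the composition $\frak x \mapsto g(\cdot;\frak x)$ is $C^\infty$-smooth into $H^s(\T_1)$ with a finite loss in $\frak x$. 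Writing $g\cdot v = T_g v + T_v g + \mathcal R^{(B)}(g,v)$ by \eqref{paraprodotto}, expanding $v$ as above, and applying Lemma \ref{lemma paraprodotto fine} to reduce each $T_g T_{a_{-k}}$ to $T_{g\,a_{-k}}$ plus a smoothing operator, together with Lemma \ref{lemma remainder paraprod} for the remainders $T_v g$ and $\mathcal R^{(B)}(g,v)$, I obtain an expansion
\[
f'\bigl(x,\Psi_\mu(\frak x)\bigr) \;=\; f'(x,q_0) \;+\; \sum_{k=0}^{N} T_{\alpha_{-k}(\frak x)}\,\partial_x^{-k}w \;+\; \rho_N(\frak x),
\]
with coefficients $\alpha_{-k}(\cdot;\frak x) \in H^s(\T_1)$ smooth in $\frak x$ and remainder $\rho_N(\frak x) \in H^{s+N+1}(\T_1)$.

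For (b), I apply the expansion of $d\Psi_\mu(\frak x)^\top$ supplied by (AE2) termwise. The $w$-independent piece $f'(x,q_0)$ produces $\Pi_\bot f'(x,q_0)$ plus the terms $\Pi_\bot\sum_k a_{-k}(\frak x;d\Psi^\top)\,\partial_x^{-k}f'(x,q_0)$, which contribute only to $\mathcal R_N(\frak x;\nabla \mathcal P_f)$ since $\partial_x^{-k}$ acting on the $w$-independent smooth function $f'(x,q_0)$ yields an arbitrarily regular object, and the terms $\Pi_\bot\sum_k(\partial_x^{-k}w)\,\mathcal A_{-k}(\frak x;d\Psi^\top)[f'(x,q_0)]$; decomposing each product $(\partial_x^{-k}w)\,\phi$ by \eqref{paraprodotto} as $T_\phi\partial_x^{-k}w$ plus smoothing contributions gives para-differential terms of the claimed form. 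The $w$-dependent piece $\sum_k T_{\alpha_{-k}}\partial_x^{-k}w$ is treated similarly by combining (AE2) with Lemmata \ref{composizione paraprodotto e derivate}–\ref{lemma composizione nostri simboli}, which commute $\partial_x^{-j}$ past para-multiplications at the cost of a smoothing operator; finally, the contribution of $\rho_N$ under $d\Psi^\top$ is absorbed into $\mathcal R_N(\frak x;\nabla \mathcal P_f)$. The main obstacle is not any genuinely new analytic estimate but rather the bookkeeping of regularity losses: each use of Lemmata \ref{lemma paraprodotto fine}, \ref{composizione paraprodotto e derivate}, \ref{lemma composizione nostri simboli} costs a fixed number of derivatives on the symbols, while (AE1)–(AE2) carry their own loss $\sigma_N$; summing these finitely many losses determines $\sigma_N$ in the statement, and $s_N$ is chosen large enough so that the base Sobolev exponent stays above the thresholds required by each Lemma.
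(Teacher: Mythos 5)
Your route diverges from the paper's at the very first step: the paper invokes the Bony para-linearization formula for the composition operator, $\nabla P_f(u) = T_{f''(\cdot,u)}\,u + \mathcal R_f(u)$ with $\mathcal R_f$ smoothing, citing \cite{Metivier}, whereas you try to bypass it via a first-order Taylor expansion $f'(x,q_0+v) = f'(x,q_0) + g\,v$ with $g = \int_0^1 f''(x,q_0+tv)\,dt$ followed by the elementary paraproduct splitting $g\,v = T_g v + T_v g + \mathcal R^{(B)}(g,v)$. This is where the gap is: the middle term $T_v g$ cannot be absorbed into the smoothing remainder $\mathcal R_N$. You invoke Lemma~\ref{lemma remainder paraprod} for both $T_v g$ and $\mathcal R^{(B)}(g,v)$, but that lemma only estimates $\mathcal R^{(B)}$; it says nothing about the balanced paraproduct $T_v g$. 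Since $g$ is itself a composition with the $H^s$-regular function $q_0+tv$, one only has $g\in H^s$ when $w\in H^s_\bot$, and Lemma~\ref{prop 1 paraproduct} then gives $T_v g\in H^s$, with no gain of derivatives. The remainder $\mathcal R_N(\frak x;\nabla\mathcal P_f)$ is required to land in $E_{s+N+1}$, so $T_v g$ neither fits there nor has the form $\Pi_\bot T_{a_{-k}}\partial_x^{-k}w$.

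The cancellation that makes the combined "non-leading" terms smoothing is exactly the content of Bony's para-linearization theorem: comparing your decomposition with the Bony one, one finds $T_{G-f''(\cdot,u)}v + T_v G$ (plus smooth pieces and $\mathcal R^{(B)}$) must equal the Bony remainder $\mathcal R_f$, and proving this quantity is smoothing requires the dyadic Littlewood--Paley argument behind Bony's theorem (roughly, the comparison of $f''$ evaluated at intermediate points of the dyadic telescoping against its low-frequency truncations). Taylor plus the static paraproduct decomposition \eqref{paraprodotto} does not reproduce this; it reorganizes the terms in a way that obscures the required estimate. To fix your proof at step~(a), you should simply invoke the Bony para-linearization formula for the composition $\partial_\zeta f(x,\Psi_\mu(\frak x)(x))$, as the paper does, rather than trying to derive its conclusion from \eqref{paraprodotto} and Lemmata~\ref{prop 1 paraproduct}--\ref{lemma remainder paraprod}. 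Step~(b), the application of $d\Psi_\mu(\frak x)^\top$ via (AE2) and the symbolic calculus of Lemmata~\ref{lemma paraprodotto fine}--\ref{lemma composizione nostri simboli}, is in line with the paper's argument.
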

\begin{proof}
One has
\begin{equation}\label{gradiente cal K u}
\begin{aligned}
& \nabla {P}_f(u)(x) = \partial_{\zeta} f(x, u(x))\,. 
\end{aligned}
\end{equation}
By the Bony para-linearization formula (cf. \cite[ Section 5.2.3]{Metivier}) for the composition operator, one gets that 
\begin{equation}\label{a0 a1 paralin}
\nabla {P}_f(u)(x) = \partial_{\zeta} f(x, u(x))= T_{\partial_{\zeta}^2 f(x, u(x))} u + {\cal R}_{f}(u)
\end{equation}
where there exists $s_N > N$ (large) so that for any integer $s \ge s_N$, 
the map $\mathcal R_f: H^s(\T_1) \to H^{s + N + 1}(\T_1)$ is $C^\infty$-smooth. 
Note that ${\cal R}_{f}(u)$ contains the zeroth order term $\partial_{\zeta} f(x, 0)$ of the Taylor expansion of $\partial_\zeta f(x, \zeta)$ at $\zeta =0$. 
By Theorem \ref{modified Birkhoff map}-${\bf (AE2)}$, $d \Psi(\frak x)^\top [\widehat q]$ has an expansion of the form
\begin{equation}\label{d Psi t trasp A}
%\begin{pmatrix}
\Big( \, 0, \, 0, \, \Pi_\bot [\widehat q]  + \Pi_\bot \sum_{k = 1}^N  a_{- k}(\frak x; d \Psi^\top)  \partial_x^{- k}\widehat q \, 
+ \Pi_\bot \sum_{k = 1}^N   (\partial_x^{- k} w) \mathcal A_{- k}(\frak x; d \Psi^\top)[\widehat q]  \Big)
%\end{pmatrix}  z_\bot
+ {\cal R}_{N}(\frak x; d \Psi^\top)[\widehat q] \, ,
\end{equation}
where the maps ${\cal V}(\delta) \to H^s(\T_1), \, \frak x \mapsto a_k(\frak x; d \Psi^\top)$,
$$
{\cal V}^1(\delta) \to {\cal B}(H^1_0(\T_1), H^s(\T_1)), \, \frak x \mapsto \mathcal A_k(\frak x; d \Psi^\top), \qquad
{\cal V}^s(\delta) \to {\cal B}(H_0^s(\T_1), E_{s+ N + 1}), \, \frak x \mapsto {\cal R}_{N}(\frak x; d \Psi^\top),
$$ 
are $C^\infty$-smooth, bounded maps. 
Using the expansion of $\Psi(\frak x)$ provided by Theorem \ref{modified Birkhoff map}-{\bf (AE1)}, 
\begin{equation}\label{espansione Psi paradiff}
\Psi(\frak x) =  \Psi_{S_+}(\theta, \mu + y) + w + \sum_{k = 1}^N a_{- k}(\frak x; \Psi) \partial_x^{- k} w  + {\cal R}_N(\frak x; \Psi) 
\end{equation}
together with the para-product formula \eqref{paraprodotto} and Lemma \ref{lemma paraprodotto fine}, one obtains 
 \begin{equation}\label{nabla cal K Psi frak x}
\begin{aligned}
(\nabla { P}_f)(\Psi(\frak x)) &  =  
\sum_{k = 0}^N T_{a_{ - k}(\frak x; \nabla { P}_f \circ\Psi )} \partial_x^{- k} w + {\cal R}_N(\frak x;  \nabla { P}_f \circ\Psi ), 
\qquad a_{ 0}(\frak x; \nabla { P}_f \circ\Psi ) = \partial_{\zeta}^2 f(x, \Psi(\frak x)),
\end{aligned}
\end{equation}
where there exist integers $\sigma_N \ge  0$ and $s_N \ge 0$ so that for any $s \geq s_N$ and $0 \le k \le N$, 
the maps 
$$
{\cal V}^{s + \sigma_N} \to H^s(\T_1), \, \frak x \mapsto a_{- k}(\frak x; \nabla { P}_f \circ\Psi ), \qquad
{\cal V}^s(\delta) \to E_{s + N + 1}, \, \frak x \mapsto {\cal R}_N(\frak x; \nabla { P}_f \circ\Psi ) ,
$$ 
are $C^\infty$-smooth.
%In particular, one computes that 
%\begin{equation}\label{primi coefficienti nabla cal K Psi}
%c_2(\frak x) := b_2(\Psi(\frak x))\,, \quad c_1(\frak x) := b_2(\Psi(\frak x))a_1^\Psi(\frak x) + b_1(\Psi(\frak x))\,. 
%\end{equation}
The expansion of $\nabla{\cal P}_f(\frak x) = d \Psi(\frak x)^\top (\nabla { P}_f)(\Psi(\frak x)) $ is then computed by using
the one of $d\Psi(\frak x)^\top$, provided by Theorem \ref{modified Birkhoff map}-{\bf (AE2)}. For any $1 \le k \le N$ ,
we thus need to compute the expansion of the sum
$\sum_{k=1}^N a_{- k}(\frak x; d \Psi^\top)  \partial_x^{- k} \nabla {P}_f(\Psi(\frak x)) + ( \partial_x^{- k} w) \mathcal A_{- k}(\frak x; d \Psi^\top)[\nabla { P}_f(\Psi(\frak x))] $. 
By \eqref{nabla cal K Psi frak x} and using the para-product formula \eqref{paraprodotto} one obtains
$$
\begin{aligned}
&\Pi_\bot  \sum_{k = 1}^N a_{ - k}(\frak x; d \Psi^\top)  \partial_x^{- k} \nabla P_f(\Psi(\frak x))  + (\partial_x^{-k}w)\mathcal A_{- k}(\frak x; d \Psi^\top)[\nabla P_f(\Psi(\frak x))] \\
& = \Pi_\bot   \sum_{k = 1}^N \Big( T_{a_{- k}(\frak x; d \Psi^\top)} \partial_x^{- k} \nabla P_f(\Psi(\frak x)) + 
T_{ \partial_x^{- k} \nabla P_f(\Psi(\frak x)) } a_{- k}(\frak x; d \Psi^\top) \Big)
+ {\cal R}^{(B)}\big(a_{- k}(\frak x; d \Psi^\top) ,\,  \partial_x^{- k} \nabla P_f(\Psi(\frak x)) \big) \\
& \quad +  \Pi_\bot   \sum_{k = 1}^N T_{\mathcal A_{- k}(\frak x; d \Psi^\top)[\nabla P_f(\Psi(\frak x))]} \partial_x^{- k} w 
+ T_{\partial_x^{- k}w} \mathcal A_{- k}(\frak x; d \Psi^\top)[\nabla P_f(\Psi(\frak x))]  \\
& \quad + \Pi_\bot   \sum_{k = 1}^N {\cal R}^{(B)}\big(\mathcal A_{- k}(\frak x; d \Psi^\top)[\nabla P_f(\Psi(\frak x))] ,\, \partial_x^{- k}  w \big) \\
& = \Pi_\bot  \sum_{k = 1}^N \Big( T_{a_{- k}(\frak x; d \Psi^\top)} \partial_x^{- k} \nabla P_f(\Psi(\frak x)) 
+ T_{\mathcal A_{- k}(\frak x; d \Psi^\top)[\nabla P_f(\Psi(\frak x))]} \partial_x^{- k} w \Big)  +  {\cal R}_N^{(1)}(\frak x)
\end{aligned}$$
where 
\begin{equation}\label{cal R N (1) def}
\begin{aligned}
{\cal R}_{N}^{(1)} &(\frak x) 
:= \Pi_\bot   \sum_{k = 1}^N  T_{ \partial_x^{- k} \nabla P_f(\Psi(\frak x)) } a_{- k}(\frak x; d \Psi^\top)  + T_{\partial_x^{- k}w } \mathcal A_{- k}(\frak x; d \Psi^\top)[\nabla P_f(\Psi(\frak x))]  \\
& \quad + \Pi_\bot  \sum_{k = 1}^N \Big( {\cal R}^{(B)}\big(a_{- k}(\frak x; d \Psi^\top)\,,\,  \partial_x^{- k} \nabla P_f(\Psi(\frak x)) \big)  
+ {\cal R}^{(B)}\big(\mathcal A_{- k}(\frak x; d \Psi^\top)[\nabla P_f(\Psi(\frak x))]\,,\, \partial_x^{- k} w  \big) \Big)\,.
\end{aligned}
\end{equation}
By applying Theorem \ref{modified Birkhoff map}-${\bf (AE1)}$,${\bf (AE2)}$, and Lemma \ref{lemma remainder paraprod}, one obtains, after increasing $s_N$ if needed,  
that for any $s \ge s_N$, the map ${\cal V}^s(\delta) \to E_{s + N + 1}$, $\frak x \mapsto {\cal R}^{(1)}_{N}(\frak x)$ is $C^\infty$-smooth. 
By the expansion given in \eqref{nabla cal K Psi frak x} and by applying Lemma \ref{lemma composizione nostri simboli} (composition of para-differential operators), one then gets 
the following identity for the normal component $(\nabla{\cal P}_f)^\bot$ of $\nabla{\cal P}_f$,
$$
\begin{aligned}
(\nabla{\cal P}_f)^\bot(\frak x) & = 
 \Pi_\bot [\nabla P_f(\Psi(\frak x))] + 
 \Pi_\bot  \sum_{k = 1}^N \Big( T_{a_{- k}(\frak x; d \Psi^\top)} \partial_x^{- k} \nabla P_f(\Psi(\frak x)) + 
T_{\mathcal A_{- k}(\frak x; d \Psi^\top)[\nabla P_f (\Psi(\frak x))]} \partial_x^{- k} w \Big)  +  {\cal R}_N^{(1)}(\frak x)\\
& =   \Pi_\bot\sum_{k = 0 }^N T_{a_{ - k}(\frak x; \nabla{\cal P}_f)} \partial_x^{- k} w + {\cal R}_N^{(2)}(\frak x)\, , 
\qquad \qquad a_{ 0}(\frak x; \nabla{\cal P}_f) = \partial_{\zeta}^2 f(x, w(x)) \, , 
\end{aligned}
$$
where there exist constants $s_N \geq N$ and $\sigma_N \ge N$ so that for any $s \ge s_N$ and any $0 \le k \le N$, 
the maps 
$$
{\cal V}^{s + \sigma_N}(\delta) \to H^s(\T_1),  \, \frak x \mapsto a_{- k}(\frak x; \nabla{\cal P}_f), \qquad
{\cal V}^s(\delta) \to H_\bot^{s + N + 1}(\T_1), \, \frak x \mapsto {\cal R}_N^{(2)}(\frak x)  ,
$$ 
are $C^\infty$-smooth. Altogether we obtain
$$
\nabla {\cal P}_f (\frak x) = = d \Psi(\frak x)^\top (\nabla { P}_f)(\Psi(\frak x)) =
 \big(0, 0, \, \Pi_\bot \sum_{k = 0}^N T_{a_{ - k}(\frak x; \nabla \mathcal P_f  )} \partial_x^{- k}w  \big) + {\cal R}_N(\frak x; \nabla \mathcal P_f) \, ,
$$
where
$$
{\cal R}_N(\frak x; \nabla {\cal P}_f) := (0, 0, {\cal R}_N^{(2)}(\frak x)) + {\cal R}_{N}(\frak x; d \Psi^\top)[\nabla P_f(\Psi(\frak x))].
$$
One verifies in a straightforward way that ${\cal R}_N(\frak x; \nabla {\cal P}_f)$ has the stated properties.
\end{proof}

Combining Theorem \ref{modified Birkhoff map} and Proposition \ref{Ham vector field perturbation} together with Lemma \ref{composizione paraprodotto e derivate} 
yields the following corollary.
\begin{corollary}[\bf Expansion of $\mathcal H_\mu$]\label{espansion hom}
For any $\mu \in \Xi$, $\mathcal H \equiv \mathcal H_\mu = (H^{kdv} + \e P_f) \circ \Phi_\mu $ can be written as 
\begin{equation}\label{kdv hamiltonian + perturbation}
\mathcal H(\frak x) = e +  {\cal N}(\frak x) + {\cal P}(\frak x) , \qquad {\cal P} (\frak x): =  {\cal P}^{kdv}(\frak x) + \e {\cal P}_{f}(\frak x), 
\end{equation}
where $e$, ${\cal N}$, and ${\cal P}^{kdv}$ are given by Theorem  \ref{modified Birkhoff map}-{\bf{(AE3)}} and  ${\cal P}_{f}$ by Proposition \ref{Ham vector field perturbation}.
More precisely, $e = {\cal H}_\mu^{kdv} (0, 0, 0)$ and for any $\frak x = (\theta, y, w) \in \mathcal V^1(\delta)$, 
\begin{equation}\label{hamiltoniana totale 1}
{\cal N}(y, w) = \omega \cdot y + \frac12  \Omega_{S_+} [y] \cdot y  + \frac12 \big\langle D_\bot^{- 1} \Omega_\bot w\,,\, w \big\rangle ,
\end{equation}
with
\begin{equation}\label{def D inv Omega bot}
D^{- 1}_\bot w (x) = \sum_{j \in S^\bot} \frac{1}{2 \pi n} w_n e^{\ii 2 \pi n x}, \qquad \Omega_\bot w(x) = \sum_{n \in S^\bot } \Omega_n w_n e^{\ii 2 \pi n x}\,.
\end{equation}
The perturbation $\mathcal P$ is of the form (cf. Proposition \ref{Ham vector field perturbation})
\begin{equation}\label{cal N cal H cal P}
 {\cal P}(\frak x) = \e{\cal P}_L(\frak x) + {\cal P}_e(\frak x)  \, , \quad \quad
 {\cal P}_L(\frak x) := {\cal P}_{00}(\theta ) + {\cal P}_{1 0}(\theta) \cdot y + \big\langle {\cal P}_{0 1}(\theta)\,,\, w \big\rangle\, ,
\end{equation}
with ${\cal P}_e$, ${\cal P}_{00}(\theta )$, ${\cal P}_{1 0}(\theta)$, and ${\cal P}_{0 1}(\theta)$ having the following properties:
there exist $0 < \delta < 1$, $0 < \e_0 < 1$, and an integer $\sigma > 0$ so that
\begin{equation}\label{proprieta ham primissima per forma normale}
\begin{aligned}
& {\cal P}_{00} \in C^\infty(\T^{S_+}, \, \R), \quad {\cal P}_{10} \in C^\infty(\T^{S_+}, \, \R^{S_+}), \quad {\cal P}_{0 1}  \in C^\infty(\T^{S_+}, \, H^s_\bot(\T_1)), 
\quad \forall s \geq 0\,, \\
& {\cal P}_e \in C^\infty({\cal V}^\sigma(\delta) \times [0, \e_0], \, \R) \quad \text{small of order three}, \\
& X_{{\cal P}_e} = (X_{{\cal P}_e}^{(\theta)}, X_{{\cal P}_e}^{(y)}, X_{{\cal P}_e}^\bot) 
= (- \nabla_y {\cal P}_e,  \nabla_\theta {\cal P}_e , \, \partial_x \nabla_\bot {\cal P}_e) \quad  \text{small of order two}, \\
& X_{{\cal P}_e}^\bot = \partial_x \nabla_\bot {\cal P}_e = {\cal OB}^2(1, N) + {\cal OS}^2(N), \quad \forall N \in \N\, ,
\end{aligned}
\end{equation}
(cf. Definition \ref{paradiff vector fields} and Definition \ref{def smoothing vector fields} for the classes of vector fields ${\cal OB}^2(1, N)$ and respectively,  ${\cal OS}^2(N)$).
%with  ${\cal P}$ having the following properties:
%for any $N \ge  1$, there exist integers $s_N> 0$ and $\sigma_N > 0$ so that the normal component 
%$$
%X_{\mathcal P}^\bot(\frak x) =   \partial_x \nabla_\bot {\cal P}(\frak x)  = \partial_x \nabla_\bot {\cal P}^{kdv}(\frak x) + \e \partial_x \nabla_\bot {\cal P}_f(\frak x)
%$$
%of the Hamiltonian vector field $X_{{\cal P}}$
%admits an expansion of the form 
%\begin{equation}\label{Campo Ham perturbazione}
%\partial_x \nabla_\bot {\cal P} (\frak x) = \Pi_\bot \sum_{k = 0}^{N+1} T_{a_{1 - k}(\frak x)} \partial_x^{1 - k} w  
% +  \e \partial_x {\cal P}_{0 1}(\theta)
%+ {\cal R}^\bot_{N}(\frak x)
%\end{equation}
%where ${\cal P}_{0 1}  \in C^\infty(\T^{S_+}, \, H^s_\bot(\T_1))$,  $\forall s \geq 0$,
%and where for any $s \geq s_N$, the maps  $a_{1-k}(\cdot) \equiv a_{1 - k}(\cdot, \e)$, $0 \le k \le  N + 1$, 
%and ${\cal R}^\bot_N(\cdot) \equiv {\cal R}^\bot_{N}(\cdot, \e)$, 
%$$a_{1 - k} : {\cal V}^{s + \sigma_N}(\delta) \times \R \to H^s(\T_1)\, \qquad 
%{\cal R}^\bot_{N }: {\cal V}^s(\delta) \times \R \to H_\bot^{s + N + 1}(\T_1),
%$$
%are $C^\infty$-smooth.
% Moreover for any $0 \le k \le N + 1$, $a_{1-k}(\cdot) \equiv a_{1 - k}(\cdot, \e)$ is small at least of order one 
%and ${\cal R}_N(\cdot) \equiv {\cal R}^\bot_{N}(\cdot, \e)$ vanishes at $(y, w) = (0, 0)$ at least of order two. 
\end{corollary}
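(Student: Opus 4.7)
The plan is to derive the corollary by directly combining Theorem \ref{modified Birkhoff map}-({\bf AE3}), which yields the expansion of $H^{kdv}\circ\Psi_\mu$, with Proposition \ref{Ham vector field perturbation}, which provides the para-differential expansion of $\nabla(P_f\circ\Psi_\mu)$, and then reorganizing the sum by extracting the Taylor polynomial of $\mathcal P_f$ at $(\theta,0,0)$ up to order one in $(y,w)$. Writing $\mathcal H_\mu = e + \mathcal N + \mathcal P^{kdv} + \e\mathcal P_f$ via \eqref{hamiltoniana KdV}, I would define
$$
\mathcal P_{00}(\theta) := \mathcal P_f(\theta,0,0), \qquad
\mathcal P_{10}(\theta) := \nabla_y \mathcal P_f(\theta,0,0), \qquad
\mathcal P_{01}(\theta) := \nabla_\bot \mathcal P_f(\theta,0,0),
$$
so that $\mathcal P_L$ is the Taylor polynomial of $\mathcal P_f$ of degree $\le 1$ in $(y,w)$, and set $\mathcal P_e := \mathcal P^{kdv} + \e(\mathcal P_f - \mathcal P_L)$. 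This immediately gives \eqref{cal N cal H cal P}. The smoothness and Sobolev regularity with respect to $\theta$ of $\mathcal P_{00}, \mathcal P_{10}, \mathcal P_{01}$ are inherited from the smoothness statements for $\nabla\mathcal P_f$ of Proposition \ref{Ham vector field perturbation}.

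The smallness of order three for $\mathcal P_e$ in the sense of Definition \ref{def map order p} is then a direct verification: by Theorem \ref{modified Birkhoff map}-({\bf AE3}), $\mathcal P^{kdv}$ is independent of $\e$ and satisfies $|\mathcal P^{kdv}(\frak x)|\lesssim(|y|+\|w\|_1)^3$ together with the vanishing of all its derivatives of total order $\le 2$ in $(y,w)$ at $(\theta,0,0)$; the term $\e(\mathcal P_f-\mathcal P_L)$ is linear in $\e$ with a second factor that vanishes to order two in $(y,w)$ at the origin. Hence every mixed derivative $\partial_\e^{k_1}\partial_y^\beta d_\bot^{k_2}$ of $\mathcal P_e$ with $k_1+|\beta|+k_2\le 2$ vanishes at $(\theta,0,0,0)$. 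Differentiating once then shows that $X_{\mathcal P_e}$ is small of order two, and similarly for each of its components.

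The main step is to establish the para-differential structure of the normal component $X_{\mathcal P_e}^\bot = \partial_x\nabla_\bot\mathcal P_e$. Adding the expansions of $\nabla\mathcal P^{kdv}$ from Theorem \ref{modified Birkhoff map}-({\bf AE3}) and of $\e\nabla\mathcal P_f$ from Proposition \ref{Ham vector field perturbation}, and subtracting $\e\nabla\mathcal P_L$ (whose normal component is the $w$-independent term $\e\mathcal P_{01}(\theta)$, which cancels the value at $w=0$ of the smoothing remainder furnished by Proposition \ref{Ham vector field perturbation}), one arrives at
$$
\nabla_\bot \mathcal P_e(\frak x) \, =\, \Pi_\bot \sum_{k=0}^N T_{b_{-k}(\frak x)}\partial_x^{-k}w \, +\, \widetilde{\mathcal R}_N^{\bot}(\frak x),
$$
where the coefficients $b_{-k}$ are small of order one (by the corresponding vanishing statement in Theorem \ref{modified Birkhoff map}-({\bf AE3}) for the $\mathcal P^{kdv}$-part and by the explicit factor $\e$ for the perturbative part) and $\widetilde{\mathcal R}_N^\bot$ is small of order two (the subtraction of $\mathcal P_L$ is essential here: it kills the value at $(\theta,0,0)$ of the smoothing remainder produced by Proposition \ref{Ham vector field perturbation}). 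An application of Lemma \ref{composizione paraprodotto e derivate} to commute $\partial_x$ past each $T_{b_{-k}}$,
$$
\partial_x T_{b_{-k}}\partial_x^{-k} \, =\, T_{b_{-k}}\partial_x^{1-k} + T_{\partial_x b_{-k}}\partial_x^{-k} + \mathcal R_{N,k}(\frak x),
$$
with $\mathcal R_{N,k}$ smoothing of order $N+1$, then lets us regroup the result as $\Pi_\bot\sum_{k=0}^{N+1} T_{c_{1-k}(\frak x)}\partial_x^{1-k}w$ plus a smoothing remainder; the first summand, with $c_{1-k}$ small of order one, is in $\mathcal{OB}^2(1,N)$ (cf.\ Definition \ref{paradiff vector fields}), while the collected smoothing terms, together with $\partial_x\widetilde{\mathcal R}_N^{\bot}$, form the element of $\mathcal{OS}^2(N)$ (cf.\ Definition \ref{def smoothing vector fields}).

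The only real obstacle is bookkeeping: one has to check carefully that the two distinct sources of smallness---the cubic vanishing in $(y,w)$ of $\mathcal P^{kdv}$ on one hand, and the factor $\e$ combined with the quadratic vanishing of $\mathcal P_f-\mathcal P_L$ in $(y,w)$ on the other---combine consistently to give the uniform order-one (respectively order-two) vanishing required for the coefficients $c_{1-k}$ (respectively the smoothing remainder). Since all of this information is already encoded in the vanishing statements of Theorem \ref{modified Birkhoff map}-({\bf AE3}) and in the explicit $\e$-prefactor of the perturbation, no further analytic input is needed and the corollary follows by a direct, if somewhat tedious, assembly.
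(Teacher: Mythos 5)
Your proposal is correct and follows precisely the route the paper indicates for this corollary, namely combining the expansion from Theorem \ref{modified Birkhoff map}-{\bf (AE3)} with Proposition \ref{Ham vector field perturbation} and Lemma \ref{composizione paraprodotto e derivate} to commute $\partial_x$ past the para-products. Your bookkeeping of the two sources of smallness (cubic vanishing of $\mathcal P^{kdv}$ in $(y,w)$; the factor $\e$ together with the quadratic vanishing of $\mathcal P_f - \mathcal P_L$) is exactly what is needed, and the observation that $\mathcal P_{01}(\theta) = \mathcal R_N^\bot(\theta,0,0;\nabla\mathcal P_f)$ (since the para-product terms vanish at $w=0$) is the key point that makes the subtracted smoothing remainder small of order two.
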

\begin{remark}\label{irrelevant constant}
Since the constant $e$ in \eqref{kdv hamiltonian + perturbation} does not affect the Hamiltonian vector field $X_\mathcal H$, by notational convenience,
we will suppress it in the sequel. The same convention will be used for any Hamiltonian under consideration.
\end{remark}
We now reformulate Theorem \ref{stability theorem} in the coordinates, provided by Theorem \ref{modified Birkhoff map}.
 By Corollary \ref{espansion hom}, the one parameter family of Hamiltonians $\mathcal H \equiv \mathcal H_\mu = (H^{kdv} + \e P_f) \circ \Phi_\mu $, $\mu \in \Xi$, is given by
 \begin{equation}\label{hamiltoniana totale}
  {\cal H}(\frak x) =  {\cal N}(\frak x) + \e{\cal P}_L(\frak x) + {\cal P}_e(\frak x)
 \end{equation}
 with $\mathcal N$ defined by \eqref{hamiltoniana totale 1} and $\mathcal P_L$, $\mathcal P_e$ by \eqref{cal N cal H cal P} (cf. Remark \ref{irrelevant constant}).
%. Moreover the Hamiltonian vector field 
%$$
%X_{{\cal H}} = (X_{{\cal H}}^{(\theta)}, X_{\cal H}^{(y)}, X_{\cal H}^\bot) = (\nabla_y {\cal H}, - \nabla_\theta {\cal H}, \partial_x \nabla_\bot {\cal H} )
%$$
%has the form 
%\begin{equation}\label{campo vettoriale partenza normal form}
%\begin{aligned}
%& X^{(\theta)}(\frak x) = \omega + \Omega_{S_+} y + \e {\cal P}_{1 0}(\theta) + \nabla_y {\cal P}_e(\frak x) \\
%& X^{(y)}(\frak x) = 
%\end{aligned}
%\end{equation}
%The Hamiltonian vector field $X_{{\cal P}_e}$ has the properties 
%\begin{equation}
%\begin{aligned}
%& {\cal P}_{00} \in C^\infty(\T^{S_+}, \R), \quad {\cal P}_{1 0} \in C^\infty(\T^{S_+}, \R^{S_+}),  \\
%& {\cal P}_{0 1} \in C^\infty\big( \T^{S_+}, H^s_\bot(\T_1) \big), \quad \forall s \geq 0\,, \\
%& X_{{\cal P}_e} = (X_{{\cal P}_e}^{(\theta)}, X_{{\cal P}_e}^{(y)}, X_{{\cal P}_e}^\bot) \quad  \text{is small of order two and} \\
%& X_{{\cal P}_e}^\bot = \partial_x \nabla_\bot {\cal P}_e = {\cal OB}^2(1, N) + {\cal OS}^2(N), \quad \forall N \in \N
%\end{aligned}
%\end{equation}
 Using that $\partial_x D_\bot^{- 1} \Omega_\bot = \ii \Omega_\bot$, 
 the Hamiltonian vector field $X_{\cal H} = \big(- \nabla_y \mathcal H, \nabla_\theta \mathcal H, \partial_x\nabla_\bot \mathcal H \big)$ 
 can be computed as
\begin{equation}\label{very first ham vec field}
X_{\cal H}(\frak x) = \begin{pmatrix}
- \omega -  \Omega_{S_+} [y] -  \e {\cal P}_{1 0}(\theta) - \nabla_y {\cal P}_e(\frak x) \\
 \e \nabla_\theta {\cal P}_L(\frak x) + \nabla_\theta {\cal P}_e(\frak x) \\
\ii \Omega_\bot w + \e \partial_x {\cal P}_{0 1}(\theta) + \partial_x \nabla_\bot {\cal P}_e(\frak x)
\end{pmatrix}
\end{equation}
and the corresponding Hamiltonian equations are 
\begin{equation}\label{very first ham equations}
\begin{aligned}
\partial_t \theta & = - \omega -  \Omega_{S_+} y -  \e {\cal P}_{1 0}(\theta) - \nabla_y {\cal P}_e(\frak x), \\
 \partial_t y & =  \e \nabla_\theta {\cal P}_L(\frak x) +  \nabla_\theta {\cal P}_e(\frak x), \\
\partial_t w &= \ii \Omega_\bot w + \e \partial_x {\cal P}_{0 1}(\theta) + \partial_x \nabla_\bot {\cal P}_e(\frak x).
\end{aligned}
\end{equation}
Except for the measure estimate \eqref{main measure estimate}, Theorem \ref{stability theorem} is an immediate consequence of the following
theorem. (We refer to Section \ref{measure estimates} for a proof of \eqref{main measure estimate}.)
\begin{theorem}\label{long time ex action-angle}
Let $f \in C^{\infty}(\T_1 \times \R, \, \R)$, $S_+$ be a finite subset of $\N$, 
$\tau$ be a number with $\tau > |S_+| $ (cf. \eqref{condizioni forma normale}), and 
 $\mu = \mu(\omega)$ with $\omega \in \Pi_\gamma$, $0 < \gamma  < 1$.
Then for any integer $s$ sufficiently large, there exists $0 < \e_0 \equiv \e_0(s, \gamma) < 1$ with the following properties: 
for any $0 < \e  \le \e_0$ there exists $T \equiv T_{\e, s, \gamma} = O(\e^{-2})$,
so that for any initial data $\frak x_0 = (\theta_0, y_0, w_0) \in \T^{S_+} \times \R^{S_+} \times H^s_\bot(\T_1)$, satisfying 
 \begin{equation}\label{smallness initial datum action-angle}
 | y_0|\,,\, \| w_0 \|_s \leq \e \, ,
 \end{equation}
there exists a unique solution $t \mapsto \frak x(t) = (\theta(t), y(t), w(t))$ of \eqref{very first ham equations} with $\frak x(0) = \frak x_0$ and
$$
 \theta \in C^1([- T, T], \T^{S_+}), \quad   y \in C^1([- T, T], \R^{S_+}), \quad w \in C^0([- T, T], H^s_\bot(T_1)) \cap C^1([- T, T], H^{s - 3}_\bot(\T_1))\,.
 $$
 In addition, the solution satisfies $ | y(t)|\,,\, \| w(t)\|_s \lesssim_{s, \gamma} \e$ for any $ t \in [- T, T]$.
\end{theorem}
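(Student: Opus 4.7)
The plan is to deduce Theorem~\ref{long time ex action-angle} from the Normal Form Theorem~\ref{teorema totale forma normale}, combined with energy estimates and the local well-posedness result of Appendix~\ref{appendix loc well posed}.

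First, for $\omega \in \Pi_\gamma$ and $s$ sufficiently large, I would invoke Theorem~\ref{teorema totale forma normale} to obtain a $C^\infty$ diffeomorphism $\Phi$ between neighbourhoods of $\T^{S_+}\times\{0\}\times\{0\}$ in $\mathcal E_s$ such that the pull-back vector field $X := \Phi^* X_{\mathcal H}$ enjoys properties $(F1)$--$(F2)$ of the overview. Thus its $y$-component $X^{(y)}$ is small of order three, while the normal component has the form
\[
X^\bot(\frak x) = \ii \Omega_\bot w + \mathtt D^\bot(\frak x)[w] + \Pi_\bot T_{a(\frak x)} \partial_x w + \mathcal R^\bot(\frak x),
\]
with $\mathtt D^\bot$ a skew-adjoint Fourier multiplier of order one, $a\in H^s(\T_1)$ small of order two, and $\mathcal R^\bot$ small of order three. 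Since $\Phi-\mathrm{Id}$ is small of order one on the relevant balls, pulling back the initial datum $\frak x_0$ with $|y_0|,\|w_0\|_s\le \e$ yields an initial datum $\frak x_0^\sharp=\Phi^{-1}(\frak x_0)$ satisfying $|y_0^\sharp|+\|w_0^\sharp\|_s\le C_0\e$ for an absolute constant $C_0$.

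Next, Appendix~\ref{appendix loc well posed} supplies a unique local solution $\frak x(t)=(\theta(t),y(t),w(t))$ of $\partial_t\frak x=X(\frak x)$ in $C^0\cap C^1$ with $\frak x(0)=\frak x_0^\sharp$, and I would run a continuity/bootstrap argument. Choose $C_*\gg C_0$ and set
\[
T^\circ := \sup\Big\{T\ge 0 \ : \ |y(t)|+\|w(t)\|_s \le C_*\e \ \text{for all }|t|\le T\Big\}.
\]
On $[-T^\circ,T^\circ]$, property $(F1)$ gives $|\dot y(t)|\lesssim_s (C_*\e)^3$, hence $|y(t)|\le \e+K_1(C_*\e)^3|t|$. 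For the $w$-equation, I would compute $\frac{1}{2}\frac{d}{dt}\|w\|_s^2=\langle w,X^\bot(\frak x)\rangle_s$. The contributions of $\ii\Omega_\bot w$ and $\mathtt D^\bot(\frak x)[w]$ vanish by skew-adjointness (using Remark~\ref{remark D inv Omega bot} for $\ii\Omega_\bot$). The para-differential contribution is controlled by symmetrising $T_a\partial_x$ via Corollary~\ref{aggiunto nostri operatori} with $m=1$, which gives $T_a\partial_x+(T_a\partial_x)^\top=-T_{\partial_x a}+\mathcal R_\top$ with $\mathcal R_\top$ smoothing; since $a$ is small of order two, this yields $|\langle w,\Pi_\bot T_a\partial_x w\rangle_s|\lesssim_s (C_*\e)^2\|w\|_s^2$. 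Finally, $\mathcal R^\bot$ being small of order three gives $|\langle w,\mathcal R^\bot\rangle_s|\lesssim_s (C_*\e)^3\|w\|_s$, so by Young's inequality
\[
\frac{d}{dt}\|w(t)\|_s^2 \le K_2(C_*\e)^2 \|w(t)\|_s^2 + K_2(C_*\e)^4.
\]

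Gronwall's inequality then yields $\|w(t)\|_s^2\le\big(\e^2+K_2(C_*\e)^4|t|\big)\exp\!\big(K_2(C_*\e)^2|t|\big)$ on $[-T^\circ,T^\circ]$. Hence on $[-T,T]$ with $T:=c\,\e^{-2}$ and $c$ small enough (depending on $C_*$, $K_2$, $s$, $\gamma$), one finds $|y(t)|+\|w(t)\|_s\le \tfrac14 C_*\e$, which strictly improves the bootstrap hypothesis; the standard continuation argument based on local well-posedness therefore forces $T^\circ\ge T$. Transferring the bounds back through $\Phi$, whose Lipschitz constants are uniformly controlled on the relevant neighbourhoods, yields the estimates claimed in Theorem~\ref{long time ex action-angle}. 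The principal obstacle in this scheme is the symmetrisation step for the $H^s$-energy of $w$: it closes only because $\mathtt D^\bot$ is genuinely skew-adjoint and the coefficient $a$ of the $\partial_x$-para-differential term is small of order two rather than one, so that it contributes $\e^2\|w\|_s^2$ and not $\e\|w\|_s^2$ to the Gronwall bound. Producing coordinates with exactly these two properties is the whole content of Theorem~\ref{teorema totale forma normale}, so all the delicate para-differential work has been pushed into that result.
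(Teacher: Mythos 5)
Your proposal is correct and follows essentially the same route as the paper: pull back by the Normal Form Theorem \ref{teorema totale forma normale}, apply the local existence result of Appendix \ref{appendix loc well posed}, run a bootstrap with an $H^s$-energy estimate on $w$ in which the skew-adjointness of $\ii\Omega_\bot + \mathtt D^\bot(\frak x)$, the fact that $a$ is small of order two, and the fact that $\mathcal R^\bot$ is small of order three make the estimate close at time scale $\e^{-2}$, and then transfer back through $\mathtt\Phi$. The only omission is that when passing $\partial_x^s$ through $T_{a(\frak x)}\partial_x$ one must also control the commutator $[\partial_x^s, T_{a(\frak x)}\partial_x]$ (as the paper does via the estimate $\|[\partial_x^s, T_a\partial_x]w\|\lesssim_s\|a\|_2\|w\|_s$), not just symmetrise $T_a\partial_x$ acting on $\partial_x^s w$; this produces the same $\e^2\|w\|_s^2$ contribution, so your conclusion is unaffected.
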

Theorem \ref{long time ex action-angle} is proved in Section \ref{conclusioni forma normale}. 
A key ingredient of its proof is the following result on normal forms. 
\begin{theorem}\label{teorema totale forma normale}{(\bf Normal Form Theorem)} 
Let $f \in C^{\infty}(\T_1 \times \R, \, \R)$, $S_+$ be a finite subset of $\N$, 
$\tau$ be a number with $\tau > |S_+| $ (cf. \eqref{condizioni forma normale}), and 
$\mu = \mu(\omega)$ with $\omega \in \Pi_\gamma$, $0 < \gamma < 1$.
Then there exists $\sigma_\ast  >0$ so that for any integer $s \ge \sigma_\ast$ the following holds:
there exist $0 < \delta \equiv \delta(s, \gamma) < 1$, $0 < \e_0\equiv \e_0(s, \gamma) \ll \delta$,
and $C_0 \equiv C_0(s, \gamma)  > 1$ 
 with the property that for any $0 < \e \le \e_0$ there exists an invertible map ${\mathtt \Phi}$ with inverse ${\mathtt \Phi}^{-1}$
(cf. Remark \ref{inverse of flow}),
\begin{equation}\label{trasformazione finale cal U}
 {\mathtt \Phi}^{\pm 1}\in {\cal C}^\infty_b({\cal V}^s(\delta), {\cal V}^s(C_0 \delta)),  \qquad \quad
 {\mathtt \Phi}^{\pm 1}(\frak x) - {\frak x} \ \  \text{small of order one}\, ,
\end{equation}
so that the pull back  $X  = (X^{(\theta)}, X^{(y)}, X^\bot) := {{\mathtt \Phi}}^* X_{\cal H_\mu}$ of the vector field $X_{\cal H_\mu}$ by ${\mathtt \Phi}$ has the form
\begin{equation}\label{forma campo vettoriale finale dopo NF}
\begin{aligned}
 X^{(\theta)}(\frak x) = & - \omega -  \e \widehat \omega + {\mathtt  N}^{(\theta)}(y, w) + {\cal O}_3^{(\theta)}(\frak x)\, , \qquad  X^{(y)}(\frak x) = {\cal O}_3^{(y)}(\frak x)\, ,  \qquad\\
& X^\bot (\frak x) =  \ii \Omega_\bot w + {\mathtt D}^\bot(\frak x)[w] + \Pi_\bot T_{a(\frak x)} \partial_x w + {\cal R}^\bot(\frak x) \, , \qquad
\end{aligned}
\end{equation}
where $\widehat \omega \in \R^{S_+}$ and
\begin{equation}\label{proprieta elementi campo vettoriale}
\begin{aligned}
& \mathtt N^{(\theta)} \in C^\infty_b \big(B_{S_+}(\delta) \times B^{\sigma_\ast}_\bot(\delta) \times [0, \e_0], \, \R^{S_+} \big)  
\quad \text{small of order one } (\text{and independent of } \theta), \qquad \qquad \qquad \\
& {\cal O}_3^{(\theta)}, \ {\cal O}_3^{(y)} \in C^\infty_b( {\cal V}^{\sigma_\ast}(\delta) \times [0, \e_0], \, \R^{S_+}) \quad \text{small of order three}, \\
& {\mathtt D}^\bot \in C^\infty_b \big( {\cal V}^{\sigma_\ast}(\delta) \times  [0, \e_0], \, {\cal B}(H^{s}_\bot(\T_1), H^{s - 1}_\bot(\T_1) ) \big)  \quad \text{small of order one,}  \\
& \quad  {\mathtt D}^\bot \text{ Fourier multiplier of the form }  {\mathtt D}^\bot(\frak x)[w] = \sum_{j \in S^\bot} d_j(\frak x) w_j e^{\ii 2 \pi j x}\text{ with the properties}\\ 
&\quad d_j \in C^\infty_b \big( {\cal V}^{\sigma_\ast}(\delta) \times  [0, \e_0], \, \R \big), \ \  \forall j \in S^\bot, 
 \qquad {\mathtt D}^\bot \text{ skew-adjoint: }  {\mathtt D}^\bot(\frak x)^\top = - {\mathtt D}^\bot(\frak x), \\
& a\in C^\infty_b \big(  {\cal V}^{s + \sigma_\ast}(\delta) \times [0, \e_0], \, H^s(\T_1) \big) \quad \text{small of order two}, \qquad \qquad \qquad \\
& {\cal R}^\bot \in C^\infty_b\big({\cal V}^s(\delta) \times [0, \e_0], \, H^s_\bot(\T_1) \big) \quad \text{small of order three.} \qquad \qquad \qquad
\end{aligned}
\end{equation}
\end{theorem}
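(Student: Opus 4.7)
The plan is to prove the Normal Form Theorem via a finite sequence of changes of coordinates, starting from the Hamiltonian $\mathcal H = \mathcal N + \e\mathcal P_L + \mathcal P_e$ in \eqref{hamiltoniana totale} with vector field \eqref{very first ham vec field}, and following the three-stage strategy sketched in the introduction. Throughout, we work with the classes $\mathcal{OB}^p(m,N)$, $\mathcal{OF}^p(m,N)$, $\mathcal{OS}^p(N)$ of para-differential, Fourier multiplier, and smoothing vector fields from Section \ref{section paradiff nonlinear}, and exploit the commutator and pullback calculus developed there (Lemmas \ref{Commutator of smoothing vector fields}--\ref{lemma commutatori Fourier multiplier} and the flow expansions in Lemma \ref{expansion flow paradiff}, Lemma \ref{lemma coniugazioni}, Lemma \ref{corollario flusso con Omega bot}, Lemma \ref{psuh forward multiplier X cal N}, Lemma \ref{push forward smoothing X cal N}). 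The integer $N$ will be fixed large enough depending only on $s$, and the parameter $\sigma_\ast$ will absorb the losses of regularity accumulated through each step.

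\textbf{Stage 1 (Standard normal form, Section \ref{forma normale smoothing standard}).} First I would remove the terms in $X_\mathcal H$ which are constant or linear in $w$ and of total order at most two in $(y,w,\e)$, by conjugating with the time-one flow of a Hamiltonian vector field $X_\mathcal F$ whose generator is affine in $w$, $\mathcal F(\theta,y,w) = \mathcal F_0(\theta,y) + \langle \mathcal F_1(\theta,y), w\rangle$ as in \eqref{ham linear w general}. By Lemma \ref{prop astratte campi vettoriali smoothing NF}, $X_\mathcal F$ is smoothing of all orders, so its flow is a smoothing perturbation of the identity, and the normal part of the transformed vector field again has the structure \eqref{expansion P e} by Lemma \ref{push forward smoothing 1}. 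The Fourier coefficients of $\mathcal F_0,\mathcal F_1$ are determined by scalar homological equations involving only $\omega\cdot\ell$ and $\omega\cdot\ell + \Omega_j(\omega)$, so they are solvable under $\omega\in\Pi_\gamma^{(0)}\cap\Pi_\gamma^{(1)}$. After this step the $y$-component of the vector field is small of order three, and the normal component reads $\ii\Omega_\bot w + X_1^\bot(\theta,y)[w] + X_2^\bot(\theta)[w,w] + \text{(small of order three)}$, with $X_1^\bot, X_2^\bot$ of the para-differential$+$smoothing form \eqref{X1 X2 intro}.

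\textbf{Stage 2 (Para-differential regularization and Fourier multiplier diagonalization, Section \ref{normalization II}).} Next I would iteratively reduce the principal symbols of $X_1^\bot$ and $X_2^\bot$ to constants in $x$. At each step one conjugates with the time-one flow of a \emph{non-canonical} vector field
$$
Y(\theta,y,w) = \big(0,0,\Pi_\bot T_{b(\theta,y) + B(\theta)[w]}\partial_x^{-1} w\big),
$$
with $b,B$ satisfying the algebraic identities \eqref{omologica lin in w intro}. The key computation, encapsulated in Lemma \ref{corollario flusso con Omega bot}, is that the commutator with $\ii\Omega_\bot$ produces a term $\Pi_\bot T_{-3\partial_x(a_m + A_m[w])}\partial_x^{2+m}w$, which cancels the next symbol in the expansion (this uses Lemma \ref{lemma espansione Omega bot} to identify $\ii\Omega_\bot = -\partial_x^3 + \dots$). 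Iterating $N+2$ times drives the symbols to the Fourier multiplier form \eqref{forma X4 intro}--\eqref{forma cal D (4) intro}. Then I would eliminate the non-leading Fourier multipliers $\sum_{k\ge 1}\Lambda_{1-k}^\bot(\theta)[w]\partial_x^{1-k}$ by the time-one flow of a vector field of the form \eqref{generatrice multiplier intro}, whose coefficients $\Xi_{1-k}^\bot$ solve the homological equation \eqref{eq homologica fourier multiplier intro}, again only invoking $\omega\in\Pi_\gamma^{(1)}$ and Lemma \ref{psuh forward multiplier X cal N}. Crucially, although these transformations are not canonical, one tracks inductively that the linearization at $w=0$ remains Hamiltonian, so the diagonal Fourier entries $[X_1^\bot(\theta,y)]_j^j$ stay purely imaginary — the property \eqref{prop parte diagonale intro} that eventually makes the surviving leading Fourier multiplier $\mathtt D^\bot$ skew-adjoint.

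\textbf{Stage 3 and conclusion (Section \ref{section smoothing remainders}).} Finally I would remove the smoothing pieces $\mathcal R_{N,1}^\bot(\theta,y)[w]$ and $\mathcal R_{N,2}^\bot(\theta)[w,w]$ by the time-one flow of the smoothing vector field \eqref{smoothing vector field intro}, whose components solve the homological equations \eqref{eq omologica 1 smoothing intro}--\eqref{eq omologica 2 smoothing intro} via Lemma \ref{push forward smoothing X cal N}. The quadratic equation carries no resonant obstruction, but its solvability requires the third Melnikov conditions $\omega\in\Pi_\gamma^{(3)}$ (and this is precisely the set tolerating a loss of space derivatives); the linear equation requires the second Melnikov conditions $\omega\in\Pi_\gamma^{(2)}$ and leaves behind the diagonal average $\mathcal Z^\bot(y)$ of \eqref{def Z}, which is skew-adjoint thanks to the tracked property $[\mathcal R_{N,1}^\bot(\theta,y)]_j^j\in\ii\R$. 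Composing all successive conjugations produces the single map $\mathtt\Phi$ of \eqref{trasformazione finale cal U}, and setting $\mathtt D^\bot := \mathcal D_5^\bot + \mathcal Z^\bot$ delivers the final form \eqref{forma campo vettoriale finale dopo NF}--\eqref{proprieta elementi campo vettoriale}. The principal para-differential term $\Pi_\bot T_{a(\frak x)}\partial_x w$ in \eqref{forma campo vettoriale finale dopo NF} collects the surviving top-order symbol, which cannot be removed by Stage 2 alone since its average $\langle a\rangle_x$ survives the cohomological reduction; smallness of order two for $a$ follows from the fact that Stage 2 only touches coefficients that were themselves small of order $\ge 1$ and paired with another small factor by the commutator structure.

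\textbf{Main obstacle.} The core technical difficulty is not the algebra of any single step but the \emph{bookkeeping} across Stage 2: one must carry the purely imaginary character of the diagonal Fourier entries, the skew-adjointness of the leading Fourier multiplier, and the order-of-smallness labels in $(y,w,\e)$ through $N+2$ iterations of \emph{non-canonical} conjugations while simultaneously controlling para-differential symbols up to smoothing remainders of order $N+1$. The entire machinery of Section \ref{section paradiff nonlinear} — in particular the closure of the classes $\mathcal{OB}^p, \mathcal{OF}^p, \mathcal{OS}^p$ under commutators and pullbacks with appropriate degree counts — is designed precisely to make this bookkeeping mechanical; without it, the Hamiltonian symmetry needed for the $H^s$ energy estimate in Section \ref{conclusioni forma normale} (which in turn gives the $O(\e^{-2})$ lifespan) would be irretrievably lost.
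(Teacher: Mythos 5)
Your proposal follows essentially the same architecture as the paper: Proposition \ref{prop ordine e 3} (Stage 1), Propositions \ref{prop total regularization} and \ref{prop regularization fourier multiplier} (Stage 2), and Proposition \ref{proposizione X6 bot} (Stage 3) are composed, $\mathtt\Phi = \Phi\circ\Psi^{(1)}\circ\Psi^{(2)}\circ\Psi^{(3)}$, and the paper's actual proof of Theorem \ref{teorema totale forma normale} is precisely this assembly, with $N$ fixed to $6$ and $\mathtt D^\bot := \mathcal D^\bot_6 = \mathcal D^\bot_5 + \mathcal Z^\bot$, $\mathtt N^{(\theta)} := -\nabla_y Q - \mathcal Z^{(\theta)}[w,w]$.

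One point in your conclusion is imprecise and worth correcting. You explain the surviving term $\Pi_\bot T_{a(\frak x)}\partial_x w$ by saying its average $\langle a\rangle_x$ ``survives the cohomological reduction'' of Stage 2 and that this is why $a$ is small of order two. That confuses two distinct survivors. The averages that survive the symbol reduction in Subsection \ref{sec regolarizzazione w w2} are exactly the skew-adjoint Fourier multipliers $\langle a_{1-n}\rangle_x\partial_x^{1-n}$ and $\langle A_{1-n}[w]\rangle_x\partial_x^{1-n}$ — these are absorbed into $\mathcal D^\bot_{4,1},\mathcal D^\bot_{4,2}$ and eventually into $\mathtt D^\bot$, not into $T_a\partial_x$. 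The term $\Pi_\bot T_{a(\frak x)}\partial_x w$ instead is read off from the remainder of class $\mathcal{OB}^3(1,N)$ in $X_6^\bot$ (cf.\ \eqref{regularized vector field X6}), whose coefficients are by Definition \ref{paradiff vector fields} small of order $3-1=2$ because that class sat entirely outside the target of the cohomological reduction (the procedure only normalizes terms of order at most two in $(y,w,\e)$). Your remark that the $\mathcal{OB}^3$ coefficients arise from commutators pairing two small factors is a fair heuristic for where they are generated, but $T_a\partial_x$ is not a cohomologically reduced version of a lower-order symbol; it is the untouched high-order remainder. Relatedly, the paper fixes $N=6$ independently of $s$, and then absorbs all regularity losses into $\sigma_\ast$; your phrasing that ``$N$ will be fixed large enough depending only on $s$'' is not how the constants are arranged, though it does no harm to the argument. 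Finally, you omit that the Stage 3 vector field $\mathcal S$ also carries a nonzero $\theta$-component $\mathcal S^{(\theta)}(\theta)[w,w]$ (solving \eqref{eq omologica smoothing astratta 0} under second Melnikov conditions) which produces the quadratic-in-$w$ piece $\mathcal Z^{(\theta)}[w,w]$ inside $\mathtt N^{(\theta)}$; the display \eqref{smoothing vector field intro} in the introduction suppresses this component, but it is present in the actual construction in Section \ref{section smoothing remainders}.
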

The proof of Theorem \ref{teorema totale forma normale} is given in Section \ref{conclusioni forma normale}. The transformation $\mathtt \Phi$ is obtained as
the composition of several transformations, constructed in Section \ref{forma normale smoothing standard} - Section \ref{normalization II}.

\section{Smoothing normal form steps }\label{forma normale smoothing standard}

% is given in Lemma \ref{espansion hom} (see also \eqref{hamiltoniana cal P}, \eqref{identificazione hamiltoniana campo vettoriale}). 
%We expand the Hamiltonian ${\cal K}$ in the following way: 
%\begin{equation}\label{espansione perturbazione totale cal K}
%\begin{aligned}
%& {\cal K}(\frak x) = \e {\cal K}_1(\frak x) + \e {\cal K}_2(\frak x) + {\cal K}_3(\frak x)\,, \\
%& {\cal K}_1(\frak x) := {\cal K}_{00}(\theta) +  {\cal K}_{10}(\theta) \cdot y  + \big\langle {\cal K}_{01}(\theta), w \big\rangle\,, \\
%& {\cal K}_2(\frak x) := \frac12  {\cal K}_{2 0}(\theta)[y] \cdot y + \big\langle {\cal K}_{11}(\theta)[y]\,,\, w \big\rangle + \frac12 \big\langle {\cal K}_{02}(\theta)[w]\,,\, w \big\rangle  
%\end{aligned}
%\end{equation}
%and ${\cal K}_3$ vanishes at least of order $3$ at $(y, w) = 0$. By recalling \eqref{regularized vector field}, one has that
%\begin{equation}\label{identificazione hamiltoniana campo}
%\begin{aligned}
%& {\cal K}_{0 1}(\theta) = \partial_x^{- 1} {\cal R}_{N}(\theta, 0, 0)\,, \quad {\cal P}_{11}(\theta)[y] = \partial_x^{- 1} \partial_y {\cal R}_{N}(\theta, 0, 0)[y]\,, \\
%& {\cal P}_{0 2}(\theta) = \partial_x^{- 1}\Big( \sum_{k = 0}^N T_{\e \eta_{1 - k}(\theta)}  \partial_x^{1 - k}  + d_\bot {\cal R}_N(\theta, 0, 0) \Big)
%\end{aligned}
%\end{equation}
As part of the proof of Theorem \ref{teorema totale forma normale},
the aim of this section is to normalize terms in the Taylor expansion of the Hamiltonian ${\cal H}$ (cf. \eqref{hamiltoniana totale}),
which are affine with respect to the normal coordinate $w$ and homogeneous of order at most three
with respect to the coordinates $y, w$ and the parameter $\e$
 (cf. {\em Overview of the proof of Theorem  \ref{stability theorem}} in Section \ref{introduzione paper}). 
The main result of this section is the following one.
\begin{proposition}\label{prop ordine e 3}
Let $f \in C^{\infty}(\T_1 \times \R, \, \R)$, $S_+$ be a finite subset of $\N$, 
$\tau$ be a number with $\tau > |S_+| $ (cf. \eqref{condizioni forma normale}),
and $\mu = \mu(\omega)$ with $\omega \in \Pi_\gamma$, $0 < \gamma < 1$.
Then for any $N \in 	\N$, there exist integers $s_N  > 0$, $\sigma_N > 0$ so that for any $s \geq s_N$, there exist 
$0 < \delta \equiv \delta(s, \gamma, N) <1$ and $0 <  \e_0\equiv \e_0(s, \gamma, N)  \ll \delta$ 
with the following properties:  for any $0 < \e \le \e_0$ there exists an invertible symplectic transformation $\Phi$ with inverse $ \Phi^{-1}$ so that 
\begin{equation}\label{prop Phi finale passi smoothing}
 \Phi^{\pm 1}\in {\cal C}^\infty_b({\cal V}^s(\delta) \times [0, \e_0], {\cal V}^s(2 \delta)) \, , \qquad
 \Phi^{\pm 1}(\frak x) - {\frak x} \quad \text{small of order one}\, ,
\end{equation}
and so that the Hamiltonian ${\cal H}^{(3)} := {\cal H} \circ \Phi$ (cf. \eqref{irrelevant constant}) has the form 
\begin{equation}\label{rid termini lineari}
{\cal H}^{(3)}(\frak x) = {\cal N}^{(3)}(\frak x) + {\cal K}(\frak x) \, , \qquad 
{\cal N}^{(3)}(\frak x) :=   \omega \cdot y + \e \widehat \omega \cdot y + \frac12 \big\langle D_\bot^{- 1} \Omega_\bot w\,,\, w \big\rangle + Q(y)\, .
\end{equation}
Here $\widehat \omega \equiv \widehat \omega(\e) \in \R^{S_+}$ is an affine function of $\e$, $Q(y) \equiv Q(y, \e)$  is small of order two, 
a polynomial of degree three in $y$ and an affine function of $\e$, and the components of the Hamiltonian vector field 
$X_{\mathcal K} = (X^{(\theta)}_{\mathcal K}, X^{(y)}_{\mathcal K}, X^\bot_{\mathcal K}) =  (- \nabla_y \mathcal K, \, \nabla_\theta \mathcal K, \, \partial_x \nabla_\bot \mathcal K)$,
corresponding to the Hamiltonian ${\cal K}$, satisfy the following properties: 
$X^{(\theta)}_{\cal K}(\frak x)$ is of the form $\Upsilon_2^{(\theta)}(\theta)[w, w] + \Upsilon_3^{(\theta)}(\frak x)$ with
$$
\Upsilon_2^{(\theta)} \in C^\infty_b(\T^{S_+}, \, \mathcal B_2(H^{\sigma_N}_\bot(\T_1), \, \R^{S_+})), \qquad \qquad
\Upsilon_3^{(\theta)} \in C_b^\infty({\cal V}^{\sigma_N}(\delta) \times [0, \e_0], \, \R^{S_+}), \, \text{small of order three,}
$$
and
\begin{equation}\label{vector field cal K}
 X^{(y)}_{\cal K} \in C^\infty_b( {\cal V}^{\sigma_N}(\delta) \times [0, \e_0],  \, \R^{S_+}),  \,  \text{small of order three,} \qquad \qquad
 X^\bot_{\cal K}(\frak x) =   {\Upsilon}^\bot (\frak x)  + {\cal R}^\bot_N(\frak x),  \qquad
 \end{equation}
 where
 $$
 {\Upsilon}^\bot =  {\cal OB}^2_{w}(1, N) + {\cal OB}^2_{ww}(1, N) + {\cal OB}^3(1, N)\,, \qquad \quad
  {\cal R}^\bot_N = {\cal OS}^2_w(N) + {\cal OS}^2_{ww}(N) + {\cal OS}^3(N)\,.
$$
% The smoothing remainder ${\cal R}$ admits the following expansion: 
%\begin{equation}\label{espansione cal RN}
%{\cal R}(\frak x) ={\cal R}_0(\theta, y) [w] + {\cal R}_{ 1}(\theta)[w, w] + {\cal R}_{3}(\frak x)
%\end{equation}
% where 
%\begin{equation}\label{cal RN dopo smoothing steps}
%\begin{aligned}
%& {\cal R}_{ 0}(\theta, y)[w] \in {\cal OS}_{w}^2(N), \quad {\cal R}_{ 1}(\theta)[w, w] \in {\cal OS}^2_{ww}(N), \quad {\cal R}_{ 3} \in {\cal OS}^3(N)\,. 
%\end{aligned}
%\end{equation}
\end{proposition}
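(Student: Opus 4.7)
The plan is to construct $\Phi$ as the composition $\Phi = \Phi_{\mathcal F^{(1)}}\circ\Phi_{\mathcal F^{(2)}}\circ\Phi_{\mathcal F^{(3)}}$ of three time-one Hamiltonian flows, each generated by a Hamiltonian of the affine-in-$w$ form $\mathcal F^{(k)}(\theta, y, w) = \mathcal F^{(k)}_{0}(\theta, y) + \langle \mathcal F^{(k)}_{1}(\theta, y), w\rangle$, chosen at step $k$ to be small of order $k$ and to eliminate the non-resonant affine-in-$w$ terms of total order $k$ in $(y, w, \e)$. The key structural fact is Lemma \ref{prop astratte campi vettoriali smoothing NF}, which asserts that $X_{\mathcal F^{(k)}}\in\mathcal{OS}(N)$ for every $N$; hence by Lemma \ref{push forward smoothing 1} the pullback of $X_{\mathcal H}$ preserves, step by step, the decomposition of its normal component into $\mathcal{OB}^2(1,N)+\mathcal{OS}^2(N)$ inherited from Corollary \ref{espansion hom}, producing at each step only corrections of strictly higher order in $(y, w, \e)$. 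The composition of three smoothing perturbations of the identity, small of order $1, 2, 3$ respectively, yields a single symplectic $\Phi$ satisfying \eqref{prop Phi finale passi smoothing}.

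Step $k=1$ eliminates $\e\mathcal P_L=\e(\mathcal P_{00}(\theta)+\mathcal P_{10}(\theta)\cdot y+\langle \mathcal P_{01}(\theta), w\rangle)$. A direct computation of $\{\mathcal N,\mathcal F^{(1)}\}$ from \eqref{def poisson action-angle}, using $\nabla_\theta\mathcal N=0$, $\nabla_y\mathcal N=\omega+\Omega_{S_+}y$, $\nabla_\bot\mathcal N=D_\bot^{-1}\Omega_\bot w$, together with skew-adjointness of $\partial_x$ on $L^2_\bot$, shows that at leading order the Poisson bracket contributes, respectively, $\omega\cdot\partial_\theta\mathcal F^{(1)}_{00}$, $(\omega\cdot\partial_\theta\mathcal F^{(1)}_{10})\cdot y$, and $\langle \omega\cdot\partial_\theta\mathcal F^{(1)}_{01}-\ii\Omega_\bot\mathcal F^{(1)}_{01},\, w\rangle$. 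This leads to the three homological equations
\begin{align*}
\omega\cdot\partial_\theta \mathcal F^{(1)}_{00} &= \langle\mathcal P_{00}\rangle_\theta - \mathcal P_{00}, \\
\omega\cdot\partial_\theta \mathcal F^{(1)}_{10} &= \widehat\omega - \mathcal P_{10}, \qquad \widehat\omega := \langle\mathcal P_{10}\rangle_\theta, \\
\omega\cdot\partial_\theta \mathcal F^{(1)}_{01} - \ii\Omega_\bot \mathcal F^{(1)}_{01} &= -\mathcal P_{01},
\end{align*}
which we solve mode by mode in Fourier. The first two require the zeroth Melnikov condition $\omega\in\Pi^{(0)}_\gamma$ and the third the first Melnikov condition $\omega\in\Pi^{(1)}_\gamma$; the polynomial loss $\langle\ell\rangle^\tau$ in the divisors is absorbed by the $C^\infty$-smoothness of $\mathcal P_{00},\mathcal P_{10},\mathcal P_{01}$ guaranteed by \eqref{proprieta ham primissima per forma normale}. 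The surviving resonant term is precisely $\e\widehat\omega\cdot y$, modulo the irrelevant constant $\e\langle\mathcal P_{00}\rangle$ that we drop per Remark \ref{irrelevant constant}.

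Steps $k=2,3$ are iterations of the same construction applied to the new non-resonant affine-in-$w$ terms of order $2$ and $3$ generated, via the Lie expansion \eqref{Lie expansion Hamiltonian}, by $\{\e\mathcal P_L,\mathcal F^{(1)}\}$, $\{\mathcal N,\mathcal F^{(2)}\}$, nested double brackets, and, for $k=3$, the linear-in-$w$ and $y$-independent components of $\mathcal P_e$ itself (which is already small of order three). The corresponding homological equations are of the same form as above and again solvable on $\Pi^{(0)}_\gamma\cap\Pi^{(1)}_\gamma$. The resonant contributions that cannot be removed are the quadratic form $\tfrac12\Omega_{S_+}[y]\cdot y$ (already present in $\mathcal N$), the linear $\e\widehat\omega\cdot y$, and the $\theta$-averages of the cubic-in-$y$ monomials produced by $\mathcal P_e^{(3,0)}$ and by the bracket contributions; their sum is automatically a polynomial in $y$ of degree three, small of order two, and affine in $\e$, and is collected as $Q(y)$ in \eqref{rid termini lineari}.

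The main obstacle is the last structural claim in \eqref{vector field cal K}, namely the precise splitting of the normal component $X_{\mathcal K}^\bot$ as the sum of pieces in $\mathcal{OB}^2_w(1,N)$, $\mathcal{OB}^2_{ww}(1,N)$, $\mathcal{OB}^3(1,N)$, plus the matching smoothing remainders in $\mathcal{OS}^2_w(N)+\mathcal{OS}^2_{ww}(N)+\mathcal{OS}^3(N)$. To establish this, I would systematically sort each term generated by the nested Poisson brackets $\{\mathcal H,\mathcal F^{(k)}\}$, $\{\{\mathcal H,\mathcal F^{(k)}\},\mathcal F^{(j)}\}$ according to its homogeneity in $w$ (independent, linear, quadratic, cubic) and its para-differential order, using the commutator calculus of Lemmata \ref{Commutator of smoothing vector fields}--\ref{lemma commutatori Fourier multiplier}, the pullback formulas of Lemma \ref{push forward smoothing 1}, and the pre-expansion of $\partial_x\nabla_\bot\mathcal P_e$ from \eqref{proprieta ham primissima per forma normale}. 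The key bookkeeping point is that contributions linear in $w$ with $y$-dependent coefficients flow into the $\mathcal{OB}^2_w/\mathcal{OS}^2_w$ classes, while contributions quadratic in $w$ with $y$-independent coefficients flow into the $\mathcal{OB}^2_{ww}/\mathcal{OS}^2_{ww}$ classes, and all remaining cubic and higher-order pieces merge into $\mathcal{OB}^3(1,N)+\mathcal{OS}^3(N)$. The $\theta$-component $X_{\mathcal K}^{(\theta)}$ receives no linear-in-$w$ contribution at order $\le 3$ because all such monomials have been killed by the first Melnikov condition, leaving exactly the claimed decomposition $\Upsilon^{(\theta)}_2(\theta)[w,w]+\Upsilon^{(\theta)}_3(\frak x)$.
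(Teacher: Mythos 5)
Your overall strategy is the same as the paper's: three time-one Hamiltonian flows, each generated by a Hamiltonian affine in $w$ (so that, by Lemma \ref{prop astratte campi vettoriali smoothing NF}, its vector field is arbitrarily smoothing and the para-differential structure of $X_\mathcal K^\bot$ survives the conjugations via Lemma \ref{push forward smoothing 1}), together with the first and zeroth Melnikov conditions for solvability. However, your step-1 homological equations contain two algebraic errors that would stop the construction from actually producing the claimed normal form. First, the $y$-linear equation must read $\omega \cdot \partial_\theta {\cal F}_{10}^{(1)} + {\cal P}_{10} + \Omega_{S_+}[ \nabla_\theta{\cal F}^{(1)}_{00}] = \langle {\cal P}_{10} \rangle_\theta$; the term $\Omega_{S_+}[\nabla_\theta\mathcal F^{(1)}_{00}]$ arises because $\nabla_y\mathcal N = \omega + \Omega_{S_+}y$, so $(\Omega_{S_+}y)\cdot\partial_\theta\mathcal F^{(1)}_{00}$ is a $y$-linear, $\theta$-dependent contribution of the same order as the ones you kept. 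Dropping it leaves an uncancelled $\e\,(\Omega_{S_+}y)\cdot\partial_\theta\mathcal F^{(1)}_{00}$ in $\mathcal H^{(1)}$, with a $\theta$-dependent coefficient, which is not of the normal form $\e\widehat\omega\cdot y$. (It is zero-mean in $\theta$, so the resonant part $\widehat\omega = \langle\mathcal P_{10}\rangle_\theta$ you wrote is still correct.) Second, the $w$-linear equation should be $(\omega\cdot\partial_\theta + \ii\Omega_\bot)\mathcal F^{(1)}_{01} + \mathcal P_{01} = 0$: using the convention of \eqref{def poisson action-angle} and $\partial_x D_\bot^{-1} = \ii$, the bracket $\{\mathcal N, \langle f(\theta), w\rangle\}$ contributes $\langle D_\bot^{-1}\Omega_\bot w, \partial_x f\rangle = \langle \ii\Omega_\bot f, w\rangle$ with a plus sign, not a minus. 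Your equation is still solvable on $\Pi^{(1)}_\gamma$ (since $S^\bot$ is symmetric and $\Omega_{-j} = -\Omega_j$), but it produces the wrong $\mathcal F^{(1)}_{01}$ and leaves the residual $2\e\langle\ii\Omega_\bot\mathcal F^{(1)}_{01}, w\rangle$ in $\mathcal H^{(1)}$.

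One more point worth flagging: your description of the ordering of the three steps is slightly off. You say step $k$ eliminates the non-resonant affine-in-$w$ terms of total order $k$; but in the paper step 1 eliminates all of $\e\mathcal P_L$ (which spans orders $1$ and $2$), step 2 is devoted specifically to the $\e^2$-proportional, $\theta$-only term $\e^2\mathcal P_{00}^{(1)}(\theta)$ generated by the first conjugation, and step 3 sweeps up the remaining non-resonant affine-in-$w$ monomials of total order $3$ that have degree $\le 2$ in $\e$. This ordering matters for the bookkeeping underlying the claim that the generator at each step is small of the right order while the resonant remainders are exactly the pieces that flow into $\widehat\omega$ and $Q(y)$.
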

In the remaining part of this section we prove Proposition \ref{prop ordine e 3}. The transformation $\Phi$ is obtained as the composition 
$\Phi^{(1)} \circ \Phi^{(2)} \circ \Phi^{(3)}$ of three symplectic transformations $\Phi^{(j)}$, $1 \le j \le 3$. 

\medskip
\noindent
 {\bf Normalization of ${\cal P}_L$    up to $O(\e^2)$. }
  %\label{step 1 smoothing}
The aim of this first step is to construct a symplectic transformation $\Phi^{(1)}$ so that 
${\cal P}_L(\frak x) \stackrel{\eqref{cal N cal H cal P}}{=} \e \big( {\cal P}_{00}(\theta) +  {\cal P}_{10}(\theta) \cdot y + \langle {\cal P}_{01}(\theta)\,,\,w \rangle \big),$ 
when expressed in the new coordinates, is in normal form up to order $\e^2$. We construct $\Phi^{(1)}$ as the time one flow of a Hamiltonian flow corresponding to a Hamiltonian of the form 
$$
\e {\cal F}^{(1)}(\frak x) =   \e{\cal F}_{00}^{(1)}(\theta) +  \e{\cal F}_{10}^{(1)}(\theta) \cdot y + \e \langle {\cal F}_{01}^{(1)}(\theta)\,,\,w \rangle
$$
where
\begin{equation}\label{properties cal F (1)}
\begin{aligned}
{\cal F}_{00}^{(1)} \in C^\infty \big(\T^{S_+}, \, \R \big), 
\qquad {\cal F}_{1 0}^{(1)} \in C^\infty\big(\T^{S_+}, \,  \R^{S_+} \big), 
\qquad {\cal F}_{01}^{(1)} \in C^\infty\big(\T^{S_+}, \, H^n_\bot(\T_1) \big), \ \  \forall \, n \geq 0 ,
\end{aligned}
\end{equation}
will be chosen to serve our needs. 
The Hamiltonian vector field corresponding to the Hamiltonian $\e {\cal F}^{(1)}(\frak x)$,
$$
X_{\e {\cal F}^{(1)}} (\frak x) = 
\Big(  - \e {\cal F}_{10}^{(1)}(\theta), \
  \e \big( \nabla_\theta {\cal F}_{00}^{(1)}(\theta) +  \nabla_\theta {\cal F}_{10}^{(1)}(\theta)\cdot y + \nabla_\theta \langle {\cal F}_{01}^{(1)}(\theta)\,,\,w \rangle  \big)\,, \
   \e \partial_x {\cal F}_{01}^{(1)}(\theta) \Big) \, ,
$$
is small of order one and by Lemma \ref{prop astratte campi vettoriali smoothing NF} arbitrarily smoothing. It means that $X_{\e {\cal F}^{(1)}} \in {\cal OS}^1(N)$ for any integer $N \geq 1$ 
(cf. Definition \ref{def smoothing vector fields}). 
Denote by  $\Phi^{(1)}(\tau, \cdot) \equiv \Phi_{\e {\cal F}^{(1)}}(\tau, \cdot)$ the flow of $X_{\e {\cal F}^{(1)}}$. 
For any given $N \in \N,$ there exists an integer $s_N >0$
with the property that for any $s \ge s_N$, there exist
$0 < \delta \equiv \delta(s, \gamma, N) < 1$ and $0 < \e_0 \equiv \e_0(s, \gamma, N) < 1$ (small), so that $\Phi^{(1)}(\tau, \cdot) \in C^\infty_b(\mathcal V^s(\delta) \times [0, \e_0],  \mathcal V^s(2\delta))$ for any $-1 \le \tau \le 1$. 
The inverse of the time one flow map $\Phi^{(1)} := \Phi^{(1)}(1, \cdot )$ is then given by $(\Phi^{(1)})^{- 1} = \Phi^{(1)}(- 1, \cdot)$ (cf. Remark \ref{inverse of flow})
and by Lemma \ref{flusso smoothing vector fields}, 
\begin{equation}\label{Phi (1) - Id}
\Phi^{(1)}(\tau, \cdot) (\frak x) - {\frak x} \in {\cal OS}^1(N)\, , \qquad \forall \, -1 \le \tau \le 1 \,. 
\end{equation} 
We now compute ${\cal H}^{(1)} := {\cal H} \circ \Phi^{(1)}$ by separately expanding the terms appearing in \eqref{cal N cal H cal P}. 
By \eqref{Lie expansion Hamiltonian} (Lie expansion), \eqref{Phi (1) - Id} (properties of $\Phi^{(1)}$) and \eqref{def poisson action-angle} (Poisson bracket) one has
$$
\begin{aligned}
& {\cal N}  \circ \Phi^{(1)}= {\cal N} + \e \{ {\cal N} ,\, {\cal F}^{(1)} \} + \e^2   \int_0^1 (1 - \tau)\{ \{{\cal N} \,, \, {\cal F}^{(1)} \}\,,\,{\cal F}^{(1)}\} \circ \Phi^{(1)}(\tau, \cdot )\, d \tau\,, \\ 
& \{ {\cal N} ,\, {\cal F}^{(1)} \} =   \omega \cdot \partial_\theta  {\cal F}_{00}^{(1)}(\theta) 
+ \big(\, \omega \cdot \partial_\theta {\cal F}_{10}^{(1)}(\theta) + \Omega_{S_+} [\nabla_\theta{\cal F}_{00}^{(1)}(\theta)] \big) \cdot  y  
+ \big\langle \big(\omega \cdot \partial_\theta + \ii \Omega_\bot\big){\cal F}_{0 1}^{(1)}(\theta),\, w\big\rangle    \\
& \qquad  \qquad \qquad +  ( \Omega_{S_+} [y] \cdot \partial_\theta)({\cal F}_{10}^{(1)}(\theta) \cdot y) +  \big\langle (\Omega_{S_+} [y] \cdot \partial_\theta)  {\cal F}_{0 1}^{(1)}(\theta), w  \big\rangle
\end{aligned}
$$
and by \eqref{Lie expansion Hamiltonian} (Lie expansion) and \eqref{proprieta ham primissima per forma normale} (properties of $ {\cal P}_e$)
$$
 \e {\cal P}_L \circ \Phi^{(1)} = \e {\cal P}_L + \e^2 \int_0^1 \{ {\cal P}_L, {\cal F}^{(1)} \} \circ \Phi^{(1)}(\tau, \cdot)\, d \tau\,, \qquad
 {\cal P}_e \circ \Phi^{(1)} \quad C^\infty-\text{smooth, small of order three}.
$$
Altogether, one obtains 
\begin{equation}\label{cal H reg 1}
\begin{aligned}
{\cal H}^{(1)} &=  {\cal N}+  \e \big( \, \omega \cdot \partial_\theta {\cal F}_{00}^{(1)}(\theta) + {\cal P}_{00}(\theta) \big) 
+ \e \big(\, \omega \cdot \partial_\theta {\cal F}_{10}^{(1)}(\theta) 
+ {\cal P}_{10}(\theta) + \Omega_{S_+}[\nabla_\theta{\cal F}_{00}^{(1)}(\theta)] \big) \cdot y  \\
& + \e \big\langle \big(\omega \cdot \partial_\theta + \ii \Omega_\bot\big){\cal F}_{0 1}^{(1)} + {\cal P}_{0 1}\,,\, w\big\rangle + {\cal P}^{(1)} \, , \\  
\end{aligned}
\end{equation}
\begin{equation}\label{def cal P1}
\begin{aligned}
{\cal P}^{(1)} & :=  \e^2  \int_0^1 (1 - \tau)\{ \{{\cal N} \,, \, {\cal F}^{(1)} \}\,,\,{\cal F}^{(1)}\} \circ \Phi^{(1)}(\tau, \cdot)\, d \tau  + \e^2 \int_0^1 \{ {\cal P}_L, {\cal F}^{(1)} \} \circ \Phi^{(1)}(\tau, \cdot)\, d \tau  \\
& \quad + \e ( \Omega_{S_+} [y] \cdot \partial_\theta)({\cal F}_{10}^{(1)}(\theta) \cdot y) + \e \big\langle (\Omega_{S_+}[y] \cdot \partial_\theta) {\cal F}_{0 1}^{(1)}(\theta), w  \big\rangle + {\cal P}_e \circ \Phi^{(1)}.
\end{aligned}
\end{equation}
Since the terms appearing in the second line of \eqref{def cal P1} are small of order three, the Hamiltonian ${\cal P}^{(1)}$ admits an expansion of the form
\begin{equation}\label{espansione hamiltoniana cal P (1)}
{\cal P}^{(1)} (\frak x) = \e^2 {\cal P}_{00}^{(1)}(\theta) + {\cal P}^{(1)}_e\, ,
\end{equation}
where  ${\cal P}_{00}^{(1)} \in C^\infty(\T^{S_+}, \R)$ and ${\cal P}^{(1)}_e$ is small of order three. In view of \eqref{cal H reg 1} and since
 $\Omega_{S_+} [\nabla_\theta{\cal F}^{(1)}_{00} ]$ has zero average in $\theta$, we consider the following system of homological equations 
 for ${\cal F}_{00}^{(1)}$, ${\cal F}_{1 0}^{(1)}$, ${\cal F}_{0 1}^{(1)}$,
\begin{equation}\label{homological equation 1}
\begin{cases}
\omega \cdot \partial_\theta {\cal F}_{00}^{(1)} + {\cal P}_{00} = \langle {\cal P}_{00}\rangle_\theta\,, \\
\omega \cdot \partial_\theta {\cal F}_{10}^{(1)} + {\cal P}_{10} + \Omega_{S_+}[ \nabla_\theta{\cal F}^{(1)}_{00}] = \langle {\cal P}_{10} \rangle_\theta\,, \\
\big(\omega \cdot \partial_\theta + \ii \Omega_\bot\big){\cal F}_{0 1}^{(1)} + {\cal P}_{0 1} = 0\, .
\end{cases}
\end{equation}
Since by assumption $\omega \in \Pi_\gamma$, $0 < \gamma < 1$, (cf. \eqref{non resonant set tot}), we can apply Lemmata \ref{eq omo zero melnikov abstract}, \ref{eq homo astratta prime melnikov},
to conclude that  the  system \eqref{homological equation 1} has a unique solution
${\cal F}_{00}^{(1)}, {\cal F}_{1 0}^{(1)}, {\cal F}_{0 1}^{(1)}$ satisfying \eqref{properties cal F (1)} and $\langle {\cal F}_{00}^{(1)}\rangle_\theta = 0$, $\langle {\cal F}_{1 0}^{(1)}\rangle_\theta = 0$. 
The Hamiltonian ${\cal H}^{(1)}$, defined in \eqref{cal H reg 1},  then reads
\begin{equation}\label{cal H reg 1 b}
 {\cal H}^{(1)} = {\cal N} + \e \widehat{\cal N}_1 + \e^2 {\cal P}_{00}^{(1)}(\theta) +  {\cal P}^{(1)}_e\,, \qquad
 \widehat{\cal N}_1(y) := \langle {\cal P}_{00}\rangle_\theta +  \langle {\cal P}_{10} \rangle_\theta \cdot y \,.
\end{equation}
Since ${\cal P}_e^{(1)}$ is small of order three, its Hamiltonian vector field $X_{{\cal P}_e^{(1)}}$ is small of order two. 
For later use we discuss the normal component $X_{{\cal P}^{(1)}_e}^\bot $ of the vector field $X_{{\cal P}^{(1)}_e}$. 
Since $X_{\e {\cal F}^{(1)}} \in {\cal OS}^1(N)$, 
and $ X^\bot_{ {\cal P}_e} = {\cal OB}^2(1, N) + {\cal OS}^2(N)$ (cf. \eqref{proprieta ham primissima per forma normale})
it follows from Lemma \ref{push forward smoothing 1} that $X^\bot_{\mathcal P_e \circ \Phi^{(1)} } = {\cal OB}^2(1, N) + {\cal OS}^2(N)$.
Arguing similarly for all the other terms in the definition of ${\cal P}_e^{(1)}$ (cf. \eqref{def cal P1}, \eqref{espansione hamiltoniana cal P (1)})
one can show that
\begin{equation}\label{prop X cal P (1) e}
X_{{\cal P}^{(1)}_e}^\bot = \partial_x \nabla_\bot {\cal P}_e^{(1)} = {\cal OB}^2(1, N) + {\cal OS}^2(N).
\end{equation}

\medskip
\noindent
{\bf Normalization of $\e^2 {\cal P}_{00}^{(1)}(\theta)$.}
The aim of this second step is to normalize the term $\e^2 {\cal P}_{00}^{(1)}(\theta)$ (small of order $2$) in \eqref{cal H reg 1 b}. 
To this end we construct a symplectic transformation $\Phi^{(2)}$, given again by the time one flow of a Hamiltonian flow, corresponding to a Hamiltonian
of the form $\e^2 {\cal F}^{(2)}(\theta)$ with
\begin{equation}\label{prop cal F (2) NF}
{\cal F}^{(2)} \in C^\infty(\T^{S_+}, \R)
\end{equation}
being a function to be determined. The Hamiltonian vector field corresponding to the Hamiltonian $\e^2 {\cal F}^{(2)}(\theta)$,
$$
X_{\e^2 {\cal F}^{(2)}}(\frak x) = \big( 0, \,  \e^2 \nabla_\theta {\cal F}^{(2)}(\theta), \, 0 \big) \,. 
$$
is small of order two and by Lemma \ref{prop astratte campi vettoriali smoothing NF} arbitrarily smoothing. It means that $X_{\e^2 {\cal F}^{(2)}} \in {\cal OS}^2(N)$ for any integer $N \geq 1$
 (cf. Definition \ref{def smoothing vector fields}). 
Denote by  $\Phi^{(2)}(\tau, \cdot) \equiv \Phi_{\e^2 {\cal F}^{(2)}}(\tau, \cdot)$ the flow of $X_{\e^2 {\cal F}^{(2)}}$. 
For any given $N \in \N,$ there exists an integer $s_N >0$ 
with the property that for any $s \ge s_N$, there exist
$0 < \delta \equiv \delta(s, \gamma, N) < 1$ and $0 < \e_0 \equiv \e_0(s, \gamma, N) < 1$ (small), so that
$\Phi^{(2)}(\tau, \cdot) \in C^\infty_b(\mathcal V^s(\delta) \times [0, \e_0],  \mathcal V^s(2\delta))$ for any $-1 \le \tau \le 1$. 
The inverse of the time one flow map $\Phi^{(2)} := \Phi^{(2)}(1, \cdot )$ is then given by $(\Phi^{(2)})^{- 1} = \Phi^{(2)}(- 1, \cdot)$ (cf. Remark \ref{inverse of flow})
and by Lemma \ref{flusso smoothing vector fields},
\begin{equation}\label{Phi (2) - Id}
\Phi^{(2)}(\tau, \cdot) (\frak x) - {\frak x} \in {\cal OS}^2(N)\, , \qquad \forall \, -1 \le \tau \le 1 \,. 
\end{equation} 
We now compute ${\cal H}^{(2)} := {\cal H}^{(1)} \circ \Phi^{(2)}$ by separately expanding the terms in \eqref{cal H reg 1 b}. 
By \eqref{Lie expansion Hamiltonian} (Lie expansion), \eqref{Phi (2) - Id} (properties of $\Phi^{(2)}$) and \eqref{def poisson action-angle} (Poisson bracket) one has
$$
\begin{aligned}
& {\cal N}  \circ \Phi^{(2)} = {\cal N} + \e^2 \{ {\cal N},  \, {\cal F}^{(2)} \} + \e^4   \int_0^1 (1 - \tau)\{ \{{\cal N} \,, \, {\cal F}^{(2)} \}\,,\,{\cal F}^{(2)}\} \circ \Phi^{(2)}(\tau, \cdot )\, d \tau   \\ 
& \qquad \qquad  = {\cal N} + \e^2   \omega \cdot \partial_\theta  {\cal F}^{(2)}(\theta) + \e^4   \int_0^1 (1 - \tau)\{ \{{\cal N} \,, \, {\cal F}^{(2)} \}\,,\,{\cal F}^{(2)}\} \circ \Phi^{(2)}(\tau, \cdot )\, d \tau \,, \\
& \e \widehat{\cal N}_1 \circ \Phi^{(2)} = \e \widehat{\cal N}_1 + \e^3 \int_0^1 \{ \widehat{\cal N}_1, {\cal F}^{(2)} \} \circ \Phi^{(2)}(\tau, \cdot)\, d \tau\,, \\
& \e^2 {\cal P}_{00}^{(1)} \circ \Phi^{(2)} = \e^2 {\cal P}_{00}^{(1)}(\theta) + \e^4 \int_0^1 \{ {\cal P}_{00}^{(1)}, {\cal F}^{(2)} \} \circ \Phi^{(2)}(\tau, \cdot)\, d \tau \\
& {\cal P}_e^{(1)} \circ \Phi^{(2)} \quad C^\infty-\text{smooth,   small of order three.}
\end{aligned}
$$
Altogether, one obtains 
\begin{equation}\label{cal H reg 2}
\begin{aligned}
{\cal H}^{(2)} &= {\cal H}^{(1)} \circ \Phi^{(2)}  =  {\cal N}+ \e \widehat{\cal N}_1 +  \e^2 \big( \omega \cdot \partial_\theta {\cal F}^{(2)}(\theta) + {\cal P}_{00}^{(1)}(\theta)  \big) + {\cal P}^{(2)}\,, \\
{\cal P}^{(2)} & := \e^4   \int_0^1 (1 - \tau)\{ \{{\cal N} \,, \, {\cal F}^{(2)} \}\,,\,{\cal F}^{(2)}\} \circ \Phi^{(2)}(\tau, \cdot )\, d \tau + \e^3 \int_0^1 \{ \widehat{\cal N}_1, {\cal F}^{(2)} \} \circ \Phi^{(2)}(\tau, \cdot)\, d \tau \\
& \quad + \e^4 \int_0^1 \{ {\cal P}_{00}^{(1)}, {\cal F}^{(2)} \} \circ \Phi^{(2)}(\tau, \cdot)\, d \tau + {\cal P}_e^{(1)} \circ \Phi^{(2)}\,. 
\end{aligned}
\end{equation}
Since ${\cal P}_e^{(1)}$ is $C^\infty$-smooth and small of order three, so is ${\cal P}^{(2)}$. 
In view of the formula for ${\cal H}^{(2)}$ in \eqref{cal H reg 2} we consider the following homological equation for ${\cal F}^{(2)}$,
\begin{equation}\label{homological equation 2} 
\omega \cdot \partial_\theta {\cal F}^{(2)}(\theta)+ {\cal P}_{00}^{(1)}(\theta)= \langle {\cal P}_{00}^{(1)}\rangle_\theta\,.
\end{equation}
Since by assumption $\omega \in \Pi_\gamma$, $0 < \gamma < 1$, (cf. \eqref{non resonant set tot}), we can apply Lemmata \ref{eq omo zero melnikov abstract}, \ref{eq homo astratta prime melnikov},
to conclude that \eqref{homological equation 2} has a unique solution ${\cal F}^{(2)} \in C^\infty(\T^{S_+}, \R)$ with $\langle {\cal F}^{(2)} \rangle_\theta = 0$.
The Hamiltonian ${\cal H}^{(2)}$ in \eqref{cal H reg 2}  then reads
\begin{equation}\label{forma finale hamiltoniana}
 {\cal H}^{(2)} = {\cal N} + \e \widehat{\cal N}_2 + {\cal P}^{(2)}\,,  \qquad
 \widehat{\cal N}_2 := \widehat{\cal N}_1 + \e \langle {\cal P}_{00}^{(1)} \rangle_\theta \stackrel{\eqref{cal H reg 1 b}}{=} \langle {\cal P}_{00}\rangle_\theta 
 +  \langle {\cal P}_{10} \rangle_\theta \cdot y + \e \langle {\cal P}_{00}^{(1)} \rangle_\theta \,.
\end{equation}
Since  ${\cal P}^{(2)}$ is small of order three, its Hamiltonian vector field $X_{{\cal P}^{(2)}}$ is small of order two. 
For later use, we again discuss the normal component $X_{{\cal P}^{(2)}}^\bot $ of the vector field $X_{{\cal P}^{(2)}}$. 
Since $X_{\e^2 {\cal F}^{(2)}} \in {\cal OS}^2(N)$, 
and $ X^\bot_{ {\cal P}^{(1)}_e} = {\cal OB}^2(1, N) + {\cal OS}^2(N)$ (cf. \eqref{prop X cal P (1) e})
it follows from Lemma \ref{push forward smoothing 1} that $X^\bot_{\mathcal P^{(1)}_e \circ \Phi^{(2)} } = {\cal OB}^2(1, N) + {\cal OS}^2(N)$.
Arguing similarly for all the other terms in ${\cal P}^{(2)}$ (cf. \eqref{cal H reg 2}
\eqref{def cal P1}, \eqref{espansione hamiltoniana cal P (1)}, \eqref{cal H reg 1 b})
one shows that
\begin{equation}\label{prop X cal P (2)}
X_{{\cal P}^{(2)}}^\bot = \partial_x \nabla_\bot {\cal P}^{(2)} = {\cal OB}^2(1, N) + {\cal OS}^2(N).
\end{equation}

%Since the constant $\langle {\cal A} \rangle_\theta$ is not relevant for the dynamics, we rename ${\cal H}_2$ by dropping such a constant, therefore, we consider 
%\begin{equation}\label{forma finale hamiltoniana}
%\begin{aligned}
%& {\cal H}_2 = {\cal N} + \e \widehat{\cal N}_1 + {\cal P}^{(2)}\,, \\
%\end{aligned}
%\end{equation}
%where ${\cal P}^{(2)}$ is small at least of order three. 

\medskip
\noindent
{\bf Normalization of terms affine in $w$.}
%\label{step 3 smoothing}
The aim of this third step is to construct a symplectic coordinate transformation $\Phi^{(3)}$, normalizing the terms in the Taylor expansion of $ {\cal P}^{(2)}$ (cf. \eqref{forma finale hamiltoniana})
with respect to $y$, $w$ at $(y, w) = (0, 0)$, which are homogeneous  in $y, w, \e$ of order three, of degree at most one in $w$,  and of degree at most two in $\e$.
 The Taylor expansion of ${\cal P}^{(2)}$ in $y$, $w$, $\e$ up to order four reads
$$
\begin{aligned}
 {\cal P}^{(2)}(\frak x) = &\e^3 {\cal P}_{00}^{(2)}(\theta) + \e^2 \big( {\cal P}_{10}^{(2)}(\theta) \cdot y + \langle {\cal P}_{0 1}^{(2)}(\theta), w \rangle \big)  
 + \e \big( {\cal P}_{2 0}^{(2)}(\theta)[y, y]  +  \langle {\cal P}_{11}^{(2)}(\theta)[y], w \rangle\big)  + \langle {\cal P}_{0 2}^{(2)}(\theta, y)[w], w \rangle \\
&  + {\cal P}_{3 0}^{(2)}(\theta)[y,y,y] + \langle {\cal P}_{2 1}^{(2)}(\theta)[y, y], w \rangle  + {\cal P}_{0 3}^{(2)}(\theta)[w,w,w] 
 + {\cal O}_4(\frak x),
\end{aligned}
$$
where for any $n \ge 0$,
\begin{equation}\label{prop espansione cal P (2)}
\begin{aligned}
& {\cal P}_{00}^{(2)}  \in C^\infty(\T^{S_+}, \, \R) \,,
\qquad \qquad  {\cal P}_{10}^{(2)} \in C^\infty(\T^{S_+}, \, \R^{S_+}), \qquad \qquad \qquad \ \  {\cal P}_{0 1}^{(2)} \in C^\infty(\T^{S_+}, \, H^n_\bot(\T_1)), \\
&{\cal P}_{2 0}^{(2)} \in C^\infty(\T^{S_+}, \, {\cal B}_2(\R^{S_+})), \quad \ \ {\cal P}_{11}^{(2)} \in C^\infty\big(\T^{S_+}, \, {\cal B}(\R^{S_+}, H^n_\bot(\T_1)) \big), \\
&  {\cal P}_{3 0}^{(2)} \in C^\infty(\T^{S_+}, \, {\cal B}_3(\R^{S_+}))\, , 
 \quad \, {\cal P}_{2 1}^{(2)} \in C^\infty(\T^{S_+}, \, {\cal B}_2(\R^{S_+}, H^n_\bot(\T_1))), \quad  {\cal P}_{0 3}^{(2)} \in C^\infty(\T^{S_+}, \, {\cal B}_3(H^n_\bot(\T_1)), \\
 & {\cal P}_{0 2}^{(2)} \in C^\infty\big( \T^{S_+} \times \R^{S_+} \times \R, \, {\cal B}(H^n_\bot(\T_1)) \big),  \qquad {\cal O}_4(\frak x) \ \  C^\infty{\text{-smooth,  small of order four.}}\\
\end{aligned}
\end{equation}
\begin{remark}\label{mathcal P^(2) (0 2)}
 In the above Taylor expansion of  ${\cal P}^{(2)}$, we combined the terms
which are of the order $(0 \, 2)$ and $(1 \,2)$ in the variables $y$, $w$ and for notational convenience, denoted the combined term by $\langle {\cal P}_{0 2}^{(2)}(\theta, y)[w], w \rangle$.
The map ${\cal P}_{0 2}^{(2)}: (\theta, y, \e) \mapsto {\cal P}_{0 2}^{(2)}(\theta, y) \equiv {\cal P}_{0 2}^{(2)}(\theta, y, \e)$ is linear in $y, \e$.
\end{remark}
We split ${\cal P}^{(2)}$ as ${\cal P}^{(2)} = {\cal P}^{(2)}_1 + {\cal P}^{(2)}_2 + {\cal O}_4$ where 
\begin{equation}\label{splitting cal P 2 normalizzazione}
\begin{aligned}
{\cal P}^{(2)}_1 & := \e^2 {\cal P}^{(2)}_{10}(\theta) \cdot y +  \e {\cal P}^{(2)}_{2 0}(\theta)[y, y] + {\cal P}^{(2)}_{3 0}(\theta)[y,y,y]  \\
& \quad + \e^2 \big\langle {\cal P}^{(2)}_{0 1}(\theta), w \big\rangle + \e \big\langle {\cal P}^{(2)}_{11}(\theta)[y], w \big\rangle + \big\langle {\cal P}^{(2)}_{2 1}(\theta)[y, y], w \big\rangle \\  
{\cal P}^{(2)}_2 & :=  \e^3 {\cal P}^{(2)}_{00}(\theta) +  \langle {\cal P}^{(2)}_{0 2}(\theta, y)[w],  w \rangle  +  {\cal P}^{(2)}_{0 3}(\theta)[w,w,w] \, .
\end{aligned}
\end{equation}
Note that ${\cal P}^{(2)}_1$ is affine in $w$ and that
the Hamiltonian vector field corresponding to the term $ \e^3 {\cal P}^{(2)}_{00}(\theta)$ is small of order three.
The transformation $\Phi^{(3)}$ is then defined as the time one flow of the Hamiltonian vector field $X_{\mathcal F^{(3)}}$ with a Hamiltonian $\mathcal F^{(3)}$ of the form
\begin{equation}\label{def cal F (3)}
\begin{aligned}
& {\cal F}^{(3)}(\frak x) :=  \e^2 {\cal F}^{(3)}_{10}(\theta) \cdot y +  \e  {\cal F}^{(3)}_{2 0}(\theta)[y, y] + {\cal F}^{(3)}_{3 0}(\theta)[y,y,y] \\
& \qquad + \e^2 \big\langle {\cal F}^{(3)}_{0 1}(\theta), w \big\rangle + \e \big\langle {\cal F}^{(3)}_{11}(\theta)[y], w \big\rangle + \big\langle {\cal F}^{(3)}_{2 1}(\theta)[y, y], w \big\rangle
\end{aligned}
\end{equation}
satisfying for any $n \ge 0$,
\begin{equation}\label{proprieta cal F (3)}
\begin{aligned}
& {\cal F}^{(3)}_{10} \in C^\infty(\T^{S_+}, \R^{S_+}), \qquad \ {\cal F}^{(3)}_{2 0} \in C^\infty(\T^{S_+}, {\cal B}_2(\R^{S_+})), 
\qquad  \qquad \ {\cal F}^{(3)}_{3 0} \in C^\infty(\T^{S_+}, {\cal B}_3(\R^{S_+})),  \\
&{\cal F}^{(3)}_{0 1} \in C^\infty(\T^{S_+}, H^n_\bot(\T_1)), \ \  {\cal F}^{(3)}_{11} \in C^\infty(\T^{S_+}, {\cal B}(\R^{S_+}, H^n_\bot(\T_1))), 
\quad  {\cal F}^{(3)}_{2 1} \in C^\infty(\T^{S_+}, {\cal B}_2(\R^{S_+}, H^n_\bot(\T_1))).
\end{aligned}
\end{equation}
The functions $\mathcal F^{(3)}_{i j}$ will be chosen according to our needs.
 By \eqref{def cal F (3)}, \eqref{proprieta cal F (3)}, the Hamiltonian vector field $X_{{\cal F}^{(3)}}$
is small of order two and by Lemma \ref{prop astratte campi vettoriali smoothing NF} arbitrarily smoothing. It means that $X_{{\cal F}^{(3)}} \in {\cal OS}^2(N)$ for any integer $N \geq 1$ 
(cf. Definition \ref{def smoothing vector fields}). 
Denote by  $\Phi^{(3)}(\tau, \cdot) \equiv \Phi_{ {\cal F}^{(3)}}(\tau, \cdot)$ the flow of $X_{ {\cal F}^{(3)}}$. 
For any given $N \in \N,$ there exists an integer $s_N >0$
 with the property that for any $s \ge s_N$, there exist
$0 < \delta \equiv \delta(s, \gamma, N) < 1$ and $0 < \e_0 \equiv \e_0(s, \gamma, N) < 1$ (small), so that
$\Phi^{(3)}(\tau, \cdot) \in C^\infty_b(\mathcal V^s(\delta) \times [0, \e_0],  \mathcal V^s(2\delta))$ for any $-1 \le \tau \le 1$. 
The inverse of the time one flow map $\Phi^{(3)} := \Phi^{(3)}(1, \cdot )$ is then given by $(\Phi^{(3)})^{- 1} = \Phi^{(3)}(- 1, \cdot)$ and by Lemma \ref{flusso smoothing vector fields},
\begin{equation}\label{Phi (3) - Id}
\Phi^{(3)}(\tau, \cdot) (\frak x) - {\frak x} \in {\cal OS}^2(N)\, , \qquad \forall \, -1 \le \tau \le 1 \,. 
\end{equation} 
We now compute ${\cal H}^{(3)} := {\cal H}^{(2)} \circ \Phi^{(3)}$ by expanding separately the terms in \eqref{forma finale hamiltoniana}. 
By \eqref{Lie expansion Hamiltonian} (Lie expansion), \eqref{Phi (3) - Id} (properties of $\Phi^{(3)}$),  
\eqref{splitting cal P 2 normalizzazione} (splitting of $\mathcal P^{(2)}$),  \eqref{def cal F (3)} - \eqref{proprieta cal F (3)} (properties of $\mathcal F^{(3)}$), 
and \eqref{def poisson action-angle} (Poisson bracket) 
$$
{\cal N} \circ \Phi^{(3)}  = {\cal N} + \{ {\cal N}\,,\, {\cal F}^{(3)} \} +  \int_0^1 ( 1- \tau) \{ \{{\cal N}, {\cal F}^{(3)} \}, {\cal F}^{(3)} \} \circ \Phi^{(3)}(\tau, \cdot)\, d \tau
$$ 
can be expanded as
\begin{equation}
\begin{aligned}
{\cal N} \circ \Phi^{(3)} &
%& = {\cal N} + \{ {\cal N}\,,\, {\cal F}^{(3)} \} +  \frac12 \int_0^1 ( 1- \tau) \{ \{{\cal N}, {\cal F}^{(3)} \}, {\cal F}^{(3)} \} \circ \Phi^{(3)}(\tau, \cdot)\, d \tau \\
 = {\cal N} + \e^2 (\omega \cdot \partial_\theta) {\cal F}_{1 0}^{(3)}(\theta) \cdot y + \e ( \omega \cdot \partial_\theta) {\cal F}_{2 0}^{(3)}(\theta)[y, y] 
 + (\omega \cdot \partial_\theta) {\cal F}_{3 0}^{(3)}(\theta)[y,y,y] \\
& + \e^2 \big\langle (\omega \cdot \partial_\theta + \ii \Omega_\bot) {\cal F}_{0 1}^{(3)}(\theta),w\big\rangle 
+ \e \big\langle (\omega \cdot \partial_\theta + \ii \Omega_\bot) {\cal F}_{1 1}^{(3)}(\theta)[y],w\big\rangle  
+ \big\langle (\omega \cdot \partial_\theta  + \ii \Omega_\bot) {\cal F}_{2 1}^{(3)}(\theta)[y, y],w\big\rangle\\
& +(\Omega_{S_+} [y] \cdot \partial_\theta) {\cal F}^{(3)}  
 +  \int_0^1 ( 1- \tau) \{ \{{\cal N}, {\cal F}^{(3)} \}, {\cal F}^{(3)} \} \circ \Phi^{(3)}(\tau, \cdot)\, d \tau \,, 
 \end{aligned}
 \end{equation}
$$
 \widehat{\cal N}_2 \circ \Phi^{(3)}   = \widehat{\cal N}_2 + \int_0^1 \{ \widehat{\cal N}_2\,,\, {\cal F}^{(3)} \} \circ \Phi^{(3)}(\tau, \cdot)\, d \tau, \quad 
{\cal P}^{(2)} \circ \Phi^{(3)}  = {\cal P}_1^{(2)} + {\cal P}_2^{(2)} + \int_0^1 \{ {\cal P}^{(2)}, {\cal F}^{(3)} \} \circ \Phi^{(3)}(\tau, \cdot)\, d \tau \,. 
$$
Since ${\cal P}^{(2)}$ (cf. \eqref{forma finale hamiltoniana}),
%${\cal P}^{(2)}_1, {\cal P}^{(2)}_2, 
${\cal F}^{(3)}$ (cf. \eqref{def cal F (3)}) are small of order three and in view of the definition of $\mathcal N$, $\widehat{\cal N}_2$ (cf. \eqref{forma finale hamiltoniana}), 
$\{ \{ {\cal N}, \, {\cal F}^{(3)} \}$, ${\cal F}^{(3)} \}$, $\e \{ \widehat{\cal N}_2, {\cal F}^{(3)} \}$, 
and $\{{\cal P}^{(2)}, {\cal F}^{(3)} \}$ are small of order four. Hence the Hamiltonian ${\cal H}^{(3)}$ takes the form 
\begin{equation}\label{hamiltonian cal H3}
\begin{aligned}
{\cal H}^{(3)} & = {\cal N} + \e \widehat{\cal N}_2
+ \e^2\Big( \omega \cdot \partial_\theta {\cal F}_{1 0}^{(3)}(\theta) + {\cal P}_{1 0}^{(2)}(\theta) \Big)\cdot y 
+ \e  \Big(\omega \cdot \partial_\theta {\cal F}_{2 0}^{(3)}(\theta) + {\cal P}_{2 0}^{(2)}(\theta)\Big)[y, y] \\
& + \Big(\omega \cdot \partial_\theta {\cal F}_{3 0}^{(3)}(\theta) + {\cal P}_{3 0}^{(2)}(\theta) \Big)[y,y,y] 
 + \e^2 \big\langle (\omega \cdot \partial_\theta + \ii \Omega_\bot) {\cal F}_{0 1}^{(3)}(\theta)  + {\cal P}_{0 1}^{(2)}(\theta), \, w\big\rangle \\
 & + \e \big\langle (\omega \cdot \partial_\theta + \ii \Omega_\bot) {\cal F}_{1 1}^{(3)}(\theta)[y]  + {\cal P}_{11}^{(2)}(\theta)[y],w\big\rangle  
  + \big\langle (\omega \cdot \partial_\theta + \ii \Omega_\bot) {\cal F}_{2 1}^{(3)}(\theta)[y, y] + {\cal P}_{21}^{(2)}(\theta)[y, y],w\big\rangle  \\
  &+ {\cal P}_2^{(2)} + {\cal O}_4  
\end{aligned}
\end{equation}
where ${\cal O}_4$ comprises all the terms which are small of order four.
In view of \eqref{hamiltonian cal H3},  we consider the following system of homological equations for $\mathcal F^{(3)}_{i j}$,
\begin{equation}\label{eq omologiche ordine 3}
\begin{aligned}
& \omega \cdot \partial_\theta {\cal F}_{j 0}^{(3)}(\theta) + {\cal P}_{j 0}^{(2)}(\theta) = \big\langle {\cal P}_{j 0}^{(2)} \big\rangle_\theta, \quad 1 \le j \le 3,  \\
&  (\omega \cdot \partial_\theta + \ii \Omega_\bot) {\cal F}^{(3)}_{0 1}(\theta) + {\cal P}^{(2)}_{0 1}(\theta) = 0\,,  \qquad
 (\omega \cdot \partial_\theta + \ii \Omega_\bot) {\cal F}^{(3)}_{1 1}(\theta) + {\cal P}^{(2)}_{11}(\theta) = 0\,, \\
& (\omega \cdot \partial_\theta + \ii \Omega_\bot) {\cal F}^{(3)}_{2 1}(\theta) + {\cal P}^{(2)}_{21}(\theta) = 0.
 \end{aligned}
\end{equation}
Since by assumption $\omega \in \Pi_\gamma$, $0 < \gamma < 1$ (cf. \eqref{non resonant set tot}), we can apply Lemmata \ref{eq omo zero melnikov abstract}, \ref{eq homo astratta prime melnikov},
to conclude that  the  system \eqref{eq omologiche ordine 3} has a unique solution
${\cal F}_{ij}^{(3)}$, satisfying the properties \eqref{proprieta cal F (3)}.
The Hamiltonian ${\cal H}^{(3)}$ in \eqref{hamiltonian cal H3} then reads
\begin{equation}\label{forma finale cal H3}
\begin{aligned}
& {\cal H}^{(3)}  = {\cal N}^{(3)}+ {\cal K}, \qquad
{\cal N}^{(3)}  :=  \omega \cdot y + \e \widehat \omega \cdot y + \frac12 \big\langle D^{- 1} \Omega_\bot w\,,\, w \big\rangle + Q(y) \,, 
\qquad {\cal K}  := {\cal P}_2^{(2)} + {\cal O}_4 \,.  \\
& \widehat \omega  := \langle {\cal P}_{10} \rangle_\theta  + \e \langle {\cal P}_{1 0}^{(2)} \rangle_\theta  \,, \qquad
Q(y) := \frac12 \Omega_{S_+} y \cdot y + \e \langle {\cal P}_{2 0}^{(2)} \rangle_\theta [y, y] + \langle {\cal P}_{3 0}^{(2)} \rangle_\theta[y, y, y]  \, .
\end{aligned}
\end{equation}
Here we dropped the irrelevant constant term $\e \langle {\cal P}_{00} \rangle_\theta + \e^2 \langle {\cal P}_{00}^{(1)} \rangle_\theta$ from the Hamiltonan ${\cal H}^{(3)}$ 
(cf. Remark \ref{irrelevant constant}). By \eqref{splitting cal P 2 normalizzazione}, \eqref{forma finale cal H3}, the components of the Hamiltonian vector field 
$X_{{\cal H}^{(3)}} = (X_{{\cal H}^{(3)}}^{(\theta)}, X_{{\cal H}^{(3)}}^{(y)}, X_{{\cal H}^{(3)}}^\bot)$ read
\begin{equation}\label{ham vec field cal H3}
\begin{aligned}
  X_{\mathcal H^{(3)}}^{(\theta)}(\frak x) = - \omega - \e \widehat \omega - & \nabla_y Q(y) - \nabla_y {\cal P}_2^{(2)}(\frak x) - \nabla_y {\cal O}_4(\frak x)  \,,  \qquad
  X_{{\cal H}^{(3)}}^{(y)}(\frak x) =  \nabla_\theta {\cal P}_2^{(2)}(\frak x)  +  \nabla_\theta {\cal O}_4(\frak x)\,, \\
& X_{{\cal H}^{(3)}}^\bot (\frak x) =  \ii \Omega_\bot w + \partial_x \nabla_\bot {\cal P}_2^{(2)}(\frak x) + \partial_x \nabla_\bot {\cal O}_4(\frak x) \, . 
\end{aligned}
\end{equation}
Since ${\cal P}_2^{(2)}$ is a $C^\infty-$smooth and small of order three and ${\cal O}_4$ is small of order four, $\nabla_\theta {\cal P}_2^{(2)}$ 
is small of order three and $\nabla_\theta {\cal O}_4$ is small of order four, implying that
\begin{equation}\label{X cal H3 (y)}
X_{{\cal H}^{(3)}}^{(y)} \in C^\infty_b \big( {\cal V}^{\sigma_N}(\delta) \times  [0, \e_0], \, \R^{S_+} \big) \quad \text{small of order three}
\end{equation} 
for some $\sigma_N > 0$. 
Towards $X_{{\cal H}^{(3)}}^{(\theta)}$, note that $\nabla_y {\cal O}_4$ is small of order three and that
$\nabla_y {\cal P}_2^{(2)}$ (cf. \eqref{splitting cal P 2 normalizzazione}) is small of order two and  has the additional property of being at least quadratic with respect to $w$. Therefore 
\begin{equation}\label{animal 0}
 \nabla_y {\cal P}_2^{(2)}(\frak x)+ \nabla_y {\cal O}_4(\frak x)  = \Upsilon_2^{(\theta)}(\theta)[w, w] + \Upsilon_3^{(\theta)}(\frak x)\, ,
\end{equation} 
where
$$
 \Upsilon_2^{(\theta)} \in C^\infty \big( \T^{S_+}, \, {\cal B}_2(H^{\sigma_N}_\bot(\T_1), \, \R^{S_+})\big),  \qquad
  \Upsilon_3^{(\theta)} \in C^\infty\big({\cal V}^{\sigma_N}(\delta) \times  [0, \e_0], \, \R^{S_+} \big) \ \  \text{small of order three}
$$
for some $\sigma_N > 0$. 
For later use, we  discuss the normal component $X_{\cal K}^\bot $ of the vector field $X_{\cal K}$.
Since by \eqref{splitting cal P 2 normalizzazione}, 
${\cal P}^{(2)}_2 =  \e^3 {\cal P}^{(2)}_{00}(\theta) +  \langle {\cal P}^{(2)}_{0 2}(\theta, y)[w],  w \rangle +  {\cal P}^{(2)}_{0 3}(\theta)[w,w,w]$ (cf. Remark \ref{mathcal P^(2) (0 2)})
one infers that
\begin{equation}\label{formula X mathbb K bot}
X_{\mathcal K}^\bot (\frak x) = \partial_x \nabla_\bot {\cal P}_2^{(2)}(\frak x) + \partial_x \nabla_\bot {\cal O}_4(\frak x) 
= 2 \partial_x  {\cal P}^{(2)}_{0 2}(\theta, y)[w] + \Upsilon^\bot_2 (\theta)[w, w] + \Upsilon^\bot_3(\frak x)
\end{equation}
where $\Upsilon^\bot_3(\frak x)$ is small of order three.
Since $X_{{\cal F}^{(3)}} \in {\cal OS}^2(N)$ and $\partial_x \nabla_\bot {\cal P}^{(2)} = {\cal OB}^2(1, N) + {\cal OS}^2(N)$ (cf. \ref{forma finale hamiltoniana}, \ref{prop X cal P (2)})
and in view of the definition of $\mathcal O_4$ (cf. \eqref{hamiltonian cal H3})
it then follows from Lemma \ref{push forward smoothing 1} that
\begin{equation}\label{prop X cal K}
\begin{aligned}
& \qquad  \qquad \qquad \qquad  \partial_x  {\cal P}^{(2)}_{0 2}(\theta, y)[w] = {\cal OB}^2_w(1, N) + {\cal OS}^2_w(1, N)\,, \\
&  \Upsilon^\bot_2 (\theta)[w, w]  = {\cal OB}^2_{ww}(1, N) + {\cal OS}^2_{ww}(N)\,, \qquad
 \Upsilon^\bot_3(\frak x) = {\cal OB}^3(1, N) + {\cal OS}^3( N)\, .
\end{aligned}
\end{equation}
%where we recall the definitions \ref{paradiff vector fields}, \ref{def smoothing vector fields}, \ref{paradiff omogenei di ordine 2}.     

\bigskip
\noindent
{\em{Proof of Proposition \ref{prop ordine e 3}.}} 
We define $\Phi := \Phi^{(1)} \circ \Phi^{(2)} \circ \Phi^{(3)}$ where $\Phi^{(1)}, \Phi^{(2)}, \Phi^{(3)}$ are the symplectic coordinate transformations,
given in the paragraphs above. 
Using the properties \eqref{Phi (1) - Id}, \eqref{Phi (2) - Id}, \eqref{Phi (3) - Id} of $\Phi^{(1)}$, $\Phi^{(2)}$, and $\Phi^{(3)}$,  respectively
one shows that  there exists an integer $s_N > 0$ with the property that for any $s \ge s_N$ 
there exist $0 < \delta \equiv \delta(s, \gamma, N) < 1$ and $0 < \e_0 \equiv \e_0(s, \gamma, N) < 1$ so that  \eqref{prop Phi finale passi smoothing} holds, 
$$
 \Phi^{\pm 1}\in {\cal C}^\infty_b({\cal V}^s(\delta) \times [0, \e_0], \, {\cal V}^s(2 \delta)) \, , \qquad
 \Phi^{\pm 1}(\frak x) - {\frak x} \quad \text{small of order one}\, .
$$
Since ${\cal K} = {\cal P}_2^{(2)} + {\cal O}_4$,
the remaining statements of Proposition \ref{prop ordine e 3} then follow by \eqref{X cal H3 (y)} - \eqref{prop X cal K}.
\hfill $\square$

\section{Normalization steps by para-differential calculus}\label{normalization II}

The goal of this section is to normalize terms in the vector field $X_\mathcal K$, which are linear or quadratic in the variable $w$,
where $X_\mathcal K$ denotes the Hamiltonian vector field of the Hamiltonian $\mathcal K$ of Proposition \ref{prop ordine e 3}.
This is achieved in three steps, described in the following three subsections, by using para-differential calculus.

\subsection{Normalization of terms linear or quadratic in $w$}\label{sec regolarizzazione w w2}
%
% The aim of this section is to construct coordinates so that the vector field $X^\bot_{{\cal H}_3} = \partial_x \nabla_\bot {\cal H}_3$, given in Proposition \ref{prop ordine e 3}, is in normal form up to terms which are para-differential operators whose coefficients are small of order one and a remainder which is $N$-smoothing and small of order one. To simplify notations we write $X, X^S, X^\bot$ instead of $X_{{\cal H}_3}, X^S_{{\cal H}_3}, X^\bot_{{\cal H}_3}$.
%By Proposition \ref{prop ordine e 3}, the Hamiltonian vector field we deal with has the form $X = (X^S, X^\bot)$ where
%$$
%X^S = \begin{pmatrix}
%\omega + \e \widehat \omega + \Omega_{S_+} y +  \nabla_y {\cal K} \\
%- \nabla_\theta {\cal K}
%\end{pmatrix}
%$$
%and $X^\bot(\frak x) = \partial_x \big(D^{- 1} \Omega_\bot w + \nabla_\bot {\cal K}(\frak x) \big)$. By \eqref{vector field cal K}-\eqref{cal RN dopo smoothing steps}
%\begin{equation}\label{X for reg}
%\begin{aligned}
%X^\bot(\frak x) & = \ii \Omega_\bot w + {\cal P}(\frak x) + {\cal R}(\frak x)
%\end{aligned}
%\end{equation}
%where ${\cal R} \in {\cal OS}^2(N)$ has an expansion of the form \eqref{cal RN dopo smoothing steps} and ${\cal P} \in {\cal OB}^2(1, N)$. By Lemma \ref{espansione astratta componenti omogenee}, we can espand ${\cal P}$ as 
%$$
%{\cal P} = {\cal P}_2 + {\cal P}_3 \quad \text{where} \quad {\cal P}_2 \in {\cal OB}^2_{hom}(1, N), \quad {\cal P}_3 \in {\cal OB}^3(1, N)\,. 
%$$ 
The aim of this subsection is to reduce to constant coefficients the terms in the normal component $ X^\bot \equiv X_{{\cal H}_3}^\bot$ of the vector field  $ X \equiv X_{{\cal H}_3}$, which are linear and quadratic in $w$.
Recall that such a reduction is needed since  $\Pi_\gamma^{(3)}$ (cf. \eqref{condizioni forma normale}) allows for a loss of derivatives in space.

By Proposition \ref{prop ordine e 3}, $X^\bot$ is of the form
$$
 X^\bot(\frak x) = X_{{\cal H}_3}^\bot(\frak x) \stackrel{\eqref{vector field cal K}}{=} \ii \Omega_\bot w + X_{\cal K}^\bot(\frak x).
 $$
 Since $\Omega_\bot $ is a diagonal Fourier multiplier with constant real coefficients (cf. \eqref{def notation normal frequencies}, \eqref{definition Omega bot}),
it remains to normalize $X_{\cal K}^\bot(\frak x )$ in the above sense.

By Proposition \ref{prop ordine e 3}, $X_{\cal K}^\bot(\frak x )$ admits an expansion of the form
\begin{equation}\label{expansion of X bot}
\begin{aligned}
& X_{\cal K}^\bot (\frak x) =   X^\bot_1(\theta, y)[w] + X^\bot_2(\theta)[w, w] + {\cal OB}^3(1, N) + {\cal OS}^3(1, N)\,, \\
& X^\bot_1(\theta, y)[w] := \Upsilon^\bot_1(\theta, y)[w]  + {\cal R}^\bot_{N,1}(\theta, y)[w], \qquad  
X^\bot_2(\theta)[w, w] := \Upsilon^\bot_2(\theta, w)[w]  + {\cal R}^\bot_{N, 2}(\theta)[w, w]\,,  
\end{aligned}
\end{equation}
where
\begin{equation}\label{expansion for X_1 + X_2}
\begin{aligned}
&   \Upsilon^\bot_1(\theta, y)[w] = \Pi_\bot \sum_{k = 0}^{N+1} T_{a_{1 - k}(\theta, y)} \partial_x^{1 - k} w \in {\cal OB}^2_w(1, N), 
\qquad {\cal R}^\bot_{N, 1}(\theta, y)[w] \in {\cal OS}^2_w(N)\,, \\
&  \Upsilon^\bot_2(\theta, w)[w] =   \Pi_\bot \sum_{k = 0}^{N+1} T_{A_{1 - k}(\theta)[w]} \partial_x^{1 - k} w \in {\cal OB}^2_{ww}(1, N), 
\quad {\cal R}^\bot_{N, 2}(\theta)[w, w] \in {\cal OS}^2_{ww}(N)\,. 
\end{aligned}
\end{equation}
By Definition \ref{paradiff omogenei di ordine 2}, for any given $N \in \N$, there are integers $s_N $, $\sigma_N > 0$ (large) 
with the property that for any $s \geq s_N$ 
there exist $0 < \delta = \delta(s, \gamma, N) < 1$ and $0 < \e_0 \equiv \e_0(s, \gamma, N) < 1$ so that  for any $0 \le k \le N+1$
\begin{equation}\label{exp a 1 - k for reg}
\begin{aligned}
& a_{1 - k} \in C^\infty_b \big( \T^{S_+} \times B_{S_+}(\delta) \times [0, \e_0] , \, H^s(\T_1) \big) \ \  \text{small of order one},  \\
& A_{1 - k} \in C^\infty\big(\T^{S_+} \times [0, \e_0],  \, {\cal B}(H^{s + \sigma_N}_\bot(\T_1), H^s(\T_1)) \big)\,. 
\end{aligned}
\end{equation}
Note that $X^\bot_1(\theta, y)[w]$ is a vector field small of order $2$ and linear in $w$, whereas  $X^\bot_2(\theta)[w, w]$ is small of order $2$, but quadratic in $w$. 
Since the vector field $X^\bot_{{\cal K}}$ is Hamiltonian,  every term in the expansion \eqref{expansion of X bot}, which is homogeneous in the coordinates $y, w$, is a Hamiltonian vector field as well. 
In particular, $ X^\bot_1(\theta, y)[w]$ is such a vector field.

\medskip

\noindent
{\bf Preliminary analysis of the vector field $X^\bot_1(\theta, y)[w]$. } 
%By \eqref{expansion of X bot}, \eqref{formula X mathbb K bot},  
%together with Lemmata \ref{lemma parte diagonale}, \ref{lemma 2 campi lineari} 
Since $ X^\bot_1(\theta, y)[w]$ is a Hamiltonian vector field which is linear in $w$, 
 \eqref{diag purely imaginary} in Appendix \ref{Appendix A} implies that the diagonal operator 
\begin{equation}\label{parte diagonal X1}
%{\cal D}_{X^\bot_1}(\theta, y) := 
{\rm diag}_{j \in S^\bot} [X^\bot_1(\theta, y)]_j^j
\end{equation}
is skew-adjoint,
\begin{equation}\label{eredita hamiltoniana}
[X^\bot_1(\theta, y)]_j^j = - \overline{[X^\bot_1(\theta, y)]_j^j}, \qquad j \in S^\bot\,.
\end{equation} 
We will show that the normal form transformations, constructed in this and the following subsection, preserve this property of  $X^\bot_1(\theta, y)[w]$. 
Since this is the only property of  the transformed vector field $X^\bot_1(\theta, y)[w]$ which is needed in the energy estimates in Section \ref{conclusioni forma normale}
we can allow for normal form transformations, which are not necessarily symplectic, as long as they preserve \eqref{eredita hamiltoniana}. 
%Note that for any $j \in S^\bot$
%\begin{equation}\label{matrice X1 jj}
%[X_1(\theta, y)]_j^j = \sum_{k = 1}^N \langle \alpha_{1 - k}(\theta, y)\rangle_x (\ii j)^{1 - k} + [{\cal R}_{1}(\theta, y)]_j^j\,.
%\end{equation}
%Since ${\cal R}_{1}(\theta, y)$ is an $N$-smoothing operator small of order one, we have the decay estimate $|[{\cal R}_{1}(\theta, y)]_j^j| \lesssim (\e + \| y \|)j^{- N}$ for any $j \in S^\bot$. Furthermore, the functions $\alpha_{1 - k}$ are real valued and therefore the equality $[X_1(\theta, y)]_j^j = - \overline{[X_1(\theta, y)]_j^j}$ leads to 
%$$
%\langle \alpha_{1 - k}(\theta, y) \rangle_x = 0 \quad \text{for}\quad k \quad \text{odd}
%$$
%and 
%$$
%[{\cal R}_{\leq 1}(\theta, y)]_j^j = - \overline{[{\cal R}_{\leq 1}(\theta, y)]_j^j}\,. 

\smallskip

Our aim is to construct iteratively a coordinate transformation on ${\cal V}^s(\delta)$ so that when expressed in the new coordinates, the vector field $X^\bot_1(\theta, y)[w] + X^\bot_2(\theta)[w, w]$ 
is again of the form \eqref{expansion for X_1 + X_2} 
and that the coefficients $a_{1 - k}(\theta, y) + A_{1 - k}(\theta)[w]$, $0 \le k \le N+1$, are independent of $x$. 
At the $(n+1)$th step, $n \geq 0$, we deal with a vector field $X_n = (X_n^{(\theta)}, X_n^{(y)}, X_n^\bot)$, 
defined as the pull back of $X$ by the composition of the transformations up to the nth step,  of the form
\begin{equation}\label{Xn bot}
\begin{aligned}
X_n^{(\theta)}(\frak x) & = - \omega -  \e \widehat \omega - \nabla_y Q(y) - \Upsilon_2^{(\theta)}(\theta)[w, w] + {\cal O}_3^{(\theta)}(\frak x) \,, \qquad X_n^{(y)}(\frak x)  = {\cal O}_3^{(y)}(\frak x)\,, \\
X_n^\bot (\frak x) & =\ii  \Omega_\bot w + {\cal D}^\bot_{n, 1}(\theta, y)[w] + {\cal D}^\bot_{n, 2}(\theta, w)[w] + X^\bot_{n, 1}(\theta, y)[w] + X^\bot_{n, 2}(\theta, w)[w]  \\
& \qquad + {\cal R}^{\bot}_{N, 1}(\theta, y)[w] +  {\cal R}^{\bot}_{N, 2}(\theta, w)[w] + {\cal OB}^3(1, N) + {\cal OS}^3(N)
\end{aligned}
%{\cal R}_n^{(0)}(\frak x) + {\cal R}_n^{(1)}(\frak x)+ {\cal P}_n^{\geq 3} + {\cal R}_n^{\geq 3} 
\end{equation}
where for notational convenience, we write $ {\cal R}^{\bot}_{N, j} \equiv  {\cal R}^{\bot}_{n, N, j}$ for $j=1,2$, and where
\begin{equation}\label{cal Mn cal Pn cal Rn}
\begin{aligned}
& {\cal D}^\bot_{n, 1}(\theta, y)[w] \in {\cal OF}^2_{w}(1, N), \qquad \qquad {\cal D}^\bot_{n, 2}(\theta, w)[w] \in {\cal OF}^2_{ww}(1, N), \\
& X^\bot_{n, 1}(\theta, y)[w] \in {\cal OB}_{w}^2(1 - n, N),  \qquad X^\bot_{n, 2}(\theta, w)[w] \in {\cal OB}_{ww}^2(1 - n, N)\\ 
&{\cal R}^{\bot}_{N,1}(\theta, y)[w] \in {\cal OS}_{w}^2(N), \qquad \qquad \ \  {\cal R}^\bot_{N, 2}(\theta, w)[w] \in {\cal OS}^2_{ww}(N)\,, \\
& {\cal O}_3^{(\theta)}, {\cal O}_3^{(y)} \in C^\infty_b \big( {\cal V}^{\sigma_N}(\delta) \times [0, \e_0], \, \R^{S_+} \big) \ \  \text{small of order three} 
\end{aligned}
\end{equation}
for some $\sigma_N > 0$.
Moreover 
\begin{equation}\label{antisimmetria}
\begin{aligned}
& {\cal D}^\bot_{n, 1}(\theta, y) = - {\cal D}^\bot_{n, 1}(\theta, y)^\top, \quad  [X^\bot_{n, 1}(\theta, y)]_j^j = - \overline{[X^\bot_{n, 1}(\theta, y)]_j^j}, \quad
 [{\cal R}^\bot_{N, 1}(\theta, y)]_j^j = - \overline{[{\cal R}^\bot_{N,1}(\theta, y)]_j^j}, \quad  \forall j \in S^\bot . 
\end{aligned}
\end{equation}
Our goal at the (n+1)th step  is to construct a transformation so that when expressed in the new coordinates, 
the vector field $X^\bot_{n, 1}(\theta, y)[w] + X^\bot_{n, 2}(\theta, w)[w]$ is of order 
$1 -(n+1) = - n $. 
Since $X^\bot_{n, 1}(\theta, y)[w] \in {\cal OB}^2_{w}(1 - n, N)$ and $X^\bot_{n, 2}(\theta, w)[w] \in {\cal OB}_{ww}^2(1 - n, N)$ we can write 
\begin{equation}\label{espansione cal P n (0)}
\begin{aligned}
& X^\bot_{n, 1} (\theta, y)[w]   = \Pi_\bot T_{a_{1 - n}(\theta, y)} \partial_x^{1 - n} w + {\cal OB}^2_{w}(- n, N)\,, \\
& X^\bot_{n, 1} (\theta, w)[w] = \Pi_\bot T_{A_{1 - n}(\theta)[w]} \partial_x^{1 - n} w + {\cal OB}^2_{ww}(- n, N)
\end{aligned}
\end{equation}
with the property that there are integers $s_N > 0$, $\sigma_N  \ge 0$ 
so that for any $s \geq s_N$ 
there exist $0 < \delta \equiv  \delta(s, \gamma, N) <1$ and $0 < \e_0 \equiv \e_0(s, \gamma, N) < 1$ so that 
\begin{equation}\label{proprieta alpha - n beta - n step 1}
\begin{aligned}
& a_{1 - n} \in C^\infty_b \big(\T^{S_+} \times B_{S_+}(\delta) \times [0, \e_0], \, H^s(\T_1) \big) \ \  \text{small of order one}, \\
& A_{1 - n}\in C^\infty\big(\T^{S_+} \times [0, \e_0], \, {\cal B}(H^{s + \sigma_N}_\bot(\T_1), \, H^s(\T_1)) \big)\,. 
\end{aligned}
\end{equation}
Hence we need to normalize the vector field $\Pi_\bot T_{a_{1 - n}(\theta, y) + A_{1 - n}(\theta)[w]} \partial_x^{1 - n} w $. 
In order to achieve this, we consider a para-differential vector field of the form 
\begin{equation}\label{hamiltoniana Gn forma normale}
\begin{aligned}
& Y^\bot_n (\theta, y, w) =  Y^\bot_{n, 1}(\theta, y)[w] + Y^\bot_{n, 2}(\theta, w)[w]\,, \\
& Y^\bot_{n, 1}(\theta,y)[w] := \Pi_\bot T_{b_n(\theta, y)} \partial_x^{- n - 1} w \in {\cal OB}^2_{w}(- n - 1, N)\,, \\
& Y^\bot_{n, 2}(\theta,w)[w] := \Pi_\bot T_{B_n(\theta)[w]} \partial_x^{- n - 1} w \in {\cal OB}^2_{ww}(- n - 1, N)\,,
\end{aligned}
\end{equation}
and make the ansatz that $b_n(\theta, y)$, $B_n(\theta)[w]$ are smooth functions (satisfying conditions as in \eqref{proprieta alpha - n beta - n step 1}) and  
$$
\langle b_n(\theta, y)\rangle_x = 0,  \qquad   \langle B_n(\theta)[w]\rangle_x= 0\, .
$$ 
To determine $b_n$ and $B_n$, we compute the pullback $X_{n + 1} := \Phi_{Y_n}^* X_n$ of $X_n$ by the time one flow map $ \Phi_{Y_n}$. corresponding to the vector field $Y_n$ . 
By Lemmata \ref{lemma coniugazioni}, \ref{corollario flusso con Omega bot}, \ref{lemma coniugazioni multiplier} 
and the induction hypothesis \eqref{antisimmetria}, one infers that the components of $X_{n + 1} = (X_{n + 1}^{(\theta)}, X_{n + 1}^{(y)}, X_{n + 1}^\bot)$ satisfy
\begin{equation}\label{def X n + 1 bot}
\begin{aligned}
X_{n + 1}^{(\theta)}(\frak x) & = - \omega - \e \widehat \omega - \nabla_y Q(y) - \Upsilon_2^{(\theta)}(\theta)[w,w] + {\cal O}_3^{(\theta)}(\frak x)\,, 
\qquad X_{n + 1}^{(y)}(\frak x)  = {\cal O}_3^{(y)}(\frak x)\,, \\
X_{n + 1}^\bot (\frak x) &=\ii  \Omega_\bot w + {\cal D}^\bot_{n, 1}(\theta, y)[w] + {\cal D}^\bot_{n, 2}(\theta, w)[w]  +  \Pi_\bot T_{- 3 \partial_x b_n(\theta, y) + a_{1 - n}(\theta, y)} \partial_x^{1 - n} w \\
&+  \Pi_\bot T_{- 3 \partial_x B_n(\theta)[w]+ A_{1 - n}(\theta)[w]} \partial_x^{1 - n} w 
 + X^\bot_{n + 1, 1}(\theta, y)[w] + {\cal R}^\bot_{N, 1}(\theta, y)[w] \\
&+ X^\bot_{n + 1, 2}(\theta, w)[w] + {\cal R}^\bot_{N, 2}(\theta)[w, w]   + {\cal OB}^3(1, N)  + {\cal OS}^3(N)
\end{aligned}
\end{equation}
where 
\begin{equation}\label{cal Mn cal P n + 1 cal R n + 1}
\begin{aligned}
&X^\bot_{n + 1, 1}(\theta, y)[w]\in {\cal OB}_{w}^2(- n, N),  \qquad X^\bot_{n + 1, 2}(\theta, w)[w] \in {\cal OB}_{ww}^2( - n, N)\\ 
&{\cal R}^\bot_{N, 1}(\theta, y)[w] \in {\cal OS}_{w}^2(N), \qquad \qquad \ \  {\cal R}^\bot_{N, 2}(\theta, w)[w] \in {\cal OS}^2_{ww}(N)\,, \\
& {\cal O}_3^{(\theta)}, {\cal O}_3^{(y)} \in C^\infty_b \big( {\cal V}^\sigma(\delta) \times [0, \e_0], \ \R^{S_+} \big) \quad \text{small of order three} 
\end{aligned}
\end{equation}
 and the diagonal matrix elements of the operators $X^\bot_{n + 1, 1}(\theta, y)$, ${\cal R}^\bot_{N,1}(\theta, y)$ are purely imaginary, namely 
\begin{equation}
[ X^\bot_{n + 1, 1}(\theta, y)]_j^j,  \  [ {\cal R}^\bot_{N,1}(\theta, y)]_j^j \in \ii \R\,, \quad \forall j \in S^\bot\,. 
\end{equation} 
We then choose $b_n(\theta, y)$ and $B_n(\theta)[w]$ to be solutions of
\begin{equation}\label{omologica lin in w}
\begin{aligned}
& - 3 \partial_x b_n(\theta, y) + a_{1 - n}(\theta, y) = \langle a_{1 - n}(\theta, y) \rangle_x\, , \\
& - 3 \partial_x B_n(\theta)[w] + A_{1 - n}(\theta)[w] = \langle A_{1 - n}(\theta)[w] \rangle_x\, .
\end{aligned}
\end{equation}
More precisely, we define
\begin{equation}\label{def gn}
\begin{aligned}
& b_n(\theta, y) := \frac13 \partial_x^{- 1}\big( a_{1 - n}(\theta, y) - \langle a_{1 - n}(\theta, y) \rangle_x   \big)\,, \\
& B_n(\theta)[w] := \frac13 \partial_x^{- 1}\big( A_{1 - n}(\theta)[w] - \langle  A_{1 - n}(\theta)[w] \rangle_x  \big) \, .
\end{aligned}
\end{equation}
Since $\Pi_\bot T_{\langle a_{1 - n}(\theta, y) \rangle_x} \partial_x^{1 - n} w = 
\langle a_{1 - n}(\theta, y) \rangle_x \partial_x^{1 - n} w$ and $\Pi_\bot T_{\langle A_{1 - n}(\theta)[w] \rangle_x} \partial_x^{1 - n} w 
= \langle A_{1 - n}(\theta)[w] \rangle_x \partial_x^{1 - n} w$ one infers from \eqref{proprieta alpha - n beta - n step 1} that
\begin{equation}\label{def cal M n + 1}
\begin{aligned}
& {\cal D}^\bot_{n + 1, 1}(\theta, y)[w] := {\cal D}^\bot_{n, 1}(\theta, y)[w] + \langle a_{1 - n}(\theta, y) \rangle_x \partial_x^{1 - n} w \in {\cal OF}^2_{w}(1, N), \\
& {\cal D}^\bot_{n+ 1, 2}(\theta, w)[w] := {\cal D}^\bot_{n, 2}(\theta, w)[w] + \langle A_{1 - n}(\theta)[w] \rangle_x \partial_x^{1 - n} w \in {\cal OF}^2_{ww}(1, N)\,. 
\end{aligned}
\end{equation}
Since $a_{1 - n }(\theta, y)$ is real valued, the Fourier multiplier $\langle a_{1 - n }(\theta, y) \rangle_x \partial^{1 - n}$ is skew-adjoint if $n $ is even. 
Futhermore, by the induction hypothesis \eqref{antisimmetria} and Lemmata \ref{lemma parte diagonale}, \ref{lemma 2 campi lineari} in Appendix \ref{Appendix A}, one has 
$$
\begin{aligned}
\langle a_{1 - n}(\theta, y) \rangle_x = 0 \quad \text{if} \  n \ \text{is odd.}
\end{aligned}
$$
Hence the Fourier multiplier ${\cal D}^\bot_{n + 1, 1}(\theta, y)$ is skew-adjoint. Altogether we showed that the vector field $X_{n + 1}^\bot$ is of the form 
\begin{equation}\label{Xn bot}
\begin{aligned}
X_{n + 1}^\bot (\frak x) & =\ii \Omega_\bot w + {\cal D}^\bot_{n + 1, 1}(\theta, y)[w] + {\cal D}^\bot_{n + 1, 2}(\theta, w)[w]   + X^\bot_{n + 1, 1}(\theta, y)[w] \\
&+ X^\bot_{n + 1, 2}(\theta, w)[w] + {\cal R}^\bot_{N, 1}(\theta, y)[w] + {\cal R}^\bot_{N, 2}(\theta, w)[w] 
 + {\cal OB}^3(1, N) + {\cal OS}^3(N) \, .
\end{aligned}
\end{equation}
We thus have proved the following
\begin{proposition}\label{prop total regularization}
For any $N \in 	\N$, there exist $s_N$, $\sigma_N > 0$
 with the following property: 
for any $s \geq s_N$ there exist $0 < \delta \equiv \delta(s, \gamma, N) < 1$, $0 < \e_0 \equiv \e_0(s, \gamma, N) < 1$ 
so that the following holds: 
there exists a transformation $\Psi^{(1)}$ with inverse $(\Psi^{(1)})^{-1}$ (cf. Remark \ref{inverse of flow}),
\begin{equation}\label{prop Psi (1)}
(\Psi^{(1)})^{\pm 1} \in {\cal C}^\infty_b({\cal V}^s(\delta) \times [0, \e_0],  \, {\cal V}^s(2 \delta)),  \quad \forall \, s \ge s_N, \qquad 
(\Psi^{(1)})^{\pm 1} (\frak x) - \frak x \ \  \text{small of order two}\, ,
\end{equation}
so that the transformed vector field $X_4 := (\Psi^{(1)})^* X_{{\cal H}_3} = (X_4^{(\theta)}, X_4^{(y)}, X_4^\bot)$ has the following properties:
\begin{equation}\label{regularized vector field 1}
\begin{aligned}
 X_4^{(\theta)}(\frak x)   = &- \omega - \e \widehat \omega - \nabla_y Q(y) - \Upsilon_2^{(\theta)}(\theta)[w, w] + {\cal O}_3^{(\theta)}(\frak x)\, , \qquad
X_4^{(y)}(\frak x)  = {\cal O}_3^{(y)}(\frak x) \, ,\\
X^\bot_4(\frak x)  = & \ii \Omega_\bot w + {\cal D}^\bot_{4, 1}(\theta, y)[w] + {\cal D}^\bot_{4, 2}(\theta, w)[w]   + {\cal R}^\bot_{N, 1}(\theta, y)[w] + {\cal R}^\bot_{N,  2}(\theta, w)[w]  \\
& +   {\cal OB}^3(1, N)+ {\cal OS}^3(N) 
\end{aligned}
\end{equation}
 where 
 \begin{equation}\label{termini campo X4}
 \begin{aligned}
 & {\cal D}^\bot_{4, 1}(\theta, y)[w]  \in {\cal OF}_{w}^2(1, N), \qquad {\cal D}^\bot_{4, 2}(\theta, w)[w] \in {\cal OF}_{ww}^2(1, N)\,, \\
 & {\cal R}^\bot_{N, 1}(\theta, y)[w] \in {\cal OS}_{w}^2(N), \qquad  \quad {\cal R}^\bot_{N, 2}(\theta, w)[w] \in {\cal OS}_{ww}^2(N)\,, \\
 & {\cal O}_3^{(\theta)}, {\cal O}_3^{(y)} \in C^\infty_b \big( {\cal V}^{\sigma_N}(\delta) \times [0, \e_0], \, \R^{S_+} \big) \quad \text{small of order three.} 
   \end{aligned}
 \end{equation}
 Moreover 
 \begin{equation}\label{prop autoaggiuntezza cal M cal R 1}
 \quad {\cal D}^\bot_{4, 1}(\theta, y) = - ({\cal D}^\bot_{4, 1}(\theta, y))^\top ,  \qquad \quad [{\cal R}^\bot_{N, 1}(\theta, y)]_j^j \in \ii \R, \quad \forall j \in S^\bot\,. 
 \end{equation}
 \end{proposition}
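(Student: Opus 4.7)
The plan is to prove Proposition \ref{prop total regularization} by iterating the single-step construction that has already been set up (equations \eqref{Xn bot}--\eqref{Xn bot} and \eqref{hamiltoniana Gn forma normale}--\eqref{def cal M n + 1}) a total of $N+2$ times, indexed by $n = 0, 1, \ldots, N+1$, and then defining $\Psi^{(1)}$ as the composition of the time-one flows $\Phi_{Y_0} \circ \Phi_{Y_1} \circ \cdots \circ \Phi_{Y_{N+1}}$. First I would verify the base case $n=0$: starting from $X_{{\cal H}_3} = X$ as given by Proposition \ref{prop ordine e 3}, the decomposition \eqref{expansion of X bot}--\eqref{expansion for X_1 + X_2} puts $X^\bot$ precisely in the form \eqref{Xn bot} with ${\cal D}^\bot_{0,1} = 0$, ${\cal D}^\bot_{0,2} = 0$ and $X^\bot_{0,j}$ of order $1$ for $j=1,2$. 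The parity/skew-adjointness conditions \eqref{antisimmetria} hold trivially at $n=0$: the reality of $X^\bot_{0,1}(\theta,y)[w] = \Upsilon_1^\bot(\theta, y)[w]$ combined with the fact that it is Hamiltonian gives \eqref{eredita hamiltoniana} via Appendix \ref{Appendix A}, and analogously for ${\cal R}^\bot_{N,1}$.

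Next I would carry out the inductive step. Given $X_n$ of the form \eqref{Xn bot} with properties \eqref{cal Mn cal Pn cal Rn}, \eqref{antisimmetria}, I would extract the top-order para-differential piece via \eqref{espansione cal P n (0)}, then define $b_n, B_n$ by the explicit formulas \eqref{def gn}. These are well-defined since $a_{1-n} - \langle a_{1-n}\rangle_x$ and $A_{1-n}[w] - \langle A_{1-n}[w]\rangle_x$ have zero mean, and they inherit smoothness and the order-one smallness from $a_{1-n}$ and $A_{1-n}$ (cf.\ \eqref{proprieta alpha - n beta - n step 1}). The pullback $X_{n+1} = \Phi_{Y_n}^* X_n$ is then computed by combining Lemma \ref{corollario flusso con Omega bot} (which produces the crucial term $\Pi_\bot T_{-3\partial_x b_n(\theta,y) + \text{(analogous for }B_n\text{)}} \partial_x^{1-n}w$ from the commutator $[\ii\Omega_\bot, Y_n^\bot]$) with Lemma \ref{lemma coniugazioni} (for the pullback of the $X^\bot_{n,j}$ pieces) and Lemma \ref{lemma coniugazioni multiplier} (for the pullback of the Fourier-multiplier pieces ${\cal D}^\bot_{n,j}$). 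By \eqref{omologica lin in w} the coefficient $-3\partial_x b_n + a_{1-n}$ collapses to $\langle a_{1-n}\rangle_x$, giving a Fourier multiplier to be absorbed into ${\cal D}^\bot_{n+1,1}$ via \eqref{def cal M n + 1}; likewise for the $w$-quadratic piece. All remaining contributions produced by the Lie expansion are either of order $-n$ (entering $X^\bot_{n+1,j}$), smoothing (entering ${\cal R}^\bot_{N,j}$ or the ${\cal OS}^3(N)$ remainder), or small of order three (entering ${\cal OB}^3(1,N)$ or the ${\cal O}_3^{(\bullet)}$ components). The components $X_n^{(\theta)}$ and $X_n^{(y)}$ are affected only by ${\cal O}_3$-type contributions since $Y_n$ is purely normal.

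The delicate part is preserving the structural conditions \eqref{antisimmetria} at each step. For ${\cal D}^\bot_{n+1,1}$ the skew-adjointness of the new Fourier multiplier $\langle a_{1-n}(\theta,y)\rangle_x \partial_x^{1-n}$ is automatic when $n$ is even (since $a_{1-n}$ is real and $\partial_x^{1-n}$ is skew-adjoint for odd exponent $1-n$), and I would show it is vacuous when $n$ is odd because then $\langle a_{1-n}(\theta,y)\rangle_x = 0$. This vanishing is exactly the content of the Hamiltonian-heredity Lemmata \ref{lemma parte diagonale} and \ref{lemma 2 campi lineari} in Appendix \ref{Appendix A}, applied to the induction hypothesis $[X^\bot_{n,1}(\theta,y)]_j^j \in \ii\R$: the diagonal Fourier coefficient of $\Pi_\bot T_{a_{1-n}}\partial_x^{1-n}$ at mode $j$ is a constant multiple of $\langle a_{1-n}\rangle_x (\ii 2\pi j)^{1-n}$, which is purely imaginary only when $n$ is odd and $\langle a_{1-n}\rangle_x = 0$. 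The analogous diagonal condition for $[X^\bot_{n+1,1}]_j^j$ and $[{\cal R}^\bot_{N,1}]_j^j$ follows because the pullback by $\Phi_{Y_n}$ preserves the Hamiltonian character of the linear-in-$w$ part (via the expansion in Lemmata \ref{lemma coniugazioni}--\ref{corollario flusso con Omega bot} and the fact that $Y_{n,1}^\bot$ is itself Hamiltonian-like at the level of its diagonal, owing to $\langle b_n\rangle_x = 0$).

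After $n = N+1$, the vector fields $X^\bot_{N+2, 1}$ and $X^\bot_{N+2, 2}$ are of order $-N-1$, hence both belong to ${\cal OB}^2_{\bullet}(-N-1, N) \subseteq {\cal OS}^2_\bullet(N)$ by Remark \ref{remark fourier multiplier paradiff} and Definition \ref{def smoothing vector fields}, and are therefore absorbed into the smoothing remainders ${\cal R}^\bot_{N,j}$. Setting $\Psi^{(1)} := \Phi_{Y_0} \circ \cdots \circ \Phi_{Y_{N+1}}$ yields the claimed expansion \eqref{regularized vector field 1}. The smoothness and regularity loss \eqref{prop Psi (1)} follow from composing the flow estimates of Lemma \ref{expansion flow paradiff} at each step, choosing $s_N, \sigma_N$ large enough and $\delta, \e_0$ small enough to accommodate the $N+2$ compositions; the ``small of order two'' statement for $\Psi^{(1)} - \text{Id}$ reflects that each $Y_n$ has coefficients small of order one and the flow is applied starting from the identity. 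I expect the principal bookkeeping difficulty to lie in tracking the growth of the regularity loss $\sigma_N$ across the $N+2$ iterations and in verifying that the parity-based cancellation $\langle a_{1-n}\rangle_x = 0$ for odd $n$ is indeed inherited under composition; both are algebraically routine but notationally heavy.
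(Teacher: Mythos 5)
Your proposal reconstructs the paper's own argument faithfully: the iterative single-step normalization via the flow of $Y_n$ with $b_n, B_n$ from \eqref{def gn} solving \eqref{omologica lin in w}, the Lie-series pullback computed through Lemmata \ref{lemma coniugazioni}, \ref{corollario flusso con Omega bot}, \ref{lemma coniugazioni multiplier}, the parity/skew-adjointness propagation via Lemmata \ref{lemma parte diagonale}--\ref{lemma 2 campi lineari}, and the composition $\Psi^{(1)} = \Phi_{Y_0}\circ\cdots\circ\Phi_{Y_{N+1}}$ with the regularity-loss bookkeeping. One small slip: the final step invokes Remark \ref{remark fourier multiplier paradiff} to place $X^\bot_{N+2,j}$ in the smoothing class, but the correct justification is the convention stated after Definition \ref{paradiff vector fields} that ${\cal OB}(m,N) = \{0\}$ when $N+m<0$ (here $N+(-N-1)=-1$), so these terms simply vanish rather than being absorbed; this does not affect the validity of the argument.
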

 
 %%%%%%%%%%%%%%%%%%%%%%%%%%%%%%%%%%%%%%%%%%%%%%%%%%%%%%%%%%%%%%%%%%%%%%%%%%%%%%%%
 %%%%%%%%%%%%%%%%%%%%%%%%%%%%%%%%%%%%%%%%%%%%%%%%%%%%%%%%%%%%%%%%%%%%%%%%%%%%%%%%

 \subsection{Normalization of Fourier multiplier quadratic in $w$.}\label{normalizzazione Fourier multipliers}
 The goal of this subsection is to normalize the vector field ${\cal D}^\bot_{4, 2}(\theta, w)[w]$ in \eqref{regularized vector field 1}. 
 According to Proposition \ref{prop total regularization} and Definitions \eqref{classe fourier multipliers}, \eqref{paradiff omogenei di ordine 2},
 \begin{equation}\label{forma espansione cal D 4 (1)}
 \begin{aligned}
& {\cal D}^\bot_{4, 2}(\theta, w)[w] = \Lambda^\bot_1(\theta)[w] \partial_x w + \widetilde {\cal D}^\bot_{4, 2}(\theta, w)[w],  \\
& \widetilde {\cal D}^\bot_{4, 2}(\theta, w)[w] := \sum_{k = 1}^{N+1} \Lambda^\bot_{1 - k}(\theta)[w] \partial_x^{1 - k} w \in {\cal OF}^2_{ww}(0, N),
 \end{aligned}
 \end{equation}
 where for any  $0 \le k \le N + 1$,  $\Lambda^\bot_{1 - k} \in C^\infty(\T^{S_+}, {\cal B}(H^{\sigma_N}_\bot, \R))$  for some $\sigma_N > 0$ (large). 
 Since  $\Lambda_1(\theta)[w]$ is real valued, the leading order operator $\Lambda_1(\theta)[w] \partial_x$ is a skew-adjoint Fourier multiplier 
 and hence has the property needed for the energy estimates in Section \ref{conclusioni forma normale}. 
 This however is not true for  $\widetilde {\cal D}^\bot_{4, 2}(\theta, w)[w]$. The goal of this section is to eliminate it.
  To this end, we consider a vector field of the form 
 \begin{equation}\label{forma multiplier per normalizzare}
 {\cal M} (\frak x) := \big( 0, 0, \,  {\cal M}^\bot(\theta, w)[w]\big)\, ,
 \qquad {\cal M}^\bot(\theta, w)[w] = \sum_{k = 1}^{N+1} \Xi^\bot_{1 - k}(\theta)[w] \partial_x^{1 - k}w \in {\cal OF}^2_{ww}(0, N),
 \end{equation}
 where $ \Xi^\bot_{1 - k}(\theta)$ will be chosen so that  the time one flow map $\Phi_{\cal M}$, generated by the vector field $X_{\cal M}$,  
 is a coordinate transformation serving our needs. In more detail, consider the pullback $X_5 := \Phi_{\cal M}^* X_4 = (X_5^{(\theta)}, X_5^{(y)}, X_5^\bot)$ 
 of the vector field $X_4$ of Proposition \ref{prop total regularization}  by $\Phi_{\cal M}$. 
 By Lemmata \ref{lemma coniugazioni multiplier}, \ref{psuh forward multiplier X cal N}, one has
 \begin{equation}\label{expansion X5 bot}
 \begin{aligned}
 X_5^{(\theta)}(\frak x) & = - \omega -  \e \widehat \omega -  \nabla_y Q(y) - \Upsilon^{(\theta)}_2(\theta)[w, w] + {\cal O}_3^{(\theta)}(\frak x) \, , \qquad 
 X_5^{(y)}  = {\cal O}_3^{(y)}(\frak x) \, ,\\
 X_5^\bot(\frak x) & =\ii \Omega_\bot w + {\cal D}_{4,1}^\bot(\theta, y)[w] +\Lambda_1(\theta)[w] \partial_x w    + {\cal R}^\bot_{4, 1}(\theta, y)[w] 
 + {\cal R}^\bot_{4,2}(\theta, w)[w]  \\
 & \qquad + \omega \cdot \partial_\theta {\cal M}^\bot(\theta, w)[w] - {\cal M}^\bot(\theta,\ii  \Omega_\bot w) [w]+ 
 \widetilde{\cal D}_{4, 2}(\theta, w)[w]  + {\cal OB}^3(1, N) + {\cal OS}^3(N)
\end{aligned}
 \end{equation}
 where for some integer $\sigma_N > 0$, 
 ${\cal O}_3^{(\theta)}$, ${\cal O}_3^{(y)}$ are in $C^\infty_b \big({\cal V}^{\sigma_N}(\delta) \times [0,\e_0], \, \R^{S_+}\big)$ and small of order three. 
 The vector field ${\cal M}^\bot(\theta, w)[w]$ is chosen to be a solution the following homological equation
 \begin{equation}\label{equazione omologica cal M w2 0}
 \omega \cdot \partial_\theta {\cal M}^\bot(\theta, w)[w] - {\cal M}^\bot(\theta, \ii  \Omega_\bot w) [w]+ \widetilde {\cal D}^\bot_{4, 2}(\theta, w)[w] = 0 ,  
 \end{equation}
 or in view of \eqref{forma espansione cal D 4 (1)}, \eqref{forma multiplier per normalizzare} equivalently, that for any $1 \le k \le  N + 1$, $ \Xi^\bot_{1 - k}(\theta)[w]$ is a solution of
 \begin{equation}\label{equazione omologica cal M w2 1}
 \omega \cdot \partial_\theta  \, \Xi^\bot_{1 - k}(\theta)[w]   -  \Xi^\bot_{1 - k}(\theta)[ \ii \Omega_\bot w] + \Lambda^\bot_{1 - k}(\theta)[w] = 0\,.
 \end{equation}
 Since $\Lambda^\bot_{1 - k}$, $\Xi^\bot_{1 - k} \in C^\infty(\T^{S_+} \times [0, \e_0], \, {\cal B}(H^{\sigma_N}_\bot(\T_1), \R))$, there exist uniquely determined maps
 $a_{\Lambda^\bot_{1 - k}}$, $a_{\Xi^\bot_{1 - k}}$ in $C^\infty(\T^{S_+} \times [0, \e_0], \, H^{- \sigma_N}_\bot(\T_1))$ so that 
 $$
 \Lambda^\bot_{1 - k}(\theta)[w] = \big\langle a_{\Lambda^\bot_{1 - k}}(\theta)\,,\, w \big\rangle\,, \qquad  
 \Xi^\bot_{1 - k}(\theta)[w]  = \big\langle a_{\Xi^\bot_{1 - k}}(\theta)\,,\, w \big\rangle \, .
 $$
Equation \eqref{equazione omologica cal M w2 1} then reads
 \begin{equation}\label{equazione omologica cal M w2 2}
 \big\langle \omega \cdot \partial_\theta \, a_{\Xi^\bot_{1 - k}}(\theta), \, w \big\rangle  
 - \big\langle a_{\Xi^\bot_{1 - k}}(\theta), \, \ii \Omega_\bot w \big\rangle + \big\langle a_{\Lambda^\bot_{1 - k}}(\theta), \, w \big\rangle  
 = 0\,.
 \end{equation}
 Since $\ii \Omega_\bot$ is skew-adjoint, one has $- \big\langle a_{\Xi^\bot_{1 - k}}(\theta), \, \ii \Omega_\bot w \big\rangle 
 = \big\langle \ii  \Omega_\bot a_{\Xi^\bot_{1 - k}}(\theta),  w \big\rangle$.  We choose $a_{\Xi^\bot_{1 - k}}$  as the solution of 
 \begin{equation}\label{equazione omologica cal M w2 3}
 \big( \omega \cdot \partial_\theta + \ii  \Omega_\bot \big)a_{\Xi^\bot_{1 - k}}(\theta)  + a_{\Lambda^\bot_{1 - k}}(\theta) = 0\,.
 \end{equation}
This equation can be solved by expanding $a_{\Xi^\bot_{1 - k}}(\theta)$ and $a_{\Lambda^\bot_{1 - k}}(\theta)$  in Fourier series with respect to $\theta$ and $x$, 
$$
a_{\Xi^\bot_{1 - k}}(\theta) = \sum_{(\ell, j) \in \Z^{S_+} \times S^\bot} \widehat a_{\Xi^\bot_{1 - k}}(\ell, j) e^{\ii \ell \cdot \theta} e^{\ii 2 \pi j x}, \qquad 
a_{\Lambda^\bot_{1 - k}}(\theta) = \sum_{(\ell, j) \in \Z^{S_+} \times S^\bot} \widehat a_{\Lambda^\bot_{1 - k}}(\ell, j) e^{\ii \ell \cdot \theta} e^{\ii 2 \pi j x}.
$$
Since by assumption $\omega \in \Pi^{(1)}_\gamma$, $0 < \gamma < 1$, (cf. \eqref{condizioni forma normale}), equation \eqref{equazione omologica cal M w2 3} can be solved.
The solution  $a_{\Xi^\bot_{1 - k}} (\theta)$ is given by
\begin{equation}\label{equazione omologica cal M w2 4}
a_{\Xi^\bot_{1 - k}}  = - (\omega \cdot \partial_\theta + \ii  \Omega_\bot)^{- 1} a_{\Lambda^\bot_{1 - k}} =  -
\sum_{(\ell, j) \in \Z^{S_+} \times S^\bot}\dfrac{\widehat a_{\Lambda^\bot_{1 - k}}(\ell, j)}{\ii (\omega \cdot \ell + \Omega_j)} e^{\ii \ell \cdot \theta} e^{\ii 2 \pi j x}\,. 
\end{equation}
Since $a_{\Lambda^\bot_{1 - k}} \in C^\infty(\T^{S_+}, H^{- \sigma_N}_\bot(\T_1))$, one infers that
$a_{\Xi^\bot_{1 - k}} \in C^\infty(\T^{S_+}, H^{- \sigma_N - \tau}_\bot(\T_1))$ and 
therefore \eqref{forma multiplier per normalizzare} is verified and equation \eqref{equazione omologica cal M w2 0} is solved. 
Finally, the vector field $X_5^\bot$ is of the form 
\begin{equation}\label{formal finale campo vett X5 bot}
\begin{aligned}
& X_5^\bot(\frak x) = \ii \Omega_\bot w + {\cal D}_5(\frak x)[w] + {\cal R}^\bot_{N, 1}(\theta, y)[w] + {\cal R}^\bot_{N, 2}(\theta, w)[w] + {\cal OB}^3(1, N) + {\cal OS}^3(N)\,, \\
& {\cal D}^\bot_5(\frak x)[w] :=  {\cal D}^\bot_{4, 1}(\theta, y)[w] +\Lambda^\bot_1(\theta)[w] \partial_x w \in {\cal OF}^2(1, N),
\end{aligned}
\end{equation} 
where the remainders ${\cal R}^\bot_{N, 1}(\theta, y)[w]$, ${\cal R}^\bot_{N, 2}(\theta, w)[w]$ are given in Proposition \ref{prop total regularization}.
Furthermore, ${\cal D}^\bot_5(\frak x)$ is skew-adjoint,
\begin{equation}\label{cal D5 skew symmetric}
{\cal D}^\bot_5(\frak x)^\top = - {\cal D}^\bot_5(\frak x)\,. 
\end{equation}
We summarize our findings of this subsection as follows.
\begin{proposition}\label{prop regularization fourier multiplier}
For any $N \in 	\N$, there exists an integer $s_N  > 0$
with the property that for any $s \ge s_N$ there exist
$0 < \delta \equiv \delta(s, \gamma, N) < 1$ and $0 < \e_0 \equiv \e_0(s, \gamma, N) < 1$ so that the following holds: there exists a transformation $\Psi^{(2)}$ 
with inverse $(\Psi^{(2)})^{-1}$ (cf. Remark \ref{inverse of flow}),
\begin{equation}\label{prop Psi (2)}
 (\Psi^{(2)})^{\pm 1} \in {\cal C}^\infty_b({\cal V}^s(\delta) \times [0, \e_0], \, {\cal V}^s(2 \delta)),  \quad \forall \, s \ge s_N, \qquad
 (\Psi^{(2)})^{\pm 1}(\frak x) - \frak x \quad \text{small of order two},
\end{equation}
so that the transformed vector field $X_5 := (\Psi^{(2)})^* X_{4} = (X_5^{(\theta)}, X_5^{(y)}, X_5^\bot)$ has the form
\begin{equation}\label{regularized vector field X5}
\begin{aligned}
X_5^{(\theta)}(\frak x) & = - \omega -  \e \widehat \omega -  \nabla_y Q(y)  - \Upsilon^{(\theta)}_2(\theta)[w, w] + {\cal O}_3(\frak x) \, , \qquad 
X_5^{(y)}(\frak x)  = {\cal O}_3(\frak x) \, , \\
X^\bot_5(\frak x) & = \ii \Omega_\bot w + {\cal D}^\bot_5(\frak x)[w]    + {\cal R}^\bot_{N, 1}(\theta, y)[w] + {\cal R}^\bot_{N, 2}(\theta, w)[w]  
  + {\cal OB}^3(1, N) + {\cal OS}^3(N)
\end{aligned}
\end{equation}
 where 
 \begin{equation}\label{prop autoaggiuntezza cal D5}
 \begin{aligned}
 & {\cal D}^\bot_5(\frak x)[w] \in {\cal OF}^2(1, N),   \qquad  {\cal D}^\bot_5(\frak x) = - {\cal D}^\bot_5(\frak x)^\top\,  
   \end{aligned}
 \end{equation}
 and the smoothing remainders ${\cal R}^\bot_{N,1}(\theta, y)[w]$, ${\cal R}^\bot_{N, 2}(\theta, w)[w]$ are given by Proposition \ref{prop total regularization}. 
  \end{proposition}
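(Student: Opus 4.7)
The plan is to construct $\Psi^{(2)}$ as the time-one flow map $\Phi_{\mathcal M}(1,\cdot)$ of a Fourier multiplier vector field of the form $\mathcal M = (0,0,\mathcal M^\bot)$ with
\[
\mathcal M^\bot(\theta,w)[w] = \sum_{k=1}^{N+1} \Xi^\bot_{1-k}(\theta)[w]\,\partial_x^{1-k}w \in \mathcal{OF}^2_{ww}(0,N),
\]
where the linear functionals $w \mapsto \Xi^\bot_{1-k}(\theta)[w]$ are to be determined. First I would invoke Lemma \ref{lemma coniugazioni multiplier} and Lemma \ref{psuh forward multiplier X cal N} to expand the pullback $X_5 := \Phi_{\mathcal M}^* X_4$. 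Since $\mathcal M \in \mathcal{OS}^2$ in the relevant sense and acts trivially on the $\theta$- and $y$-components up to higher-order corrections, all terms in $X_4$ of order two except $\widetilde{\mathcal D}^\bot_{4,2}(\theta,w)[w]$ (cf.\ the splitting \eqref{forma espansione cal D 4 (1)}) are unchanged modulo $\mathcal{OB}^3(1,N)+\mathcal{OS}^3(N)$. The new contribution from the commutator $[X_{\mathcal N},\mathcal M]$ is, by Lemma \ref{psuh forward multiplier X cal N}, exactly $\omega\cdot\partial_\theta\mathcal M^\bot(\theta,w)[w] - \mathcal M^\bot(\theta,\ii\Omega_\bot w)[w]$ plus admissible remainders.

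Next, I would impose the homological equation \eqref{equazione omologica cal M w2 0}, which in view of the Fourier multiplier structure decouples into the scalar equations \eqref{equazione omologica cal M w2 1} for each $1 \le k \le N+1$. Writing $\Xi^\bot_{1-k}(\theta)[w] = \langle a_{\Xi^\bot_{1-k}}(\theta), w\rangle$ and using that $\ii\Omega_\bot$ is skew-adjoint, this reduces to the transport equation \eqref{equazione omologica cal M w2 3}
\[
(\omega\cdot\partial_\theta + \ii\Omega_\bot)\,a_{\Xi^\bot_{1-k}}(\theta) + a_{\Lambda^\bot_{1-k}}(\theta) = 0,
\]
which I would solve by Fourier series: the $(\ell,j)$-th coefficient of the solution is $-\widehat a_{\Lambda^\bot_{1-k}}(\ell,j)/[\ii(\omega\cdot\ell + \Omega_j)]$. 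Here the key step is the main obstacle: since $\omega \in \Pi_\gamma^{(1)}$, the first Melnikov conditions guarantee $|\omega\cdot\ell + \Omega_j| \ge \gamma/\langle\ell\rangle^\tau$, so the division loses only $\tau$ derivatives in $\theta$. Because $a_{\Lambda^\bot_{1-k}} \in C^\infty(\mathbb T^{S_+}, H^{-\sigma_N}_\bot(\mathbb T_1))$, the solution $a_{\Xi^\bot_{1-k}}$ lies in $C^\infty(\mathbb T^{S_+}, H^{-\sigma_N-\tau}_\bot(\mathbb T_1))$, which is sufficient to conclude $\mathcal M^\bot \in \mathcal{OF}^2_{ww}(0,N)$ after enlarging $\sigma_N$.

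Having solved the homological equation, the vector field $X_5^\bot$ takes the form displayed in \eqref{formal finale campo vett X5 bot}, namely
\[
X_5^\bot(\frak x) = \ii\Omega_\bot w + \mathcal D^\bot_5(\frak x)[w] + \mathcal R^\bot_{N,1}(\theta,y)[w] + \mathcal R^\bot_{N,2}(\theta,w)[w] + \mathcal{OB}^3(1,N) + \mathcal{OS}^3(N),
\]
with $\mathcal D^\bot_5(\frak x)[w] := \mathcal D^\bot_{4,1}(\theta,y)[w] + \Lambda^\bot_1(\theta)[w]\,\partial_x w$. Finally, I would verify skew-adjointness of $\mathcal D^\bot_5$: by Proposition \ref{prop total regularization} the piece $\mathcal D^\bot_{4,1}(\theta,y)$ is already skew-adjoint (cf.\ \eqref{prop autoaggiuntezza cal M cal R 1}), and the Fourier multiplier $\Lambda^\bot_1(\theta)[w]\,\partial_x$ is skew-adjoint because $\Lambda^\bot_1(\theta)[w] \in \mathbb R$ and $\partial_x$ is skew-adjoint on $L^2_\bot(\mathbb T_1)$. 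The mapping properties \eqref{prop Psi (2)} of $\Psi^{(2)}$ and its inverse $\Phi_{\mathcal M}(-1,\cdot)$ follow from Lemma \ref{expansion flow multiplier} together with the smallness of order two of $\Phi_{\mathcal M}(\pm 1,\cdot) - \mathrm{Id}$, after possibly shrinking $\delta$ and $\e_0$ and enlarging $s_N$ in the standard way.
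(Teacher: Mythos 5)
Your proposal is correct and follows essentially the same approach as the paper: construct $\Psi^{(2)}$ as the time-one flow of a Fourier multiplier vector field $\mathcal M = (0,0,\mathcal M^\bot)$ with $\mathcal M^\bot \in \mathcal{OF}^2_{ww}(0,N)$, expand the pullback via Lemma \ref{lemma coniugazioni multiplier} and Lemma \ref{psuh forward multiplier X cal N}, solve the resulting homological equation coefficient-wise by reducing it through skew-adjointness of $\ii\Omega_\bot$ to the transport equation \eqref{equazione omologica cal M w2 3}, invert it in Fourier using the first Melnikov conditions, and verify skew-adjointness of $\mathcal D^\bot_5$ from \eqref{prop autoaggiuntezza cal M cal R 1} together with the fact that $\Lambda^\bot_1(\theta)[w]\in\R$. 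One small imprecision worth noting: $\mathcal M^\bot\in\mathcal{OF}^2_{ww}(0,N)$ is a Fourier multiplier vector field of order $0$, not a smoothing vector field in the sense of Definition \ref{def smoothing vector fields}, so the parenthetical ``$\mathcal M\in\mathcal{OS}^2$ in the relevant sense'' should not be taken literally; the correct mechanism (which you in fact invoke) is Lemma \ref{lemma coniugazioni multiplier} and Lemma \ref{psuh forward multiplier X cal N}, which do establish that the $\theta$- and $y$-components only receive corrections small of order three.
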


%%%%%%%%%%%%%%%%%%%%%%%%%%%%%%%%%%%%%%%%%%%%%%%%%%%%%%%%%%%%%%%%%%%%%%%%%%
%%%%%%%%%%%%%%%%%%%%%%%%%%%%%%%%%%%%%%%%%%%%%%%%%%%%%%%%%%%%%%%%%%%%%%%%%%%  

\subsection{Normalization of the smoothing remainders}\label{section smoothing remainders}
In this subsection, we normalize the vector field 
$$
\big(  \Upsilon^{(\theta)}_2(\theta)[w, w], \, 0, \,  {\cal R}^\bot_{N, 1}(\theta, y)[w] + {\cal R}^\bot_{N, 2}(\theta)[w, w] \big) \, ,
$$
which is part of the vector field $X_5$ defined in \eqref{regularized vector field X5}. 
Note that all the terms are either linear or quadratic in the variable $w$.
We consider a smoothing vector field of the form 
$$
{\cal S}(\frak x) := \big( \, \mathcal S^{(\theta)}(\theta)[w, w], \ 0,  \ {\cal S}^\bot_1(\theta, y)[w] + {\cal S}^\bot_2(\theta)[w, w] \big) 
$$
where we make the ansatz that for some $\sigma_N  > 0$, 
$\mathcal S^{(\theta)} \in C^\infty\big( \T^{S_+} \times [0, \e_0], \, {\cal B}_2(H^{\sigma_N}_\bot(\T_1), \R^{S_+}) \big)$ and
\begin{equation}\label{ansatz ultima normalizzazione}
{\cal S}^\bot_1(\theta, y)[w] \in {\cal OS}_{w}^2(N - 1), \qquad {\cal S}^\bot_2(\theta)[w, w] \in {\cal OS}_{ww}^2(N - 5)\,.
\end{equation}
We then consider the time one flow map $\Phi_{\cal S}$, associated to the vector field ${{\cal S}}$, and 
compute the pullback $X_6 := \Phi_{\cal S}^* X_5 = (X_6^{(\theta)}, X_6^{(y)}, X_6^\bot)$ of the vector field $X_5$ by $\Phi_\mathcal S$. 
By Lemmata \ref{push forward smoothing 1}, \ref{push forward smoothing X cal N} 
and in view of Remark \eqref{remark fourier multiplier paradiff}, $X_6$ is of the form 
\begin{equation}\label{prima expansione X6 bot}
\begin{aligned}
& X_6^{(\theta)}(\frak x)  = - \omega - \e \widehat \omega - \nabla_y Q(y) 
+ \omega \cdot \partial_\theta \,  \mathcal S^{(\theta)}(\theta)[w, w]  - 
\mathcal S^{(\theta)}(\theta)[\ii \Omega_\bot w, w] - \mathcal S^{(\theta)}(\theta)[w, \ii \Omega_\bot w] \\
& \qquad \qquad - \Upsilon^{(\theta)}_2(\theta)[w, w] + {\cal O}^{(\theta)}_3(\frak x) , \\
& X_6^{(y)}(\frak x)  = {\cal O}^{(y)}_3(\frak x) , \\
& X_6^\bot (\frak x)  =\ii  \Omega_\bot w + {\cal D}^\bot_5(\frak x)[w] + 
\Big( \omega \cdot \partial_\theta {\cal S}^\bot_1(\theta, y) + [\ii \Omega_\bot , \, {\cal S}^\bot_1(\theta, y)]_{lin} + {\cal R}^\bot_{N, 1}(\theta, y) \Big)[w] \\
& \qquad +  \omega \cdot \partial_\theta \, {\cal S}^\bot_2(\theta)[w, w] +
\ii \Omega_\bot {\cal S}^\bot_2(\theta)[w, w] - {\cal S}^\bot_2(\theta)[\ii  \Omega_\bot w, w] - {\cal S}^\bot_2(\theta)[w, \ii \Omega_\bot w] + {\cal R}^\bot_{N, 2}(\theta)[ w, w] \\
& \qquad + {\cal OB}^3(1, N ) + {\cal OS}^3(N - 6)
\end{aligned}
\end{equation}
where ${\cal O}^{(\theta)}_3$, ${\cal O}^{(y)}_3$  denote terms which are small of order three. The components
$\mathcal S^{(\theta)}$ and  $\mathcal S^\bot_1$, $\mathcal S^\bot_2$ are now chosen as the solutions of the following homological equations,
\begin{equation}\label{eq omologiche smoothing terms}
\begin{aligned}
&  \omega \cdot \partial_\theta \,  \mathcal S^{(\theta)}(\theta)[w , w] - \mathcal S^{(\theta)}(\theta)[\ii \Omega_\bot w, w] - \mathcal S^{(\theta)}(\theta)[w, \ii \Omega_\bot w] 
- \Upsilon^{(\theta)}_2(\theta)[w, w] = - \mathcal Z^{(\theta)}[w, w],  \\
& \qquad \quad \mathcal Z^{(\theta)} [w, w]  := \sum_{j \in S^\bot} \, w_j w_{- j} \,  \langle  \Upsilon_2^{(\theta)}(\theta)[e^{\ii 2 \pi j x}, e^{- \ii 2 \pi j x}]  \rangle_\theta  \, , \\
&  \omega \cdot \partial_\theta \,  {\cal S}^\bot_1(\theta, y) + [\ii \Omega_\bot , {\cal S}^\bot_1(\theta, y)]_{lin} + {\cal R}^\bot_{N, 1}(\theta, y) = {\cal Z}^\bot( y)\,, \qquad
 {\cal Z}^\bot( y) := {\rm diag}_{j \in S^\bot} [\widehat{\cal R}^\bot_{N, 1}(0, y)]_j^j\,, \\
&  \omega \cdot \partial_\theta \, {\cal S}^\bot_2(\theta)[w, w] + \ii \Omega_\bot {\cal S}^\bot_2(\theta)[w, w] 
- {\cal S}^\bot_2(\theta)[\ii  \Omega_\bot w, w] - {\cal S}^\bot_2(\theta)[w,\ii  \Omega_\bot w] + {\cal R}^\bot_{N, 2}(\theta)[ w, w]= 0\,.
\end{aligned}
\end{equation}
Homological equations of this form can be solved by applying the following lemma.
\begin{lemma}\label{lemma equazioni omologiche smoothing}
Let $N \in \N$.
$(i)$ Let $\mathcal M^{(\theta)} \in C^\infty(\T^{S_+} \times [0, \e_0], \, {\cal B}_2(H^\sigma_\bot(\T_1), \, \R^{S_+}))$ for some $\sigma > 0$ and 
assume that $\omega \in \Pi_\gamma$, $0 < \gamma < 1$ (cf. \eqref{condizioni forma normale}). 
Then there exists $\mathcal S^{(\theta)} \in C^\infty(\T^{S_+} \times [0, \e_0], \, {\cal B}_2(H^{\sigma + 1}_\bot(T_1), \, \R^{S_+}))$ solving
\begin{equation}\label{eq omologica smoothing astratta 0}
\begin{aligned}
& \omega \cdot \partial_\theta \, \mathcal S^{(\theta)}(\theta)[w , w] - \mathcal S^{(\theta)}(\theta)[\ii \Omega_\bot w, w] - 
\mathcal S^{(\theta)}(\theta)[w, \ii \Omega_\bot w] - \mathcal M^{(\theta)}(\theta)[w, w] = - \mathcal Z^{(\theta)}[w, w], \\
& \mathcal Z^{(\theta)}[w, w] := \sum_{j \in S^\bot}  \, w_j w_{- j} \, \langle  \mathcal M^{(\theta)}(\theta)[e^{\ii 2 \pi j x}, e^{- \ii 2 \pi j x}]  \rangle_\theta \,. 
\end{aligned}
\end{equation}
$(ii)$ Let ${\cal R}^\bot_{N, 1}(\theta, y)[w] \in {\cal OS}_{w}^2(N)$ and $\omega \in \Pi_\gamma$, $0 < \gamma < 1$ (cf. \eqref{condizioni forma normale}). 
Then there exists ${\cal S}^\bot_1(\theta, y)[w] \in {\cal OS}_{w}^2(N - 1)$ which solves the equation
\begin{equation}\label{eq omologica smoothing astratta 1}
  \omega \cdot \partial_\theta \, {\cal S}^\bot_1(\theta, y) + [\ii \Omega_\bot , \,  {\cal S}^\bot_1(\theta, y)]_{lin} + {\cal R}^\bot_{N, 1}(\theta, y) = {\cal Z}^\bot( y)\,, \qquad
 {\cal Z}^\bot( y) := {\rm diag}_{j \in S^\bot} [\widehat{\cal R}^\bot_{N, 1}(0, y)]_j^j\,. 
\end{equation}
$(iii)$ Let ${\cal R}^\bot_{N, 2}(\theta)[w, w]  \in {\cal OS}_{ww}^2(N)$ and assume that $\omega \in \Pi_\gamma$, $0 < \gamma < 1$ (cf. \eqref{condizioni forma normale}). 
Then there exists ${\cal S}^\bot_2(\theta)[w, w] \in {\cal OS}_{ww}^2(N - 5)$ which solves the equation
\begin{equation}\label{eq omologica smoothing astratta 2}
\begin{aligned}
&  \omega \cdot \partial_\theta \, {\cal S}^\bot_2(\theta)[w,  w] +\ii \Omega_\bot {\cal S}^\bot_2(\theta)[w, w] - {\cal S}^\bot_2(\theta)[\ii  \Omega_\bot w, w] 
- {\cal S}^\bot_2(\theta)[w, \ii \Omega_\bot w]+ {\cal R}^\bot_{N, 2}(\theta)[w, w] = 0\,. 
\end{aligned}
\end{equation}
\end{lemma}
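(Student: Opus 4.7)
My strategy for all three items is the same: each equation is diagonal in the Fourier basis indexed by $(\ell, j, \ldots) \in \Z^{S_+} \times (S^\bot)^k$ and reduces to a mode-by-mode algebraic inversion. I will expand the unknown and the data in Fourier series in $\theta$ and $x$, read off the equation for each mode, invert using the appropriate Melnikov condition in $\Pi_\gamma$, and verify that the resulting series defines a map in the claimed class. The right-hand-side kernels $\mathcal Z^{(\theta)}$, $\mathcal Z^\bot$, and $0$ are tuned precisely to kill the resonant modes.

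For item $(i)$, expanding
$$\mathcal S^{(\theta)}(\theta)[w,w] = \sum_{\ell, j_1, j_2} \widehat{\mathcal S}^{(\theta)}_{j_1,j_2}(\ell)\,e^{\ii\ell\cdot\theta}\, w_{j_1}w_{j_2}$$
and using $(\ii\Omega_\bot w)_j = \ii\Omega_j w_j$, equation \eqref{eq omologica smoothing astratta 0} becomes
$$\ii(\omega\cdot\ell - \Omega_{j_1} - \Omega_{j_2})\,\widehat{\mathcal S}^{(\theta)}_{j_1,j_2}(\ell) = \widehat{\mathcal M}^{(\theta)}_{j_1,j_2}(\ell) - \widehat{\mathcal Z}^{(\theta)}_{j_1,j_2}(\ell),$$
with $\widehat{\mathcal Z}^{(\theta)}_{j_1,j_2}(\ell)$ supported precisely at $\ell = 0$, $j_2 = -j_1$. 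Writing $-\Omega_{j_i} = \Omega_{-j_i}$ and applying $\Pi_\gamma^{(2)}$ with $(-j_1, -j_2)$ gives $|\omega\cdot\ell - \Omega_{j_1} - \Omega_{j_2}| \geq \gamma/\langle\ell\rangle^\tau$ off this resonant set, so one divides there and sets $\widehat{\mathcal S}^{(\theta)}_{j_1,j_2}(0) = 0$ on-resonance. Item $(ii)$ is analogous: writing
$$[[\ii\Omega_\bot, \mathcal S^\bot_1(\theta,y)]_{lin}]_j^{j'} = \ii(\Omega_j - \Omega_{j'})[\mathcal S^\bot_1(\theta,y)]_j^{j'},$$
equation \eqref{eq omologica smoothing astratta 1} reduces mode-by-mode to division by $\ii(\omega\cdot\ell + \Omega_j - \Omega_{j'})$, and $\Pi_\gamma^{(2)}$ again controls this off the resonant set $\{\ell = 0,\ j' = j\}$, which is exactly what $\mathcal Z^\bot$ cancels.

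The main obstacle is item $(iii)$, where the three-wave denominator $\omega\cdot\ell + \Omega_j - \Omega_{j_1} - \Omega_{j_2}$ appears with no matching resonant subtraction on the right, so every mode must be inverted. Writing
$$\mathcal S^\bot_2(\theta)[w,w] = \sum_{j, \ell, j_1, j_2} [\widehat{\mathcal S}^\bot_2]_{j_1,j_2}^j(\ell)\,e^{\ii\ell\cdot\theta}\, w_{j_1}w_{j_2}\, e^{\ii 2\pi j x},$$
I will case-split the denominator. Setting $(k_1,k_2,k_3) = (j,-j_1,-j_2)$, the non-degeneracy condition $k_i + k_m \neq 0$ from $\Pi_\gamma^{(3)}$ translates to $j\neq j_1$, $j\neq j_2$, $j_1 + j_2 \neq 0$; in this generic case $\Pi_\gamma^{(3)}$ supplies
$$|\omega\cdot\ell + \Omega_j - \Omega_{j_1} - \Omega_{j_2}| \geq \frac{\gamma}{\langle\ell\rangle^\tau\langle j\rangle^2\langle j_1\rangle^2\langle j_2\rangle^2}.$$
In each degenerate case, two of the $\Omega$-terms cancel and the denominator collapses to a first-Melnikov expression $\omega\cdot\ell\pm\Omega_n$, bounded below by $\gamma/\langle\ell\rangle^\tau$ via $\Pi_\gamma^{(1)}$ with no space-derivative loss. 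Combining the two regimes and summing the Fourier series, the $\langle j\rangle^2\langle j_1\rangle^2\langle j_2\rangle^2$ loss, propagated through the standard bookkeeping of bilinear smoothing bounds, yields the stated five-order loss $\mathcal S^\bot_2 \in \mathcal{OS}^2_{ww}(N-5)$.

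In all three items, the polynomial $\langle\ell\rangle^\tau$ loss in $\theta$-derivatives is absorbed by enlarging the auxiliary indices $\sigma$, $\sigma_N$, while smoothness in $(\theta, y, \e)$ and the \emph{small of order $p$} property are preserved under the mode-by-mode division because the inversion is linear and commutes with $\partial_y^\alpha\partial_\e^k$. Convergence of the Fourier series follows from the smoothing assumption on the data, which outweighs the polynomial divisor bounds above once the Fourier coefficients are estimated term by term.
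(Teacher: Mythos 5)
Your proposal is correct and follows essentially the same route as the paper: Fourier expansion in $\theta$ and $x$, reduction to mode-by-mode algebraic inversion, invocation of the relevant Melnikov conditions from $\Pi_\gamma$, and estimation of the resulting coefficients to place $\mathcal S$ in the stated class. The paper only writes out item $(iii)$ in detail and asserts $(i)$, $(ii)$ are similar; your sketches of $(i)$ and $(ii)$ are indeed the right analogues. One small point where you are actually more explicit than the paper: for item $(iii)$ the paper writes the uniform divisor bound $|\widehat{\mathcal S^\bot_2}(\ell, n)_{j j'}| \leq \langle \ell \rangle^\tau \langle j \rangle^2 \langle j' \rangle^2 \langle n \rangle^2 \gamma^{-1} |\widehat{\mathcal R^\bot_{N, 2}}(\ell, n)_{j j'}|$ citing only $\Pi_\gamma^{(3)}$, but $\Pi_\gamma^{(3)}$ by its definition applies only when $j_k + j_m \ne 0$; the degenerate indices $n = j$, $n = j'$, or $j + j' = 0$ are silently covered by $\Pi_\gamma^{(1)}$ (which gives the stronger bound $\gamma/\langle\ell\rangle^\tau$, so the same inequality still holds), exactly as you observe. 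You correctly track the five-derivative loss in $(iii)$ (two factors of $\langle j_1 \rangle^2 \langle j_2 \rangle^2$ go into the bilinear norm, $\langle n \rangle^2$ eats two output derivatives, and one more is lost to Cauchy--Schwarz summability), and your remark that the division commutes with $\partial_y^\beta \partial_\e^k$ so the homogeneity order is preserved is the reason the solution stays in the class $\mathcal{OS}^2$.
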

\begin{proof}
Since items $(i)$, $(ii)$ can be proved by arguments similar to the ones used in the proof of item $(iii)$, we only prove the latter. 
By assumption, $\mathcal R^\bot_{N, 2}(\theta, w)[w] \equiv {\mathcal R^\bot_{N, 2}}(\theta)[w, w] \in {\cal OS}_{ww}^2(N)$.
Hence there exists an integer $s_N> 0$ with the property that for any $s \geq s_N$, there exists $0 < \e_0 \equiv \e_0(s) < 1$ so that  
$$
\mathcal R^\bot_{N, 2} : \T^{S_+} \times [0, \e_0] \to  {\cal B}_{2, s, N} , \,  (\theta, \e) \mapsto \mathcal R^\bot_{N, 2}(\theta) \equiv  \mathcal R^\bot_{N, 2}(\theta, \e) \, ,
\qquad {\cal B}_{2, s, N}: = \mathcal B_2(H^{s}(\T_1), H^{s + N + 1}(\T_1)),
$$ 
is  $C^\infty$-smooth and bounded (cf. \eqref{forme multilineari}, Defintion \ref{paradiff omogenei di ordine 2}).
% equipped with the norm
%\begin{equation}\label{norma mappe multilineari}
%\| Q \|_{{\cal B}_{s, N}} := \sup_{\| w \|_s, \| v \|_s \leq 1} \| {\cal Q}[u, v] \|_{s + N}
%\end{equation}
%(recall \eqref{forme multilineari}). 
A a consequence, for any multi-index $\alpha \in \Z_{\ge 0}^{S_+}$, 
\begin{equation}\label{china 100}
\| \partial_\theta^\alpha \mathcal R^\bot_{N, 2}(\theta) \|_{{\cal B}_{2, s, N}} \lesssim_{\alpha, s} 1\,. 
\end{equation}
Expanding $\mathcal R^\bot_{N, 2}(\theta)$  in its Fourier  series, 
$\mathcal R^\bot_{N, 2}(\theta) = \sum_{\ell \in \Z^{S_+}} \widehat{\mathcal R^\bot_{N, 2}}(\ell) e^{\ii \ell \cdot \theta}$,
the latter estimates imply
\begin{equation}\label{china 101}
\| \widehat{\mathcal R^\bot_{N, 2}}(\ell) \|_{{\cal B}_{2, s, N}} \lesssim_{\alpha, s} \langle \ell \rangle^{- |\alpha|}, \qquad \forall \,  \alpha \in \Z_{\ge 0}^{S_+}\,, \ \  \forall \,  \ell \in \Z^{S_+}\,.  
\end{equation}
Since for any $\ell \in \Z^{S_+}$, $\widehat{\mathcal R^\bot_{N, 2}}(\ell) \in {\cal B}_{2, s, N}$, one has for any $w, v \in H^s_\bot(\T_1)$
\begin{equation}\label{forma bilineare cal R ell}
 \widehat{\mathcal R^\bot_{N, 2}}(\ell)[w, v] = \sum_{j, j' \in S^\bot}  w_j v_{j'} \widehat{\mathcal R^\bot_{N, 2}}(\ell)_{j j'} \,, \qquad
 \widehat{\mathcal R^\bot_{N, 2}}(\ell)_{j j'}( x) := \widehat{\mathcal  R^\bot_{N, 2}}(\ell)[e^{\ii 2 \pi j  x}, e^{\ii 2 \pi j'  x}]\,.
\end{equation}
In particular, for $w = e^{\ii 2 \pi j x}$, $v = e^{\ii 2 \pi j' x}$, one infers from \eqref{china 101} that
\begin{equation}\label{china 102}
\| \widehat{\mathcal R^\bot_{N, 2}}(\ell)_{j j'} \|_{s + N + 1} \lesssim_{\alpha, s} 
\langle \ell \rangle^{- |\alpha|}\langle j \rangle^s \langle j' \rangle^s\,, \qquad
\forall \, \alpha \in \Z_{\ge 0}^{S_+}, \ \ell \in \Z^{S_+}, \  j , j' \in S^\bot  .
\end{equation}
Expanding also $\mathcal S^\bot_2(\theta)$  in its Fourier  series, 
$\mathcal S^\bot_2(\theta) = \sum_{\ell \in \Z^{S_+}} \widehat{\mathcal S^\bot_2}(\ell) e^{\ii \ell \cdot \theta}$,
one has for any $w, v \in H^s_\bot(\T_1)$,
\begin{equation}\label{forma bilineare cal S ell}
 \widehat{\mathcal S^\bot_2}(\ell)[w, v] = \sum_{j, j' \in S^\bot}  w_j v_{j'} \widehat{\mathcal S^\bot_2}(\ell)_{j j'} \,, \qquad
 \widehat{\mathcal S^\bot_2}(\ell)_{j j'}( x) := \widehat{\mathcal  S^\bot_2}(\ell)[e^{\ii 2 \pi j  x}, e^{\ii 2 \pi j'  x}]\,.
\end{equation}
By expanding $ \widehat{\mathcal S^\bot_2}(\ell)_{j j'}( x) $ and $ \widehat{\mathcal R^\bot_2}(\ell)_{j j'}( x) $ with respect to the variable $x \in \T_1$ in Fourier series,
\begin{equation}\label{china 80}
 \widehat{\mathcal S^\bot_2}(\ell)_{j j'}( x)  = \sum_{n \in S^\bot}  \widehat{\mathcal S^\bot_2}(\ell, n)_{j j'}  e^{\ii 2 \pi n x}, \qquad 
  \widehat{\mathcal R^\bot_{N, 2}}(\ell)_{j j'}( x)  = \sum_{n \in S^\bot}  \widehat{\mathcal R^\bot_{N, 2}}(\ell, n)_{j j'}  e^{\ii 2 \pi n x}\,,
\end{equation}
the homological equation \eqref{eq omologica smoothing astratta 2} yields the following equations for the coefficients  $\widehat{\mathcal S^\bot_2}(\ell, n)_{j j'}$ of $\widehat{\mathcal S^\bot_2}(\ell)$,
\begin{equation}\label{china 103}
\ii \big( \omega \cdot \ell + \Omega_n - \Omega_j - \Omega_{j'}\big) \widehat{\mathcal S^\bot_2}(\ell, n)_{j j'} + \widehat{\mathcal R^\bot_{N, 2}}(\ell, n)_{j j'} = 0\,. 
\end{equation}
Since $\omega \in \Pi_\gamma^{(3)}$, $0 < \gamma < 1$ (cf. \eqref{condizioni forma normale}), the latter equations admit solutions. They are given by
\begin{equation}\label{china 104}
\widehat{\mathcal S^\bot_2}(\ell, n)_{j j'} = - \dfrac{\widehat{\mathcal R^\bot_{N, 2}}(\ell, n)_{j j'}}{\ii \big(\omega \cdot \ell + \Omega_n - \Omega_j - \Omega_{j'}\big)}\, , 
\qquad \forall \, \ell \in \Z^{S_+}, \ n, \,  j, \,  j' \in S^\bot \, ,
\end{equation}
and satisfy the estimate
$|\widehat{\mathcal S^\bot_2}(\ell, n)_{j j'}| \leq \langle \ell \rangle^\tau \langle j \rangle^2 \langle j' \rangle^2 \langle n \rangle^2 \gamma^{- 1} |\widehat{\mathcal R^\bot_{N, 2}}(\ell, n)_{j j'}|$
 (cf. \eqref{condizioni forma normale}).
By \eqref{china 80}, one has 
$\| \widehat{\mathcal  S^\bot_2}(\ell)_{j j'} \|_{s + N - 1}  
= \big( \sum_{n \in S^\bot} \langle n \rangle^{2 (s + N - 1)} |\widehat{\mathcal S^\bot_2}(\ell, n)_{j j'}|^2 \big)^{\frac12} $
and hence
\begin{equation}\label{china 106}
\begin{aligned}
\| \widehat{\mathcal  S^\bot_2}(\ell)_{j j'} \|_{s + N - 1} &  \leq \langle \ell \rangle^\tau \langle j \rangle^2 \langle j' \rangle^2 \gamma^{- 1}
\Big( \sum_{n \in S^\bot} \langle n \rangle^{2 (s + N - 1)} \langle n \rangle^4 | \widehat{\mathcal R^\bot_{N, 2}} (\ell, n)_{j j'}|^2 \Big)^{\frac12}  \\
& = \langle \ell \rangle^\tau \langle j \rangle^2 \langle j' \rangle^2 \gamma^{- 1} \| \widehat{\mathcal R^\bot_{N, 2}}(\ell)_{j j'} \|_{s + N + 1} 
\stackrel{\eqref{china 102}}{\lesssim_{\alpha, s}} 
\langle \ell \rangle^{\tau - |\alpha|} \langle j \rangle^{s + 2} \langle j' \rangle^{s + 2} \gamma^{- 1}\,. 
\end{aligned}
\end{equation}
For any $w, v \in H^{s+3}_\bot(\T_1)$, one then obtains by the Cauchy-Schwarz inequality,
\begin{equation}\label{china 107}
\begin{aligned}
\| \widehat{\mathcal S^\bot_2}(\ell)[w, v] \|_{s + N - 1} & {\leq} \sum_{j, j' \in S^\bot} |w_j | |v_{j'}|  \,\| \widehat{\mathcal S^\bot_2}(\ell)_{j j'} \|_{s + N - 1} 
\stackrel{\eqref{china 106}}{\lesssim_{\alpha, s}} 
\langle \ell \rangle^{\tau - |\alpha|} \gamma^{- 1} \sum_{j, j' \in S^\bot} \langle j \rangle^{s + 2} |w_j| \langle j' \rangle^{s + 2} |v_{j'}|  \\
& \lesssim_{\alpha, s} \langle \ell \rangle^{\tau - |\alpha|} \gamma^{- 1} \| w \|_{s + 3} \| v \|_{s + 3} \, .
\end{aligned}
\end{equation}
Writing $s$ for $s + 3$, we thus have proved that there exists $s_N > 0$ (large) so that
$$
\| \widehat{\mathcal S^\bot_2}(\ell) \|_{{\cal B}_{2, s, N - 4}} 
\lesssim_{\alpha, s}
 \langle \ell \rangle^{\tau - |\alpha|}\gamma^{- 1}\, , \qquad \forall \, \alpha \in \Z_{\ge 0}^{S_+}, \ s \ge s_N\, .
$$
implying that ${\mathcal S^\bot_2} \in C^\infty(\T^{S_+} \times [0, \e_0], \, {\cal B}_{2, s, (N - 5) + 1})$ for any $s \geq s_N$.  Hence 
${\mathcal S^\bot_2}(\theta)[w, w] \in {\cal OS}_{ww}^2(N - 5)$.  
\end{proof}

\smallskip
By Lemma \ref{lemma equazioni omologiche smoothing} and in view of  \eqref{prima expansione X6 bot}, \eqref{eq omologiche smoothing terms}, 
the vector field $X_6 = (X_6^{(\theta)}, X_6^{(y)}, X_6^\bot)$ takes the form
\begin{equation}\label{forma finale X6 bot}
\begin{aligned}
X_6^{(\theta)}(\frak x) & = - \omega -  \e \widehat \omega - \nabla_y Q(y) - \mathcal Z^{(\theta)}[w, w] + {\cal O}_3^{(\theta)}(\frak x)\, , \qquad \quad  \
X_6^{(y)}(\frak x)  = {\cal O}_3^{(y)}(\frak x) \, ,\\
X_6^\bot (\frak x) & = \ii  \Omega_\bot w + {\cal D}^\bot_6(\frak x)[w] + {\cal OB}^3(1, N) + {\cal OS}^3(N - 6)\,, \qquad
{\cal D}^\bot_6(\frak x)   := {\cal D}^\bot_5(\frak x) + {\cal Z}^\bot(y)\,, \quad\\
& {\cal O}_3^{(\theta)}, {\cal O}_3^{(y)} \in C^\infty_b([0, \e_0] \times {\cal V}^{\sigma_N}(\delta), \, \R^{S_+})  \quad \text{ terms small of order three}
\end{aligned}
\end{equation} 
for some $\sigma_N > 0$.
Since by \eqref{prop autoaggiuntezza cal M cal R 1}, $[{\mathcal R^\bot_{N, 1}}(\theta, y)]_j^j \in \ii \R$, $j \in S^\bot$,
and ${\cal Z}^\bot(y) = {\rm diag}_{j \in S^\bot} [\widehat{\mathcal R^\bot_1}(0, y)]_j^j$, the operator ${\cal Z}^\bot(y)$ is a skew-adjoint Fourier multiplier and hence
by \eqref{prop autoaggiuntezza cal D5} so is ${\cal D}^\bot_6(\frak x)$. We summarize our findings as follows. 
\begin{proposition}\label{proposizione X6 bot}
For any $N \in 	\Z_{\ge 6}$, there exists an integer $s_N > N$
 with the property that for any $s \ge s_N$, there exist $0 < \delta \equiv \delta(s, \gamma, N) <1$ and $0 < \e_0 \equiv \e_0(s, \gamma, N) < 1$  so that the following holds. 
There exists a map $\Psi^{(3)}$ with inverse $(\Psi^{(3)})^{-1}$ (cf. Remark \ref{inverse of flow}), 
\begin{equation}\label{prop Psi (3)}
 (\Psi^{(3)})^{\pm 1} \in {\cal C}^\infty_b({\cal V}^s(\delta)  \times [0, \e_0], \, {\cal V}^s(2 \delta)), \quad \forall s \geq s_N\,,  \qquad
 (\Psi^{(3)})^{\pm 1}(\frak x ) - \frak x \quad \text{small of order two},
\end{equation}
so that the transformed vector field $X_6 := (\Psi^{(3)})^* X_{5} = (X_6^{(\theta)}, \, X_6^{(y)}, \, X_6^\bot)$ has the form
\begin{equation}\label{regularized vector field X6}
\begin{aligned}
X^{(\theta)}_6(\frak x)  = - \omega - & \e \widehat \omega - \nabla_y Q(y)  - \mathcal Z^{(\theta)}[w, w] + {\cal O}_3^{(\theta)}(\frak x) , \qquad \qquad
X^{(y)}_6(\frak x)  = {\cal O}_3^{(y)}(\frak x) \, , \\
X^\bot_6(\frak x) & = \ii \Omega_\bot w + {\cal D}^\bot_6(\frak x)[w]    + {\cal OB}^3(1, N) + {\cal OS}^3(N - 6) \, ,
\end{aligned}
\end{equation}
 where ${\cal D}^\bot_6(\frak x)$ is a Fourier multiplier of order one given by \eqref{forma finale X6 bot} 
 and satisfies ${\cal D}^\bot_6(\frak x) = - {\cal D}^\bot_6(\frak x)^\top$, where
% \begin{equation}\label{prop autoaggiuntezza cal D5}
 \begin{equation}\label{Z ww}
 \mathcal Z^{(\theta)} \in {\cal B}_2(H^{\sigma_N}_\bot, \R^{S_+}),   \qquad  \mathcal Z^{(\theta)}[w, w] 
 = \sum_{j \in S^\bot} w_j w_{- j} \, \langle  \Upsilon_2^{(\theta)}(\theta)[e^{\ii 2 \pi j x}, e^{- \ii 2 \pi j x}]  \rangle_\theta , \ \ \forall \, w \in H^{\sigma_N}_\bot(\T_1),
 \end{equation}
 for some $\sigma_N > 0$, and where ${\cal O}_3^{(\theta)}$, ${\cal O}_3^{(y)}$ comprises terms which are small of order three. 
\end{proposition}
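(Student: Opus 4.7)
The plan is to take as $\Psi^{(3)}$ the time one flow map $\Phi_{\mathcal S}(1,\cdot)$ of the smoothing vector field
$$
{\mathcal S}(\frak x) = \bigl(\mathcal S^{(\theta)}(\theta)[w,w],\ 0,\ \mathcal S^\bot_1(\theta,y)[w] + \mathcal S^\bot_2(\theta)[w,w]\bigr)
$$
introduced at the start of Subsection \ref{section smoothing remainders}, where the three components are chosen to solve the homological equations \eqref{eq omologiche smoothing terms}. First, I would apply Lemma \ref{lemma equazioni omologiche smoothing}: item $(i)$ (using $\omega\in\Pi_\gamma^{(2)}$) produces $\mathcal S^{(\theta)}$ from $\Upsilon^{(\theta)}_2$; item $(ii)$ (using $\omega\in\Pi_\gamma^{(2)}$) produces $\mathcal S^\bot_1\in\mathcal{OS}^2_w(N-1)$ from $\mathcal R^\bot_{N,1}$; and item $(iii)$ (using $\omega\in\Pi_\gamma^{(3)}$, which forces the loss $N\mapsto N-5$) produces $\mathcal S^\bot_2\in\mathcal{OS}^2_{ww}(N-5)$ from $\mathcal R^\bot_{N,2}$. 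With $\mathcal S$ so defined, Lemma \ref{flusso smoothing vector fields} yields that for any $s\geq s_N$ there exist $\delta,\varepsilon_0>0$ such that $(\Psi^{(3)})^{\pm 1}\in C^\infty_b(\mathcal V^s(\delta)\times[0,\varepsilon_0],\mathcal V^s(2\delta))$ and $(\Psi^{(3)})^{\pm 1}(\frak x)-\frak x$ is small of order two, which proves \eqref{prop Psi (3)}.

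Next I would compute the pullback $X_6:=(\Psi^{(3)})^* X_5$ by means of the Lie expansion \eqref{espansione commutatori notazioni} applied to each summand in \eqref{regularized vector field X5}. The term $\ii\Omega_\bot w + \mathcal D^\bot_5(\frak x)[w]$ plays the role of the normal form vector field $X_\mathcal N$ (with the Fourier multiplier $\mathcal D^\bot_5$ absorbed appropriately), so I would invoke Lemma \ref{push forward smoothing X cal N} to obtain the explicit expressions $\Upsilon^{(1)}+\Upsilon^{(2)}$ for the commutator $[X_\mathcal N,\mathcal S_0+\mathcal S_1]$ plus a remainder in $\mathcal{OS}^3(N-5)$. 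For the remaining pieces (the para-differential contributions in $\mathcal{OB}^3(1,N)$ and the smoothing $\mathcal R^\bot_{N,j}$'s already present), Lemma \ref{push forward smoothing 1} shows they are preserved modulo additions of smaller order. Collecting everything, the $\theta$-component picks up precisely $\omega\!\cdot\!\partial_\theta\mathcal S^{(\theta)}-\mathcal S^{(\theta)}[\ii\Omega_\bot\cdot,\cdot]-\mathcal S^{(\theta)}[\cdot,\ii\Omega_\bot\cdot]-\Upsilon^{(\theta)}_2$, which by the first homological equation equals $-\mathcal Z^{(\theta)}[w,w]$, giving \eqref{Z ww}; the normal component linear in $w$ becomes $\mathcal Z^\bot(y)[w]$, and the quadratic-in-$w$ smoothing term is killed by the third equation.

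Finally I would verify the skew-adjointness claim: by Proposition \ref{prop regularization fourier multiplier} $\mathcal D^\bot_5$ is already skew-adjoint, and by \eqref{prop autoaggiuntezza cal M cal R 1} the diagonal entries $[\mathcal R^\bot_{N,1}(\theta,y)]_j^j$ lie in $\ii\R$, so that
$$
\mathcal Z^\bot(y) = \mathrm{diag}_{j\in S^\bot}[\widehat{\mathcal R^\bot_{N,1}}(0,y)]_j^j
$$
is a skew-adjoint Fourier multiplier; hence $\mathcal D^\bot_6=\mathcal D^\bot_5+\mathcal Z^\bot(y)$ satisfies $(\mathcal D^\bot_6)^\top=-\mathcal D^\bot_6$. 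The remaining contributions coming from $y$-gradients of smooth terms combine into the ``small of order three'' objects $\mathcal O_3^{(\theta)},\mathcal O_3^{(y)}$, and whatever is not explicitly displayed falls into $\mathcal{OB}^3(1,N)+\mathcal{OS}^3(N-6)$ (the loss $N-6$ comes from Lemma \ref{comm smoothing paradiff} applied to $[X^\bot,\mathcal S]$ with $X^\bot$ of order one combined with the loss $N-5$ already present in $\mathcal S^\bot_2$).

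The main obstacle is the solvability of the third homological equation in \eqref{eq omologiche smoothing terms}: the small divisors $\omega\!\cdot\!\ell+\Omega_n-\Omega_j-\Omega_{j'}$ are bounded from below only by third order Melnikov conditions that permit loss of space derivatives, which is why the smoothing exponent degrades from $N$ to $N-5$. This is already handled in Lemma \ref{lemma equazioni omologiche smoothing}$(iii)$, whose Fourier-series computation leverages the hypothesis $\omega\in\Pi_\gamma^{(3)}$ precisely to control the $\langle j\rangle^2\langle j'\rangle^2\langle n\rangle^2$ factor appearing in \eqref{china 106}. Once this is in place, the remainder of the proof is bookkeeping: matching classes of vector fields through the symbolic calculus of Section \ref{section paradiff nonlinear} and tracking how the Lie expansion absorbs higher-order terms into $\mathcal O_3^{(\theta)},\mathcal O_3^{(y)},\mathcal{OB}^3(1,N)$, and $\mathcal{OS}^3(N-6)$.
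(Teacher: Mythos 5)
Your proposal is correct and follows essentially the same route the paper takes in Subsection \ref{section smoothing remainders}: the transformation $\Psi^{(3)}$ is the time one flow of the smoothing vector field $\mathcal S$ whose components solve the homological equations \eqref{eq omologiche smoothing terms} via Lemma \ref{lemma equazioni omologiche smoothing}, the pullback $X_6 = \Phi_\mathcal S^* X_5$ is expanded using Lemma \ref{push forward smoothing X cal N} (for the normal form part) together with Lemma \ref{push forward smoothing 1} (for the remaining paradifferential, Fourier multiplier, and smoothing pieces), and the skew-adjointness of $\mathcal D_6^\bot = \mathcal D_5^\bot + \mathcal Z^\bot(y)$ follows from \eqref{prop autoaggiuntezza cal D5} and \eqref{prop autoaggiuntezza cal M cal R 1}. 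One small clarification: the paper does not absorb $\mathcal D_5^\bot$ into $X_\mathcal N$ but instead treats it as a separate Fourier multiplier vector field pulled back via Lemma \ref{push forward smoothing 1}(ii); the net effect is the same, and your bookkeeping of the loss of smoothing (from $N$ via $\mathcal S_2^\bot \in \mathcal{OS}^2_{ww}(N-5)$ to the final $\mathcal{OS}^3(N-6)$ after the commutator with the order one paradifferential part) matches the paper.
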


\section{Proofs of Theorem  \ref{long time ex action-angle} and Theorem \ref{teorema totale forma normale}}\label{conclusioni forma normale}

First we prove Theorem \ref{teorema totale forma normale}.

\smallskip

\noindent
{\em Proof of Theorem \ref{teorema totale forma normale}.} We apply Propositions \ref{prop ordine e 3}, \ref{prop total regularization}, \ref{prop regularization fourier multiplier}, \ref{proposizione X6 bot}. Choose $N = 6$ and define 
\begin{equation}\label{def cal U}
{\mathtt \Phi} := \Phi \circ \Psi^{(1)} \circ \Psi^{(2)} \circ \Psi^{(3)}\,. 
\end{equation}
By \eqref{prop Phi finale passi smoothing}, \eqref{prop Psi (1)}, \eqref{prop Psi (2)}, \eqref{prop Psi (3)}, ${\mathtt \Phi}$ satisfies property \eqref{trasformazione finale cal U}. 
Moreover $X = X_6 = {\mathtt \Phi}^* X_{\cal H}$ is given in \eqref{regularized vector field X6} with $N = 6$. Hence by setting 
$$
{\mathtt D}^\bot := {\cal D}^\bot_6\, , \qquad \mathtt N^{(\theta)}(y, w) := - \nabla_y Q(y) - \mathcal Z^{(\theta)}[w, w]\, ,
$$ 
one has that ${\mathtt D}^\bot$, $\mathtt N^{(\theta)}$, ${\cal O}_3^{(\theta)}$, ${\cal O}_3^{(y)}$ satisfy the properties stated in \eqref{proprieta elementi campo vettoriale}. 
Since $N = 6$ , the remainder term ${\cal OB}^3(1, 6) + {\cal OS}^3(0)$ in the expansion of $X^\bot(\frak x) = X^\bot_6(\frak x)$ in  \eqref{regularized vector field X6}
has the form (cf. Definitions \ref{paradiff vector fields}, \ref{def smoothing vector fields} )
$$
\Pi_\bot \sum_{k = 0}^7 T_{a_{1 - k}(\frak x)} \partial_x^{1 - k} w + {\cal R}^\bot_0(\frak x)
$$
with the following property: there are integers $s_\ast $, $\sigma > 0$ so that for any $s \geq s_\ast$
 there exist $0 < \delta \equiv \delta(s, \gamma) < 1$ and $0 < \e_0 \equiv \e_0(s, \gamma) < 1$ 
so that 
\begin{equation}\label{schifido 0}
\begin{aligned}
& a_{1 - k} \in C^\infty_b \big(  {\cal V}^{s + \sigma}(\delta) \times [0, \e_0], \, H^s(\T_1) \big) \quad \text{small of order two}, \qquad \forall \, 0 \le k \le 7 \, ,\\
& {\cal R}^\bot_0 \in C^\infty_b \big( {\cal V}^{s}(\delta) \times [0, \e_0], \, H^s_\bot(\T_1) \big) \quad \text{small of order three. }
\end{aligned}
\end{equation}
We then define 
$$
a (\frak x) := a_1(\frak x), \qquad {\cal R}^\bot(\frak x) := \Pi_\bot \sum_{k = 0}^6 T_{a_{- k}(\frak x)} \partial_x^{- k} w + {\cal R}^\bot_0(\frak x)\, .
$$
One shows that ${\cal R}^\bot \in C^\infty_b( {\cal V}^s(\delta) \times [0, \e_0], \, H^s_\bot(\T_1))$ for any $s \ge s_\ast +\sigma$ and that ${\cal R}^\bot$ is small of order three.
Indeed, by  \eqref{schifido 0}  and the estimate \eqref{stima elementare paraproduct} (paraproduct), it follows that for any $\frak x \in {\cal V}^{s}(\delta)$,
$$
\begin{aligned}
& \| {\cal R}^\bot(\frak x) \|_s \lesssim_{s, \gamma} {\rm max}_{0 \le k \le 7} \| a_{1 - k}(\frak x) \|_1 \| w \|_s + ( \e + \| y \| + \| w \|_s )^3 \\
& \qquad \qquad \lesssim_{s, \gamma} {\rm max}_{0 \le k \le 7} \| a_{1 - k}(\frak x) \|_{s_\ast} \| w \|_s + ( \e + \| y \| + \| w \|_s )^3 \\
& \qquad \qquad \lesssim_{s, \gamma} ( \e + \| y \| + \| w \|_{s_\ast +  \sigma} )^3  + ( \e + \| y \| + \| w \|_s )^3 .
\end{aligned}
$$
Hence we proved that for any $s \ge  s_\ast +  \sigma$,
$$
 \| {\cal R}^\bot(\frak x) \|_s \lesssim_{s, \gamma}  ( \e + \| y \| + \| w \|_s )^3.
$$
Theorem \ref{teorema totale forma normale} then follows by choosing $\sigma_\ast:= s_\ast + \sigma$.
\hfill $\square$

\medskip

%\label{sezione stime energia}

Let us now turn to the proof of Theorem \ref{long time ex action-angle}.
It is based on energy estimates for the solutions of the equation $\partial_t \frak x = X(\frak x)$ where $X$ 
is the vector field provided by Theorem \ref{teorema totale forma normale}
(cf. \eqref{forma campo vettoriale finale dopo NF}, \eqref{proprieta elementi campo vettoriale})
\begin{equation}\label{PDE NF energy estimates}
\begin{cases}
\partial_t \theta (t) = - \omega - \e \widehat \omega + \mathtt N^{(\theta)}(y, w) + {\cal O}_3^{(\theta)}(\frak x) \\
\partial_t y (t)  = {\cal O}_3^{(y)}(\frak x) \\
\partial_t w (t) = \ii \Omega_\bot w + {\mathtt  D}^\bot(\frak x)[w] + \Pi_\bot T_{a(\frak x)} \partial_x w + {\cal R}^\bot(\frak x).
\end{cases}
\end{equation}
Choose $\sigma_\ast > 0$ and for any $s \geq \sigma_\ast$, $0 < \delta \equiv \delta(s, \gamma) < 1 $, 
$0 < \e_0 \equiv \e_0(s, \gamma) \ll \delta$ 
as in Theorem \ref{teorema totale forma normale}. For any $s \ge \sigma_\ast$ and $0 < \e \le \e_0(s, \gamma)$ 
we then consider the Cauchy problem of \eqref{PDE NF energy estimates} 
with small  initial data $\frak x_0 = (\theta_0, y_0, w_0) \in \T^{S_+} \times \R^{S_+} \times H^s_\bot(\T_1)$, 
\begin{equation}\label{dati iniziali cauchy prob}
%\theta(0) = \theta_0, \quad y(0) = y_0, \quad w(0) = w_0\,, \qquad 
 | y_0| , \, \| w_0 \|_s \leq \e\,. 
\end{equation}
Increasing $\s_*$ and decreasing $\e_0$, if needed, it follows from Proposition \ref{local existence theorem}
that for any $s \ge \s_*$ and $0 < \e \le \e_0$ there exists $T\equiv T_{\e, s, \gamma}> 0 $ so that
the Cauchy problem of \eqref{PDE NF energy estimates}  for any initial data $\frak x_0 = (\theta_0, y_0, w_0)$ satisfying \eqref{dati iniziali cauchy prob} 
has a unique solution $t \mapsto \frak x(t) = (\theta(t), y(t), w(t))$ with
\begin{equation}\label{local solution}
 \theta \in C^1([- T, T], \T^{S_+}),  \quad y \in C^1([- T, T], \R^{S_+}), \quad w \in C^0([- T, T], H^s_\bot(\T_1)) \cap C^1([- T, T], H^{s - 3}_\bot(\T_1)). 
\end{equation}
In addition, by Proposition \ref{local existence theorem} there exists $C_\ast \equiv C_\ast(\gamma) > 1$ so that
\begin{equation}\label{sol locali sistema NF}
 | y(t) |, \, \| w(t) \|_s\, , \,  |\Theta(t)| \,  \leq C_\ast  \e \, , \qquad \forall t \in [- T, T]\,, 
\end{equation}
where
\begin{equation}\label{def Theta}
 \Theta(t) := \theta(t) - \theta_0 + (\omega + \e \widehat \omega) t - \int_0^t \mathtt N^{\theta}(y(\tau), w(\tau))\, d \tau, 
 \qquad t \in [- T, T].
\end{equation}
%Note that according to \eqref{PDE NF energy estimates}, $ \partial_t  \Theta(t) =  {\cal O}_3^{(\theta)}(\frak x)$.
We now prove that the time $T$  of existence of the solution can be chosen to be of size $\e^{-2}$.
\begin{proposition}\label{prop tempo di esistenza}
Let $\sigma_\ast$ and $0 < \e_0 \equiv \e_0(s, \gamma) < 1$, $s \ge \sigma_\ast$ be given as above.
Then for any $s \ge \sigma_\ast$ there exists a constant 
$C_{**} \equiv C_{**}(s, \gamma) > 0$ so that for any $0 < \e \le \e_0$, 
the time of existence $T$ of the solution $\frak x(t)$ can be chosen as $T_{\e, s, \gamma} := C_{**} \e^{- 2}$. 
\end{proposition}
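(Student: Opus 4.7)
The plan is to establish Proposition \ref{prop tempo di esistenza} by a standard bootstrap argument based on an energy estimate for the system \eqref{PDE NF energy estimates}. Because the structural features \eqref{proprieta elementi campo vettoriale} of the normal form are tailor--made for such estimates, the key point is simply to confirm that each piece contributes at the expected order in $\e$.

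Define the energy
\begin{equation*}
\mathcal{E}_s(t) := |y(t)|^2 + \| w(t)\|_s^2 ,
\end{equation*}
and, for $M>0$ to be chosen, let $T_M$ be the supremum of times $T'\in [0,T]$ such that $\sqrt{\mathcal{E}_s(t)} \le M \e$ for all $|t|\le T'$; by \eqref{sol locali sistema NF}, $T_M>0$ provided $M\ge C_\ast$. Differentiating $\mathcal{E}_s$ along a solution I would analyse the contributions of the four terms in the $w$--equation together with the $y$--equation.

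First, since ${\cal O}_3^{(y)}$ is small of order three (cf. \eqref{proprieta elementi campo vettoriale}), one has $|{\cal O}_3^{(y)}(\frak x)|\lesssim_s (\e+|y|+\|w\|_s)^3$, so that
\begin{equation*}
\frac{d}{dt}|y|^2 = 2\, y\cdot {\cal O}_3^{(y)}(\frak x) \lesssim_s \sqrt{\mathcal{E}_s(t)}\,(\e+\sqrt{\mathcal{E}_s(t)})^3 .
\end{equation*}
For $\frac{d}{dt}\|w\|_s^2$ I use the inner product $\langle u,v\rangle_s := \sum_{j\in S^\bot}\langle j\rangle^{2s}\, u_j\overline{v_j}$ on $H^s_\bot(\T_1)$. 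The contribution of $\ii\Omega_\bot w$ vanishes because $\Omega_\bot$ is a real diagonal Fourier multiplier; that of $\mathtt{D}^\bot(\frak x)[w]$ vanishes because $\mathtt{D}^\bot$ is a real diagonal, skew--adjoint Fourier multiplier (cf. \eqref{proprieta elementi campo vettoriale}); and the smoothing remainder ${\cal R}^\bot$, being small of order three, contributes at most $\|w\|_s\,(\e+\sqrt{\mathcal{E}_s})^3$.

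The main work is the paradifferential transport term $\Pi_\bot T_{a(\frak x)}\partial_x w$, which is the obstacle in the estimate. Here I use that $a(\frak x)$ is real--valued and small of order two. Writing $\Lambda^s := (1-\partial_x^2)^{s/2}$, I conjugate $T_a\partial_x$ by $\Lambda^s$ and invoke the symbolic calculus of Section \ref{para-differential calculus}: Lemma \ref{lemma composizione nostri simboli} and Corollary \ref{aggiunto nostri operatori} yield
\begin{equation*}
\Lambda^{s}\, T_{a}\partial_x\, \Lambda^{-s} \;=\; T_{a}\partial_x \;+\; \mathcal{B}(\frak x),\qquad  \big(T_a\partial_x\big)^{\!\top} \;=\; -T_a\partial_x \;+\; \mathcal{B}'(\frak x),
\end{equation*}
where $\mathcal{B}(\frak x),\mathcal{B}'(\frak x)\in\mathcal{B}(L^2_\bot(\T_1))$ have operator norm $\lesssim_s \|a(\frak x)\|_{\sigma}$ for a suitable $\sigma$. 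Consequently
\begin{equation*}
2\,\mathrm{Re}\, \langle w,\, \Pi_\bot T_{a(\frak x)}\partial_x w\rangle_s \;\lesssim_s\; \|a(\frak x)\|_\sigma\, \|w\|_s^2 \;\lesssim_s\; (\e+\sqrt{\mathcal{E}_s})^2\, \|w\|_s^2,
\end{equation*}
using that $a$ is small of order two.

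Collecting all contributions, on $[-T_M,T_M]$ one gets
\begin{equation*}
\Big|\frac{d}{dt}\mathcal{E}_s(t)\Big| \;\le\; C(s,\gamma)\, \big(\e+\sqrt{\mathcal{E}_s(t)}\big)^2\,\mathcal{E}_s(t) \;+\; C(s,\gamma)\,\sqrt{\mathcal{E}_s(t)}\,\big(\e+\sqrt{\mathcal{E}_s(t)}\big)^3.
\end{equation*}
Under the bootstrap assumption $\sqrt{\mathcal{E}_s(t)}\le M\e$ with $M\e\le 1$ this reduces to $\big|\tfrac{d}{dt}\mathcal{E}_s(t)\big|\le C_1(s,\gamma)(1+M)^2\e^2\,\mathcal{E}_s(t) + C_1(s,\gamma)(1+M)^3\e^4$, and Gronwall, together with $\mathcal{E}_s(0)\le 2\e^2$ (from \eqref{dati iniziali cauchy prob}), yields
\begin{equation*}
\mathcal{E}_s(t) \;\le\; \big(2\e^2 + C_2 M^3 \e^2|t|\big)\,\exp\!\big(C_1(1+M)^2\e^2|t|\big),\qquad |t|\le T_M .
\end{equation*}
Choosing first $M=M(s,\gamma)$ large, then $C_{\ast\ast}=C_{\ast\ast}(s,\gamma)$ small relative to $C_1,C_2$, and finally $\e_0$ small, one obtains $\sqrt{\mathcal{E}_s(t)}\le \tfrac12 M\e$ for $|t|\le C_{\ast\ast}\e^{-2}$. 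By the continuity of $t\mapsto\mathcal{E}_s(t)$ (from \eqref{local solution}) and the standard continuation criterion afforded by Proposition \ref{local existence theorem}, the solution therefore extends to the interval $[-C_{\ast\ast}\e^{-2},\,C_{\ast\ast}\e^{-2}]$ while staying in the regime of validity of the estimates, which is the claim.
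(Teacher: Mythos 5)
Your proposal is correct and rests on the same underlying mechanism as the paper's proof: an $H^s$ energy estimate for $w$ that exploits (a) the skew--adjointness of the constant Fourier multiplier $\ii\Omega_\bot+\mathtt D^\bot(\frak x)$, which makes the leading transport terms drop out, (b) a symmetrization/commutator estimate for $\Pi_\bot T_{a}\partial_x$ giving a contribution $\lesssim \|a\|_\sigma\|w\|_s^2$ with $\|a\|_\sigma\lesssim\e^2$, and (c) the order-three smallness of ${\cal R}^\bot$ and ${\cal O}_3^{(y)}$ -- followed by a bootstrap/continuation argument. The packaging differs: the paper first proves the auxiliary Lemma \ref{lemma eps T soluzione}, which, \emph{assuming} the a priori bound $|y(t)|,\|w(t)\|_s\lesssim\e$ from Proposition \ref{local existence theorem}, yields directly $|\partial_t\|\partial_x^s w\|^2|\lesssim\e^4$ (no Gronwall exponential is needed, since the quadratic coefficient $\e^2\|w\|_s^2$ is bounded by $\e^4$ using the a priori bound) and hence the clean integrated bound $\|w(t)\|_s\le\e+C T\e^3$; Proposition \ref{prop tempo di esistenza} is then a one-line contradiction with the definition of $\check T$. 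Your bootstrap with Gronwall and $\mathcal{E}_s=|y|^2+\|w\|_s^2$ achieves the same conclusion, since on the time scale $T\sim\e^{-2}$ the exponential factor $\exp(C\e^2 T)$ is $O(1)$. A few minor points: the forcing term in your Gronwall inequality should read $C_2 M(1+M)^3\e^4|t|$ rather than $C_2 M^3\e^2|t|$ (the $\e^2$ looks like a typo -- with $\e^2$ the conclusion $\mathcal{E}_s\lesssim\e^2$ would not follow); the paper also tracks $\Theta(t)$ in $M(T)$, but this is only because $\Theta$ appears in Proposition \ref{local existence theorem} and Theorem \ref{long time ex action-angle}, not because $\Theta$ enters the continuation criterion (it does not, since $\theta\in\T^{S_+}$ is compact), so your omission of $\Theta$ from the energy is harmless; finally, the conjugation identity $\Lambda^s T_a\partial_x\Lambda^{-s}=T_a\partial_x+\mathcal{B}$ is not directly a statement of Lemma \ref{lemma composizione nostri simboli} (which treats $T_a\partial_x^m\circ T_b\partial_x^{m'}$) -- the paper instead estimates the commutator $[\partial_x^s, T_a\partial_x]$ via the reference \cite[Lemma A.1]{BM} and uses Corollary \ref{aggiunto nostri operatori} for the transpose, but your idea is equivalent and the estimate you state is correct.
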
 
To prove the latter proposition, we first need to make some preliminary considerations. Let $s \geq  \sigma_\ast$ and $0 < \e \le \e_0$.
 By \eqref{proprieta elementi campo vettoriale}, 
 $a$ is small of order two and ${\cal R}^\bot$, $ {\cal O}_3^{(\theta)}$, ${\cal O}_3^{(y)}$ are small of order three, 
and by applying the estimates \eqref{sol locali sistema NF}, one has 
\begin{equation}\label{stime epsilon quantita campo vett}
\begin{aligned}
& |{\cal O}_3^{(\theta)}(\frak x(t))|\,,  \,  |{\cal O}_3^{(y)}(\frak x(t))| \lesssim_\gamma \e^3, \qquad 
 \| a(\frak x(t)) \|_{\sigma_\ast} \lesssim_\gamma \e^2, \qquad \| {\cal R}^\bot(\frak x(t)) \|_s \lesssim_{s, \gamma} \e^3, \qquad \forall \, t \in [- T, T]\,.
\end{aligned}
\end{equation}
First we prove the following lemma.
\begin{lemma}\label{lemma eps T soluzione}
Given any $s \ge  \sigma_\ast$, there exists a constant $K_0 \equiv K_0(s, \gamma) > 0$
(large) so that the solutions \eqref{local solution} satisfy
\begin{equation}\label{estimate local solution}
 |\Theta(t)| \leq K_0 T \e^3\,,\qquad |y(t)|, \, \| w(t) \|_s  \leq \e + K_0 \e^3 T, \qquad \qquad  \forall  \, t \in [- T, T]\,. 
\end{equation}
%The constant $K_0(s)$ can be chosen to be independent of $0 < \e \le \e_0(s)$.
As a consequence, for any $T > 0$ satisfying $T \leq \frac{1}{K_0} \e^{-2}$, one has
\begin{equation}\label{estimate local solution 1}
|\Theta(t)| \leq  \e, \qquad |y(t)|\,,\, \| w(t) \|_s  \leq 2\e, \quad \forall t \in [- T, T]\,.  
\end{equation}
\end{lemma}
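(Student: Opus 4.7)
The plan is to prove both estimates by bootstrap: set up a differential inequality in $t$ for each of the three components $\Theta$, $y$, $w$, apply Gronwall, and then observe that the time window $T \le K_0^{-1}\e^{-2}$ is short enough for the resulting a priori bound $\e + K_0 \e^3 T$ to stay below $2\e$. The bounds on $\Theta$ and $y$ will come essentially for free; the real work is in the $H^s$--energy estimate for $w$.

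For $\Theta$ and $y$, differentiating the definition \eqref{def Theta} of $\Theta$ and comparing with the first equation in \eqref{PDE NF energy estimates} gives $\partial_t \Theta(t) = {\cal O}_3^{(\theta)}(\frak x(t))$, and the second equation of \eqref{PDE NF energy estimates} gives $\partial_t y(t) = {\cal O}_3^{(y)}(\frak x(t))$. By the pointwise estimate \eqref{stime epsilon quantita campo vett}, both right-hand sides are $\lesssim_\gamma \e^3$ along the solution. Integrating from $0$ to $t \in [-T,T]$ and using $\Theta(0)=0$, $|y_0|\le\e$ yields $|\Theta(t)| \le K_0 T\e^3$ and $|y(t)| \le \e + K_0 T\e^3$ with $K_0 = K_0(s,\gamma)$ absorbing the implicit constants.

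For $w$, I would compute $\tfrac12\tfrac{d}{dt}\|w\|_s^2 = \Re\langle \Lambda^s w, \Lambda^s \partial_t w\rangle_{L^2}$ with $\Lambda = \langle D\rangle$ (justified by approximating initial data in $H^{s+3}$ and passing to the limit, using the regularity \eqref{local solution}). The contributions from $\ii\Omega_\bot w$ and $\mathtt D^\bot(\frak x)[w]$ both vanish: each is a Fourier multiplier (so it commutes with $\Lambda^s$) and is skew-adjoint by \eqref{proprieta elementi campo vettoriale}, so $\Re\langle \Lambda^s w, \ii\Omega_\bot \Lambda^s w\rangle = 0$ and likewise for $\mathtt D^\bot$. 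The remainder ${\cal R}^\bot(\frak x)$ is controlled in $H^s$ by $\lesssim_{s,\gamma}\e^3$, contributing at most $C\e^3\|w\|_s$. The delicate term is $\Pi_\bot T_{a(\frak x)}\partial_x w$, which is of order one in $w$; here I would write
\[
\Lambda^s T_a \partial_x w \;=\; T_a\partial_x \Lambda^s w + [\Lambda^s, T_a\partial_x]\, w,
\]
estimate the commutator by Lemma \ref{composizione paraprodotto e derivate} as an operator of order $s$ with norm $\lesssim \|a\|_{\sigma_\ast}$, and symmetrize the first piece via Corollary \ref{aggiunto nostri operatori}, which gives $T_a\partial_x + (T_a\partial_x)^\top = -T_{\partial_x a} + \text{smoothing}$, a bounded operator on $H^s$ with norm $\lesssim \|a\|_{\sigma_\ast}$. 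Combined with the bound $\|a(\frak x(t))\|_{\sigma_\ast} \lesssim_\gamma \e^2$ from \eqref{stime epsilon quantita campo vett}, this yields $|\Re\langle \Lambda^s w, \Lambda^s \Pi_\bot T_a\partial_x w\rangle| \lesssim_{s,\gamma} \e^2\|w\|_s^2$. Collecting the three contributions gives
\[
\tfrac{d}{dt}\|w(t)\|_s \;\le\; C(s,\gamma)\bigl(\e^2 \|w(t)\|_s + \e^3\bigr),
\]
and Gronwall applied to $E(t) := \|w(t)\|_s + \e$ yields $E(t) \le 2\e\, e^{C\e^2 |t|}$, hence $\|w(t)\|_s \le \e + K_0 \e^3 T$ on $[-T,T]$ after enlarging $K_0$.

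The second assertion then follows trivially from the first: if $T \le K_0^{-1}\e^{-2}$, then $K_0\e^3 T \le \e$, giving $|y(t)|, \|w(t)\|_s \le 2\e$ and $|\Theta(t)| \le \e$. The one nonroutine point in this plan — and the place where the whole normal-form construction pays off — is the energy estimate for the para-product $T_{a(\frak x)}\partial_x w$: because this is an operator of order $+1$, the naive bound would lose a derivative and give nothing, and it is precisely the approximate skew-adjointness of $T_a\partial_x$ (which in turn relied on $a(\frak x)$ being real-valued and small of order two, as established in Theorem \ref{teorema totale forma normale}) that turns this loss into a harmless $\e^2\|w\|_s^2$ contribution.
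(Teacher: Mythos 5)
Your proof is essentially correct and follows the same strategy as the paper's: integrate the $O(\e^3)$ right-hand sides for $\Theta$ and $y$, and run an $H^s$-energy estimate for $w$ in which the skew-adjointness of $\ii\Omega_\bot + \mathtt D^\bot(\frak x)$ (both Fourier multipliers, hence commuting with $\partial_x^s$) kills the main terms, the para-product $T_{a(\frak x)}\partial_x$ is controlled after symmetrization via Corollary~\ref{aggiunto nostri operatori} and a commutator bound, and the remainder $\mathcal R^\bot$ contributes $O(\e^3)\|w\|_s$.

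The one point worth flagging is the final integration step. You apply Gronwall to $\tfrac{d}{dt}\|w\|_s \le C(\e^2\|w\|_s + \e^3)$, obtaining $\|w(t)\|_s + \e \le 2\e\,e^{C\e^2|t|}$; passing from this exponential form to the claimed $\|w(t)\|_s \le \e + K_0\e^3 T$ requires $\e^2 T$ to be bounded (which does hold in the bootstrap of Proposition~\ref{prop tempo di esistenza}, where $T\le K_0^{-1}\e^{-2}$, but is not part of the hypotheses of the first clause of the lemma). The paper avoids Gronwall entirely: it substitutes the a priori bound $\|w(t)\|_s\le C_\ast\e$ from \eqref{sol locali sistema NF} into the right-hand side $\e^2\|w\|_s^2 + \e^3\|w\|_s$ to get the constant differential inequality $|\partial_t\|\partial_x^s w(t)\|^2|\lesssim_{s,\gamma}\e^4$, then integrates directly and uses $\sqrt{1+x}\le 1+x$ to obtain $\|w(t)\|_s\le\e+C_2\e^3 T$ with no implicit smallness assumption on $\e^2T$. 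Since \eqref{sol locali sistema NF} is already needed to justify the pointwise bounds \eqref{stime epsilon quantita campo vett} that both of you rely on, the paper's route is the cleaner one here; your argument still works in context because the lemma is only ever invoked under the constraint $T\le K_0^{-1}\e^{-2}$.
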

\noindent
{\em Proof of Lemma \ref{lemma eps T soluzione}.} 
Let $s \ge  \sigma_\ast$.
First we prove the claimed estimates for  $\Theta(t)$ and $y(t)$. By the definition \eqref{def Theta} of $\Theta$ 
and \eqref{PDE NF energy estimates} (Hamiltonian equations), one has
$$
\Theta(0) = 0,  \qquad  \quad \partial_t \Theta(t) = {\cal O}_3^{(\theta)}(\frak x(t)),
$$
implying that 
$$
\Theta(t) = \int_0^t {\cal O}_3^{(\theta)}(\frak x(\tau))\, d \tau\,. 
$$
Moreover by \eqref{PDE NF energy estimates}, 
$$
y(t) = y_0 + \int_0^t {\cal O}_3^{(y)}(\frak x(\tau))\, d \tau\,. 
$$
By \eqref{dati iniziali cauchy prob} and \eqref{stime epsilon quantita campo vett}, one then concludes that there exists a constant $C_1 \equiv C_1(s, \gamma) > 0$ so that
\begin{equation}\label{stima Theta (t) y (t)}
| \Theta(t)| \leq C_1 T \e^3, \qquad |y(t)| \leq \e  + C_1 T \e^3, \qquad \qquad  \forall  \, t \in [- T, T] \, .
\end{equation}

\smallskip

\noindent
It remains to estimate the $H^s$-norm of $w(t)$. To this end recall that for any $w \in H^s_\bot(\T_1)$, 
$$
\| w \|_s = \big(\sum_{j \in S^\bot} |j|^{2 s} |w_j|^2 \big)^{\frac12} =  \| \partial_x^s w \| \, ,
$$
where $\| \partial_x^s w \|$ denotes the $L^2$-norm of $\partial_x^s w $. Then 
\begin{equation}\label{energy estimate 0}
\begin{aligned}
\partial_t \| \partial_x^s w(t) \|^2 & 
= \big\langle \partial_x^s \partial_t w(t)\,,\, \partial_x^s w(t) \big\rangle + \big\langle \partial_x^s w(t)\,,\, \partial_x^s \partial_t w(t) \big\rangle \\
& \stackrel{\eqref{PDE NF energy estimates}}{=} 
\big\langle \partial_x^s\big( \ii \Omega_\bot w + {\mathtt D}^\bot(\frak x)[w] + \Pi_\bot T_{a(\frak x)} \partial_x w + {\cal R}^\bot(\frak x) \big)\,,\, \partial_x^s w \big\rangle \\
& \quad + \big\langle \partial_x^s w \,,\, \partial_x^s \big(\ii \Omega_\bot w + {\mathtt D}^\bot(\frak x)[w] + \Pi_\bot T_{a(\frak x)} \partial_x w + {\cal R}^\bot(\frak x) \big)\big\rangle\,. 
\end{aligned}
\end{equation}
Since $\Omega_\bot$ and ${\mathtt D}^\bot(\frak x)$ are both Fourier multipliers, the linear commutators with the Fourier multiplier $\partial_x^s$ vanish,
$$
[\partial_x^s, \Omega_\bot]_{lin}= 0 \, , \qquad  [\partial_x^s, {\mathtt D}^\bot(\frak x)]_{lin} = 0\, . 
$$
Using in addition that ${\mathtt D}^\bot(\frak x)$ is skew-adjoint (cf. \eqref{proprieta elementi campo vettoriale}) and hence
$(\ii \Omega_\bot + {\mathtt D}^\bot(\frak x))^\top = - \ii \Omega_\bot - {\mathtt D}^\bot(\frak x)$, one infers
\begin{equation}\label{energy estimate 1}
\begin{aligned}
& \big\langle \partial_x^s \big( \ii \Omega_\bot w + {\mathtt D}^\bot(\frak x)[w] \big)\,,\, \partial_x^s w \big\rangle 
+ \big\langle \partial_x^s w \,,\, \partial_x^s \big(\ii \Omega_\bot w + {\mathtt D}^\bot(\frak x)[w] \big)\big\rangle \\
& \qquad = \big\langle \big(\ii \Omega_\bot  + {\mathtt D}^\bot(\frak x) \big)  \partial_x^s w\,,\, \partial_x^s w \big\rangle 
+ \big\langle \partial_x^s w \,,\, \big(\ii \Omega_\bot  + {\mathtt D}^\bot(\frak x) \big)  \partial_x^s w \big\rangle \\
& \qquad = \big\langle \big(\ii \Omega_\bot  + {\mathtt D}^\bot(\frak x) \big)  \partial_x^s w\,,\, \partial_x^s w \big\rangle 
+ \big\langle \big(\ii \Omega_\bot  + {\mathtt D}^\bot(\frak x) \big)^\top  \partial_x^s w\,,\, \partial_x^s w \big\rangle = 0\,. 
\end{aligned}
\end{equation}
Moreover 
\begin{equation}\label{energy estimate 2}
\begin{aligned}
& \big\langle \partial_x^s  T_{a(\frak x)} \partial_x w\,,\, \partial_x^s w \big\rangle + \big\langle \partial_x^s w \,,\, \partial_x^s  T_{a(\frak x)} \partial_x w\big\rangle \\
& = \big\langle   T_{a(\frak x)} \partial_x \partial_x^s w\,,\, \partial_x^s w \big\rangle + \big\langle \partial_x^s w \,,\,   T_{a(\frak x)} \partial_x \partial_x^sw\big\rangle 
 + \big\langle   [\partial_x^s, T_{a(\frak x)} \partial_x]  w\,,\, \partial_x^s w \big\rangle + \big\langle \partial_x^s w \,,\,   [\partial_x^s, T_{a(\frak x)} \partial_x] w\big\rangle \\
& = \big\langle   \big( T_{a(\frak x)} \partial_x + (T_{a(\frak x)} \partial_x)^\top \big) \partial_x^s w\,,\, \partial_x^s w \big\rangle  
 + \big\langle   [\partial_x^s, T_{a(\frak x)} \partial_x]  w\,,\, \partial_x^s w \big\rangle + \big\langle \partial_x^s w \,,\,   [\partial_x^s, T_{a(\frak x)} \partial_x] w\big\rangle\,.
\end{aligned}
\end{equation}
By increasing $\sigma_*$ if needed one gets by Corollary \ref{aggiunto nostri operatori} (with $N=1$, $m=1$)
$$
\|  \Pi_\bot T_{a(\frak x)} \partial_x + \Pi_\bot (T_{a(\frak x)} \partial_x)^\top\|_{{\cal B}(L^2_\bot)} \lesssim  \| a(\frak x) \|_{\sigma_\ast} 
\stackrel{\eqref{stime epsilon quantita campo vett}}{\lesssim_\gamma} \e^2 
$$
and hence by the Cauchy-Schwarz inequality,
\begin{equation}\label{energy estimate 2}
\begin{aligned}
|\big\langle   \big( T_{a(\frak x)} \partial_x + (T_{a(\frak x)} \partial_x)^\top \big) \partial_x^s w\,,\, \partial_x^s w \big\rangle |  
\lesssim_\gamma \e^2 \| \partial_x^s w \| \lesssim_\gamma \e^2 \| w \|_s^2 \,. 
\end{aligned}
\end{equation}
Moreover, arguing as  in \cite[Lemma A.1]{BM}, one has 
$$
\| [\partial_x^s, T_{a(\frak x)} \partial_x]  w\|_{L^2} \lesssim_{s} \| a(\frak x) \|_2 \| w \|_s 
\stackrel{\sigma_\ast \geq 2, \eqref{stime epsilon quantita campo vett}}{\lesssim_{s, \gamma}} \e^2 \| w \|_s\,.
$$
The latter estimate, together with the Cauchy-Schwarz inequality, imply that 
\begin{equation}\label{energy estimate 3}
\begin{aligned}
|\big\langle   [\partial_x^s, T_{a(\frak x)} \partial_x]  w\,,\, \partial_x^s w \big\rangle + \big\langle \partial_x^s w \,,\,   [\partial_x^s, T_{a(\frak x)} \partial_x] w\big\rangle| \
\lesssim_{s, \gamma} \e^2 \| w \|_s^2\,. 
\end{aligned}
\end{equation}
Finally, by using the Cauchy-Schwarz inequality once more and the estimate \eqref{stime epsilon quantita campo vett} for ${\cal R}^\bot$, one gets 
\begin{equation}\label{energy estimate 4}
\begin{aligned} 
& |\big\langle \partial_x^s {\cal R}^\bot(\frak x)\,,\, \partial_x^s w \big\rangle + \big\langle  \partial_x^s w\,,\,\partial_x^s {\cal R}^\bot(\frak x) \big\rangle| 
\lesssim \| {\cal R}^\bot(\frak x) \|_s \| w \|_s  \lesssim_{s, \gamma} \e^3 \| w \|_s\,. 
\end{aligned}
\end{equation}
Thus, collecting \eqref{energy estimate 0}-\eqref{energy estimate 4}, and 
since by \eqref{sol locali sistema NF}, $\| w(t) \|_s \leq C_\ast \e$ for any $t \in [- T, T]$, 
one gets 
$$
|\partial_t  \, \| \partial_x^s w(t) \|^2| \lesssim_{s, \gamma} \e^4\, , \qquad  \forall \,  t \in [- T, T].
$$
We then conclude that there exists a constant  $C_2 \equiv C_2(s, \gamma) > 0$ so that
\begin{equation}\label{energy estimate 5}
\| w(t) \|_s \leq (\| w_0 \|_s^2 + C_2 T \e^4 )^{1/2} \leq \e (1 +  C_2 T \e^2)^{1/2} \leq \e + C_2 T \e^3 , \quad \forall t \in [- T, T] \, .
\end{equation}
The claimed statement then follows with $K_0(s, \gamma)  := \max\{ C_1(s, \gamma), C_2(s, \gamma) \}$.
\hfill $\square$

\bigskip

\noindent
{\em Proof of Proposition \ref{prop tempo di esistenza}.}
For any given $s \ge  \sigma_\ast$, $0 < \e \le \e_0$, 
and initial data satisfying \eqref{dati iniziali cauchy prob},  
consider the solution $t \mapsto \frak x(t)$ in \eqref{local solution} of \eqref{PDE NF energy estimates}.
It satisfies the estimates \eqref{estimate local solution} - \eqref{estimate local solution 1} of Lemma \ref{lemma eps T soluzione}.
Let
$$
 \check T := {\rm sup}\{  \, 0 < T < \frac{1}{K_0} \e^{-2} : 
 \, 2 |\Theta(t)|, \,  |y(t)|, \, \| w(t) \|_s \, \leq 2 \e, \ \forall  t \in [- T, T] \} \, ,
 $$
where $K_0 \equiv K_0(s, \gamma)$ is given by Lemma \ref{lemma eps T soluzione}, and define 
$$
M(T) := \max_{ \, |t| \le T}\{2 |\Theta( t)|,  |y( t)|, \| w( t) \|_s \}, \quad T \in [0,  \check T)\,.
$$
Assume that  
$\check T \leq \frac 12 \frac{1}{K_0} \e^{- 2}$.
By the definition of $\check T$ and Proposition \ref{local existence theorem}
it then follows that $\sup_{T < \check T} M(T) = 2 \e$. On the other hand, 
from Lemma \ref{lemma eps T soluzione} one infers that
$$
\begin{aligned}
M( \check T) \leq \e + K_0 \e^3 \check T \leq \e(1 + 1/2 ) \le \frac{3}{2} \e\, .
\end{aligned}
$$ 
Hence we obtained a contradiction and thus conclude that  $\check T = O(\e^{- 2})$. 
 \hfill $\square$
 
 \medskip
 
 \noindent
{\em Proof of Theorem \ref{long time ex action-angle}.}
 Let $ t \mapsto \frak x(t) = (\theta(t), y(t), w(t))$ be a curve satisfying \eqref{dati iniziali cauchy prob} -  \eqref{sol locali sistema NF}.
%It then follows by Theorem \ref{teorema totale forma normale}   that
%$\frak x(t)$ is a solution of \eqref{PDE NF energy estimates} if and only if  $\frak x'(t) =  (\theta'(t), y'(t), w'(t)) := {\mathtt \Phi}(\frak x(t))$
%is a solution of \eqref{very first ham equations} with initial data $\frak x'_0 = {\mathtt \Phi}(\frak x_0)$.
By Theorem \ref{teorema totale forma normale} (Normal Form Theorem),  
$\frak x(t) = (\theta(t), y(t), w(t))$ is a solution of \eqref{PDE NF energy estimates} if and only if 
$$
\frak x'(t) = (\theta'(t), y'(t), w'(t)) := {\mathtt \Phi}(\frak x(t))
$$
is a solution of \eqref{very first ham equations} with initial data $\frak x'_0 = {\mathtt \Phi}(\frak x_0)$. 

By \eqref{trasformazione finale cal U} (properties of the transformation $\mathtt \Phi$), 
for any $\frak x$ in $\mathcal V^s(\delta)$ with $\frak x' := {\mathtt \Phi}(\frak x) \in \mathcal V^s(\delta)$ one has
$ \frak x = {\mathtt \Phi}^{- 1}(\frak x')$
and
$$
| y'|, \| w' \|_s \leq C(s, \gamma)\big( \e + | y | + \| w\|_s \big)\, ,  \qquad  |y|, \| w \|_s \leq C(s, \gamma) \big( \e + | y' | + \| w'\|_s \big) 
$$
for some constant $C(s, \gamma) > 0$.
Hence, if $\frak x(t)$ satisfies \eqref{dati iniziali cauchy prob} - \eqref{sol locali sistema NF}, then $ \frak x_0' \in \T^{S_+} \times \R^{S_+} \times H^s_\bot(\T_1)$ with
$| y_0' |$, $\| w_0' \|_s \leq C(s, \gamma) \e$
 and
$$
\theta' \in C^1([- T, T], \T^{S_+}), \quad y' \in C^1([- T, T], \R^{S_+}), 
\quad w' \in C^0([- T, T], H^s_\bot(\T_1)) \cap C^1([- T, T], H^{s - 3}_\bot(\T_1))
$$
with
$$
 | y'(t) |, \, \| w'(t) \|_s\, \leq 2 C(s, \gamma)  \e\, ,  \quad \forall t \in [- T, T]\,.
 $$
By Proposition \ref{prop tempo di esistenza},  $T$ can be chosen as $T_{\e, s, \gamma} = O(\e^{- 2})$. This proves Theorem \ref{long time ex action-angle} .
%then follows by replacing $C_0(s) \e$ by $\e$. 
\hfill $\square$

\bigskip

\section{Measure estimates}\label{measure estimates}

In this section we prove the measure estimate \eqref{main measure estimate} of the set $\Pi_\gamma$ defined in \eqref{non resonant set tot}, \eqref{condizioni forma normale}. 
More precisely we show the following 
\begin{proposition}\label{prop measure total} 
There exists $\mathtt a \in (0, 1)$ so that for any $0 \le j \le 3$ and any $0 < \gamma < 1$,
$ | \Pi \setminus \Pi_\gamma^{(j)} | \lesssim \gamma^{\mathtt a}$.
\end{proposition}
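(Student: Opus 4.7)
\medskip

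\noindent
\emph{Proof plan for Proposition \ref{prop measure total}.} The strategy is the standard decomposition of the complement $\Pi\setminus\Pi_\gamma^{(i)}$ as a countable union over $(\ell,\vec j)$ of resonant sets
\[
R_{\ell,\vec j}^{(i)} := \bigl\{ \omega\in \Pi : |\omega\cdot\ell + \textstyle\sum_k \Omega_{j_k}(\omega)| < \gamma\,\mathtt{w}_{\ell,\vec j}^{(i)} \bigr\},
\]
with weight $\mathtt{w}_{\ell,\vec j}^{(i)}$ as in \eqref{condizioni forma normale}; we bound $|R_{\ell,\vec j}^{(i)}|$ by a one-dimensional Fubini argument along a direction in $\omega$-space in which the small divisor has nondegenerate derivative, and then sum the resulting bounds. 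The key analytic input is an asymptotic expansion of the normal frequencies of the form $\Omega_j(\omega) = -(2\pi j)^3 + \widetilde\Omega_j(\omega)$, with $\widetilde\Omega_j$ (and its $\omega$-derivatives) bounded uniformly in $j\in S^\bot$; such an expansion is the parametric counterpart of Lemma \ref{lemma espansione Omega bot} and follows from the normal-form expansion of $\mathcal H^{kdv}$ together with smoothness of the map $\omega\mapsto\mu(\omega)$. Using this, the derivative of $\omega\mapsto \omega\cdot\ell+\sum_k \Omega_{j_k}(\omega)$ in the direction of $\ell/|\ell|$ is bounded below by $c|\ell|$ as soon as $|\ell|$ dominates the other contributions, so Fubini yields $|R_{\ell,\vec j}^{(i)}|\lesssim \gamma\,\mathtt{w}_{\ell,\vec j}^{(i)}/|\ell|$.

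The cases $i=0,1,2$ are then standard. For $i=0$, summation of $\gamma\langle\ell\rangle^{-\tau}/|\ell|$ over $\ell\neq 0$ converges whenever $\tau>|S_+|$, giving $|\Pi\setminus\Pi_\gamma^{(0)}|\lesssim \gamma$. For $i=1,2$, cubic dispersion of $\Omega_j$ is used to discard the bulk of indices: if $|j_1|$ (say) is much larger than a suitable polynomial in $\langle\ell\rangle$ and the remaining $|j_k|$, then $|\omega\cdot\ell+\sum_k \Omega_{j_k}(\omega)|\gtrsim |j_1|^3$ exceeds $\gamma\langle\ell\rangle^{-\tau}$ and the condition holds automatically. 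On the complementary (polynomially bounded) range the sum of the Fubini bounds is finite, and a trade-off $|\Pi\setminus\Pi_\gamma^{(i)}|\lesssim \gamma^{\mathtt a}$ for some $\mathtt a\in(0,1)$ follows by optimizing between the dispersive exclusion threshold and the raw measure bound. The constraint $(\ell,j_1,j_2)\neq(0,j_1,-j_1)$ in the definition of $\Pi_\gamma^{(2)}$ removes the trivial resonance $\Omega_{j_1}+\Omega_{-j_1}\equiv 0$; away from that locus, either $j_1+j_2\neq 0$ and $|j_1^3+j_2^3|=|j_1+j_2|(j_1^2-j_1 j_2+j_2^2)$ provides a quantitative lower bound on the leading part, or $\ell\neq 0$ and one uses the $\ell$-direction as above.

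The delicate case is $i=3$, where the weight $\mathtt{w}_{\ell,j_1,j_2,j_3}^{(3)} = \langle\ell\rangle^{-\tau}\langle j_1\rangle^{-2}\langle j_2\rangle^{-2}\langle j_3\rangle^{-2}$ encodes the loss of three derivatives in space, and one must handle the possibility that the leading term $(2\pi)^3(j_1^3+j_2^3+j_3^3)$ of $\Omega_{j_1}+\Omega_{j_2}+\Omega_{j_3}$ vanishes or is very small while all $|j_k|$ are large. This is exactly where Fermat's Last Theorem in the case $n=3$ (Euler \cite{Euler}) enters through Lemma \ref{Lemma three wave res}: under the constraints $j_k+j_m\neq 0$ and $j_k\in\Z\setminus\{0\}$, the equation $j_1^3+j_2^3+j_3^3=0$ has no solutions, hence $|j_1^3+j_2^3+j_3^3|\geq 1$ and the lower-order correction $\widetilde\Omega_{j_1}+\widetilde\Omega_{j_2}+\widetilde\Omega_{j_3}$, being $O(1)$ uniformly in the $j_k$'s, cannot destroy the nondegeneracy of the small divisor as a function of $\omega$. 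This plays the role of a \emph{nonresonance alternative}: either $\ell\neq 0$ and the Fubini bound in the $\ell$-direction applies, or $\ell=0$ and the integer lower bound just described forces $|\Omega_{j_1}+\Omega_{j_2}+\Omega_{j_3}|\gtrsim 1\gg \gamma\mathtt{w}$ making the condition trivial. Combining dispersive truncation of the large-$|j_k|$ region with the Fubini estimate on the remainder and the Euler exclusion on the ``diagonal'' set, and optimizing the threshold as in the previous cases, yields $|\Pi\setminus\Pi_\gamma^{(3)}|\lesssim \gamma^{\mathtt a}$ for some $\mathtt a\in(0,1)$; the main technical obstacle is a careful accounting of the weights $\langle j_k\rangle^{-2}$ so that the series over $(\ell,j_1,j_2,j_3)$ in the surviving range still converges.
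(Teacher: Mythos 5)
Your decomposition matches the paper's Lemmata \ref{lemma easy resonant sets}--\ref{lemma measure est case 2} in outline: Fubini along $\ell/|\ell|$ for large $|\ell|$, cubic dispersion to exclude large $|j_k|$, and Euler's case $n=3$ of Fermat's Last Theorem to handle the $\ell=0$, all-$|j_k|$-large diagonal via Lemma \ref{Lemma three wave res}. (Minor point: the paper has $\Omega_j(\omega)=(2\pi j)^3 + O(j^{-1})$, not $-(2\pi j)^3 + \widetilde\Omega_j$, cf.\ Lemma \ref{properties Omega}, though this sign does not affect the argument.)

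However, there is a genuine gap at the end of your argument, and it concerns exactly the source of the exponent $\mathtt a\in(0,1)$. After the three exclusion mechanisms above are applied, what remains is a \emph{finite} union of resonant sets $R_{\ell j_1 j_2 j_3}(\gamma)$ with $|\ell|\leq C_1$ and $|j_1|,|j_2|,|j_3|\leq C_3$. For these finitely many indices you have no a priori lower bound on the derivative of $\omega\mapsto \omega\cdot\ell + \Omega_{j_1}(\omega)+\Omega_{j_2}(\omega)+\Omega_{j_3}(\omega)$ (the $O(1)$ corrections $\widetilde\Omega_{j_k}$ can completely overwhelm the leading cubic when all $|j_k|$ are bounded), so the Fubini bound does not apply, and you cannot conclude $\lesssim\gamma$ let alone $\lesssim\gamma^{\mathtt a}$. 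The paper closes this gap by two additional inputs you omit: (i) the resonance function does not vanish identically on $\Pi$, which is quoted from \cite[Proposition 15.5]{KP} and is a nontrivial nondegeneracy property of the KdV frequency map; (ii) the Weierstrass preparation theorem (in the form of \cite[Lemma 9.7]{BKM} or \cite[Proposition 3.1]{BC}) applied to each of the finitely many real-analytic resonance functions, yielding a measure bound $\lesssim\gamma^{\mathtt a}$ where $\mathtt a$ reflects the (possibly higher-order) vanishing of these functions. This, and not any ``optimization of the dispersive threshold,'' is where the sub-unit exponent $\mathtt a<1$ actually comes from; your trade-off heuristic would only reproduce the exponent $1$ that the Fubini argument already gives on the infinite part of the sum. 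Without the nonvanishing input from \cite{KP} and the Weierstrass preparation step, the proof does not close.
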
  
We will concentrate on the proof of the claimed measure estimate of $\Pi_\gamma^{(3)}$. The ones of 
$\Pi_\gamma^{(0)}$, $\Pi_\gamma^{(1)}$, and $\Pi_\gamma^{(2)}$ can be obtained in a similar way and are in fact a bit easier to prove. 
%We recall that $\Pi_\gamma^{(3)}$ is defined by 
%\begin{equation}
%\begin{aligned}
%\Xi^{(3)}_\gamma & := \Big\{ \mu \in \Pi : |\omega (\mu) \cdot \ell + \omega_{j_1}^{kdv}(\mu) + \omega_{j_2}^{kdv}(\mu) + \omega_{j_3}^{kdv}(\mu)| \geq \frac{\gamma}{\langle \ell \rangle^\tau \langle j_1 \rangle^2 \langle j_2 \rangle^2 \langle j_3 \rangle^2},  \\
%& \qquad \forall (\ell, j_1, j_2, j_3) \in \Z^{S_+} \times S^\bot \times S^\bot \times S^\bot  \Big\}\,.
%\end{aligned}
%\end{equation}
%First we invert the map 
%$$
%\Xi \to \Pi:=  \omega(\Xi), \quad \mu \mapsto \omega(\mu)\,.
%$$
Recall that 
\begin{equation}\label{Pi gamma (3) est}
\begin{aligned}
\Pi_\gamma^{(3)} & = \Big\{ \omega \in \Pi : |\omega \cdot \ell + \Omega_{j_1} (\omega) + \Omega_{j_2}(\omega) + \Omega_{j_3}(\omega)| \geq \frac{\gamma}{\langle \ell \rangle^\tau \langle j_1 \rangle^2 \langle j_2 \rangle^2 \langle j_3 \rangle^2}, \\
&  \qquad \forall (\ell, j_1, j_2, j_3) \in \Z^{S_+}  \times S^\bot \times S^\bot \times S^\bot \quad \text{with}\quad j_k + j_m \neq 0, \quad \forall  k, m \in \{ 1,2,3\}\Big\}\,
\end{aligned}
\end{equation}
where for any $ j \in S^\bot$, $\Omega_j(\omega) := \omega^{kdv}_j(\mu(\omega), 0)$.
One has 
$
\Pi \setminus \Pi_\gamma^{(3)}  \subset  
\bigcup_{\begin{subarray}{c}
\ell \in \Z^{S_+}, j_1, j_2, j_3 \in S^\bot\\
 j_k + j_m \neq 0, \forall  k, m \in \{ 1,2,3\}
\end{subarray}} 
R_{\ell j_1 j_2 j_3}(\gamma)\,, $ where
$$
R_{\ell j_1 j_2 j_3}(\gamma)  = \Big\{ \omega \in \Pi : |\omega \cdot \ell + 
\Omega_{j_1}(\omega) + \Omega_{j_2}(\omega) + \Omega_{j_3}(\omega)| 
< \frac{\gamma}{\langle \ell \rangle^\tau \langle j_1 \rangle^2 \langle j_2 \rangle^2 \langle j_3 \rangle^2} \Big\}\,. 
$$
First we need to establish the following regularity properties and asymptotics for the normal frequencies $\Omega_j(\omega)$, $j \in S^\bot$.
\begin{lemma}\label{properties Omega}
The map
$$
\Omega^*: \Pi \to \ell^\infty(S^\bot, \mathbb R), \, \omega \mapsto (\Omega^*_j(\omega))_{j \in S^\bot}\, , \qquad
\Omega^*_j(\omega) := j \big( \Omega_j(\omega) - (2\pi j)^3 \big)\, ,
$$
is real analytic. Furthermore, uniformly on a complex neighborhood of $\Pi$ in $\C^{S_+}$,
\begin{equation}\label{asintotica Omega j}
\Omega_j(\omega) = (2 \pi j)^3 + O(j^{- 1}) \,  \quad \text{as }  j \to \pm \infty\, .
\end{equation}
\end{lemma}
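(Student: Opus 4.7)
The plan is to reduce the claim to known asymptotics for the KdV frequencies $\omega_n^{kdv}(I)$ viewed as functions of the actions, and then to transport these via the real analytic diffeomorphism $\mu : \Pi \to \Xi$.

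First I would recall from \cite{KP} (and the more refined analysis of the KdV normal form in \cite{Kap-Mon-2}) that the map
\[
I \mapsto \bigl( n\,(\omega_n^{kdv}(I,0) - (2\pi n)^3) \bigr)_{n \in S^\bot}
\]
is real analytic from a neighborhood of $\Xi$ in $\R^{S_+}$ into $\ell^\infty(S^\bot,\R)$, with a uniform $O(n^{-1})$ bound that extends holomorphically to a complex neighborhood. This is the standard asymptotic for the frequencies of the Hill operator with finite gap potential, and it is precisely the statement that the Birkhoff normal form of KdV produces frequencies of order three with a symbol-like expansion in inverse powers of $n$, consistent with Lemma \ref{lemma espansione Omega bot} above. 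Since $\Xi$ is compact, complex-analyticity together with uniform $\ell^\infty$ bounds on this neighborhood is automatic.

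Next, by hypothesis the action-to-frequency map $\omega : \Xi \to \Pi$ is a real analytic diffeomorphism onto its image, and hence so is its inverse $\mu : \Pi \to \Xi$ (see \cite{BK}). By the real analytic implicit function theorem, $\mu$ extends holomorphically to a complex neighborhood $\mathcal{U}$ of $\Pi$ in $\C^{S_+}$, mapping it into a complex neighborhood of $\Xi$ on which the previous step applies. Composing, the map
\[
\omega \mapsto \Omega^*(\omega) = \bigl( n\,(\Omega_n(\omega) - (2\pi n)^3) \bigr)_{n \in S^\bot}
= \bigl( n\,(\omega_n^{kdv}(\mu(\omega),0) - (2\pi n)^3) \bigr)_{n \in S^\bot}
\]
is the composition of two real analytic maps with values in, respectively, $\C^{S_+}$ and $\ell^\infty(S^\bot,\C)$. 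Since composition preserves real analyticity into a Banach space target, $\Omega^*$ is real analytic from $\Pi$ to $\ell^\infty(S^\bot,\R)$.

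Finally, the asymptotic \eqref{asintotica Omega j} on a complex neighborhood of $\Pi$ is just the statement that $\Omega^*$, extended holomorphically to $\mathcal{U}$, is $\ell^\infty$-bounded: for any $\omega \in \mathcal{U}$,
\[
|\Omega_j(\omega) - (2\pi j)^3| \;=\; |j|^{-1} |\Omega^*_j(\omega)| \;\lesssim\; |j|^{-1}\, \sup_{\omega \in \mathcal{U}} \|\Omega^*(\omega)\|_{\ell^\infty},
\]
and the supremum is finite by the uniform bounds in the first step combined with the boundedness of $\mu$ on $\mathcal{U}$. The only genuine content here is the first step — the asymptotic expansion of the KdV frequencies $\omega_n^{kdv}(I,0)$ in inverse powers of $n$ uniformly in $I$ on a complex neighborhood of $\Xi$ — which one extracts from the spectral analysis of Hill operators with finite gap potentials; the rest is a routine composition argument. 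I do not anticipate any further difficulty, since the only parameter-dependence is through the smooth change of variables $\mu$, which preserves all relevant regularity.
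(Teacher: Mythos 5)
Your proposal is correct and follows essentially the same route as the paper: establish real analyticity and boundedness of $I \mapsto \big(j(\omega^{kdv}_j(I,0) - (2\pi j)^3)\big)_{j}$ into $\ell^\infty$ on a complex neighborhood of $\Xi$, transport via the real analytic diffeomorphism $\mu : \Pi \to \Xi$, and deduce the asymptotics from compactness. One small point worth flagging: the key analyticity-with-weights result you treat as already available in \cite{KP} (or \cite{Kap-Mon-2}) is actually obtained in the paper by combining \cite[Theorem 1.2]{KM} (real analyticity of $I \mapsto (\omega^{kdv}_j(I,0))_j$ into $\ell^\infty$, and local boundedness of the $j$-weighted map on a complex neighborhood) with the abstract criterion \cite[Theorem A.3]{KP} that weak analyticity plus local boundedness implies analyticity into a Banach space. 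That extra step matters because multiplying by the unbounded weight $j$ changes the target space, so analyticity of the unweighted map does not directly give analyticity of the weighted one without the uniform bound. Your argument tacitly assumes this, which is fine as a plan but would need the precise reference to close the gap.
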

\begin{proof}
Since by \cite[Theorem 1.2 (i)]{KM},
$\Xi \to \ell^\infty(S^\bot_+, \R),  \, I \mapsto  (\omega^{kdv}_j(I, 0))_{j \in S_+^\bot}$
is real analytic and since by \cite[Theorem 1.2 (iii)]{KM}
$$
\Xi \to \ell^\infty(S^\bot_+, \R), \, I \mapsto  \big(j (\omega^{kdv}_j(I, 0) - (2\pi j)^3) \big)_{j \in S_+^\bot}
$$
is locally bounded in a complex neighborhood of $\Pi$ in $\C^{S_+}$, it follows from \cite[Theorem A.3]{KP} that the latter map is real analytic.
Furthermore, by  \cite[Theorem 15.4]{KP}, the action to frequency map
$$
\Xi \to \Pi, \, I = (I_j)_{j \in S_+} \mapsto (\omega_j^{kdv}(I, 0))_{j \in S_+}
$$
is real analytic and by the definition of $\Xi$ and $\Pi$, it is a diffeomorphism. Hence its inverse $\mu: \Pi \to \Xi, \omega \mapsto \mu(\omega)$ 
is also a real analytic diffeomorphism. 
Since for any $\omega \in \Pi$ and $j \in S^\bot$, $\Omega_j(\omega) = \omega^{kdv}_j(\mu(\omega), 0)$ and  $\Omega_j(\omega) = - \Omega_{- j}(\omega)$
we altogether have proved that the composition 
$$
\Omega^* : \Pi \to \ell^\infty(S^+, \mathbb R), \, \omega \mapsto (j (\omega^{kdv}_j(\mu(\omega), 0) - (2\pi j)^3)_{j \in S^\bot}
$$
is real analytic. Since $\Pi \subset \R^{S_+}$ is compact, $\Omega^*$ is actually bounded on a complex neighborhood of $\Pi$ in $\C^{S_+}$
and hence the claimed asymptotics hold.
\end{proof}

\begin{lemma}\label{lemma easy resonant sets}
There exist constants $C_0 > 0$ and $C_1 > 0$ so that for any $j_1, j_2, j_3 \in S^\bot$ and any $\ell \in \Z^{S_+}$ with $|\ell| \geq C_1$
$$
|R_{\ell j_1 j_2 j_3}(\gamma)| \le C_0 \frac{\gamma}{\langle \ell \rangle^\tau \langle j_1 \rangle^2 \langle j_2 \rangle^2 \langle j_3 \rangle^2}\,. 
$$
\end{lemma}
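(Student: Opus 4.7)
The plan is to exploit the fact that for large $|\ell|$ the linear term $\omega\cdot\ell$ dominates the resonance function, making the gradient with respect to $\omega$ transversal and reducing the measure estimate to an elementary one-dimensional slicing argument.

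First, I would upgrade Lemma \ref{properties Omega} to derivative bounds for the normal frequencies. Since $\omega \mapsto \Omega^*_j(\omega) = j(\Omega_j(\omega)-(2\pi j)^3)$ is real analytic on $\Pi$ and uniformly bounded on a complex neighborhood of $\Pi$, Cauchy estimates yield a constant $M>0$, independent of $j\in S^\bot$, with
\[
\sup_{\omega\in\Pi}\bigl|\nabla_\omega\bigl(\Omega_j(\omega)-(2\pi j)^3\bigr)\bigr|\ \le\ \frac{M}{\langle j\rangle}\ \le\ M.
\]
Now set
\[
\phi_{\ell,j_1,j_2,j_3}(\omega)\ :=\ \omega\cdot\ell+\Omega_{j_1}(\omega)+\Omega_{j_2}(\omega)+\Omega_{j_3}(\omega),
\]
so that its gradient reads $\nabla_\omega\phi = \ell + \sum_{k=1}^3 \nabla_\omega\Omega_{j_k}(\omega)$. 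Choosing $C_1 := 6M+1$, then for every $|\ell|\ge C_1$ and every $\omega\in\Pi$ we have $|\nabla_\omega\phi(\omega)|\ge |\ell|-3M\ge |\ell|/2$, uniformly in $(j_1,j_2,j_3)$.

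The second step is the standard transversality/slicing argument. Introduce the unit vector $\mathtt e := \ell/|\ell|$ and decompose $\omega = s\,\mathtt e + \omega^\perp$ with $\omega^\perp\in \mathtt e^{\perp}$. Then
\[
\partial_s\,\phi_{\ell,j_1,j_2,j_3}(s\,\mathtt e+\omega^\perp)\ =\ \mathtt e\cdot\nabla_\omega\phi\ =\ |\ell|+\mathtt e\cdot\!\!\sum_{k=1}^{3}\nabla_\omega\Omega_{j_k}\ \ge\ \frac{|\ell|}{2}.
\]
Hence, for each fixed $\omega^\perp$, the one-dimensional sub-level set
\[
\Bigl\{s\in\R\ :\ s\,\mathtt e+\omega^\perp\in\Pi,\ |\phi_{\ell,j_1,j_2,j_3}(s\,\mathtt e+\omega^\perp)|<\frac{\gamma}{\langle \ell\rangle^{\tau}\langle j_1\rangle^{2}\langle j_2\rangle^{2}\langle j_3\rangle^{2}}\Bigr\}
\]
has one-dimensional Lebesgue measure at most $\tfrac{4\gamma}{|\ell|\,\langle \ell\rangle^{\tau}\langle j_1\rangle^{2}\langle j_2\rangle^{2}\langle j_3\rangle^{2}}$. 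Integrating over the bounded transverse section (note $\Pi$ is bounded) and using $|\ell|\asymp\langle\ell\rangle$ for $|\ell|\ge C_1$ yields the claim with a constant $C_0$ that depends only on $|S_+|$, $M$ and the diameter of $\Pi$.

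I do not expect any genuine obstacle here: the combinatorial/arithmetic input (Fermat-type restriction $j_k+j_m\ne 0$) referenced in \emph{Comments on Theorem \ref{stability theorem}}$(ii)$ is \emph{not} needed for the easy regime $|\ell|\ge C_1$. That arithmetic input will instead be required in the complementary regime $|\ell|<C_1$, where $\omega\cdot\ell$ no longer dominates and one must extract smallness from a non-trivial three-wave resonance function $\Omega_{j_1}+\Omega_{j_2}+\Omega_{j_3}$; but that is the content of a separate lemma and lies outside the scope of the present statement.
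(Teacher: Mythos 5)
Your proposal is correct and follows essentially the same route as the paper: parametrize $\omega$ along the direction $\ell/|\ell|$, use Lemma \ref{properties Omega} together with Cauchy estimates to get a $j$-independent bound on the derivative of $\Omega_{j_1}+\Omega_{j_2}+\Omega_{j_3}$ along that line, deduce a uniform lower bound on the one-dimensional derivative of the resonance function when $|\ell|$ exceeds a threshold, and then conclude by the standard slicing/Fubini argument. The only cosmetic difference is that you obtain the lower bound $|\ell|/2$ where the paper settles for $1$; both suffice, and your closing remark about why the arithmetic (Fermat) input is irrelevant in this regime is accurate.
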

\begin{proof}
Let $\ell \in \Z^{S_+} \setminus \{ 0 \}$. Choose $v \in \R^{S_+}$ with $v \cdot \ell = 0$ and introduce 
$s \mapsto  \omega(s)  := s \frac{\ell}{|\ell|} + v$. Then $\ell \cdot \omega(s) = s |\ell |$ and hence for any $j_1, j_2, j_3 \in S^\bot$
and any $s \in \R$ with $\omega(s) \in \Pi,$
$$
\vphi (s) := \ell \cdot \omega(s) +  \Omega_{j_1}(\omega(s)) +  \Omega_{j_2}(\omega(s)) +  \Omega_{j_3}(\omega(s))
=  s |\ell | +  \Omega_{j_1}(\omega(s)) +  \Omega_{j_2}(\omega(s)) +  \Omega_{j_3}(\omega(s)).
$$
By Lemma \ref{properties Omega} and Cauchy's theorem there exists $C >0$, independent of $j_1, j_2, j_3 \in S^\bot$, so that 
$$
\big| \frac{d}{ds} \big( \Omega_{j_1}(\omega(s)) +  \Omega_{j_2}(\omega(s)) +  \Omega_{j_3}(\omega(s)) \big) \big|  \le C\, .
$$
It then follows that $|\vphi'(s)| \geq 1$ for any $|\ell | \geq C_1:= C+1$. This implies the claimed estimate. 
\end{proof}

\begin{lemma}\label{Lemma three wave res}
There exist constants $C_0 > 0$, $C_2 > 0$ so that for $j_1, j_2, j_3 \in S^\bot$ with $\min \{ |j_1|, |j_2|, |j_3| \} \geq C_2$ 
one has 
\begin{equation}\label{measure easy}
R_{0 j_1 j_2 j_3} (\gamma) = \emptyset\, , \qquad 
|R_{\ell j_1 j_2 j_3}(\gamma)| \le C_0 \frac{\gamma}{\langle \ell \rangle^\tau \langle j_1 \rangle^2 \langle j_2 \rangle^2 \langle j_3 \rangle^2}\, ,
\quad  \forall \, \ell \in \Z^{S_+}\setminus\{0\}\, .
\end{equation}
\end{lemma}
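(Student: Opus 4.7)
\medskip

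\noindent
\emph{Proof plan for Lemma \ref{Lemma three wave res}.} The strategy is to combine the asymptotic expansion of the normal frequencies from Lemma \ref{properties Omega} with the $n=3$ case of Fermat's Last Theorem (Euler) to handle the $\ell=0$ case, and with a transversality argument in $\omega$ to handle the $\ell \neq 0$ case. By Lemma \ref{properties Omega} there exist a complex neighborhood $\mathcal U \supset \Pi$ in $\mathbb C^{S_+}$ and a constant $M>0$ such that
\begin{equation*}
\Omega_j(\omega) = (2\pi j)^3 + \frac{\Omega_j^*(\omega)}{j}\,, \qquad \sup_{j \in S^\bot,\,\omega \in \mathcal U} |\Omega_j^*(\omega)| \le M\,,
\end{equation*}
and Cauchy's estimate then gives $|\nabla_\omega \Omega_j(\omega)| \le M'/|j|$ uniformly for $\omega \in \Pi$, for some $M'>0$.

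For $\ell = 0$, recall that $j_k \neq 0$ for each $k$ (this is the case $k=m$ of the constraint $j_k+j_m \neq 0$). The $n=3$ case of Fermat's Last Theorem, proved by Euler, states that the equation $x^3+y^3=z^3$ has no solution in nonzero integers; equivalently, $j_1^3+j_2^3+j_3^3 \neq 0$ whenever $j_1,j_2,j_3 \in \Z\setminus\{0\}$. Since $j_1^3+j_2^3+j_3^3$ is an integer, we conclude $|j_1^3+j_2^3+j_3^3|\ge 1$. Using the expansion above,
\begin{equation*}
|\Omega_{j_1}+\Omega_{j_2}+\Omega_{j_3}| \ge (2\pi)^3|j_1^3+j_2^3+j_3^3| - M\Big(\frac{1}{|j_1|}+\frac{1}{|j_2|}+\frac{1}{|j_3|}\Big) \ge (2\pi)^3 - \frac{3M}{C_2} \ge \frac{(2\pi)^3}{2}
\end{equation*}
provided $C_2$ is chosen large enough. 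Since $\gamma/(\langle j_1\rangle^2\langle j_2\rangle^2\langle j_3\rangle^2) \le \gamma < 1 < (2\pi)^3/2$, the defining inequality of $R_{0 j_1 j_2 j_3}(\gamma)$ cannot hold, so this set is empty.

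For $\ell \in \Z^{S_+}\setminus\{0\}$, I will establish the transversality bound
\begin{equation*}
|\nabla_\omega\big(\omega\cdot\ell + \Omega_{j_1}(\omega)+\Omega_{j_2}(\omega)+\Omega_{j_3}(\omega)\big)| \ge \tfrac12\,, \qquad \forall \,\omega \in \Pi\,,
\end{equation*}
uniformly in such $\ell$ and in $j_1,j_2,j_3$ with $\min|j_k|\ge C_2$. Indeed, this gradient equals $\ell + \nabla_\omega\Omega_{j_1}+\nabla_\omega\Omega_{j_2}+\nabla_\omega\Omega_{j_3}$, and by the Cauchy estimate above its last three summands are bounded by $3M'/C_2 \le 1/2$ after enlarging $C_2$ if necessary; since $\ell \in \Z^{S_+}\setminus\{0\}$ satisfies $|\ell|\ge 1$, the claim follows. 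Writing $\ell = |\ell|\hat e$ with $\hat e=\ell/|\ell|$, a standard Fubini argument along the direction $\hat e$ (slicing $\Pi$ by lines parallel to $\hat e$) then yields
\begin{equation*}
|R_{\ell j_1 j_2 j_3}(\gamma)| \le C\,\frac{\gamma}{|\ell|\,\langle\ell\rangle^\tau \langle j_1\rangle^2\langle j_2\rangle^2\langle j_3\rangle^2} \le C_0\,\frac{\gamma}{\langle\ell\rangle^\tau \langle j_1\rangle^2\langle j_2\rangle^2\langle j_3\rangle^2}\,,
\end{equation*}
with $C_0$ depending only on $\Pi$. The main delicate point is the $\ell=0$ case, where one cannot gain from transversality since $\nabla_\omega\Omega_j=O(|j|^{-1})\to 0$; it is precisely here that the arithmetic input of Fermat--Euler, ruling out cube-sum cancellations among the three normal sites, is indispensable. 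The case $\ell \neq 0$ is then a routine transversality-plus-Fubini estimate, and in fact provides an independent argument to Lemma \ref{lemma easy resonant sets}.
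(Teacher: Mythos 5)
Your proposal is correct and follows essentially the same route as the paper: the $\ell=0$ case is handled via the asymptotics $\Omega_j = (2\pi j)^3 + O(j^{-1})$ from Lemma \ref{properties Omega} together with Euler's $n=3$ case of Fermat's Last Theorem to get a lower bound of order one on $|\Omega_{j_1}+\Omega_{j_2}+\Omega_{j_3}|$, and the $\ell\neq 0$ case is handled by a transversality estimate in the direction $\hat e = \ell/|\ell|$ (the paper works directly with $\varphi'(s)$ along $\omega(s)=s\,\ell/|\ell|+v$, which is precisely your directional derivative). The only cosmetic difference is that you note the extra $1/|\ell|$ gain in the measure bound, which the paper does not record since it is not needed; otherwise the two arguments coincide.
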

\begin{proof}
First we consider the case $\ell = 0$.
By the asymptotics \eqref{asintotica Omega j} it follows that for any $j_1, j_2, j_3 \in S^\bot$, 
$$
|\Omega_{j_1} + \Omega_{j_2} + \Omega_{j_3}| \geq 8\pi^3 |j_1^3 + j_2^3 + j_3^3| - \frac{C}{\min \{|j_1|, |j_2|, |j_3| \}}
$$
for some constant $C > 0$. By the case $n=3$ of Fermat's Last Theorem (cf. \cite{Euler})
$$
|j_1^3 + j_2^3 + j_3^3| \geq 1\, .
$$
Requesting that ${\rm min}\{|j_1|, |j_2|, |j_3| \} \geq C_2:= 2C$, one gets 
$|\Omega_{j_1} + \Omega_{j_2} + \Omega_{j_3}|  \geq 4\pi^3$ and hence
$R_{0 j_1 j_2 j_3}(\gamma) = \emptyset$ for any such $j_1, j_2 , j_3$ in $S^\bot$.

Now let us consider the case $\ell \in \Z^{S_+} \setminus \{ 0 \}$. For  any given $j_1, j_2, j_3 \in S^\bot$, 
define $s \mapsto \varphi(s)$ as in the proof of Lemma \ref{lemma easy resonant sets},
$$
\vphi (s) := |\ell | s + \Omega_{j_1}(\omega(s)) + \Omega_{j_2}(\omega(s)) + \Omega_{j_3}(\omega(s))\,. 
$$
By Lemma \ref{properties Omega} there exists $C >0$, independent of $j_1, j_2, j_3 \in S^\bot$, so that 
$$
\big| \frac{d}{ds} j_k \Omega_{j_k}(\omega(s)) \big|  \le C\, , \qquad \forall \, 1 \le k \le 3\, .
$$
By increasing $C_2$ if needed, it follows that for $j_1, j_2, j_3 \in S^\bot$ satisfying $\min\{ |j_1|, |j_2|, |j_3|\} \geq C_2$,
$$
|\vphi'(s)| \geq |\ell| - \frac{3C}{\min \{ |j_1|, |j_2|, |j_3|\}} \geq \frac12 \, .
$$
This implies the claimed measure estimate \eqref{measure easy}. 
\end{proof}
\begin{lemma}\label{lemma measure est case 2}
There exists a constant $C_3 \ge \max \{ C_2, C_1 \}$, where $C_2$ is the constant of Lemma \ref{Lemma three wave res} and $C_1$ 
the constant of Lemma \ref{lemma easy resonant sets}, so that 
$$
R_{\ell j_1 j_2 j_3} (\gamma) = \emptyset\, \qquad 
\forall \ell  \in \Z^{S_+} \text{ with } |\ell | < C_1\, \text{ and } \, \, \forall \, j_1, j_2, j_3 \in S^\bot \text{ satisfying } (*)
$$ 
where
$$  
(*) \qquad  j_k + j_m \neq 0, \quad 
\forall \,  k, m  \in \{ 1,2,3 \}\, , \qquad  \min \{ |j_1|, |j_2|, |j_3| \} < C_2 \, , \quad  \max \{ |j_1|, |j_2|, |j_3| \} \ge C_3 \, .
 $$
\end{lemma}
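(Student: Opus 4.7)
The plan is to exploit the asymptotics $\Omega_j(\omega) = (2\pi j)^3 + O(1/|j|)$ of Lemma \ref{properties Omega} together with an elementary factorization of $j_2^3 + j_3^3$, in the spirit of Lemma \ref{Lemma three wave res}, to force the small divisor to be uniformly bounded away from zero in the bounded-$\ell$ regime where one $|j_k|$ is small while another is large. Since $0 < \gamma < 1$ and $\langle\ell\rangle^\tau \langle j_1\rangle^2 \langle j_2\rangle^2 \langle j_3\rangle^2 \ge 1$, to conclude that $R_{\ell j_1 j_2 j_3}(\gamma) = \emptyset$ it suffices to show, for $C_3$ chosen sufficiently large, that
\[
|\omega \cdot \ell + \Omega_{j_1}(\omega) + \Omega_{j_2}(\omega) + \Omega_{j_3}(\omega)| \ge 1, \qquad \forall\, \omega \in \Pi.
\]

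Since both this inequality and the hypothesis $(*)$ are symmetric in $(j_1, j_2, j_3)$, I would first relabel so that $|j_1| \le |j_2| \le |j_3|$; the hypothesis then reads $|j_1| < C_2$ and $|j_3| \ge C_3$, with $|j_2|$ unrestricted. By Lemma \ref{properties Omega} there is a constant $K>0$ (independent of $j$) so that $|\Omega_j(\omega) - (2\pi j)^3| \le K$ for every $\omega \in \Pi$ and every $j \in S^\bot$; together with the bound $|\omega\cdot\ell| \le C_1 \sup_{\omega\in\Pi}|\omega|$, this yields
\[
\bigl|\omega\cdot\ell + \Omega_{j_1}(\omega) + \Omega_{j_2}(\omega) + \Omega_{j_3}(\omega)\bigr| \;\ge\; (2\pi)^3\, |j_1^3 + j_2^3 + j_3^3| - K_1
\]
for a constant $K_1$ depending only on $\Pi$, $C_1$, $K$.

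The key step is a lower bound on $|j_1^3 + j_2^3 + j_3^3|$. Using the factorization $j_2^3 + j_3^3 = (j_2+j_3)(j_2^2 - j_2 j_3 + j_3^2)$, the quadratic identity $j_2^2 - j_2 j_3 + j_3^2 = (j_2 - j_3/2)^2 + \tfrac{3}{4} j_3^2 \ge \tfrac{3}{4} j_3^2$, and the integrality bound $|j_2 + j_3| \ge 1$ (which holds because $j_2 + j_3 \ne 0$ by $(*)$ and $j_2+j_3 \in \Z$), I obtain $|j_2^3 + j_3^3| \ge \tfrac{3}{4} j_3^2$. Since $|j_1|^3 < C_2^3$, this gives $|j_1^3 + j_2^3 + j_3^3| \ge \tfrac{3}{4} j_3^2 - C_2^3$.

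Combining the two estimates yields $|\omega\cdot\ell + \Omega_{j_1}(\omega) + \Omega_{j_2}(\omega) + \Omega_{j_3}(\omega)| \ge (2\pi)^3\bigl(\tfrac{3}{4} j_3^2 - C_2^3\bigr) - K_1$. The plan is then to choose $C_3 \ge \max\{C_1, C_2\}$ large enough that $(2\pi)^3\bigl(\tfrac{3}{4} C_3^2 - C_2^3\bigr) - K_1 \ge 1$; since $|j_3| \ge C_3$, the desired lower bound by $1$ follows and $R_{\ell j_1 j_2 j_3}(\gamma) = \emptyset$. I do not anticipate a genuine obstacle: the argument is a bounded-$\ell$ companion to Lemmata \ref{lemma easy resonant sets} and \ref{Lemma three wave res}, and the only new wrinkle is allowing one summand to be small, which the dominance of the $j_3^2$ term readily absorbs once $C_3$ is taken sufficiently large.
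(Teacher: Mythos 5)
Your proof is correct, and it in fact takes a cleaner and more robust route than the paper's. The paper's proof splits into cases according to how many of the frequencies $|j_k|$ exceed $C_3$: (i) exactly one large ($|j_2|,|j_3|<C_2$, $|j_1|\ge C_3$) where it uses $|j_1|^3 - |j_2|^3 - |j_3|^3 \ge C_3^3 - 2C_2^3$, and (ii) two large ($|j_1|,|j_2|\ge C_3$, $|j_3|\le C_2$) where it further splits on whether $j_1, j_2$ share a sign and, in the opposite-sign case, uses the factorization $\bigl||j_1|^3 - |j_2|^3\bigr| = \bigl||j_1|-|j_2|\bigr|\bigl(|j_1|^2 + |j_1||j_2| + |j_2|^2\bigr)$ together with $\bigl||j_1|-|j_2|\bigr|\ge 1$. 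As written, the paper's case split does not explicitly cover the intermediate configuration $|j_3| < C_2 \le |j_2| < C_3 \le |j_1|$, though the same estimates would in fact go through there. Your argument avoids all this by relabeling so $|j_1|\le|j_2|\le|j_3|$ and applying the single factorization $j_2^3+j_3^3=(j_2+j_3)(j_2^2-j_2j_3+j_3^2)$ combined with the integrality bound $|j_2+j_3|\ge 1$ (valid by $(*)$) and the quadratic-form bound $j_2^2-j_2j_3+j_3^2\ge\tfrac34 j_3^2$; this yields $|j_1^3+j_2^3+j_3^3|\ge\tfrac34 j_3^2 - C_2^3$ uniformly in $j_2$, with no sign or size dichotomy. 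What the uniform factorization buys you is precisely insensitivity to the size of the middle frequency, which closes the unaddressed intermediate case and shortens the proof.
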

\begin{proof}
Let $\ell \in \Z^{S_+}$ with  $|\ell| \leq C_1$ and  $j_1, j_2, j_3 \in S^\bot$ with $\min \{ |j_1|, |j_2|, |j_3| \} \leq C_2$ 
and $j_k + j_m \neq 0$ for any $k, m \in \{ 1,2,3 \}$. 
%Clearly, if $j_1$, $j_2$,  and $j_3$ all have the same sign, then $R_{0 j_1 j_2 j_3} (\gamma) = \emptyset$ if $\max \{ |j_1|, |j_2|, |j_3| \} \ge C_3$ 
First consider the case where $|j_2|, |j_3| < C_2$. By Lemma \ref{properties Omega} one then has
for $|j_1| \geq C_3$ with $C_3 > 0$ chosen large enough,
$$
|\omega \cdot \ell + \Omega_{j_1} + \Omega_{j_2} + \Omega_{j_3}| \, \geq
\, 8\pi^3 ( |j_1|^3 - |j_2|^3 - |j_3|^3) - C - |\omega| C_1 \,  \geq \, C_3^3 - 2 C_2^3 - C - |\omega| C_1 \geq 1 \, ,
$$
implying that $R_{\ell j_1 j_2 j_3}(\gamma) = \emptyset$. 

Let us now turn to the case where $|j_1| , |j_2| \geq C_3$ and $|j_3| \leq C_2$. If $j_1$ and $j_2$ have the same sign, then 
one concludes again that
$$
|\omega \cdot \ell + \Omega_{j_1} + \Omega_{j_2} + \Omega_{j_3}| \geq 
8 \pi^3(|j_1|^3 + |j_2|^3  - |j_3|^3) - C - |\omega| C_1 \geq 2 C_3^3 - C_2^3 - C - |\omega| C_1 \geq 1
$$
by increasing $C_3$ if needed. Hence again $R_{\ell j_1 j_2 j_3}(\gamma) = \emptyset$. Now assume that $j_1$ and $j_2$ do not have the same sign. 
Since by assumption,  
$j_1 + j_2 \neq 0$, one has $|j_1| - |j_2| \ne 0 $ and it then follows that 
$$
\begin{aligned}
|\omega \cdot \ell + \Omega_{j_1} + \Omega_{j_2} + \Omega_{j_3} | & \geq ||j_1|^3 - |j_2|^3| - |j_3|^3 - C - C_1 |\omega| \\
& \geq |(|j_1| - |j_2|)|(|j_1|^2 + |j_1||j_2| + |j_2|^2) - C_2^3 - C - C_1 |\omega|  \\
& \geq 3 C_3^2 - C_2^3 - C - C_1 |\omega| \geq 1
\end{aligned}
$$
by increasing $C_3$ once more if needed. We conclude that also in this case $R_{\ell j_1 j_2 j_3}(\gamma) =\emptyset$. 
\end{proof}

\noindent
{\em Proof of Proposition \ref{prop measure total}.}
As already mentioned, we concentrate on the proof of the claimed estimate for $| \Pi \setminus \Pi_\gamma^{(3)} | $.
In view of Lemma \ref{lemma easy resonant sets} -  Lemma \ref{lemma measure est case 2}, 
it remains to estimate the measure of the finite union  
$$
\bigcup_{\begin{subarray}{c}
|\ell| \leq C_1 \\
 |j_1|, |j_2|, |j_3| \leq C_3
 \end{subarray}} R_{\ell j_1 j_2 j_3}(\gamma)
$$
where $C_1 > 0$ is given by Lemma \ref{lemma easy resonant sets} and $C_3 > 0$ by Lemma \ref{lemma measure est case 2}.
%\begin{lemma}\label{estimate finitely many res}
%There exists $\mathtt a \in (0, 1)$ such that 
%$$
%\big| \bigcup_{\begin{subarray}{c}
%|\ell| \leq C_1 \\
% |j_1|, |j_2|, |j_3| \leq C_3
% \end{subarray}} R_{\ell j_1 j_2 j_3}(\gamma) \big| \lesssim \gamma^{\mathtt a}\,. 
%$$
%\end{lemma}
%\begin{proof}
By Lemma \ref{properties Omega},  for any $\ell \in \Z^{S_+}$, $j_1, j_2, j_3 \in S^\bot$ with $|\ell| \leq C_1$
and  $|j_1|, |j_2|, |j_3| \leq C_3$, the function
$$
\omega \mapsto \omega \cdot \ell + \Omega_{j_1}(\omega) + \Omega_{j_2}(\omega) + \Omega_{j_3}(\omega)
$$ 
is real analytic and by \cite[Proposition 15.5]{KP}, does not vanish identically. Hence by the Weierstrass preparation Theorem 
(cf. \cite[Lemma 9.7]{BKM}, \cite[Proposition 3.1]{BC}), for any given $C > 0$ there exists $\mathtt a \in (0, 1)$ so that
 $$
\big|  \bigcup_{\begin{subarray}{c}
|\ell| \leq C_1 \\
 |j_1|, |j_2|, |j_3| \leq C_3
 \end{subarray}} \big\{ \omega \in \Pi : |\omega \cdot \ell + \Omega_{j_1}(\omega) + \Omega_{j_2}(\omega) + \Omega_{j_3}(\omega)| \leq C \gamma \big\} \big|
 \lesssim \gamma^{\mathtt a} 
 $$
and the claimed estimate for $| \Pi \setminus \Pi_\gamma^{(3)} | $ follows.
\hfill $\square$

\begin{remark}\label{remark on Pi_gamma^4}
Note that there exist (many) non-trivial  solutions of the diophantine equation
\begin{equation}\label{diophantine}
j_1^3 + j_2^3 + j_3^3 + j_4^3 = 0 
\ee
where $(j_1, j_2, j_3, j_4) \in \Z^4$ is said to be a trivial solution if there exist $1 \le \alpha < \beta \le 4$ so that $j_\alpha = - j_\beta$. 
The following example was suggested by Michela Procesi,
$$
(10)^3 + 9^3 + (-1)^3 + (-12)^3 = 0 \, .
$$
We therefore expect that Lemma \ref{Lemma three wave res} does not extend to the sets $R_{\ell j_1 j_2 j_3 j_4} (\gamma)$, defined as
$$
R_{\ell j_1 j_2 j_3 j_4}(\gamma)  : =
 \Big\{ \omega \in \Pi : \big| \omega \cdot \ell + \sum_{k=1}^4\Omega_{j_k}(\omega) \big|
<  \frac{\gamma}{\langle \ell \rangle^\tau \langle j_1 \rangle^2 \langle j_2 \rangle^2 \langle j_3 \rangle^2 \langle j_4 \rangle^2} \Big\}
$$
and hence that an estimate for $|\Pi \setminus \Pi^{(4)}_\gamma|$ of the type as in Proposition \ref{prop measure total} for $|\Pi \setminus \Pi^{(3)}_\gamma|$   
does not hold. Here $ \Pi_\gamma^{(4)}$ is defined as
$$
\begin{aligned}
 \Pi_\gamma^{(4)} & := \big\{ \omega \in \Pi  \ : \ |\omega  \cdot \ell + \sum_{k = 1}^4 \Omega_{j_k}(\omega) | 
 \geq \frac{\gamma}{\langle \ell \rangle^\tau \langle j_1 \rangle^2 \langle j_2 \rangle^2 \langle j_3 \rangle^2  \langle j_4 \rangle^2}  
 \nonumber \\
& \qquad \forall (\ell, j_1, j_2, j_3, j_4) \in \Z^{S_+} \times (S^\bot)^4  \ \text{with} \  j_k + j_m \neq 0 \ \  \forall k, m \in \{1,2,3, 4\}  \big\} \, .
\end{aligned}
$$
\end{remark}

%%%%%%%%%%%%%%%%%%%%%%%%%%%%%%%%%%%%%%%%%%%%%%%%%%%%%%%%%%%%%%%%%%%%%
%%%%%%%%%%%%%%%%%%%%%%%%%%%%%%%%%%%%%%%%%%%%%%%%%%%%%%%%%%%%%%%%%%%%%

\appendix

\section{Linear vector fields on $H^s_\bot(\T_1)$}\label{Appendix A}
In this appendix we discuss properties of linear vector fields on  $H^s_\bot(\T_1)$, used throughout the main body of the paper.
Let $X$ be an unbounded linear vector field on $H^s_\bot(\T_1)$, $s \in \N$, with domain $H^{s + 1}_\bot(\T_1)$,
$$
X : H^{s}_\bot(\T_1) \to H^{s-1}_\bot(\T_1)\, ,
$$
which admits an expansion of order $N \in \N$,
\begin{equation}\label{vec field 1 appendix}
X [w] =  \sum_{k = 0}^{N+1} \lambda_{1 - k} \partial_x^{1 - k} w + {\cal R}_N [w]\, , \qquad  \lambda_{1 - k} \in \R, \quad   \forall \, 0 \le k \le N + 1\, ,
\end{equation}
where the remainder ${\cal R}_N$ is $(N +1)$-regularizing,  ${\cal R}_N \in {\cal B}(H^s_\bot(\T_1), H^{s + N +1}_\bot(\T_1))$.
%for $s \geq s_N$ large enough. 
If in addition, $X$ is a Hamiltonian linear vector field on $H^s_\bot(\T_1)$, 
$$
X[w] = \partial_x \nabla H[w]\, , \qquad H(w) := \frac12\int_0^1 A[w] \cdot w d x  \, , \quad \forall w \in H^s_\bot(\T_1),
$$
where $A: H^s_\bot(\T_1) \to H^s_\bot(\T_1)$ is a symmetric, bounded linear operator,
then the diagonal matrix elements $X_j^j$ of $X$ satisfy
\begin{equation}\label{diag purely imaginary}
X_j^j = \int_0^1 \partial_x A [e^{\ii 2\pi j x}] \cdot e^{-\ii 2\pi j x} d x \, \in  \,\ii \R, \qquad \forall j \in S^\bot . 
\end{equation}
\begin{lemma}\label{lemma parte diagonale}
Let $X$ be a vector field as in \eqref{vec field 1 appendix} and assume that its diagonal matrix elements satisfy $X_j^j \in \ii \R$ for any $j \in S^\bot$. 
Then $\lambda_{1 - k} = 0$ for any  $0 \le k \le N + 1$ with $1 - k$ even and $({\cal R}_N)_j^j \in \ii \R$ for any $j \in S^\bot$.
% is a Hamiltonian vector field. 
\end{lemma}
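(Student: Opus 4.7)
The strategy is to compute the diagonal entries $X_j^j$ explicitly and exploit the parity of $1-k$. Since $\partial_x^m e^{\ii 2\pi j x} = (\ii 2\pi j)^m e^{\ii 2\pi j x}$ for any $m \in \Z$ and $j \ne 0$, one has $(\partial_x^{1-k})_j^j = (\ii 2\pi j)^{1-k}$, which is purely imaginary when $1-k$ is odd and purely real when $1-k$ is even. Combined with $\lambda_{1-k} \in \R$, the hypothesis $X_j^j \in \ii\R$ and the expansion \eqref{vec field 1 appendix} therefore yield, upon taking real parts,
\begin{equation*}
\sum_{k : 1-k \text{ even}} \lambda_{1-k} (-1)^{(1-k)/2} (2\pi j)^{1-k} + \mathrm{Re}\big((\mathcal{R}_N)_j^j\big) = 0, \qquad \forall j \in S^\bot.
\end{equation*}

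The second ingredient is a pointwise decay estimate on $(\mathcal{R}_N)_j^j$. Since $\mathcal{R}_N \in \mathcal{B}(H^s_\bot(\T_1), H^{s+N+1}_\bot(\T_1))$, the bound $\|\mathcal{R}_N[e^{\ii 2\pi j x}]\|_{s+N+1} \lesssim |j|^s$ holds; extracting the $j$-th Fourier coefficient on the left gives $|(\mathcal{R}_N)_j^j| \lesssim |j|^{-(N+1)}$ as $|j| \to \infty$. Recall also that $S^\bot = \Z \setminus (S \cup \{0\})$ is cofinite and so contains all integers of sufficiently large absolute value.

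Listing the even values of $1-k$ in $\{1, 0, -1, \ldots, -N\}$ as $\{0, -2, \ldots, -2M\}$ with $2M \le N$, the identity above becomes a Laurent-polynomial relation $\sum_{m=0}^{M} c_m j^{-2m} = -\mathrm{Re}((\mathcal{R}_N)_j^j)$, with $c_m = (-1)^m (2\pi)^{-2m} \lambda_{-2m}$. I would then perform a successive extraction of leading asymptotics: letting $|j| \to \infty$ in $S^\bot$ forces $c_0 = \lambda_0 = 0$; multiplying the remaining identity by $j^2$ gives an error of order $O(|j|^{2-N-1})$, which vanishes in the limit and yields $\lambda_{-2} = 0$; and inductively, at the $m$-th step, multiplication by $j^{2m}$ produces an error of order $O(|j|^{2m - N -1})$, which tends to $0$ precisely because $2m \le N$. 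This shows $\lambda_{1-k} = 0$ whenever $1-k$ is even.

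Once all even-power $\lambda_{1-k}$ have been eliminated, the real-part identity from the first paragraph collapses to $\mathrm{Re}((\mathcal{R}_N)_j^j) = 0$ for every $j \in S^\bot$, giving $(\mathcal{R}_N)_j^j \in \ii\R$. There is no genuine obstacle in this argument: the only point requiring attention is the compatibility between the range $2M \le N$ of even powers that must be killed and the smoothing exponent $N+1$ providing the decay, which is exactly tight at each step of the induction.
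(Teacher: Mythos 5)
Your proof is correct and follows the same route as the paper: take real (or use $X_j^j=-\overline{X_j^j}$) parts of the diagonal expansion, note that $(\mathcal{R}_N)_j^j = O(|j|^{-N-1})$ by the smoothing property, and let $|j|\to\infty$ along $S^\bot$ to successively kill the Laurent coefficients with even exponent. The only difference is that you spell out the decay estimate and the coefficient extraction, which the paper leaves implicit in the phrase "one thus concludes."
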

\begin{proof}
It follows from the assumptions that for any $j \in S^\bot$, 
$$
X_j^j = - \overline X_j^j \, , \qquad
X_j^j = \sum_{k = 0}^{N+1} \lambda_{1 - k} (\ii 2 \pi j)^{1 - k} + ({\cal R}_N)_j^j  \quad \text{with} \quad \lambda_{1-k} \in \R \, , \quad   ({\cal R}_N)_j^j = O(j^{- N - 1})\, .
$$ 
One thus concludes that 
$$
 \sum_{k = 0}^{N+1} \lambda_{1 - k} (\ii  2\pi  j)^{1 - k} + O(j^{- N -1}) = - \sum_{k = 0}^{N+1} \lambda_{1 - k}(- 1)^{1 - k} (\ii 2\pi j)^{1 - k} + O(j^{- N -1})
$$
and hence $\lambda_{1 - k} = 0$ for any $0 \le k \le N +1$ with $1 - k$ even. This implies that 
$$
({\cal R}_N)_j^j  = X^j_j - \sum_{k = 0}^{N+1} \lambda_{1 - k} (\ii  2\pi  j)^{1 - k} \in \ii \R\, , \qquad \forall \, j \in S^\bot  .
$$ 
\end{proof}
Consider a vector field $X : H^{s}_\bot(\T_1) \to H^{s-1}_\bot(\T_1)$, admitting an expansion of order $N$ of the form
\begin{equation}\label{vec field 2 appendix}
X[w]= \Pi_\bot \sum_{k = 0}^{N +1} T_{a_{1  - k}} \partial_x^{1 - k} w + {\cal R}_N[w], 
\qquad a_{1 - k}\in  H^s(\T_1)\,, \quad   \forall \, 0 \le k \le N + 1\, ,
\end{equation}
where the remainder ${\cal R}_N$ is $(N+1)$-regularizing,  ${\cal R}_N \in {\cal B}(H^s_\bot(\T_1), H^{s + N + 1}_\bot(\T_1))$.
\begin{lemma}\label{lemma 2 campi lineari}
Let $X$ be a vector field as in \eqref{vec field 2 appendix} and assume that $X_j^j \in \ii \R$ for any $j \in S^\bot$. 
Then $\langle a_{1 - k } \rangle_x = 0$ for any $0 \le k \le N + 1$ with $1-k$ even and $({\cal R}_N)_j^j \in \ii \R$ for any $j \in S^\bot$. 
\end{lemma}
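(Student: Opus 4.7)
The plan is to reduce Lemma \ref{lemma 2 campi lineari} to the already established Lemma \ref{lemma parte diagonale} by computing the diagonal Fourier matrix elements of the para-differential pieces $\Pi_\bot T_{a_{1-k}} \partial_x^{1-k}$ explicitly. The key observation is that the admissible cut-off $\psi$ of Definition \ref{cut-off function} satisfies $\psi(0,\xi) = 1$ for every $\xi$, since $0 \le \e' \langle \xi \rangle$. In view of the formula \eqref{definizione paraprodotto} for the para-product, the symbol of $T_{a_{1-k}}$ at frequency $\xi = j \in S^\bot$ in the $x$-independent mode is exactly $\psi(0,j) \widehat{a_{1-k}}(0) = \langle a_{1-k} \rangle_x$.

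First I would compute, for each fixed $k$ and each $j \in S^\bot$, the diagonal matrix element
\begin{equation*}
\bigl[\Pi_\bot T_{a_{1-k}} \partial_x^{1-k}\bigr]_j^j
= \int_0^1 T_{a_{1-k}}\bigl[(\ii 2\pi j)^{1-k} e^{\ii 2\pi j x}\bigr]\, e^{-\ii 2\pi j x}\, dx
= \langle a_{1-k} \rangle_x\, (\ii 2\pi j)^{1-k},
\end{equation*}
using that the only Fourier mode of $a_{1-k}$ contributing to the diagonal is the zero mode and that $\psi(0,j) = 1$. Summing over $k$ then gives
\begin{equation*}
X_j^j = \sum_{k=0}^{N+1} \langle a_{1-k} \rangle_x\, (\ii 2 \pi j)^{1-k} + (\mathcal{R}_N)_j^j, \qquad \forall\, j \in S^\bot.
\end{equation*}

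Next I would apply Lemma \ref{lemma parte diagonale} with the choice $\lambda_{1-k} := \langle a_{1-k} \rangle_x \in \R$ and the same smoothing remainder $\mathcal R_N$; the hypothesis $X_j^j \in \ii \R$ is exactly the one required. The conclusion of Lemma \ref{lemma parte diagonale} then immediately gives $\langle a_{1-k} \rangle_x = 0$ for all $0 \le k \le N+1$ with $1-k$ even, together with $(\mathcal R_N)_j^j \in \ii \R$ for every $j \in S^\bot$.

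There is no real obstacle here: the whole lemma is a direct corollary of Lemma \ref{lemma parte diagonale}, the only point requiring a moment's attention being that the admissibility properties of $\psi$ guarantee $\psi(0,\xi) \equiv 1$, so that the para-product reduces to ordinary multiplication on the constant-coefficient part. The rest is bookkeeping of powers of $\ii 2\pi j$.
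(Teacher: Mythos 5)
Your proof is correct and takes essentially the same route as the paper's: compute the diagonal Fourier matrix element of each para-differential term, identify $\lambda_{1-k} := \langle a_{1-k} \rangle_x$, and invoke Lemma \ref{lemma parte diagonale}. The paper simply states this as ``a direct calculation shows that $X_j^j = \sum_k \lambda_{1-k}(\ii 2\pi j)^{1-k} + (\mathcal R_N)_j^j$''; you have usefully spelled out the calculation, in particular the role of $\psi(0,\xi)\equiv 1$ in reducing the para-product to the zero Fourier mode of $a_{1-k}$ on the diagonal.
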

\begin{proof}
For any $j \in S^\bot$, a direct calculation shows that 
$$
X_j^j = \sum_{k = 0}^{N+1}\lambda_{1 - k}  (\ii 2 \pi j)^{1 - k} + ({\cal R}_N)_j^j, \qquad  \lambda_{1 - k} := \langle a_{1 - k} \rangle_x \in \R, \quad \forall 0 \le k \le N + 1\,. 
$$
Since by assumption $X_j^j$ is purely imaginary, the claimed results then follow from Lemma \ref{lemma parte diagonale}. 
\end{proof}

\section{Standard results on homological equations}\label{Appendix B}

In this appendix we record two standard results on homological equations, used in our normal form procedure. 
Without further reference, we use the notations introduced in the paragraph {\em Notations and terminology} in Section \ref{introduzione paper}.
\begin{lemma}\label{eq omo zero melnikov abstract}
Let $\gamma \in (0, 1)$, $\tau > 0$, and $\omega \in \R^{S_+}$. Assume that 
$$
|\omega \cdot \ell | \geq \frac{\gamma}{|\ell|^\tau}, \qquad \forall \ell \in \Z^{S_+} \setminus \{ 0 \} ,
$$
and that ${\cal P} \in C^\infty(\T^{S_+}, B)$ where  $B$ is a Banach space with norm $\| \cdot \|_B$. 
Then there exists a unique solution ${\cal F} \in C^\infty (\T^{S_+}, B)$  with zero average of
$$
\omega \cdot \partial_\theta \, {\cal F}(\theta) + {\cal P}(\theta) = \langle {\cal P} \rangle_\theta\,, \qquad
\langle {\cal P} \rangle_\theta := \int_{\T^{S_+}} \mathcal F(\theta) d \theta = 0 \, .
$$
It is denoted by 
${\cal F}(\theta)  = - 
(\omega \cdot \partial_\theta)^{- 1}\big( {\cal P}(\theta) - \langle {\cal P} \rangle_\theta \big)$. 
\end{lemma}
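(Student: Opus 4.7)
The plan is to solve the equation by Fourier expansion in the angle variable $\theta \in \T^{S_+}$. Write $\mathcal{P}(\theta) = \sum_{\ell \in \Z^{S_+}} \widehat{\mathcal{P}}(\ell) e^{\ii \ell \cdot \theta}$ with $\widehat{\mathcal{P}}(\ell) \in B$, and similarly make the ansatz $\mathcal{F}(\theta) = \sum_{\ell} \widehat{\mathcal{F}}(\ell) e^{\ii \ell \cdot \theta}$. Since $\omega \cdot \partial_\theta$ acts diagonally in Fourier as multiplication by $\ii (\omega \cdot \ell)$, the equation decouples into
\begin{equation*}
\ii (\omega \cdot \ell) \, \widehat{\mathcal{F}}(\ell) + \widehat{\mathcal{P}}(\ell) = 0, \qquad \forall \, \ell \in \Z^{S_+} \setminus \{0\},
\end{equation*}
while the $\ell = 0$ mode yields $\widehat{\mathcal{P}}(0) = \langle \mathcal{P}\rangle_\theta$, which is automatic. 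Imposing the normalization $\langle \mathcal{F} \rangle_\theta = 0$ fixes $\widehat{\mathcal{F}}(0) = 0$, and the Diophantine condition $|\omega \cdot \ell| \ge \gamma \langle \ell \rangle^{-\tau}$ makes each $\ii(\omega \cdot \ell)$ invertible for $\ell \ne 0$, giving the explicit formula
\begin{equation*}
\widehat{\mathcal{F}}(\ell) := - \frac{\widehat{\mathcal{P}}(\ell)}{\ii \, \omega \cdot \ell}, \qquad \forall \, \ell \in \Z^{S_+} \setminus \{0\}.
\end{equation*}

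Next I would verify that the formal series $\mathcal{F}(\theta) = \sum_{\ell \ne 0} \widehat{\mathcal{F}}(\ell) e^{\ii \ell \cdot \theta}$ defines an element of $C^\infty(\T^{S_+}, B)$. Since $\mathcal{P} \in C^\infty(\T^{S_+}, B)$, repeated integration by parts shows that its Fourier coefficients are of rapid decay: for every $M \in \N$ there exists $C_M > 0$ such that $\| \widehat{\mathcal{P}}(\ell) \|_B \le C_M \langle \ell \rangle^{-M}$. Combining with the small divisor bound yields
\begin{equation*}
\| \widehat{\mathcal{F}}(\ell) \|_B \le \gamma^{-1} \langle \ell \rangle^{\tau} \| \widehat{\mathcal{P}}(\ell) \|_B \le C_M \gamma^{-1} \langle \ell \rangle^{\tau - M}, \qquad \forall \, \ell \in \Z^{S_+} \setminus \{0\},
\end{equation*}
so the Fourier coefficients of $\mathcal{F}$ are also of rapid decay (the loss of $\langle \ell \rangle^\tau$ is absorbed by choosing $M$ sufficiently large). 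Hence the series and all its termwise derivatives $\partial_\theta^\alpha \mathcal{F}$ converge absolutely and uniformly in $B$, proving $\mathcal{F} \in C^\infty(\T^{S_+}, B)$.

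Finally, uniqueness follows the same Fourier argument in reverse: if $\mathcal{F}_1, \mathcal{F}_2$ are two solutions with zero average, their difference $\mathcal{G} := \mathcal{F}_1 - \mathcal{F}_2 \in C^\infty(\T^{S_+}, B)$ satisfies $\omega \cdot \partial_\theta \, \mathcal{G} = 0$ and $\langle \mathcal{G} \rangle_\theta = 0$. The first identity gives $\ii (\omega \cdot \ell) \widehat{\mathcal{G}}(\ell) = 0$ for all $\ell$, hence $\widehat{\mathcal{G}}(\ell) = 0$ for $\ell \ne 0$ by the Diophantine condition, and the zero average normalization forces $\widehat{\mathcal{G}}(0) = 0$, so $\mathcal{G} \equiv 0$. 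There is no real obstacle here; the proof is essentially a bookkeeping exercise in Fourier analysis on $\T^{S_+}$ with values in the Banach space $B$, and the only point requiring attention is ensuring that the Fourier series manipulations (termwise differentiation, absolute convergence in $B$) are justified, which follows from the rapid decay estimates above.
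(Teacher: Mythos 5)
Your proof is correct and is exactly the standard Fourier-mode argument one would use here; the paper records this lemma in its appendix without proof, calling it a "standard result on homological equations," and your write-up supplies precisely the expected justification (mode-by-mode solvability, rapid decay of Fourier coefficients of the $C^\infty$ datum absorbing the polynomial loss $\langle\ell\rangle^\tau$ from the small divisors, and uniqueness from the zero-average normalization).
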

\begin{lemma}\label{eq homo astratta prime melnikov}
Let $\Omega_\bot :  L^2_\bot(\T_1) \to L^2_\bot(\T_1)$
be a (possibly unbounded) Fourier multiplier of diagonal form, 
$\Omega_\bot [w] := \sum_{n \in S^\bot} \Omega_n w_n e^{\ii 2\pi n x}$,
and let $0 < \gamma < 1$, $\tau > 0$, and $\omega \in \R^{S_+}$.  Assume that 
$$
|\omega \cdot \ell + \Omega_n| \geq \frac{\gamma}{\langle \ell \rangle^\tau}, \qquad \forall \, (\ell, n) \in \Z^{S_+} \times S^\bot ,
$$
and that ${\cal P} \in C^\infty(\T^{S_+}, H^s_\bot(\T_1))$ for any $s \geq 0$. 
Then there exists a unique solution ${\cal F} \in C^\infty(\T^{S_+}, H^0_\bot(\T_1))$ of the equation 
$$
\big( \omega \cdot \partial_\theta + \ii \Omega_\bot \big) {\cal F}(\theta) + {\cal P}(\theta) = 0\,. 
$$
Furthermore, ${\cal F} \in C^\infty(\T^{S_+}, H^s_\bot(\T_1))$ for any $s \geq 0$. 
\end{lemma}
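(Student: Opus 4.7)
The plan is to solve the homological equation by expansion in a double Fourier series — with respect to the angles $\theta \in \T^{S_+}$ and the space variable $x \in \T_1$ — and then use the first Melnikov conditions to invert the symbol $\ii(\omega\cdot\ell+\Omega_n)$ termwise.

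First I would write $\mathcal P(\theta) = \sum_{(\ell,n) \in \Z^{S_+}\times S^\bot} \widehat{\mathcal P}(\ell,n)\, e^{\ii \ell\cdot\theta}\, e^{\ii 2\pi n x}$ and similarly for $\mathcal F$. Since $(\omega\cdot\partial_\theta+\ii\Omega_\bot)$ acts diagonally in these Fourier modes, the equation is equivalent to
$$
\ii(\omega\cdot\ell + \Omega_n)\,\widehat{\mathcal F}(\ell,n) + \widehat{\mathcal P}(\ell,n) = 0\,,\qquad \forall (\ell,n) \in \Z^{S_+}\times S^\bot\,.
$$
By the Melnikov hypothesis, the factor $\omega\cdot\ell+\Omega_n$ never vanishes, so the unique candidate for $\mathcal F$ has Fourier coefficients
$$
\widehat{\mathcal F}(\ell,n) = -\,\frac{\widehat{\mathcal P}(\ell,n)}{\ii(\omega\cdot\ell+\Omega_n)}\,.
$$
This defines $\mathcal F$ uniquely as a formal series, and uniqueness of the $C^\infty$ solution follows automatically from the fact that the denominator never vanishes.

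Next I would verify that this formal series defines an element of $C^\infty(\T^{S_+}, H^s_\bot(\T_1))$ for every $s\geq 0$. The small-divisor estimate only contributes a loss of a factor $\gamma^{-1}\langle \ell\rangle^\tau$, so for any multi-index $\alpha\in\Z_{\geq 0}^{S_+}$ and any $s\geq 0$,
$$
\|\partial_\theta^\alpha \mathcal F(\theta)\|_s^2 \lesssim \sum_{\ell,n} \langle \ell\rangle^{2|\alpha|}\, \langle n\rangle^{2s}\, \frac{|\widehat{\mathcal P}(\ell,n)|^2}{\gamma^{-2}\langle\ell\rangle^{-2\tau}} \lesssim \gamma^{-2}\sum_{\ell,n}\langle\ell\rangle^{2|\alpha|+2\tau}\langle n\rangle^{2s}|\widehat{\mathcal P}(\ell,n)|^2\,.
$$
Since $\mathcal P \in C^\infty(\T^{S_+}, H^{s'}_\bot(\T_1))$ for every $s'\geq 0$, its Fourier coefficients decay faster than any polynomial in $\langle\ell\rangle$ and $\langle n\rangle$; absorbing the loss $\langle\ell\rangle^{2|\alpha|+2\tau}$ into this decay shows that the right-hand side is finite for all $\alpha$ and all $s$. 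Hence $\mathcal F \in C^\infty(\T^{S_+},H^s_\bot(\T_1))$ for every $s$, and by construction it solves the equation.

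There is no essential obstacle here: the Melnikov condition is exactly what is needed to invert the symbol, and the only ``loss'' it produces is the polynomial factor $\langle\ell\rangle^\tau$, which is harmless against the infinite smoothness of $\mathcal P$. The mild point worth checking is that $-\ii \Omega_\bot$ is the diagonal symbol that appears after conjugation (since $(\ii\Omega_\bot)\widehat{w}_n = \ii\Omega_n \widehat{w}_n$), so that the denominator is indeed $\ii(\omega\cdot\ell+\Omega_n)$ and not $\ii(\omega\cdot\ell-\Omega_n)$; this matches the Melnikov set $\Pi^{(1)}_\gamma$ in \eqref{condizioni forma normale}.
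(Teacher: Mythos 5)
Your proposal is correct and is the standard argument: solve termwise in the double Fourier series in $(\theta,x)$, use the first Melnikov condition to bound the small divisor by $\gamma^{-1}\langle\ell\rangle^\tau$, and absorb that polynomial loss into the rapid decay of the Fourier coefficients of the $C^\infty$ datum $\mathcal P$. The paper records this lemma in Appendix~\ref{Appendix B} as a ``standard result'' without supplying a proof, so there is no alternative paper argument to contrast with; note only that in your intermediate display the denominator should read $\gamma^{2}\langle\ell\rangle^{-2\tau}$ (equivalently the factor $\gamma^{-2}\langle\ell\rangle^{2\tau}$ in the numerator), which matches your final line.
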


%%%%%%%%%%%%%%%%%%%%%%%%%%%%%%%%%%%%%%%%%%%%%%%%%%%%%%%%%%%%%%%%%%%%%%
%%%%%%%%%%%%%%%%%%%%%%%%%%%%%%%%%%%%%%%%%%%%%%%%%%%%%%%%%%%%%%%%%%%%%%

\section{A local existence result for $\partial_t \frak x = X(\frak x)$}\label{appendix loc well posed}

The goal of this appendix is to state a local existence result for the equation 
$\partial_t \frak x = X(\frak x)$ where $X$ is the vector field, introduced in Theorem \ref{teorema totale forma normale}
(cf. \eqref{forma campo vettoriale finale dopo NF}, \eqref{proprieta elementi campo vettoriale}),
\begin{equation}\label{PDE NF energy estimates C}
\begin{cases}
\partial_t \theta  = - \omega - \e \widehat \omega + \mathtt N^{(\theta)}(y, w) + {\cal O}_3^{(\theta)}(\frak x) \\
\partial_t y = {\cal O}_3^{(y)}(\frak x) \\
\partial_t w= \ii \Omega_\bot w + {\mathtt  D}^\bot(\frak x)[w] + \Pi_\bot T_{a(\frak x)} \partial_x w + {\cal R}^\bot(\frak x)
\end{cases}
\end{equation}
where we assume that the assumptions of Theorem \ref{teorema totale forma normale} are satisfied. In particular, 
$\omega \in \Pi_\gamma$, $0 < \gamma < 1$.
 This local existence result is used in Section \ref{conclusioni forma normale}. It reads as follows. 
%In the course of this section we will increase $\sigma_*$ and  decrease $\e_0(s)$, if needed,
% but for notational convenience, continue to denote them by the same letter.
\begin{proposition}\label{local existence theorem}
There exists $\sigma_\ast > 0$ (large) so that for any integer $s \ge \s_*$, there exist $0 < \e_0 \equiv \e_0(s, \gamma) < 1$ (small) 
and $C_* = C_*(s, \gamma) > 1$ (large)  with the following property: for any $0 < \e \le \e_0$, there exists 
$T= T_{\e, s, \gamma} > 0 $ so that for any initial data $\frak x_0 = (\theta_0, y_0, w_0) \in \T^{S_+} \times \R^{S_+} \times H^s_\bot(\T_1)$ 
with 
\begin{equation}\label{dati iniziali cauchy prob C}
 | y_0| \le  \e \,  , \quad   \| w_0 \|_s \leq \e \, ,
\end{equation}
there exists a unique solution 
$\frak x(t) = (\theta(t), y(t), w(t))$, $t \in [-T, T]$, of \eqref{PDE NF energy estimates C}
with $\frak x(0) = \frak x_0$ satisfying 
\begin{equation}\label{sol locali sistema NF fin C}
 \theta \in C^1([- T, T], \T^{S_+}),  \quad y \in C^1([- T, T], \R^{S_+}), \quad w \in C^0([- T, T], H^s_\bot(\T_1)) \cap C^1([- T, T], H^{s - 3}_\bot(\T_1)). 
 \end{equation}
 Furthermore, 
 \begin{equation}\label{estimate solution C}
 | y(t) |, \, \| w (t) \|_s\, , \,  |\Theta(t)| \,  \leq C_*  \e \qquad \forall t \in [- T, T]\,, 
\end{equation}
where
\begin{equation}\label{def Theta fin C}
\begin{aligned}
&  \Theta (t) := \theta (t) - \theta_0 + (\omega + \e \widehat \omega) t - \int_0^t \mathtt N^{(\theta)}(y(\tau), w(\tau))\, d \tau.  
 \end{aligned}
\end{equation}
\end{proposition}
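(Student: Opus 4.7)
The strategy is a classical regularization and compactness approach, tailored to exploit the para-differential structure of the $w$-equation. The $\theta$- and $y$-equations are ordinary differential equations with bounded right-hand side (once $w(\cdot)$ is known), so the heart of the matter is local well-posedness of the quasilinear equation for $w$. I would first freeze the semilinear smoothing term and introduce a parabolic regularization
\[
\partial_t w \;=\; \ii\Omega_\bot w + {\mathtt D}^\bot(\frak x)[w] + \Pi_\bot T_{a(\frak x)}\partial_x w + {\cal R}^\bot(\frak x) \;-\; \nu \,\partial_x^4 w, \qquad \nu>0,
\]
coupled with the original $\theta$- and $y$-equations. For each $\nu>0$ this is a semilinear parabolic system to which the standard Banach fixed-point theorem applies in $C^0([-T_\nu,T_\nu],\mathcal E_s)$ for some small $T_\nu>0$; the fact that $\mathtt N^{(\theta)}, {\cal O}_3^{(\theta)}, {\cal O}_3^{(y)}, {\mathtt D}^\bot, a, {\cal R}^\bot$ depend smoothly and boundedly on $\frak x$ (by \eqref{proprieta elementi campo vettoriale}) makes this routine.

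Next I would derive energy estimates that are \emph{uniform in $\nu$}. This is the crucial step and mirrors the computation done in Section \ref{conclusioni forma normale}. Differentiating $\|\partial_x^s w(t)\|^2$ in time, the contributions of $\ii\Omega_\bot$ and ${\mathtt D}^\bot(\frak x)$ vanish because both are skew-adjoint Fourier multipliers (hence commute with $\partial_x^s$ and contribute zero in the $L^2$ pairing). For the quasilinear piece $\Pi_\bot T_{a(\frak x)}\partial_x w$, Corollary \ref{aggiunto nostri operatori} shows that its symmetrization $T_a\partial_x + (T_a\partial_x)^\top$ is of order $0$, bounded by $\|a(\frak x)\|_{\sigma_\ast}$; combined with the commutator estimate $\|[\partial_x^s,T_{a(\frak x)}\partial_x]w\|_{L^2}\lesssim_s \|a(\frak x)\|_{\sigma_\ast}\|w\|_s$ and the smallness of $a$ of order two, this gives a term of the form $(\e+|y|+\|w\|_s)^2\|w\|_s^2$. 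The smoothing remainder ${\cal R}^\bot(\frak x)$ is small of order three and yields a term $(\e+|y|+\|w\|_s)^3\|w\|_s$. The parabolic regularization contributes $-\nu\|\partial_x^{s+2}w\|^2\le 0$, which is dropped. Coupled with $|\dot y| \lesssim (\e+|y|+\|w\|_s)^3$ and $|\dot\Theta|\lesssim(\e+|y|+\|w\|_s)^3$, a Gronwall/bootstrap argument starting from $|y_0|,\|w_0\|_s\le\e$ yields the existence of $T \equiv T_{\e,s,\gamma}>0$ and $C_\ast = C_\ast(s,\gamma)>1$, independent of $\nu$, such that $|y^{(\nu)}(t)|, \|w^{(\nu)}(t)\|_s, |\Theta^{(\nu)}(t)| \le C_\ast \e$ for $|t|\le T$.

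Having obtained uniform bounds, I would pass to the limit $\nu\to 0^+$ by a compactness argument. From the $w$-equation one reads off $\partial_t w^{(\nu)}$ bounded in $L^\infty([-T,T], H^{s-3}_\bot)$ uniformly in $\nu$, which together with the uniform bound in $L^\infty([-T,T], H^s_\bot)$ gives, via Aubin--Lions, strong convergence in $C^0([-T,T], H^{s-\delta}_\bot)$ for any $\delta>0$, along a subsequence. The smooth dependence of the symbols $a(\frak x), {\mathtt D}^\bot(\frak x), {\cal R}^\bot(\frak x)$ and of $\mathtt N^{(\theta)}, {\cal O}_3^{(\theta)}, {\cal O}_3^{(y)}$ on $\frak x$, combined with continuity of the para-product $(a,w)\mapsto T_a\partial_x w$ in the relevant topologies, allows one to take limits in the equation. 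The limit $\frak x(t)=(\theta(t),y(t),w(t))$ belongs to $L^\infty([-T,T], \mathcal E_s)$ and, by a standard weak-strong continuity argument for quasilinear equations with skew-adjoint principal part, actually belongs to $C^0([-T,T], H^s_\bot)$; then the equation gives $\partial_t w\in C^0([-T,T], H^{s-3}_\bot)$.

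Finally, uniqueness follows by energy estimate on the difference $\delta\frak x = \frak x_1 - \frak x_2$ of two solutions: $\delta w$ satisfies a linear equation whose principal symbol equals that of $X^\bot$ evaluated at $\frak x_1$, plus a quasilinear defect term $\Pi_\bot(T_{a(\frak x_1)}-T_{a(\frak x_2)})\partial_x w_2$. The latter is controlled by $\|\delta\frak x\|$ in a norm one order lower than $s$ using smoothness of $a$ in $\frak x$, and a Gronwall argument in $H^{s-1}_\bot$ (or any fixed lower-order norm) closes the estimate. The expected main obstacle is precisely the quasilinear coefficient $a(\frak x)$: the symmetric part of $T_{a(\frak x)}\partial_x$ would be of order one if we did not use the para-differential calculus, so the energy estimate would not close in $H^s$. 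That this part is in fact of order zero, by Corollary \ref{aggiunto nostri operatori}, is what makes the whole argument work, and the smallness of $a$ of order two (from \eqref{proprieta elementi campo vettoriale}) is what allows the bound $\|w(t)\|_s\le C_\ast\e$ to be preserved.
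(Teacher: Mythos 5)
Your proposal is correct and would prove the statement, but it takes a genuinely different route from the paper. The paper proceeds by Picard iteration on the full system: it defines iterates $\frak{x}^{(n)}$ where the coefficients $a$, $\mathtt{D}^\bot$, $\mathcal{O}_3^{(\theta)}$, $\mathcal{O}_3^{(y)}$, $\mathcal{R}^\bot$ are frozen at the previous iterate $\frak{x}^{(n-1)}$, so that each step solves a genuinely \emph{linear} nonautonomous para-differential equation for $w^{(n)}$. That linear equation is then handled in a separate lemma via a Galerkin truncation onto finite-dimensional Fourier modes $H_N$, with energy estimates on the truncated ODE system and a compactness argument as $N\to\infty$. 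Uniform bounds on the iterates come from the same skew-adjointness and para-differential symmetrization facts you cite (Corollary \ref{aggiunto nostri operatori} and the commutator estimate), and contraction is established by showing $\|\frak{x}^{(n)} - \frak{x}^{(n-1)}\|_{C^0_t E_{s-1}} \le 2^{-n}$, i.e.\ convergence is proven one Sobolev order below $s$, with the top-order continuity recovered a posteriori. Your approach instead regularizes by adding $-\nu\partial_x^4 w$, solves the nonlinear regularized problem directly by Banach fixed point, derives $\nu$-uniform energy estimates by the same mechanism, and passes to the limit via Aubin--Lions. Both schemes live or die by the same observation, which you correctly identify: the symmetric part of $T_{a(\frak{x})}\partial_x$ is of order zero (not one), and $a$ is small of order two, so the energy estimate closes at regularity $s$. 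What the paper's route buys is that uniqueness and convergence of the scheme come almost for free from the linear contraction estimate, whereas the parabolic route front-loads existence at the regularized level and requires a separate uniqueness argument and a more delicate limit (one should be a bit careful that the bound on $\partial_t w^{(\nu)}$ from the regularized equation naively lands in $H^{s-4}$ rather than $H^{s-3}$ because of the $-\nu\partial_x^4 w$ term, though since that term carries a factor $\nu$ the $H^{s-3}$ regularity of $\partial_t w$ is recovered in the limit). Conversely the parabolic route avoids any discussion of Galerkin truncations and their interaction with the para-product.
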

%Proposition \ref{local existence theorem} can be proved by nowadays standard arguments, by constructing a sequence of approximating solutions and using energy estimates. 
%For the convenience of the reader the proof can be found in the version of this paper on arXiv (see \cite{KM-arxiv}).
The rest of this appendix is devoted to the proof of Proposition \ref{local existence theorem}, which is based on an iterative scheme. 
For any given $\frak x_0$ satisfying \eqref{dati iniziali cauchy prob C},
define inductively a sequence $ \frak x^{(n)}(t) = (\theta^{(n)}(t), y^{(n)}(t), w^{(n)}(t))$, $n \geq 0,$ as follows:
\begin{equation}\label{theta y w 0 (t) C}
\frak x^{(0)}(t) = ( \theta^{(0)}(t),  y^{(0)}(t), w^{(0)}(t)) : = \frak x_0 = (\theta_0, y_0,  w_0)
\end{equation}
whereas for $n \ge 1$, $\frak x^{(n)}(t) = (\theta^{(n )}(t), y^{(n )}(t), w^{(n )}(t))$ is defined to be the solution (cf. Lemma \ref{proprieta yn wn C} below) of 
\begin{equation}\label{PDE NF energy estimates approx C}
\begin{aligned}
\begin{cases}
\partial_t \theta^{(n )} = - \omega - \e \widehat \omega + \mathtt N^{(\theta)}(y^{(n )}, w^{(n )}) + {\cal O}_3^{(\theta)}({\frak x}^{(n-1)}),  \\
\partial_t y^{(n )}  = {\cal O}_3^{(y)}({\frak x}^{(n -1)}), \\
\partial_t w^{(n)} = \ii \Omega_\bot w^{(n )} + {\mathtt  D}^\bot({\frak x}^{(n-1)})[w^{(n )}] + \Pi_\bot T_{a({\frak x}^{(n-1)})} \partial_x w^{(n )} + {\cal R}^\bot({\frak x}^{(n-1)}),
\end{cases} 
\end{aligned}
\end{equation}
with initial data  $\frak x^{(n)}(0) = \frak  x_0$.
The following lemma holds. 
\begin{lemma}\label{proprieta yn wn C}
There exists $\sigma_\ast > 0$ (large) 
so that for any integer $s \ge \s_*$, there exist 
$\e_0 \equiv \e_0(s, \gamma) > 0$ (small) and $C_\ast \equiv C_*(s, \gamma) > 1$ (large) with the following property: for any
$0 < \e \le \e_0(s, \gamma)$, there exists $T= T_{\e, s, \gamma} > 0 $
 so that for any initial data $\frak x_0 = (\theta_0, y_0, w_0) \in \T^{S_+} \times \R^{S_+} \times H^s_\bot(\T_1)$ 
satisfying \eqref{dati iniziali cauchy prob C} and for any integer $n \geq 0$, the system \eqref{PDE NF energy estimates approx C}
admits a unique solution, satisfying $ \theta^{(n)} \in C^1([- T, T], \, \T^{S_+}),$ $ y^{(n)} \in C^1([- T, T], \, \R^{S_+})$, and 
\begin{equation}\label{sol locali sistema NF C}
 w^{(n)} \in C^0([- T, T], \, H^s_\bot(\T_1)) \cap C^1([- T, T], \, H^{s - 3}_\bot(\T_1)).
\end{equation}
Furthermore,
\begin{equation}\label{sol locali sistema NF C 1}
 | y^{(n)}(t) |, \ \| w^{(n)} (t) \|_s\, , \  |\Theta^{(n)}(t)| \,  \leq C_\ast  \e \, , \qquad \forall t \in [- T, T]\,, 
\end{equation}
where $ \Theta^{(0)}(t) := 0$ and
\begin{equation}\label{def Theta C}
  \Theta^{(n)} (t) := \theta^{(n)} (t) - \theta_0 + (\omega + \e \widehat \omega) t - \int_0^t \mathtt N^{(\theta)}(y^{(n - 1)}(\tau), w^{(n - 1)}(\tau))\, d \tau, 
 \quad n \geq 1\,.
\end{equation}
\end{lemma}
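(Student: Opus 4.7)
I would argue by induction on $n$, showing simultaneously the existence of $\frak x^{(n)}$ on a common time interval $[-T,T]$ with $T = c(s,\gamma)\,\e^{-2}$ and the uniform bound \eqref{sol locali sistema NF C 1}. The base case $n=0$ is immediate since $\frak x^{(0)}(t) \equiv \frak x_0$, $\Theta^{(0)}\equiv 0$, and $|y_0|, \|w_0\|_s \le \e \le C_\ast \e$. For the induction step, assume the bound has been established at level $n-1$. Since $\frak x^{(n-1)}(t)$ then takes values in $\mathcal V^s(C_\ast\e)$ (with $C_\ast \e \le \delta$ after shrinking $\e_0$), the qualitative properties in \eqref{proprieta elementi campo vettoriale} give, for some $\sigma_\ast$ and all $s \ge \sigma_\ast$,
\begin{equation}\label{stime coeff iter}
|\mathcal O_3^{(\theta)}(\frak x^{(n-1)}(t))|,\ |\mathcal O_3^{(y)}(\frak x^{(n-1)}(t))| \lesssim_{s,\gamma} \e^3,\ \ \|a(\frak x^{(n-1)}(t))\|_{\sigma_\ast} \lesssim_{s,\gamma} \e^2,\ \ \|\mathcal R^\bot(\frak x^{(n-1)}(t))\|_s \lesssim_{s,\gamma} \e^3,
\end{equation}
and $\mathtt D^\bot(\frak x^{(n-1)}(t))$ is a skew-adjoint Fourier multiplier bounded on $H^s_\bot$ by $\lesssim \e$.

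The components $y^{(n)}$ and $\Theta^{(n)}$ are then constructed by direct integration: from \eqref{PDE NF energy estimates approx C} one gets $y^{(n)}(t) = y_0 + \int_0^t \mathcal O_3^{(y)}(\frak x^{(n-1)}(\tau))\,d\tau$ and $\Theta^{(n)}(t) = \int_0^t \mathcal O_3^{(\theta)}(\frak x^{(n-1)}(\tau))\,d\tau$, whence by \eqref{stime coeff iter} one obtains $|y^{(n)}(t)| \le \e + C(s,\gamma)\,\e^3 T$ and $|\Theta^{(n)}(t)| \le C(s,\gamma)\,\e^3 T$; once $w^{(n)}$ is known, $\theta^{(n)}$ is recovered from \eqref{def Theta C}. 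The delicate construction is that of $w^{(n)}$, which solves a linear non-autonomous Cauchy problem
\begin{equation}\label{linear w n}
\partial_t w^{(n)} = \mathcal L(t) w^{(n)} + g(t),\qquad w^{(n)}(0)=w_0,
\end{equation}
with $\mathcal L(t) := \ii \Omega_\bot + \mathtt D^\bot(\frak x^{(n-1)}(t)) + \Pi_\bot T_{a(\frak x^{(n-1)}(t))}\partial_x$ and $g(t) := \mathcal R^\bot(\frak x^{(n-1)}(t))$. Local well-posedness of \eqref{linear w n} in $H^s_\bot$ is obtained by the standard parabolic-regularization (vanishing viscosity) scheme: one solves $\partial_t w^{(n)}_\nu = \mathcal L(t) w^{(n)}_\nu + \nu \partial_x^2 w^{(n)}_\nu + g(t)$ by a fixed-point argument in $C^0([-T,T],H^s_\bot)$, derives $\nu$-uniform energy estimates, and passes to the limit $\nu\to 0^+$; the regularity \eqref{sol locali sistema NF C} is then read off from \eqref{linear w n}, since $\mathcal L(t) : H^s_\bot \to H^{s-3}_\bot$ is continuous.

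The key point—which also drives the uniform bound on $\|w^{(n)}(t)\|_s$—is the $H^s$ energy estimate for \eqref{linear w n}. Since $\ii\Omega_\bot$ and $\mathtt D^\bot(\frak x^{(n-1)}(t))$ are skew-adjoint Fourier multipliers, they commute with $\partial_x^s$ and contribute nothing to $\partial_t\|\partial_x^s w^{(n)}\|^2$. The para-product term is handled exactly as in \eqref{energy estimate 2}--\eqref{energy estimate 3} of Section \ref{conclusioni forma normale}: by Corollary \ref{aggiunto nostri operatori} the symmetrized operator $\Pi_\bot T_a\partial_x + \Pi_\bot(T_a\partial_x)^\top$ is bounded on $L^2_\bot$ by $\lesssim \|a\|_{\sigma_\ast} \lesssim \e^2$, and by the standard commutator lemma $\|[\partial_x^s, T_a\partial_x]w\|_{L^2} \lesssim_s \|a\|_{\sigma_\ast}\|w\|_s \lesssim \e^2 \|w\|_s$. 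Together with \eqref{stime coeff iter} for $g$, Cauchy--Schwarz yields
\begin{equation}\label{energy iter}
\bigl|\partial_t \|w^{(n)}(t)\|_s^2\bigr| \le K(s,\gamma)\bigl(\e^2 \|w^{(n)}(t)\|_s^2 + \e^3 \|w^{(n)}(t)\|_s\bigr).
\end{equation}
Grönwall's inequality applied to $\|w^{(n)}(t)\|_s$ gives $\|w^{(n)}(t)\|_s \le \bigl(\e + K(s,\gamma)\,\e^3 T\bigr)\exp(K(s,\gamma)\,\e^2 T)$ for $|t|\le T$. Choosing $T := c(s,\gamma)\,\e^{-2}$ with $c(s,\gamma)$ small enough makes the exponential factor $\le 2$ and the prefactor $\le 2\e$, so that $\|w^{(n)}(t)\|_s \le 4\e$; combined with the analogous bounds for $y^{(n)}$ and $\Theta^{(n)}$ (which are even easier, as they are of order $\e^3 T \lesssim \e$), this closes the induction with $C_\ast := C_\ast(s,\gamma)$ chosen, e.g., $C_\ast = 5$.

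The main obstacle is really the energy estimate \eqref{energy iter}: everything else is bookkeeping. The point is that the normal-form construction of Theorem \ref{teorema totale forma normale} was designed precisely to ensure that the unbounded terms in $X^\bot$ of order $\ge 1$ are either skew-adjoint Fourier multipliers (contributing nothing to the $H^s$ energy) or para-products with a coefficient $a$ which is small of order two; had $a$ been merely small of order one, the Grönwall factor would be $e^{c \e T}$ and one would obtain only $T = O(\e^{-1})$. The quadratic smallness of $a$ is what unlocks the $O(\e^{-2})$ time scale, and it is the single decisive ingredient both in this appendix and in Lemma \ref{lemma eps T soluzione} of Section \ref{conclusioni forma normale}.
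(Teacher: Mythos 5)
Your proposal is correct and captures the essential mechanism of the proof (induction plus energy estimate), but it takes a genuinely different route for the underlying linear well-posedness, and it is worth spelling out the comparison.

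The paper does not re-derive well-posedness of the linear Cauchy problem for $w^{(n)}$: it simply invokes Lemma \ref{lemma positura lineare f}, which was proved earlier in Appendix \ref{appendix loc well posed} by a Galerkin scheme (truncations $\Pi_N$, uniform $H^s$ energy estimates, weak-$\ast$ compactness, upgrade to strong $C^0_t H^s_x$ continuity). You instead re-prove local well-posedness from scratch by vanishing viscosity. Both are standard and both deliver the same outcome, so this is a valid alternative; but your version duplicates work the paper has already done and hence is longer. There is, however, a real technical wrinkle in your description of the viscosity step: the regularization $+\nu\partial_x^2$ is only second order, while $\ii\Omega_\bot$ is third order, so the Duhamel map $w \mapsto \int_0^t e^{\nu(t-\tau)\partial_x^2}\big(\ii\Omega_\bot w(\tau) + \cdots\big)\,d\tau$ loses one derivative net (the semigroup gains only $(\nu(t-\tau))^{-3/2}$ worth of smoothing against a loss of $3$, which is not time-integrable), and the fixed point in $C^0([-T,T],H^s_\bot)$ does \emph{not} close as stated. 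The fix is easy and standard---either put $\ii\Omega_\bot + \mathtt D^\bot(\frak x^{(n-1)}(t))$ into the free propagator (harmless, since these are skew-adjoint Fourier multipliers and $e^{t\ii\Omega_\bot}$ is an isometry, so the combined propagator has the same parabolic smoothing as $e^{\nu t\partial_x^2}$ and the remaining $\Pi_\bot T_a\partial_x$ and forcing are of order $\le 1$), or use a higher-order viscosity $-\nu\partial_x^4$---but as written the sketch is incomplete.

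Two further remarks on the bookkeeping. First, the statement of the Lemma only asks for \emph{some} $T>0$; the sharp lifespan $T = O(\e^{-2})$ is established in the paper afterwards (Proposition \ref{prop tempo di esistenza} via Lemma \ref{lemma eps T soluzione}), whereas you build it into the induction. That is a legitimate strengthening, but then the implied constants in \eqref{stime coeff iter} really do depend on $C_*$ (since the smallness bounds read $\lesssim (|y|+\|w\|_s+\e)^p$ and you are feeding in $|y^{(n-1)}|,\|w^{(n-1)}\|_s \le C_*\e$), so the final inequality you must close is of the type $(1+K(C_*)c)\,e^{K(C_*)c} \le C_*$: you must fix $C_*$ first, which fixes $K(C_*)$, and only then choose $c$ small. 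You wave at this as bookkeeping, which is acceptable, but ``$C_*=5$ by fiat'' hides exactly the order in which the constants must be chosen. Second, the paper's own proof only imposes $T C_s \e^2 \le 1$, consistent with $T$ up to order $\e^{-2}$, but deliberately stops short of stating the $\e^{-2}$ rate here; your proof is not wrong, just more ambitious than the Lemma requires.
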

\begin{proof}
 We prove the claimed results by induction on $n$. For $n = 0$, by the definition \eqref{theta y w 0 (t) C} of $\frak x^{(0)}(t)$, the claimed statement holds 
 with $T = 1$ and with $ \sigma_\ast$, $\e_0$ given as in Theorem \ref{teorema totale forma normale}.
 Now assume that the claimed statement holds at the step $n\ge 0$ of the induction 
% {\color{orange} (clarify values $\s_*$, $\e_0$, and $T$) }
 and let us prove it at the step $n + 1$. 
% \begin{proposition}\label{prop tempo di esistenza}
%For any $s \ge 2 \sigma_\ast$, there exists a constant $C_* \equiv C_*(s) > 0$ so that for and any $0 < \e \le \e_0(s)$, $T$ can be chosen as $T_{\e, s} := C_* \e^{- 2}$. 
%\end{proposition} 
We first need to make some preliminary considerations.
Let $s \geq  \sigma_\ast$ and $0 < \e \le \e_0$.
 Since by \eqref{proprieta elementi campo vettoriale}, $a$ is small of order two and 
 ${\cal R}^\bot$, $ {\cal O}_3^{(\theta)}$, ${\cal O}_3^{(y)}$ are small of order three, 
it follows from Theorem \ref{teorema totale forma normale} and the estimates \eqref{sol locali sistema NF C 1}, which hold by the induction hypothesis, 
that there exists a constant $C_s \equiv  C_s( \gamma) > 0$, independent of $n$, so that for any $t \in [- T, T]$
\begin{equation}\label{stime epsilon quantita campo vett C}
\begin{aligned}
 \quad  |{\cal O}_3^{(\theta)}(\frak x^{(n)}(t))|\,,  \  |{\cal O}_3^{(y)}(\frak x^{(n)}(t))| \le C_s  \e^3, \qquad 
 \| a(\frak x^{(n)}(t)) \|_{\sigma_\ast} \le C_s \e^2, 
  \qquad \| {\cal R}^\bot(\frak x^{(n)}(t)) \|_s \le C_s \e^3 \,
\end{aligned}
\end{equation}
By the second equation in \eqref{PDE NF energy estimates approx C}, one has
$$
y^{(n + 1)}(t) = y_0 + \int_0^t {\cal O}_3^{(y)}({\frak x}^{(n)}(\tau))\, d \tau,
$$
implying that  
\begin{equation}\label{prop y n + 1 C}
y^{(n + 1)} \in C^1([- T, T], \, \R^{S_+}),   \qquad   
|y^{(n + 1)}(t)| \leq \e + T C_s \e^3  \leq  \,  C_* \e, \quad \forall \, t \in [-T, T] \, ,
\end{equation}
where we have chosen $T>0$ so that $T C_s \e^2 \le 1$ small enough. 
By \eqref{stime epsilon quantita campo vett C} it then also follows that
\begin{equation}\label{T a frak x n 1 C}
T \| a({\frak x}^{(n)}) \|_{\sigma_*} \leq T C_s \e^2  \le1\,.
\end{equation}
To solve the equation for $w^{(n + 1)}$ in \eqref{PDE NF energy estimates approx C}, 
we apply Lemma \ref{lemma positura lineare f} in Appendix \ref{Appendix C} with ${\cal D}(t) = \ii \Omega_\bot + {\mathtt D}^\bot({\frak x}^{(n)}(t))$,
 $a = a({\frak x}^{(n)}(t))$, and $f = {\cal R}^\bot({\frak x}^{(n)}(t))$ to conclude that there exists a unique solution $w^{(n + 1)}$ of
$$
\begin{cases}
\partial_t w^{(n + 1)} = \ii \Omega_\bot w^{(n + 1)} + {\mathtt  D}^\bot({\frak x}^{(n)})[w^{(n + 1)}] + 
\Pi_\bot T_{a({\frak x}^{(n)})} \partial_x w^{(n + 1)} + {\cal R}^\bot({\frak x}^{(n)}) \\
 w^{(n + 1)}(0) = w_0
\end{cases}
$$ 
in  $C^0([- T, T], \, H^s_\bot(\T_1)) \cap C^1([- T, T], \, H^{s - 3}_\bot(\T_1))$ and that $w^{(n+1)}$ satisfies
\begin{equation}\label{prop w n + 1}
\| w^{(n + 1)}(t) \|_s , \  \| \partial_t w^{(n + 1)}(t) \|_{s - 3} \lesssim_{s, \gamma}  
\e + T \| {\cal R}^\bot({\frak x}^{(n)}) \|_s \stackrel{ \eqref{stime epsilon quantita campo vett C}}{\lesssim_{s, \gamma} } \e + T  C_s \e^3 \leq C_* \e 
\end{equation}
since $T C_s\e^2 \le 1$. We then define 
\begin{equation}\label{def Theta n + 1 C}
 \Theta^{(n + 1)} (t) := \theta^{(n + 1)} (t) - \theta_0 + (\omega + \e \widehat \omega) t - \int_0^t \mathtt N^{(\theta)}(y^{(n + 1)}(\tau), w^{(n + 1)}(\tau))\, d \tau .
 \qquad t \in [- T, T].
\end{equation}
By the first equation in \eqref{PDE NF energy estimates approx C}, one gets 
$\Theta^{(n + 1)}(t) = \int_0^t {\cal O}_3^{(\theta)}({\frak x}^{(n)}(\tau))\, d \tau$
and hence, using again \eqref{stime epsilon quantita campo vett C}, 

\begin{equation}\label{prop theta n + 1 C}
\theta^{(n + 1)} \in C^1 ([- T, T], \T^{S_+}), \qquad |\Theta^{(n + 1)}(t)| \leq C_* \e, \quad \forall t \in [- T, T]\,. 
\end{equation}
This concludes the proof of the lemma. 
\end{proof}
In order to prove the convergence of the sequence $(\frak x_n(t))_{n \ge 0}$, constructed in Lemma \ref{proprieta yn wn C}, we prove 
\begin{lemma}\label{lemma n + 1 - n C}
Under the assumptions of Lemma \ref{proprieta yn wn C}, for any $n \geq 1$,
$$
\| {\frak x}^{(n )}(\cdot) - {\frak x}^{(n - 1)}(\cdot) \|_{{\cal C}^0_t E_{s - 1}}\,, \
\|\partial_t ({\frak x}^{(n )}(\cdot) - {\frak x}^{(n - 1)}(\cdot)) \|_{{\cal C}^0_t E_{s - 4}}  \leq  \, 2^{- n}\,. 
$$
\end{lemma}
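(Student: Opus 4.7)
The strategy is a contraction argument for the scheme \eqref{PDE NF energy estimates approx C}. Writing $\delta \frak x^{(n+1)} := \frak x^{(n+1)} - \frak x^{(n)}$ and subtracting consecutive iterates, the difference satisfies
\begin{equation*}
\begin{cases}
\partial_t \delta\theta^{(n+1)} = d\mathtt N^{(\theta)}(\cdots)[\delta y^{(n+1)}, \delta w^{(n+1)}] + \bigl({\cal O}_3^{(\theta)}(\frak x^{(n)}) - {\cal O}_3^{(\theta)}(\frak x^{(n-1)}) \bigr), \\
\partial_t \delta y^{(n+1)} = {\cal O}_3^{(y)}(\frak x^{(n)}) - {\cal O}_3^{(y)}(\frak x^{(n-1)}), \\
\partial_t \delta w^{(n+1)} = \ii \Omega_\bot \delta w^{(n+1)} + {\mathtt D}^\bot(\frak x^{(n)})[\delta w^{(n+1)}] + \Pi_\bot T_{a(\frak x^{(n)})}\partial_x \delta w^{(n+1)} + F^{(n)},
\end{cases}
\end{equation*}
with zero initial data, where the forcing
$$
F^{(n)} := \bigl({\mathtt D}^\bot(\frak x^{(n)}) - {\mathtt D}^\bot(\frak x^{(n-1)})\bigr)[w^{(n)}] + \Pi_\bot T_{a(\frak x^{(n)}) - a(\frak x^{(n-1)})}\partial_x w^{(n)} + \bigl({\cal R}^\bot(\frak x^{(n)}) - {\cal R}^\bot(\frak x^{(n-1)})\bigr)
$$
collects all terms where the coefficients are evaluated at different iterates on the same $w^{(n)}$. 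Using the smoothness of $a, {\mathtt D}^\bot, {\cal R}^\bot$ together with their smallness of order two (for $a$, ${\mathtt D}^\bot$) and three (for ${\cal R}^\bot$), the uniform bounds \eqref{sol locali sistema NF C 1}, and \eqref{stima elementare paraproduct}, one gets (provided $s \ge \sigma_\ast + C$ for a suitable constant $C$)
$$
\| F^{(n)}(t) \|_{s-1} \lesssim_{s,\gamma} \e^2 \, \| \delta \frak x^{(n)}(t) \|_{E_{s-1}}, \qquad \forall t \in [-T, T],
$$
and analogous Lipschitz bounds $\lesssim_{s,\gamma} \e^2 \|\delta \frak x^{(n)}(t)\|_{E_{s-1}}$ for the differences of ${\cal O}_3^{(\theta)}$, ${\cal O}_3^{(y)}$ and $d\mathtt N^{(\theta)}$.

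Next I would derive an energy estimate for $\delta w^{(n+1)}$ in $H^{s-1}_\bot$, repeating the argument of the proof of Lemma \ref{lemma eps T soluzione}. The Fourier multiplier $\ii \Omega_\bot + {\mathtt D}^\bot(\frak x^{(n)})$ is skew-adjoint by \eqref{proprieta elementi campo vettoriale} and commutes with $\partial_x^{s-1}$, so it contributes $0$ to $\partial_t \| \partial_x^{s-1} \delta w^{(n+1)}\|^2$. The paradifferential term $T_{a(\frak x^{(n)})}\partial_x$ is skew-adjoint modulo an $O(\e^2)$ operator on $L^2_\bot(\T_1)$ by Corollary \ref{aggiunto nostri operatori} and \eqref{stime epsilon quantita campo vett C}; combined with the standard commutator estimate $\| [\partial_x^{s-1}, T_{a(\frak x^{(n)})}\partial_x] \|_{{\cal B}(L^2_\bot)} \lesssim_s \e^2$, this produces terms bounded by $\e^2 \|\delta w^{(n+1)}\|_{s-1}^2$. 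The only genuine source term is $F^{(n)}$, which by Cauchy--Schwarz and the bound above gives $\e^2 \|\delta \frak x^{(n)}\|_{E_{s-1}} \|\delta w^{(n+1)}\|_{s-1}$. Integrating the resulting differential inequality and incorporating the elementary bounds on $\delta \theta^{(n+1)}, \delta y^{(n+1)}$ obtained by integrating in time, Gr\"onwall's inequality yields
$$
\| \delta \frak x^{(n+1)} \|_{C^0_t E_{s-1}} \le C_s\, T \e^2 \, e^{C_s T \e^2} \, \| \delta \frak x^{(n)} \|_{C^0_t E_{s-1}}.
$$

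Then, possibly reducing the time $T = T_{\e, s, \gamma}$ from Lemma \ref{proprieta yn wn C} by a factor depending on $s$ and $\gamma$ (while keeping $T = O(\e^{-2})$ and all of the uniform bounds \eqref{sol locali sistema NF C 1} intact), the prefactor $C_s T \e^2 e^{C_s T \e^2}$ becomes $\le 1/2$. The base case $n = 1$ follows by bounding $\| \frak x^{(1)} - \frak x^{(0)} \|_{C^0_t E_{s-1}}$ directly from the integral form of \eqref{PDE NF energy estimates approx C}: each right-hand side component is of size $\e$ times $T$ (or $\e$ for the linear part of the $w$-equation times $T$), hence at most $1$ after possibly shrinking $T$ further. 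Iterating the contraction gives $\| \delta \frak x^{(n)} \|_{C^0_t E_{s-1}} \le 2^{-n}$. Finally, the $E_{s-4}$ bound on $\partial_t \delta \frak x^{(n+1)}$ follows by substituting the just-obtained $E_{s-1}$ bound into the difference equation, since the principal part $\ii \Omega_\bot \delta w^{(n+1)}$ and the paradifferential term $T_{a}\partial_x \delta w^{(n+1)}$ cost at most three derivatives, while the $\theta$- and $y$-components lose none.

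The main technical obstacle is the loss-of-derivative book-keeping: the coefficients $a, {\mathtt D}^\bot, {\cal R}^\bot$ are smooth maps only when their domain is taken at regularity $s + \sigma_\ast$, so naive Lipschitz bounds require control of $\delta \frak x^{(n)}$ in $E_{s + \sigma_\ast}$, which is \emph{not} available. The remedy is to perform the contraction at the lower regularity $s - 1$ (where interpolation with the uniform $E_s$-bound from Lemma \ref{proprieta yn wn C} gives the Lipschitz estimates, provided $s$ is chosen larger than $\sigma_\ast + C$ for some fixed constant $C$) and to crucially exploit that $a$ and ${\mathtt D}^\bot$ are small of order two, which converts every coefficient difference into a factor of $\e$ times $\|\delta \frak x^{(n)}\|_{E_{s-1}}$. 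This explains both the single-derivative loss in the statement and the dependence of $T$ on $s$ and $\gamma$.
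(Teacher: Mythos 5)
Your overall scheme — subtract consecutive iterates, write the inhomogeneous para-differential equation for the difference, prove an energy estimate using the skew-adjointness of $\ii\Omega_\bot + {\mathtt D}^\bot$ and Corollary \ref{aggiunto nostri operatori} plus a commutator bound, then Gr\"onwall and shrink $T$ — is precisely the paper's route. (The paper invokes Lemma \ref{lemma positura lineare f} for the $w$-equation rather than re-deriving the energy estimate, but that is a presentational difference.) However, several of your quantitative claims are wrong, and one of them leads you to an incorrect parenthetical assertion.

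First, ${\mathtt D}^\bot$ is small of order \emph{one}, not two (cf.\ \eqref{proprieta elementi campo vettoriale}), so the term $({\mathtt D}^\bot(\frak x^{(n)}) - {\mathtt D}^\bot(\frak x^{(n-1)}))[w^{(n)}]$ is of size $O(1)\cdot\|\delta\frak x^{(n)}\|\cdot\e = O(\e)\|\delta\frak x^{(n)}\|$, giving $\|F^{(n)}\|_{s-1} \lesssim \e\,\|\delta\frak x^{(n)}\|_{E_{s-1}}$ (one power of $\e$, not two); this matches the paper's \eqref{prop fn differenze C}. Second, and more seriously, $\mathtt N^{(\theta)}(y,w) = -\nabla_y Q(y) - \mathcal Z^{(\theta)}[w,w]$ is small of order one, not two, so $d\mathtt N^{(\theta)}$ contains the non-small term $-\Omega_{S_+}$; the Lipschitz constant for the $\theta$-component is therefore $O(1)$ in $\e$, not $O(\e^2)$. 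The resulting estimate is $|\delta\theta^{(n+1)}(t)|\lesssim T\,\|\delta\frak x^{(n)}\|_{C^0_t E_\sigma}$ with no $\e$-gain, as in the paper. Consequently your claimed contraction factor $C_s T\e^2 e^{C_s T\e^2}$ is too optimistic for the $\theta$-component, and the assertion ``while keeping $T = O(\e^{-2})$'' cannot be right: to obtain a contraction you must take $T$ small \emph{independently of $\e$} (roughly $T\lesssim_{s,\gamma} 1$). Your base-case estimate for $n=1$ has the same flaw — the drift $-\omega - \e\widehat\omega$ in the $\theta$-equation for $\frak x^{(1)}$ is $O(1)$, so the right-hand side is not ``of size $\e$.''

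None of this actually breaks the lemma, because Lemma \ref{lemma n + 1 - n C} and Proposition \ref{local existence theorem} only require the existence of \emph{some} $T>0$, and the paper's proof simply says ``taking $T$ small enough.'' The $T = O(\e^{-2})$ timescale is established separately by the bootstrap argument in Proposition \ref{prop tempo di esistenza}, which uses the a priori bound of Lemma \ref{lemma eps T soluzione} rather than the contraction estimate. You should delete the parenthetical claim about $T = O(\e^{-2})$ and replace the uniform factor $\e^2$ by the correct orders ($\e^0$ for $\mathtt N^{(\theta)}$, $\e$ for ${\mathtt D}^\bot$ and $T_a$, $\e^2$ for ${\cal O}_3^{(\theta)}$, ${\cal O}_3^{(y)}$, ${\cal R}^\bot$); the rest of your argument then goes through.
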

\begin{proof} 
By \eqref{PDE NF energy estimates approx C},  $\widehat{\frak x}^{(n)}(t) = 
(\widehat\theta^{(n)}(t), \widehat y^{(n)}(t), \widehat w^{(n)}(t)) := 
{\frak x}^{(n)}(t) - {\frak x}^{(n -1)}(t)
$
satisfies 
\begin{equation}\label{equazione differenze conv C}
 \begin{cases}
\partial_t \widehat \theta^{(n)} = f^{(\theta, n)}\, , \\
\partial_t \widehat y^{(n)} = f^{(y, n)}\, ,  \\
\partial_t \widehat w^{(n)} = \ii \Omega_\bot \widehat w^{(n)} + {\mathtt D}^\bot({\frak x}^{(n)}) \widehat w^{(n)} + \Pi_\bot T_{a({\frak x}^{(n)})} \partial_x \widehat w^{(n)}+ f^{(\bot, n)}, 
\end{cases} 
\end{equation} 
with $\widehat{\frak x}^{(n)}(0) = (0, 0, 0)$, where 
\begin{equation}\label{fn eq difference C}
\begin{aligned}
& f^{(\theta, n)} := \mathtt N^{(\theta)}(y^{(n)}, w^{(n)}) - \mathtt N^{(\theta)}(y^{(n-1)}, w^{(n-1)}) + {\cal O}_3^{(\theta)}({\frak x}^{(n)}) - {\cal O}_3^{(\theta)}({\frak x}^{(n - 1)}) \,, \\
& f^{(y, n)} := {\cal O}_3^{(y)}({\frak x}^{(n)}) - {\cal O}_3^{(y)}({\frak x}^{(n - 1)})\,, \\
& f^{(\bot, n)} := \Big( {\mathtt D}^\bot({\frak x}^{(n)}) - {\mathtt D}^\bot({\frak x}^{(n - 1)}) \Big)[w^{(n - 1)}] + 
\Pi_\bot T_{a({\frak x}^{(n)}) - a({\frak x}^{(n - 1)})} \partial_x w^{(n-1)}+ {\cal R}^\bot({\frak x}^{(n)}) - {\cal R}^\bot({\frak x}^{(n - 1)}) \, .
\end{aligned}
\end{equation}
By the properties stated in \eqref{proprieta elementi campo vettoriale} and by the mean value theorem, for some $\sigma > 0$ large enough and $s \geq \sigma$, one can show that 
\begin{equation}\label{prop fn differenze C}
\begin{aligned}
& f^{(\theta, n)}, f^{(y, n)} \in C^0([- T, T], \R^{S_+}),  \qquad |f^{(\theta, n)}| \lesssim 
\| {\frak x}^{(n)} - {\frak x}^{(n - 1)} \|_{{C}^0_t E_\sigma}, \quad |f^{(y, n)}| \lesssim \e^2 \| {\frak x}^{(n)} - {\frak x}^{(n-1)} \|_{{C}^0_t E_\sigma}  \,, \\
& f^{(\bot, n)} \in C^0([- T, T], H^{s - 1}_\bot(\T_1)), \qquad \| f^{(\bot, n)} \|_{C^0_t H^{s - 1}_x} \lesssim_s 
\e \| {\frak x}^{(n)} - {\frak x}^{(n - 1)} \|_{{C}^0_t E_{ s - 1}}\,.
\end{aligned}
\end{equation}
Hence we immediately conclude that for any $t \in [- T, T]$,
\begin{equation}\label{stima widehat theta n y n C}
|\widehat\theta^{(n)} (t)| \lesssim T \| {\frak x}^{(n)} - {\frak x}^{(n - 1)} \|_{{C}^0_t E_\sigma}, \qquad 
|\widehat y^{(n)}(t)| \lesssim T \e^2 \| {\frak x}^{(n)} - {\frak x}^{(n-1)} \|_{{C}^0_t E_\sigma} . 
\end{equation}
Furthermore, by applying Lemma \ref{lemma positura lineare f}, with ${\cal D}(t) := \ii \Omega_\bot + {\mathtt D}^\bot({\frak x}^{(n)})$, $a = a({\frak x}^{(n)})$, 
$f = f^{(\bot, n)}$, and by the estimate \eqref{prop fn differenze C} for $f^{(\bot, n)}$, one also deduces that 
\begin{equation}\label{stima widehat wn C}
\| \widehat w^{(n)}(t) \|_{s - 1} \lesssim_s \e T \| {\frak x}^{(n)} - {\frak x}^{(n-1)} \|_{{\cal C}^0_t E_{s - 1}}, \quad \forall t \in [- T, T]\,. 
\end{equation}
Therefore, collecting \eqref{stima widehat theta n y n C}, \eqref{stima widehat wn C}, using the induction hypothesis, and by taking $T$ small enough, 
one gets $\| {\frak x}^{(n + 1)} - {\frak x}^{(n)} \|_{{\cal C}^0_t E_{s - 1}} \leq 2^{- (n + 1)}$ which is one of the two claimed estimates at the step $n + 1$. 
The estimate for $\partial_t ({\frak x}^{(n+1)} - {\frak x}^{(n)})$ can be proved in a similar fashion. 
\end{proof}
By Lemma \ref{lemma n + 1 - n C} and by a standard telescoping argument, one obtains
$$
\theta^{(n)} \to \theta, \quad  y^{(n)} \to y\,,\qquad \partial_t \theta^{(n)} \to \partial_t \theta\,,\quad  \partial_t y^{(n)} \to \partial_t y \quad \text{uniformly for} \quad - T \le t \le T \, .
$$
By the estimates \eqref{sol locali sistema NF C} and by passing to the limit as $n \to + \infty$, one then obtains the bounds \eqref{sol locali sistema NF fin C} for $\Theta(t)$ and $y(t)$.

\noindent
Furthermore, 
$$
w^{(n)} \to w \quad \text{in} \quad C^0([- T, T], \, H^{s - 1}_\bot) \cap C^1([- T, T], \, H^{s - 4}_\bot(\T_1))\,. 
$$
and $(\theta(t), y(t), w(t))$ is a smooth solution of \eqref{PDE NF energy estimates C}. Furthermore, arguing as at the end of the proof of Lemma \ref{lemma positura lineare}, 
one shows that 
$$
w \in C^0([- T, T], \, H^s_\bot(\T_1)) 
$$
and in turn, using the equation, that $\partial_t w \in C^0([- T, T], H^{s - 3}_\bot(\T_1))$. 
One also shows that $w(t)$ satisfies the claimed bound \eqref{sol locali sistema NF fin C} 
by using the bounds on $w^{(n)}$ in \eqref{sol locali sistema NF C 1}. To prove the uniqueness, take two smooth solutions ${\frak x}_1, {\frak x}_2$ 
satisfying the same initial condition ${\frak x}_1(0) = {\frak x}_0 = {\frak x}_2(0)$. 
Then write the equation for the difference ${\frak x}_1 - {\frak x}_2$ and argue as in the proof of Lemma \ref{lemma n + 1 - n C} to conclude that
$$
\| {\frak x}_1(t) - {\frak x}_2(t) \|_{E_{\sigma}} \lesssim \int_0^t \| {\frak x}_1(\tau) - {\frak x}_2(\tau) \|_{E_{\sigma}}\, d \tau, \quad \forall t \in [- T, T]
$$
for some $\sigma > 0$ (large). By the Gronwall Lemma,  ${\frak x}_1 = {\frak x}_2$. 
This concludes the proof of Proposition \ref{local existence theorem}.

%%%%%%%%%%%%%%%%%%%%%%%%%%%%%%%%%%%%%%%%%%%%%%%%%%%%%%%%%%%%%%%%%%%%%%%%%%%%%%%%
%%%%%%%%%%%%%%%%%%%%%%%%%%%%%%%%%%%%%%%%%%%%%%%%%%%%%%%%%%%%%%%%%%%%%%%%%%%%%%%

\section{On a class of linear para-differential equations}\label{Appendix C}
In this appendix we discuss a well-posedness result for a linear para-differential equation of the form 
\begin{equation}\label{PDE lineare Hs bot con f}
\partial_t w = {\cal D}(t) [w] + \Pi_\bot T_a \partial_x w + f\,, \qquad x \in \T_1, \ t \in [-T, T] , 
\end{equation}
in the Sobolev space  $H^s_\bot(\T_1)$ for some integer $s \ge \sigma$ with $\sigma> 0$ sufficiently large.
Here the linear operator ${\cal D}(t)$ is a time-dependent Fourier multiplier of order $m \ge 1$, ${\cal D}(t) w( x) = \sum_{n \in S^\bot} d_n(t)w_n e^{\ii 2\pi n x}$
with
\begin{equation}\label{ipotesi cal D (t)} 
 {\cal D} \in C^0\big([- T, T], \, {\cal B}(H^{s }_\bot(\T_1), H^{s-m}_\bot(\T_1))  \big), 
\qquad  {\cal D}(t) = - {\cal D}(t)^\top, \quad \forall t \in [- T, T]\, ,
\end{equation}
and the coefficient $a(t, x)$ of the operator $T_a$ of para-multiplication by $a$ and the forcing term $f(t, x)$ satisfy 
\begin{equation}\label{ipotesi a pde lin}
a \in C^0\big( [- T, T], H^\sigma_\bot(\T_1) \big), \qquad 
%\label{ipotesi f PDE lineare}
f \in C^0 \big( [- T, T], H^s_\bot(\T_1) \big) .
\end{equation}
The main result of this appendix is Lemma \ref{lemma positura lineare f} which is used in the proof of Proposition \ref{local existence theorem}. 

\noindent
First we consider the initial value problem for equation \eqref{PDE lineare Hs bot con f} with vanishing forcing term, 
\begin{equation}\label{PDE lineare Hs bot}
\partial_t w = {\cal D}(t) [w] + \Pi_\bot T_a \partial_x w\,, \qquad w(\tau, \cdot) = w_0(\cdot) \, ,
\end{equation}
where the initial time $\tau$ is in $[- T, T]$. 
\begin{lemma}\label{lemma positura lineare}
There exists $\s \ge m $ (large) with the following property: Assume that for some  $0 < T \le 1$ and any $s \ge \s$, 
 \eqref{ipotesi cal D (t)} - \eqref{ipotesi a pde lin} hold and $\| a \|_{{\cal C}^0_t H^\sigma_x} \leq 1$. Then for any $w_0 \in H^s_\bot(\T_1)$, 
 there exists a unique solution $w$ of \eqref{PDE lineare Hs bot} in
$C^0([- T, T], H^s_\bot(\T_1)) \cap C^1([- T, T], H^{s - m}_\bot(\T_1))$. For any $t \in [- T, T]$, it satisfies the estimate 
\begin{equation}\label{stima PDE lin hom}
\begin{aligned}
& \| w(t) \|_s\,,\, \| \partial_t w(t) \|_{s - m} \lesssim_s   \| w_0 \|_s\,.
\end{aligned}
\end{equation}
\end{lemma}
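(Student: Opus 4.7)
I plan to establish this well-posedness result by the classical energy method combined with a Faedo--Galerkin approximation. The key structural observation is that the principal part $\mathcal{D}(t)$ is both skew-adjoint and a Fourier multiplier, so it commutes with $\partial_x^s$ and contributes nothing to the $H^s$-energy balance; meanwhile the paradifferential perturbation $\Pi_\bot T_a \partial_x$ is of order one but, thanks to the symbolic calculus developed in Section \ref{para-differential calculus}, differs from a skew-adjoint operator only by a bounded operator whose $L^2$-norm is controlled by a fixed Sobolev norm $\|a\|_\sigma$. The hypothesis $\|a\|_{C^0_t H^\sigma_x} \le 1$ then produces an $s$-independent rate of growth, and Gronwall closes the estimate.

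\smallskip

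The first step would be the a priori energy estimate. For a sufficiently regular solution, differentiating $\|\partial_x^s w(t)\|^2$ in time gives
\begin{equation*}
\tfrac{d}{dt}\|w(t)\|_s^2 \;=\; 2\,\mathrm{Re}\,\langle \partial_x^s \mathcal{D}(t)w,\,\partial_x^s w\rangle \;+\; 2\,\mathrm{Re}\,\langle \partial_x^s \Pi_\bot T_a\partial_x w,\,\partial_x^s w\rangle\,.
\end{equation*}
Since $\mathcal{D}(t)$ is a Fourier multiplier, $[\partial_x^s,\mathcal{D}(t)]_{lin}=0$, and the skew-adjointness hypothesis \eqref{ipotesi cal D (t)} kills the first term. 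For the second I decompose $\partial_x^s\circ T_a\partial_x = T_a\partial_x\circ\partial_x^s + [\partial_x^s, T_a\partial_x]_{lin}$; a direct commutator estimate in the spirit of Corollary \ref{corollario espansione commutatore} (together with a bound of the type used in the energy estimate of Section \ref{conclusioni forma normale}) yields $\|[\partial_x^s, T_a\partial_x]_{lin}w\|_{L^2}\lesssim_s \|a\|_\sigma\|w\|_s$ for a single integer $\sigma$ independent of $s$, while Corollary \ref{aggiunto nostri operatori} gives that $T_a\partial_x + (T_a\partial_x)^\top$ is bounded on $L^2$ with operator norm $\lesssim \|a\|_\sigma$. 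Combined with Cauchy--Schwarz and the hypothesis $\|a\|_{C^0_t H^\sigma_x}\le 1$, these bounds produce $\tfrac{d}{dt}\|w(t)\|_s^2 \lesssim_s \|w(t)\|_s^2$, and Gronwall's inequality together with $T\le 1$ delivers the $H^s$-bound in \eqref{stima PDE lin hom}. The corresponding bound on $\partial_t w$ in $H^{s-m}$ then follows directly from the equation itself, using \eqref{ipotesi cal D (t)} and Lemma \ref{prop 1 paraproduct}.

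\smallskip

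To produce actual solutions, let $P_n$ denote the orthogonal projector onto the Fourier modes $|k|\le n$ in $L^2_\bot(\T_1)$ and consider the finite-dimensional nonautonomous linear ODE
\begin{equation*}
\partial_t w_n \;=\; P_n \mathcal{D}(t) P_n w_n \;+\; P_n \Pi_\bot T_a\partial_x P_n w_n\,,\qquad w_n(\tau) = P_n w_0\,,
\end{equation*}
on $[-T,T]$, which is solved by standard ODE theory. The energy argument above applies verbatim to $w_n$ with constants uniform in $n$, so $(w_n)$ is bounded in $L^\infty([-T,T],H^s_\bot)$ and $(\partial_t w_n)$ in $L^\infty([-T,T],H^{s-m}_\bot)$. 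Extracting a weak-$\ast$ subsequential limit $w$, and using the Aubin--Lions lemma for strong convergence in $L^\infty([-T,T],H^{s-1}_\bot)$, produces a distributional solution of \eqref{PDE lineare Hs bot}. Weak continuity of $t\mapsto w(t)$ in $H^s_\bot$ comes by interpolating the uniform $H^s$-bound with strong continuity in $H^{s-m}_\bot$ granted by the equation, and combined with continuity of $\|w(t)\|_s$ in $t$ (which follows from the fact that the energy identity is inherited by the limit) upgrades to strong continuity in $H^s_\bot$, giving $w \in C^0([-T,T],H^s_\bot)\cap C^1([-T,T],H^{s-m}_\bot)$. Uniqueness is immediate from the a priori estimate applied to the difference of two solutions, which satisfies the homogeneous equation with zero initial data.

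\smallskip

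The main technical hurdle is isolating a single integer $\sigma$, independent of $s$, such that both the commutator $[\partial_x^s, T_a\partial_x]_{lin}$ and the symmetric part $T_a\partial_x+(T_a\partial_x)^\top$ are controlled by $\|a\|_\sigma$; this is precisely what the symbolic calculus of Section \ref{para-differential calculus} provides, so beyond this careful bookkeeping no substantial new obstacle arises, the argument being otherwise a standard energy/Galerkin proof adapted to the paradifferential setting.
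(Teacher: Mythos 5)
Your proof is correct and follows the same overall architecture as the paper: a Faedo--Galerkin scheme with Fourier truncations, an $s$-uniform energy estimate exploiting the skew-adjointness and Fourier-multiplier structure of $\mathcal{D}(t)$ together with the commutator and adjoint calculus of Section~\ref{para-differential calculus}, a passage to the limit, an upgrade to $C^0([-T,T],H^s_\bot)$ via Lipschitz continuity of $\|w(t)\|_s^2$ and weak continuity, and uniqueness by Gronwall. The one genuine point of divergence is how convergence of the Galerkin approximations is established. You invoke Aubin--Lions--Simon compactness (uniform $L^\infty_t H^s$ bound on $w_n$ plus uniform $L^\infty_t H^{s-m}$ bound on $\partial_t w_n$, compact embedding $H^s\hookrightarrow H^{s'}$) to extract a strongly convergent subsequence in $C^0([-T,T],H^{s'})$ for $s'<s$. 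The paper instead directly proves that $(w^{(N)})$ is Cauchy in $C^0([-T,T],L^2_\bot)$ by writing down the equation for the difference $h^{(N)}=w^{(N+1)}-w^{(N)}$ (which is the same type of para-differential equation with a small forcing arising from $\Pi_{N+1}-\Pi_N$) and applying Gronwall to obtain an explicit $O(N^{-2})$ rate; interpolation with the uniform $H^s$ bound then gives strong $C^0_t H^{\bar s}$ convergence for $\bar s<s$. The two routes are both sound: the Cauchy-estimate route is more self-contained and hands-free (no appeal to compactness theorems, whole-sequence convergence with a rate, so no subsequence extraction is needed), while the Aubin--Lions route is more generic and shorter to set up at the cost of relying on Simon's $L^\infty$-variant of the lemma and getting only subsequential convergence before uniqueness is invoked. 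One small remark: the step ``the energy identity is inherited by the limit'' deserves a line of justification --- the paper handles it by appealing to \cite[Proposition 5.1.D]{Tay}, which shows $t\mapsto\|w(t)\|_s^2$ is Lipschitz; you would need the same (or an equivalent) argument to make the upgrade to $C^0_t H^s$ airtight.
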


\begin{proof}
The lemma is proved by constructing a sequence of approximating solutions. To this end we introduce
 for any integer $N \ge 1$ the finite dimensional subspace  $H_N$ of $L^2_\bot(\T_1)$,
\begin{equation}\label{HN}
H_N := \big\{ u \in L^2_\bot(\T_1) :  \, u(x) = \sum_{j \in S^\bot_N}  u_n e^{\ii 2 \pi n x}\big\}\, , \qquad  S^\bot_N:= S^\bot \cap [-N, N]\, ,
\end{equation}
and denote by $\Pi_N$ the corresponding $L^2-$orthogonal projector $\Pi_N : L^2_\bot(\T_1) \to H_N$. 
We consider the truncated equation
\begin{equation}\label{flow-propagator N}
 \partial_t w =  \Pi_N \big( {\cal D}(t) [w] + \Pi_\bot T_a \partial_x w  \big), \qquad   w(\tau, \cdot) = \Pi_N w_0\, ,
\end{equation}
where $w(t, x) = \sum_{n \in S^\bot_N} w_n(t) e^{\ii 2 \pi n x} \in H_N$.
The equation in \eqref{flow-propagator N}  is a linear non-autonomous ODE on the finite dimensional space $H_N$ 
and hence it admits a unique solution $w^{(N)} \in C^1([- T, T], H_N)$. 
We will show that the sequence $(w^{(N)})_{N \ge 1}$ admits a limit, which is the solution 
of \eqref{PDE lineare Hs bot} with the claimed properties. To this end, in a first step, we prove estimates for the Sobolev norm $ \| w^{(N)}(t) \|_{s}$.

\medskip

\noindent
{\sc Bound of $ \| w^{(N)}(t) \|_{s}$.} Note that $ \| w^{(N)}(t) \|_{s} =  \| \partial_x^s w^{(N)}(t) \| $.  
Since ${\cal D}(t)$ is a Fourier multiplier, the commutator $ [\partial_x^s, \, {\cal D}(t)] $ vanishes and since
 for any $v \in L^2_\bot(\T_1)$, 
$$
\big\langle \Pi_\bot u,\, v \big\rangle = \big\langle u,\, v \big\rangle \, , \ \ \forall \, u \in L^2(\T_1) \, , \qquad 
\big\langle \Pi_N v , \, g \big\rangle = \big\langle v,\, g \big\rangle\, , \ \ \forall \, g \in H_N, 
$$
one concludes that
\begin{align}
\partial_t \| \partial_x^s w^{(N)} \| & = 
 \big\langle \partial_x^s \big( {\cal D}(t) [w^{(N)}] + \Pi_N \Pi_\bot T_a \partial_x w^{(N)}  \big) , \, \partial_x^s w^{(N)} \big\rangle 
  + \big\langle \partial_x^s  w^{(N)} , \, \partial_x^s \big( {\cal D}(t) [w^{(N)}] + \Pi_N \Pi_\bot T_a \partial_x w^{(N)} \big)\big\rangle \nonumber  \\ 
 & = \big\langle  {\cal D}(t) \partial_x^s w^{(N)} ,  \, \partial_x^s w^{(N)} \big\rangle  + 
 \big\langle \partial_x^s w^{(N)}, \, {\cal D}(t)  \partial_x^s  w^{(N)}  \big\rangle \label{termine-prin} \\
 & \quad + \big\langle  \partial_x^s  T_a \partial_x w^{(N)} ,\, \partial_x^s w^{(N)} \big\rangle + 
 \big\langle  \partial_x^s w^{(N)} ,\,  \partial_x^s  T_a \partial_x w^{(N)} \big\rangle\,. \label{termine-prin-2}
 %& \quad + \big\langle \partial_x^s f\,,\, \partial_x^s w^{(N)} \big\rangle + \big\langle \partial_x^s w^{(N)}\,,\,  \partial_x^s f \big\rangle \label{termine-prin-3}
\end{align}

\noindent
{\em Analysis of the terms in \eqref{termine-prin}.} Since by assumption ${\cal D}(t)^\top = - {\cal D}(t)$, one has
\begin{equation}\label{pezzo cal D (t)}
\begin{aligned}
& \big\langle  {\cal D}(t) \partial_x^s w^{(N)}, \, \partial_x^s w^{(N)} \big\rangle  + \big\langle \partial_x^s w^{(N)},  \, {\cal D}(t)  \partial_x^s  w^{(N)}  \big\rangle 
& = \big\langle \big( {\cal D}(t) + {\cal D}(t)^\top\big) \partial_x^s w^{(N)}, \, \partial_x^s w^{(N)} \big\rangle = 0\,. 
\end{aligned}
\end{equation}
{\em Analysis of the terms in \eqref{termine-prin-2}.} One computes
\begin{equation}\label{energy estimate 2}
\begin{aligned}
& \big\langle \partial_x^s  T_{a} \partial_x w^{(N)}\,,\, \partial_x^s w^{(N)} \big\rangle + \big\langle \partial_x^s w^{(N)} \,,\, \partial_x^s  T_{a} \partial_x w^{(N)}\big\rangle \\
& = \big\langle   T_{a} \partial_x \partial_x^s w^{(N)}\,,\, \partial_x^s w^{(N)} \big\rangle + \big\langle \partial_x^s w^{(N)} \,,\,   T_{a} \partial_x \partial_x^s w^{(N)}\big\rangle 
 + \big\langle   [\partial_x^s, T_{a} \partial_x]  w^{(N)}\,,\, \partial_x^s w^{(N)} \big\rangle + \big\langle \partial_x^s w^{(N)} \,,\,   [\partial_x^s, T_{a} \partial_x] w^{(N)}\big\rangle \\
& = \big\langle   \big( T_{a} \partial_x + (T_{a} \partial_x)^\top \big) \partial_x^s w^{(N)}\,,\, \partial_x^s w^{(N)} \big\rangle  
 + \big\langle   [\partial_x^s, T_{a} \partial_x]  w^{(N)}\,,\, \partial_x^s w^{(N)} \big\rangle + \big\langle \partial_x^s w^{(N)} \,,\,   [\partial_x^s, T_{a} \partial_x] w^{(N)}\big\rangle\,.
\end{aligned}
\end{equation}
By Corollary \ref{aggiunto nostri operatori} (with $N=1$, $m=1$) there exists an integer $\sigma \ge 1$ so that
$$
\|  T_{a} \partial_x + (T_{a} \partial_x)^\top\|_{{\cal B}(L^2)} \lesssim  \| a \|_{\sigma} 
$$
and hence by the Cauchy-Schwarz inequality, 
\begin{equation}\label{energy estimate 2a}
\begin{aligned}
|\big\langle   \big( T_{a} \partial_x + (T_{a} \partial_x)^\top \big) \partial_x^s w^{(N)}\,,\, \partial_x^s w^{(N)} \big\rangle | \lesssim \| a \|_\sigma \,  \| \partial_x^s w^{(N)} \|^2 \,. 
\end{aligned}
\end{equation}
Moreover, arguing as  in \cite[Lemma A.1]{BM}, one has 
$$
\| [\partial_x^s, \, T_{a} \partial_x]  w^{(N)}\| \lesssim_s \| a \|_2 \| w^{(N)} \|_s \stackrel{\sigma \geq 2}{\lesssim_s}  \| a \|_\sigma \,  \| \partial_x^s w^{(N)} \|\,.
$$
The latter estimate, together with the Cauchy-Schwarz inequality, imply that 
\begin{equation}\label{energy estimate 3}
\begin{aligned}
|\big\langle   [\partial_x^s, \, T_{a} \partial_x]  w^{(N)}, \, \partial_x^s w^{(N)} \big\rangle + \big\langle \partial_x^s w^{(N)} \,,\,  
 [\partial_x^s, \, T_{a} \partial_x] w^{(N)}\big\rangle| \lesssim_s \| a \|_\sigma \,  \| \partial_x^s w^{(N)} \|^2\,. 
\end{aligned}
\end{equation}
Using \eqref{energy estimate 2a}-\eqref{energy estimate 3}, one then infers from \eqref{energy estimate 2}
\begin{equation}\label{energy estimate 3b}
|\big\langle \partial_x^s  T_{a} \partial_x w^{(N)}, \, \partial_x^s w^{(N)} \big\rangle + \big\langle \partial_x^s w^{(N)} ,\, \partial_x^s  T_{a} \partial_x w^{(N)}\big\rangle| 
\lesssim_s \| a \|_\sigma \| \partial_x^s w^{(N)} \|^2 \, .
\end{equation}
Combining \eqref{termine-prin}, \eqref{termine-prin-2}, \eqref{pezzo cal D (t)}, \eqref{energy estimate 3b}, yields the estimate
\begin{equation}
|\, \partial_t \| \partial_x^s w^{(N)} \|^2| \lesssim_s  \| a \|_\sigma \| \partial_x^s w^{(N)} \|^2 , 
\end{equation}
which implies that 
\begin{equation}\label{est for gronwall}
\begin{aligned}
\| \partial_x^s w^{(N)} (t)\|_{L^2}^2 & \leq \| w_0 \|_{s}^2 + C(s) \Big| \int_\tau^t  \| a(t' ) \|_\sigma \| \partial_x^s w^{(N)}(t') \|^2\, d t' \Big| \\
& {\leq} \| w_0 \|_s^2  + C(s) \| a \|_{{\cal C}^0_t H^\sigma_x}  \Big| \int_\tau^t \| \partial_x^s w^{(N)}(t') \|^2\, d t'  \Big|
\end{aligned}
\end{equation}
for some constant $C(s) > 0$. The Gronwall Lemma (recall that $- T \le t,  \tau \le T$) then implies that 
$$
\| w^{(N)}(t) \|_s^2= \| \partial_x^s w^{(N)} \|^2 \le {\rm exp} \big(C(s) \| a \|_{{\cal C}^0_t H^\sigma_x} T \big) \| w_0 \|_s^2, \quad \forall t \in [- T, T]\,. 
$$
Since by assumption $0 < T \le 1$ and $ \| a \|_{{\cal C}^0_t H^\sigma_x}  \le 1$, it then follows that
\begin{equation}\label{uniform bound wN}
\| w^{(N)}(t) \|_s^2= \| \partial_x^s w^{(N)} \|^2 \le {\rm exp} (C(s)) \| w_0 \|_s^2, \quad \forall t \in [- T, T]\,. 
\end{equation}

 \medskip
 
 \noindent
 {\sc Convergence.} 
 Now we  pass to the limit $ N \to + \infty $. 
By \eqref{uniform bound wN} the sequence of functions $w^{(N)}$ is bounded in 
$C^0([- T, T], \, H^s_\bot(\T_1)) \subseteq L^\infty([- T, T], \, H^s_\bot(\T_1))$ and,  
 up to subsequences,  
 \begin{equation}\label{disuguaglianza liminf u uN}
 w^{(N)} \stackrel{w^*}{\rightharpoonup} w \ \  \text{in } \  L^\infty([- T, T], H^s_\bot(\T_1))  \, , 
\qquad  \| w \|_{L^\infty_{ t} H^s_x} \leq \liminf_{N \to + \infty} \| w^{(N)} \|_{L^\infty_{ t} H^s_x}\,.
 \end{equation} 
 {\em Claim:}  $( w^{(N)})_{N \ge 1}$ converges to $w$ in $C^0([- T, T],  \, H^{s}_\bot(\T_1))  \cap C^1([- T, T], \, H^{ s - m}_\bot(\T_1))  $, 
 and $ w$ solves \eqref{PDE lineare Hs bot}. 
  \\[1mm]
We first prove that $w^{(N)}$ is a Cauchy sequence in $C^0([- T, T], L^2_\bot(\T_1) )$. Indeed, by \eqref{flow-propagator N},  
    the difference $h^{(N)} := w^{(N + 1)} - w^{(N)}$ solves 
    $$
    \partial_t h^{(N)} = {\cal D}(t) h^{(N)}  + \Pi_{N+ 1} ( \Pi_\bot T_a \partial_x h^{(N)} ) +(\Pi_{N + 1} - \Pi_N) \Pi_\bot T_a \partial_x  w^{(N)}\,, 
    \qquad  h^{(N)}(\tau) = (\Pi_{N + 1} - \Pi_N) w_0 \, , 
    $$ 
    and therefore
    \begin{align}
    \partial_t \| h^{(N)}(t)\|^2 & = \big\langle \partial_t h^{(N)}, \,h^{(N)} \big\rangle+ \big\langle  h^{(N)}, \,\partial_t h^{(N)} \big\rangle  \nonumber\\
    & =  \big\langle {\cal D}(t) h^{(N)}, \, h^{(N)} \big\rangle + \big\langle h^{(N)}, \, {\cal D}(t) h^{(N)} \big\rangle  
     + \big\langle  T_a \partial_x h^{(N)} ,\, h^{(N)} \big\rangle + \big\langle  h^{(N)},\,  T_a \partial_x h^{(N)} ) \big\rangle \nonumber\\
    & \quad + \big\langle  (\Pi_{N + 1} - \Pi_N)\Pi_\bot T_a \partial_x  w^{(N)}, \, h^{(N)} \big\rangle 
    + \big\langle h^{(N)}, \, (\Pi_{N + 1} - \Pi_N) \Pi_\bot T_a \partial_x  w^{(N)} \big\rangle \,. \label{coccodrillo 0}
    \end{align}
    Arguing as in \eqref{pezzo cal D (t)}, \eqref{energy estimate 2} - \eqref{energy estimate 3b}, one gets 
    \begin{equation}\label{coccodrillo 100}
    \begin{aligned}
    & \big\langle {\cal D}(t) h^{(N)}, \, h^{(N)} \big\rangle + \big\langle h^{(N)}, \, {\cal D}(t) h^{(N)} \big\rangle = 0\,, \\
    & | \big\langle  T_a \partial_x h^{(N)} ,\, h^{(N)} \big\rangle + \big\langle  h^{(N)} , \,  T_a \partial_x h^{(N)} )|  \lesssim \| a \|_\sigma \| h^{(N)} \|^2\,. 
    \end{aligned}
    \end{equation}
  Moreover 
  \begin{align}
  &| \big\langle  (\Pi_{N + 1} - \Pi_N)\Pi_\bot T_a \partial_x  w^{(N)}, h^{(N)} \big\rangle + \big\langle h^{(N)}, (\Pi_{N + 1} - \Pi_N) \Pi_\bot T_a \partial_x  w^{(N)} \big\rangle|  \nonumber\\
  & \lesssim  \| (\Pi_{N + 1} - \Pi_N) \Pi_\bot T_a \partial_x  w^{(N)}  \| \| h^{(N)}\|  
  \lesssim  
  \| h^{(N)}\|^2 + \| (\Pi_{N + 1} - \Pi_N) \Pi_\bot T_a \partial_x  w^{(N)} \|^2  \nonumber\\
  & \lesssim   \| h^{(N)}\|^2 + \big( N^{-2} \|  T_a \partial_x  w^{(N)}  \|_2 \big)^2  
  \stackrel{\eqref{stima elementare paraproduct}, \eqref{uniform bound wN}}{\lesssim} \| h^{(N)}\|^2  + \big( N^{-2} \| w_0 \|_3 \big)^2 
\stackrel{\s \geq 3}{\lesssim}   \| h^{(N)}\|^2  + \big( N^{-2} \| w_0  \|_{\sigma} \big)^2\,.  \label{coccodrillo 2}
  \end{align}
  Hence \eqref{coccodrillo 0}-\eqref{coccodrillo 2} imply that 
  $$  
  \partial_t \| h^{(N)}(t)\|^2 \lesssim  \| h^{(N)}(t)\|^2 + N^{-4}\| w_0\|_{\sigma}^2 
  $$ 
 and, since $ \| h^{(N)}(\tau)\| \leq N^{- 2} \| w_0\|_{2}$, we deduce from the Gronwall Lemma that
$$
\| w^{(N + 1)} - w^{(N)}\|_{{\mathcal C}^0_{ t} L^2_x} \lesssim   N^{-2} \| w_0\|_\sigma  \, {\rm exp}(C T)^{\frac12}
 $$
 for some constant $C > 0$. The above inequality, together with a standard telescoping argument implies that $ w^{(N)} $ is a Cauchy sequence in 
 $ C^0([- T, T],  \, L^2_\bot(\T_1)) $. 
Hence $ w^{(N)} \to \tilde w \in C^0([- T, T],  \, L^2_\bot(\T_1))  $. By \eqref{disuguaglianza liminf u uN} 
we have $ \tilde w = w  \in C^0([- T, T],  \, L^2_\bot(\T_1))  \cap L^\infty([- T, T],  \, H^s_\bot(\T_1)) $. 
 Next, for any $ \bar s \in [0, s) $ one has by the interpolation inequality 
  $$
  \| w^{(N)} - w \|_{L^\infty_{t} H^{\bar s}_x } \leq 
  \| w^{(N)} - w \|_{L^\infty_{ t} L^2_x }^{1 - \lambda}  \, \| w^{(N)} - w \|_{L^\infty_{ t} H^{ s}_x }^\lambda \, , \qquad \lambda:=  \bar s / s, 
$$
and, since  $ w^{(N)} $ is bounded in $ L^\infty([- T, T], \, H^s_\bot(\T_1)) $ (see \eqref{uniform bound wN}),
$ w  \in L^\infty([- T, T], \, H^s_\bot(\T_1)) $, 
and $ w^{(N)} \to w \in C^0([- T, T],  \, L^2_\bot(\T_1))$, we deduce that  $w^{(N)} \to w$ in $ C^0([- T, T], \, H^{\bar s}_\bot(\T_1))$.
  Moreover  we deduce 
$$
  \partial_t w^{(N)} = \Pi_N \big( {\cal D}(t) [w^{(N)}] + \Pi_\bot T_a \partial_x w^{(N)}  \big) \to {\cal D}(t) [w] 
  + \Pi_\bot T_a \partial_x w  \quad  \text{in}\ \  C^0([- T, T], \,  H^{\bar s - m}_\bot(\T_1) ) 
  \, , \quad \forall \bar s \in [0, s) \, . 
$$
 As a consequence $ w \in 
 C^1([- T, T], \,  H^{\bar s - m}_\bot(\T_1) )$ 
 and $ \pa_t w = {\cal D}(t) [w] + \Pi_\bot T_a \partial_x w $  
  solves \eqref{PDE lineare Hs bot}. 
 
Finally,  
arguing as in \cite{Tay}, Proposition 5.1.D, 
it follows that the function $t \to \| w(t)\|_{s}^2$ is Lipschitz.  
Furthermore, one can show that  if $ t_n \to t $ then 
$ w(t_n ) \rightharpoonup w(t) $ weakly in $  H^s_\bot(\T_1) $, because 
 $w(t_n) \to w(t)$ in $H^{\bar s}_\bot(\T_1)$ for any $\bar s \in [0, s)$.
 As a consequence the sequence $w(t_n) \to w(t) $ strongly in $H^s_\bot(\T_1)$. 
 This proves that  $w \in C^0([- T, T], H^s_\bot(\T_1))$ and therefore $\partial_t w \in C^0([- T, T], H^{s - m}_\bot(\T_1)) $. 
 
 \medskip
 \noindent
  {\sc Uniqueness.} If $w_1, w_2 \in C^0([- T, T], H^{s}_\bot(\T_1)) \cap  C^1([- T, T], H^{s - m}_\bot(\T_1)) $, $s \geq \s $,  
  are solutions of \eqref{PDE lineare Hs bot} with $w_1(\tau) = w_2(\tau) \in H^s_\bot(\T_1)$, then $h: = w_1 - w_2$  solves 
  $$
  \partial_t h = {\cal D}(t) h + \Pi_\bot T_a \partial_x h\,, \qquad h(\tau) = 0\,.
  $$
  Arguing as in the proofs of the previous energy estimates, we deduce the energy inequality 
  $  \partial_t \| h (t)\|^2 \leq C \| h(t)\|^2 $. Since $ h(\tau)= 0 $, 
the Gronwall Lemma implies that $\| h(t)\|^2 = 0$, for any $t \in [-T, T]$. This shows the uniqueness. 

\smallskip

The estimate for $\| w \|_{s }$ in  \eqref{stima PDE lin hom} then follows by \eqref{uniform bound wN} - \eqref{disuguaglianza liminf u uN} 
and the one of $\| \partial_t w \|_{s - m}$ in  \eqref{stima PDE lin hom} by using the equation. 
%Going through the arguments of the proof, one finds an appropriate value for $\sigma \ge m$.
\end{proof}
In the next lemma we consider the inhomogeneous equation \eqref{PDE lineare Hs bot con f}.  
\begin{lemma}\label{lemma positura lineare f}
Let $\sigma \ge m$ and $m$ be given as in Lemma \ref{lemma positura lineare} and 
 assume that for some $0 < T \le 1$ and $s \ge \s$,
 \eqref{ipotesi cal D (t)} - \eqref{ipotesi a pde lin} hold
and $\| a \|_{{\cal C}^0_t H^\sigma_x} \leq 1$.
Then for any $w_0 \in H^s_\bot(\T_1)$, 
 there exists a unique solution $t \mapsto w(t)$ of \eqref{PDE lineare Hs bot con f} in $C^0([- T, T], H^s_\bot(\T_1)) \cap C^1([- T, T], H^{s - m}_\bot(\T_1))$, 
with $w(0) = w_0$. For any $t \in [- T, T]$ it satisfies, 
\begin{equation}\label{stima PDE lin inomo}
\begin{aligned}
& \| w(t) \|_s \lesssim_s \| w_0 \|_s + \int_0^t \| f(\tau) \|_s \,d \tau \lesssim_s \| w_0 \|_s + T \| f \|_{{\cal C}^0_t H^s_x}\,, \\
&  \| \partial_t w(t) \|_{s - m} \lesssim_s   \| w_0 \|_s + T \| f \|_{{\cal C}^0_t H^s_x}\,, \quad \forall t \in [- T, T].
\end{aligned}
\end{equation}
\end{lemma}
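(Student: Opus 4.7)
My plan is to reduce the inhomogeneous problem to the homogeneous one already solved in Lemma \ref{lemma positura lineare} by means of Duhamel's principle, then derive the bounds directly. By Lemma \ref{lemma positura lineare} applied with arbitrary initial time $\tau \in [-T,T]$, we obtain a two-parameter family of solution operators $U(\cdot,\tau) : H^s_\bot(\T_1) \to C^0([-T,T], H^s_\bot(\T_1)) \cap C^1([-T,T], H^{s-m}_\bot(\T_1))$ of the homogeneous equation \eqref{PDE lineare Hs bot} with $U(\tau,\tau) = \mathrm{Id}$ and $\|U(t,\tau) w_0\|_s \lesssim_s \|w_0\|_s$ uniformly in $t,\tau$. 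I would then define
\begin{equation*}
w(t) := U(t,0) w_0 + \int_0^t U(t,\tau) f(\tau)\, d\tau\,,
\end{equation*}
and verify by a direct computation that $w$ solves \eqref{PDE lineare Hs bot con f} with $w(0) = w_0$, using continuity of $\tau \mapsto f(\tau)$ in $H^s_\bot$ and the equation satisfied by $U(\cdot,\tau)$.

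For the first estimate in \eqref{stima PDE lin inomo}, I apply the homogeneous bound \eqref{stima PDE lin hom} under the integral sign:
\begin{equation*}
\|w(t)\|_s \leq \|U(t,0)w_0\|_s + \int_0^{|t|} \|U(t,\tau) f(\tau)\|_s\, d\tau \lesssim_s \|w_0\|_s + \int_0^{|t|} \|f(\tau)\|_s\, d\tau\,,
\end{equation*}
which gives the claimed inequality. The bound for $\|\partial_t w(t)\|_{s-m}$ then follows from the equation \eqref{PDE lineare Hs bot con f} itself: under the hypotheses \eqref{ipotesi cal D (t)}, \eqref{ipotesi a pde lin} and $\|a\|_{\mathcal C^0_t H^\sigma_x}\le 1$, the operator $\mathcal D(t)$ maps $H^s_\bot$ to $H^{s-m}_\bot$ with uniform norm, and $\Pi_\bot T_a \partial_x$ maps $H^s_\bot$ to $H^{s-1}_\bot \subseteq H^{s-m}_\bot$ (recall $m\ge 1$) with norm $\lesssim_s \|a\|_\sigma$ by Lemma \ref{prop 1 paraproduct}; combining with $\|f\|_{s-m} \leq \|f\|_s$ yields the desired bound.

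Uniqueness is immediate: the difference of two solutions of \eqref{PDE lineare Hs bot con f} with the same initial data solves the homogeneous equation \eqref{PDE lineare Hs bot} with zero initial datum, and Lemma \ref{lemma positura lineare} then forces it to vanish identically on $[-T,T]$.

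The only delicate step is verifying that the Duhamel integral $v(t) := \int_0^t U(t,\tau) f(\tau)\,d\tau$ actually lies in $C^0([-T,T], H^s_\bot)$ rather than merely in $L^\infty$, and that it is $C^1$ into $H^{s-m}_\bot$ with the expected derivative. This is the main technical point; I would argue as at the end of the proof of Lemma \ref{lemma positura lineare}, combining the Lipschitz continuity of $t \mapsto \|v(t)\|_s^2$ (obtained from the approximation scheme, or alternatively from running the Galerkin truncation directly on \eqref{PDE lineare Hs bot con f} with the forcing term $\Pi_N f$ in the energy identity, which adds a controllable contribution $2\langle \partial_x^s f, \partial_x^s w^{(N)}\rangle$ estimated by Cauchy-Schwarz) with weak continuity in $H^s_\bot$. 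Once $v \in C^0([-T,T],H^s_\bot)$ is established, the equation immediately gives $\partial_t v \in C^0([-T,T],H^{s-m}_\bot)$, concluding the proof.
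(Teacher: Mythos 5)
Your proposal takes essentially the same approach as the paper: construct the two-parameter solution operator $\Phi(\tau,t)$ for the homogeneous problem via Lemma \ref{lemma positura lineare}, write the solution by the Duhamel formula $w(t)=\Phi(0,t)w_0+\int_0^t\Phi(\tau,t)f(\tau)\,d\tau$, and read off the estimates. The paper states the conclusion more tersely ("the claimed estimates easily follow"); your added discussion of the continuity of the Duhamel integral in $H^s_\bot$ is a legitimate and correct elaboration of the same argument, not a different route.
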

\begin{proof}
For any $t, \tau \in [-T, T]$, denote  by $\Phi(\tau, t)$ the flow map of the para-differential equation \eqref{PDE lineare Hs bot},
$$
\partial_t w = {\cal D}(t) [w] + \Pi_\bot T_a \partial_x w\,, \qquad w(\tau, \cdot) = w_0(\cdot) \, .
$$
By Lemma \ref{lemma positura lineare}, $\Phi(\tau, t)$ is a bounded linear operator $H^s_\bot(\T_1) \to  H^s_\bot(\T_1)$ for any $s \ge \sigma$. 
The estimate \eqref{stima PDE lin hom} implies that
$$
\| \Phi(\tau, t) w_0 \|_s \lesssim_s \| w_0 \|_s , \qquad \| \partial_t \Phi(\tau, t) w_0 \|_{s - m} \lesssim_s \| w_0 \|_s\,. 
$$
%Arguing as in the proof of Lemma \ref{lemma positura lineare} , one shows that for any $0 \le \tau \le T,$ 
%the flow map $\Phi(t; \tau)$ is well defined for $0 \le t \le T$ and satisfies 
%$$
%\| \Phi(t; \tau) w_\tau \|_s \lesssim_s \| w_\tau \|_s , \qquad \| \partial_t \Phi(t; \tau) w_\tau \|_{s - m} \lesssim_s \| w_\tau \|_s\,. 
%$$
%Analogous results hold for $-T \le \tau \le 0$ and $-T \le t \le 0.$
The unique solution of the equation \eqref{PDE lineare Hs bot con f} in $C^0([- T, T], H^s_\bot(\T_1)) \cap C^1([- T, T], H^{s - m}_\bot(\T_1))$
with initial data $w(0) = w_0$ is then given by the Duhamel formula 
$w(t) = \Phi(0, t) w_0 + \int_0^t \Phi(\tau, t) f(\tau)\, d \tau$
and the claimed estimates  easily follow. 
\end{proof}

\medskip

\noindent
T. Kappeler, 
Institut f\"ur Mathematik, 
Universit\"at Z\"urich, Winterthurerstrasse 190, CH-8057 Z\"urich;\\
${}\qquad$  email: thomas.kappeler@math.uzh.ch 

\noindent
R. Montalto, 
University of Milan, Via Saldini 50, 20133, Milan, Italy;\\
${}\qquad$ email: riccardo.montalto@unimi.it

\end{document}